\documentclass[a4paper,11pt]{amsart}
\usepackage[colorlinks, linkcolor=blue,anchorcolor=Periwinkle,
citecolor=blue,urlcolor=Emerald]{hyperref}
\usepackage[all]{xy}
\SelectTips{cm}{}
\usepackage{hyperref}
\usepackage{graphicx}
\usepackage{psfrag}
\usepackage{tikz}
\usepackage{tikz-cd}
\usepackage{mathtools}
\usepackage{amsmath,amssymb,tikz-cd}
\usepackage[export]{adjustbox}
\usepackage{calligra}

\usetikzlibrary{decorations.pathreplacing}
\usetikzlibrary{matrix,arrows}

\usepackage[french,english]{babel}



\textwidth15.1cm \textheight22.7cm \headheight12pt
\oddsidemargin.4cm \evensidemargin.4cm \topmargin0cm


\newcommand{\ko}{\: , \;}

\newcommand{\ol}[1]{\overline{#1}}

\setcounter{tocdepth}{1}

\numberwithin{equation}{subsection}
\newtheorem{theorem}[subsection]{Theorem}
\newtheorem{definition}[subsection]{Definition}
\newtheorem{classification-theorem}[subsection]{Classification Theorem}
\newtheorem{decomposition-theorem}[subsection]{Decomposition Theorem}
\newtheorem{proposition-definition}[subsection]{Proposition-Definition}
\newtheorem{definition-proposition}[subsection]{Definition-Proposition}
\newtheorem{example-definition}[subsection]{Example-Definition}
\newtheorem{periodicity-conjecture}[subsection]{Periodicity Conjecture}
\newtheorem{lemma}[subsection]{Lemma}
\newtheorem{proposition}[subsection]{Proposition}
\newtheorem{corollary}[subsection]{Corollary}

\newtheorem{example}[subsection]{Example}
\newtheorem{remark}[subsection]{Remark}

\newtheorem{notation}[subsection]{Notation}

\newtheorem{Definition-Proposition}[subsection]{D\'efinition-Proposition}
\newtheorem{Definition}[subsection]{D\'efinition}
\newtheorem{Lemme}[subsection]{Lemme}
\newtheorem{Theorem}[subsection]{Th\'eor\`eme}
\newtheorem{Corollaire}[subsection]{Corollaire}
\newcommand{\reminder}[1]{}

\renewcommand{\mod}{\mathrm{mod}}

\newcommand{\rep}{\mathrm{rep}}

\newcommand{\Mod}{\mathrm{Mod}\,}
\newcommand{\CM}{\mathrm{CM}}
\newcommand{\pd}{\mathrm{pd}}
\newcommand{\proj}{\mathrm{proj}\,}

\newcommand{\per}{\mathrm{per} }
\newcommand{\thick}{\mathrm{thick} }
\newcommand{\pvd}{\mathrm{pvd} }
\newcommand{\add}{\mathrm{add} }

\renewcommand{\Im}{\mathrm{Im} }

\newcommand{\tr}{\mathrm{tr}}
\newcommand{\cat}{\mathrm{cat}}

\newcommand{\Cone}{\mathrm{Cone}}
\newcommand{\Com}{\mathrm{Com}}
\newcommand{\sMod}{\mathrm{sMod}\,}
\newcommand{\sSet}{\mathrm{sSet}}

\newcommand{\dgcat}{\mathrm{dgcat}}
\newcommand{\Hqe}{\mathrm{Hqe}}

\renewcommand{\rep}{\mathrm{rep}}

\newcommand{\pretr}{\mathrm{pretr} }

\newcommand{\Set}{\mathrm{Set}}
\newcommand{\const}{\mathrm{const}}
\newcommand{\colim}{\mathrm{colim}}
\newcommand{\cok}{\mathrm{cok} }

\renewcommand{\ker}{\mathrm{ker} }
\newcommand{\obj}{\mathrm{obj} }

\newcommand{\R}{\mathcal R}

\newcommand{\iso}{\xrightarrow{_\sim}}

\newcommand{\Cosp}{\mathrm{Cosp}}

\newcommand{\Sq}{\mathrm{Sq}}
\newcommand{\Cat}{\mathrm{Cat}}
\newcommand{\Quiv}{\mathrm{Quiv}}
\newcommand{\Id}{\mathrm{id}}
\newcommand{\Dia}{\mathrm{Dia}}
\newcommand{\Res}{\mathrm{Res}}
\newcommand{\Iso}{\mathrm{Iso}}
\newcommand{\Ex}{\mathrm{Ex}}
\newcommand{\Ab}{\mathrm{Ab}}

\newcommand{\Def}{\mathrm{def}\kern 0.1em}

\newcommand{\D}{\mathcal {D}}
\newcommand{\A}{\mathcal {A}}
\newcommand{\B}{\mathcal {B}}
\newcommand{\C}{\mathcal {C}}
\newcommand{\E}{\mathcal {E}}
\newcommand{\F}{\mathcal {F}}

\newcommand{\J}{\mathcal {J}}
\newcommand{\I}{\mathcal {I}}
\newcommand{\M}{\mathcal {M}}
\newcommand{\N}{\mathcal {N}}
\newcommand{\U}{\mathcal {U}}
\newcommand{\V}{\mathcal {V}}
\newcommand{\W}{\mathcal {W}}
\newcommand{\s}{\mathcal S}
\newcommand{\T}{\mathcal T}
\renewcommand{\P}{\mathcal P}
\newcommand{\X}{\mathcal X}
\newcommand{\Y}{\mathcal Y}
\newcommand{\Z}{\mathcal Z}
%
%

\newcommand{\Fun}{\mathrm{Fun}}
\newcommand{\Hom}{\mathrm{Hom}}

\newcommand{\RHom}{\mathrm{RHom}}

\newcommand{\Ext}{\mathrm{Ext}}

\newcommand{\End}{\mathrm{End}}
\newcommand{\rad}{\mathrm{rad}}

\newcommand{\Mor}{\mathrm{Mor}}

\newcommand{\Map}{\mathrm{Map}}
%
%

\renewcommand{\phi}{\varphi}

\renewcommand{\tilde}[1]{\widetilde{#1}}


\usetikzlibrary{positioning}

\begin{document}

\setcounter{tocdepth}{2}
\setcounter{secnumdepth}{4}
\begin{figure}
  \includegraphics[width=200pt,left]{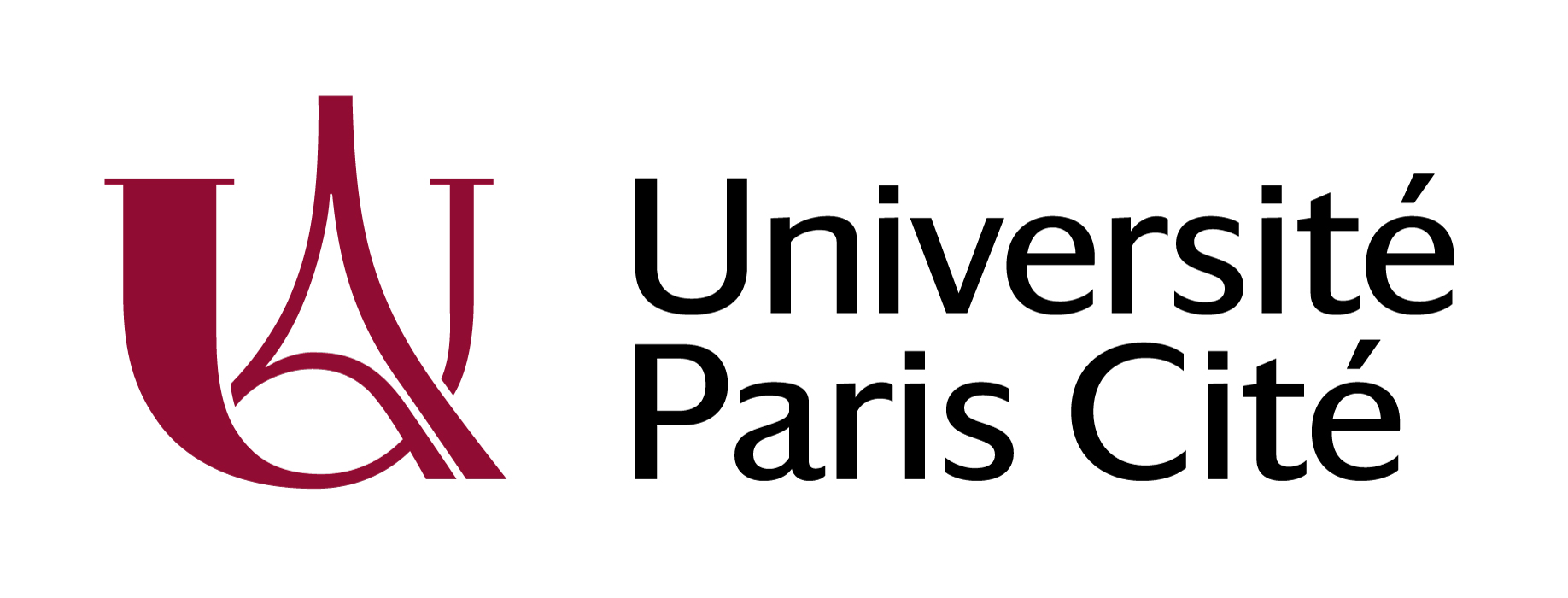}
  \label{logo:UPC}
\end{figure}
\begin{center}
\LARGE
{Universit\'e Paris Cit\'e}
\end{center}
\vskip 40pt
\textbf{\'Ecole Doctorale de Sciences Math\'ematiques de Paris Centres (ED 386)}\\
\textbf{Instituet de Math\'ematiques de Jussieu-Paris Rive Gauche (UMR 7586)}


\begin{center}
\rule{\textwidth}{.4pt}\par
{\huge\bfseries On exact dg categories \par}
\rule{\textwidth}{.4pt}\par
\bigbreak
\end{center}
\vskip 10pt
\begin{center}pr\'esent\'e par \textbf{Xiaofa CHEN}\end{center}

\vskip 20pt

\begin{center}TH\`ESE DE DOCTORAT DE MATH\'EMATIQUES\end{center}

\vskip 10pt

\begin{center}Dirig\'ee par \textbf{Bernhard KELLER}\end{center}

\vskip 20pt

\begin{center}Pr\'esent\'ee et soutenue publiquement le 11 Juillet 2023 \end{center}
\vskip 10pt
\begin{center}devant le jury compos\'e de:\end{center}
\vskip 10pt
\[
\begin{aligned}
&\textbf{Mme Claire AMIOT} &&\text{MCU-HDR}&&\text{Universit\'e Grenoble Alpes} &&\text{Examinatrice}\\
&\textbf{M Xiao-Wu CHEN} &&\text{Professor}&&\text{USTC} &&\text{Examinateur}\\
&\textbf{M Gustavo JASSO} &&\text{Senior Lecturer}&&\text{Lunds Universitet} &&\text{Examinateur}\\
&\textbf{M Bernhard KELLER} &&\text{Professeur}&&\text{Universit\'e Paris Cit\'e} &&\text{Directeur}\\
&\textbf{M Julian K\"ULSHAMMER} &&\text{Associate Professor}&&\text{Uppsala University} &&\text{Rapporteur}\\
&\textbf{Mme Muriel LIVERNET} &&\text{Professeure}&&
\text{Universit\'e Paris Cit\'e} &&\text{Examinatrice}\\
&\textbf{Mme Sibylle SCHROLL}&&\text{Professor}&&\text{University of Cologne} &&\text{Examinatrice}\\
&\textbf{M Bertrand TO\"EN} &&\text{DR CNRS}&&\text{Universit\'e de Toulouse} &&\text{Rapporteur}
\end{aligned}
\]
\newpage

\begin{figure}[!htb]
   \begin{minipage}{0.48\textwidth}
     \includegraphics[width=.7\linewidth]{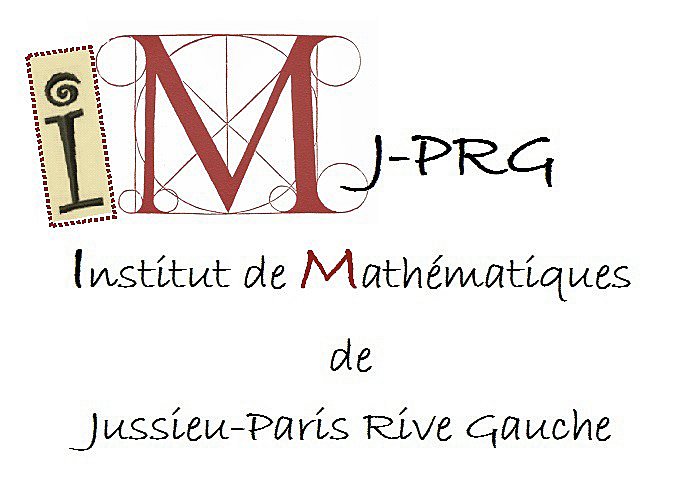}
    
Institut de Math\'ematiques de Jussieu-Paris Rive Gauche

Universit\'e Paris Cit\'e – Campus des Grands Moulins

B\^atiment Sophie Germain, Boite Courrier 7012

8 Place Aur\'elie Nemours,

75205 Paris Cedex 13
   \end{minipage}\hfill
   \begin{minipage}{0.48\textwidth}
     \includegraphics[width=.7\linewidth]{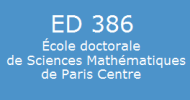}
    
    Universit\'e Paris Cit\'e

\'Ecole Doctorale ED 386

B\^atiment Sophie Germain

Case courrier 7012

8 place Aur\'elie Nemours

75205 Paris Cedex 13
   \end{minipage}
\end{figure}
\newpage

\begin{Large}\textbf{Acknowledgments}\end{Large}
\vskip 10pt
I would like to express my heartfelt gratitude to my supervisor, Bernhard Keller. His exceptional knowledge and expertise and consistent support have been extremely helpful throughout my PhD journey. His excellent guidance, productive feedback and continuous encouragement have been invaluable in shaping my research and refining my ideas. I am truly grateful for his very kind sharing of mathematical insights. His taste in mathematics remains a treasure for me.

I would like to thank my supervisor in China Xiao-Wu Chen for the constant support and encouragement during my career in mathematics. His extensive knowledge and commitment to academic excellence have greatly enhanced the quality and rigor of this thesis. I am indebted to his critical insights in shaping the direction and scope of my research. I would like to thank him for being a member of the defense committee of this thesis.

It is my honour to have Bertrand To\"en and Julian K\"ulshammer as the rapporteurs for this thesis and I would like to thank them for accepting it and for all their valuable suggestions. I would like to thank Sibylle Schroll for inviting me to give a talk at the LAGOON seminar, for her suggestions about finding postdoc positions, and for agreeing to be a member of the defense committee. I would like to thank Gustavo Jasso for organising the FD seminar and the study group on $\infty$-categories, and among others, for his talks on exact $\infty$-categories, and for agreeing to be part of the thesis committee. I sincerely thank Claire Amiot and Muriel Livernet for being part of the thesis committee.

I am grateful to Erlend Børve, Peigen Cao, Alessandro Contu, Mikhail Gorsky, Gustavo Jasso, Miantao Liu, Amnon Neeman, Yann Palu, Yu Wang, Zhengfang Wang and Yilin Wu for many interesting discussions and useful comments. I am grateful to Antoine S\'edillot for his help during my stay in Paris.
I would like to thank Steffen Koenig for his advice on searching for a postdoc position.
 I would like to thank Steffen Koenig and Zhengfang Wang for their invitations to University of Stuttgart and their hospitality. I thank Petter Andreas Bergh for his invitation to the Abel Symposium 2022 and the hospitality. 
 
 I would like to thank my wife Yaru Shao for always being supportive and for being with me all the way.
 Finally, I would like to thank my father, my mother and my brother for their unconditional support. 
	\label{s:acknowledgment}
	\newpage

\selectlanguage{french}
\begin{Large}\textbf{R\'esum\'e}\end{Large}
\vskip 10pt
Nous introduisons la notion de dg-cat\'egorie exacte, qui g\'en\'eralise simultan\'ement les notions de 
cat\'egorie exacte au sens de Quillen et de dg-cat\'egorie pr\'etriangul\'ee au sens de Bondal--Kapranov. 
Il s'agit \'egalement d'un analogue diff\'erentiel gradu\'e  de la notion d'$\infty$-cat\'egorie exacte de Barwick et
d'un enrichissement diff\'erentiel gradu\'e de la notion de cat\'egorie extriangul\'ee de Nakaoka--Palu. 
Nos motivations viennnent par exemple des cat\'egories apparaissant dans la cat\'egorification additive des 
alg\`ebres amass\'ees \`a coefficients. Nous donnons une d\'efinition en compl\`ete analogie avec celle de Quillen, 
mais o\`u la cat\'egorie des paires noyau-conoyau est remplac\'ee par une cat\'egorie homotopique plus sophistiqu\'ee. 
Nous obtenons un certain nombre de r\'esultats fondamentaux concernant le dg-nerf, la dg-cat\'egorie d\'eriv\'ee, 
les produits tensoriels et les cat\'egories de dg-foncteurs \`a valeurs dans une dg-cat\'egorie exacte.
Par exemple, nous montrons que pour une dg-cat\'egorie $\A$ dont la cat\'egorie homotopique $H^0(\A)$ est additive,
on a une bijection entre les structures exactes sur $\A$ et les structures exactes (au sens de Barwick) sur le dg-nerf de $\A$. 
Nous montrons \'egalement l'existence de la plus grande structure exacte sur une (petite) dg-cat\'egorie $\A$ dont
la cat\'egorie homotopique est additive. Ceci g\'en\'eralise un th\'eor\`eme de Rump pour les cat\'egories exactes de Quillen.
\vskip 15pt
{\begin{Large}\textbf {Mots-cl\'es}\end{Large}}
\vskip 10pt
Cat\'egorie extriangul\'ee, dg-cat\'egorie exacte,  dg-cat\'egorie d\'eriv\'ee, 
complexes homotopiques \`a 3 termes, sous-cat\'egorie stable par extensions, cat\'egorie amass\'ee.

\selectlanguage{english}
\newpage
\begin{Large}\textbf{Abstract}\end{Large}
\vskip 10pt
We introduce the notion of an exact dg category, which is a simultaneous generalization of the notions of 
exact category in the sense of Quillen and of pretriangulated dg category in the sense of Bondal--Kapranov. 
It is also a differential graded analogue of Barwick's notion of exact $\infty$-category and a differential
graded enhancement of Nakaoka--Palu's notion of extriangulated category. It is completely different 
from Positselski's notion of exact DG-category.
Our motivations come for example from the categories appearing in the additive categorification 
of cluster algebras with coefficients. We give a definition in complete analogy with Quillen's but 
where the category of kernel-cokernel pairs is replaced with a more sophisticated homotopy category. 
We obtain a number of fundamental results concerning the dg nerve, the dg derived category, tensor 
products and functor categories with exact dg target. For example, we show that for a given dg category 
$\A$ with additive homotopy category $H^0(\A)$, there is a bijection between exact structures on $\A$ 
and exact structures (in the sense of Barwick) on the dg nerve of  $\A$. 
We also show the existence of the greatest exact structure on a (small) dg category 
with additive homotopy category. This generalizes a theorem of Rump for Quillen exact categories. 
\vskip 15pt
{\begin{Large}\textbf {Keywords}\end{Large}} 
\vskip 10pt
Extriangulated category, exact dg category, dg derived category, 3-term complex up to homotopy,
homotopy short exact sequence,  extension-closed subcategory.
\newpage

\section*{}
\topskip0pt
\vspace*{4cm}
\rightline{\begin{Large}\`A mon p\`ere\end{Large}}
\vspace*{4cm}

\newpage
\tableofcontents
\newpage
\section{Introduction (version française)}
\subsection{Aperçu}
Dans cette th\`ese, nous introduisons et \'etudions la notion de \emph{dg-cat\'egorie exacte}.
Nous illustrons la position de cette classe de cat\'egories parmi les autres types de cat\'egories que nous consid\'ererons dans cette th\`ese dans le diagramme suivant

\begin{tikzcd}
&\{\text{dg-cat.  pr\'etri.}\}\ar[rd,red]\ar[rr,"N_{dg}"{description,near start},hook]\ar[dd,hook]&&\{\text{$\infty$-cat. stable}\}\ar[dd,hook]\ar[ld,red]\\
&&\{\text{cat. tri.}\}\ar[dd,hook]&\\
\{\text{ cat. ex. (Quillen)}\}\ar[r,hook]\ar[rrd,hook]&\{\text{dg-cat.  ex.}\}\ar[rr,hook,"N_{dg}"{description,near start}]\ar[rd,red]&&\{\text{$\infty$-cat. ex. (Barwick)}\}\ar[ld,red]\\
&&\{\text{cat. extri.}\}&
\end{tikzcd}
Ici, les fl\`eches noires repr\'esentent des inclusions de classes et les fl\`eches rouges envoient une dg-cat\'egorie  (respectivement une $\infty$-cat\'egorie) $\A$ sur $H^0(\A)$ (respectivement $h\A$). Comme nous le voyons dans le diagramme, la notion de dg-cat\'egorie exacte est
\begin{itemize}
\item[1)] une version diff\'erentielle gradu\'ee de la notion d'$\infty$-cat\'egorie exacte de Barwick \cite{Barwick15};
\item[2)] une g\'en\'eralisation simultan\'ee des notions de cat\'egorie exacte au sens de Quillen \cite{Quillen73} et de dg-cat\'egorie  pr\'etriangul\'ee au sens de Bondal--Kapranov \cite{BondalKapranov90};
\item[3)] un enrichissement diff\'erentiel gradu\'e de la notion de cat\'egorie extriangul\'ee au sens de Nakaoka--Palu \cite{NakaokaPalu19} en analogie avec l'enrichissement diff\'erentiel gradu\'e de la notion de cat\'egorie triangul\'ee de Bondal--Kapranov.
\end{itemize}
Nous avons \'egalement des caract\'erisations de certaines des sous-classes pr\'esentes dans le diagramme:
\begin{itemize}
\item[1)] une cat\'egorie extriangul\'ee est une cat\'egorie exacte de Quillen si et seulement si toute inflation est monomorphe et toute d\'eflation est \'epimorphe, cf. \cite[Corollaire 3.18]{NakaokaPalu19};
\item[2)] une cat\'egorie extriangul\'ee est une cat\'egorie triangul\'ee si et seulement si elle est une cat\'egorie extriangul\'ee de Frobenius dont les projectifs-injectifs sont les objets nuls, cf. \cite[Corollaire 7.6]{NakaokaPalu19};
\item[3)] une dg-cat\'egorie exacte est quasi-\'equivalente \`a une cat\'egorie exacte de Quillen si et seulement si 
l'homologie de la dg-cat\'egorie sous-jacente est concentr\'ee en degr\'e z\'ero;
\item[4)] une dg-cat\'egorie exacte connective est quasi-\'equivalente \`a la $\tau_{\leq 0}$-troncation d'une dg-cat\'egorie pr\'etriangul\'ee si et seulement si sa cat\'egorie d'homologie de degr\'e z\'ero est canoniquement triangul\'ee, cf. Exemple~\ref{exm:exactdgpretriangulated}.
\end{itemize}
Plus de d\'etails sur ces classes et les d\'eveloppements historiques sont donn\'es dans les sous-sections suivantes.
Soulignons que notre notion de dg-cat\'egorie exacte est compl\`etement diff\'erente de la notion de Positselski de DG-cat\'egorie exacte \cite{Positselski21}. Par exemple, une structure exacte dans notre sens peut être transport\'ee le long d'une quasi-\'equivalence, cf. Remarque~\ref{truncationexactdgstructure} b), ce qui n'est pas du tout le cas pour les structures exactes au sens de Positselski. La motivation
principale de Positselski est d'axiomatiser des situations o\`u l'on a une dg-cat\'egorie
{\em pr\'etriangul\'ee} $\A$ dont la cat\'egorie $Z^0(\A)$ est en outre munie d'une structure exacte
au sens de Quillen (comme la cat\'egorie des complexes \`a composantes dans une
cat\'egorie exacte). Par contre, notre notion vise \`a axiomatiser une classe de dg-cat\'egories
{\em non n\'ecessairement pr\'etriangul\'ees} munie d'une structure suppl\'ementaire.

\subsection{Cat\'egories exactes}
La notion de cat\'egorie exacte a \'et\'e introduite par Quillen dans \cite{Quillen73} afin de d\'efinir sa $K$-th\'eorie alg\'ebrique sup\'erieure (les cat\'egories exactes Karoubiennes ont \'et\'e introduites 15 ans plus tôt par Heller \cite{Heller58}).
Grosso modo, une cat\'egorie exacte est une cat\'egorie additive $\E$ munie d'une classe de paires noyau-conoyau appel\'ees conflations
\[
\begin{tikzcd}
L\ar[r,tail,"i"]&M\ar[r,two heads,"p"]&N
\end{tikzcd}
\]
o\`u $i$ est appel\'ee inflation et $p$ est appel\'ee d\'eflation, et qui satisfait certains axiomes.
La liste des axiomes a \'et\'e r\'eduite au `minimum' par Keller dans \cite[Appendice A.1]{Keller90}.
Notamment, l'axiome `obscur' de Quillen est redondant et peut être d\'eduit des autres axiomes, cf. aussi  \cite[p.~525, Corollaire]{Yoneda60}.
Pour un traitement complet de la th\'eorie des cat\'egories exactes au sens de Quillen, nous renvoyons \`a l'article de synth\`ese de B\"uhler \cite{Buhler10}. Pour une \'etude des groupes d'extension sup\'erieurs dans une cat\'egorie exacte, nous renvoyons aux notes de cours \cite{FrerickSieg10}.
Quillen a affirm\'e et Laumon et Thomason--Trobaugh \cite{ThomasonTrobaugh90} ont prouv\'e que pour une cat\'egorie exacte essentiellement petite $\E$, il existe une \'equivalence exacte $G:\E\rightarrow \C$ sur une sous-cat\'egorie pleine stable par extensions d'une cat\'egorie ab\'elienne $\C$ telle que le foncteur $G$ soit pleinement exact, c'est-\`a-dire qu'une suite dans $\E$ est une conflation si et seulement si son image dans $\C$ est une suite exacte courte, cf. \cite[A.2 Proposition]{Keller90}.
Quillen a utilis\'e les cat\'egories exactes comme cadre pour sa remarquable construction $Q$, qui est li\'ee \`a la bicat\'egorie des ponts introduite par B\'enabou, cf. \cite[2.6]{Benabou67}. La cat\'egorie $Q(\E)$ d'une cat\'egorie exacte a les m\^emes objets que $\E$ et les morphismes de $M$ \`a $M'$ sont donn\'es par des classes d'\'equivalence de toits
\[
\begin{tikzcd}
M&N\ar[l,two heads,"p"swap]\ar[r,tail,"i"]&M'
\end{tikzcd}
\]
o\`u $p$ est une d\'eflation et $i$ est une inflation, et deux toits sont consid\'er\'es comme 
\'equivalents s'il existe un isomorphisme entre les toits qui se restreint aux identit\'es sur $M$ et $M'$.
Les compositions dans $Q(\E)$ sont donn\'ees par des pullbacks.
En fait, la liste des axiomes de Quillen correspond exactement \`a ce dont on a besoin lors de la construction $Q$.
La g\'en\'eralisation des cat\'egories ab\'eliennes aux cat\'egories exactes permet d'obtenir plus d'exemples, comme la cat\'egorie des modules libres sur un anneau, la cat\'egorie des fibr\'es vectoriels sur un sch\'ema, les cat\'egories des modules localement libres sur une alg\`ebre pr\'eprojective g\'en\'eralis\'ee \cite{GeissLeclercSchroer17}, \ldots\ .

\subsection{Des cat\'egories triangul\'ees aux dg-cat\'egories}
D'autre part, pour formuler et d\'emontrer ses extensions du th\'eor\`eme de dualit\'e de Serre \cite{Grothendieck60},
Grothendieck a invent\'e la notion de cat\'egories d\'eriv\'ees et,
avec son \'etudiant Verdier, a introduit la notion de cat\'egorie triangul\'ee \cite{Verdier96}.
Presque simultan\'ement, Puppe a \'egalement trouv\'e les axiomes (sauf l'axiome de l'octa\`edre) 
pour les cat\'egories triangul\'ees \cite{Puppe62}.
Pendant longtemps, les cat\'egories triangul\'ees ont \'et\'e consid\'er\'ees comme trop grossi\`eres pour permettre le d\'eveloppement d'une th\'eorie substantielle, principalement en raison du manque de fonctorialit\'e de la construction du c\^one. Cette perception a chang\'e gr\^ace notamment aux travaux de Neeman \cite{Neeman01,Neeman22}, mais il reste le fait que de nombreuses op\'erations sur les cat\'egories d\'eriv\'ees ne peuvent  pas \^etre effectu\'ees dans le cadre des cat\'egories triangul\'ees,
notamment les produits tensoriels de cat\'egories triangul\'ees et les cat\'egories de foncteurs avec des cibles triangul\'ees.
Une solution viable \`a ce probl\`eme est fournie par la th\'eorie des dg-cat\'egories.
La notion de dg-cat\'egorie est apparue dans le travail de Kelly \cite{Kelly65}.
Dans les ann\'ees 90, Bondal--Kapranov \cite{BondalKapranov90} ont observ\'e que
les espaces de morphismes des cat\'egories triangul\'ees apparaissant naturellement en alg\`ebre
sont les groupes de cohomologie de degr\'e z\'ero de certains complexes, et
il est un principe (popularis\'e par Grothendieck) en alg\`ebre homologique de consid\'erer le complexe lui-même
plutôt que sa cohomologie.
Par cons\'equent, ils ont propos\'e d'enrichir les cat\'egories triangul\'ees en utilisant des dg-cat\'egories,
en exigeant que la structure triangul\'ee soit compatible avec la structure dg.
En formalisant cette id\'ee, ils ont d\'efini la propri\'et\'e d'une dg-cat\'egorie d'être pr\'etriangul\'ee et
ont observ\'e que chaque dg-cat\'egorie peut engendrer librement une dg-cat\'egorie pr\'etriangul\'ee.
Peu de temps apr\`es, Keller a publi\'e son article fondamental \cite{Keller94}
sur les cat\'egories d'homotopie et les cat\'egories d\'eriv\'ees non born\'ees des dg-cat\'egories.
Drinfeld \cite{Drinfeld04} a donn\'e une construction \'el\'egante du quotient dg, dont l'existence
remonte \`a Keller \cite{Keller99}, et qui enrichit le quotient de Verdier.

\subsection{Des cat\'egories extriangul\'ees aux dg-cat\'egories exactes}
La notion de cat\'egorie extriangul\'ee a \'et\'e introduite par Nakaoka--Palu dans \cite{NakaokaPalu19}. Elle constitue une g\'en\'eralisation simultan\'ee de la notion de cat\'egorie exacte de Quillen et de la notion de cat\'egorie triangul\'ee de Grothendieck--Verdier. Leur objectif \'etait de fournir un cadre pratique pour \'enoncer des preuves qui s'appliquent \`a la fois aux cat\'egories exactes et aux cat\'egories triangul\'ees, et plus g\'en\'eralement aux sous-cat\'egories stable par extensions des cat\'egories triangul\'ees. Depuis leur introduction en 2019, la th\'eorie des cat\'egories extriangul\'ees a beaucoup \'evolu\'e et de nombreuses notions et constructions ont \'et\'e g\'en\'eralis\'ees dans ce cadre (ou dans le cadre plus g\'en\'eral des cat\'egories $n$-exangul\'ees \cite{HerschendLiuNakaoka21}). Parmi ces notions et constructions, on peut citer :
\begin{itemize}
\item les paires de cotorsion et leurs cœurs et co-cœurs, ainsi que les sous-cat\'egories 
$n$-amas-basculantes et bousculantes (silting), cf. \cite{NakaokaPalu19, LiuNakaoka19, ChangZhouZhu19, AdachiTsukamoto22, HuertaMendozaSaenzSantiago22, AdachiTsukamoto23, ZhaoZhuZhuang21, ZhuZhuang20, LiuZhouZhouZhu21};
\item les mutations et la th\'eorie des objets amas-basculants, cf.~\cite{ChangZhouZhu19, Pressland23, ZhouZhu18, LiuZhou20, LiuZhou20a, LiuZhou21, JorgensenShah22, GorskyNakaokaPalu23};
\item les Karoubianisations, cf.~\cite{WangWeiZhangZhao23, Dixy22, KlapprothMsapatoShah22};
\item la th\'eorie d'Auslander--Reiten, cf.~\cite{IyamaNakaokaPalu18, TanWangZhao23};
\item la formule d'Auslander pour la cat\'egorie des foncteurs d'une cat\'egorie extriangul\'ee, cf.~\cite{Ogawa21};
\item les sous-structures d'une cat\'egorie extriangul\'ee, cf.~\cite{HerschendLiuNakaoka21, Enomoto21, Sakai23};
\item le recollement de cat\'egories extriangul\'ees, cf.~\cite{WangWeiZhang22, HeHuZhou22, HuZHou21, MaZhaoZhuang23};
\item les groupes de Grothendieck des cat\'egories extriangul\'ees, cf.~\cite{ZhuZhuang21, Haugland21, EnomotoSaito22};
\item les alg\`ebres de Hall des cat\'egories extriangul\'ees, cf.~\cite{WangWeiZhang22a, FangGorsky22};
\item les localisations des cat\'egories extriangul\'ees, cf.~\cite{NakaokaOgawaSakai22, Ogawa22, HeHeZhou22};
\item \ldots
\end{itemize}
L'\'etude de la classe des sous-structures (= sous-bifoncteurs ferm\'es) d'une cat\'egorie exacte appara\^{\i}t d\'ej\`a dans \cite{ButlerHorrocks61}, cf.~aussi \cite{AuslanderSolberg93a, DraxlerReitenSmaloSolberg99, BrustleHassounLangfordRoy20, FangGorsky22, BaillargeonBrustleGorskyHassoun22}. Notons que la classe des sous-structures triangul\'ees d'une cat\'egorie triangul\'ee ne contient que la structure triviale.
Il serait int\'eressant de trouver une g\'en\'eralisation commune des cat\'egories exactes \`a droite et des cat\'egories suspendues, cf.~\cite{KellerVossieck87, BazzoniCrivei13}.

Par analogie avec la notion de cat\'egorie triangul\'ee alg\'ebrique \cite{Keller06d}, nous d\'efinissons une {\em cat\'egorie extriangul\'ee alg\'ebrique} comme suit.
\begin{Definition-Proposition}[Definition-Proposition~\ref{algebraic}]\label{def:algebraic}
Soit $\C$ une cat\'egorie extriangul\'ee. Les \'enonc\'es suivants sont \'equivalents :
\begin{itemize}
\item[1)] $\C$ est \'equivalente, en tant que cat\'egorie extriangul\'ee, \`a une sous-cat\'egorie pleine stable par extensions d'une cat\'egorie triangul\'ee alg\'ebrique.
\item[2)] $\C$ est \'equivalente, en tant que cat\'egorie extriangul\'ee, \`a $\B/(\mathcal P_0)$ pour une cat\'egorie exacte de Quillen $\B$ et une classe $\mathcal P_0$ d'objets projectifs-injectifs.
\item[3)] $\C$ est \'equivalente, en tant que cat\'egorie extriangul\'ee, \`a $H^0(\A)$ pour une {\em dg-cat\'egorie exacte} $\A$.
\end{itemize}
Si l'une des conditions \'equivalentes ci-dessus est satisfaite, alors $\C$ est appel\'ee une {\em cat\'egorie extriangul\'ee alg\'ebrique}.
\end{Definition-Proposition}

\`A partir de la D\'efinition-Proposition, il est clair que la classe des cat\'egories extriangul\'ees alg\'ebriques est stable sous~:
\begin{itemize}
\item[a)] passage aux sous-cat\'egories stables par extensions ;
\item[b)] quotients par des id\'eaux engendr\'es par les identit\'es d'objets projectifs-injectifs ;
\item[c)] passage aux sous-structures extriangul\'ees, cf.~Corollaire~\ref{cor:exactsubstructure}.
\end{itemize}
Nous nous attendons \'egalement \`a ce qu'elle soit stable lors du passage aux localisations au sens de Nakaoka--Ogawa--Sakai \cite{NakaokaOgawaSakai22}.
Il est \'egalement clair que si une cat\'egorie extriangul\'ee alg\'ebrique est triangul\'ee, alors elle est aussi alg\'ebrique en tant que cat\'egorie triangul\'ee.

\`A partir de la partie 1) de la D\'efinition-Proposition, il est clair qu'une cat\'egorie extriangul\'ee alg\'ebrique $\C$ poss\`ede un $\delta$-foncteur bivariant non born\'e au sens de~\cite[Definition 4.5]{GorskyNakaokaPalu21}.
En particulier, cela fournit un bifoncteur additif $\mathbb E^{-1}$ qui est crucial dans l'\'etude
de l'ensemble partiellement ordonn\'e des {\em paires de $s$-torsion} dans une cat\'egorie 
extriangul\'ee dans~\cite{AdachiEnomotoTsukamoto23}.

\subsection{Classes d'exemples de cat\'egories extriangul\'ees et de dg-cat\'egories exactes}
L'exemple suivant est tir\'e de ~\cite{Jin20}. Soit $k$ un corps.
Une dg-alg\`ebre $A$ sur $k$ est {\em propre} si $\sum_{i\in \mathbb Z}\dim_{k}{H^i(A)<\infty}$. 
Elle est {\em de Gorenstein} si la sous-cat\'egorie \'epaisse $\per(A)$ de la cat\'egorie d\'eriv\'ee $\D(A)$ 
engendr\'ee par $A$ coïncide avec la sous-cat\'egorie \'epaisse $\thick(DA)$ engendr\'ee par $DA$, où $D=\Hom_k(-,k)$ est le $k\mbox{-dual}$.
Soit $A$ une dg-alg\`ebre de Gorenstein propre et connective.
Un dg-module sur $A$ est {\em \`a valeurs parfaites} si son complexe sous-jacent d'espaces
vectoriels est parfait. Nous notons $\pvd(A)$ la cat\'egorie triangul\'ee des dg-modules 
\`a valeurs parfaites.
Un dg-module $M$ sur $A$ dans $\pvd(A)$ est {\em {de Cohen--Macaulay}} si $H^i(M)=0$ et $\Hom_{\D(A)}(M,\Sigma^i A)=0$ pour $i>0$.
Soit $\CM A$ la sous-cat\'egorie de $\pvd(A)$ constitu\'ee des dg-modules de Cohen--Macaulay sur $A$.
D'apr\`es \cite[Th\'eor\`eme 2.4]{Jin20}, la cat\'egorie $\CM A$ est une cat\'egorie extriangul\'ee de Frobenius $\Ext\mbox{-}$finie avec $\proj(\CM A)=\add A$. 
D'apr\`es la Proposition~\ref{prop:dgsingularitycategory}, la cat\'egorie stable $\CM(A)/[\proj(\CM A)]$ est \'equivalente \`a la cat\'egorie des singularit\'es $\pvd(A)/\per(A)$.
Nous verrons dans l'Exemple~\ref{exm:CMdgmodule} que si $\CM_{dg} A$ d\'esigne l'enrichissement diff\'erentiel gradu\'e de $\CM A$ h\'erit\'e
de $\pvd_{dg}(A)$,
alors l'inclusion de $\CM_{dg}(A)$ dans $\pvd_{dg}(A)$ induit une \'equivalence entre la cat\'egorie d\'eriv\'ee de $\CM_{dg}(A)$ et $\pvd_{dg}(A)$. De plus, cette \'equivalence induit une \'equivalence entre la cat\'egorie des singularit\'es de $\CM_{dg}(A)$ et celle de $A$, de mani\`ere parfaitement analogue \`a la situation d'une alg\`ebre d'Iwanaga--Gorenstein de dimension finie concentr\'ee en degr\'e $0$.

On peut facilement g\'en\'eraliser les r\'esultats de l'Exemple~\ref{exm:CMdgmodule} \`a la situation de r\'eduction bousculante (silting) au sens d'Iyama--Yang~\cite{IyamaYang18}~:
Soit $\T$ une cat\'egorie triangul\'ee alg\'ebrique et $\P$ une sous-cat\'egorie pr\'ebousculante de 
$\T$, c'est-\`a-dire une sous-cat\'egorie pleine telle que $\Hom_{\T}(\P,\Sigma^{i}\P)=0$ pour $i>0$. 
Soit $\mathcal S$ la sous-cat\'egorie \'epaisse de $\T$ engendr\'ee par $\P$.
Soient $\V$ et $\W$ les sous-cat\'egories pleines suivantes de $\T$~:
\[
\V=\{X|\Hom_{\T}(X,\Sigma^{i}\P)=0 \text{ pour $i>0$}\} \quad \W=\{X|\Hom_{\T}(\P,\Sigma^{i}X)=0 \text{ pour $i>0$}\}.
\]
Posons $\Z=\V\cap \W$.
Alors $\Z$ est stable par extensions dans $\T$ et donc est canoniquement extriangul\'ee. Notons que les objets dans $\P$ sont projectifs-injectifs dans $\Z$.
Supposons que $\Z$ est une cat\'egorie extriangul\'ee de Frobenius dont les objets projectifs-injectifs sont les objets dans $\P$, c'est-\`a-dire que $(\Z,\Z)$ est une paire de 
$\P$-mutation~\cite[Definition 2.5]{IyamaYoshino08}.
Par exemple, cela est vrai lorsque $\P$ est  finie \`a gauche\footnote{les restrictions \`a $\P$
des foncteurs $\Hom(?,V)$, $V\in \V$, sont de type finie, par exemple si l'inclusion de $\P$ dans
$\V$ admet un adjoint \`a gauche} dans $\V$
et finie \`a droite dans $\W$.
Dans ce cas, le quotient additif $\Z/[\P]$ est canoniquement une cat\'egorie triangul\'ee.
Supposons en outre que $\Z$ engendre $\T$ en tant que cat\'egorie triangul\'ee. Cela est vrai si et seulement si pour chaque objet $X\in \T$, on a $\Sigma^{-l}X\in \V$ et $\Sigma^{l}X\in \W$ lorsque $l\gg 0$.
Si nous remplaçons les cat\'egories ci-dessus par leurs dg-enrichissements, alors la \mbox{$\tau_{\leq 0}$-troncation} de $\Z$ porte une structure dg exacte canonique dont la dg-cat\'egorie d\'eriv\'ee born\'ee est $\T$ et dont la dg-cat\'egorie des singularit\'es est $\T/\mathcal S$.
Des consid\'erations similaires s'appliquent au cas sp\'ecial de~\cite[1.2]{IyamaYang20} où la paire de torsion $\mathcal S=\X\perp \Y$ est une t-structure.

Les dg-cat\'egories exactes apparaissent \'egalement naturellement dans le contexte de la cat\'egorification des alg\`ebres amass\'ees \`a coefficients. Ces alg\`ebres amass\'ees sont associ\'ees \`a des {\em carquois glac\'es}, qui sont des carquois munis d'un sous-carquois gel\'e (pas n\'ecessairement plein). Pour r\'ealiser la cat\'egorification, on munit un tel
carquois glac\'e d'un potentiel non d\'eg\'en\'er\'e. 
Dans sa th\`ese \cite{Wu21, Wu23a, Wu23b}, Yilin Wu a construit une cat\'egorie extriangul\'ee de Frobenius, la {\em cat\'egorie de Higgs}, pour chaque carquois glac\'e \`a potentiel Jacobi-fini. Sa construction g\'en\'eralise les cat\'egories de modules sur les alg\`ebres pr\'eprojectives utilis\'ees avec beaucoup de succ\`es par Geiss--Leclerc--Schröer \cite{GeissLeclercSchroeer11b,GeissLeclercSchroeer08b,GeissLeclercSchroeer06,GeissLeclercSchroeer13}. La cat\'egorie de Higgs est une sous-cat\'egorie stable par extensions d'une cat\'egorie triangul\'ee alg\'ebrique canonique (la cat\'egorie amass\'ee relative) et poss\`ede donc une dg-enrichissement canonique. Pr\'ecisons cela davantage (nous utilisons les notations de \cite{Wu23a}): soit $(Q,F,W)$ un carquois glac\'e \`a potentiel Jacobi-fini. Notons $\Gamma_{rel}$ la dg-alg\`ebre de Ginzburg 
relative $\Gamma_{rel}(Q,F,W)$. Soit $e=\sum_{i\in F}e_i$ l'idempotent associ\'e \`a tous les sommets gel\'es. Soit $\pvd_{e}(\Gamma_{rel})$ la sous-cat\'egorie pleine des dg-modules sur $\Gamma_{rel}$ dont la restriction aux sommets gel\'es est acyclique. 
Alors la cat\'egorie amass\'ee relative $\C=\C(Q,F,W)$ associ\'ee \`a $(Q,F,W)$ est d\'efinie comme le quotient de Verdier 
\[
\per(\Gamma_{rel})/\pvd_{e}(\Gamma_{rel}).
\]
Soit $\mathcal P=\add(e\Gamma_{rel})$. Le {\em domaine fondamental relatif} $\mathcal F^{rel}_{\Gamma_{rel}}=\mathcal F^{rel}$ associ\'e \`a $(Q,F,W)$ est d\'efini comme la sous-cat\'egorie suivante de $\per(\Gamma_{rel})$
\[
\mathcal F^{rel}{\coloneqq}\{\Cone(X_1\xrightarrow{f}X_0) | X_i\in\add(\Gamma_{rel}) \text{ et } \Hom(f,I) \text{ est surjective}, \forall I\in \mathcal P\}.
\]
\'Etant donn\'e que $\add{\Gamma_{rel}}$ est pr\'ebousculant dans $\per\Gamma_{rel}$, 
la sous-cat\'egorie pleine 
\[
{\Z'=\add\Gamma_{rel}\ast\add\Sigma\Gamma_{rel}}
\]
 est stable par extensions dans $\per\Gamma_{rel}$ et h\'erite donc d'une structure extriangul\'ee canonique. 
Ensuite, $\F^{rel}$ peut \'egalement être d\'ecrit comme la sous-cat\'egorie stable par extensions de $\Z'$ et form\'ee des objets $X$ qui sont relativement $\mathbb E$-projectifs par rapport \`a $\P$, c'est-\`a-dire les objets $X$ tels que $\Ext^1_{\T}(X,\P)=0$. Soit $\pi^{rel}:\per(\Gamma_{rel})\rightarrow \C$ le foncteur quotient canonique. La {\em cat\'egorie de Higgs} $\mathcal H$ est l'image de $\mathcal F^{rel}$ dans $\C$ par le foncteur quotient $\pi^{rel}$. D'apr\`es \cite[Proposition 5.39, Theorem 5.46]{Wu23a}, la cat\'egorie de Higgs $\mathcal H$ est stable par extensions dans $\C$ et, dot\'ee de la structure extriangul\'ee h\'erit\'ee, devient une cat\'egorie extriangul\'ee de Frobenius dont la sous-cat\'egorie des projectifs-injectifs est $\P$.
Nous remplaçons les cat\'egories pr\'ec\'edentes par leurs dg-enrichissements. Le dg-foncteur canonique $\tau_{\leq 0}\F^{rel}\rightarrow \tau_{\leq 0}\mathcal H$ est une quasi-\'equivalence et la structure dg exacte sur $\tau_{\leq 0}\F^{rel}$ est une sous-structure de celle sur $\tau_{\leq 0}\mathcal H$. Par le Th\'eor\`eme~\ref{intro:maximalstructure}, la dg-cat\'egorie sous-jacente $\tau_{\leq 0}\F^{rel}$ poss\`ede la plus grande structure exacte. Si nous prenons la sous-structure exacte de la plus grande structure exacte qui rend les objets de $\P$ projectifs-injectifs, nous retrouvons la dg-cat\'egorie exacte $\tau_{\leq 0}\mathcal H$. Nous verrons \`a la Section~\ref{subsection:higgscategories} que la cat\'egorie d\'eriv\'ee born\'ee de $\tau_{\leq 0}\F^{rel}$ est $\per\Gamma_{rel}$. La cat\'egorie d\'eriv\'ee born\'ee de 
$\tau_{\leq 0}\mathcal H$ est $\C$ et la cat\'egorie des singularit\'es de $\tau_{\leq 0}\mathcal H$ est la cat\'egorie amass\'ee associ\'ee au carquois \`a potentiel $(\overline Q,\overline W)$, où $\overline{Q}$ est le carquois obtenu \`a partir de $Q$ en supprimant tous les sommets gel\'es et toutes les fl\`eches incidentes aux sommets gel\'es, et $\overline W$ est le potentiel sur $\overline Q$ obtenu en supprimant tous les cycles passant par les sommets de $F$ dans $W$.

\subsection{$\infty$-cat\'egories stables et $\infty$-cat\'egories exactes}
La notion d'ensemble simplicial a \'et\'e introduite dans~\cite{EilenbergZilber50} comme un affinement de la notion de complexe simplicial.
Boardman--Vogt~\cite[Definition 4.8]{BoardmanVogt73} ont utilis\'e la {\em condition de Kan restreinte} pour d\'efinir la notion de {\em complexe de Kan faible} (= {\em quasicat\'egorie} = {\em $\infty$-cat\'egorie}) dans la cat\'egorie des ensembles simpliciaux.
Chaque petite cat\'egorie peut être vue comme une $\infty$-cat\'egorie via le foncteur {\em nerf} $N$.
\`A chaque ensemble simplicial $X$, on peut associer sa {\em cat\'egorie homotopique} $h(X)$.
Elle a une description simple lorsque $X$ est une $\infty$-cat\'egorie, cf.~\cite{BoardmanVogt73}.
La th\'eorie des quasicat\'egories a \'et\'e consid\'erablement d\'evelopp\'ee par Joyal et Lurie, entre autres, cf.~\cite{Lurie09,LurieHA,Joyal08,Cisinski19}.
Lurie \cite{Lurie06}, cf.~aussi~\cite{LurieHA} et \cite{ToenVezzosi04a}, a introduit la notion de {\em $\infty$-cat\'egorie stable}, bas\'ee sur les techniques \'etudi\'ees dans son livre~\cite{Lurie09}, et a montr\'e que la cat\'egorie homotopique d'une $\infty$-cat\'egorie stable est canoniquement triangul\'ee.
Les cat\'egories triangul\'ees ainsi obtenues sont appel\'ees {\em topologiques}.
Le foncteur {\em dg-nerf} $N_{dg}$ a \'et\'e introduit dans \cite{LurieHA} comme un outil cl\'e reliant les dg-cat\'egories aux $\infty$-cat\'egories.
Il a montr\'e qu'il existe une adjonction de Quillen entre la {\em structure de mod\`ele de Joyal} pour les $\infty$-cat\'egories et la {\em structure de mod\`ele de Dwyer--Kan} pour les dg-cat\'egories.
Faonte a d\'emontr\'e dans son article \cite{Faonte17b} que le dg-nerf d'une dg-cat\'egorie pr\'e-triangul\'ee est une $\infty$-cat\'egorie stable, où il a g\'en\'eralis\'e le foncteur dg-nerf au foncteur 
nerf simplicial reliant les $A_{\infty}$-cat\'egories aux $\infty$-cat\'egories.
Barwick a introduit la notion de $\infty$-cat\'egorie exacte dans~\cite{Barwick15} en tant que g\'en\'eralisation homotopique de la notion de cat\'egorie exacte de Quillen.
La construction $Q$ de Quillen a \'egalement \'et\'e g\'en\'eralis\'ee \`a ce cadre, cf.~\cite{Barwick13}.
Dans~\cite{NakaokaPalu20}, il a \'et\'e d\'emontr\'e que la cat\'egorie homotopique d'une $\infty$-cat\'egorie exacte $\E$ porte une structure extriangul\'ee canonique.
Les cat\'egories extriangul\'ees obtenues ainsi sont appel\'ees {\em topologiques}.
Klemenc~\cite{Klemenc22} a construit, pour chaque petite $\infty$-cat\'egorie exacte $\E$, l'{\em enveloppe stable} $\D^b_{\infty}(\E)$ avec un foncteur pleinement exact et pleinement fid\`ele $\eta_{\E}:\E\rightarrow \D^b_{\infty}(\E)$ dont l'image essentielle est stable par extensions.
\'Etant donn\'e que la cat\'egorie homotopique de $\D^b_{\infty}(\E)$ est triangul\'ee (d'apr\`es le th\'eor\`eme de Lurie), cela donne une autre preuve du r\'esultat de Nakaoka--Palu.
Par rapport aux dg-cat\'egories, les espaces de morphismes dans les quasi-cat\'egories ne sont pas faciles \`a manipuler.
C'est l'une de nos motivations pour introduire la notion de {\em dg-cat\'egorie exacte}.

\subsection{Dg-cat\'egories exactes}
Soit $\A$ une dg-cat\'egorie. Pour simplifier, nous supposons que $\A$ est connective, a des complexes Hom cofibrants et que $Z^0(\A)$ est additive. 
Nous utilisons les notations introduites dans la section~\ref{subsection:notations}.

La cat\'egorie {\em carr\'ee} $\mathrm{Sq}$ est la cat\'egorie des chemins du carquois 
\[
\begin{tikzcd}
00\ar[r,"f"]\ar[d,"g"swap] & 01\ar[d,"j"] \\
10\ar[r,"k"swap] & 11
\end{tikzcd}
\]
avec la relation de commutation $jf\sim kg$.
Nous identifions les objets $X\in\rep(k\mathrm{Sq},\A)$ avec les carr\'es commutatifs
\begin{equation}
\begin{tikzcd}\label{D(Sq)1}
X_{00}\ar[r,"f"]\ar[d,"g"swap]&X_{01}\ar[d,"j"]\\
X_{10}\ar[r,"k"swap]&X_{11}
\end{tikzcd}
\end{equation}
dans $\C(\A)$ où $jf=kg$. 
\begin{Definition}[D\'efinition~\ref{maindef}]
Un objet $X\in \rep(k\Sq,\A)$ est un {\em{{carr\'e cart\'esien homotopique}}} par rapport \`a $\A$ si l'application canonique $X_{00}\rightarrow \Sigma^{-1}\mathrm{Cone}((-j,k))$ induit un isomorphisme   
\[
\tau_{\leq 0}\RHom_\A(A^{\wedge},X_{00})\rightarrow \tau_{\leq 0}\RHom_\A(A^{\wedge}, \Sigma^{-1}\mathrm{Cone}((-j,k)))
\]
dans $\D(k)$ pour chaque $A$ dans $\A$.
\end{Definition}
De mani\`ere duale, on peut d\'efinir la notion de {\em carr\'e cocart\'esien homotopique} et ensuite d\'efinir la notion de {\em carr\'e bicart\'esien homotopique}.
Nous prenons souvent une r\'esolution cofibrante $\overline{\Sq}\rightarrow \Sq$ de $\Sq$, cf.~Section~\ref{res}, et via l'\'equivalence de cat\'egories $\rep(k\overline{\Sq},\A)\rightarrow\rep(k\Sq,\A)$ nous identifions les objets des deux côt\'es. 
Un avantage est que nous pouvons supposer que tous les termes d'un objet $X\in \rep(k\overline{\Sq},\A)$ sont des modules dg repr\'esentables (et pas seulement quasi-repr\'esentables).

Soit $\I$ la dg-cat\'egorie des chemins du carquois \`a relations suivant
\begin{equation}\label{quiv1:3term}
\begin{tikzcd}
0\ar[r,"f",""{swap, name=1}]&1\ar[r,"g",""{swap,name=2}]&2\ar[r,from=1,to=2,dashed,no head,swap,bend right =8ex]
\end{tikzcd}
\end{equation}
où $f$ et $g$ sont des morphismes ferm\'es de degr\'e 0 et $gf=0$.
La cat\'egorie $\mathcal H_{3t}(\A)$ des complexes homotopiques \`a 3 termes (=3-term h-complexes) sur $\A$ est d\'efinie comme la cat\'egorie de 0-homologie $H^0(\Fun_{\infty}(\I,\A))$ de la dg-cat\'egorie $\Fun_{\infty}(\I,\A)$ des $A_{\infty}$-foncteurs 
(pas des $\infty$-foncteurs !) de $\I$ \`a $\A$, cf.~D\'efinition~\ref{def:3termhomotopy}.
Soit $\J$ la dg-cat\'egorie des chemin du carquois diff\'erentiel gradu\'e suivant 
\begin{equation}\label{quiv1:3termcofibrant}
\begin{tikzcd}
0\ar[r,"f"]\ar[rr,"h"swap,bend right=6ex]&1\ar[r,"g"]&2
\end{tikzcd}
\end{equation}
où $f$ et $g$ sont des morphismes ferm\'es de degr\'e 0 et $h$ est de degr\'e $-1$ tel que $gf=-d(h)$.
Alors les complexes homotopiques \`a 3 termes sur $\A$ peuvent être identifi\'es avec
les dg-foncteurs de $\J$ \`a $\A$.
Nous identifions donc les objets dans $\mathcal H_{3t}(\A)$ avec les diagrammes dans $\A$
\begin{equation}\label{intro1:F}
\begin{tikzcd}
&A_0\ar[r,"f"]\ar[rr,bend right = 8ex,"h"swap]&A_1\ar[r,"j"]&A_2
\end{tikzcd}
\end{equation}
où $|f|=|j|=0$, $|h|=-1$ et $d(f)=0$, $d(j)=0$ et $d(h)=-jf$.
La cat\'egorie $\mathcal H_{3t}(\A)$ est \'equivalente \`a la sous-cat\'egorie pleine de $\rep(\Sq,\A)$ constitu\'ee des objets $X$ tels que $X_{10}$ est acyclique.
Un h-complexe \`a 3 termes~\ref{intro1:F} est {\em homotopiquement exact (resp. \`a gauche, \`a droite)} si l'objet correspondant dans $\rep(k\Sq, \A)$ est homotopiquement bicart\'esien (resp.~cart\'esien, cocart\'esien).
\begin{Lemme}[Lemme~\ref{lem:3termhcomplex}]
Un h-complexe \`a 3 termes~\ref{intro1:F} est homotopiquement exact \`a gauche si et seulement si les conditions suivantes sont satisfaites :
pour chaque $A\in\A$ et $n\leq 0$ et chaque paire de morphismes $(v,w)\in Z^{n}\A(A,A_1)\times \A^{n-1}(A,A_2)$ tels que $d(w)=-jv$, il existe un morphisme $u\in Z^n \A(A,A_0)$, unique \`a un 
cobord pr\`es, tel qu'il existe une paire de morphismes $(v',w')\in \A^{n-1}(A,A_1)\times \A^{n-2}(A,A_2)$ satisfaisant $v-fu=d(v')$, $w-hu=-d(w')-jv'$.  
\end{Lemme}
Il est clair par ce lemme que l'on peut faire de la chasse aux diagrammes en utilisant des suites 
homotopiquement exactes.
Nous utilisons cette description pour prouver certains lemmes sur les diagrammes au Chapitre~\ref{sec:diagramlemmas}. 
Il convient de noter que bien que parfois les r\'esultats homologiques dans les cat\'egories triangul\'ees soient applicables, le point subtil r\'eside dans la v\'erification de la compatibilit\'e des homotopies, cf.~Proposition~\ref{push}.

\begin{Definition} [D\'efinition~\ref{exactdgstructure}]
Une {\em structure exacte} sur $\A$ est une classe $\mathcal{S}\subseteq \mathcal{H}_{3t}(\A)$ stable par isomorphismes, constitu\'ee de suites homotopiquement exactes (appel\'ees {\em conflations})
\[
\begin{tikzcd}
A\ar[r, tail,"i"]\ar[rr,bend right=8ex,"h"swap]&B\ar[r, two heads, "p"]&C\\
\end{tikzcd} 
\]
où $i$ est appel\'e une {\em inflation} et $p$ est appel\'e une {\em d\'eflation}, telles que les axiomes suivants soient satisfaits :
\begin{itemize}
\item[Ex0] $\Id_{0}$ est une d\'eflation.
\item[{Ex}1] Les compositions de d\'eflations sont des d\'eflations.
\item[{Ex}2] \'Etant donn\'e une d\'eflation $p:B\rightarrow C$ et toute application $c: C'\rightarrow C$ dans $Z^0(\A)$, l'objet
\[
\begin{tikzcd}
&C'\ar[d,"c"]\\
B\ar[r,"p"swap]&C
\end{tikzcd}
\]
admet un pullback homotopique
\[
\begin{tikzcd} 
{B'}\ar[r,"{p'}"]\ar[d,"{b}"swap]\ar[rd,"s"blue,blue]&{C'}\ar[d,"{c}"]\\
{B}\ar[r,"{p}"swap]&{C}
\end{tikzcd}
\]
et ${p'}$ est \'egalement une d\'eflation. 
\item[$\Ex2^{op}$] \'Etant donn\'ee une inflation $i: A\rightarrow B$ et toute application $a:A\rightarrow A'$ dans $Z^0(\A)$, l'objet
\[
\begin{tikzcd}
A\ar[r,"i"]\ar[d,"a"swap]&B\\
A'&
\end{tikzcd}
\]
admet un pushout homotopique
\[
\begin{tikzcd}
{A}\ar[r,"{i}"]\ar[d,"{a}"swap]\ar[rd,"s"blue,blue]&{B}\ar[d,"{j}"]\\
{A'}\ar[r,"{i'}"swap]&{B'}
\end{tikzcd}
\]
et ${i'}$ est \'egalement une inflation.
\end{itemize}
Nous appelons $(\A,\mathcal {S})$ ou simplement $\A$ une {\em dg-cat\'egorie exacte}.
\end{Definition}

\begin{Theorem} [Th\'eor\`eme~\ref{fun}]
Soit $\A$ une petite dg-cat\'egorie exacte.
Si $\B$ est une petite dg-cat\'egorie connective, alors $\rep_{dg}(\B,\A)$ est canoniquement une dg-cat\'egorie exacte.
\end{Theorem}

\begin{Theorem} [Th\'eor\`eme~\ref{main}]\label{intro1:main}
Soit $\A$ une dg-cat\'egorie exacte.
Il existe un morphisme exact universel $F:\A\rightarrow \D^b_{dg}(\A)$ dans $\Hqe$ de $\A$ vers une dg-cat\'egorie pr\'etriangul\'ee $\D^b_{dg}(\A)$. Si de plus $\A$ est connective, ce morphisme v\'erifie les propri\'et\'es suivantes :
\begin{itemize}
\item[1)] Il induit une quasi-\'equivalence de $\tau_{\leq 0}{\A}$ vers $\tau_{\leq 0}\D'$ pour une 
sous-dg-cat\'egorie $\D'$ de $\D^b_{dg}(\A)$ stable par extensions.
\item[2)] Il induit une bijection naturelle $\mathbb E(C,A)\xrightarrow{\sim} \Ext^1_{\D^b(\A)}(FC,FA)$ 
pour chaque paire d'objets $C,A$ dans $H^0(\A)$ où $\D^b(\A)=H^0(\D^b_{dg}(\A))$.
\end{itemize}
Nous appelons $\D^b_{dg}(\A)$ la {\em dg-cat\'egorie d\'eriv\'ee born\'ee} de $\A$.
\end{Theorem}
Nous construisons $\D^b_{dg}(\A)$ comme le dg-quotient de l'enveloppe pr\'e-triangul\'ee $\pretr(\A)$ par une sous-dg-cat\'egorie pleine $\N$, qui g\'en\'eralise la dg-cat\'egorie des complexes acycliques d'une cat\'egorie exacte de Quillen. Nous en d\'eduisons que si $\A$ est une cat\'egorie exacte de Quillen, alors $\D^b_{dg}(\A)$ est quasi-\'equivalente au dg-enrichissement canonique de la cat\'egorie d\'eriv\'ee born\'ee de $\A$, d'où le nom et la notation.

Le plongement universel $\A\rightarrow \D^b_{dg}(\A)$ permet de d\'emontrer des lemmes diagrammatiques pour les dg-cat\'egories exactes qui peuvent être difficile \`a prouver directement. 
Par exemple, soit $\J$ la dg-cat\'egorie de (\ref{quiv1:3termcofibrant}) et soit $F: \J\rightarrow \rep_{dg}(\J,\A)$ un dg-foncteur tel que $F(0)$ et $F(2)$ sont toutes les deux des conflations dans $\A$. Soit 
$F_i: \J\rightarrow \A$ la composition de $F$ avec $\rep_{dg}(\J,\A)\rightarrow \A$, induite par l'inclusion $i:k\rightarrow \J$ pour $i=0$, $1$ et $2$. Si tous les $F_i$ sont des conflations dans $\A$, alors $F(1)$ est \'egalement une conflation dans $\A$. Ce `lemme diagrammatique' est l'ingr\'edient principal de la d\'emonstration du Lemme~\ref{lem:internalhom}.

Le foncteur dg-nerf $N_{dg}$ envoie chaque petite dg-cat\'egorie exacte sur une $\infty$-cat\'egorie exacte. De plus, nous avons le th\'eor\`eme suivant.
\begin{Theorem}[Th\'eor\`eme~\ref{nerve}]
\label{intro1:nerve} Soit $\A$ une dg-cat\'egorie telle que $H^0(\A)$ est une cat\'egorie additive. Alors il existe une bijection entre la classe des structures exactes sur la dg-cat\'egorie $\A$ et la classe des structures exactes sur la $\infty$-cat\'egorie $N_{dg}(\A)$. De plus, si $\A$ est une dg-cat\'egorie pr\'e-triangul\'ee munie de la structure exacte maximale (donn\'ee par toutes les suites exactes homotopiques), alors son dg-nerf est une $\infty$-cat\'egorie stable munie de la structure exacte maximale (où chaque morphisme est \`a la fois une inflation et une d\'eflation).
\end{Theorem}

Cela \'etend en particulier un th\'eor\`eme de Faonte \cite{Faonte17b}, qui a montr\'e que si une dg-cat\'egorie est pr\'e-triangul\'ee, alors son dg-nerf est une $\infty$-cat\'egorie stable.

Soit $(\C,\mathbb E,\mathfrak s)$ une petite cat\'egorie extriangul\'ee. Pour $n\geq 0$, Gorsky--Nakaoka--Palu~\cite{GorskyNakaokaPalu21} ont d\'efini le bimodule des {\em extensions sup\'erieures} $\mathbb E^{n}(?,-)$ sur $\C$ comme la puissance tensorielle $n$-i\`eme sur $\C$ de $\mathbb E$. Un {\em $\delta$-foncteur} est un triplet $(T,\epsilon,\eta)$ où $T=(T^i)_{i\geq 0}$ est une s\'equence de $\C$-$\C$-bimodules et $\epsilon$ et $\eta$ sont des collections de morphismes de connexion satisfaisant certaines conditions, cf.~D\'efinition~\ref{def:deltafunctor}. Il est clair \`a partir de la d\'efinition que $(\mathbb E^n)_{n\geq 0}$ avec les morphismes de connexion associ\'es est un $\delta$-foncteur. En utilisant un r\'esultat g\'en\'eral concernant les $\delta$-foncteurs avec des bimodules $T^i$ faiblement effaçables pour $i>0$, cf.~Corollaire~\ref{cor:effaceablebimodule}, nous avons le r\'esultat suivant.
\begin{proposition}[Proposition~\ref{higher}]\label{intro:higher}
Soit $F:\A\rightarrow \D^{b}_{dg}(\A)$ le morphisme exact universel de $\A$ vers une cat\'egorie pr\'etriangul\'ee. Alors nous avons un isomorphisme canonique de $\delta$-foncteurs $\alpha:\mathbb E^n(?,-)\xrightarrow{\sim} \Ext^n_{\D^b(\A)}(?,-)$.
\end{proposition}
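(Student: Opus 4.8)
The plan is to realize both $\mathbb E^n(?,-)$ and $\Ext^n_{\D^b(\A)}(F?,F-)$ as $\delta$-functors on $\C=H^0(\A)$ and to identify them via the rigidity of $\delta$-functors whose bimodules are weakly effaceable in positive degrees, i.e.\ Corollary~\ref{cor:effaceablebimodule}. First I would fix the target side. By Theorem~\ref{intro1:main}~1), $F$ restricts to a quasi-equivalence of $\tau_{\leq 0}\A$ onto $\tau_{\leq 0}\D'$ for a (full) extension-closed dg subcategory $\D'\subseteq\D^b_{dg}(\A)$, so on homology $F$ induces an equivalence of $\C$ onto the extension-closed subcategory $\C'=H^0(\D')$ of the triangulated category $\D^b(\A)$. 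By Theorem~\ref{intro1:main}~2) this equivalence is compatible with the extriangulated structures: a conflation $A\rightarrowtail B\twoheadrightarrow C$ of $\A$ is sent to a triangle $FA\to FB\to FC\to\Sigma FA$ in $\D^b(\A)$, and $\mathbb E(C,A)\xrightarrow{\sim}\Ext^1_{\D^b(\A)}(FC,FA)=\Hom_{\D^b(\A)}(FC,\Sigma FA)$. Since $\C'$ is extension-closed in the triangulated category $\D^b(\A)$, the long exact sequences attached to these triangles turn $(\Ext^n_{\D^b(\A)}(F?,F-))_{n\geq 0}$ into a $\delta$-functor on $\C$ in the sense of Definition~\ref{def:deltafunctor}.

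Next I would build the comparison morphism and check it in degrees $0$ and $1$. The assignment ``conflation $\mapsto$ connecting morphism of the associated triangle'' --- part of the fact that $F$ is an exact morphism --- gives a morphism of bimodules $\mathbb E\to\Ext^1_{\D^b(\A)}(F?,F-)$ compatible with connecting maps; composing with the Yoneda products $\Ext^1_{\D^b(\A)}\otimes\cdots\otimes\Ext^1_{\D^b(\A)}\to\Ext^n_{\D^b(\A)}$ and using that $\mathbb E^n$ is the $n$-fold tensor power of $\mathbb E$ over $\C$, one obtains morphisms $\alpha^n:\mathbb E^n(?,-)\to\Ext^n_{\D^b(\A)}(F?,F-)$ assembling into a morphism of $\delta$-functors $\alpha$. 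In degree $0$, $\alpha^0$ is the map $\Hom_{\C}(?,-)\to\Hom_{\D^b(\A)}(F?,F-)$, an isomorphism because $F$ induces a fully faithful functor $H^0(\A)\hookrightarrow\D^b(\A)$ (Theorem~\ref{intro1:main}~1)), and $\alpha^1$ is the isomorphism of Theorem~\ref{intro1:main}~2).

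To conclude via Corollary~\ref{cor:effaceablebimodule} it remains to show that $\mathbb E^n$ and $\Ext^n_{\D^b(\A)}(F?,F-)$ are weakly effaceable for $n\geq 1$. For $\mathbb E^n$ this is the weak effaceability of the higher extension bimodules proved in~\cite{GorskyNakaokaPalu21}: a deflation appearing in an $n$-fold conflation sequence representing a given class effaces it. The weak effaceability of $\Ext^n_{\D^b(\A)}(F?,F-)$ is the main obstacle. For $n=1$ it is immediate: given $\xi\in\Hom_{\D^b(\A)}(FC,\Sigma FA)$, the object $\Sigma^{-1}\mathrm{Cone}(\xi)$ is an extension of $FC$ by $FA$, hence lies in $\C'$ by extension-closedness, so $\xi$ is the class of a genuine conflation $A\rightarrowtail E\twoheadrightarrow C$ in $\C$ and the deflation $E\twoheadrightarrow C$ effaces it. For $n\geq 2$ the cone construction no longer stays inside $\C'$, and I would instead invoke the explicit model $\D^b_{dg}(\A)=\pretr(\A)/\N$: objects of $\pretr(\A)$ are \emph{finite} twisted complexes over $\A$, which yields, for each $C$ in $\C$, finite resolutions of $FC$ by conflations with terms in $\C'$ --- exactly the mechanism by which $\Ext^\bullet_{\D^b(\E)}(C,A)$ is computed by Yoneda $n$-extensions in a Quillen exact category $\E$. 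Concretely one shows that every $\xi\in\Ext^n_{\D^b(\A)}(FC,FA)$ with $n\geq 1$ lies in the image of the connecting map of some conflation $A'\rightarrowtail B\twoheadrightarrow C$ in $\C$, whence $\xi$ is effaced by the deflation $B\twoheadrightarrow C$. Given this, Corollary~\ref{cor:effaceablebimodule} applied to $\alpha$ --- a morphism of $\delta$-functors between two $\delta$-functors with weakly effaceable positive parts which is an isomorphism in degrees $0$ and $1$ --- shows that $\alpha^n$ is an isomorphism for all $n\geq 0$, giving the asserted canonical isomorphism of $\delta$-functors. The two points requiring care are that the equivalence $\C\simeq\C'$ matches conflations on the nose (so that the two $\delta$-functor structures are literally comparable), and that the finiteness of twisted complexes is genuinely used to produce the conflation resolutions above, since no ``enough projectives'' hypothesis is available.
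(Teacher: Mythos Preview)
Your overall strategy matches the paper's: both reduce to Corollary~\ref{cor:effaceablebimodule} by verifying that the $\delta$-functor $(\Ext^n_{\D^b(\A)}(F?,F-))_{n\geq 0}$ has weakly effaceable terms in positive degrees. Your treatment of degrees $0$ and $1$ is fine, and your observation that $\mathbb E^n$ is weakly effaceable is correct. The explicit construction of $\alpha^n$ via Yoneda products is harmless but unnecessary: Corollary~\ref{cor:effaceablebimodule} already provides existence and uniqueness of the isomorphism once both sides are shown to satisfy the hypotheses.

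The genuine gap is the effaceability of $\Ext^n_{\D^b(\A)}$ for $n\geq 2$, which is where all the content lies. Your sketch --- ``objects of $\pretr(\A)$ are finite twisted complexes, which yields finite resolutions of $FC$ by conflations with terms in $\C'$'' --- is not the right mechanism and would not work as stated. There is no resolution of $FC$ at play; the object $FC$ is already in $\C'$. What must be analysed is a \emph{roof} $C\xleftarrow{s} M\xrightarrow{w}\Sigma^n A$ in $\tr(\A)$ representing a class in $\Ext^n_{\D^b(\A)}(C,A)$, where $N'=\Cone(s)\in\N$. The key ingredient the paper uses, and which your proposal misses, is Corollary~\ref{t}: the canonical t-structure on $\D(\A)$ restricts to a \emph{bounded} t-structure on $\N$ whose heart consists exactly of the defective objects, i.e.\ totalizations of conflations. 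This lets one replace $N'$ by an object in $\N^{[-n+1,0]}$ and filter it as $(\Sigma^{n-1}N_{n-1})*\cdots*N_0$ with each $N_i$ the totalization of an explicit conflation $A_i\rightarrowtail B_i\twoheadrightarrow C_i$. A diagram chase (using connectivity of $\A$ to kill spurious Hom groups) then shows that the roof $w/s$ equals the composition of a morphism $\Sigma^{-1}(v)/t$ in $\Ext^{n-1}_{\D^b(\A)}(C,A_{n-2})$ with a map $\Sigma^{n-1}A_{n-2}\to\Sigma^n A$; in particular the original class is effaced by the inflation $A_{n-2}\to B_{n-2}$. Finiteness of twisted complexes plays no role here; what matters is the boundedness of the t-structure on $\N$ and the identification of its heart with totalizations of conflations.
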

Un r\'esultat similaire pour les $\infty$-cat\'egories exactes est \'egalement valable. En cons\'equence, pour une dg-cat\'egorie exacte $\A$, l'enveloppe stable de $N_{dg}(\A)$ est canoniquement \'equivalente \`a $N_{dg}(\D^b_{dg}(\A))$.

Pour une cat\'egorie extriangul\'ee $\C$ avec une sous-cat\'egorie $\P$ constitu\'ee d'objets projectifs-injectifs (non n\'ecessairement tous) dans $\C$, Nakaoka--Palu ont montr\'e que le quotient additif $\C/\P$ a la structure d'une cat\'egorie extriangul\'ee, induite \`a partir de celle de $\C$, cf.~\cite[Proposition 3.30]{NakaokaPalu19}. Le th\'eor\`eme suivant am\'eliore leur r\'esultat.
\begin{Theorem}[Th\'eor\`eme~\ref{quot}]\label{intro:quot}
Soit $\A$ une dg-cat\'egorie exacte et $\P$ une sous-cat\'egorie pleine de $\A$ constitu\'ee d'objets projectifs-injectifs dans $\A$. Soit $\mathcal S_{dg}$ l'enrichissment diff\'erentiel gradu\'e canonique de $\D^b(\A)/\tr(\P)$. Alors le dg-quotient $\A/\P$ poss\`ede une structure dg exacte canonique induite par celle de $\A$ et sa dg-cat\'egorie d\'eriv\'ee est quasi-\'equivalente \`a $\mathcal S_{dg}$.
\end{Theorem}

Soient $\A$ et $\A'$ des dg-cat\'egories exactes connectives et $\C$ une dg-cat\'egorie exacte arbitraire. Un morphisme $\mu: \A\otimes \A'\rightarrow \C$ dans $\Hqe$ est {\em biexact} si, pour tous les objets $A\in \A$ et $A'\in \A'$, les morphismes induits
\[
\mu_{A,-}:\A'\rightarrow \C,\;\;\mu_{-,A'}:\A\rightarrow \C
\]
sont tous les deux des morphismes exacts. 
Soit $\A\boxtimes \A'$ la $\tau_{\leq 0 }$-truncation de la fermeture par extensions de l'image quasi-essentielle du morphisme dans $\Hqe$
\[
F: \A\otimes \A'\rightarrow \D^b_{dg}(\A)\otimes\D^b_{dg}(\A')\rightarrow \pretr(\D^b_{dg}(\A)\otimes\D^b_{dg}(\A')).
\]
\begin{proposition}[Proposition~\ref{prop:universalbilinear}]\label{intro:universalbilinear}
La morphisme naturel $\A\otimes \A'\rightarrow \A\boxtimes \A'$ est le morphisme biexact universel dans $\Hqe$ de $\A\otimes \A'$ vers une dg-cat\'egorie exacte.
\end{proposition}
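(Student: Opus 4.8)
The plan is to show that precomposition with the natural morphism $e\colon\A\otimes\A'\to\A\boxtimes\A'$ induces a bijection between exact morphisms $\A\boxtimes\A'\to\C$ in $\Hqe$ and biexact morphisms $\A\otimes\A'\to\C$ in $\Hqe$. First I would check that $e$ is biexact. Writing $F\colon\A\to\D^b_{dg}(\A)$ and $F'\colon\A'\to\D^b_{dg}(\A')$ for the universal exact morphisms of Theorem~\ref{intro1:main}, the morphism $\A'\to\A\boxtimes\A'$ obtained by fixing an object $A$ of $\A$ is the composite of $\A'\xrightarrow{F'}\D^b_{dg}(\A')\xrightarrow{FA\otimes(-)}\pretr(\D^b_{dg}(\A)\otimes\D^b_{dg}(\A'))$ with corestriction to the extension closure of the quasi-essential image of $F\otimes F'$ and with $\tau_{\leq 0}$-truncation. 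Here $F'$ is exact; tensoring with the fixed object $FA$ of a pretriangulated dg category is exact since $\Hom_{\D^b_{dg}(\A)}(FA,FA)\otimes_k(-)$ is an exact functor commuting with the cone construction defining $\pretr$; corestriction to an extension-closed dg subcategory and $\tau_{\leq 0}$-truncation preserve exactness of a morphism; and the composite lands in $\A\boxtimes\A'$ because $FA\otimes F'B'$ belongs to the quasi-essential image of $F\otimes F'$ for every $B'\in\A'$. The argument for a fixed object of $\A'$ is dual, so $e$ is biexact, and precomposition with $e$ does send exact morphisms $\A\boxtimes\A'\to\C$ to biexact ones.

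For the converse, let $\mu\colon\A\otimes\A'\to\C$ be biexact. Since $\A\otimes\A'$ and $\A\boxtimes\A'$ are connective, we may replace $\C$ by its $\tau_{\leq 0}$-truncation --- an exact dg category with the same conflations by Remark~\ref{truncationexactdgstructure} --- and assume $\C$ connective; composing with the universal exact morphism $\C\to\D''\coloneqq\D^b_{dg}(\C)$ gives a biexact morphism $\bar\mu\colon\A\otimes\A'\to\D''$ with pretriangulated target. Under the tensor--hom adjunction $\RHom_{\Hqe}(\A\otimes\A',\D'')\simeq\RHom_{\Hqe}(\A,\rep_{dg}(\A',\D''))$, and using that a quasi-functor dg category into a pretriangulated dg category is again pretriangulated with objectwise cones and triangles, exactness of $\bar\mu$ in the first variable amounts to exactness of the associated morphism $\A\to\rep_{dg}(\A',\D'')$; by the universal property of $F$ it factors uniquely as an exact morphism $\D^b_{dg}(\A)\to\rep_{dg}(\A',\D'')$. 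Exactness of $\bar\mu$ in the second variable says this morphism sends $F(\A)$ into the full pretriangulated dg subcategory $\rep^{\ex}_{dg}(\A',\D'')$ of exact morphisms $\A'\to\D''$, which by Theorem~\ref{intro1:main} applied to $\A'$ is quasi-equivalent to $\rep_{dg}(\D^b_{dg}(\A'),\D'')$ and is closed under shifts and cones; since, by the construction of $\D^b_{dg}(\A)$ as a dg quotient of $\pretr(\A)$, the category $H^0(\D^b_{dg}(\A))$ is generated as a triangulated category by the image of $H^0(\A)$, the morphism lands there entirely. Re-adjoining twice yields a morphism $G\colon\D^b_{dg}(\A)\otimes\D^b_{dg}(\A')\to\D''$ that is exact in each variable and satisfies $G\circ(F\otimes F')\simeq\bar\mu$.

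Because $\D''$ is pretriangulated, $G$ extends uniquely along $\D^b_{dg}(\A)\otimes\D^b_{dg}(\A')\to\pretr(\D^b_{dg}(\A)\otimes\D^b_{dg}(\A'))$ to a morphism $\hat G$. Being exact and carrying the quasi-essential image of $F\otimes F'$ into the image of $\C$ in $\D''$, $\hat G$ carries the extension closure $\E$ defining $\A\boxtimes\A'$ into the extension-closed dg subcategory $\D'_\C\subseteq\D''$ with $\tau_{\leq 0}\D'_\C\simeq\C$ furnished by Theorem~\ref{intro1:main}; hence $\tau_{\leq 0}\hat G$ restricts to $\A\boxtimes\A'=\tau_{\leq 0}\E\to\tau_{\leq 0}\D'_\C$, and composing with a quasi-inverse of $\C\xrightarrow{\sim}\tau_{\leq 0}\D'_\C$ produces an exact morphism $H\colon\A\boxtimes\A'\to\C$. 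Since $\C\to\D''$ is fully faithful in connective degrees and $\A\otimes\A'$ is connective, one checks $H\circ e\simeq\mu$; and since every choice above is forced by a universal property (of $F$, of $F'$, of $\pretr$, and of $\otimes$ in $\Hqe$) or by exactness together with triangulated generation, any exact morphism $\A\boxtimes\A'\to\C$ restricting to $\mu$ must coincide with $H$. This gives the asserted bijection.

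The hard part will be the middle step: propagating biexactness from the generating objects $\A$ and $\A'$ to a morphism out of $\D^b_{dg}(\A)\otimes\D^b_{dg}(\A')$ that remains exact in each variable separately. This rests on identifying exact morphisms into a pretriangulated dg category with morphisms out of the bounded dg derived category, on the pretriangulatedness and objectwise computation of cones in the relevant quasi-functor dg categories, and on the compatibility of all of this with the tensor--hom adjunction in $\Hqe$; the genuine effort lies in keeping track of these coherences rather than in any single computation.
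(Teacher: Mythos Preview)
Your proof follows essentially the same strategy as the paper's, but with far more detail filled in. The paper's proof is extremely terse: it draws a single commutative diagram and asserts that ``the conclusion follows immediately'' from the fact that the image of $\tau_{\leq 0}\C$ in $\D^b_{dg}(\tau_{\leq 0}\C)$ is extension-closed. What you have done is unpack the construction of the crucial dashed morphism $\pretr(\D^b_{dg}(\A)\otimes\D^b_{dg}(\A'))\to\D^b_{dg}(\tau_{\leq 0}\C)$ via the tensor--hom adjunction in $\Hqe$ combined with the universal properties of $\D^b_{dg}(\A)$ and $\D^b_{dg}(\A')$, which is precisely the content the paper leaves implicit.

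One minor point: your claim that $\rep_{dg}^{ex}(\A',\D'')$ is quasi-equivalent to $\rep_{dg}(\D^b_{dg}(\A'),\D'')$ as dg categories is stronger than what Theorem~\ref{intro1:main} directly provides (only a bijection on $\Hqe$-isomorphism classes). But you do not actually need this: all that matters is that $\rep_{dg}^{ex}(\A',\D'')$ is a pretriangulated full dg subcategory of $\rep_{dg}(\A',\D'')$, which follows because cones and shifts of exact morphisms into a pretriangulated target remain exact. With that, the exact morphism $\A\to\rep_{dg}^{ex}(\A',\D'')$ extends directly to $\D^b_{dg}(\A)$ by the universal property, making the detour through triangulated generation unnecessary. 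Your uniqueness argument is also compressed, though the paper's proof does not address uniqueness explicitly either; the cleanest route is to note that the chain of bijections you set up (adjunction, universal property of $\D^b_{dg}$ twice, universal property of $\pretr$) already shows that precomposition with $e$ is a bijection from exact morphisms $\A\boxtimes\A'\to\C$ landing in the extension-closed image of $\C$ to biexact morphisms $\A\otimes\A'\to\C$.
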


Soient $\B$ et $\C$ des dg-cat\'egories exactes connectives.
Soit $\rep_{dg}^{ex}(\B,\C)$ la sous-dg-cat\'egorie pleine $\rep_{dg}(\B,\C)$ constitu\'ee des morphismes exacts.
Il est d\'emontr\'e au Lemme~\ref{lem:internalhom} que $\rep_{dg}^{ex}(\B,\C)$ est stable par extensions dans $\rep_{dg}(\B,\C)$ et h\'erite donc d'une structure dg exacte canonique.
\begin{Corollaire}[Corollaire~\ref{cor:exactadjunction}]\label{intro:exactadjunction}
Soient $\A$ et $\B$ de petites dg-cat\'egories exactes connectives. Soit $\C$ une petite dg-cat\'egorie exacte.
Alors nous avons la bijection naturelle suivante
\[
\Hqe_{ex}(\A\boxtimes \B,\C)\iso \Hqe_{ex}(\A,\rep_{dg}^{ex}(\B,\C)).
\]
\end{Corollaire}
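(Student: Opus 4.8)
The plan is to reduce the statement to two facts established earlier: the tensor--hom adjunction for dg categories in $\Hqe$, and the universal property of $\A\boxtimes\B$ recorded in Proposition~\ref{prop:universalbilinear}. First I would recall that for any small dg category $\X$ there is a natural bijection
\[
\Phi\colon \Hqe(\X\otimes\B,\C)\iso \Hqe(\X,\rep_{dg}(\B,\C))
\]
which sends a morphism $\mu$ to the morphism $\tilde\mu$ whose value at an object $A$ is $\mu_{A,-}\colon\B\to\C$, and such that the composition of $\tilde\mu$ with the evaluation morphism $\rep_{dg}(\B,\C)\to\C$ at an object $B$ is $\mu_{-,B}\colon\X\to\C$. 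I apply this with $\X=\A$. Since $\B$ and $\C$ are small, $\rep_{dg}(\B,\C)$ and its full dg subcategory $\rep_{dg}^{ex}(\B,\C)$ are essentially small, so both sides of the asserted bijection are honest sets.

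Next, by Proposition~\ref{prop:universalbilinear} the natural morphism $\A\otimes\B\to\A\boxtimes\B$ is the universal biexact morphism in $\Hqe$ from $\A\otimes\B$ towards an exact dg category. Composition with it therefore identifies $\Hqe_{ex}(\A\boxtimes\B,\C)$ with the set of biexact morphisms $\A\otimes\B\to\C$ in $\Hqe$, which I denote $\Hqe_{biex}(\A\otimes\B,\C)$. It then remains to check that $\Phi$ carries $\Hqe_{biex}(\A\otimes\B,\C)$ bijectively onto $\Hqe_{ex}(\A,\rep_{dg}^{ex}(\B,\C))$, after which the composite of the two bijections is the corollary, and naturality is inherited from that of $\Phi$ and of the universal property of $\A\boxtimes\B$.

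For the key step, fix $\mu\in\Hqe(\A\otimes\B,\C)$ and set $\tilde\mu=\Phi(\mu)$. For each object $A$ the morphism $\mu_{A,-}$ is precisely the value of $\tilde\mu$ at $A$; because $\rep_{dg}^{ex}(\B,\C)$ is, by definition, the full dg subcategory of $\rep_{dg}(\B,\C)$ spanned by the exact morphisms and is extension-closed there (Lemma~\ref{lem:internalhom}), the morphism $\tilde\mu$ factors through $\rep_{dg}^{ex}(\B,\C)$ in $\Hqe$ if and only if every $\mu_{A,-}$ is exact. On the other hand, by Theorem~\ref{fun} the exact structure on $\rep_{dg}(\B,\C)$, hence the exact structure inherited by $\rep_{dg}^{ex}(\B,\C)$, is detected objectwise: a $3$-term h-complex is a conflation exactly when each of its evaluations at the objects $B\in\B$ is a conflation in $\C$; in particular the evaluation morphisms $\rep_{dg}(\B,\C)\to\C$ are exact. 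Therefore $\tilde\mu\colon\A\to\rep_{dg}^{ex}(\B,\C)$ is an exact morphism if and only if its composition with each evaluation morphism, namely $\mu_{-,B}\colon\A\to\C$, is exact. Combining the two equivalences, $\mu$ is biexact if and only if $\Phi(\mu)$ lies in $\Hqe_{ex}(\A,\rep_{dg}^{ex}(\B,\C))$, so $\Phi$ restricts to the desired bijection.

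The main obstacle I anticipate is making the manipulations with morphisms of $\Hqe$ rigorous, since these are not honest dg functors but zigzags of quasi-equivalences and dg functors. One must use that being an exact morphism is a well-defined, quasi-equivalence-invariant property (Remark~\ref{truncationexactdgstructure} b)) in order to replace $\A$, $\B$, $\rep_{dg}(\B,\C)$ and the evaluation morphisms by strictly functorial cofibrant models in which the objectwise criterion of Theorem~\ref{fun} applies pointwise, and in which the currying $\Phi$ is computed strictly. A secondary subtlety, since $\A$ is connective while $\rep_{dg}^{ex}(\B,\C)$ need not be, is that exactness of $\tilde\mu$ is only tested against conflations coming from the $\tau_{\leq 0}$-truncation of the target; but this is exactly the framework in which exact morphisms of connective exact dg categories are set up, so it causes no genuine difficulty. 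The remaining verifications (that evaluation morphisms are exact, that extension-closedness passes to $\rep_{dg}^{ex}(\B,\C)$) are immediate from the cited results.
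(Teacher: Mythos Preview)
Your proof is correct and follows exactly the same approach as the paper: the paper's proof consists of the single displayed composition
\[
\Hqe_{ex}(\A\boxtimes\B,\C)\iso \Hqe_{biexact}(\A\otimes \B,\C)\iso\Hqe_{ex}(\A,\rep_{dg}^{ex}(\B,\C)),
\]
and you have spelled out precisely why each of these two bijections holds, invoking Proposition~\ref{prop:universalbilinear} for the first and the objectwise description of conflations in $\rep_{dg}(\B,\C)$ from Theorem~\ref{fun} together with Lemma~\ref{lem:internalhom} for the second.
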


Soit $(\C,\mathbb E,\mathfrak s)$ une petite cat\'egorie extriangul\'ee.
Le {\em d\'efaut} d'une $\mathfrak s$-conflation 
\[
A\xrightarrow{x}B\xrightarrow{y}C
\] 
est d\'efini comme le conoyau du morphisme de $\C$-modules \`a droite $\C(?,B)\xrightarrow{(?,y)}\C(?,C)$.
La sous-cat\'egorie pleine de la cat\'egorie des modules $\Mod\C$ dont les objets sont les d\'efauts est not\'ee 
$\Def\mathbb E$. Elle a \'et\'e introduite dans cette g\'en\'eralit\'e par Ogawa dans \cite{Ogawa21} 
et \'etudi\'ee plus en d\'etail par Enomoto \cite{Enomoto21}, qui a montr\'e que $\Def \mathbb E$ est
une sous-cat\'egorie de Serre de la cat\'egorie ab\'elienne des $\C$-modules coh\'erents et que l'application
qui associe \`a un sous-bifoncteur ferm\'e de $\mathbb E$ sa cat\'egorie de d\'efauts est un isomorphisme de
posets sur le poset des sous-cat\'egories de Serre de $\Def \mathbb E$.
Si $\A$ est une dg-cat\'egorie exacte connective, alors $\Def H^0(\A)$ est \'equivalente
au cœur d'une $t$-structure sur la cat\'egorie $\N$ des complexes `acycliques', cf.
la remarque apr\`es le Th\'eor\`eme~\ref{intro1:main}.

\begin{Theorem}[Th\'eor\`eme~\ref{bijectionstructures}]\label{intro:bijectionstructures}
Soit $(\A,\mathcal S)$ une dg-cat\'egorie exacte et $(H^0(\A),\mathbb E,\mathfrak s)$ la cat\'egorie extriangul\'ee alg\'ebrique associ\'ee.
Les posets suivants (où les relations d'ordre sont donn\'ees par l'inclusion) sont canoniquement isomorphes :
\begin{itemize}
\item[(1)]Le poset des sous-structures exactes de $(\A,\mathcal S)$.
\item[(2)]Le poset des sous-bifoncteurs ferm\'es de $\mathbb E$.
\item[(3)]Le poset des sous-cat\'egories de Serre de $\Def\mathbb E$.
\end{itemize}
\end{Theorem}
Ici, l'isomorphisme entre les posets (2) et (3) d\'ecoule du th\'eor\`eme d'Enomoto \cite{Enomoto21}.

Dans~\cite{Rump11}, Rump a d\'emontr\'e que toute cat\'egorie additive poss\`ede une plus grande structure exacte, ce qui g\'en\'eralise les r\'esultats de~\cite{SiegWegner11,Crivei12}. 
Le th\'eor\`eme suivant g\'en\'eralise le r\'esultat de Rump aux dg-cat\'egories additives.
\begin{Theorem}[Th\'eor\`eme~\ref{thm:maximalexactdgstructure}]\label{intro:maximalstructure}
Chaque dg-cat\'egorie additive $\A$ admet une plus grande structure exacte  (c'est-\`a-dire maximale unique).
\end{Theorem}
Il n'est pas clair comment formuler, et encore moins comment d\'emontrer, un \'enonc\'e 
analogue pour les structures extriangul\'ees.
\newpage

\section{Introduction (English version)} 

\subsection{Overview} 
In this thesis, we introduce and study the notion of {\em exact dg category}.
We illustrate the position of this class of categories among the other types of 
categories that we will consider in this thesis in the following diagram
\[
\begin{tikzcd}
&\{\text{pretri. dg cat.}\}\ar[rd,red]\ar[rr,"N_{dg}"{description,near start},hook]\ar[dd,hook]&&\{\text{stable $\infty$-cat.}\}\ar[dd,hook]\ar[ld,red]\\
&&\{\text{tri. cat.}\}\ar[dd,hook]&\\
\{\text{Quillen ex. cat.}\}\ar[r,hook]\ar[rrd,hook]&\{\text{ex. dg cat.}\}\ar[rr,hook,"N_{dg}"{description,near start}]\ar[rd,red]&&\{\text{Barwick ex. $\infty$-cat.}\}\ar[ld,red]\\
&&\{\text{extri. cat.}\}&
\end{tikzcd}
\]
Here, the black arrows denote inclusions of classes and the red arrows send a dg category (respectively an
$\infty$-category)  $\A$ to $H^0(\A)$ (respectively $h\A$). As we see in the diagram, the notion of an exact dg category is
\begin{itemize}
\item[1)] a dg version of Barwick's \cite{Barwick15} notion of exact $\infty$-category;
\item[2)] a simultaneous generalization of the notions of exact category in the sense of Quillen \cite{Quillen73} and of 
pretriangulated dg category in the sense of Bondal--Kapranov \cite{BondalKapranov90} and
\item[3)] a dg enhancement of the notion of extriangulated category in the sense of Nakaoka--Palu 
\cite{NakaokaPalu19} in analogy with Bondal--Kapranov's dg enhancement of the notion of triangulated category.
\end{itemize}
We also have characterizations of some of the subclasses occuring in the diagram:
\begin{itemize}
\item[1)] an extriangulated category is a Quillen exact category if and only if any inflation is monomorphic and any deflation is epimorphic, cf.~\cite[Corollary 3.18]{NakaokaPalu19};
\item[2)] an extriangulated category is a triangulated category if and only it is a Frobenius extriangulated category whose
projective-injectives are the zero objects, cf.~\cite[Corollary 7.6]{NakaokaPalu19};
\item[3)] an exact dg category is quasi-equivalent to a Quillen exact category if and only if the underlying dg category has its
homology concentrated in degree zero;
\item[4)] a connective exact dg category is quasi-equivalent to the $\tau_{\leq 0}$-truncation of a pretriangulated dg category 
if and only if its zeroth homology category is canonically triangulated, cf.~Example~\ref{exm:exactdgpretriangulated}.
\end{itemize}
More details on these classes and the historical developments are given in the following subsections. 
Let us emphasize that our notion of exact dg category is completely different from Positselski's notion
of exact DG-category \cite{Positselski21}. For example, an exact structure in our sense can be
transported along a quasi-equivalence, cf.  Remark~\ref{truncationexactdgstructure} b), which is not at all the case for exact structures in the sense of Positselski. His principal motivation is to axiomatise
situations where we have a {\em pretriangulated} dg category $\A$ whose category $Z^0(\A)$
is moreover endowed with a Quillen exact structure (for example the category of all 
complexes with components in a given Quillen exact category). By contrast, we aim at axiomatising
a class of {\em not necessarily pretriangulated} dg categories endowed with additional
structure.

\subsection{Exact categories}
The notion of exact category was introduced by Quillen in \cite{Quillen73} in order to define
his higher algebraic $K$-theory (idempotent complete exact categories were introduced 15 years earlier
by Heller \cite{Heller58}).
Loosely speaking, an exact category is an additive category $\E$ together with a class of kernel-cokernel pairs called conflations
\[
\begin{tikzcd}
L\ar[r,tail,"i"]&M\ar[r,two heads,"p"]&N
\end{tikzcd}
\]
where $i$ is called an inflation and $p$ is called a deflation, which satisfies certain axioms. 
The list of axioms has been reduced to ``the" minimum by Keller in \cite[Appendix A.1]{Keller90}.  
Notably, Quillen's obscure axiom is redundant and can be deduced from the other axioms, cf.~also in \cite[p.525, Corollary]{Yoneda60}. 
For a comprehensive treatment of Quillen's exact category, we refer to B\"uhler's survey \cite{Buhler10}. For a study of higher extension groups in an exact category, we refer to the lecture notes~\cite{FrerickSieg10}.
Quillen stated and Thomason--Trobaugh \cite{ThomasonTrobaugh90} proved 
that for an essentially small exact category $\E$, there is an exact equivalence $G:\E\rightarrow \C$ onto an extension-closed full subcategory $\C$ of an abelian category such that the functor $G$ is fully exact, i.e.~a sequence in $\E$ is a conflation if and only if its image in $\C$ is a short exact sequence, cf.~\cite[A.2 Proposition]{Keller90}.
Quillen used exact categories as a framework for his remarkable $Q$-construction, which is related to the bicategory of spans introduced by B\'enabou, cf.~\cite[2.6]{Benabou67}. The category $Q(\E)$ of an exact category consists of objects in $\E$ and the morphisms from $M$ to $M'$ are given by equivalence class of roofs
\[
\begin{tikzcd}
M&N\ar[l,two heads,"p"swap]\ar[r,tail,"i"]&M'
\end{tikzcd}
\]
where $p$ is a deflation and $i$ is an inflation and two roofs are considered equivalent if there is an isomorphism between the roofs which restricts to the identities of $M$ and of $M'$. 
The compositions in $Q(\E)$ are given by pullbacks.
In fact, Quillen's list of axioms is exactly what one needs when forming the $Q$-construction.
The generalization from abelian categories to exact categories allows for more examples, for example the category of free modules over a ring, the category of vector bundles over a scheme, the categories of locally free modules over a generalized preprojective algebra \cite{GeissLeclercSchroer17}, \ldots\ .

\subsection{From triangulated categories to dg categories}
On the other hand, in order to prove and formulate his extensions of Serre's duality theorem \cite{Grothendieck60}, 
Grothendieck invented the notion of derived categories and, 
together with his student Verdier, introduced the notion of triangulated category \cite{Verdier96}. 
At almost the same time, Puppe also found the axioms (except the octahedral axiom) for
triangulated categories \cite{Puppe62}. 
For a long time, triangulated categories were perceived to be too coarse to allow the development of
a substantial theory, principally due to the non-functoriality of the mapping cone construction. 
This view has changed drastically \cite{Neeman01,Neeman22} 
but still many operations on derived categories cannot be performed in the framework of triangulated categories, 
notably tensor products of triangulated categories and functor categories with target triangulated categories. 
A viable solution to this problem is provided by the theory of dg categories. 
The notion of dg category already appeared in Kelly's work \cite{Kelly65}.
In the nineties, Bondal--Kapranov \cite{BondalKapranov90} observed that 
the morphism spaces of the triangulated categories appearing naturally in algebra 
are the zeroth cohomology groups of certain complexes and 
it is a principle (popularized by Grothendieck) in homological algebra to consider the complex itself 
rather than its cohomology. 
Therefore, they proposed to enhance triangulated categories using dg categories, 
requiring that the  triangulated structure should be compatible with the  dg structure.
Formalising this idea, they defined the property of a dg category to be pretriangulated and 
observed that each dg category can freely generate a pretriangulated dg category. 
Soon after, Keller published his seminal paper \cite{Keller94} 
discussing homotopy categories and unbounded derived categories of dg categories. 
Drinfeld \cite{Drinfeld04} gave an elegant construction of the dg quotient, whose existence
goes back to Keller's \cite{Keller99}, and which enhances Verdier's triangle quotient. 

\subsection{From extriangulated categories to exact dg categories} 
The notion of an extriangulated category was introduced by Nakaoka--Palu in~\cite{NakaokaPalu19}. 
It is a simultaneous generalization of Quillen's notion of exact category and 
Grothendieck--Verdier's notion of a triangulated category. 
Their aim was to give a convenient setup for writing down proofs 
which apply to both exact categories and triangulated categories, and more generally to
extension-closed subcategories of triangulated categories. 
The theory of extriangulated categories has developed a lot since their introduction in 2019 
and many notions and constructions have been generalized to this setting (or the more general 
setting of $n$-exangulated categories \cite{HerschendLiuNakaoka21}). Such notions and
constructions include
\begin{itemize}
\item  cotorsion pairs and their hearts and cohearts, and silting and $n\mbox{-}$cluster tilting subcategories, cf.~\cite{NakaokaPalu19, LiuNakaoka19,ChangZhouZhu19, AdachiTsukamoto22,HuertaMendozaSaenzSantiago22,AdachiTsukamoto23,ZhaoZhuZhuang21, ZhuZhuang20,LiuZhouZhouZhu21};
\item  mutations and cluster-tilting theory, cf.~\cite{ ChangZhouZhu19, Pressland23, ZhouZhu18, LiuZhou20, LiuZhou20a, LiuZhou21, JorgensenShah22,GorskyNakaokaPalu23};
\item idempotent completions, cf.~\cite{WangWeiZhangZhao23, Dixy22,KlapprothMsapatoShah22};
\item  Auslander--Reiten theory, cf.~\cite{IyamaNakaokaPalu18, TanWangZhao23};
\item Auslander's formula for the functor category of an extriangulated category, cf.~\cite{Ogawa21};
\item substructures of an extriangulated category, cf.~\cite{HerschendLiuNakaoka21,Enomoto21, Sakai23};
\item recollement of extriangulated categories, cf.~\cite{WangWeiZhang22, HeHuZhou22, HuZHou21, MaZhaoZhuang23};
\item Grothendieck groups of extriangulated categories, cf.~\cite{ZhuZhuang21, Haugland21, EnomotoSaito22};
\item Hall algebras of extriangulated categories, cf.~\cite{WangWeiZhang22a,FangGorsky22};
\item  localizations of extriangulated categories, cf.~\cite{NakaokaOgawaSakai22, Ogawa22, HeHeZhou22};
\item \ldots
\end{itemize}
 The study of the class of substructures (=closed subbifunctors) of an exact category already appears in \cite{ButlerHorrocks61}, cf.~also~\cite{AuslanderSolberg93a, DraxlerReitenSmaloSolberg99, BrustleHassounLangfordRoy20, FangGorsky22, BaillargeonBrustleGorskyHassoun22}. 
 Notice that the class of triangulated substructures of a triangulated category only consists of the trivial one.
  It would be interesting to find a common generalization of one-sided exact categories and suspended categories, cf.~\cite{KellerVossieck87,BazzoniCrivei13}.

 In analogy with the notion of algebraic triangulated category \cite{Keller06d}, we define an {\em algebraic extriangulated category} 
 as follows.
 \begin{definition-proposition}[Definition-Proposition~\ref{algebraic}]\label{def:algebraic}Let $\C$ be an extriangulated category. The following statements are equivalent:
\begin{itemize}
\item[1)] $\C$ is equivalent, as an extriangulated category, to a full extension-closed subcategory of an algebraic 
triangulated category.
\item[2)] $\C$ is equivalent, as an extriangulated category, to $\B/(\mathcal P_0)$ for a Quillen exact category $\B$ and a class $\mathcal P_0$ of projective-injective objects.
\item[3)] $\C$ is equivalent, as an extriangulated category, to $H^0(\A)$ for an {\em exact dg category} $\A$.
\end{itemize}
If one of the above equivalent conditions holds, then $\C$ is called an {\em algebraic extriangulated category}.
\end{definition-proposition}
From the Definition-Proposition, it is clear that the class of algebraic extriangulated categories is stable under
passage to
\begin{itemize}
\item[a)] extension-closed subcategories;
\item[b)] quotients by ideals generated by identities of projective-injective objects;
\item[c)] extriangulated substructures, cf.~Corollary~\ref{cor:exactsubstructure}.
\end{itemize}
We also expect that it is stable under passage to localizations in the sense of
Nakaoka--Ogawa--Sakai \cite{NakaokaOgawaSakai22}.
It is also clear that if an algebraic extriangulated category is triangulated, then it is algebraic as a triangulated category.

From part 1) of the Definition-Proposition, it is clear that an algebraic extriangulated category $\C$ has an unbounded bivariant $\delta$-functor in the sense of~\cite[Definition 4.5]{GorskyNakaokaPalu21}. 
In particular, this provides an additive bifunctor $\mathbb E^{-1}$ which is crucial in the study of the poset of 
{\em $s$-torsion pairs} in an extriangulated category in~\cite{AdachiEnomotoTsukamoto23}.

\subsection{Classes of examples of extriangulated and exact dg categories}
The following example is taken from ~\cite{Jin20}.
Let $k$ be a field. 
A dg $k$-algebra $A$ is {\em proper} if $\sum_{i\in \mathbb Z}\dim_{k}{H^i(A)<\infty}$. 
It is {\em Gorenstein} if the thick subcategory $\per(A)$ of the derived category $\D(A)$ 
generated by $A$ coincides with the thick subcategory $\thick(DA)$ generated by $DA$, where $D=\Hom_k(-,k)$ is the $k\mbox{-dual}$.
Let $A$ be a connective proper Gorenstein dg algebra.
A dg $A\mbox{-}$module is {\em perfectly valued} if its total cohomology is finite-dimensional and we denote by $\pvd(A)$ the triangulated category of perfectly valued dg $A\mbox{-}$modules.
A dg $A\mbox{-}$module $M$ in $\pvd(A)$ is {\em {Cohen--Macaulay}} if $H^i(M)=0$ and $\Hom_{\D(A)}(M,\Sigma^i A)=0$ for $i>0$.
Let $\CM A$ be the subcategory of $\pvd(A)$ consisting of Cohen-Macaulay dg $A\mbox{-}$modules.
By \cite[Theorem 2.4]{Jin20}, the category $\CM A$ is an $\Ext\mbox{-}$finite Frobenius extriangulated category with $\proj(\CM A)=\add A$. 
By Proposition~\ref{prop:dgsingularitycategory}, the stable category $\CM(A)/[\proj(\CM A)]$ is equivalent to the singularity category
$\pvd(A)/\per(A)$.
We will see in Example~\ref{exm:CMdgmodule} that if $\CM_{dg} A$ denotes the dg enhancement of $\CM A$ inherited from $\pvd_{dg}(A)$,
then the inclusion of $\CM_{dg}(A)$ into $\pvd_{dg}(A)$ induces an equivalence from the derived category of 
$\CM_{dg}(A)$ onto $\pvd_{dg}(A)$. Moreover, this equivalence induces an equivalence from the singularity
category of $\CM_{dg}(A)$ onto that of $A$ in complete analogy with the situation for a finite-dimensional
Iwanaga--Gorenstein algebra concentrated in degree $0$.

One can easily generalize the results of Example~\ref{exm:CMdgmodule} to the setup of silting reduction in the sense of 
Iyama--Yang~\cite{IyamaYang18} (details will appear elsewhere): 
Let $\T$ be an algebraic triangulated category and $\P$ a presilting subcategory of $\T$, i.e.~a full subcategory such that $\Hom_{\T}(\P,\Sigma^{i}\P)=0$ for $i>0$. Let $\mathcal S$ be the thick subcategory of $\T$ generated by $\P$.
Let $\V$ and $\W$ be the following full subcategories of $\T$ respectively 
\[
\V=\{X|\Hom_{\T}(X,\Sigma^{i}\P)=0 \text{ for $i>0$}\} \quad \W=\{X|\Hom_{\T}(\P,\Sigma^{i}X)=0 \text{ for $i>0$}\}.
\]
Put $\Z=\V\cap \W$.
Then $\Z$ is extension-closed in $\T$ and hence is canonically extriangulated. Notice that objects in $\P$ are projective-injective in $\Z$.
Assume that $\Z$ is a Frobenius extriangulated category whose projective-injective objects are the objects in $\P$, i.e.~$(\Z,\Z)$ is a $\P$-mutation pair~\cite[Definition 2.5]{IyamaYoshino08}.
For example, this holds when $\P$ is covariantly finite in $\V$ and contravariantly finite in $\W$. 
In this case, the additive quotient $\Z/[\P]$ is canonically a triangulated category.
Assume further that the $\Z$ generates $\T$ as a triangulated category. 
 This holds if and only if for each object $X\in \T$, we have $\Sigma^{-l}X\in \V$ and $\Sigma^{l}X\in \W$ when $l\gg 0$. 
If we replace the above categories with their dg enhancements, then the \mbox{$\tau_{\leq 0}$-truncation} of $\Z$ carries a canonical exact dg structure whose bounded dg derived category is $\T$ and whose dg singularity category is $\T/\mathcal S$.
Similar  considerations apply to the special case of~\cite[1.2]{IyamaYang20} where the torsion pair $\mathcal S=\X\perp \Y$ is a 
t-structure, cf.~Example~\ref{exm:Iyama--Yang}.

Exact dg categories also appear naturally in the additive categorification of cluster algebras with coefficients.
Such cluster algebras are associated with {\em ice quivers}, i.e.~quivers endowed with a (not necessarily full)
{\em frozen} subquiver. For the categorification, one endows the ice quiver with a non degenerate potential.
In his thesis \cite{Wu21, Wu23a, Wu23b}, Yilin Wu has constructed a Frobenius extriangulated
category, the {\em Higgs category}, for each Jacobi-finite ice quiver with potential.
His construction generalizes the categories of modules over preprojective algebras 
used with a great success by Geiss--Leclerc--Schr\"oer \cite{GeissLeclercSchroeer11b,GeissLeclercSchroeer08b,GeissLeclercSchroeer06,GeissLeclercSchroeer13}.
The Higgs category is an extension closed subcategory of a canonical ambient algebraic triangulated
category (the relative cluster category) and thus carries a canonical dg enhancement. Let us make
this more precise (we use the notations of \cite{Wu23a}): Let $(Q,F,W)$ be a Jacobi-finite ice quiver with potential. 
Denote by $\Gamma_{rel}$ the relative Ginzburg dg algebra $\Gamma_{rel}(Q,F,W)$. 
Let $e=\sum_{i\in F}e_i$ be the idempotent associated with all frozen vertices. 
Let $\pvd_{e}(\Gamma_{rel})$ be the full subcategory of $\pvd(\Gamma_{rel})$ of the dg $\Gamma_{rel}$-modules whose restriction to frozen vertices is acyclic.
Then the relative cluster category $\C=\C(Q,F,W)$ associated to $(Q,F,W)$ is defined as the Verdier quotient of triangulated categories
\[
\per(\Gamma_{rel})/\pvd_{e}(\Gamma_{rel}).
\]
Let $\mathcal P=\add(e\Gamma_{rel})$.
The {\em relative fundamental domain $\mathcal F^{rel}_{\Gamma_{rel}}=\mathcal F^{rel}$} associated to $(Q,F,W)$ is defined as the following subcategory of $\per(\Gamma_{rel})$
\[
\mathcal F^{rel}{\coloneqq}\{\Cone(X_1\xrightarrow{f}X_0) | X_i\in\add(\Gamma_{rel}) \text{ and } \Hom(f,I) \text{ is surjective}, \forall I\in \mathcal P\}.
\]
Since $\add{\Gamma_{rel}}$ is presilting in $\per\Gamma_{rel}$, the full subcategory $\Z'=\add\Gamma_{rel}\ast\add\Sigma\Gamma_{rel}$ is extension-closed in $\per\Gamma_{rel}$ and thus inherits a canonical extriangulated structure.
Then $\F^{rel}$ can also be described as the extension-closed subcategory of $\Z'$ consisting of objects $X$ which are relatively $\mathbb E$-projective with respect to $\P$, i.e.~the objects $X$ such that $\Ext^1_{\T}(X,\P)=0$.
Let $\pi^{rel}:\per(\Gamma_{rel})\rightarrow \C$ be the canonical quotient functor. 
The {\em Higgs category} $\mathcal H$ is the image of $\mathcal F^{rel}$ in $\C$ under the quotient functor $\pi^{rel}$. 
By \cite[Proposition 5.39, Theorem 5.46]{Wu23a}, the Higgs category $\mathcal H$ is extension-closed in $\C$ and,
endowed with the inherited extriangulated structure, becomes a Frobenius extriangulated category whose subcategory
of projective-injectives is $\P$.
We replace the above categories by their dg enhancements. 
The canonical dg functor $\tau_{\leq 0}\F^{rel}\rightarrow \tau_{\leq 0}\mathcal H$ is a quasi-equivalence and the exact dg structure on $\tau_{\leq 0}\F^{rel}$ is a substructure of that on $\tau_{\leq 0}\mathcal H$.
By Theorem~\ref{intro:maximalstructure}, the underlying dg category $\tau_{\leq 0}\F^{rel}$ has the greatest exact structure.
If we take the exact substructure of the greatest exact structure which makes the objects in $\P$ projective-injective, then we recover the exact dg category $\tau_{\leq 0}\mathcal H$.
We will see in Section~\ref{subsection:higgscategories} that the bounded derived category of $\tau_{\leq 0}\F^{rel}$ is $\per\Gamma_{rel}$. 
The bounded derived category of $\tau_{\leq 0}\mathcal H$ is $\C$ and the singularity category of $\tau_{\leq 0}\mathcal H$ is the cluster category associated with the quiver with potential $(\overline Q,\overline W)$ where $\overline{Q}$ is the quiver obtained from $Q$ by deleting all frozen vertices and all arrows incident with frozen vertices and $\overline W$ is the potential on $\overline Q$ obtained by deleting all cycles passing through vertices of $F$ in $W$.

\subsection{Stable $\infty$-categories and exact $\infty$-categories}
The notion of simplicial set was introduced in~\cite{EilenbergZilber50} as a refinement of the notion of simplicial complex.
Boardman--Vogt~\cite[Definition 4.8]{BoardmanVogt73} used the {\em restricted Kan condition} to define the notion of {\em weak Kan complex} (= {\em quasicategory} = {\em $\infty$-category}) in the category of simplicial sets.
Each small category can be seen as an $\infty$-category via the {\em nerve} functor $N$.
To each simplicial set $X$, one can associate its {\em homotopy category} $h(X)$. 
It has a simple description when $X$ is an $\infty$-category, cf.~\cite{BoardmanVogt73}.
The theory of quasicategories has been developed substantially by Joyal and Lurie among others, cf.~\cite{Lurie09,LurieHA,Joyal08,Cisinski19}.
Lurie (\cite{Lurie06}, cf.~also~\cite{LurieHA} and \cite{ToenVezzosi04a}) introduced the notion of {\em stable $\infty$-category}, based on the techniques studied in his book~\cite{Lurie09}, and showed that the homotopy category of a stable $\infty$-category is canonically triangulated.
The triangulated categories thus obtained are called {\em topological}.
The {\em dg nerve} functor $N_{dg}$ was introduced in \cite{LurieHA} as a key tool linking dg categories with $\infty$-categories.
He showed that there is a Quillen adjunction between the {\em Joyal model structure} for $\infty$-categories and the {Dwyer--Kan model structure} for dg categories.
Faonte showed that the dg nerve of a pretriangulated dg category is a stable $\infty$-category in
his paper \cite{Faonte17b}, where he generalized the dg nerve functor to the simplicial nerve functor 
linking $A_{\infty}$-categories to $\infty$-categories.
Barwick introduced the notion of exact $\infty$-category in~\cite{Barwick15} as a homotopy-theoretic
generalization of Quillen's notion of exact category.
Quillen's $Q$-construction has also been generalized to this setting, cf.~\cite{Barwick13}.
In~\cite{NakaokaPalu20}, it was shown that the homotopy category of an exact $\infty$-category $\E$ carries a canonical extriangulated structure. 
The extriangulated categories obtained are called {\em topological}.
Klemenc~\cite{Klemenc22} constructed, for each small exact $\infty$-category $\E$, the {\em stable hull} $\D^b_{\infty}(\E)$ together with a fully exact fully faithful functor $\eta_{\E}:\E\rightarrow \D^b_{\infty}(\E)$ whose essential image is closed under extensions.
Since the homotopy category of  $ \D^b_{\infty}(\E)$ is triangulated (by Lurie's theorem), this gives another
proof of the result by Nakaoka--Palu.
Compared with dg categories, the morphism spaces in quasicategories are not easy to handle.
This is one of our motivations to introduce the notion of {\em exact dg category}.

\subsection{Exact dg categories}
Let $\A$ be a dg category. For simplicity, we assume that $\A$ is connective and has cofibrant Hom complexes and that $Z^0(\A)$ is additive. We use the notations introduced in Section~\ref{subsection:notations}.

The {\em square} category $\mathrm{Sq}$ is the path category of the quiver 
\[
\begin{tikzcd}
00\ar[r,"f"]\ar[d,"g"swap] & 01\ar[d,"j"] \\
10\ar[r,"k"swap] & 11
\end{tikzcd}
\]
with the commutativity relation $jf\sim kg$.
We identify objects $X\in\rep(k\mathrm{Sq},\A)$ with commutative squares
\begin{equation}
\begin{tikzcd}\label{D(Sq)}
X_{00}\ar[r,"f"]\ar[d,"g"swap]&X_{01}\ar[d,"j"]\\
X_{10}\ar[r,"k"swap]&X_{11}
\end{tikzcd}
\end{equation}
in $\C(\A)$ where $jf=kg$. 
\begin{definition}[Definition~\ref{maindef}]
An object $X\in \rep(k\Sq,\A)$ is a \text{\em{{homotopy cartesian square}}} with respect to $\A$ if the canonical map $X_{00}\rightarrow \Sigma^{-1}\mathrm{Cone}((-j,k))$ induces an isomorphism   
\[
\tau_{\leq 0}\RHom_\A(A^{\wedge},X_{00})\rightarrow \tau_{\leq 0}\RHom_\A(A^{\wedge}, \Sigma^{-1}\mathrm{Cone}((-j,k)))
\]
in $\D(k)$ for each $A$ in $\A$.
\end{definition}
Dually one can define the notion of {\em homotopy cocartesian square} and then define the notion of {\em homotopy bicartesian square}.
We often take a cofibrant resolution $\overline{\Sq}\rightarrow \Sq$ of $\Sq$, cf.~Section~\ref{res}, and via the equivalence of categories $\rep(k\overline{\Sq},\A)\rightarrow\rep(k\Sq,\A)$ we identify objects on both sides. 
One advantage is that we may assume that 
all terms in an object $X\in \rep(k\overline{\Sq},\A)$ are representable (and not just quasi-representable) dg modules.

Let $\I$ be the dg $k$-path category of the following quiver with relations
\begin{equation}\label{quiv:3term}
\begin{tikzcd}
0\ar[r,"f",""{swap, name=1}]&1\ar[r,"g",""{swap,name=2}]&2\ar[r,from=1,to=2,dashed,no head,swap,bend right =8ex]
\end{tikzcd}
\end{equation}
where $f$ and $g$ are closed of degree 0 and $gf=0$.
The category $\mathcal H_{3t}(\A)$ of 3-term homotopy complexes (=3-term h-complexes) over $\A$ is defined to be the 0-th homology category $H^0(\Fun_{\infty}(\I,\A)))$ of the dg category $\Fun_{\infty}(\I,\A)$ of $A_{\infty}$-functors 
(not $\infty$-functors!) from $\I$ to $\A$, cf.~Definition~\ref{def:3termhomotopy}.
Let $\J$ be the dg $k$-path category of the following graded quiver with relations
\begin{equation}\label{quiv:3termcofibrant}
\begin{tikzcd}
0\ar[r,"f"]\ar[rr,"h"swap,bend right=6ex]&1\ar[r,"g"]&2
\end{tikzcd}
\end{equation}
where $f$ and $g$ are closed of degree 0 and $h$ is of degree $-1$ such that $gf=-d(h)$.
Then 3-term homotopy complexes over $\A$ can be identified with dg functors from $\J$ to $\A$.
So we identify objects in $\mathcal H_{3t}(\A)$ with diagrams in $\A$
\begin{equation}\label{intro:F}
\begin{tikzcd}
&A_0\ar[r,"f"]\ar[rr,bend right = 8ex,"h"swap]&A_1\ar[r,"j"]&A_2
\end{tikzcd}
\end{equation}
where $|f|=|j|=0$, $|h|=-1$ and $d(f)=0$, $d(j)=0$ and $d(h)=-jf$.
The category $\mathcal H_{3t}(\A)$ is equivalent to the full subcategory of $\rep(\Sq,\A)$ consisting of objects $X$ such $X_{10}$ is acyclic.
A 3-term h-complex~\ref{intro:F} is {\em homotopy (resp.~left, right ) short exact} if the corresponding object in $\rep(k\Sq, \A)$ is homotopy bicartesian (resp.~cartesian, cocartesian).
\begin{lemma}[Lemma~\ref{lem:3termhcomplex}]
A 3-term h-complex~\ref{intro:F} is homotopy left short exact if and only if the following holds:
for each $A\in\A$ and $n\leq 0$ and each pair of morphisms $(v,w)\in Z^{n}\A(A,A_1)\times \A^{n-1}(A,A_2)$ such that $d(w)=-jv$, there exists a morphism $u\in Z^n \A(A,A_0)$, unique up to a coboundary, such that there exists a pair of morphisms $(v',w')\in \A^{n-1}(A,A_1)\times \A^{n-2}(A,A_2)$ satisfying $v-fu=d(v')$, $w-hu=-d(w')-jv'$.  
\end{lemma}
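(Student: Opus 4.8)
The plan is to unwind Definition~\ref{maindef} into the completely explicit condition stated. By definition, the $3$-term h-complex~\ref{intro:F} is homotopy left short exact if and only if the corresponding object $X\in\rep(\Sq,\A)$ is a homotopy cartesian square. I would work with a representable model: after replacing $\Sq$ by a cofibrant resolution $\overline{\Sq}$ (cf.~Section~\ref{res}), the object $X$ may be taken to have all corners representable, with $X_{00}=A_0$, $X_{01}=A_1$, $X_{11}=A_2$, with $X_{00}\to X_{01}$ equal to $f$ and $X_{01}\to X_{11}$ equal to $j$, and with $X_{10}$ representable, acyclic and contractible, through which the homotopy $h$ records the commutativity.

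The first reduction is to replace $\Sigma^{-1}\Cone((-j,k))$ by the homotopy fibre $\Sigma^{-1}\Cone(j)$ of $j$. Since $X_{10}$ is acyclic, $\RHom_\A(A^\wedge,X_{10})$ is acyclic for every $A\in\A$; hence the canonical inclusion $\Sigma^{-1}\Cone(j)\hookrightarrow\Sigma^{-1}\Cone((-j,k))$ obtained by discarding the $X_{10}$-summand is a quasi-isomorphism, and in particular induces an isomorphism on $\tau_{\leq 0}\RHom_\A(A^\wedge,-)$ for each $A$. Thus $X$ is homotopy cartesian if and only if the induced morphism $A_0\to\Sigma^{-1}\Cone(j)$ is a $\tau_{\leq 0}$-equivalence in the same sense. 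The key point, which I would verify carefully, is that this induced morphism is precisely the closed degree-zero morphism $(f,h)\colon A_0\to\Sigma^{-1}\Cone(j)$; equivalently, that the h-complex~\ref{intro:F} \emph{is} this morphism. Indeed, $(f,h)$ is closed of degree $0$ exactly because $d(h)=-jf$, and $h$ is precisely what the canonical map of $X$ becomes once one transports it across the contraction of $X_{10}$.

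Next I would make the target explicit. Since $A^\wedge$ is cofibrant, $\RHom_\A(A^\wedge,M)=M(A)$, so $\RHom_\A(A^\wedge,\Sigma^{-1}\Cone(j))$ is the complex $\Sigma^{-1}\Cone\bigl(\A(A,A_1)\xrightarrow{j_*}\A(A,A_2)\bigr)$. Reading off its cocycles and coboundaries in degree $n$, the group $H^n$ is the set of classes of pairs $(v,w)\in Z^n\A(A,A_1)\times\A^{n-1}(A,A_2)$ with $d(w)=-jv$, modulo the coboundaries $(d(v'),-d(w')-jv')$ with $(v',w')\in\A^{n-1}(A,A_1)\times\A^{n-2}(A,A_2)$; and the map induced on $H^n$ by composition with $(f,h)$ sends $[u]$ to $[(fu,hu)]$. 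This is well defined: if $d(u)=0$ then $d(fu)=0$ and $d(hu)=-jfu$ using $d(h)=-jf$, so $(fu,hu)$ is a cocycle; and if $u=d(u'')$ then $(fu,hu)=(d(fu''),-d(hu'')-j(fu''))$ is a coboundary of the stated form.

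Finally, the assertion that this map is an isomorphism on $H^n$ for all $n\leq 0$ and all $A$ splits, for each such $n$ and $A$, into surjectivity and injectivity. Surjectivity says exactly that for every cocycle $(v,w)$ as above there is $u\in Z^n\A(A,A_0)$ together with $(v',w')\in\A^{n-1}(A,A_1)\times\A^{n-2}(A,A_2)$ satisfying $v-fu=d(v')$ and $w-hu=-d(w')-jv'$; injectivity says that any two such $u$ differ by a coboundary, i.e.\ $u$ is unique up to a coboundary. Taken together these are word for word the condition of the lemma, which completes the proof. The main obstacle is the step identifying the canonical comparison map of $X$ with $(f,h)\colon A_0\to\Sigma^{-1}\Cone(j)$: one must track $h$ through the contraction of the acyclic corner $X_{10}$ and check that every sign matches Definition~\ref{maindef}; once this is done, the remainder is a routine calculation with cocycles and coboundaries.
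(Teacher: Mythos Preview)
Your proposal is correct and follows essentially the same approach as the paper. The paper sets this up by first defining homotopy left exactness for sequences in $\C_{dg}^{\leq 0}(k)$ via the requirement that the map $[f,h]^{\intercal}\colon U\to \tau_{\leq 0}\Sigma^{-1}\Cone(j)$ be a quasi-isomorphism (unpacking this into the element-level description), and then states Lemma~\ref{lem:3termhcomplex} as the assertion that the h-complex in $\A$ is homotopy left exact if and only if applying $\tau_{\leq 0}\A(A,-)$ yields such a sequence for each $A$; you collapse these two steps into one direct computation, but the substantive content---discarding the acyclic corner $X_{10}$, identifying the comparison map with $(f,h)$, and reading off the cocycles and coboundaries of $\Sigma^{-1}\Cone(j)$---is identical.
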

It is clear from this lemma that one can do diagram chasing using homotopy short exact sequences. 
We use this description to prove some diagram lemmas in Chapter~\ref{sec:diagramlemmas}. 
It is worth noting that while sometimes homological results in triangulated categories are applicable, the subtle point lies in checking that the homotopies are compatible, cf.~Proposition~\ref{push}.

\begin{definition}[Definition~\ref{exactdgstructure}]\label{intro:exactdgstructure}
An {\em exact structure} on $\A$ is a class $\mathcal{S}\subseteq \mathcal H_{3t}(\A)$ stable under isomorphisms, consisting of homotopy short exact sequences (called {\em conflations})
\[
\begin{tikzcd}
A\ar[r, tail,"i"]\ar[rr,bend right=8ex,"h"swap]&B\ar[r,two heads, "p"]&C\\
\end{tikzcd} 
\]
where $i$ is called an {\em inflation} and $p$ is called a {\em deflation}, such that the following axioms are satisfied
\begin{itemize}
\item[Ex0] $\Id_{0}$ is a deflation.
\item[{Ex}1] Compositions of deflations are deflations.
\item[{Ex}2] Given a deflation $p:B\rightarrow C$ and any map $c: C'\rightarrow C$ in $Z^0(\A)$, the object
\[
\begin{tikzcd}
&C'\ar[d,"c"]\\
B\ar[r,"p"swap]&C
\end{tikzcd}
\]
admits a homotopy pullback 
\[
\begin{tikzcd} 
{B'}\ar[r,"{p'}"]\ar[d,"{b}"swap]\ar[rd,"s"blue,blue]&{C'}\ar[d,"{c}"]\\
{B}\ar[r,"{p}"swap]&{C}
\end{tikzcd}
\]
and ${p'}$ is also a deflation. 
\item[$\Ex2^{op}$] Given an inflation $i: A\rightarrow B$ and any map $a:A\rightarrow A'$ in $Z^0(\A)$, the object
\[
\begin{tikzcd}
A\ar[r,"i"]\ar[d,"a"swap]&B\\
A'&
\end{tikzcd}
\]
admits a homotopy pushout 
\[
\begin{tikzcd}
{A}\ar[r,"{i}"]\ar[d,"{a}"swap]\ar[rd,"s"blue,blue]&{B}\ar[d,"{j}"]\\
{A'}\ar[r,"{i'}"swap]&{B'}
\end{tikzcd}
\]
and ${i'}$ is also an inflation.
\end{itemize}
We call $(\A,\mathcal {S})$ or simply $\A$ an {\em exact dg category}.
\end{definition} 

Let $(\A,\mathcal S)$ and $(\A',\mathcal S')$ be exact dg categories.
A morphism $F:\A\rightarrow \A'$ in $\Hqe$ is {\em exact} if the induced functor $\mathcal H_{3t}(\A)\rightarrow \mathcal H_{3t}(\A')$ sends the objects in $\mathcal S$ to the objects in $\mathcal S'$.
We denote by $\Hqe_{ex}(\A,\A')$ the subset of $\Hqe(\A,\A')$ consisiting of the exact morphisms.

\begin{theorem}[Theorem~\ref{fun}]\label{intro:fun}
Let $\A$ be a small exact dg category. 
If $\B$ is a small connective dg category, then $\rep_{dg}(\B,\A)$ is canonically an exact dg category. 
\end{theorem}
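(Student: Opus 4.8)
The plan is to equip $\rep_{dg}(\B,\A)$ with the \emph{objectwise} exact structure and to transport each axiom from $\A$ through the evaluation dg functors. I would first replace $\B$ by a cofibrant resolution; this changes $\rep_{dg}(\B,\A)$ only up to quasi-equivalence, along which exact structures transport by Remark~\ref{truncationexactdgstructure} b), so it is harmless. Set $\mathcal E:=\rep_{dg}(\B,\A)$. Then $\mathcal E$ has cofibrant Hom complexes, it is connective because $\B$ and $\A$ both are, and $Z^0(\mathcal E)$ is additive since biproducts are computed objectwise in the additive category $Z^0(\A)$; thus the standing hypotheses of Definition~\ref{exactdgstructure} hold for $\mathcal E$. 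For each object $B$ of $\B$, restriction along the dg functor $k\to\B$ selecting $B$ is a dg functor $\mathrm{ev}_B\colon\mathcal E\to\A$. Using the adjunctions $\rep_{dg}(\J\otimes\B,\A)\cong\rep_{dg}(\J,\rep_{dg}(\B,\A))\cong\rep_{dg}(\B,\rep_{dg}(\J,\A))$, legitimate since $\J$ and $\B$ are cofibrant, a $3$-term h-complex $X$ over $\mathcal E$ is the same datum as a family $(\mathrm{ev}_B X)_{B\in\B}$ of $3$-term h-complexes over $\A$ depending dg-functorially on $B$. I declare the conflations of $\mathcal E$ to be the class $\mathcal S_{\mathcal E}$ of those $X$ with $\mathrm{ev}_B X\in\mathcal S$ for every $B$; this class is manifestly stable under isomorphism.

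The first substantive point is that such an $X$ is a genuine \emph{homotopy} short exact sequence in $\mathcal E$, i.e.\ that objectwise homotopy bicartesianness propagates to the functor category. For this I would pass to the bounded dg derived category. By Theorem~\ref{main}, the universal exact morphism $\A\to\D^b_{dg}(\A)$ identifies $\tau_{\leq0}\A$ with $\tau_{\leq0}\D'$ for an extension-closed full dg subcategory $\D'$ of the pretriangulated dg category $\D^b_{dg}(\A)$, and it is fully exact. Since $\D^b_{dg}(\A)$ is pretriangulated, so is $\rep_{dg}(\B,\D^b_{dg}(\A))$, with shifts and cones formed objectwise, and it therefore carries its maximal exact dg structure, whose conflations are precisely the objectwise triangles (cf.\ Example~\ref{exm:exactdgpretriangulated}). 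Postcomposition with $\A\to\D^b_{dg}(\A)$ maps $\mathcal E$ into $\rep_{dg}(\B,\D^b_{dg}(\A))$; because $\D'$ is extension-closed and triangles in $\rep_{dg}(\B,\D^b_{dg}(\A))$ are detected objectwise, the image of $\mathcal E$ is extension-closed, hence inherits an exact dg structure. A $3$-term h-complex $X$ over $\mathcal E$ lies in this inherited structure iff its image is an objectwise triangle with all terms objectwise in $\D'$, which, since $\A\to\D^b_{dg}(\A)$ is fully exact, happens iff each $\mathrm{ev}_B X$ is homotopy short exact in $\A$. Thus $\mathcal S_{\mathcal E}$ consists of genuine homotopy short exact sequences and agrees with the structure induced from the pretriangulated category $\rep_{dg}(\B,\D^b_{dg}(\A))$.

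It remains to check the axioms of Definition~\ref{exactdgstructure}. Axiom Ex0 is immediate. For Ex2 and Ex2$^{op}$: given a deflation $p\colon G\to H$ together with $c\colon H'\to H$ in $Z^0(\mathcal E)$ (resp.\ an inflation together with a map out of its source), one forms the homotopy pullback (resp.\ homotopy pushout) objectwise --- Ex2 (resp.\ Ex2$^{op}$) for $\A$ supplies, for each $B$, a homotopy (co)cartesian completion in which $\mathrm{ev}_B(p')$ is a deflation (resp.\ $\mathrm{ev}_B(i')$ is an inflation). These assemble into a square over $\mathcal E$ by the diagram lemmas of Chapter~\ref{sec:diagramlemmas}, equivalently by running the construction inside the pretriangulated $\rep_{dg}(\B,\D^b_{dg}(\A))$; the square is homotopy (co)cartesian in $\mathcal E$ by the objectwise detection just established, and $p'$ (resp.\ $i'$) is a deflation (resp.\ inflation) because it is so at every object. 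For Ex1, if $p\colon G\to H$ and $q\colon H\to I$ are deflations then $\mathrm{ev}_B(qp)=\mathrm{ev}_B(q)\,\mathrm{ev}_B(p)$ is a deflation in $\A$ for every $B$ by Ex1 for $\A$; the corresponding objectwise kernels assemble --- again via the embedding into $\rep_{dg}(\B,\D^b_{dg}(\A))$ --- into a $3$-term h-complex over $\mathcal E$ that is objectwise a conflation, hence lies in $\mathcal S_{\mathcal E}$, so $qp$ is a deflation. Finally, the whole construction is phrased through the quasi-equivalence-invariant data $\mathrm{ev}_B$ and $\D^b_{dg}(\A)$, so it is independent of the chosen cofibrant resolution of $\B$; this is the sense in which the exact dg structure on $\rep_{dg}(\B,\A)$ is canonical.

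The step I expect to be the main obstacle is exactly the propagation of homotopy exactness to the functor category --- equivalently, the assertion that homotopy (co)cartesian squares in $\rep_{dg}(\B,\A)$ are detected by the evaluation functors $\mathrm{ev}_B$. The subtlety is that the Hom complexes of a dg functor category are bar-type totalizations rather than naive natural transformations, so the $\tau_{\leq0}\RHom$ criterion of Definition~\ref{maindef} cannot be unwound objectwise by a direct computation; passing through $\rep_{dg}(\B,\D^b_{dg}(\A))$, where homotopy short exactness becomes the existence of an honest triangle and is visibly objectwise, is what renders it tractable. As usual in this theory, the delicate point is then the compatibility of the homotopies involved under this comparison, cf.\ Proposition~\ref{push}.
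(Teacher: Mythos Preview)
Your overall strategy is sound and the conflation class you define coincides with the paper's, but your route to the axioms is genuinely different. The paper does \emph{not} invoke the embedding theorem (Theorem~\ref{main}); instead it proves two self-contained lemmas. Lemma~\ref{suff} shows directly, via the identification $\RHom_{\B^{op}\otimes\A}(X,Y)\simeq\RHom_{\B^e}(\B,\RHom_\A(X,Y))$ together with connectivity of $\B$, that an object of $\rep(\Sq,\rep_{dg}(\B,\A))$ is homotopy (co)cartesian as soon as each evaluation $F_B$ of it is. Lemma~\ref{pointwise} shows that homotopy pullbacks and pushouts exist in $\rep_{dg}(\B,\A)$ whenever they exist objectwise, by explicitly constructing the missing vertex using the heart of the canonical $t$-structure on $\D(\B^{op}\otimes\A)$. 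These two lemmas yield all the axioms at once. Your detour through the pretriangulated category $\rep_{dg}(\B,\D^b_{dg}(\A))$ is more conceptual and successfully recycles existing machinery (Theorem~\ref{main}, Corollary~\ref{strictify}, Example-Definition~\ref{exactdgextensionclosed}), but it buys this at the cost of a logical dependency on the embedding theorem and requires first reducing $\A$ to the connective case via Remark~\ref{truncationexactdgstructure}~a), which you should make explicit. The paper's direct approach is lighter and works uniformly without that reduction.

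One small correction: $\rep_{dg}(\B,\A)$ is \emph{not} connective in general even when $\B$ and $\A$ both are (take $\B=k[1\to 2]$ and $\A=k$; suitable objects have nonvanishing $\Ext^1$). This is harmless for your argument, since Definition~\ref{exactdgstructure} only requires $H^0$ to be additive, but you should drop that claim.
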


\begin{theorem}[Theorem~\ref{main}]\label{intro:main}
Let $\A$ be an exact dg category. 
There exists a universal exact morphism $F:\A\rightarrow \D^b_{dg}(\A)$ in $\Hqe$ from $\A$ to a pretriangulated dg category $\D^b_{dg}(\A)$. If moreover $\A$ is connective, this morphism satisfies:
\begin{itemize}
\item[1)]It induces a quasi-equivalence from $\tau_{\leq 0}{\A}$ to $\tau_{\leq 0}\D'$ for an extension-closed dg subcategory $\D'$ of $\D^b_{dg}(\A)$.
\item[2)]It induces a natural bijection $\mathbb E(C,A)\xrightarrow{\sim} \Ext^1_{\D^b(\A)}(FC,FA)$ for each pair of objects $C,A$ in $H^0(\A)$ where $\D^b(\A)=H^0(\D^b_{dg}(\A))$.
\end{itemize}
We call $\D^b_{dg}(\A)$ the {\em bounded dg derived category} of $\A$. 
\end{theorem}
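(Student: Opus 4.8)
The plan is to construct $\D^b_{dg}(\A)$ as a dg quotient of the Bondal--Kapranov pretriangulated hull. First I would form $\pretr(\A)$ together with the canonical dg functor $\A\hookrightarrow\pretr(\A)$, and let $\N$ be the full dg subcategory of $\pretr(\A)$ generated, under shifts and cones, by the totalizations $\mathrm{tot}(A\xrightarrow{i}B\xrightarrow{p}C)$ of the conflations of $\A$; this is the dg analogue of the dg category of bounded acyclic complexes over a Quillen exact category. Then $\D^b_{dg}(\A):=\pretr(\A)/\N$, the Drinfeld dg quotient, and $F$ is the composite $\A\hookrightarrow\pretr(\A)\to\D^b_{dg}(\A)$. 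Since $\pretr(\A)$ is pretriangulated and $\N$ is closed under shifts and cones, the dg quotient $\D^b_{dg}(\A)$ is again pretriangulated; when $\A$ is an ordinary Quillen exact category this construction recovers the standard dg enhancement of $\D^b(\A)$, which justifies the name.

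For exactness of $F$: given a conflation $A\xrightarrow{i}B\xrightarrow{p}C$ with chosen null-homotopy $h$, the totalization $\mathrm{tot}(A,B,C)$ lies in $\N$ and hence becomes a zero object in $\D^b_{dg}(\A)$, so the image $3$-term h-complex becomes a triangle $FA\to FB\to FC\to\Sigma FA$; since a conflation in a pretriangulated dg category with its maximal exact structure is the same as a triangle (cf.~Example~\ref{exm:exactdgpretriangulated}), $F$ is exact. For universality, let $G\colon\A\to\B$ be an exact morphism in $\Hqe$ with $\B$ pretriangulated. By the universal property of the pretriangulated hull, $G$ extends uniquely to $\bar G\colon\pretr(\A)\to\B$ in $\Hqe$. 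Exactness of $G$ says that each $G(A)\to G(B)\to G(C)$ is a triangle in $\B$, equivalently that $\bar G(\mathrm{tot}(A,B,C))$ is contractible; since contractible objects form a full dg subcategory closed under shifts and cones, $\bar G$ annihilates all of $\N$, and the universal property of the dg quotient yields a unique factorization of $\bar G$ through $\D^b_{dg}(\A)$. Conversely, precomposition with $F$ sends any morphism $\D^b_{dg}(\A)\to\B$ in $\Hqe$ to an exact morphism, so $F$ is the universal exact morphism to a pretriangulated dg category.

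Now assume $\A$ is connective. The technical heart of part 1) is the computation of the morphism complexes of $\D^b_{dg}(\A)$ between objects of $F(\A)$: I would show that the canonical map induces an isomorphism $\tau_{\leq 0}\A(A,B)\iso\tau_{\leq 0}\RHom_{\D^b_{dg}(\A)}(FA,FB)$ for all $A,B\in\A$, equivalently $\Hom_{\D^b(\A)}(FA,\Sigma^{-n}FB)\cong H^{-n}\A(A,B)$ for every $n\geq 0$. In the dg quotient such a morphism is represented by a roof $A\xleftarrow{s}X\to\Sigma^{-n}B$ in $\pretr(\A)$ with $\Cone(s)\in\N$, and the claim amounts to a straightening statement: every such roof reduces to an honest morphism of $\A$, and two honest morphisms coincide in $\D^b(\A)$ if and only if they are homotopic in $\A$. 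The proof uses the exact-structure axioms Ex1, Ex2 and $\Ex2^{op}$ to move $\Cone(s)$ off along $\N$, and the connectivity of $\A$ to guarantee that no obstruction is visible in degrees $\leq 0$; as in the diagram lemmas, the delicate point is the compatibility of the homotopies. Granting this, I would take $\D'$ to be the smallest extension-closed full dg subcategory of $\D^b_{dg}(\A)$ containing $F(\A)$: full faithfulness of $\tau_{\leq 0}\A\to\tau_{\leq 0}\D'$ is exactly the morphism computation, while essential surjectivity on $H^0$ follows from part 2), which shows that forming this extension closure adds no new objects up to isomorphism.

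For part 2), define $\alpha\colon\mathbb E(C,A)\to\Ext^1_{\D^b(\A)}(FC,FA)=\Hom_{\D^b(\A)}(FC,\Sigma FA)$ by sending a conflation to the connecting morphism of its triangle; well-definedness on $\mathbb E$-classes and naturality in $C$ and $A$ are formal. Injectivity follows from the faithfulness established in part 1): an equality of connecting morphisms produces an isomorphism between the two triangles in $\D^b(\A)$ fixing $FA$ and $FC$, which then lifts to an isomorphism of the corresponding conflations in $\mathcal{H}_{3t}(\A)$. Surjectivity is a realization statement: given $\varepsilon\in\Hom_{\D^b(\A)}(FC,\Sigma FA)$, I would complete it to a triangle $FA\to E\to FC\xrightarrow{\varepsilon}\Sigma FA$, lift the middle object and the two maps back to a $3$-term h-complex in $\A$ using Ex2 and $\Ex2^{op}$, check that this h-complex is a conflation, and verify that its triangle is the one we started with. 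This realization step, together with the roof-straightening computation of part 1), is where I expect the real obstacle to lie; everything else is bookkeeping with the universal properties of $\pretr(\A)$ and of the dg quotient.
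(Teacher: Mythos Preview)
Your construction of $\D^b_{dg}(\A)$ as $\pretr(\A)/\N$ and your proof of universality match the paper's Lemma~\ref{univer} essentially verbatim; that part is fine.

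The gap is in parts 1) and 2). Your ``roof-straightening'' paragraph is a statement of what must be shown rather than a strategy for showing it, and the paper's actual mechanism is one you have not identified: a bounded $t$-structure on the triangulated subcategory $\N=H^0(\N_{dg})$. The key chain of lemmas is this. First (Lemma~\ref{leftexactsequence}), the totalization of any conflation lies in the heart $\mathcal H\simeq\Mod H^0(\A)$ of the canonical $t$-structure on $\D(\A)$; such objects are called \emph{defective}. Second (Lemma~\ref{wide}), the defective objects form a \emph{wide} subcategory of $\mathcal H$ --- this is where Ex1, Ex2 and Proposition~\ref{property}(b) are actually used, not in an ad hoc roof manipulation. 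Third (Corollary~\ref{t}), it follows that the canonical $t$-structure on $\D(\A)$ restricts to a bounded $t$-structure on $\N$. Once you have this, full faithfulness in degrees $\leq 0$ follows from the general Lemma~\ref{lem:tstructureff} applied to the torsion pair $(\N^{\leq 0},\N^{\geq 1})$: one checks that $A\in{}^\perp\N^{\geq 1}$ and $\Sigma^i A\in(\Sigma^{-1}\N^{\leq 0})^\perp$ for $i\leq 0$.

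For surjectivity in 2), the same $t$-structure lets you assume that the cone $N'$ of a roof representing a class in $\Ext^1$ lies in the \emph{heart} of $\N$, hence is the totalization of a \emph{single} conflation $X'$; one then realizes the extension as $a_*c^*[X']$ for the morphisms $a,c$ read off from $N'$. Your proposal to ``lift the middle object $E$ back to $\A$'' does not work directly: $E$ is only known as an object of $\D^b(\A)$, and you have no handle on it until you have already produced a conflation. For injectivity, the paper's argument is simpler than yours: if $[X]\mapsto 0$ then $\overline f$ is a split monomorphism in $\D^b(\A)$, hence (by the full faithfulness just proved) in $H^0(\A)$, so $[X]=0$ by Proposition~\ref{split}. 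No lifting of triangle isomorphisms is needed.
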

We construct $\D^b_{dg}(\A)$ as the dg quotient
of the pretriangulated hull $\pretr(\A)$ by a full dg subcategory $\N$, which generalizes the dg category of acyclic
complexes over a Quillen exact category. We deduce that if $\A$ is a Quillen exact category, then $\D^b_{dg}(\A)$ is 
quasi-equivalent to the canonical dg enhancement of the bounded derived category of $\A$, whence the name
and the notation.

The universal embedding $\A\rightarrow \D^b_{dg}(\A)$ allows us to prove diagram lemmas for exact dg categories, 
which may be hard to prove directly. 
For example, let $\J$ be the dg category of (\ref{quiv:3termcofibrant}) and 
$F:\J\rightarrow \rep_{dg}(\J,\A)$ a dg functor such that $F(0)$ and $F(2)$ are both conflations in $\A$. 
Let $F_i:\J\rightarrow \A$ be the composition of $F$ with $\rep_{dg}(\J,\A)\rightarrow \A$ which is induced by the inclusion $i:k\rightarrow \J$ for $i=0$, $1$ and $2$.
If all $F_i$ are conflations in $\A$, then $F(1)$ is also an conflation in $\A$. This `diagram lemma' is the
main ingredient of the proof of Lemma~\ref{lem:internalhom}.

The dg nerve functor $N_{dg}$ sends each small exact dg category to an exact $\infty$-category. 
Moreover, we have the following theorem.
\begin{theorem}[Theorem~\ref{nerve}]
\label{intro:nerve}Let $\A$ be a dg category such that $H^0(\A)$ is an additive category. 
Then there is a bijection between the class of exact structures on the dg category $\A$ and the class of 
exact structures on the $\infty$-category $N_{dg}(\A)$.
Moreover, if $\A$ is a pretriangulated dg category endowed with the maximal exact structure 
(given by all homotopy short exact sequences), then its dg nerve is a stable $\infty$-category
endowed with the maximal exact structure (where each morphism is both an inflation and a deflation).
\end{theorem}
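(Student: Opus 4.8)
The plan is to establish the bijection by transporting, in both directions, the elementary data of an exact structure: on the dg side this is the class $\mathcal S\subseteq\mathcal H_{3t}(\A)$ of conflations, and on the $\infty$-side it is the class of exact sequences of $N_{dg}(\A)$ --- equivalently, by Barwick's axioms \cite{Barwick15}, the two subcategories of ingressive and egressive morphisms, which are recovered from the exact sequences. First I would fix the dictionary. The dg nerve $N_{dg}(\A)$ has the same objects as $\A$, its mapping spaces are the Dold--Kan spaces of the connective truncations $\tau_{\leq 0}\A(A,B)$, and hence $hN_{dg}(\A)=H^0(\A)$, so that $N_{dg}(\A)$ is an additive $\infty$-category. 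Given an exact structure $\mathcal S$ on $\A$ I would declare a morphism of $N_{dg}(\A)$ to be ingressive (resp. egressive) when its class in $H^0(\A)$ is the inflation (resp. deflation) of some conflation in $\mathcal S$, and declare the exact sequences to be the images of the elements of $\mathcal S$ under the comparison functor described next; conversely, an exact structure on $N_{dg}(\A)$ is sent to the class of $3$-term h-complexes whose image is an exact sequence.

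The comparison functor comes from restricting the dg nerve along the cofibrant resolutions of Section~\ref{res}: the objects of $\rep(k\overline{\Sq},\A)$ carry exactly the coherence datum of a square in an $\infty$-category, so one obtains a functor $\rep(k\overline{\Sq},\A)\to\Fun(\Delta^1\times\Delta^1,N_{dg}(\A))$, and similarly $\J$ gives coherent $3$-term diagrams; under the identification of $\mathcal H_{3t}(\A)$ with the full subcategory of $\rep(k\Sq,\A)$ on the objects with $X_{10}$ acyclic, a $3$-term h-complex is sent to a cofiber-sequence-shaped diagram. The central step is the lemma: \emph{an object $X\in\rep(k\overline{\Sq},\A)$ is a homotopy cartesian (resp. cocartesian) square in the sense of Definition~\ref{maindef} if and only if the corresponding square in $N_{dg}(\A)$ is a pullback (resp. pushout).} I would prove it as follows. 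A cone in an $\infty$-category is a limit cone exactly when it becomes one after applying every corepresentable functor $\Map_{N_{dg}(\A)}(A,-)$; that functor is computed, via Dold--Kan, by $\tau_{\leq 0}\RHom_\A(A^\wedge,-)$; and since the inclusion of connective complexes into $\D(k)$ admits $\tau_{\leq 0}$ as a right adjoint, homotopy limits of connective complexes are $\tau_{\leq 0}$ of the triangulated homotopy limits. Hence the space-level homotopy pullback of $\Map(A,X_{01})\to\Map(A,X_{11})\leftarrow\Map(A,X_{10})$ corresponds to $\tau_{\leq 0}\RHom_\A(A^\wedge,\Sigma^{-1}\Cone((-j,k)))$, which is precisely the complex in Definition~\ref{maindef}; the cartesian condition is thus the condition that the square be a pullback in $N_{dg}(\A)$, and dualizing handles the cocartesian case, so homotopy bicartesian matches bicartesian. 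I expect this lemma --- in particular pinning down the comparison functor and checking that it reaches every square of $N_{dg}(\A)$ up to equivalence --- to be the main obstacle; the rest is comparatively formal.

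Granting the lemma, I would check that the two assignments are well defined and mutually inverse. A $3$-term h-complex $A\to B\to C$ is homotopy short exact iff its square is homotopy bicartesian with acyclic $X_{10}$, i.e. iff $A$ is the fiber of $B\to C$ and $C$ the cofiber of $A\to B$, so by the lemma conflations correspond to exact sequences of $N_{dg}(\A)$. I would then match the axioms: Ex0, together with the basic closure properties of inflations and deflations that hold exactly as in \cite{Buhler10} (isomorphisms are inflations and deflations, compositions of inflations are inflations), corresponds to the requirement that ingressives and egressives form subcategories containing the equivalences and that $0$ is a zero object; Ex1 corresponds to closure of egressives under composition; Ex2 and Ex2$^{op}$ correspond, via the lemma (and the fact that every morphism of $N_{dg}(\A)$ is represented by a map in $Z^0(\A)$), to Barwick's axioms that pullbacks of egressives and pushouts of ingressives along arbitrary morphisms exist and are again egressive resp. ingressive; and Barwick's ambigressive ``pullback $\Leftrightarrow$ pushout'' axiom corresponds to the fact that conflations are homotopy bicartesian by definition, together with the closure properties already transported. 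Since an ingressive is always the inflation of the exact sequence produced by pushing it out along the map $A\to 0$, the ingressives and egressives are recovered from the exact sequences, so the two assignments are inverse and the bijection follows.

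For the ``moreover'', assume $\A$ pretriangulated. By Faonte \cite{Faonte17b}, $N_{dg}(\A)$ is a stable $\infty$-category, in which every morphism is ingressive and egressive for the maximal exact structure and every (co)fiber sequence is exact. On the dg side, by Example~\ref{exm:exactdgpretriangulated} the maximal exact structure on a pretriangulated $\A$ consists of all homotopy short exact sequences, and since every morphism $i\colon A\to B$ sits in a triangle $A\to B\to\Cone(i)$, exhibiting $A$ as a fiber and $\Cone(i)$ as a cofiber, every morphism is both an inflation and a deflation. Both maximal structures are thus described as ``all (co)fiber sequences'', so they correspond under the bijection just constructed, which is the assertion.
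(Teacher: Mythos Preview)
Your approach is essentially the same as the paper's: establish a comparison between squares in $\A$ and in $N_{dg}(\A)$, prove the key lemma that homotopy (co)cartesian squares correspond to $\infty$-categorical (co)limits, and then transport the axioms. Your identification of the central lemma is exactly Lemma~\ref{homotopypullback} in the paper.

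Two points of comparison worth noting. First, the concern you flag---that the comparison functor is essentially surjective on squares---is resolved in the paper by Corollary~\ref{dgnerve}, which gives an equivalence $\rep(\Lambda(\Delta^1\times\Delta^1),\A)\simeq h(\Map(\Delta^1\times\Delta^1,N_{dg}(\A)))$; this in turn rests on the lax-monoidality of $\Lambda$ (Proposition~\ref{prop:Lambdamonoidal}). Second, your proof of the key lemma via ``mapping spaces detect limits'' is correct and slightly more conceptual than the paper's route, which passes through the big dg nerve $N_{dg}^{big}$ (Proposition~\ref{bignerve}) and Lurie's Theorem~\ref{holimsimplicial} on homotopy (co)limits in simplicial nerves of fibrant simplicial categories; the paper also makes an explicit reduction to squares with trivial homotopies $h_1=h_2=0$ by first passing to an additive closure with contractible pre-covers and pre-envelopes, a step you should include since Theorem~\ref{holimsimplicial} applies to a strictly compatible family of maps. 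Both routes compute the same thing, but the paper's is more hands-on and yours appeals to more general $\infty$-categorical facts.
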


This extends in particular a theorem by Faonte \cite{Faonte17b}, who showed that 
if a dg category is pretriangulated, then its dg nerve is a stable $\infty$-category.

Let $(\C,\mathbb E,\mathfrak s)$ be a small extriangulated category.
For $n\geq 0$, Gorsky--Nakaoka--Palu~\cite{GorskyNakaokaPalu21} defined the {\em higher 
extension} bimodule $\mathbb E^{n}(?,-)$ on $\C$ as the $n$th tensor power over $\C$ of $\mathbb E$.
A {\em $\delta$-functor} is a triple $(T,\epsilon,\eta)$ where $T=(T^i)_{i\geq 0}$ be a sequence of $\C$-$\C$-bimodules
and $\epsilon$ and $\eta$ are collections of connecting morphisms satisfying certain conditions, cf.~Definition~\ref{def:deltafunctor}.
It is clear from the definition that $(\mathbb E^n)_{n\geq 0}$ together with the associated connecting morphisms is a $\delta$-functor.
Using a general result concerning $\delta$-functors with weakly effaceable bimodules $T^i$ for $i>0$, cf.~Corollary~\ref{cor:effaceablebimodule}, we have the following result.
\begin{proposition}[Proposition~\ref{higher}]\label{intro:higher}
Let $F:\A\rightarrow \D^{b}_{dg}(\A)$ be the universal embedding of $\A$ 
into a pretriangulated category. 
Then we have a canonical isomorphism of $\delta$-functors $\alpha:\mathbb E^n(?,-)\xrightarrow{\sim} \Ext^n_{\D^b(\A)}(?,-)$.\end{proposition}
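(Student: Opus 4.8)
We write $(\C,\mathbb E,\mathfrak s)$ for the algebraic extriangulated category $H^0(\A)$ attached to the exact dg category $\A$, and $\overline F\colon\C\to\D^b(\A)$ for the functor induced on zeroth homology by the universal embedding $F\colon\A\to\D^b_{dg}(\A)$ of Theorem~\ref{main}. From that theorem, $\overline F$ is fully faithful with extension-closed essential image, it identifies $\mathbb E(C,A)$ with $\Ext^1_{\D^b(\A)}(\overline FC,\overline FA)$, and, since $F$ is exact, it carries a conflation $A\rightarrowtail B\twoheadrightarrow C$ of $\A$ to a triangle $\overline FA\to\overline FB\to\overline FC\to\Sigma\overline FA$ of $\D^b(\A)$ whose class in $\Ext^1_{\D^b(\A)}(\overline FC,\overline FA)$ corresponds under this identification to $\mathfrak s$ of the conflation. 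The plan is to recognize both $(\mathbb E^n)_{n\geq 0}$ and $\Ext^\bullet\coloneqq(\Ext^n_{\D^b(\A)}(\overline F?,\overline F-))_{n\geq 0}$ as $\delta$-functors on $\C$ that agree in degrees $0$ and $1$, and then to conclude by Corollary~\ref{cor:effaceablebimodule}.

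First I would check that $\Ext^\bullet$, equipped with the connecting morphisms $\epsilon$ and $\eta$ obtained by applying $\Hom_{\D^b(\A)}(\overline F?,-)$ and $\Hom_{\D^b(\A)}(-,\overline F-)$ to the triangles attached to conflations of $\A$, satisfies the axioms of Definition~\ref{def:deltafunctor}; this is a routine translation of the octahedral axiom, using that $\overline F$ has extension-closed image so that the triangles involved genuinely come from conflations (this $\delta$-functor is the canonical realization of the unbounded bivariant $\delta$-functor already mentioned for algebraic extriangulated categories in the introduction). In degree $0$, full faithfulness of $\overline F$ gives $\mathbb E^0=\C(?,-)=\Ext^0$; in degree $1$, part~2) of Theorem~\ref{main} gives a natural isomorphism $\mathbb E=\mathbb E^1\iso\Ext^1$, which by the normalization recalled above is compatible with the connecting morphisms. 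Hence the identity in degree $0$ together with this isomorphism in degree $1$ constitute the beginning of a morphism of $\delta$-functors, and, using the universal property of the higher-extension $\delta$-functor $\mathbb E^\bullet$ of Gorsky--Nakaoka--Palu, it extends uniquely to a morphism of $\delta$-functors $\alpha\colon\mathbb E^\bullet\to\Ext^\bullet$.

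It then remains to verify the hypotheses of Corollary~\ref{cor:effaceablebimodule}, namely that $\mathbb E^n$ and $\Ext^n_{\D^b(\A)}(\overline F?,\overline F-)$ are weakly effaceable for $n\geq 1$, so that $\alpha^0$ and $\alpha^1$ being isomorphisms forces $\alpha^n$ to be an isomorphism for all $n$. Weak effaceability of $\mathbb E^n$ for $n\geq 1$ is a general property of higher extensions. For $\Ext^n_{\D^b(\A)}(\overline F?,\overline F-)$ one uses that $\Ext$ in the triangulated category $\D^b(\A)$ is Yoneda $\Ext$ and that $\overline F(\C)$ is extension-closed: given $\xi\in\Ext^n(\overline FC,\overline FA)$ with $n\geq1$, one realizes its topmost layer by a conflation $A'\rightarrowtail B\twoheadrightarrow C$ of $\A$, and then the pullback of $\xi$ along the deflation $B\twoheadrightarrow C$ vanishes; since one may choose all intermediate terms of a representative $n$-extension inside $\overline F(\C)$, this is precisely weak effaceability.

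The main obstacle I anticipate is the degree-$1$ compatibility of $\alpha$ together with the effaceability of the restricted $\Ext$: the former requires matching the explicit connecting maps of $\mathbb E^\bullet$ constructed by Gorsky--Nakaoka--Palu with the triangulated connecting maps of $\D^b(\A)$, which forces one to trace through the construction of $\D^b_{dg}(\A)$ as the dg quotient of $\pretr(\A)$ by $\N$; the latter requires the identification of $n$-extensions in $\D^b(\A)$ with Yoneda sequences whose intermediate objects lie in the extension-closed subcategory $\overline F(\C)$. Once these are in place, the conclusion $\alpha\colon\mathbb E^n(?,-)\iso\Ext^n_{\D^b(\A)}(?,-)$ is immediate, and naturality in both variables and compatibility with $\epsilon,\eta$ are part of the output of Corollary~\ref{cor:effaceablebimodule}.
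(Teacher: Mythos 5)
Your overall architecture matches the paper's: both recognize $(\Ext^n_{\D^b(\A)}(?,-))_{n\geq 0}$ with the obvious connecting maps as a $\delta$-functor normalized by $\Hom$ in degree $0$ and by Theorem~\ref{main}~2) in degree $1$, and both reduce the claim to Corollary~\ref{cor:effaceablebimodule}, whose hypotheses include weak effaceability of $T^i$ for $i>0$. The problem is the step you yourself flag as the ``main obstacle'' and then do not resolve: your argument for weak effaceability of $\Ext^n_{\D^b(\A)}(\overline F?,\overline F-)$ presupposes that an arbitrary element $\xi\in\Hom_{\D^b(\A)}(\overline FC,\Sigma^n\overline FA)$ has a ``topmost layer'' realized by a conflation of $\A$, i.e.\ that $\xi$ factors as $\overline FC\to\Sigma\overline FD\to\Sigma^n\overline FA$ with the first arrow coming from a conflation $D\rightarrowtail B\twoheadrightarrow C$. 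In a triangulated category an element of $\Hom(X,\Sigma^nY)$ is just a morphism; there is no a priori Yoneda presentation with intermediate terms in the extension-closed image of $\overline F$. That presentation is essentially equivalent to the surjectivity of $\mathbb E^n\to\Ext^n$ you are trying to prove, so as written the effaceability argument is circular.

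The paper closes exactly this gap by a direct computation in the Verdier quotient $\D^b(\A)=\tr(\A)/\N$: a morphism $C\to\Sigma^nA$ is a roof $C\Leftarrow M\xrightarrow{w}\Sigma^nA$ with $N=\Cone(s)\in\N$; using the bounded t-structure on $\N$ from Corollary~\ref{t} (whose heart consists of the defective objects, i.e.\ totalizations of conflations), the connectivity of $\A$, and vanishing of the relevant $\Hom$ spaces, one may assume $N\in\N^{[-n+1,0]}$ and write it as an iterated extension $(\Sigma^{n-1}N_{n-1})*\cdots*N_0$ of shifted defective objects $N_i$. Extracting from $N_{n-1}$ a conflation and a class $\delta_{n-1}\in\mathbb E(C_{n-1},A)$, and chasing the octahedra, one shows that $w/s$ factors through a morphism effaced by the corresponding deflation. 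If you want to salvage your route, you must supply precisely this factorization; it cannot be obtained from ``$\overline F(\C)$ is extension-closed'' alone, since that only controls $\Ext^1$, not the decomposition of higher $\Ext$ classes into composable degree-one pieces.
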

A similar result for exact $\infty$-categories also holds. As a consequence, for an exact dg category $\A$, the stable hull of $N_{dg}(\A)$ is canonically equivalent to $N_{dg}(\D^b_{dg}(\A))$.

For an extriangulated category $\C$ with a subcategory $\P$ consisting of (not necessarily all) projective-injective objects in $\C$, Nakaoka--Palu showed that the ideal quotient $\C/\P$ has the structure of an extriangulated category, induced from that of $\C$, cf.~\cite[Proposition 3.30]{NakaokaPalu19}. The following theorem enhances their result.
\begin{theorem}[Theorem~\ref{quot}]\label{intro:quot}
Let $\A$ be a connective exact dg category and $\P$ a full dg subcategory of $\A$ consisting of projective-injective objects in $\A$.
Let $\mathcal S_{dg}$ be the canonical dg enhancement of $\D^b(\A)/\tr(\P)$.
Then the dg quotient $\A/\P$ carries a canonical exact dg structure induced from that of $\A$ and its dg derived category is quasi-equivalent to $\mathcal S_{dg}$.
\end{theorem}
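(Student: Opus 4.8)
The plan is to realize both $\A/\P$ and $\mathcal S_{dg}$ inside the bounded dg derived category of $\A$, where the pretriangulated machinery is available. Let $F\colon \A\to\D^b_{dg}(\A)$ be the universal exact morphism of Theorem~\ref{main}, let $\tr(\P)$ denote the dg subcategory of $\D^b_{dg}(\A)$ generated by $F(\P)$ under shifts and cones, and set $\mathcal S_{dg}:=\D^b_{dg}(\A)/\tr(\P)$. Since $\D^b_{dg}(\A)$ is pretriangulated, so is $\mathcal S_{dg}$, and $H^0(\mathcal S_{dg})=\D^b(\A)/\tr(\P)$, so $\mathcal S_{dg}$ is the canonical dg enhancement in the statement. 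The composite $\A\xrightarrow{F}\D^b_{dg}(\A)\xrightarrow{\pi}\mathcal S_{dg}$ sends every object of $\P$ to a contractible object, hence factors in $\Hqe$ through a morphism $\overline F\colon\A/\P\to\mathcal S_{dg}$. One first checks that $\A/\P$ is still connective, using that in Drinfeld's model the morphism complexes of $\A/\P$ are assembled, via a filtration bounded in each cohomological degree, from those of $\A$ together with shifts $(-)[n]$ ($n\geq1$) of tensor products of morphism complexes of $\A$, all of which vanish in positive degrees.

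\textbf{The exact dg structure.} I would declare the conflations of $\A/\P$ to be the $3$-term h-complexes isomorphic in $\mathcal H_{3t}(\A/\P)$ to the images $q_*\sigma$ of conflations $\sigma$ of $\A$; on $H^0$ these are the images of the conflations of $H^0(\A)$, which by \cite[Proposition~3.30]{NakaokaPalu19} form an extriangulated structure on $H^0(\A)/[\P]$. The key point is then to show that $\overline F$ induces a quasi-equivalence from $\tau_{\leq0}(\A/\P)$ onto $\tau_{\leq0}\D''$, where $\D''$ is the full dg subcategory of $\mathcal S_{dg}$ on the objects $\pi F(A)$, $A\in\A$, and that $\D''$ is extension-closed in $\mathcal S_{dg}$. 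Granting this, homotopy short exactness in $\A/\P$ is detected by $\overline F$, so each $q_*\sigma$ is homotopy short exact (its image is a triangle of $\mathcal S_{dg}$ with terms in $\D''$); moreover $\D''$, being extension-closed in a pretriangulated dg category, carries a canonical exact dg structure, and transporting it along the quasi-equivalence (cf.~Remark~\ref{truncationexactdgstructure}) endows $\A/\P$ with an exact dg structure whose conflations are exactly the $q_*\sigma$ up to isomorphism; in particular the axioms Ex0, Ex1, Ex2 and $\Ex2^{op}$ hold.

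\textbf{The dg derived category.} With $\A/\P$ now an exact dg category, Theorem~\ref{main} gives its universal exact morphism $F'\colon\A/\P\to\D^b_{dg}(\A/\P)$, and it remains to identify $\D^b_{dg}(\A/\P)$ with $\mathcal S_{dg}$. Since $\overline F$ sends $q_*\sigma$ to $\pi_*F_*\sigma$, which is a triangle of $\mathcal S_{dg}$, the morphism $\overline F$ is exact, so it suffices to show that $(\mathcal S_{dg},\overline F)$ has the universal property of $(\D^b_{dg}(\A/\P),F')$: given an exact morphism $G\colon\A/\P\to\B$ into a pretriangulated dg category, the composite $Gq$ is exact, hence extends uniquely to $\tilde G\colon\D^b_{dg}(\A)\to\B$; as $\tilde G F=Gq$ kills $\P$ and $\tilde G$ respects the pretriangulated structure, $\tilde G$ kills $\tr(\P)$ and so factors uniquely through $\pi$; uniqueness passes down because $q$ and $\pi$ are epimorphisms in $\Hqe$. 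Alternatively, one may note that $\D''$ generates $\mathcal S_{dg}$ as a pretriangulated category (because $F(\A)$ generates $\D^b_{dg}(\A)$) and invoke the statement inverse to Theorem~\ref{main}, that the bounded dg derived category of $\tau_{\leq0}$ of an extension-closed generating dg subcategory of a pretriangulated dg category is that category itself, cf.~Example~\ref{exm:exactdgpretriangulated}.

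\textbf{Main obstacle.} The hard part is the technical claim in the second step: comparing the morphism complexes of the two dg quotients and establishing that $\D''$ is extension-closed. In the Drinfeld presentations of $\A/\P$ and of $\D^b_{dg}(\A)/\tr(\P)$ the length-$n$ summands — paths through $\P$, respectively through $\tr(\P)$ — must be shown to agree in cohomological degrees $\leq0$: the length-$0$ parts do so because $\tau_{\leq0}\A(A,B)\xrightarrow{\sim}\tau_{\leq0}\D^b_{dg}(\A)(FA,FB)$ by Theorem~\ref{main}, and the extra contributions of the shifts and cones of the $F(P)$ occurring in $\tr(\P)$ but not in $F(\P)$ vanish in degrees $\leq0$ precisely because the objects of $\P$ are projective-injective, so that $\mathbb E^{>0}(P,-)=0=\mathbb E^{>0}(-,P)$ and hence $\Ext^{>0}_{\D^b(\A)}(FP,FA)=0=\Ext^{>0}_{\D^b(\A)}(FA,FP)$ for every $A\in\A$ by Proposition~\ref{higher}. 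The same vanishing, combined with the extension-closedness of the image of $\A$ in $\D^b_{dg}(\A)$, shows that the bifunctor $\mathbb E$ is unchanged by the quotient — the dg analogue of the computation underlying \cite[Proposition~3.30]{NakaokaPalu19} — so that a triangle of $\mathcal S_{dg}$ with outer terms in $\D''$ lifts to $\D^b_{dg}(\A)$ with middle term an extension of objects of the (extension-closed) image of $\A$, hence again in $\D''$.
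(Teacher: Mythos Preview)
Your overall architecture—factor the composite $\A\to\D^b_{dg}(\A)\to\mathcal S_{dg}$ through $\overline F\colon\A/\P\to\mathcal S_{dg}$, show $\overline F$ induces a quasi-equivalence $\tau_{\leq0}(\A/\P)\simeq\tau_{\leq0}\D''$ with $\D''$ extension-closed, transport the exact structure, and identify the derived category—is exactly the paper's. The divergence is entirely in how the ``main obstacle'' is handled.

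The paper does not compare Drinfeld presentations. It inserts the intermediate Verdier quotient $\tr(\A)/\tr(\P)$ and proves two things (packaged as Proposition~\ref{prop:dgsingularitycategory}, resting on Lemma~\ref{lem:quotientprojectiveinjective} and the unnamed lemma preceding the definition of projective). First, for any connective $\C$ and full $\C'\subset\C$, one has $\Hom_{\tr(\C)/\tr(\C')}(A,\Sigma^nB)=0$ for $n>0$ and $=(H^0(\C)/[\C'])(A,B)$ for $n=0$: given a roof $A\xleftarrow{s}M\to\Sigma^nB$ with $\Cone(s)\in\tr(\C')$, a weight decomposition in the canonical co-t-structure (Proposition~\ref{cot}) lets one assume $\Cone(s)\in\T^{>-1}$, whence the map to $\Sigma^nB\in\T^{\leq-n}$ factors through $A$. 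Second, for $A,B\in\A$ the map $\Hom_{\tr(\A)/\tr(\P)}(A,\Sigma^nB)\to\Hom_{\D^b(\A)/\tr(\P)}(A,\Sigma^nB)$ is bijective for $n\leq0$, again by roof calculus: one reduces $\Cone(s)\in\tr(\P)$ to $\T^{\geq0}$ and then uses that $\P$ is projective-injective (hence orthogonal to $\mathcal N$) to lift the roof back to $\tr(\A)$ via a five-lemma comparison. For $n\geq1$ the quotient map from $\D^b(\A)$ is already bijective, giving extension-closedness. The derived category is then identified using item~(4) together with Proposition~\ref{higher}, which is close to your alternative ``generates plus higher Ext'' argument.

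Your direct filtration-by-filtration Drinfeld comparison is where the proposal has a real gap. The length-$n$ summands are not individually comparable: Drinfeld's differential couples adjacent path lengths via composition, and $\tau_{\leq0}$ does not commute with the shifted tensor products, so knowing each factor in degrees $\leq0$ does not control the total complex there. Moreover, even after replacing $\D^b_{dg}(\A)/\tr(\P)$ by the quasi-equivalent $\D^b_{dg}(\A)/\P$, the individual factors $\D^b_{dg}(\A)(FP_i,FP_{i+1})$ carry positive-degree cohomology (higher $\Ext$'s among non-$\P$ objects do not enter here, but the comparison of the length-$0$ term $\D^b_{dg}(\A)(FA,FB)$ with $\A(A,B)$ is only a $\tau_{\leq0}$-quasi-isomorphism, not an isomorphism). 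The vanishing $\Ext^{>0}_{\D^b(\A)}(FP,-)=0=\Ext^{>0}_{\D^b(\A)}(-,FP)$ that you correctly extract from Proposition~\ref{higher} is exactly the right input, but it has to be fed into the roof calculus at the Verdier-quotient level; that is what the co-t-structure argument does cleanly.
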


Let $\A$ and $\A'$ be connective exact dg categories and $\C$ an arbitrary exact dg category.
A morphism $\mu: \A\otimes \A'\rightarrow \C$ in $\Hqe$ is {\em biexact}, if for all objects $A\in \A$ and 
$A'\in \A'$, the induced morphisms 
\[
\mu_{A,-}:\A'\rightarrow \C,\;\;\mu_{-,A'}:\A\rightarrow \C
\]
are both exact morphisms. Let $\A\boxtimes \A'$ be the $\tau_{\leq 0 }$-truncation of the extension closure of
the quasi-essential image of the morphism in $\Hqe$
\[
F: \A\otimes \A'\rightarrow \D^b_{dg}(\A)\otimes\D^b_{dg}(\A')\rightarrow \pretr(\D^b_{dg}(\A)\otimes\D^b_{dg}(\A'))
\]

\begin{proposition}[Proposition~\ref{prop:universalbilinear}]\label{intro:universalbilinear}
 The natural morphism $\A\otimes \A'\rightarrow \A\boxtimes \A'$ is the universal biexact morphism in $\Hqe$ from $\A\otimes \A'$ to an exact dg category.
\end{proposition}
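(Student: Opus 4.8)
The plan is to verify the universal property of the natural morphism $\eta\colon\A\otimes\A'\to\A\boxtimes\A'$ directly, by combining the universal properties of the functor $\D^b_{dg}(-)$ (Theorem~\ref{main}) and of the pretriangulated hull with the tensor--hom adjunction in $\Hqe$. Recall that $\A\boxtimes\A'$ is the $\tau_{\leq 0}$-truncation of the extension closure $\E$ of the quasi-essential image of $F=(F_\A\otimes F_{\A'})$ followed by $\D^b_{dg}(\A)\otimes\D^b_{dg}(\A')\hookrightarrow\pretr(\D^b_{dg}(\A)\otimes\D^b_{dg}(\A'))$, where $F_\A$, $F_{\A'}$ are the universal embeddings of Theorem~\ref{main}. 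First I would show that $\eta$ is biexact: for fixed $A'\in\A'$ the morphism $\eta_{-,A'}$ is the composite of $F_\A$ (exact, by Theorem~\ref{main}), of the triangle functor $C\mapsto C\otimes F_{\A'}(A')$ into $\pretr(\D^b_{dg}(\A)\otimes\D^b_{dg}(\A'))$, of the inclusion of the extension-closed --- hence canonically exact --- dg subcategory $\E$, and of the $\tau_{\leq 0}$-truncation, which preserves conflations; each factor is exact, and symmetrically for $\eta_{A,-}$. Thus precomposing an exact morphism $\A\boxtimes\A'\to\C$ with $\eta$ always yields a biexact morphism, and it remains to produce and characterize the factorization.

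Let $\mu\colon\A\otimes\A'\to\C$ be biexact with $\C$ an exact dg category; one reduces to the case $\C$ connective, using that $\A\boxtimes\A'$ is connective (cf.~Remark~\ref{truncationexactdgstructure}). Compose $\mu$ with the universal embedding $\C\to\D^b_{dg}(\C)$, obtaining a biexact $\nu\colon\A\otimes\A'\to\D^b_{dg}(\C)$ with pretriangulated target, and extend it along $F$ as follows. By the tensor--hom adjunction $\nu$ corresponds to a morphism $\A\to\rep_{dg}(\A',\D^b_{dg}(\C))$; biexactness of $\nu$ says exactly that this lands in the full subcategory $\rep_{dg}^{ex}(\A',\D^b_{dg}(\C))$ of exact morphisms and is exact there. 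As $\D^b_{dg}(\C)$ is pretriangulated, the internal form of the universal property of Theorem~\ref{main} identifies $\rep_{dg}^{ex}(\A',\D^b_{dg}(\C))$ with $\rep_{dg}(\D^b_{dg}(\A'),\D^b_{dg}(\C))$ via precomposition with $F_{\A'}$, and the latter is pretriangulated because a dg category of functors into a pretriangulated dg category is again pretriangulated; hence $\rep_{dg}^{ex}(\A',\D^b_{dg}(\C))$ is pretriangulated. Applying Theorem~\ref{main} to $\A$ now gives a morphism $\D^b_{dg}(\A)\to\rep_{dg}^{ex}(\A',\D^b_{dg}(\C))$, i.e.~a biexact $\D^b_{dg}(\A)\otimes\A'\to\D^b_{dg}(\C)$; since any morphism between pretriangulated dg categories (with their maximal exact structures) is exact, the same argument in the second variable produces a morphism $\D^b_{dg}(\A)\otimes\D^b_{dg}(\A')\to\D^b_{dg}(\C)$ restricting to $\nu$ along $F_\A\otimes F_{\A'}$, and the universal property of the pretriangulated hull then yields a morphism $\tilde\nu\colon\pretr(\D^b_{dg}(\A)\otimes\D^b_{dg}(\A'))\to\D^b_{dg}(\C)$ with $\tilde\nu\circ F\simeq\nu$, unique in $\Hqe$.

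It remains to restrict $\tilde\nu$ to $\A\boxtimes\A'$ with image in $\C$ rather than in $\D^b_{dg}(\C)$. Being a triangle functor, $\tilde\nu$ sends the quasi-essential image of $F$ into the image of $\mu$, which lies in $\C\subseteq\D^b_{dg}(\C)$, and hence sends $\E$ into the extension closure of that image; by part 1) of Theorem~\ref{main} applied to the connective $\C$, the $\tau_{\leq 0}$-truncation of this extension closure is $\tau_{\leq 0}\C=\C$. Truncating $\tilde\nu|_{\E}$ thus gives an exact $\bar\mu\colon\A\boxtimes\A'\to\C$ with $\bar\mu\circ\eta\simeq\mu$. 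For uniqueness, observe that, since the objects $F_\A(A)\otimes F_{\A'}(A')$ generate $\pretr(\D^b_{dg}(\A)\otimes\D^b_{dg}(\A'))$ as a triangulated dg category, one has $\D^b_{dg}(\A\boxtimes\A')\simeq\pretr(\D^b_{dg}(\A)\otimes\D^b_{dg}(\A'))$, so an exact morphism out of $\A\boxtimes\A'$ is determined by its restriction along $\eta$; hence $\bar\mu$ is the unique such factorization, which proves that $\eta$ is the universal biexact morphism from $\A\otimes\A'$ to an exact dg category.

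The main obstacle will be the two-variable extension of the second paragraph: pinning down the internal (enriched) form of the universal property of $\D^b_{dg}(-)$ so that the identifications $\rep_{dg}^{ex}(\A',\D^b_{dg}(\C))\simeq\rep_{dg}(\D^b_{dg}(\A'),\D^b_{dg}(\C))$ hold as quasi-equivalences, checking that the ``biexact in each variable'' property is genuinely preserved through the successive adjunctions and through the passage to the pretriangulated hull, and establishing the identification $\D^b_{dg}(\A\boxtimes\A')\simeq\pretr(\D^b_{dg}(\A)\otimes\D^b_{dg}(\A'))$ used in the descent to $\C$ and in the uniqueness argument; the biexactness of $\eta$ and the truncation bookkeeping should be comparatively routine.
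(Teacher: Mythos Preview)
Your approach is essentially the same as the paper's, but considerably more explicit. The paper's proof consists of a single commutative diagram with dashed arrows and one sentence (``since the essential image of $\tau_{\leq 0}\C\hookrightarrow\D^b_{dg}(\tau_{\leq 0}\C)$ is extension-closed, the conclusion follows immediately''); the construction of the dashed arrow $\pretr(\D^b_{dg}(\A)\otimes\D^b_{dg}(\A'))\to\D^b_{dg}(\tau_{\leq 0}\C)$ is left entirely to the reader. Your second paragraph is exactly what fills this in, and the strategy --- tensor--hom adjunction followed by two applications of Theorem~\ref{main} and one of the pretriangulated hull --- is the right one.

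Two remarks on the details. First, you do not need the quasi-equivalence $\rep_{dg}^{ex}(\A',\D^b_{dg}(\C))\simeq\rep_{dg}(\D^b_{dg}(\A'),\D^b_{dg}(\C))$; Theorem~\ref{main} only provides a bijection on isomorphism classes, not an enriched statement. What you actually need is that $\rep_{dg}^{ex}(\A',\D^b_{dg}(\C))$ is pretriangulated, and this follows directly: since $\D^b_{dg}(\C)$ carries the maximal exact structure, a morphism $\A'\to\D^b_{dg}(\C)$ is exact iff it sends each conflation to a homotopy short exact sequence, and this property is visibly stable under shifts and cones in the pretriangulated dg category $\rep_{dg}(\A',\D^b_{dg}(\C))$. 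This sidesteps the first obstacle you flag.

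Second, your uniqueness argument has a genuine gap: you deduce $\D^b_{dg}(\A\boxtimes\A')\simeq\pretr(\D^b_{dg}(\A)\otimes\D^b_{dg}(\A'))$ from the fact that the objects $F_\A(A)\otimes F_{\A'}(A')$ generate the right-hand side, but generation alone gives only essential surjectivity of the comparison functor, not full faithfulness. The paper does not address uniqueness at all. One way to close the gap: your chain of bijections in the second paragraph actually shows that $\pretr(\D^b_{dg}(\A)\otimes\D^b_{dg}(\A'))$ corepresents the functor $\D\mapsto\Hqe_{biex}(\A\otimes\A',\D)$ on pretriangulated $\D$; hence there is a canonical morphism $\Psi\colon\pretr(\dots)\to\D^b_{dg}(\A\boxtimes\A')$ induced by the biexact $\A\otimes\A'\to\D^b_{dg}(\A\boxtimes\A')$, and one checks that $\Psi$ and the comparison map $\Xi\colon\D^b_{dg}(\A\boxtimes\A')\to\pretr(\dots)$ are mutually inverse using the two universal properties. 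Making the composite $\Psi\circ\Xi$ equal to the identity precise is the remaining work, and it is indeed the core of the obstacle you identify.
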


Let $\B$ and $\C$ be connective exact dg categories.
Let $\rep_{dg}^{ex}(\B,\C)$ be the full dg subcategory $\rep_{dg}(\B,\C)$ consisting of exact morphisms.
It is shown in Lemma~\ref{lem:internalhom} that $\rep_{dg}^{ex}(\B,\C)$ is stable under extensions in $\rep_{dg}(\B,\C)$ and thus inherits a canonical exact dg structure. 
\begin{corollary}[Corollary~\ref{cor:exactadjunction}]\label{intro:exactadjunction}
Let $\A$ and $\B$ be small connective exact dg categories. Let $\C$ be a small exact dg category. 
Then we have the following natural bijection
\[
\Hqe_{ex}(\A\boxtimes \B,\C)\iso \Hqe_{ex}(\A,\rep_{dg}^{ex}(\B,\C)).
\]
\end{corollary}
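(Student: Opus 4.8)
The plan is to compose two adjunctions. The first is the universal property of $\boxtimes$ established in Proposition~\ref{prop:universalbilinear}: since $\A$ and $\B$ are connective, precomposition with the universal biexact morphism $\A\otimes\B\to\A\boxtimes\B$ induces a bijection
\[
\Hqe_{ex}(\A\boxtimes\B,\C)\iso\Hqe_{biex}(\A\otimes\B,\C),
\]
where $\Hqe_{biex}(\A\otimes\B,\C)\subseteq\Hqe(\A\otimes\B,\C)$ denotes the subset of biexact morphisms. The second is the standard tensor--hom adjunction in $\Hqe$: because $\A$ and $\B$ have cofibrant Hom-complexes, $\A\otimes\B$ already computes the derived tensor product, so there is a bijection $\Hqe(\A\otimes\B,\C)\iso\Hqe(\A,\rep_{dg}(\B,\C))$, natural in $\A$, $\B$ and $\C$, sending a morphism $\mu$ to its mate $\tilde\mu$, characterised up to isomorphism in $\Hqe$ by $\tilde\mu(A)\simeq\mu_{A,-}$ in $\rep_{dg}(\B,\C)$ and $\tilde\mu(A)(B)\simeq\mu(A,B)$. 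So it suffices to show that, under this second bijection, the biexact morphisms $\A\otimes\B\to\C$ correspond exactly to the exact morphisms $\A\to\rep_{dg}^{ex}(\B,\C)$.

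To identify these two subsets, recall that the canonical exact structure on $\rep_{dg}(\B,\C)$ furnished by Theorem~\ref{fun} is the pointwise one: a $3$-term h-complex over $\rep_{dg}(\B,\C)$ is a conflation if and only if its evaluation at each $B\in\B$ is a conflation in $\C$. By Lemma~\ref{lem:internalhom}, $\rep_{dg}^{ex}(\B,\C)$ is extension-closed in $\rep_{dg}(\B,\C)$, so its inherited exact structure admits the same pointwise description. Now $\mu$ is biexact precisely when (i) $\mu_{A,-}\colon\B\to\C$ is exact for every $A\in\A$ and (ii) $\mu_{-,B}\colon\A\to\C$ is exact for every $B\in\B$. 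Condition (i) says exactly that each $\tilde\mu(A)$ lies in the full dg subcategory $\rep_{dg}^{ex}(\B,\C)$, i.e.\ that $\tilde\mu$ factors in $\Hqe$ through $\rep_{dg}^{ex}(\B,\C)$. Granting (i), condition (ii) says that for every conflation $\eta$ of $\A$ and every $B\in\B$ the sequence $(\tilde\mu\eta)(B)\simeq\mu_{-,B}(\eta)$ is a conflation in $\C$, which by the pointwise description means exactly that $\tilde\mu\eta$ is a conflation in $\rep_{dg}^{ex}(\B,\C)$, i.e.\ that $\tilde\mu\colon\A\to\rep_{dg}^{ex}(\B,\C)$ is an exact morphism. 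Hence $\mu$ is biexact if and only if $\tilde\mu\in\Hqe_{ex}(\A,\rep_{dg}^{ex}(\B,\C))$, and composing the two bijections yields the claimed one; its naturality is inherited from that of the two constituents.

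The main point requiring care is the interplay, at the level of the categories $\mathcal H_{3t}$, between the notion of exact morphism in $\Hqe$ (defined via the induced functor on $3$-term h-complexes rather than via a strict dg functor), the evaluation quasi-functors $\mathrm{ev}_B\colon\rep_{dg}(\B,\C)\to\C$, and the mate correspondence of the tensor--hom adjunction. Concretely, one must check that each $\mathrm{ev}_B$ is an exact morphism, that the functor $\mathcal H_{3t}(\A)\to\mathcal H_{3t}(\rep_{dg}(\B,\C))$ induced by $\tilde\mu$, followed by the functor induced by $\mathrm{ev}_B$, coincides with the functor induced by $\mu_{-,B}\colon\A\to\C$, and likewise that composition with $\mathrm{ev}_B$ realises $\tilde\mu(A)\mapsto\mu(A,B)=\mu_{-,B}(A)$. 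All of these become straightforward once the morphisms in $\Hqe$ are represented by honest quasi-functors (bimodules), using that $\A$ and $\B$ are connective with cofibrant Hom-complexes; everything else reduces to unwinding definitions and to the two cited adjunctions.
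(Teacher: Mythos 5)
Your proof is correct and follows essentially the same route as the paper, which proves the corollary by exactly the composition of bijections $\Hqe_{ex}(\A\boxtimes\B,\C)\iso\Hqe_{biexact}(\A\otimes\B,\C)\iso\Hqe_{ex}(\A,\rep_{dg}^{ex}(\B,\C))$, the first from Proposition~\ref{prop:universalbilinear} and the second from the tensor--hom adjunction together with the pointwise description of the exact structure on $\rep_{dg}(\B,\C)$. Your write-up merely makes explicit the identification of biexact morphisms with exact morphisms into $\rep_{dg}^{ex}(\B,\C)$, which the paper leaves implicit.
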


Let $(\C,\mathbb E,\mathfrak s)$ be a small extriangulated category.
The {\em defect} of an $\mathfrak s$-conflation 
\[
A\xrightarrow{x}B\xrightarrow{y}C
\] 
is defined to be the cokernel of the morphism of right $\C$-modules $\C(?,B)\xrightarrow{(?,y)}\C(?,C)$.
The full subcategory of the module category $\Mod\C$ whose objects are the defects is denoted by 
$\Def \mathbb E$. It was introduced in this generality by Ogawa in \cite{Ogawa21} 
and further studied by Enomoto \cite{Enomoto21}, who showed that $\Def \mathbb E$ is
a Serre subcategory of the abelian category of coherent $\C$-modules and that the map
taking a closed subbifunctor of $\mathbb E$ to its category of defects is a poset
isomorphism onto the poset of Serre subcategories of $\Def \mathbb E$.
If $\A$ is a connective exact dg category, then $\Def H^0(\A)$ is equivalent
to the heart of a $t$-structure on the category $\N$ of `acyclic complexes', cf.
the remark after Theorem~\ref{intro:main}.

\begin{theorem}[Theorem~\ref{bijectionstructures}]\label{intro:bijectionstructures}
Let $(\A,\mathcal S)$ be an exact dg category and $(H^0(\A),\mathbb E,\mathfrak s)$ the associated algebraic extriangulated category.
The following posets (where the order relations are given by inclusion) are canonically isomorphic:
\begin{itemize}
\item[(1)]The poset of exact substructures of $(\A,\mathcal S)$.
\item[(2)]The poset of closed subbifunctors of $\mathbb E$.
\item[(3)]The poset of Serre subcategories of $\Def \mathbb E$.
\end{itemize}
\end{theorem}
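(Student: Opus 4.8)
The isomorphism between the posets (2) and (3) is Enomoto's theorem~\cite{Enomoto21}, so the substance is the isomorphism between (1) and (2), which I would obtain by exhibiting a pair of mutually inverse, order-preserving maps. Given an exact substructure $\mathcal S'\subseteq\mathcal S$, let $\mathbb E'(C,A)\subseteq\mathbb E(C,A)$ be the subset of extension classes realized by conflations lying in $\mathcal S'$; conversely, given a closed additive subbifunctor $\mathbb E'\subseteq\mathbb E$, let $\Phi(\mathbb E')$ be the class of those conflations $X$ of $\mathcal S$ whose associated class $\delta_{X}\in\mathbb E(C,A)$ lies in $\mathbb E'(C,A)$. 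First I would check that $\mathcal S'\mapsto\mathbb E'$ takes values in closed additive subbifunctors. The axioms $\Ex2$ and $\Ex2^{op}$ say that a homotopy pullback of a deflation in $\mathcal S'$, and a homotopy pushout of an inflation in $\mathcal S'$, again lie in $\mathcal S'$; combined with the compatibility (read off from the construction of $\D^b_{dg}(\A)$ in Theorem~\ref{main}) between homotopy (co)cartesian squares in $\A$ and the functorial operations $c^{*}$, $a_{*}$ on $\mathbb E$, this shows $\mathbb E'$ is a subbifunctor. It is additive because an exact dg structure contains the split conflations and is stable under finite direct sums of conflations (the dg analogue, derivable from $\Ex0$--$\Ex2^{op}$, of an elementary fact for Quillen exact categories), so the Baer sum of two classes of $\mathbb E'$ is realized inside $\mathcal S'$; and $\Ex1$, once translated to $H^0(\A)$, is exactly the statement that the $\mathbb E'$-deflations are closed under composition, which by~\cite{HerschendLiuNakaoka21} means $\mathbb E'$ is closed.

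Conversely, I would check that $\Phi(\mathbb E')=:\mathcal S'$ is an exact substructure. Stability under isomorphisms is clear since $\mathbb E'$ depends only on isomorphism classes in $\mathcal H_{3t}(\A)$, and $\Ex0$ is trivial. Given a deflation $p\colon B\to C$ in $\mathcal S'$ and a map $c\colon C'\to C$, the homotopy pullback exists and the induced $p'\colon B'\to C'$ is a deflation in $\mathcal S$ because $\mathcal S$ itself satisfies $\Ex2$; its class equals $c^{*}\delta_{X}\in c^{*}\mathbb E'(C,A)\subseteq\mathbb E'(C',A)$, so $p'$ is a deflation in $\mathcal S'$, proving $\Ex2$, and $\Ex2^{op}$ is dual. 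For $\Ex1$, a composite of two deflations of $\mathcal S'$ is a deflation of $\mathcal S$, and its class lies in $\mathbb E'$ precisely by closedness of $\mathbb E'$ (using again the translation to $H^0(\A)$ and the formula for the class of a composite of deflations).

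It remains to see the two assignments are mutually inverse and order-preserving. Monotonicity in both directions is immediate from the definitions. For $\mathbb E'\mapsto\mathcal S'\mapsto\mathbb E''$: by construction $\mathbb E''\subseteq\mathbb E'$, while every $\delta\in\mathbb E'(C,A)$ is realized by some conflation of $\mathcal S$ (the realization $\mathfrak s$ is defined on all of $\mathbb E(C,A)$), which then lies in $\mathcal S'$, so $\mathbb E''=\mathbb E'$. For $\mathcal S'\mapsto\mathbb E'\mapsto\mathcal S''$: clearly $\mathcal S'\subseteq\mathcal S''$, and if $X\in\mathcal S$ has $\delta_{X}\in\mathbb E'$ then $\delta_{X}$ is also realized by some $X'\in\mathcal S'$; since two conflations of $\mathcal S$ with the same endpoints and the same class in $\mathbb E(C,A)$ are isomorphic in $\mathcal H_{3t}(\A)$ (part of the construction of $(H^0(\A),\mathbb E,\mathfrak s)$ attached to Theorem~\ref{main}), we get $X\cong X'$ and hence $X\in\mathcal S'$ by iso-stability. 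Together with Enomoto's isomorphism (2)$\leftrightarrow$(3) this proves the theorem.

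The hard part is not any single deep step but the bookkeeping underlying the first two paragraphs: one must make fully explicit how $\mathbb E$, its bifunctoriality, and the composition of deflations are recovered from homotopy (co)cartesian squares in $\A$ through the universal embedding $F\colon\A\to\D^b_{dg}(\A)$, and then match each of $\Ex0$, $\Ex1$, $\Ex2$, $\Ex2^{op}$ with the corresponding clause in the definition of a closed additive subbifunctor. The auxiliary facts that an exact dg structure contains the split conflations and is closed under finite direct sums of conflations --- needed for the additivity of $\mathbb E'$ --- should be isolated as lemmas beforehand.
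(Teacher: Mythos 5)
Your proposal is correct and follows essentially the same route as the paper: define $\mathcal S'\mapsto\mathbb F_{\mathcal S'}$ and $\mathbb F\mapsto\mathcal S_{\mathbb F}$, match $\Ex1$ with the closedness criterion of~\cite{HerschendLiuNakaoka21}, use $\Ex2$/$\Ex2^{op}$ for bifunctoriality, and check the two assignments are mutually inverse, with (2)$\leftrightarrow$(3) delegated to Enomoto. The only cosmetic difference is that the paper obtains additivity and injectivity of $\mathbb F_{\mathcal S'}\subseteq\mathbb E$ directly from the fact that $(H^0(\A),\mathbb F_{\mathcal S'},\mathfrak s')$ is already an extriangulated category together with the splitting criterion (Proposition~\ref{split}), and it makes explicit, via Lemma~\ref{fact}, the point you gloss slightly --- that stability of $\mathcal S_{\mathbb F}$ under arbitrary isomorphisms in $\mathcal H_{3t}(\A)$ (not just equivalences fixing the ends) follows from factoring an isomorphism as a pushout followed by a pullback and using that $\mathbb F$ is a subbifunctor.
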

Here, the isomorphism between the posets in (2) and (3) follows from Enomoto's theorem \cite{Enomoto21}.

In~\cite{Rump11}, Rump proved that every additive category has the greatest exact structure, which generalized the results in~\cite{SiegWegner11,Crivei12}. 
The following theorem generalizes Rump's result to additive dg categories.
\begin{theorem}[Theorem~\ref{thm:maximalexactdgstructure}]\label{intro:maximalstructure}
Each additive dg category $\A$ admits a greatest (i.e.~unique maximal) exact dg structure.
\end{theorem}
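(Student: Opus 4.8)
The plan is to carry out, in the differential graded setting, the strategy Rump uses in \cite{Rump11} for ordinary additive categories: single out the \emph{stable} part of the class of all homotopy short exact sequences and show that it is already an exact dg structure, necessarily the greatest one.

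First I would reduce to a convenient model: since an exact dg structure can be transported along a quasi-equivalence (Remark~\ref{truncationexactdgstructure} b)) and such a transport identifies the posets of exact dg structures, I may assume $\A$ is small, has cofibrant Hom complexes and that $Z^0(\A)$ is additive; the non-small case reduces to essentially small full dg subcategories. Next, record that the class $\mathcal H^{\mathrm{hse}}_{3t}(\A)$ of \emph{all} homotopy short exact sequences is stable under isomorphism in $\mathcal H_{3t}(\A)$ and satisfies Ex0, but in general violates Ex1, Ex2 and $\Ex2^{op}$: the required homotopy pullbacks and pushouts may fail to exist or may fail to be homotopy short exact. So, following \cite{Rump11}, one must cut down to a stable subclass.

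I would define the stable morphisms coinductively. Call a closed degree-zero morphism $p\colon B\to C$ a \emph{stable deflation} if it is the deflation of some homotopy short exact sequence and, for every closed degree-zero morphism $c\colon C'\to C$, there is a homotopy pullback of $p$ along $c$ whose underlying square completes to a homotopy short exact sequence and whose induced morphism $B'\to C'$ is again a stable deflation; concretely, the stable deflations form the largest class of closed degree-zero morphisms with this self-reproducing property, which exists by a transfinite construction since $\A$ is small. Dually one defines \emph{stable inflations}, and sets
\[
\mathcal S_{\max}=\{\,A\xrightarrow{\,i\,}B\xrightarrow{\,p\,}C \text{ homotopy short exact}\mid i \text{ stable inflation},\ p \text{ stable deflation}\,\}.
\]
The core of the argument is to verify that $\mathcal S_{\max}$ is an exact dg structure. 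Ex0 holds since identities are stable (de)flations; Ex2 and $\Ex2^{op}$ hold essentially by the definition of stability, the only extra point being that the inflation (resp.\ deflation) produced by the homotopy pullback (resp.\ pushout) of a sequence in $\mathcal S_{\max}$ is again stable, which one extracts by combining the two one-sided conditions as in \cite{Rump11}; and Ex1 — closure of stable deflations under composition — is proved as in the classical case, pullback-stability being the engine. Maximality is then immediate: for any exact dg structure $(\A,\mathcal S)$, iterating Ex2 inside $\mathcal S$ shows that every deflation of $\mathcal S$ is a stable deflation (each successive base change remains a conflation of $\mathcal S$, hence is homotopy short exact) and dually every inflation of $\mathcal S$ is a stable inflation, so $\mathcal S\subseteq\mathcal S_{\max}$; uniqueness of a maximal element follows.

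The main obstacle is homotopy coherence. At every step one is handling $3$-term h-complexes as in \eqref{intro:F}, not bare kernel--cokernel pairs, so one must produce the degree $-1$ homotopies $h$ — and the higher coherence data implicit in $\mathcal H_{3t}(\A)=H^0(\Fun_\infty(\I,\A))$ — for composites and for iterated homotopy pullbacks, and then check that the associated square in $\rep(k\Sq,\A)$ is homotopy bicartesian, rather than merely bicartesian in $H^0(\A)$. This is exactly the subtlety flagged around Proposition~\ref{push}, and it is where the diagram lemmas of Chapter~\ref{sec:diagramlemmas} together with the universal embedding $F\colon\A\to\D^b_{dg}(\A)$ of Theorem~\ref{main} carry the load. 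A secondary, more bureaucratic point is to make the coinductive definition of ``stable'' precise, to check that the class of stable deflations is invariant under the isomorphisms of $\mathcal H_{3t}(\A)$ (so that $\mathcal S_{\max}$ is isomorphism-closed) and contains all split epimorphisms, and to confirm that the transfinite construction stabilizes.
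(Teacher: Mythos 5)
There is a genuine gap, and it is exactly the one Rump's later paper \cite{Rump15} exposes. Your class $\mathcal S_{\max}$ of ``stable'' homotopy short exact sequences (deflation pullback-stable under all iterated base changes, inflation dually) is, in the paper's notation, the class of sequences with deflation in $P\R$ and inflation in the dual class. Your maximality argument correctly shows that every exact dg structure is contained in $\mathcal S_{\max}$, but $\mathcal S_{\max}$ is in general \emph{not} an exact dg structure. Already for an ordinary additive category (a dg category concentrated in degree $0$), Rump \cite{Rump15} gives an example of a stable kernel--cokernel pair that does not belong to the greatest exact structure; since every exact structure consists of stable sequences, this forces $\mathcal S_{\max}$ to violate one of the axioms. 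The step you wave at --- ``$\Ex1$ is proved as in the classical case, pullback-stability being the engine'' --- is precisely where it breaks: for composable stable deflations $p,p'$ the composite $p'p$ is again in $P\R$, but there is no reason its homotopy kernel should be a stable inflation; similarly, in $\Ex2$ the pulled-back deflation is stable by fiat, but the pulled-back inflation need not be.

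What is missing is the analogue of Quillen's ``obscure axiom''. The paper works with one-sided (left and right) exact structures carrying the extra axiom $\Ex3$ (if $gh$ is a deflation and $g$ admits a homotopy kernel, then $g$ is a deflation), introduces a second operation $Q$ that enforces $\Ex3$, and shows that $PQP\R$ --- not $P\R$ --- is the class of deflations of the greatest \emph{left} exact structure (Proposition~\ref{generatingleftexactstructure}, Corollary~\ref{maximalleftexact}). The greatest two-sided structure is then obtained from the nontrivial combination result Proposition~\ref{leftplusright}, whose verifications of $\Ex1$ and $\Ex2$ lean essentially on $\Ex3$ for one structure and $\Ex3^{op}$ for the other. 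Your construction does recover the correct answer when $H^0(\A)$ is weakly idempotent complete (Theorem~\ref{divisivedgcategory}), and your preliminary reductions and your concern about homotopy coherence are sound; but in the general case you must insert the $Q$-step and the left/right splitting, or the argument stops at ``$\mathcal S_{\max}$ contains every exact structure'' without exhibiting a greatest one.
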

It is not clear how to formulate, let alone prove, an analogous statement for extriangulated
structures. 
\newpage
\section{Homotopy diagrams}
	\subsection{Notations and terminology}\label{subsection:notations}
	In this subsection, we collect basic notations and terminology needed in this paper. The standard references for dg categories are \cite{Keller94, Keller06d, Drinfeld04,Toen11, BondalKapranov90}.
	
	Throughout we fix a commutative ring $k$. 
	We write $\otimes$ for the tensor product over $k$.  
	We denote by $\C_{dg}(k)$ the dg category of complexes of $k$-modules and by $\C(k)$ the category of complexes of $k$-modules. 
	
	Let $\A$ be a dg $k$-category.
	For two objects $A_1$ and $A_2$, the Hom complex is denoted by $\Hom_{\A}(A_1,A_2)$ or 
	$\A(A_1,A_2)$.	An element $f$ of $\A(A_1,A_2)^{p}$ will be called a {\em homogeneous} morphism of degree $p$ with the notation $|f|=p$. 
	A homogeneous morphism $f:A_1\rightarrow A_2$ is {\em closed} if we have $d(f)=0$.
	
                 Let $M$ be complex of $k$-modules. We put
	\[
	\begin{tikzcd}
	\tau_{\leq 0}M=(\cdots\ar[r]&M^{-2}\ar[r]&M^{-1}\ar[r]&Z^0M\ar[r]&0\ar[r]&\cdots).
	\end{tikzcd}
	\] 
                 For a dg category $\A$, denote by $\tau_{\leq 0}\A$ the dg category with the same objects as $\A$ 
                 and whose morphism complexes are given by
                 \[
                 (\tau_{\leq 0} \A)(A_1, A_2) = \tau_{\leq 0}(\A(A_1, A_2)).
                  \]
                 The composition is naturally induced by that of $\A$. 

 A dg category $\A$ is {\em connective} if for each pair of objects $A_1, A_2\in \A$, the 
 complex $\Hom_{\A}(A_1, A_2)$  has cohomology concentrated in non-positive degrees.
It is called {\em strictly connective} if the components of the complex $\Hom_{\A}(A_1, A_2)$ vanish
in all positive degrees.

	We denote by $Z^0(\A)$ the category with the same objects as $\A$ and whose morphism
spaces are defined by
	\[
	(Z^0\A)(A_1,A_2)=Z^0(\A(A_1,A_2)),
	\]
 where $Z^0$ is the kernel of $d:\A(A_1,A_2)^0\rightarrow \A(A_1,A_2)^1$.
 Similarly, we denote by $H^0(\A)$ the category with the same objects as $\A$ and whose 
 morphism spaces are given by 
 \[
 (H^0\A)(A_1,A_2)=H^0(\A(A_1,A_2)),
 \]
 where $H^0$ denotes the zeroth cohomology of the complex.
 
	The {\em oppositie dg category} $\A^{op}$ has the same objects as $\A$ and its morphism spaces are defined by
	\[
	\A^{op}(X,Y)=\A(Y,X);
	\] 
                 the composition of $f\in\A^{op}(Y,X)^{op}$ with $g\in \A^{op}(Z,Y)^{q}$ is given by $(-1)^{pq}gf$.
	By a dg $\A$-module $M$, we mean a right dg $\A$-module, i.e. a dg functor $\A^{op}\rightarrow \C_{dg}(k)$. 
	We denote by $\C_{dg}(\A)$ the dg category of right dg $\A$-modules and by $\C(\A)$ the category of dg 
	$\A$-modules. Note that $Z^0(\C_{dg}(\A))=\C(\A)$. 
	The category up to homotopy of dg $\A$-modules is 
	\[
	\mathcal H(\A)=H^0(\C_{dg}(\A)).
	\]

An object $A$ in $\A$ is {\em contractible} if it is a zero object in $H^0(\A)$, or equivalently if we have $\Id_{M}=d(h)$ for some morphism $h:M\rightarrow M$ of degree $-1$.
A dg category $\A$ has {\em contractible pre-envelopes} provided for each object $A$ in $\A$, there exists a closed morphism $f:A\rightarrow A'$ of degree $0$ such that $A'$ is contractible and for each morphism $h:A\rightarrow B$ in $\A$, there exists some $h':A'\rightarrow B$ satisfying $h'\circ f=h$ (hence $d(h')\circ f=d(h)$). 
Dually one has the notion of a dg category having {\em contractible pre-covers}.
A dg $\A$-module is {\em contractible} if it is contractible in the dg category $\C_{dg}(\A)$.

 For each object $A\in\A$, we have the right dg module {\em represented by} $A$
 \[
 A^{\wedge}=\Hom_{\A}(-,A).
 \]
 A dg $\A$-module $M$ is {\em representable} if it is isomorphic to $A^\wedge$ for some
 $A\in\A$ and {\em quasi-representable} if it is quasi-isomorphic to $A^\wedge$ for some $A \in \A$.
 
For a dg $\A$-module $M$, put $IM=\Cone(\Id_M)$ and $PM=\Cone(\Id_{\Sigma^{-1}M})$. 
Explicitly, we have $IM=\Sigma M\oplus M$ and $PM=M\oplus \Sigma^{-1}M$ as graded $\A$-modules. 
We have the natural inclusion $i=[0,1]^{\intercal}:M\rightarrow IM$ and projection $p=[1,0]:PM\rightarrow M$.
 
 A dg functor $F:\A\rightarrow \B$ is a {\em quasi-equivalence} if
 \begin{itemize}
 \item[a)] it is {\em quasi-fully faithful}, i.e.~for all objects $A, A'\in A$, the morphism
  \[
 F_{A,A'}:\A(A,A')\rightarrow \B(FA,FA').    
 \] 
 is a quasi-isomorphism and
 \item[b)] the induced functor  $H^0(F):H^0(\A)\rightarrow H^0(\B)$ is an equivalence of categories.
 \end{itemize}
 
 The {\em Yoneda dg functor}
 \[
 \A\rightarrow \C_{dg}(\A),\;\; A\mapsto A^{\wedge}
 \]
 is fully faithful.
 Let $\A'$ be the full dg subcategory of $\C_{dg}(\A)$ consisting of objects $A^{\wedge}$, $\Cone(\Id_{A^{\wedge}})$ and $\Cone(-\Id_{\Sigma^{-1}A^{\wedge}})$ for each $A\in\A$.
 The dg category $\A'$ has contractible pre-covers and contractible pre-envelopes and the inclusion $\A\rightarrow \A'$ is a quasi-equivalence. 
 Thus, up to quasi-equivalence, we may assume $\A$ has contractible pre-covers and contractible pre-envelopes. 
 
  The category $\dgcat$ of small dg categories admits the Dwyer-Kan model structure (cf.~\cite{Tabuada05}), 
 whose weak equivalences are the quasi-equivalences. Its homotopy category is denoted by $\Hqe$. 
 There exists a cofibrant replacement functor $Q$ on $\dgcat$ such that for any $\A\in\dgcat$, the natural dg functor $Q(\A)\rightarrow \A$ is the identity on the set of objects. 
 In particular, the Hom-complexes of $Q(\A)$ are cofibrant over $k$.
 The {\em tensor product} $\A\otimes \B$ of two dg categories $\A$ and $\B$ has the class of objects  $\obj(\A)\times \obj(\B)$ and the morphism spaces
 \[
 (\A\otimes\B)((A,B),(A',B'))=\A(A,A')\otimes \B(B,B')
 \]
 with the natural compositions and units. 
 This defines a symmetric monoidal structure $-\otimes -$ on $\dgcat$ which is closed.
 For $\A, \B\in\dgcat$, put $\A\otimes^{\mathbb L}\B=\A\otimes Q(\B)$. 
 This extends to a bifunctor $-\otimes^{\mathbb L}-:\dgcat\times \dgcat\rightarrow \dgcat$ and then passes through the homotopy categories
 \[
 -\otimes^{\mathbb L}-:\Hqe\times \Hqe\rightarrow \Hqe.
 \]
 
 Let $\Cat$ be the category of small categories. 
 Let $[\Cat]$ be the category with objects small categories and with morphisms isomorphism classes of functors.
 In particular the isomorphism class of an equivalence of categories is an isomorphism in $[\Cat]$.
 So the functor $H^0:\dgcat\rightarrow \Cat$ induces a functor
 \[
 H^0:\Hqe\rightarrow [\Cat].
 \]
 
 For a category $\C$, we denote by $\Iso(\C)$ its isomorphism classes of objects.
 
 For a dg category $\A$, we denote by $\D(\A)$ its {\em derived category}, a triangulated category. 
 By definition $\D(\A)$ is the localization of $\C(\A)$ at the class of {\em quasi-isomorphisms}, i.e.~morphisms of dg $\A$-modules which induce quasi-isomorphisms of complexes when evaluated at any object in $\A$.
 Let $\pi:\C(\A)\rightarrow \D(\A)$ be the quotient functor. 
 For a morphism $j:M\rightarrow N$ in $\C(\A)$, we denote by $\overline{\jmath}=\pi(j):M\rightarrow N$ the corresponding morphism in $\D(\A)$.
 A dg functor $F:\A\rightarrow \B$ induces a triangle functor $F_*:\D(\A)\rightarrow \D(\B)$ which is an equivalence of triangulated categories if $F$ is a quasi-equivalence.
 The {\em dg derived category} $\D_{dg}(\A)$ of $\A$ is defined to be the full dg subcategory of $\C_{dg}(\A)$ consisting of cofibrant dg $\A$-modules in the {\em projective model structure} of $\C(\A)$ (cf.~\cite[Theorem 3.2]{Keller06d}).
 The canonical functor $H^0(\D_{dg}(\A))\rightarrow \D(\A)$ is an equivalence of triangulated categories.
 
 For dg categories $\A$ and $\B$, we define $\rep(\B,\A)$ to be the full subcategory of $\D(\A\otimes^{\mathbb L} \B^{op})$ whose objects are the dg bimodules $X$ such that $X(-,B)$ is quasi-representable for each object $B$ of $\B$. 
 We define the canonical dg enhancement $\rep_{dg}(\B,\A)$ to be the full dg subcategory of $\D_{dg}(\A\otimes^{\mathbb L}\B^{op})$ whose objects are those of $\rep(\B,\A)$.
 
 A dg category $\A$ is {\em pretriangulated} if the canonical inclusion $H^0(\A)\rightarrow \D(\A)$ is a triangulated subcategory.
Note that $\rep_{dg}(\B,\A)$ is pretriangulated if $\A$ is pretriangulated.
For a dg category $\A$, its {\em pretriangulated hull} $\pretr(\A)$ (\cite{BondalKapranov90, Drinfeld04, BondalLarsenLunts04}) is defined as follows. 
The objects of $\pretr(\A)$ are ``one-sided twisted complexes'', i.e.~formal expressions $(\oplus_{i=1}^{n}A_i[r_i],q)$, where $A_i\in \A$, $r_i\in\mathbb Z$, $n\geq 0$, $q=(q_{ij})$, $q_{ij}\in \Hom_{\A}^{r_i-r_j+1}(A_j,A_i)$, $q_{ij}=0$ for $i\geq j$ and $dq+q^2=0$. 
Here we adopt the convention $(dq)_{ij}=(-1)^{r_i}d_{\A}(q_{ij})$ and $(q^2)_{ij}=\sum_{k}q_{ik}\circ q_{kj}$.
If $A$ and $A'$ are two objects in $\pretr(\A)$ with $A=(\oplus_{i=1}^{n}A_i[r_i],q)$ and $A'=(\oplus_{i'=1}^{n'}A_{i'}'[r_{i'}'],q')$, the complex $\Hom_{\pretr(\A)}(A,A')$ has as the degree $m$ component the space of matrices $f=(f_{ij})$, $f_{ij}\in\Hom_{\A}^{m+r_{i}'-r_j}(A_{j}, A_{i}')$. 
The differential $d$ carries a morphism $f=(f_{ij})$ of degree $m$ to $df=((df)_{ij})$ where 
\[
(df)_{ij}=(-1)^{r_i'}d_{\A}(f_{ij})+\sum_{k}q'_{ik}\circ f_{kj}-(-1)^{m} \sum_{k} f_{ik}\circ q_{kj}.
\]
The composition map 
\[
\Hom_{\pretr(\A)}(A',A'')\otimes \Hom_{\pretr(\A)}(A,A')\rightarrow \Hom_{\pretr(\A)}(A,A'')
\]
 is the matrix multiplication: $f'\otimes f\mapsto f''$ where $f''_{ij}=\sum_{k}f'_{ik}\circ f_{kj}$.
 We denote the triangulated category $H^0(\pretr(\A))$ by $\tr(\A)$.
The Yoneda dg functor $\A\rightarrow \C_{dg}(\A)$ extends to a fully faithful dg functor $\pretr(\A)\rightarrow \D_{dg}(\A)$.
It induces a fully faithful triangle functor $\tr(\A)\rightarrow \D(\A)$ whose essential image is the triangulated subcategory of $\D(\A)$ generated by the representable dg modules.

A {\em quiver} (or {\em directed graph}) $Q$ is a quadruple $(Q_0,Q_1,s,t)$ where $Q_0$ is a set whose elements are called {\em objects} or {\em vertices} of $Q$, $Q_1$ is a set of {\em arrows} $f$ (or {\em edges}), and $s,t$ are maps $Q_1\rightarrow Q_0$ where $s(f)$ is the {\em source} of the arrow $f$ and $t(f)$ is the {\em target} of $f$. 
A morphism between quivers is defined in the obvious way. 
We denote the category of quivers by $\mathrm{Quiv}$.

Each small category $\C$ has an {\em underlying quiver} $F(\mathcal C)$ and this extends to the forgetful functor $F:\Cat\rightarrow\Quiv$. 
This functor admits a left adjoint $P:\Quiv\rightarrow \Cat$ which associates to a quiver $Q$ the {\em path category} $P(Q)$ of $Q$. 
The path category $P(Q)$ has the same objects as $Q$ and its morphisms $x_1\rightarrow x_n$ are finite paths
\[
\begin{tikzcd}
x_1\ar[r,"f_1"]&x_2\ar[r]&\cdots\ar[r,"f_{n-1}"]&x_n
\end{tikzcd}
\]
consisting of $n\geq 1$ objects $x_1,\cdots,x_n$ of $Q$ which are connected by arrows $f_i:x_i\rightarrow x_{i+1}$ of $Q$. 
The composite of two paths is defined by concatenation: $\mathrm{concat}(f,g)=g\circ f$.

Let $Q$ be a quiver and $R$ a function which assigns to each pair of objects $x,y$ of $Q$ a binary relation $R_{x,y}$ on the set of finite paths from $x$ to $y$. 
Then the path category of the quiver $Q$ with relations $R$ is defined to be the quotient category $P(Q)/\overline{R}$, where $\overline{R}$ is the smallest family of equivalence relations $\overline{R}_{x,y}$, $x,y\in Q_0$, containing $R$ and stable under pre- and postcomposition with morphisms. 
Often we only write down nontrivial relations in $R$.

For a small category $\I$, we have the $k$-category $k\I$ whose set of objects is the same as that of $\I$ and for each pair of objects $x,y$ in $\C$, the space $k\I(i,j)$ is the free $k$-module generated by the set $\I(i,j)$. 
In particular, if we view $k\I$ as a dg category concentrated in degree 0, then it is {\em $k$-cofibrant}, i.e. the Hom complexes are $k$-cofibrant. 
Let $\B$ be a $k$-cofibrant dg category. 
We have a quasi-equivalence $\A\otimes ^{\mathbb L}\B\iso\A\otimes \B$.
A dg functor $F:\B\rightarrow\A$ gives rise to a $\B$-$\A$-bimodule $_{F}\A_{\A}$ defined by
\[
(A,B)\mapsto \Hom_{\A}(A,FB).
\]
The assignment $F\mapsto  {_{F}\A_{\A}}$ passes to the following map
\[
\Hqe(\B,\A)\rightarrow \Iso(\rep(\B,\A))
\]
which is a bijection, cf.~\cite{Toen07}.
It is shown in loc.~cit~that $\rep_{dg}(\B,\A)$ is the internal Hom of the monoidal category $(\Hqe,-\otimes^{\mathbb L}-)$, i.e.~ for small dg categories $\A$, $\B$ and $\C$, we have 
\[
\Hqe(\A\otimes^{\mathbb L}\B,\C)\iso\Hqe(\A,\rep_{dg}(\B,\C)).
\]
\subsection{Preliminaries}\label{Preliminaries}	
	Throughout we fix a dg $k$-category $\A$. 
	Consider the derived category $\D(\A)$ of the dg category $\A$. 
	Our first aim is to formulate proper definitions of homotopy (co)cartesian squares in the dg category $\A$ (not  $\D(\A)$).

\begin{notation}
 For a small category $\mathcal I$, put
\[
\D(\I)=\D(k(\I^{op})\otimes \A),\;\; \C(\I)=\C(k(\I^{op})\otimes \A), \;\;\rep(\I)=\rep(k(\I),\A)
\]
and 
\[
\rep_{dg}(\I)=\rep_{dg}(k(\I),\A).
\]
\end{notation}
We are mainly concerned with the case when $\I$ is the category with one object and one morphism or one of the following categories:

The {\em square} category $\mathrm{Sq}$ is the path category of the quiver 
\[
\begin{tikzcd}
00\ar[r,"f"]\ar[d,"g"swap] & 01\ar[d,"j"] \\
10\ar[r,"k"swap] & 11
\end{tikzcd}
\]
with the commutativity relation $jf\sim kg$;

the {\em cospan} category $\mathrm{Cosp}$ is the path category of the quiver
\[
\begin{tikzcd}
&01\ar[d]\\
10\ar[r]&11;
\end{tikzcd}
\]

the {\em span} category $\mathrm{Sp}$ is the path category of the quiver
\[
\begin{tikzcd}
00\ar[r]\ar[d]&01\\
10&
\end{tikzcd};
\]

the {\em composition} category $\mathrm{Com}$ is the path category of the quiver
\[
\begin{tikzcd}
0\ar[r]&1\ar[r]&2;
\end{tikzcd}
\]

the {\em morphism} category $\mathrm{Mor}$ is the path category of the quiver
\[
\begin{tikzcd}
0\ar[r]&1.
\end{tikzcd}
\]

We identify objects $X\in\D(\mathrm{Sq})$ with commutative squares
\begin{equation}
\begin{tikzcd}\label{D(Sq)}
X_{00}\ar[r,"f"]\ar[d,"g"swap]&X_{01}\ar[d,"j"]\\
X_{10}\ar[r,"k"swap]&X_{11}
\end{tikzcd}
\end{equation}
in $\C(\A)$. 

In general we identify an object $X\in \D(\I)$ with an $\I$-shaped diagram $X$ in $\C(\A)$, i.e.~a functor 
\[
X:\I\rightarrow \C(\A),\;\; i\mapsto X_i.
\] 
 Let $Q:\C(\A)\rightarrow \D(\A)$ be the canonical quotient functor. 
The map sending $X$ to $Q\circ X$ defines the canonical {\em diagram functor} $\Dia:\D(\I)\rightarrow\Fun(\I,\D(\A))$ which is {\em conservative}, i.e. detects isomorphisms. Note that it is not full in general. 

We have the following diagram with the obvious functors $i:\Cosp\rightarrow\Sq$ and $s:\Mor\rightarrow\Cosp$. 
\[
\begin{tikzcd}
0\ar[d,""{name=1}]&\;\ar[d,""{name=4}, white]&01\ar[d,""{name=2}]&00\ar[rd,phantom,"="]\ar[r,"f"]\ar[d,"g"{name=3,swap}] & 01\ar[d]\ar[d,"j"] \\
1&10\ar[r]&11&10\ar[r,"k"swap] & 11\ar[r, from=2, to=3,red,"i"]\ar[r,from=1,to=4,red,"s"]
\end{tikzcd}\;.
\]

Write $L:\D(\mathrm{Sq})\rightarrow \D(\mathrm{Cosp})$ for the restriction functor along $i$, $U$ for its left adjoint and $R$ for its right adjoint.

Write $L':\D(\mathrm{Cosp})\rightarrow \D(\mathrm{Mor})$ for the restriction functor along $s$, $U'$ for its left adjoint and $R'$ for its right adjoint. 

We describe these functors at the level of objects. The functor $U'$ sends an object $Y\xrightarrow{j} Z$ to 
\[
\begin{tikzcd}
&Y\ar[d,"j"]\\
0\ar[r]&Z
\end{tikzcd}\;.
\]
The functor $R'$ sends $Y\xrightarrow{j} Z$ to 
\[
\begin{tikzcd}
&Y\ar[d,"j"]\\
Z\ar[r,equal]&Z
\end{tikzcd}\;.
\]
The functor $U$ sends 
\[
\begin{tikzcd}
&Y\ar[d,"j"]\\
W\ar[r,"k"swap]&Z
\end{tikzcd}
\] 
to 
\[
\begin{tikzcd}
0\ar[r]\ar[d]&Y\ar[d,"j"]\\
W\ar[r,"k"swap]&Z
\end{tikzcd}\;. 
\]
Let us describe the functor $R$. 
Let $S$ be the object
\[
\begin{tikzcd}  & B\ar[d,"{j}"] \\ 
                       C\ar[r,"k"swap] & D
                       \end{tikzcd}
\]
in $\D(\mathrm{Cosp})$. 
Then the functor $R$ takes $S$ to the dg $k\Sq^{op}\otimes \A$-module
\[ 
\RHom_{k\mathrm{Cosp^{op}}}(M,S)
\]
where $M$ is the dg $k\Cosp^{op}\otimes k(\Sq)$-module $\Hom_{k(\Sq)^{op}}(i(-),-)$.
We will construct a quasi-isomorphism $M'\rightarrow M$ of bimodules such that $M'(-,j)$ is cofibrant for each $j\in \Sq$.
The dg $k\Cosp^{op}$-module $M(-,00)$ is given by the diagram
\[
\begin{tikzcd}
&k\ar[d,"\Id"]\\
k\ar[r,"\Id"swap]&k
\end{tikzcd}\;.
\]
It fits into a short exact sequence in $\C(k\mathrm{Cosp}^{op})$
\[
0\rightarrow 11^{\wedge}\rightarrow 01^{\wedge}\oplus 10^{\wedge}\rightarrow M(-,00)\rightarrow 0.
\]
So it is quasi-isomorphic to the cokernel $N$ of the graded-injective map 
\[
11^{\wedge}\rightarrow \Cone(\Id_{11^{\wedge}})\oplus 10^{\wedge}\oplus 01^{\wedge}.
\]
Let $M'$ be the dg $k\Cosp^{op}\otimes k(\Sq)$-module given by the following diagram in $\C(k\Cosp^{op})$
\[
\begin{tikzcd}
N&10^{\wedge}\ar[l]\\
01^{\wedge}\oplus \Cone(\Id_{11^{\wedge}})\ar[u]&11^{\wedge}\ar[l]\ar[u]\mathrlap{\;.}
\end{tikzcd}
\]
Then we get the desired quasi-isomorphism $M'\rightarrow M$.
So the functor $R$ takes $S$ to $\Hom_{k\Cosp^{op}}(M',S)$ which is given by the following diagram in $\C(\A)$
\[
\begin{tikzcd}
K\ar[r]\ar[d]&B\ar[d,"{j}"]\\
C\oplus \Cone(-\Id_{\Sigma^{-1}D})\ar[r,"(k\ p)"swap]&D
\end{tikzcd}
\]
such that $p:\Cone(-\Id_{\Sigma^{-1}D})\rightarrow D$ is the canonical projection map and that the diagram fits into the following graded split short exact sequence in $\C(\A)$
\[
0\rightarrow K\rightarrow \Cone(-\Id_{\Sigma^{-1}D})\oplus B\oplus C\xrightarrow{[p\;\;j\;\;k]} D\rightarrow 0.
\]

In summary, we have the following diagram of triangle functors 
\begin{equation}
\begin{tikzcd}\label{adjointtriple}
\D(\mathrm{Mor})\ar[shift left =2ex, rr,"U'"]\ar[shift right=2ex,rr,"R'"]
&&\D(\mathrm{Cosp})\ar[ll,"L'"swap]\ar[shift left=2ex, rr,"U"]\ar[shift right=2ex, rr,"R"]
&&\D(\mathrm{Sq}).\ar[ll,"L"swap]
\end{tikzcd}
\end{equation}

Consider the category $\C(\Sq)$ of dg modules over $k(\mathrm{Sq})^{op}\otimes \A$. The representable dg modules are 
\[
\begin{tikzcd}A^{\land}\ar[r,equal]\ar[d,equal]&A^{\land}\ar[d,equal]\\A^{\land}\ar[r,equal]&A^{\land}\end{tikzcd},
\begin{tikzcd}0\ar[r]\ar[d]&A^{\land}\ar[d,equal]\\0\ar[r]&A^{\land}\end{tikzcd},
\begin{tikzcd}0\ar[r]\ar[d]&0\ar[d]\\A^{\land}\ar[r,equal]&A^{\land}\end{tikzcd},
\begin{tikzcd}0\ar[r]\ar[d]&0\ar[d]\\0\ar[r]&A^{\land}\end{tikzcd}
\]
where $A\in \A$.

Since the cofibrant objects are direct summands, as graded modules, of sums of modules which are shifts of representable modules, we have that cofibrant objects in $\C(\Sq)$ are of the form
\[
\begin{tikzcd}
X_{00}\ar[r,"f"]\ar[d,"g"swap]&X_{01}\ar[d,"j"]\\
X_{10}\ar[r,"k"swap]&X_{11}
\end{tikzcd} 
\]
where $f,g,j,k$ are graded-split injections and all $X_i$ are cofibrant dg $\A$-modules.

\begin{lemma}[\cite{KellerNicolas13,BeligiannisReiten07}] \label{lemma:tstructure}
 Let $\A$ be a dg category.
There is a canonical t-structure 
\[
(\D(\A)^{\leq 0},\D(\A)^{\geq 0})
\]
on $\D(\A)$ such that $\D(\A)^{\geq 0}$ is formed by those dg modules whose cohomology is concentrated in non-negative degrees. 
\end{lemma}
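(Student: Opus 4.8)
The plan is to realise $\D(\A)^{\leq 0}$ as the aisle generated by a suitable set of objects and then to identify its right orthogonal explicitly. First I would record that, $\A$ being small, $\D(\A)$ is a compactly generated triangulated category: it has arbitrary coproducts and the representable dg modules $A^{\wedge}$, $A\in\A$, form a set of compact generators. I would then put $\mathcal S=\{\,\Sigma^{n}A^{\wedge}\mid A\in\A,\ n\geq 0\,\}$; this is a set which is stable under $\Sigma$, so the smallest aisle $\mathcal U\subseteq\D(\A)$ containing $\mathcal S$ exists. Declaring $\D(\A)^{\leq 0}=\mathcal U$ and $\D(\A)^{\geq 1}=\mathcal U^{\perp}$ then produces a t-structure on $\D(\A)$, cf.~\cite{KellerNicolas13,BeligiannisReiten07} and the references therein; alternatively one builds the truncation functor $\tau^{\leq 0}$ directly by a small-object argument at the level of the projective model structure on $\C(\A)$, attaching cells $\Sigma^{n}A^{\wedge}$ with $n\geq 0$.

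Next I would identify the coaisle. Since $\mathcal S$ is stable under $\Sigma$, one checks formally that $\mathcal U^{\perp}=\mathcal S^{\perp}$: the inclusion $\mathcal U^{\perp}\subseteq\mathcal S^{\perp}$ is clear from $\mathcal S\subseteq\mathcal U$, and conversely $\mathcal S^{\perp}$ is stable under $\Sigma^{-1}$, extensions and coproducts, so ${}^{\perp}(\mathcal S^{\perp})$ is a pre-aisle containing $\mathcal S$, whence $\mathcal U\subseteq{}^{\perp}(\mathcal S^{\perp})$ and therefore $\mathcal S^{\perp}\subseteq\mathcal U^{\perp}$. For any dg module $N$, any $A\in\A$ and any $n\geq 0$, the derived dg Yoneda lemma gives a natural isomorphism
\[
\Hom_{\D(\A)}(\Sigma^{n}A^{\wedge},N)\;\cong\;H^{-n}(N(A)).
\]
Hence $N\in\mathcal S^{\perp}=\D(\A)^{\geq 1}$ if and only if $H^{m}(N(A))=0$ for all $m\leq 0$ and all $A\in\A$, i.e.~$\D(\A)^{\geq 1}$ is exactly the class of dg modules whose cohomology is concentrated in degrees $\geq 1$. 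Shifting, $\D(\A)^{\geq 0}=\Sigma\,\D(\A)^{\geq 1}$ is the class of dg modules whose cohomology is concentrated in non-negative degrees, which is the asserted description. (Note that $\D(\A)^{\leq 0}$ itself need \emph{not} be the class of dg modules with cohomology in non-positive degrees: the two agree precisely when $\A$ is connective, since then $\mathcal S$ already lies in that class.)

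The only point that is not purely formal is the existence of the aisle generated by $\mathcal S$ — equivalently the existence of the truncation triangle $\tau^{\leq 0}M\to M\to\tau^{\geq 1}M\to\Sigma\tau^{\leq 0}M$ for every $M$ — and this is where I expect the main work to sit: it uses that $\D(\A)$ is (well/compactly) generated, not just any triangulated category. I would either invoke the general existence theorem for t-structures generated by a set of objects in a compactly generated triangulated category, or carry out the transfinite construction of $\tau^{\geq 1}M$ as the Bousfield localization of $M$ killing all morphisms $\Sigma^{n}A^{\wedge}\to M$ with $n\geq 0$, with the small-object argument ensuring the process terminates. Everything else — stability of $\D(\A)^{\leq 0}$ under $\Sigma$, extensions and coproducts, the orthogonality $\Hom_{\D(\A)}(\D(\A)^{\leq 0},\D(\A)^{\geq 1})=0$, and the cohomological description of the coaisle above — then follows formally.
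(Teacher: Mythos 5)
Your argument is correct and is essentially the standard one: the paper itself gives no proof of this lemma, delegating to \cite{KellerNicolas13,BeligiannisReiten07}, and your construction --- the aisle generated by $\{\Sigma^{n}A^{\wedge}\mid A\in\A,\ n\geq 0\}$ in the compactly generated category $\D(\A)$, together with the Yoneda computation $\Hom_{\D(\A)}(\Sigma^{n}A^{\wedge},N)\cong H^{-n}(N(A))$ identifying the coaisle --- is exactly the argument those references supply. Your caveat that $\D(\A)^{\leq 0}$ need not be the class of modules with cohomology in non-positive degrees unless $\A$ is connective is also consistent with the paper, which only makes that identification later under a connectivity hypothesis.
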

Note that we have $A^{\wedge}\in\D(\A)^{\leq 0}$ for each $A\in\A$.

Let $\B$ be a connective dg $k$-category. 
Put $\T=\D(\A\otimes \B^{op})$. 
Let $(\T^{\leq 0},\T^{\geq 0})$ be the canonical t-structure on $\T$. 

Let $X$ be a dg $\B-\A$-bimodule. It is clear that $X\in \T^{\geq 0}$ if and only if $X(-,B)\in \D(\A)^{\geq 0}$ for each $B\in\mathcal B$.
\begin{lemma}\label{cha}
We have $X\in \T^{\leq 0}$ if $X(-,B)\in \D(\A)^{\leq 0}$ for each $B\in\B$.
\end{lemma}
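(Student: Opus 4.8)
The plan is to realise $X$ as a homotopy colimit of objects that manifestly lie in $\T^{\leq 0}$, using the bar resolution of $X$ over $\B$. For $B\in\B$, write $\iota_B\colon\A\to\A\otimes\B^{op}$ for the dg functor $A\mapsto(A,B)$; restriction along $\iota_B$ is the evaluation functor $e_B^{*}=(-)(-,B)\colon\T\to\D(\A)$, and it admits a left adjoint, extension of scalars, $\iota_{B!}\colon\D(\A)\to\T$, with $\iota_{B!}(M)=M\otimes_{k}\B(B,-)$; in particular the representable $(A,B)^{\wedge}$ of $\A\otimes\B^{op}$ is $\iota_{B!}(A^{\wedge})$. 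Recall from the remark preceding the lemma that $e_B^{*}$ sends $\T^{\geq 1}$ into $\D(\A)^{\geq 1}$. First I would record two closure properties. (a) For every $B$, $\iota_{B!}(\D(\A)^{\leq 0})\subseteq\T^{\leq 0}$: indeed, for $M\in\D(\A)^{\leq 0}$ and $Y\in\T^{\geq 1}$, adjunction gives $\Hom_{\T}(\iota_{B!}M,Y)\cong\Hom_{\D(\A)}(M,e_B^{*}Y)=0$ because $e_B^{*}Y\in\D(\A)^{\geq 1}$ and $\D(\A)^{\leq 0}\perp\D(\A)^{\geq 1}$, so $\iota_{B!}M\in{}^{\perp}\T^{\geq 1}=\T^{\leq 0}$. (b) $\D(\A)^{\leq 0}$ is closed under $-\otimes_{k}V$ for any connective complex $V\in\D(k)^{\leq 0}$: for fixed $M\in\D(\A)^{\leq 0}$ the functor $M\otimes_{k}-$ commutes with coproducts, shifts and cones and sends the generator $k$ of the aisle $\D(k)^{\leq 0}$ to $M$, hence sends $\D(k)^{\leq 0}$ into $\D(\A)^{\leq 0}$.

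Next I would resolve $X$. Replacing $\A$ as in Section~\ref{subsection:notations} and $\B$ by a quasi-equivalent dg category, we may assume the Hom-complexes of $\B$ are $k$-flat; this is harmless and is needed only so that the bar complex below is quasi-isomorphic to $X$ and the occurring tensor products are derived. The two-sided bar resolution of $X$ over $\B$ then exhibits $X$ as quasi-isomorphic to the totalisation of the complex $C_{\bullet}$ with
\[
C_{n}=\bigoplus_{B_{0},\dots,B_{n}}\iota_{B_{n}!}\Bigl(X(-,B_{0})\otimes_{k}\B(B_{0},B_{1})\otimes_{k}\cdots\otimes_{k}\B(B_{n-1},B_{n})\Bigr),
\]
placed in cohomological degree $-n$, with the usual bar differential. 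Since $\B$ is connective, each complex $\B(B_{0},B_{1})\otimes_{k}\cdots\otimes_{k}\B(B_{n-1},B_{n})$ lies in $\D(k)^{\leq 0}$; together with the hypothesis $X(-,B_{0})\in\D(\A)^{\leq 0}$ and closure property (b), the $\A$-module inside each $\iota_{B_{n}!}(\,\cdot\,)$ lies in $\D(\A)^{\leq 0}$, so by (a) every summand of $C_{n}$, and hence $C_{n}$ itself, lies in $\T^{\leq 0}$.

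Finally I would assemble the pieces. The partial totalisations $T_{n}=\mathrm{Tot}(C_{0}\leftarrow C_{1}\leftarrow\cdots\leftarrow C_{n})$ are finite iterated extensions of the shifts $\Sigma^{i}C_{i}$ for $0\leq i\leq n$; since $\T^{\leq 0}$ is closed under positive shifts and extensions, $T_{n}\in\T^{\leq 0}$. As $X\simeq\mathrm{hocolim}_{n}T_{n}$ and this homotopy colimit sits in a triangle two of whose vertices are $\bigoplus_{n}T_{n}\in\T^{\leq 0}$, we conclude $X\in\T^{\leq 0}$. I expect the only genuinely delicate point to be the standard but fiddly bookkeeping that the bar complex is quasi-isomorphic to $X$ and that the tensor products occurring are homotopy-invariant (whence the reduction to $k$-flat Hom-complexes); everything else is formal, being a consequence of $\T^{\leq 0}$ being an aisle closed under coproducts, extensions and positive shifts together with the adjunction $\iota_{B!}\dashv e_B^{*}$.
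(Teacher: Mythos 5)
Your argument is correct, but it takes a genuinely different route from the paper's. The paper argues by orthogonality in one step: for $Y\in\T^{\geq 1}$ it uses the adjunction $\RHom_{\B^{op}\otimes\A}(X,Y)\simeq\RHom_{\B^{op}\otimes\B}(\B,\RHom_{\A}(X,Y))$ and observes that the inner $\B$-bimodule $\RHom_{\A}(X,Y)$ has cohomology concentrated in degrees $\geq 1$ (precisely because $X(-,B)\in\D(\A)^{\leq 0}$ and $Y(-,B')\in\D(\A)^{\geq 1}$), while $\B$ is connective, so the outer Hom vanishes and $X\in{}^{\perp}\T^{\geq 1}=\T^{\leq 0}$. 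You instead prove a generation statement: the aisle $\T^{\leq 0}$ contains the objects $\iota_{B!}(M)$ with $M\in\D(\A)^{\leq 0}$, and the bar resolution exhibits $X$ as a homotopy colimit of iterated extensions of such objects. Both proofs rest on the same two inputs --- the adjunction $\iota_{B!}\dashv e_B^{*}$ and the connectivity of $\B$ --- but the paper applies them covariantly (computing maps out of $X$ into $\T^{\geq 1}$), while you apply them contravariantly (resolving $X$ by extended modules). The paper's version is five lines and sidesteps all convergence and flatness bookkeeping for the bar complex; yours is longer but isolates the reusable fact that $\T^{\leq 0}$ is generated as an aisle by the images $\iota_{B!}(\D(\A)^{\leq 0})$, which is slightly more information. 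One small caveat on your reduction step: replacing $\B$ by a $k$-flat quasi-equivalent model changes $\D(\A\otimes\B^{op})$ into $\D(\A\otimes^{\mathbb L}\B^{op})$, which is not literally the category $\T$ of the statement unless $\A$ or $\B$ already has flat Hom complexes; since the paper's own manipulations (and all its applications, where $\B=k\I$ for a small category $\I$) implicitly make the same assumption, this is a shared imprecision rather than a defect of your proof.
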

\begin{proof}
Suppose $X(-,B)\in \D(\A)^{\leq 0}$ for each $B\in\B$. Let $Y\in \T^{\geq 1}$. 

Then we have $\Hom_{\D(\A)}(X(-,B),Y(-,B'))=0$ for $B,B'\in\B$. 

So we have $\tau_{\leq 0}\RHom_{\A}(X,Y)=0$ and we have
\[
\begin{aligned}\RHom_{\B^{op}\otimes \A}(X,Y)=&\RHom_{\B^{op}\otimes \B}(\B,\RHom_{\A}(X,Y))\\
=&\RHom_{\B^{op}\otimes \B}(\B,\tau_{\leq 0}\RHom_{\A}(X,Y))\\
=&0
\end{aligned}
\]
Hence $X\in\T^{\leq 0}$.
\end{proof}
In particular, for a small category $\mathcal I$, the category $\rep(\I)$ is a full subcategory of $\D(\I)^{\leq 0}$.

\begin{definition}[\cite{Bondarko10,Pauksztello08}]
 Let $\T$ be a triangulated category.
 A {\em co-t-structure} on $\T$ is a pair of full additive subcategories $(\T^{w\leq 0},\T^{w\geq 0})$ such that the following properties hold:
 \begin{itemize}   
\item[(i)] $\T^{w\leq 0}$ and $\T^{w\geq 0}$ are stable under forming direct summands;
\item[(ii)] $\Sigma^{-1}\T^{w\geq 0}\subset \T^{w\geq 0}$; $\Sigma \T^{w\leq 0}\subset \T^{w\leq 0}$;
\item[(iii)] $\Hom_{\T}(\Sigma^{-1}\T^{w\geq 0}, \T^{w\leq 0})=0$;
\item[(iv)] For any object $X$ of $\T$ there exists a triangle
\begin{align}
\Sigma^{-1} B\rightarrow X\rightarrow A\rightarrow B \label{weight} \tag{2.1}
\end{align}
such that $A\in \T^{w\leq 0}$ and $B\in \T^{w\geq 0}$.

The triangle (\ref{weight}) is called a {\em weight decomposition} of $X$.
\end{itemize}
\end{definition}
Let $(\T^{w\leq 0},\T^{w\geq 0})$ be a co-t-structure on a triangulated category $\T$.
\begin{definition}[\cite{Bondarko10, Pauksztello08}]
\begin{itemize}
\item The full subcategory $\mathcal H{\coloneqq}\T^{w\leq 0}\cap \T^{w\geq 0}$ is called the {\em coheart} of the co-t-structure.
\item $\T^{w\geq l}{\coloneqq}\Sigma^{-l}\T^{w\geq 0}$ and $\T^{w\leq l}{\coloneqq}\Sigma^{-l}\T^{w\leq 0}$.
\item For any $i,j\in\mathbb Z$, $i\leq j$, we define $\T^{[i,j]}{\coloneqq}\T^{w\leq j}\bigcap \T^{w\geq i}$.
\item The co-t-structure $(\T^{w\leq 0}, \T^{w\geq 0})$ is called {\em bounded} if 
\[
\T=\bigcup_{l\in \mathbb Z}\T^{w\leq l}=\bigcup_{l\in\mathbb Z}\T^{w\geq l}.
\]
\end{itemize} 
\end{definition}
\begin{example}[\cite{Bondarko10, Pauksztello08}]
Let $\B$ be an additive category. 
We denote by $\mathcal H(\B)^{\leq 0}$ (resp.~$\mathcal H(\B)^{\geq 0}$) the full subcategory consisting of complexes that are homotopy equivalent to complexes concentrated in degrees $\leq 0$ (resp.~$\geq 0$). 
The pair of subcategories $(\mathcal H(\B)^{\leq 0},\mathcal H(\B)^{\geq 0})$ is a co-t-structure on $\mathcal H(\B)$.
Its coheart is the Karoubi-closure of $\B$ in $\mathcal H(\B)$, i.e.~the full subcategory consisting of the objects $X\in \mathcal H(\B)$ which are retracts of objects in $\B$. 
For example, suppose $p: Y^0\rightarrow Y^1$ is an epimorphism in $\B$ with a left inverse $s:Y^1\rightarrow Y^0$. 
Then the complex
 \[
 \cdots\rightarrow0\rightarrow 0\rightarrow Y^0\xrightarrow{p} Y^1\rightarrow 0\rightarrow \cdots
 \]
where $Y^0$ is in degree 0, is a retract of the stalk complex $Y^0$ and hence belongs to the coheart. 
Each such complex belongs to $\B$ if and only if $\B$ is weakly idempotent complete.
\end{example}
Let $\A$ be a dg $k$-category. 
Recall that we denote by $\pretr(\A)$ its pretriangulated hull and by $\tr(\A)$ the triangulated category $H^0(\pretr(\A))$ and that the Yoneda embedding induces a fully faithful triangle functor $\tr(\A)\rightarrow \D(\A)$.

\begin{proposition}$($\cite[Proposition 6.2.1]{Bondarko10}$)$ \label{cot}
Suppose $\A$ is a strictly connective dg category. 
Put $\T=\tr(\A)$. 
Let $\T^{\leq 0}$ (resp.~$\T^{\geq 0}$) be the full subcategory of $\T$ which consists of objects isomorphic to those of the form $A=(\oplus_{i=1}^{n} A_i[r_i],q)$ for $n\geq 0$ and $r_i\geq 0$ (resp. $r_i\leq 0$) for each $i$. 
Then $(\T^{\leq 0}, \T^{\geq 0})$ is a co-t-structure on $\T$, the {\em canonical co-t-structure}. 
\end{proposition}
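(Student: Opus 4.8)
The plan is to verify the four axioms of a co-t-structure directly, working with the explicit description of $\pretr(\A)$ recalled above and writing $\T=\tr(\A)$, with $\T^{w\leq 0}=\T^{\leq 0}$ (twisted complexes all of whose shifts satisfy $r_i\geq 0$) and $\T^{w\geq 0}=\T^{\geq 0}$ (all $r_i\leq 0$) in the notation of the definition. Both classes are visibly closed under finite direct sums (concatenate the lists of summands and take the block-diagonal differential), so it remains to check (i) closure under retracts, (ii) $\Sigma\T^{w\leq 0}\subseteq\T^{w\leq 0}$ and $\Sigma^{-1}\T^{w\geq 0}\subseteq\T^{w\geq 0}$, (iii) the orthogonality $\Hom_\T(\Sigma^{-1}\T^{w\geq 0},\T^{w\leq 0})=0$, and (iv) the existence of weight decompositions. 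The single nonformal ingredient is strict connectivity: it forces any off-diagonal component of a twisted differential, or any matrix entry computing a Hom in the "wrong" range of degrees, to sit in a strictly positive degree and hence to vanish.

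Axiom (ii) is immediate, since $\Sigma$ shifts every $r_i$ up by one and $\Sigma^{-1}$ shifts every $r_i$ down by one, preserving the conditions $r_i\geq 0$ and $r_i\leq 0$ respectively. For (iii), take $Y=(\bigoplus_j B_j[s_j],q')$ with all $s_j\leq 0$ and $X=(\bigoplus_i A_i[r_i],q)$ with all $r_i\geq 0$; then $\Sigma^{-1}Y$ has all shifts $s_j-1\leq -1$, and by the formula recalled above the degree $0$ component of $\Hom_{\pretr(\A)}(\Sigma^{-1}Y,X)$ is the space of matrices $(f_{ij})$ with $f_{ij}\in\Hom_\A^{\,r_i-(s_j-1)}(B_j,A_i)$. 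Since $r_i-s_j+1\geq 1>0$, strict connectivity gives $f_{ij}=0$, so this Hom complex vanishes in degree $0$ and a fortiori $\Hom_\T(\Sigma^{-1}Y,X)=H^0(0)=0$.

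For (iv), fix $X\in\T$ and choose a representative twisted complex $(\bigoplus_{i=1}^n A_i[r_i],q)$ with $q_{ij}=0$ for $i\geq j$. As $q_{ij}\in\Hom_\A^{\,r_i-r_j+1}(A_j,A_i)$, strict connectivity forces $q_{ij}=0$ unless $r_j>r_i$; hence the partial order on the indices generated by "$q_{ij}\neq 0$" refines the order by the value of $r_i$, and after permuting the summands (a permutation matrix is a closed degree $0$ isomorphism in $\pretr(\A)$) we may assume $r_1\leq r_2\leq\cdots\leq r_n$. Let $m$ be chosen so that $r_i\leq -1$ for $i\leq m$ and $r_i\geq 0$ for $i>m$. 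Because every nonzero $q_{ij}$ has $i<j$, the subspace $\bigoplus_{i\leq m}A_i$ is stable under $q$, so $X_M=(\bigoplus_{i\leq m}A_i[r_i],q|_M)$ is a sub-twisted-complex of $X$ with stupid quotient $X_P=(\bigoplus_{i>m}A_i[r_i],q|_P)$, and the associated triangle reads $X_M\to X\to X_P\xrightarrow{\ \delta\ }\Sigma X_M$ in $\tr(\A)$. Now $X_P$ has all shifts $\geq 0$, so $X_P\in\T^{w\leq 0}$, while $X_M$ has all shifts $\leq -1$, so $\Sigma X_M$ has all shifts $\leq 0$, i.e.\ $\Sigma X_M\in\T^{w\geq 0}$; putting $A=X_P$ and $B=\Sigma X_M$ the triangle above is exactly a weight decomposition $\Sigma^{-1}B\to X\to A\to B$.

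Axiom (i), closure under retracts, is the main obstacle: unlike the others it is invisible on individual twisted complexes, because $\tr(\A)$ need not be idempotent complete. I would deduce it from the weak uniqueness of weight decompositions. Using the orthogonality (iii) one first shows that in any weight decomposition $\Sigma^{-1}B\to X\to A\xrightarrow{w}B$ the map $X\to A$ is a left $\T^{w\leq 0}$-approximation, so that any two weight decompositions of $X$ are linked by a morphism of triangles which is $\Id_X$ in the middle; then — this is the delicate step, handled exactly as in the general theory of weight structures via the octahedral axiom ($3\times 3$ lemma), cf.~\cite{Bondarko10} — that morphism is an isomorphism of triangles. Granting this, if $X$ is a retract of $Z\in\T^{w\leq 0}$, write $Z\cong X\oplus X'$, take weight decompositions of $X$ and of $X'$ from (iv), and form their direct sum; comparing this weight decomposition of $Z$ with the trivial one $0\to Z\xrightarrow{\Id}Z\to 0$ (legitimate since $Z\in\T^{w\leq 0}$) forces the $\Sigma^{-1}B$-term of the former to be isomorphic to $0$, hence its direct summand $X_M\cong 0$, hence $X\cong X_P\in\T^{w\leq 0}$; the argument for $\T^{w\geq 0}$ is dual. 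Finally, boundedness of the resulting co-t-structure is clear, since every object of $\tr(\A)$ is a finite twisted complex and therefore lies in $\T^{w\leq l}\cap\T^{w\geq -l}$ for $l\gg 0$.
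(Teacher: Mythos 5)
The paper offers no proof of this statement (it is quoted from \cite{Bondarko10}), so I can only judge your argument on its own terms. Your verifications of closure under finite direct sums, of axioms (ii) and (iii), and of the weight decomposition axiom (iv) are correct and are the natural degree counts: strict connectivity kills the degree-$0$ entries of $\Hom_{\pretr(\A)}(\Sigma^{-1}Y,X)$ since $r_i-s_j+1\geq 1$, and it forces $q_{ij}\neq 0\Rightarrow r_j>r_i$, so that after reordering the summands with $r_i\leq -1$ span a sub-twisted-complex and give the decomposition $X_M\to X\to X_P\to\Sigma X_M$ exactly as you describe.

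Your proof of axiom (i), however, breaks at the step you yourself flag as delicate: the morphism of triangles linking two weight decompositions of the same object is \emph{not} an isomorphism in general. Weight decompositions are only weakly unique, and this is an essential feature of weight structures, not something the octahedron repairs. Concretely, for any $Z\in\T^{\leq 0}$ and any nonzero $B$ in the coheart (say $B=A^{\wedge}$ for $A\in\A$), both $0\to Z\xrightarrow{\Id}Z\to 0$ and $\Sigma^{-1}B\xrightarrow{0}Z\to Z\oplus B\to B$ are weight decompositions of $Z$, and they are not isomorphic. Hence comparing the direct-sum decomposition of $Z\cong X\oplus X'$ with the trivial one cannot force $B\oplus B'\cong 0$, and the conclusion $X\cong X_P$ does not follow. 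Retract-closedness of the classes as defined (``isomorphic to'', not ``a retract of'') is the one genuinely non-formal point of the proposition, and your argument does not establish it.

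A correct argument runs as follows. If $X$ is a retract of $Z\in\T^{\leq 0}$, then $\Hom_{\T}(\Sigma^{-1}Y,X)=0$ for every $Y\in\T^{\geq 0}$, being a retract of $\Hom_{\T}(\Sigma^{-1}Y,Z)=0$. Represent $X$ by any twisted complex $(\bigoplus_j B_j[s_j],p)$ and put $m=\min_j s_j$. If $m\leq -1$, the summands with $s_j=m$ emit no $p$-components (a nonzero $p_{ij}$ would need target shift $<m$), so they span a sub-twisted-complex $X_m=\bigoplus_{s_j=m}B_j[m]=\Sigma^{-1}\bigl(\bigoplus_{s_j=m}B_j[m+1]\bigr)$ with $\bigoplus B_j[m+1]\in\T^{\geq 0}$, and one gets a triangle $X_m\to X\to X_{>m}\to\Sigma X_m$ whose first map vanishes by the orthogonality above. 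The triangle therefore splits, $X\cong\Cone\bigl(\Sigma X_m\hookrightarrow X_{>m}\bigr)$, and this cone is a twisted complex all of whose shifts are $\geq m+1$. Iterating raises the minimal shift to $0$ in finitely many steps, so $X\in\T^{\leq 0}$; the argument for $\T^{\geq 0}$ is dual. (Equivalently, this shows $\T^{\leq 0}=(\Sigma^{-1}\T^{\geq 0})^{\perp}$, after which retract-closedness is automatic.) I recommend replacing your paragraph on axiom (i) with this stripping argument.
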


\subsection{Homotopy cartesian squares for triangulated categories}
Let $\T$ be a triangulated category. Consider a commutative square in $\T$
\begin{equation}\label{triangulatedpullback}
\begin{tikzcd}
Y \arrow[r,"{f}"] \arrow[d,"g" swap] & Z \ar[d,"g'"] \\
Y' \arrow[r,"f'"swap] & Z'.
\end{tikzcd}
\end{equation}
It is {\em homotopy cartesian} (\cite[Definition 1.4.1]{Neeman99}) if there is a (distinguished) triangle 
\[
\begin{tikzcd}
Y\ar[r,"{(g,-f)^{\intercal}}"] &Y'\oplus Z\ar[r,"{(f'\ g')}"]&Z'\ar[r,"{\partial}"] &\Sigma Y
\end{tikzcd}
\]
for some $\partial:Z'\rightarrow \Sigma Y$. 
For a general commutative square as above, let $u=(g, -f)^{\intercal}$. 
We complete it to a triangle
\[
\begin{tikzcd}
Y\ar[r,"u"]&Y'\oplus Z\ar[r,"v"]&W\ar[r,"w"]&\Sigma Y.
\end{tikzcd}
\]
Then we have a map $t:W\rightarrow Z'$ such that $(f'\ \ g')=t\circ v$. 
However this map is not canonical and we do not know if this map is an isomorphism. 
Therefore we cannot determine whether the commutative square is homotopy cartesian from this map.

It turns out that we have a satisfactory definition of homotopy cartesian squares provided $\T$ is algebraic, i.e.~$\T$ is equivalent, as a triangulated category, to $H^0(\A)$ for a pretriangulated dg category $\A$. 
In this case, the adjunction $(L,R)$ in the diagram \ref{adjointtriple} restricts to an adjuntion 
\[
\begin{tikzcd}
\rep(k(\mathrm{Sq}),\A)\ar[r,shift left=1ex,"L"]& \rep(k(\mathrm{Cosp}),\A)\ar[l, shift left=1ex,"R"].
\end{tikzcd}
\]
\begin{definition}Let $\A$ be a pretriangulated dg category. 
An object $X\in \rep(k(\mathrm{Sq}),\A)$ is {\em homotopy cartesian} if the adjunction morphism $X\rightarrow RLX$ is invertible.
\end{definition}
Recall that we identify $X$ with the diagram \ref{D(Sq)}
\[
\begin{tikzcd}
X_{00}\ar[r,"f"]\ar[d,"g"swap]&X_{01}\ar[d,"j"]\\
X_{10}\ar[r,"k"swap]&X_{11}
\end{tikzcd}\;.
 \]
The object $X$ is homotopy cartesian if and only if the canonical morphism
\[
X_{00}\xrightarrow{(f,g,0)^{\intercal}}\Sigma^{-1}\mathrm{Cone}((-j,k))
\] 
is an isomorphism in $\D(\A)$ (cf.~the proof of Lemma \ref{adj}). 
Note that this implies $\Dia(X)\in \Fun(\mathrm{Sq}, H^0(\A))$ is a homotopy cartesian square in $\T=H^0(\A)$. 

Conversely, let $Y\in \Fun(\Sq, H^0(\A))$ be a square in $H^0(\A)$. If it is a homotopy cartesian square, then it is isomorphic to $\Dia(X)$ for some homotopy cartesian object $X\in \rep(\Sq,\A)$. 
Indeed, by Lemma \ref{epi} below, the object $\Res(Y)\in \Fun(\Cosp, H^0(\A))$ is isomorphic to $\Dia(V)$ for some $V\in \rep(\Cosp, \A)$.
Then the object $X\coloneqq R(V)\in \rep(\Sq,\A)$ is homotopy cartesian and satisfies $\Dia(X)\iso Y$.

Recall that for a small category $\I$, the canonical diagram functor 
\[
\Dia:\D(\I)\rightarrow \mathrm{Fun}(\I,\D({\A}))
\]
sends an object $X:\I\rightarrow \C(\A)$ to the corresponding diagram $\Dia(X)=\pi \circ X$ where $\pi:\C(\A)\rightarrow \D(\A)$ is the canonical quotient functor. 

Recall that, for a category $\C$, we denote by $\Iso(\C)$ the class of isomorphism classes of objects in $\mathcal{C}$. 
The following lemma is well-known. 
Here we include a proof for the convenience of the reader.
\begin{lemma}\label{epi}
Let $\I$ be the path category $P(Q)$ of a quiver $Q$ (without relations).
The functor $\Dia:\D(\I)\rightarrow \Fun(\I,\D(\A))$ is an epivalence, i.e.~it is full and dense and detects isomorphisms. 
In particular, it induces a bijection 
\[
\Iso(\D(I))\xrightarrow{\sim}\Iso(\mathrm{Fun}(\I,\D(\A))).
\]
\end{lemma}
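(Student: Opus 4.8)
The functor $\Dia$ is conservative for an arbitrary small category $\I$ (as recalled above), so it suffices to prove, for $\I=P(Q)$, that $\Dia$ is full and dense; the asserted bijection on isomorphism classes will then follow formally. Throughout we write $\pi\colon\C(\A)\to\D(\A)$ for the quotient functor and we use the identification of objects of $\C(\I)=\C(k\I^{op}\otimes\A)$ with functors $X\colon\I\to\C(\A)$.

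\emph{Density.} Since $\I=P(Q)$ is the free category on the quiver $Q$, giving a functor $X\colon\I\to\C(\A)$ amounts to choosing a dg $\A$-module $X_v$ for each vertex $v$ and a closed degree-$0$ morphism $X_a\colon X_{s(a)}\to X_{t(a)}$ of dg $\A$-modules for each arrow $a$, with \emph{no} relations to check. Given $Y\in\Fun(\I,\D(\A))$, choose for each vertex $v$ a cofibrant dg $\A$-module $X_v$ together with an isomorphism $\theta_v\colon\pi(X_v)\iso Y_v$ (all objects are fibrant in the projective model structure). For each arrow $a\colon u\to v$ the morphism $\theta_v^{-1}\circ Y_a\circ\theta_u$ lies in $\D(\A)(X_u,X_v)=H^0\Hom_{\C(\A)}(X_u,X_v)$, the equality holding because $X_u$ is cofibrant and $X_v$ fibrant; lift it to a closed degree-$0$ morphism $X_a\colon X_u\to X_v$. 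These data assemble into $X\in\C(\I)\subseteq\D(\I)$, and the $\theta_v$ constitute an isomorphism $\Dia(X)\cong Y$. Hence $\Dia$ is dense; note that the only use of the hypothesis is that there are no relations, so arbitrary lifts of the arrows may be used.

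\emph{Fullness.} Let $X,X'\in\D(\I)$. Replacing $X$ by a pointwise cofibrant replacement (a pointwise quasi-isomorphism, hence an isomorphism in $\D(\I)$), we may assume every $X_v$ is cofibrant over $\A$; and $X'$ is automatically fibrant in $\C(\I)$. The obstacle is that $\Dia$ fails to be full already on $\C(\I)$: a natural transformation in $\D(\A)$ only provides morphisms commuting with the structure maps \emph{up to homotopy}, so one must compute $\Hom_{\D(\I)}(X,X')$ through an honest cofibrant resolution of $X$. Here freeness of $Q$ is decisive: since $k\I$ is the tensor category $T_{kQ_0}(kQ_1)$, its diagonal bimodule has the standard length-one resolution, which yields a small explicit cofibrant model
\[
pX=\Cone\Big(\bigoplus_{a\in Q_1}(i_{t(a)})_{!}(X_{s(a)})\xrightarrow{\ \delta\ }\bigoplus_{v\in Q_0}(i_v)_{!}(X_v)\Big),
\]
where $i_v\colon\{*\}\to\I$ picks out $v$ and $(i_v)_{!}$ is the left adjoint of evaluation at $v$ (so $((i_v)_{!}M)_w=\bigoplus_{\text{paths }v\to w}M$), and $\delta$ is built from the $X_a$ and path concatenation. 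Each $(i_v)_{!}$ is left Quillen (its right adjoint, evaluation at $v$, preserves fibrations and trivial fibrations, both being pointwise), so $pX$ is cofibrant, and $\delta$ is arranged so that the augmentation $pX\to X$ is a pointwise quasi-isomorphism. Therefore
\[
\Hom_{\D(\I)}(X,X')=\Hom_{\D(\I)}(pX,X')=H^0\Hom_{\C_{dg}(k\I^{op}\otimes\A)}(pX,X'),
\]
and, by the $(i_v)_{!}$--evaluation adjunction, a closed degree-$0$ morphism $pX\to X'$ is precisely a family of closed degree-$0$ morphisms $f_v\colon X_v\to X'_v$ together with degree-$(-1)$ maps $h_a\colon X_{s(a)}\to X'_{t(a)}$ with $d(h_a)=f_{t(a)}X_a-X'_a f_{s(a)}$; its image under $\Dia$ is the natural transformation $(\pi(f_v))_v$, naturality being exactly these identities. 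Conversely, given any natural transformation $\phi\colon\Dia X\to\Dia X'$, lift each $\phi_v\in H^0\Hom_{\C(\A)}(X_v,X'_v)$ to a closed degree-$0$ morphism $f_v$; since $X_{s(a)}$ is cofibrant, $f_{t(a)}X_a$ and $X'_a f_{s(a)}$ represent the same class in $\D(\A)(X_{s(a)},X'_{t(a)})=H^0\Hom_{\C(\A)}(X_{s(a)},X'_{t(a)})$, hence differ by $d(h_a)$ for some $h_a$. The resulting chain map $pX\to X'$ represents a morphism $X\to X'$ in $\D(\I)$ whose image under $\Dia$ is $\phi$. This proves fullness; the main point, and the one place where the no-relations hypothesis is essential, is the existence of the length-one resolution $pX$, which keeps the coherence data down to a single homotopy per arrow with no higher coherences.

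\emph{Bijection on isomorphism classes.} A full, dense and conservative functor induces a bijection on isomorphism classes of objects: density gives surjectivity, and for injectivity an isomorphism $\Dia X\iso\Dia X'$ lifts, by fullness, to some $f\colon X\to X'$ with $\Dia f$ invertible, whence $f$ is invertible by conservativity. Applied to $\I=P(Q)$, this gives the bijection $\Iso(\D(\I))\iso\Iso(\Fun(\I,\D(\A)))$.
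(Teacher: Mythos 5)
Your proof is correct and follows essentially the same route as the paper: density by lifting vertexwise cofibrant replacements and the arrows (possible because $P(Q)$ is free), and fullness via the two-term resolution $\bigoplus_{a}(i_{t(a)})_!(X_{s(a)})\to\bigoplus_{v}(i_v)_!(X_v)\to X$ coming from freeness together with the $(i_!,i^*)$ adjunction. The only (cosmetic) difference is that you package the resolution as an explicit cofibrant cone $pX$ and read off morphisms at the chain level, whereas the paper applies $\RHom(-,G)$ to the associated triangle and extracts the surjection from the resulting long exact sequence.
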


\begin{proof}
The functor $\Dia$ clearly detects isomorphisms.
Let us first show that it is dense.

Let $S: \I\rightarrow \D(\A)$ be an object in $\Fun(\I, \D(\A))$.
For each object $i\in \I$, we take a cofibrant resolution $\theta_i: X_i\rightarrow S(i)$.
Then for each arrow $\alpha:i\rightarrow j$ in $Q$, we have a morphism $X_{\alpha}:X_i\rightarrow X_{j}$ in $\C(\A)$ such that $\theta_{j}\circ \pi(X_{\alpha})\circ \theta_{i}^{-1}=S(\alpha)$ where $\pi:\C(\A)\rightarrow \D(\A)$ is the quotient functor.
This extends to a functor $X:\I=kQ\rightarrow \C(\A)$ such that $\Dia(X)=\pi \circ X\iso S$. 
This shows that the functor $\Dia$ is dense.

Let us now show that it is full.

Let $\C$ be an additive category with coproducts and $\I$ a small category.
For each object $i\in \I$, the map sending a functor $F:\I \rightarrow \C$ to $F(i)$ determines the {\em evaluation functor at $i$}
\[
i^{*}:\Fun(\I,\C)\rightarrow \C,\;\; F\mapsto F(i).
\] 
It admits a left adjoint 
\[
i_{!}:\C\rightarrow \Fun(\I,\C)
\]
sending an object $X$ to the functor $j\mapsto \coprod_{p:i\rightarrow j} X$, where $p$ runs through morphisms from $i$ to $j$ in $\I$.

Suppose $\I$ is the path category $P(Q)$ of a quiver $Q$.
Then for each $F\in\Fun(\I,\C)$, we have the following sequence
\[
0\rightarrow \coprod_{\alpha: i\rightarrow j \text{ in $Q$}}j_{!}i^{*}F\rightarrow \coprod_{i\in Q}i_{!}i^{*}F\rightarrow F\rightarrow 0.
\]
which splits when evaluated at each object $i\in \I$.

Now suppose $\C=\C(\A)$.
The above sequence is a short exact sequence in $\Fun(P(Q),\C(\A))\iso\C(\A\otimes k(P(Q))^{op})$.
Thus it induces a triangle in $\D(\A\otimes k(P(Q))^{op})$
\begin{equation}\label{triangle}
\bigoplus_{\alpha:i\rightarrow j \text{ in $Q$}}j_!i^*F\rightarrow \bigoplus_{i\in Q}i_{!}i^{*}F\rightarrow F\rightarrow \Sigma\bigoplus_{\alpha:i\rightarrow \text{ in $Q$}}j_!i^*F.
\end{equation}
Notice that the adjunction $(i_{!},i^{*})$ is a Quillen adjunction with respect to the projective model structures on both categories.
Thus we have
\[
\RHom(i_{!}X,F)\iso\RHom(X,i^*F).
\]

Let $G$ be another object in $\Fun(\I,\C(\A))$.
If we apply $\RHom(-, G)$ to the triangle \ref{triangle}, we otain the following triangle in $\D(k)$
\[
\RHom(F,G)\rightarrow \prod_{i\in Q}\RHom(F(i),G(i))\xrightarrow{\phi} \prod_{\alpha:i\rightarrow j\text{ in $Q$}}\RHom(F(i),G(j))\rightarrow \Sigma\RHom(F,G).
\]
By direct inspection, we see that the kernel of $H^0(\phi)$ is the space of morphisms of diagrams from $\Dia(F)$ to $\Dia(G)$.
Thus by applying $H^0$ to the triangle we see that the map
\[
\Hom_{\D(\A\otimes k(P(Q))^{op})}(F,G)\rightarrow \Hom_{\Fun(P(Q),\D(\A))}(\Dia(F), \Dia(G))
\]
 is a surjection.
\end{proof}
Let $\A$ be a pretriangulated dg category. 
In summary, we have the following diagram of functors
\[
\begin{tikzcd}
\rep(k(\Sq),\A)\ar[r,"\Dia"]\ar[d,"L"swap,shift right=1ex]&\Fun(\Sq, H^0(\A))\ar[d,"\Res"]\\
\rep(k(\Cosp),\A)\ar[r,"\Dia"swap]\ar[u,"R"swap, shift right=1ex]&\Fun(\Cosp,H^0(\A))
\end{tikzcd}
\]
where \begin{itemize}
\item $\Res\circ\Dia=\Dia\circ L$ and $\Res\circ\Dia\circ R\iso \Dia$; 
\item by Lemma \ref{epi}, the bottom horizontal functor induces a bijection between the isomorphism classes of objects; 
\item the functor $R$ is fully faithful and by definition, an object $X$ in $\rep(\Sq,\A)$ is homotopy cartesian if and only if it is in the essential image of $R$; 
\item for each homotopy cartesian square $X$ in $H^0(\A)$, there exists a non-unique homotopy bicartesian object in $\A$ whose image under $\Dia$ is isomorphic to $X$;
\item the right vertical functor does not admit a right adjoint because maps in the triangulated category $H^0(\A)$ do not have kernels in general.
\end{itemize}
\subsection{Homotopy pullbacks/pushouts}
In this subsection, we extend the definitions of homotopy (co)cartesian squares to the general case when $\A$ is not necessarily pretriangulated. 
\begin{definition}\label{maindef}
An object $X\in \rep(\mathrm{Sq})$ is a \text{\em{{homotopy cartesian square}}} with respect to $\A$ if the canonical map $X_{00}\rightarrow \Sigma^{-1}\mathrm{Cone}((-j,k))$ induces an isomorphism   
\[
\tau_{\leq 0}\RHom(A^{\wedge},X_{00})\rightarrow \tau_{\leq 0}\RHom(A^{\wedge}, \Sigma^{-1}\mathrm{Cone}((-j,k)))
\]
in $\D(k)$ for each $A$ in $\A$.
\end{definition}
\begin{remark}\label{subcategory}
Let $\A'$ be a full dg subcategory of $\A$. 
Let  $F:\rep(\Sq,\A')\rightarrow \rep(\Sq,\A)$ be the inclusion functor induced by the inclusion dg functor $\A'\rightarrow \A$.
Let $X$ be an object in $\rep(\Sq,\A')$. 
If $F(X)$ is homotopy cartesian with respect to $\A$, then $X$ is homotopy cartesian with respect to $\A'$. 
\end{remark}
In the rest of the chapter, we will only use this relative version of homotopy (co)cartesian squares.

For an object $X\in \rep(\mathrm{Sq})$, by Lemma \ref{cha} the unit morphism $X\rightarrow RL X$ induces a canonical morphism $X\rightarrow \tau_{\leq 0} RLX$ in $\D(\mathrm{Sq})$.

\begin{lemma} \label{adj}Let $X$ be an object in $\rep(\mathrm{Sq})$. If the canonical map $X\rightarrow \tau_{\leq 0} RLX$ is an isomorphism, then $X$ is homotopy cartesian. If $\A$ is a connective dg category, then the converse also holds.
\end{lemma}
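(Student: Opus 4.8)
The plan is to make the unit morphism $X\to RLX$ completely explicit and then reduce both implications to formal properties of the canonical $t$-structures. Using the description of the right adjoint $R$ recorded in Section~\ref{Preliminaries}, and the fact that $LX$ is the cospan $X_{10}\xrightarrow{k}X_{11}\xleftarrow{j}X_{01}$, the square $RLX$ has $(RLX)_{01}=X_{01}$, $(RLX)_{11}=X_{11}$, $(RLX)_{10}=X_{10}\oplus\Cone(-\Id_{\Sigma^{-1}X_{11}})$ and $(RLX)_{00}=K$, where $K$ fits into the graded-split short exact sequence
\[
0\to K\to \Cone(-\Id_{\Sigma^{-1}X_{11}})\oplus X_{01}\oplus X_{10}\xrightarrow{[p\;j\;k]}X_{11}\to 0,
\]
so that $K$ is quasi-isomorphic in $\D(\A)$ to $\Sigma^{-1}\Cone((-j,k))$ once the contractible summand $\Cone(-\Id_{\Sigma^{-1}X_{11}})$ is discarded. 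A short chase through this model shows that the unit $\eta_X\colon X\to RLX$ is the identity on the components $01$ and $11$, the split inclusion on the component $10$, and on the component $00$ it is, after the above quasi-isomorphism, the canonical map $c\colon X_{00}\xrightarrow{(f,g,0)^{\intercal}}\Sigma^{-1}\Cone((-j,k))$ determined by $f$, $g$ and the equality $jf=kg$ --- precisely the map occurring in Definition~\ref{maindef}. (This also yields the identification of homotopy cartesian squares promised earlier in the pretriangulated case.)

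Next I would identify $\tau_{\leq 0}RLX$. Because $k\Sq$ has its Hom-complexes concentrated in degree $0$, Lemma~\ref{cha} (applied to $k\Sq$, together with the description of $\D(\Sq)^{\geq 0}$ preceding it) shows that $\tau_{\leq 0}$ on $\D(\Sq)$ is computed componentwise; the terms $X_{01}$, $X_{11}$, $X_{10}\oplus\Cone(-\Id_{\Sigma^{-1}X_{11}})$ are quasi-representable and hence already lie in $\D(\A)^{\leq 0}$, while $(\tau_{\leq 0}RLX)_{00}=\tau_{\leq 0}\Sigma^{-1}\Cone((-j,k))$. Since $X\in\rep(\Sq)\subseteq\D(\Sq)^{\leq 0}$, the unit factors canonically as $\bar\eta_X\colon X\to\tau_{\leq 0}RLX$, and its $00$-component is the factorization $\bar c\colon X_{00}\to\tau_{\leq 0}\Sigma^{-1}\Cone((-j,k))$ of $c$. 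As $\Dia$ is conservative and the $01,10,11$-components of $\bar\eta_X$ are isomorphisms in $\D(\A)$, we conclude that $\bar\eta_X$ is invertible in $\D(\Sq)$ if and only if $\bar c$ is invertible in $\D(\A)$, i.e.\ if and only if $\RHom_\A(A^\wedge,\bar c)$ is an isomorphism in $\D(k)$ for every $A\in\A$.

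Write $N=\Sigma^{-1}\Cone((-j,k))$. Applying $\RHom_\A(A^\wedge,-)$ to the triangle $\tau_{\leq 0}N\to N\to\tau_{\geq 1}N$ and using $A^\wedge\in\D(\A)^{\leq 0}$, one gets $\RHom_\A(A^\wedge,\tau_{\geq 1}N)\in\D(k)^{\geq 1}$, whence $\tau_{\leq 0}\RHom_\A(A^\wedge,\bar c)$ and $\tau_{\leq 0}\RHom_\A(A^\wedge,c)$ differ by an isomorphism. Hence if $\bar\eta_X$ is invertible, then $\bar c$, and a fortiori $\tau_{\leq 0}\RHom_\A(A^\wedge,c)$, is an isomorphism for every $A$; this is exactly the defining condition for $X$ to be homotopy cartesian, which proves the first assertion for arbitrary $\A$. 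For the converse, assume $\A$ connective. Then $\D(\A)^{\leq 0}$ is generated, under positive shifts, extensions and coproducts, by the representable modules $B^\wedge$, whose cohomology is concentrated in non-positive degrees, so the same holds for every object of $\D(\A)^{\leq 0}$; in particular $\RHom_\A(A^\wedge,X_{00})=X_{00}(A)$ (recall $X_{00}$ is quasi-representable) and $\RHom_\A(A^\wedge,\tau_{\leq 0}N)=(\tau_{\leq 0}N)(A)$ both lie in $\D(k)^{\leq 0}$. Consequently $\RHom_\A(A^\wedge,\bar c)$ is an isomorphism for all $A$ if and only if the map $\tau_{\leq 0}\RHom_\A(A^\wedge,X_{00})\to\tau_{\leq 0}\RHom_\A(A^\wedge,N)$ of Definition~\ref{maindef} is, so that $X$ homotopy cartesian forces $\bar c$, hence $\bar\eta_X$, to be invertible.

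The step that I expect to demand real care is the first one: extracting from the cofibrant-bimodule model of $R$ in Section~\ref{Preliminaries} that the $00$-component of $\eta_X$ really is the canonical map $X_{00}\to\Sigma^{-1}\Cone((-j,k))$, with the correct signs, and that the remaining three components are quasi-isomorphisms. Once this explicit picture of the unit is secured --- together with the two bookkeeping facts that $\tau_{\leq 0}$ is computed componentwise on $\D(\Sq)$ and that, over a connective $\A$, objects of $\D(\A)^{\leq 0}$ have cohomology concentrated in non-positive degrees --- everything else is a routine manipulation of the canonical $t$-structures.
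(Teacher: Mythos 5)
Your proof is correct and follows essentially the same route as the paper: you make the unit $X\to RLX$ explicit via the bimodule model of $R$, use Lemma~\ref{cha} to see that the truncation $\tau_{\leq 0}RLX$ only modifies the $00$-component, and then compare $\bar c$ with the condition of Definition~\ref{maindef} by applying $\tau_{\leq 0}\RHom_\A(A^\wedge,-)$, invoking connectivity only for the converse. The only cosmetic difference is that you discard the contractible summand of $(RLX)_{10}$ and work with $N=\Sigma^{-1}\Cone((-j,k))$ directly, whereas the paper keeps $K$ and identifies it with $\Sigma^{-1}\Cone(p,-j,k)$; this is immaterial.
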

\begin{proof}Let $X$ be the object
\[ \begin{tikzcd}A\ar[r,"f"]\ar[d,"g"swap,""{name=2}]&B\ar[d,"j"]\\
C\ar[r,"k"swap]&D
\end{tikzcd}
\]
in $\rep(\mathrm{Sq})$. Let $p:\Cone(-\Id_{\Sigma^{-1}D})\rightarrow D$ be the canonical projection. 

Then the canonical map $X\rightarrow RLX$ can be described by the following commutative diagram in $\C(\A)$:
\[
\begin{tikzcd}
A\ar[r,"\begin{bmatrix}f\\g\end{bmatrix}"]\ar[d,"u"swap]&B\oplus C\ar[d,"\begin{bmatrix}0\ 1\ 0\\0\ 0\ 1\end{bmatrix}"]\ar[r,"{[}-j{,}k{]}"]&D\ar[d,equal]\\
K\ar[r, tail]&\Cone(-\Id_{\Sigma^{-1}D})\oplus B\oplus C\ar[r,"{[}p{,}-j{,}k{]}"swap, two heads]&D
\end{tikzcd}
\]
where the second row is a short exact sequence in $\C(\A)$. 
By Lemma \ref{cha}, we have $(\tau_{\leq 0} RLX)_{00}\iso \tau_{\leq 0}K$.

We have a canonical commutative diagram in $\D(\A)$
\[
\begin{tikzcd}
A\ar[r]\ar[d,"u"swap]&\Sigma^{-1}\Cone(-j,k)\ar[d,"\sim"]\\
K\ar[r,"\sim"]&\Sigma^{-1}\Cone(p,-j,k)
\end{tikzcd}.
\]
The map $u$ induces a canonical map $A\rightarrow \tau_{\leq 0}K$ in $\D(\A)$. 
It is an isomorphism in $\D(\A)$ if and only if the canonical map $X\rightarrow \tau_{\leq 0}RLX$ is an isomorphism in $\D(\Sq)$.

The object $X$ is a homotopy cartesian square if and only if $u:A\rightarrow K$ induces an isomorphism in $\D(k)$ 
\[
\tau_{\leq 0}\RHom(A'^{\wedge},A)\iso \tau_{\leq 0}\RHom(A'^{\wedge}, K) 
\]
for each $A'$ in $\A$. 

Note that we have
\[
\tau_{\leq 0}\RHom(A'^{\wedge},\tau_{\leq 0}K) \iso \tau_{\leq 0}\RHom(A'^{\wedge}, K).
\]
So if the canonical map $A\rightarrow \tau_{\leq 0}K$ is an isomorphism in $\D(\A)$, then $X$ is homotopy cartesian. 

When the dg category $\A$ is connective, we have 
\[
\tau_{\leq 0}\RHom(A'^{\wedge},A)\iso \RHom(A'^{\wedge},A)
\]
and
\[
\tau_{\leq 0}\RHom(A'^{\wedge},\tau_{\leq 0}K)\iso \RHom(A'^{\wedge},\tau_{\leq 0}K).
\]
So the converse also holds.

\end{proof}

\begin{example}
Each graded-split short exact sequence  $0\rightarrow A\xrightarrow{f} B\xrightarrow{j} C\rightarrow 0$ in $\mathcal A$ gives rise to a homotopy cartesian square 
\[
\begin{tikzcd}
A^{\wedge}\arrow[r,"f^{\wedge}"]\arrow[d]&B^{\wedge}\arrow[d,"j^{\wedge}"]\\
0\arrow[r]&C^{\wedge}
\end{tikzcd}.
\]
\end{example}
\begin{example}\label{ordinary}
Let $\A$ be an additive category which we consider as a dg category concentrated in degree zero. 
Then the square
\[
 \begin{tikzcd}
 A^{\wedge}\arrow[r,"f^{\wedge}"]\arrow[d]&B^{\wedge}\arrow[d,"j^{\wedge}"]\\0\arrow[r]&C^{\wedge}
 \end{tikzcd}
 \]
  with $A,B,C\in \A$ is homotopy cartesian if and only if in the sequence 
  \[
  \begin{tikzcd}0\arrow[r]&A\arrow[r,"f"]&B\arrow[r,"j"]&C\end{tikzcd},
  \]
 $f$ is a kernel of $j$. Indeed we identify $\pretr(\A)$ with $\C^{b}_{dg}(\A)$. Then the object $\Sigma^{-1}\mathrm{Cone}(j)$ is identified with the complex $0\rightarrow B\rightarrow C\rightarrow 0$ where $B$ is in degree $0$. 
  
  For each $A'\in \A$, the space $\Hom(A'^{\wedge},\Sigma^{-1}\mathrm{Cone}(g))$ is then identified with the kernel of the map $\Hom_{\A}(A',B)\rightarrow \Hom_{\A}(A',C)$.
\end{example}

The following lemma was obtained independently by Genovese, Lowen and Van den Bergh, c.f. \cite[Lemma 2.5.5]{GenoveseLowenVandenBergh22}. 
\begin{lemma}\label{trun}
Let $\B$ be a connective dg category. Then the natural functor 
\[
F: \rep(\B,\tau_{\leq 0}\A)\rightarrow\rep(\B,\A)
\]
is an equivalence of categories. 
Let $H:\D(\A\otimes^{\mathbb L}\B^{op})\rightarrow \D(\tau_{\leq 0}\A\otimes^{\mathbb L} \B^{op})$ be the restriction functor. 
Then a quasi-inverse of $F$ is given by $G=\tau_{\leq 0}\circ H$. 
Moreover, for any $X, Y\in \rep(\B,\tau_{\leq 0}\A)$, we have $\tau_{\leq 0}\RHom(X, Y)\xrightarrow{\sim}\tau_{\leq 0}\RHom(FX,FY)$. 
\end{lemma}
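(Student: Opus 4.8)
The plan is to identify $F$ with extension of scalars and $G$ with a truncation of restriction of scalars along the canonical dg functor $\iota\colon\tau_{\leq 0}\A\to\A$, and then to prove that they form an adjoint equivalence by reducing every assertion to the case of representable modules. Concretely, I would take $F=\iota_{!}=-\otimes^{\mathbb L}_{\tau_{\leq 0}\A}\A$ and $H=\iota^{*}$ (restriction of scalars), which form a derived adjunction between $\D(\tau_{\leq 0}\A\otimes^{\mathbb L}\B^{op})$ and $\D(\A\otimes^{\mathbb L}\B^{op})$, with unit $\eta\colon\Id\to HF$ and counit $\varepsilon\colon FH\to\Id$. The structural input I would record first is that, by Lemma~\ref{cha} together with the characterization of $\T^{\geq 0}$ stated just before it (and the fact that the $t$-structure of Lemma~\ref{lemma:tstructure} has objectwise truncation functors), the truncation $\tau_{\leq 0}$ on each of these bimodule categories is computed objectwise in $\B$, i.e.\ $(\tau_{\leq 0}Z)(-,B)\simeq\tau_{\leq 0}(Z(-,B))$; in particular $\rep(\B,\tau_{\leq 0}\A)\subseteq\T^{\leq 0}$.

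With this in hand I would check that $F$ and $G:=\tau_{\leq 0}\circ H$ restrict to functors between the subcategories in the statement: $F$ sends a representable $\tau_{\leq 0}\A$-module $A^{\wedge}$ to the representable $\A$-module $A^{\wedge}$, so it preserves quasi-representability of each $Z(-,B)$; and if $Y\in\rep(\B,\A)$ with $Y(-,B)\simeq A_{B}^{\wedge}$, then $(HY)(-,B)\simeq\A(-,A_{B})$ as a $\tau_{\leq 0}\A$-module, whose truncation is the representable module $\tau_{\leq 0}\A(-,A_{B})$, so $G(Y)\in\rep(\B,\tau_{\leq 0}\A)$. Next, using that $\rep(\B,\tau_{\leq 0}\A)\subseteq\T^{\leq 0}$ and that the truncation triangle $\tau_{\leq 0}HY\to HY\to C$ has $C\in\T^{\geq 1}$, for which $\tau_{\leq 0}\RHom(X,C)=0$ whenever $X\in\T^{\leq 0}$, I would show that $(F,G)$ is again an adjoint pair on these subcategories. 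Then $F$ is fully faithful iff the unit $X\to GFX$ is invertible; evaluating at each object of $\B$ reduces this to representables, where the unit is literally the identity $\tau_{\leq 0}\A(-,A)\to\tau_{\leq 0}(\A(-,A))=\tau_{\leq 0}\A(-,A)$.

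For essential surjectivity it remains to see that the counit $FGY\to Y$ — obtained by applying $F$ to $\tau_{\leq 0}HY\to HY$ and composing with $\varepsilon_{Y}$ — is invertible. Since a morphism of bimodules is an isomorphism as soon as it becomes one after evaluating at each object of $\B$, this reduces to the representable case, where the triangle identities identify the morphism with the canonical quasi-isomorphism $\tau_{\leq 0}\A(-,A_{B})\otimes^{\mathbb L}_{\tau_{\leq 0}\A}\A\xrightarrow{\sim}\A(-,A_{B})$ supplied by the co-Yoneda lemma. This proves that $F$ is an equivalence with quasi-inverse $G=\tau_{\leq 0}\circ H$. Finally, the last assertion follows from the same mechanism: for $X,Y\in\rep(\B,\tau_{\leq 0}\A)$ the unit fits into a triangle $Y\to HFY\to C'$ with $C'\in\T^{\geq 1}$ (checked objectwise on representables), so $\tau_{\leq 0}\RHom(X,Y)\xrightarrow{\sim}\tau_{\leq 0}\RHom(X,HFY)$, and the $(F,H)$-adjunction rewrites the right-hand side as $\tau_{\leq 0}\RHom(FX,FY)$.

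I expect the only genuine obstacle to be the bookkeeping: verifying that every truncation in sight is objectwise (so that one may argue one object of $\B$ at a time) and that the relevant units and counits do restrict, on representable modules, to the evident canonical maps. Once these reductions are set up cleanly — essentially all of the content being Lemma~\ref{cha} together with co-Yoneda — no actual computation is needed.
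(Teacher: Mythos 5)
Your argument is correct, but it takes a genuinely different route from the paper. The paper proves the lemma formally at the level of $\Hqe$: it uses the adjunction between the inclusion $i\colon\dgcat_{\leq 0}\to\dgcat$ and the truncation $\tau_{\leq 0}$, and then shows by a Yoneda-type corepresentability argument — testing against an arbitrary strictly connective $\C$ and invoking To\"en's internal-Hom adjunction $\Hom_{\Hqe}(\C\otimes^{\mathbb L}\B,\A)\cong\Hom_{\Hqe}(\C,\rep_{dg}(\B,\A))$ together with the fact that tensor products of strictly connective dg categories stay strictly connective — that $\tau_{\leq 0}\rep_{dg}(\B,\tau_{\leq 0}\A)\to\tau_{\leq 0}\rep_{dg}(\B,\A)$ is an isomorphism in $\Hqe_{\leq 0}$; all three assertions of the lemma then fall out of this single quasi-equivalence. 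You instead work inside the derived categories of bimodules with the concrete adjunction $(\iota_{!},\iota^{*})$, restrict it to an adjunction $(F,\tau_{\leq 0}\circ H)$ using Lemma~\ref{cha} and the $t$-structure, and verify unit and counit objectwise on representables via co-Yoneda. The paper's proof is shorter and avoids all objectwise bookkeeping, but it does not explicitly exhibit the quasi-inverse as $\tau_{\leq 0}\circ H$ — that identification still requires essentially the checks you perform; your proof is longer but makes the adjoint equivalence and the formula for $G$ completely explicit, and it isolates exactly where connectivity of $\B$ enters (namely, that the truncation of a bimodule is computed objectwise, which is the content of Lemma~\ref{cha} and its converse). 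Both proofs are valid; just be careful, as you note, that the "objectwise" truncation is only a genuine dg submodule after replacing $\B$ by a strictly connective model, though this costs nothing at the derived level.
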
 
\begin{proof}
Let $\dgcat_{\leq 0}$ be the category of small strictly connective dg categories. 
Then we have the following adjunction
\[
\begin{tikzcd}
\dgcat_{\leq 0}\ar[r,"i", shift left =0.8ex]&\dgcat\ar[l,"\tau_{\leq 0}", shift left=0.8ex]
\end{tikzcd}
\]
where the left adjoint $i:\dgcat_{\leq 0}\rightarrow \dgcat$ is the inclusion functor and the right adjoint $\tau_{\leq 0}:\dgcat\rightarrow \dgcat_{\leq 0}$ sends a small dg category $\A$ to $\tau_{\leq 0}\A$.

Let $\Hqe_{\leq 0}$ be the localization of $\dgcat_{\leq 0}$ at the class of quasi-equivalences of strictly connective dg categories.
The functors $i$ and $\tau_{\leq 0}$ clearly preserve quasi-equivalences. 
Hence by the universal properties of localizations of categories, the above adjunction induces the following adjunction
\[
\begin{tikzcd}
\Hqe_{\leq 0}\ar[r,"i", shift left =0.8ex]&\Hqe\ar[l,"\tau_{\leq 0}", shift left=0.8ex]
\end{tikzcd}
\]
where $i$ is fully faithful.

We claim that the canonical morphism in $\Hqe_{\leq 0}$
\[
\tau_{\leq 0}\rep_{dg}(\B,\tau_{\leq 0}\A)\rightarrow \tau_{\leq 0}\rep_{dg}(\B,\A)
\]
is an isomorphism.

Let $\C$ be an arbitrary small strictly connective dg category.
Recall that the tensor product of strictly connective dg categories remains strictly connective and 
that each connective dg category admits a cofibrant replacement which is strictly connective.
Recall that $\rep_{dg}(\B,\A)$ is the internal Hom of the monoidal category $(\Hqe,-\otimes^{\mathbb L}-)$.
Then we have
\[
\begin{aligned}
\Hom_{\Hqe_{\leq 0}}(\C,\tau_{\leq 0}\rep_{dg}(i(\B),i(\tau_{\leq 0}\A)))
&=\Hom_{\Hqe}(i(\C),\rep_{dg}(i(\B),i(\tau_{\leq 0}\A)))\\
&=\Hom_{\Hqe}(i(\C)\otimes^{\mathbb L}i(\B),i(\tau_{\leq 0}\A))\\
&=\Hom_{\Hqe_{\leq 0}}(\C\otimes^{\mathbb L}\B,\tau_{\leq 0}\A)\\
&=\Hom_{\Hqe}(i(\C)\otimes^{\mathbb L}i(\B),\A)\\
&=\Hom_{\Hqe}(i(\C),\rep_{dg}(i(\B),\A))\\
&=\Hom_{\Hqe_{\leq 0}}(\C,\tau_{\leq 0}\rep_{dg}(i(\B),\A)).
\end{aligned}
\]
\end{proof}
The following lemma allows us to reduce, in many cases, to the connective case when considering homotopy (co)cartesian squares.
\begin{lemma}\label{truncationhomotopycartesian}
Consider the equivalence of categories $F:\rep(\Sq,\tau_{\leq 0}\A)\iso \rep(\Sq,\A)$ as in Lemma \ref{trun}.
An object $X$ in $\rep(\mathrm{Sq},\tau_{\leq 0}\A)$ is homotopy cartesian
 if and only if its image under $F$ in $\rep(\mathrm{Sq},\A)$ is homotopy cartesian.
\end{lemma}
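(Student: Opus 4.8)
The plan is to unwind Definition~\ref{maindef} on both sides and, using Lemma~\ref{trun} together with the dg Yoneda lemma, to reduce the equivalence to an elementary compatibility between the truncation functor $\tau_{\leq 0}$ and the formation of mapping cones of complexes of $k$-modules.

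First I would fix notation and write $Y=FX\in\rep(\Sq,\A)$ for the commutative square with terms $Y_{ab}$ and structure maps $f,g,j,k$. By Lemma~\ref{trun} the functor $G=\tau_{\leq 0}\circ H$, with $H$ the restriction functor, is a quasi-inverse of $F$, so $X\cong G(Y)$; and since $H$ leaves the values of a dg module at the objects of $\A=\tau_{\leq 0}\A$ unchanged while $\tau_{\leq 0}$ of a dg module is computed objectwise, this gives, for each object $A$, a canonical identification $X_{ab}(A)\cong\tau_{\leq 0}\bigl(Y_{ab}(A)\bigr)$ in $\D(k)$ under which the structure maps of $X$ become the $\tau_{\leq 0}$-truncations of those of $Y$ evaluated at $A$, and likewise for the canonical comparison map of Definition~\ref{maindef}. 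Next I would use that $\RHom(A^{\wedge},N)\cong N(A)$ for any object $A$ and any dg module $N$, by the dg Yoneda lemma (no derivation is needed since $A^{\wedge}$ is cofibrant), and that $\RHom(A^{\wedge},-)$ commutes with shifts and with $\Cone$. Combining these, Definition~\ref{maindef} for $Y$ over $\A$ becomes the assertion that, for each $A\in\A$, the canonical map
\[
\tau_{\leq 0}\bigl(Y_{00}(A)\bigr)\;\longrightarrow\;\tau_{\leq 0}\Bigl(\Sigma^{-1}\Cone\bigl((-j,k)\colon Y_{01}(A)\oplus Y_{10}(A)\to Y_{11}(A)\bigr)\Bigr)
\]
is invertible in $\D(k)$, while Definition~\ref{maindef} for $X$ over $\tau_{\leq 0}\A$ becomes the same assertion with each $Y_{ab}(A)$ replaced by $\tau_{\leq 0}\bigl(Y_{ab}(A)\bigr)$ and each structure map by its truncation. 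The two sources then coincide because $\tau_{\leq 0}\circ\tau_{\leq 0}=\tau_{\leq 0}$.

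It then remains to identify the two targets, and this is the only point where a genuine, if short, computation is needed: for a closed degree-$0$ morphism $\psi\colon M\to N$ of complexes of $k$-modules one has the equality of subcomplexes of $\Sigma^{-1}\Cone(\psi)$
\[
\tau_{\leq 0}\bigl(\Sigma^{-1}\Cone(\psi)\bigr)=\tau_{\leq 0}\bigl(\Sigma^{-1}\Cone(\tau_{\leq 0}\psi)\bigr),
\]
because $\Sigma^{-1}\Cone(\psi)$ and $\Sigma^{-1}\Cone(\tau_{\leq 0}\psi)$ already coincide in all degrees $\leq -1$ and have the same group of $0$-cocycles. Applying this objectwise with $M=Y_{01}\oplus Y_{10}$, $N=Y_{11}$ and $\psi=(-j,k)$, then evaluating at $A$, and checking by inspection of the explicit formula $(f,g,0)^{\intercal}$ for the comparison map that the two comparison maps correspond under this identification, one concludes that the condition defining ``homotopy cartesian with respect to $\tau_{\leq 0}\A$'' for $X$ and the condition defining ``homotopy cartesian with respect to $\A$'' for $FX$ are literally the same; hence one holds if and only if the other does.

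The main difficulty I anticipate is bookkeeping rather than anything conceptual: one has to keep careful track of which copy of $\tau_{\leq 0}$ is applied at which stage, and to verify that the equivalence $F$ of Lemma~\ref{trun} really does intertwine, on the two sides, the formation of the terms of the square, of their structure maps, and of the canonical comparison maps of Definition~\ref{maindef}. Once the explicit description of $F$ and $G=\tau_{\leq 0}\circ H$ from Lemma~\ref{trun} is in hand, each of these compatibilities is formal, and no connectivity hypothesis on $\A$ is required.
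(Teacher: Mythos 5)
Your proof is correct, but it takes a genuinely different route from the one in the paper. The paper stays entirely at the level of derived categories: it applies $\Hom_{\D(\tau_{\leq 0}\A)}(A^{\wedge},-)$ to the triangle $\Sigma^{-1}\Cone(\psi)\to X_{10}\oplus X_{01}\xrightarrow{\psi} X_{11}\to\Cone(\psi)$, notes that the groups $\Hom(A^{\wedge},\Sigma^{-n}X_{ij})$ for $n\geq 0$ are preserved by the induction functor $\D(\tau_{\leq 0}\A)\to\D(\A)$ (the $\tau_{\leq 0}\RHom$-compatibility of Lemma~\ref{trun}), and then transfers the groups $\Hom(A^{\wedge},\Sigma^{-n}\Sigma^{-1}\Cone(\psi))$, $n\geq 0$, across the induction functor by the Five Lemma applied to the two long exact sequences. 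You instead descend to the chain level via the Yoneda lemma and isolate the identity $\tau_{\leq 0}\bigl(\Sigma^{-1}\Cone(\psi)\bigr)=\tau_{\leq 0}\bigl(\Sigma^{-1}\Cone(\tau_{\leq 0}\psi)\bigr)$ for a closed degree-zero map of complexes, after which the two homotopy-cartesianness conditions become literally the same statement. Your key identity is correct: the two complexes agree in degrees $\leq -1$, and a zero-cocycle of $\Sigma^{-1}\Cone(\psi)$ automatically has its source component in $Z^0$ while $\psi$, being closed, carries zero-cocycles to zero-cocycles, so the groups of zero-cocycles also coincide. What the paper's route buys is that it never leaves the derived category, so strict-model issues never arise; what your route buys is that the Five Lemma is replaced by an equality of subcomplexes and the mechanism behind the lemma becomes completely transparent. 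Two points you should make explicit to close the bookkeeping: (i) homotopy cartesianness is invariant under isomorphism in $\rep(\Sq,\tau_{\leq 0}\A)$ (it is detected by the functor $RL$ of the adjunction, hence by data functorial in $\D(\Sq)$), so you may replace $X$ by $G(FX)$, where your identifications hold on the nose rather than merely in $\D(k)$; and (ii) the objectwise truncation of a dg bimodule over the strictly connective dg category $\tau_{\leq 0}\A\otimes k\Sq^{op}$ is indeed a dg submodule computing $\tau_{\leq 0}$ for the canonical t-structure, which is what justifies ``$\tau_{\leq 0}$ of a dg module is computed objectwise.'' Both verifications are routine.
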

\begin{proof} Let $X$ be the commutative square of quasi-representable dg $\tau_{\leq 0}\A$-modules given by the diagram \ref{D(Sq)}.
Consider the following diagram 
\[
\begin{tikzcd}
X_{00}\ar[d,"{[}f{,}g{,}0{]}^{\intercal}"swap]\ar[rd,"{[}f{,}g{]}^{\intercal}"]&&&\\
\Sigma^{-1}\mathrm{Cone}(\psi)\ar[r]&X_{10}\oplus X_{01}\ar[r,"\psi={[}-j{,}k{]}"]& X_{11}\ar[r]&\mathrm{Cone}(\psi)
\end{tikzcd}\;.
\] 
By applying $\Hom_{\D(\tau_{\leq 0}\A)}(A^{\wedge},-)$ to the above triangle for each $A\in \tau_{\leq 0}\A$, we get the associated long exact sequence 
\[
\begin{tikzcd}
&&\Hom(A^{\wedge},X_{00})\ar[d]&&\\
\dots\ar[r]&\Hom(A^{\wedge},\Sigma^{-1}X_{11})\ar[r]&\Hom(A^{\wedge},\Sigma^{-1}\Cone(\psi)) \ar[r]&\Hom(A^{\wedge},X_{10}\oplus X_{01})\ar[r]&\dots
\end{tikzcd}
\]
We have $\Hom(A^{\wedge}, \Sigma^{-n}X_{ij})\iso\Hom(FA^{\wedge},\Sigma^{-n}FX_{ij})$ for $n\geq 0$ where $F:\D(\tau_{\leq 0}\A)\rightarrow \D(\A)$ is the induction functor.
By the Five-Lemma, it is then clear that an object $X$ in $\rep(\mathrm{Sq},\tau_{\leq 0}\A)$ is homotopy cartesian if and only if its image under $F$ in $\rep(\mathrm{Sq},\A)$ is homotopy cartesian.
\end{proof}
\begin{definition}\label{homotopypullbackdef}
An object $S$ in $\rep(\mathrm{Cosp})$ is said to admit a {\em homotopy pullback} if there is a homotopy cartesian square $X$ in $\rep(\mathrm{Sq})$ with an isomorphism $\phi:LX\rightarrow S$ in $\D(\mathrm{Cosp})$. 
In this case, the pair $(X,\phi)$ is called a homotopy pullback of $S$. 
We will also abuse the notation and say that $X$ is a {\em homotopy pullback} of $S$.
\end{definition}

\begin{remark}The isomorphism $\varphi:LX\rightarrow S$ in the definition of a homotopy pullback is in general not the identity. 
For example, let $\A$ be the dg category of two-term complexes of finitely generated projective modules over a finite dimensional algebra $A$. 
The cospan
\[
\begin{tikzcd}
&A\ar[d]\\
0\ar[r]&\Cone(\Id_A)
\end{tikzcd}
\]
where $A$ denotes the corresponding stalk complex, does not admit a homotopy pullback $(X,\phi)$ with $\phi$ being the identity. 
It admits a homotopy pullback with $X$ given by
\[
\begin{tikzcd}
A\ar[r,equal]\ar[d]&A\ar[d]\\
0\ar[r]&0
\end{tikzcd}
\]
and $\phi^{-1}$ given by the obvious morphism 
\[
\begin{tikzcd}
         &A\ar[d]                                  &\ar[d, white,""{name=4}]&  &\ar[d,white,""{name=3}]&           &A\ar[d]\ar[r,blue,from=4,to=3]\\
0\ar[r]&\Cone({\Id_A})       &\                                     &  &\                                    &0\ar[r] &0
\end{tikzcd}.
\]  
\end{remark}

\begin{proposition}\label{pullbackunique}
The functor $L:\D(\mathrm{Sq})\rightarrow \D(\mathrm{Cosp})$ restricted to
 the full subcategory consisting of homotopy cartesian squares is fully faithful. 
In particular, the homotopy pullback of an object $S$ in $\rep(k\mathrm{Cosp},\A)$, 
if it exists, is unique up to a unique isomorphism. 
\end{proposition}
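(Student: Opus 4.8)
The plan is to exhibit the homotopy cartesian squares as the objects on which the unit of a suitable adjunction involving $L$ is invertible, so that full faithfulness of $L$ becomes a formal matter. The twist is that in the present (non‑pretriangulated) generality one cannot use $R$ itself — it does not preserve $\rep(\Sq)$ — but must instead use $\tau_{\leq 0}R$, and Lemma~\ref{adj} is exactly the bridge between ``unit invertible'' and ``homotopy cartesian''. First I would reduce to $\A$ connective: by Lemma~\ref{trun} the natural functor $F:\rep(\Sq,\tau_{\leq 0}\A)\to\rep(\Sq,\A)$ is an equivalence (and likewise for $\Cosp$), it commutes with $L$ up to natural isomorphism (both are restriction functors acting on disjoint tensor factors, combined with the componentwise good truncation), and by Lemma~\ref{truncationhomotopycartesian} it preserves and reflects homotopy cartesian squares. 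So from now on $\A$ is connective.

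\textbf{The relative adjunction $L\dashv\tau_{\leq 0}R$.} Recall from the remark after Lemma~\ref{cha} that $\rep(\Sq)\subseteq\D(\Sq)^{\leq 0}$. For $X\in\D(\Sq)^{\leq 0}$ and $S\in\D(\Cosp)$, applying $\Hom_{\D(\Sq)}(X,-)$ to the truncation triangle $\tau_{\leq 0}RS\to RS\to\tau_{\geq 1}RS\to\Sigma\tau_{\leq 0}RS$ and using that $X$ is orthogonal to $\D(\Sq)^{\geq 1}$ (which contains both $\tau_{\geq 1}RS$ and $\Sigma^{-1}\tau_{\geq 1}RS$) gives $\Hom(X,\tau_{\leq 0}RS)\iso\Hom(X,RS)$; composing with the $(L,R)$‑adjunction isomorphism $\Hom(X,RS)\iso\Hom(LX,S)$ exhibits $\tau_{\leq 0}R$ as a right adjoint of $L:\D(\Sq)^{\leq 0}\to\D(\Cosp)$. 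The unit of this new adjunction at an object $X$ is precisely the canonical map $X\to\tau_{\leq 0}RLX$ induced by the $(L,R)$‑unit as in Lemma~\ref{adj}, and by that lemma (here connectivity is used) an object $X\in\rep(\Sq)$ is homotopy cartesian if and only if this unit is invertible.

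\textbf{Full faithfulness and uniqueness.} Let $X,X'$ be homotopy cartesian squares. Then the composite
\[
\Hom_{\D(\Sq)}(X,X')\xrightarrow{\;\eta_{X'}\circ(-)\;}\Hom_{\D(\Sq)}(X,\tau_{\leq 0}RLX')\iso\Hom_{\D(\Cosp)}(LX,LX')
\]
has both arrows bijective: the first because $\eta_{X'}:X'\to\tau_{\leq 0}RLX'$ is an isomorphism by the previous step, the second by the relative adjunction; and by the triangle identities the composite is $f\mapsto Lf$. Hence $L$ is fully faithful on homotopy cartesian squares. For the ``in particular'' assertion: given two homotopy pullbacks $(X,\phi)$ and $(X',\phi')$ of $S$, the isomorphism $(\phi')^{-1}\circ\phi:LX\iso LX'$ lifts uniquely to a morphism $\psi:X\to X'$, which is an isomorphism because a fully faithful functor reflects isomorphisms, and is the unique isomorphism satisfying $\phi'\circ L\psi=\phi$, i.e.\ the unique isomorphism of homotopy pullbacks over $S$.

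\textbf{Main obstacle.} Once the relative adjunction $L\dashv\tau_{\leq 0}R$ is set up, the last two steps are purely formal; the real content is already packaged in Lemma~\ref{adj} (which carries out the genuine homological analysis over a connective $\A$) and in Lemma~\ref{truncationhomotopycartesian}. So the only point that demands care is the bundle of compatibility checks in the reduction step — above all that the equivalence $F$ intertwines the two incarnations of $L$ — which are straightforward but must be verified rather than assumed.
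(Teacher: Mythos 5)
Your proposal is correct and follows essentially the same route as the paper: both rest on Lemma~\ref{adj} (homotopy cartesian $\Leftrightarrow$ the unit $X\to\tau_{\leq 0}RLX$ is invertible over a connective $\A$) together with the reduction to the connective case via the truncation equivalences of Lemma~\ref{trun}. The only difference is presentational — you package the key step as a relative adjunction $L\dashv\tau_{\leq 0}R$ on $\D(\Sq)^{\leq 0}$, whereas the paper carries out the fullness and faithfulness diagram chases by hand — and your reduction comes first rather than last.
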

\begin{proof}
We first show the case when $\A$ is connective. 

Let $X$ and $X'$ be two homotopy cartesian squares. 
We first show fullness.
Let $f:LX\rightarrow LX'$ be a morphism in $\rep(\Cosp,\A)$.

From the adjunction 
\begin{tikzcd}
\D({\mathrm{Sq}})\arrow[shift left=1ex, r, "L"]&\D(\mathrm{Cosp})\arrow[shift left=1ex, l,"R"] 
\end{tikzcd},
we have a morphism $g:X\rightarrow RLX'$.

Since the object $X'$ is homotopy cartesian, by Lemma \ref{adj}, 
the map $X\rightarrow RLX'$ induces a morphism $h:X\rightarrow \tau_{\leq 0}RLX'\xleftarrow{\sim} X'$, as the following diagram shows 
\[
\begin{tikzcd}
&X'\ar[d,dashed,"\sim"]\ar[rd]&\\
X\ar[ru,"h"red,red]\ar[r]&\tau_{\leq 0}RLX'\ar[r]&RLX'.
\end{tikzcd}
\]
By applying the functor $L$ to the above diagram, we see that $L(h)=f$.
This shows the fullness.

Next we show faithfulness.
Let $h:X\rightarrow X'$ be a morphism between homotopy cartesian squares.
We have the following diagram in $\D(\Sq)$
\[
\begin{tikzcd}
X\ar[r]\ar[d,"h"swap]&\tau_{\leq 0}RLX\ar[d]\ar[r]&RLX\ar[d,"RL(h)"]\\
X'\ar[r,"\sim"]&\tau_{\leq 0}RLX'\ar[r]&RLX'\\
\end{tikzcd}
\] 
Therefore if the morphism $L(h)$ is zero, then $h$ is also zero.

For the general case, consider the following diagram of functors
\[
\begin{tikzcd}
\rep(k\Sq,\tau_{\leq 0}\A)\arrow[rd, hook]\arrow[shift right=1ex,ddd,"L"swap]\arrow[shift left=1ex,rrr,"F"]&&&\rep(k\Sq,\A)\arrow[shift left=1ex,lll,"G"]\arrow[shift right=1ex, ddd, "L'"]\arrow[ld,hook]\\
&\D( \tau_{\leq 0}\A\otimes k(\Sq^{op}))\arrow[r,  shift left=1ex, "F"] \arrow[shift right=1ex,d,"L"swap]  &\D(\A\otimes k(\Sq^{op}))\arrow[l, shift left=1ex, "H"]\arrow[shift right=1ex,d,"L'"swap]&\\
&\D(\tau_{\leq 0}\A\otimes k(\Cosp^{op}))\arrow[shift left=1ex, r, "F'"]\arrow[shift right=1ex,u,"R"swap]&\D(\A\otimes k(\Cosp^{op}))\arrow[shift left=1ex, l, "H'"]\arrow[shift right=1ex,u,"R'"swap]&\\
\rep(k\Cosp,\tau_{\leq 0}\A)\arrow[ru,hook]\arrow[shift left=1ex,rrr,"F'"]&&&\rep(k\Cosp,\A)\arrow[shift left=1ex, lll,"G'"]\arrow[lu,hook]
\end{tikzcd}
\]

Let $X$ and $X'$ be two objects in $\rep(k\Sq,\A)$ which are homotopy cartesian. 

We have a canonical isomorphism of triangle functors $L\circ H\iso H'\circ L'$. 
Since the restriction functor $L$ commutes with the truncation functors $\tau_{\leq 0}$ on $\D(\tau_{\leq 0}\A\otimes k(\Sq^{op}))$ and on $\D(\tau_{\leq 0}\A\otimes k\Cosp^{op})$, and since the functor $G$ (resp.~$G'$) is a composition of $\tau_{\leq 0}$ and $H$ (resp.~$H'$), we have a natural isomorphism of functors $G'\circ L'
\xrightarrow{\sim}L\circ G:\rep(\Sq,\A)\rightarrow \rep(k\Cosp,\tau_{\leq 0}\A)$. 

Let $f:L'X\rightarrow L'X'$ be a morphism in $\rep(k\Cosp,\A)$. 
The map $G'(f):G'L'X\rightarrow G'L'S$ then yields a canonical morphism $g:LGX \xrightarrow{\sim} LGX'$. 
From the connective case, there is a morphism $h':GX\rightarrow GX'$ such that $g=L(h')$.
Since $G$ is an equivalence of categories, there exists a morphism $h:X\rightarrow X'$ such that $h'=G(h)$.
Since $G'$ is also an equivalence of categories, we see that $f=L'(h)$.

Similarly, one shows that if a morphism $h:X\rightarrow X'$ is such that $L'(h)=0$, then we have $h=0$.
\end{proof}
\begin{remark}[Universal property of homotopy pullbacks]
\label{pullbackuniversal}
Similarly, one can show that given a homotopy cartesian square $X'$ and an object $X$ in $\rep(\Sq)$, if we have a map $\varphi: LX\rightarrow LX'$ in $\rep(\mathrm{Cosp})$, then we have a canonical map $\theta:X\rightarrow X'$ in $\rep(\mathrm{Sq})$ such that $L(\theta)=\varphi$. 
\end{remark}
Consider an object $X\in \D(\mathrm{Sq})$ as follows
\[
 \begin{tikzcd}
 A\ar[r,"f"]\ar[d,"g"swap]&B\ar[d,"j"]\\
 N\ar[r,"k"swap]&C 
 \end{tikzcd}
 \]
where $N$ is acyclic.

\begin{lemma} 
We have an isomorphism in $\D(\mathrm{Sq})$ from $X$ to 
\[
\begin{tikzcd}
A\ar[r,"\begin{bmatrix}f\\-g\end{bmatrix}"]\ar[d]&B\oplus N\ar[d,"{[}j{,}k{]}"]\\
0\ar[r]&C 
\end{tikzcd}
\] 
which restricts to $\Id_{A}$ and $\Id_{C}$. 
\end{lemma}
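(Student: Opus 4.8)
The plan is to exhibit the isomorphism not as a morphism of $\Sq$-shaped diagrams in $\C(\A)$, but as a roof $X\xleftarrow{\,q\,}\tilde Y\xrightarrow{\,r\,}X'$ in $\C(\Sq)$ whose two legs are objectwise quasi-isomorphisms of dg $\A$-modules, and then to set $\phi:=r\circ q^{-1}$ in $\D(\Sq)$. A roof is genuinely needed: there is in general no morphism of diagrams from $X$ to the folded square restricting to the identities on the corners $A$ and $C$, since the naturality square for the composite arrow $00\to 11$ would force the chain map $jf$ to vanish on the nose, whereas the relation $jf=kg$ only makes it vanish in $\D(\A)$, where $N\cong 0$ because $N$ is acyclic.

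Concretely, I would take $\tilde Y$ to be the strictly commutative square with corners $\tilde Y_{00}=A$, $\tilde Y_{01}=B\oplus N$, $\tilde Y_{10}=N$, $\tilde Y_{11}=C\oplus N$ and structure maps $\begin{bmatrix}f\\-g\end{bmatrix}\colon A\to B\oplus N$, $g\colon A\to N$, $\left[\begin{smallmatrix}j&k\\0&-\Id_N\end{smallmatrix}\right]\colon B\oplus N\to C\oplus N$ and $\begin{bmatrix}0\\ \Id_N\end{bmatrix}\colon N\to C\oplus N$; this commutes exactly because $jf-kg=0$. The leg $q\colon\tilde Y\to X$ would be $\Id_A$ on $00$, the projection $B\oplus N\to B$ on $01$, $\Id_N$ on $10$, and $[\,\Id_C\ \ k\,]\colon C\oplus N\to C$ on $11$; the leg $r\colon\tilde Y\to X'$ would be $\Id_A$, $\Id_{B\oplus N}$, the zero map $N\to 0$, and $[\,\Id_C\ \ 0\,]\colon C\oplus N\to C$. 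I would then verify the four naturality squares for each leg — every one of them reduces to a one- or two-entry matrix identity, the decisive ones being $[\,\Id_C\ \ k\,]\left[\begin{smallmatrix}j&k\\0&-\Id_N\end{smallmatrix}\right]=[\,j\ \ 0\,]$ for $q$ and $[\,\Id_C\ \ 0\,]\left[\begin{smallmatrix}j&k\\0&-\Id_N\end{smallmatrix}\right]=[\,j\ \ k\,]$ for $r$ — and check that every component of $q$ and of $r$ is a quasi-isomorphism of dg $\A$-modules, the only nonformal inputs being that $N\to 0$ and the projections $B\oplus N\to B$ and $C\oplus N\to C$ are quasi-isomorphisms since $N$ is acyclic. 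Hence $q$ and $r$ are invertible in $\D(\Sq)$ and $\phi=r\circ q^{-1}\colon X\iso X'$.

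Finally I would check that $\phi$ restricts to $\Id_A$ and $\Id_C$. Its $00$-component is $\Id_A\circ\Id_A^{-1}=\Id_A$. Its $11$-component is $\overline{[\,\Id_C\ 0\,]}\circ\overline{[\,\Id_C\ k\,]}^{-1}$ in $\D(\A)$, and $\overline{[\,\Id_C\ k\,]}=\overline{[\,\Id_C\ 0\,]}$ there because their difference factors as $C\oplus N\to N\xrightarrow{k}C$ through the projection $C\oplus N\to N$, which is zero in $\D(\A)$ since $N\cong 0$; so the $11$-component is $\Id_C$. I expect the only real difficulty to be pinning down the apex $\tilde Y$ — in particular the correction $-\Id_N$ in the lower-right block and the twist $k$ in $q_{11}$, which are precisely what let both legs be objectwise quasi-isomorphisms while still restricting to identities on the corners; the remaining verifications are routine.
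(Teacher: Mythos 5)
Your proof is correct and follows essentially the same strategy as the paper's: both produce an explicit zigzag of objectwise quasi-isomorphisms through an auxiliary strictly commutative square obtained by adjoining copies of the acyclic module $N$ to absorb the failure of strict commutativity (the paper uses a cospan $X\to Y\leftarrow X'$ with apex built from $A\oplus N$ and $B\oplus N$, whereas you use a span $X\leftarrow\tilde Y\to X'$ with apex built from $B\oplus N$ and $C\oplus N$). Your verifications of the naturality squares, of the objectwise quasi-isomorphism property, and of the restriction to $\Id_A$ and $\Id_C$ (via the factorization of $[\,0\ \ k\,]$ through the acyclic $N$) are all sound.
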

\begin{proof}
Indeed we can write an isomorphism in $\D(\Sq)$ explicitly as follows:
\[
\begin{tikzcd}
A\ar[r,"f"]\ar[d,"g"swap,""{name=2}]\ar[bend left=10ex,rr,blue,"\begin{bmatrix}1\\-g\end{bmatrix}"]&B\ar[bend left=10ex, rr,blue,"\begin{bmatrix}1\\0\end{bmatrix}"]\ar[d,"j"]& A\oplus N\ar[d,"{[}0\text{,}-1{]}"]\ar[r,"\begin{bmatrix}f\ \ 0\\g\ \ 1\end{bmatrix}"]&B\oplus N\ar[d,"{[}j\text{,}-k{]}"]&A\ar[d]\ar[ll,bend right=10ex,red,"\begin{bmatrix}1\\0\end{bmatrix}"swap]\ar[r,"\begin{bmatrix}f\\g\end{bmatrix}"]&B\oplus N\ar[ll,bend left=6ex, red,"\ \ \ \ \ \Id"]\ar[d,"{[}j\text{,}-k{]}"]\\
N\ar[r,"k"swap]\ar[bend right=6ex,rr,blue,"\Id"swap]&C\ar[bend right=6ex,rr,blue,"\Id"swap]&N\ar[r,"k"swap]&C&0\ar[ll,bend left=6ex, red,"\mathrm{0}"]\ar[r]&C\ar[ll,bend left=6ex,red,"\Id"] 
\end{tikzcd}.
\]
\end{proof}
\begin{remark}
Therefore if an object $X$ in $\rep(\mathrm{Sq},\A)$ is such that $N$ is acyclic, we will assume that $N$ is actually the zero dg module.
\end{remark}

Recall that we have the pair of duality functors \[\begin{tikzcd}
\rep(\I,\A)\arrow[shift left=1ex,rr,"\mathrm{RHom}(- \text{,} \mathcal{A})"]&&\rep(\I^{op},\A^{op})^{op}\arrow[shift left=1ex, ll,"\RHom(-\text{,}\A)"]
\end{tikzcd}\]
Then we can form the dual definitions of a homotopy cocartesian square with respect to $\A$ and of a homotopy pushout of an object in $\D(\mathrm{Sp})$.
\begin{definition}
An object $X\in \rep(\mathrm{Sq})$ given by the diagram \ref{D(Sq)} is a \text{\em{{homotopy}}} \text{\em{{cocartesian square with respect to $\A$}}} if the canonical map $\mathrm{Cone}((f,g)^{\intercal})\rightarrow X_{11}$ induces an isomorphism in $\D(k) $  
\[
\tau_{\leq 0}\RHom(X_{11},A^{\wedge})\rightarrow \tau_{\leq 0}\RHom(\mathrm{Cone}((f,g)^{\intercal}),A^{\wedge})
\]
for each $A$ in $\A$.
\end{definition}
Consider the adjunction \begin{tikzcd}[column sep=huge]
 \D(\mathrm{Sp})
  \arrow[shift left=1ex,"L"]{r}[name=D]{} &
 \D(\mathrm{Sq})
 \arrow[shift left=1ex,"R"]{l}[name=U]{}
 \end{tikzcd}
induced by the inclusion $\mathrm{Sp}\rightarrow\mathrm{Sq}$. 
\begin{definition}
An object $S$ in $\rep(\mathrm{Sp})$ is said to admit a {\em homotopy pushout} if there is a homotopy cartesian square $X$ in $\rep(\mathrm{Sq},\A)$ with an isomorphism $\phi:S\rightarrow RX$ in $\D(\mathrm{Sp})$. In this case, the pair $(X,\phi)$ is called a {\em homotopy pushout of} $S$. We will also abuse the notation and say that $X$ is a {\em homotopy pushout} of $S$.
\end{definition}

Let us give another way of presenting the duality between homotopy pushouts and homotopy pullbacks.
First we observe that $k\Sq$ is quasi-equivalent to the cofibrant dg $k$-path category 
$\Lambda=\Lambda(\Delta^1 \times \Delta^1)$ given by the differential graded quiver
\[
\begin{tikzcd}
0\ar[r,"f"]\ar[rd,"l"{yshift=-3pt}]\ar[rd,bend right=5ex,"h_1"{yshift=6pt,xshift=2pt,red},red,swap]\ar[rd,bend left=5ex,"h_2"{yshift=-5pt, red},red]\ar[d,"g"swap]&1\ar[d,"j"]\\
2\ar[r,"k"swap]&3
\end{tikzcd}
\]
where $|f|=|j|=|l|=|g|=|k|=0$, $|h_1|=|h_2|=-1$, $d(h_1)=l-kg$ and $d(h_2)=l-jf$, cf.~the discussions after 
Remark \ref{homotopylimit}.
Then objects in $\rep(\Lambda,\A)$ are isomorphic to objects whose terms are representable,  
e.g.~one can argue as in Lemma \ref{Comrepresentable}.
Such objects canonically give objects in $\rep(\Lambda,\A^{op})$ by simply reversing the arrows and
composing with the obvious isomorphism $\Lambda \iso \Lambda^{op}$.
In such a way, one defines an object in $\rep(\Lambda,\A)$ to be a homotopy pushout if and only if the corresponding object in $\rep(\Lambda,\A^{op})$ is a homotopy pullback.

\begin{example}\label{example:2-term}
Let $A$ be a finite-dimensional $k$-algebra. 
Let $\mathcal H^{[-1,0]}(\proj A)$ be the full subcategory of $\T=\mathcal H^b(\proj A)$ consisting of two-term complexes $P^{-1} \to P^0$ of finitely generated projective $A$-modules. 
Let $\A=\C^{b}_{dg}(\proj A)$ be the canonical enhancement of
${\mathcal H}^b(\proj A)$ and $\A'$ its full dg subcategory on the objects $P^{-1} \to P^0$.
For an $A$-module $M$, we denote by $M^*$ its $A$-dual.
One can show that there exists a homotopy bicartesian square in $\A'$ which is not homotopy bicartesian in 
$\A$ if and only if there exists an $A$-module $M$ satisfying the following properties
\begin{itemize}
\item[(1)] $\pd_A M= 1$, and
\item[(2)] $\pd_{A^{op}} M^*=1$, and
\item[(3)] $M$ is reflexive, i.e. the canonical map $M\rightarrow M^{**}$ is an isomorphism.
\end{itemize}
Below, we will show by examples that there is no redundance in these conditions.
Suppose we are given such an $A$-module $M$. 
Take a minimal projective presentation of $M$
\[
0\rightarrow P_1\rightarrow P_0\rightarrow M\rightarrow 0.
\]
Then we have a left exact sequence of left $A$-modules:
\[
0\rightarrow M^*\rightarrow P_0^*\rightarrow P_1^*.
\]
Take a minimal projective presentation of $N=M^*$
\[
0\rightarrow Q_1\rightarrow Q_0\rightarrow N\rightarrow 0.
\]
Then we have a left exact sequence of right $A$-modules:
\[
0\rightarrow M\rightarrow Q_0^*\rightarrow Q_1^*.
\]
Then the following square
\[
\begin{tikzcd}
(P_1\rightarrow P_0)\ar[d]\ar[r]&(0\rightarrow Q_0^*)\ar[d]\\
0\ar[r]&(0\rightarrow Q_1^*)
\end{tikzcd}
\]
is homotopy bicartesian in $\A'$, but not in $\A$.

Note that conditions (1) and (3) together do not imply condition (2), as the following example shows.
Let $A_1$ be the algebra given by the following quiver the relations
\[
\begin{tikzpicture}
 \node(a) at (-1.5,0){
  $1$};
 \node(b) at (1.5,0){
  $2$};
 \node(c) at (0,-1.5){
  $3\mathrlap{.}$};
 
 \draw[->,transform canvas={yshift=0.2cm}](a)--(b) node [pos=0.5,above] {$ a_{1} $};
 \draw[->,transform canvas={yshift=-0.2cm}](a)--(b) node [pos=0.5,below] {$ a_{2} $};
 \draw[->,transform canvas={yshift=-0.1cm}](b)--(c) node [pos=0.3,below] {$ b $};
 \draw[->,transform canvas={yshift=-0.1cm}](c)--(a) node [pos=0.6,below] {$ c $};
 \draw[dashed,blue] ([shift=(-57:0.65)]-1.5,0) arc (-55:-350:0.7cm);
 \draw[dashed,blue] ([shift=(-160:0.8)]1.5,0) arc (-168:-135:0.8cm);
 \draw[dashed,blue] ([shift=(45:0.6)]0,-1.5) arc (45:135:0.6cm);
\end{tikzpicture}
\]
Let $L$ be following $A_1$-module 
\[
\begin{tikzcd}
k\ar[rd,"0"swap]&&k\ar[ll,shift right=0.8ex,"\Id"swap]\ar[ll,shift left=0.8ex,"0"]\\
&0\mathrlap{.}\ar[ru,"0"swap]&
\end{tikzcd}
\]
It admits a projective presentation 
\[
0\rightarrow P_1\rightarrow P_2\rightarrow L\rightarrow 0
\]
and its $A_1$-dual is the simple module $S_3'$ at the vertex 3.
The $A_1^{op}$-module $S_3'$ admits a projective resolution
\[
0\rightarrow P_2'\rightarrow P_1'\rightarrow P_3'\rightarrow S_3'\rightarrow 0.
\]
Thus $L^{**}$ is isomorphic to $L$, both being the kernel of $P_3\rightarrow P_1$.
Therefore the $A_1$-module $L$ satisfies properties (1) and (3) but not property (2). 
Also, the $A_1^{op}$-module $L^*$ satisfies properties (2) and (3) but not property (1).

Note also that conditions (1) and (2) together do not imply condition (3), as the following example shows. 
Let $A_2$ be the algebra given by the following quiver with relations
\[
\begin{tikzcd}
&1&\\
2\ar[r,""{name=6,near end,swap},""{name=7,near end}]&3\ar[u,""{name=1,swap},""{name=8,near start}]\ar[r,""{name=4},""{name=5,near start,swap}]&4.\ar[lu,""{name=2},""{name=3}]\ar[r,from=3,to=4,dashed,no head, bend right=8ex,blue]\ar[r,from=5,to=6,dashed,no head,bend left=8ex,blue]\ar[r,from=7,to=8,dashed,no head,bend left=8ex,blue]
\end{tikzcd}
\] 
Consider the simple module $S_3$ at vertex 3.
It has a projective resolution
\[
0\rightarrow P_2\rightarrow P_3\rightarrow S_3\rightarrow 0
\]
Its $A_2$-dual is the kernel of $P_3'\rightarrow P_2'$ and is isomorphic to $S_1'\oplus S_4'$.
The module $S_1'=P_1'$ is projective and $S_4'$ admits a projective resolution
\[
0\rightarrow P_1'\rightarrow P_4'\rightarrow S_4'\rightarrow 0
\]
So the $A_2$-bidual of $S_3$ contains $P_1$ as a direct summand and hence $S_3$ is not reflexive.

We give examples of algebras $A$ admitting such modules.
Note that such an algebra can neither be hereditary, nor local nor self-injective.
Consider the algebras given by the quivers with relations
\[
\begin{tikzcd}
 &2\ar[rd,""{name=b,swap,near start},""{name=b',swap, near end},"b",""{name=b'',swap}]&\\
1\ar[ru,""{name= a,swap,near end},"a"]&&3,\ar[ll,""{name=c, swap,near start},"c"]\arrow[from = a, to = b,bend right=20, dashed, no head]\ar[from=b'',to=c,bend right=20,dashed,no head]
\end{tikzcd}
\begin{tikzcd}
&&5&\\
&&4\ar[u,""{name=4}]&\\
1&3\ar[l]&2\ar[u,""{name=3,near end},""{name=2,swap,near start}]\ar[l]&6.\ar[l,""{near end,swap,name=1}]\ar[r,from=1,to=2,dashed,no head,bend right=12ex]\ar[r,from=3,to=4,dashed,no head,bend left=12ex]
\end{tikzcd}
\]
\[
\begin{tikzcd}
1\ar[r,""{name=1,swap, near end},"a"]&2\ar[r,""{name=2,swap,near start},""{name=3,swap, near end},"b"]&3\ar[r,""{name=4,swap,near start},"c"]&4\ar[r,bend right=12ex, from =1,to=2,dashed,no head]\ar[r,bend right=12ex,from=3,to=4,dashed,no head],
\end{tikzcd}
\begin{tikzcd}
&&4\ar[d,""{name=5,swap,near end}]&\\
1&3\ar[l,""{name=1,near start}]&2\ar[l,""{name=2,near end},""{name=3,near start},""{name=6,near start, swap}]&5,\ar[l,""{name=4,near end}]\ar[r,from=1,to=2,dashed,no head,bend right=8ex]\ar[r,from =3,to=4,dashed,no head,bend right=8ex]\ar[r,from=5,to=6,dashed,no head,bend right =8ex]
\end{tikzcd}
\begin{tikzcd}
2\ar[r,""{name=2,near start,swap},""{name=3,near end, swap}]&3\ar[d,""{name=4,near start, swap}]\\
1\ar[u,""{name=1,swap,near end}]&4.\ar[l]\ar[r,from=1,to=2,dashed,no head,bend right=8ex]\ar[r,from=3,to=4,no head,dashed, bend right=8ex]
\end{tikzcd}
\]
Then the simple modules $S_2$ at the vertices 2 satisfy the above properties.
\end{example}

\subsection{An $A_{\infty}$-theoretic description} 
Our aim is to give a description of the homotopy
category of homotopy $3$-term complexes using the $A_\infty$-formalism. 
We first recall some basic facts and notation for $A_\infty$-categories. Our standard references for $A_\infty$-categories are \cite{Keller01,Keller02, Keller06c,Lefevre03}. We follow the sign conventions of \cite{GetzlerJones90}. Note that we keep the assumption that $k$ is a commutative ring.

\begin{definition}An $A_{\infty}$-category $\A$ is the datum of 
\begin{itemize}
\item a class of objects $\obj(\A)$,
\item for all $A, B\in \A$, a $\mathbb Z$-graded $k$-module $\Hom_{\A}(A,B)$, often denoted by $(A,B)$,
\item for all $n\geq 1$ and all $A_0$, \ldots, $A_n$, a graded map
\[
m_n:(A_{n-1},A_n)\otimes(A_{n-2},A_{n-1})\otimes \cdots\otimes(A_0,A_1)\rightarrow (A_0,A_n)
\]
of degree $2-n$
\end{itemize}
such that for each $n\geq 1$ and all $A_0, \ldots, A_n\in\A$, we have the identity
\[
\sum (-1)^{r+st}m_u(\boldsymbol{1}^{\otimes r}\otimes m_s\otimes \boldsymbol{1}^{\otimes t})=0
\]
of maps
\[
(A_{n-1},A_n)\otimes(A_{n-2},A_{n-1})\otimes \cdots\otimes(A_0,A_1)\rightarrow (A_0,A_n)
\]
where the sum runs over all decompositions $n=r+s+t$ and we put $u=r+1+t$.
\end{definition}
\begin{remark}The composition $m_2$ induces an associative composition 
\[
\mu: H^0(\A)(B,C)\otimes H^0(\A)(A,B)\rightarrow H^0(\A)(A,C)
\]
for each triple of objects $(A,B,C)$ in $\A$.
\end{remark}
\begin{definition}Let $\A$ be an $A_{\infty}$-category and $A\in\A$. A morphism $e_{A}\in \Hom^{0}_{\A}(A,A)$ is a {\em strict identity} if we have 
\[
m_2(f,e_A)=f \mbox{ and } m_2(e_A,g)=g
\]
whenever these make sense and 
\[
m_n(\ldots,e_A,\ldots)=0
\]
for all $n\neq 2$.
In particular, $e_A$ is a cycle of the complex $(\Hom_{\A}(A,A),m_1)$. 
Clearly, if $e_A$ exists, it is unique. 
The $A_{\infty}$-category $\A$ {\em has strict identities} if there is a strict identity $e_A$ for each object $A\in \A$.
\end{definition}
\begin{definition}Let $\A$ and $\B$ be two $A_{\infty}$-categories. An $A_{\infty}$-functor $F:\A\rightarrow \B$ is the datum of
\begin{itemize}
\item a map $F:\obj (\A)\rightarrow \obj (\B)$,
\item for all $n\geq 1$ and all $A_0$, \ldots, $A_n\in\A$ a graded map
\[
F_n: (A_{n-1},A_n)\otimes (A_{n-2}, A_{n-1})\otimes \cdots \otimes (A_0,A_1)\rightarrow \Hom_{\B}(FA_0,FA_n)
\]
of degree $1-n$
\end{itemize}
such that 
\[
\sum (-1)^{r+st}F_u(\boldsymbol{1}^{\otimes r}\otimes m_s\otimes \boldsymbol1^{\otimes t})=\sum (-1)^sm_r(F_{i_1}\otimes F_{i_2}\otimes \cdots \otimes F_{i_r})
\]
where the first sum runs over all decompositions $n=r+s+t$, we put $u=r+1+t$, and the second sum runs over all $1\leq r\leq n$ and all decompositions $n=i_1+\cdots+i_r$; the sign on the right hand side is given by
\[
s=(r-1)(i_1-1)+(r-2)(i_2-1)+\cdots+2(i_{r-2}-1)+(i_{r-1}-1).
\]
The $A_{\infty}$-functor $F$ is {\em strict} if we have $F_n=0$ for all $n\geq 2$. Clearly the above datum with $F_n=0$ for $n\geq 2$ defines a strict $A_{\infty}$-functor if and only if for each $k\geq 1$, we have
\[
F_1m_k=m_k(F_1\otimes \cdots\otimes F_1).
\]
\end{definition}
\begin{definition}Let $\A$ and $\B$ be two $A_{\infty}$-categories with strict identities. An $A_{\infty}$-functor $F$ is {\em strictly unital} if 
\[
F_1(e_A)=e_{FA}
\]
and
\[
F_i(\cdots\otimes e_A\otimes\cdots)=0
\]
for all $i\geq 2$.
\end{definition}
The following simple example, due to M. Kontsevich, shows that already between dg categories, $A_{\infty}$-functors are quite interesting.
\begin{example}[\cite{Kontsevich98,Drinfeld04}]\label{homotopyequivalence}
Let $ I_{2}$ be the $k$-category generated by the category $J_2$ with $2$ objects and precisely one morphism with any given source and target
\[
\begin{tikzcd}
1\ar[r,bend left=4ex,"a"]&2.\ar[l,bend left=4ex,"b"]
\end{tikzcd}
\]
We consider it as a dg category concentrated in degree 0. Let $\A$ be a dg category and $X,Y$ two objects in $\A$. 

We will show that $X$ and $Y$ are homotopy equivalent if and only if there exists a strictly unital $A_{\infty}$-functor from $I_2$ to $\A$ such that $1$ is sent to $X$ and $2$ is sent to $Y$.

Let $\mathcal K$ be the Kontsevich dg category defined as the dg $k$-path category given by 
\[
\begin{tikzcd}
1\ar[r,bend left=4ex,"a"]\ar[r,bend left=12ex,"r"]\ar[loop left,looseness=10,"h_1"] &2\ar[l,bend left=4ex,"b"]\ar[loop right, looseness=10, "h_2"]
\end{tikzcd}
\]
where $|a|=|b|=0$, $|h_1|=-1=|h_2|$, $|r|=-2$ and $d(a)=d(b)=0$, $d(h_1)=1-ba$, $d(h_2)=1-ab$, $d(r)=-ah_1+h_2a$.
By \cite[Corollary 3.7.3]{Drinfeld04}, $\mathcal K$ is a cofibrant resolution of $I_2$. 
In particular we have 
\begin{align}
\Ext^{n}_{\mathcal K}(i,j)=0\label{Kont}
\end{align}
for $i,j\in\{1,2\}$ and $n\neq 0$.

Suppose $X$ and $Y$ are homotopy equivalent. 
Then we have a homotopy equivalence $f: X\rightarrow Y$ and a homotopy inverse $g:Y\rightarrow X$. 
By definition there exist $\tilde{h_1}:X\rightarrow X$ and $\tilde{h_2}: Y\rightarrow Y$ of degree $-1$ such that $d(\tilde{h_1})=1-gf$ and $d(\tilde{h_2})=1-fg$.
Set ${h_1}=\tilde{h_1}+g\tilde{h_2}f-gf\tilde{h_1}$ and ${h_2}=\tilde{h_2}$. Let $r=\tilde{h_2}\tilde{h_2}f-\tilde{h_2}f\tilde{h_1}$.
Then $d({h_1})=d(\tilde{h_1})$ and $-f{h_1}+{h_2}f=d(r)$. 
Thus $(f,g,h_1,h_2,r)$ defines a dg functor $G: \mathcal K \rightarrow \A$.

Notice that $m_i$ vanishes in $I_2$ for $i\neq 2$ and in $\A$ for $i\geq 3$. 
When $i=2$, $m_2$ produces either identities or zeros in $I_2$. 
Thus for an $A_{\infty}$-functor $F:I_2\rightarrow \A$, the conditions become
\begin{align}
d(F_1(a))&=0, d(F_1(b))=0\nonumber\\
1=F_1(b)F_1(a)+d(F_2(b,a)), 1&=F_1(a)F_1(b)+d(F_2(a,b))\nonumber\\
\sum (-1)^{r}F_{n-1}(\boldsymbol1^{\otimes r}\otimes m_2\otimes \boldsymbol1^{\otimes t})&=d\circ(F_n)+\sum(-1)^{i_1-1}m_2(F_{i_1}\otimes F_{i_2}) \label{A3}
\end{align}
where $n\geq 3$, $r+t=n-2$ and $i_1+i_2=n$. 
When acting on $(a_1,\cdots, a_n)$, the conditions become trivial for $n\geq 3$ if some $a_i$ is an identity. 
When each $a_i$ is non identity, $m_2$ in each summand of the left hand side will produce an identity and hence becomes zero by $F_{n-1}$.
Thus we only need to take care of $F_n(a_1,a_2,\cdots, a_n)$ with $a_i\in\{a, b\}$ and $a_i\neq a_{i+1}$.

We define an $A_{\infty}$-functor $F:I_2\rightarrow \A$ as follows: the value of $F_n$ on a sequence $(a_1,a_2,\cdots, a_n)$, where $a_i\in\{a,b\}$ and $a_i\neq a_{i+1}$, is the image under the dg functor $G$ of an element $\tilde{F_n}(a_1,a_2,\cdots,a_n)$, which is constructed by induction, in $\mathcal K(i,j)$ with $(i,j)$ depending on the sequence $(a_1,a_2,\cdots,a_n)$. 
More precisely
\begin{align*}
\tilde{F_1}(a)&=a,\tilde{F_1}(b)=b\\
\tilde{F_2}(a,b)&=h_2, \tilde{F_2}(b,a)=h_1.\\
&\tilde{F_3}(a,b,a)=r.
\end{align*}

By induction we have 
\begin{align}
\sum(-1)^{i_1}d\circ m_2(\tilde{F_{i_1}}\otimes \tilde{F_{i_2}})(a_1,a_2,\cdots,a_n)=0.\label{Ainf}
\end{align}
where $(i_1,i_2)$ runs through the pair such that $i_1+i_2=n$. Here $(\tilde{F_{i_1}}\otimes \tilde{F_{i_2}})(a_1,a_2,\cdots,a_n)$ is understood as $\tilde{F_{i_1}}(a_1,\cdots,a_{i_1})\otimes \tilde{F_{i_2}}(a_{i_1+1},\cdots,a_n)$.
 
By \ref{Kont}, there exists an element $\tilde{F_n}(a_1,a_2,\cdots,a_n)\in \mathcal K(i,j)^{1-n}$ whose coboundary is the cocycle 
\[
\sum(-1)^{i_1}\tilde{F_{i_1}}(a_1,\cdots,a_{i_1})\circ\tilde{F_{i_2}}(a_{i_1+1},\cdots,a_n)
\]
which appears on the left hand side of \ref{Ainf}. 

For example we may put
\[
\tilde{F_3}(b,a,b)=-brb+bh_2^2+h_1^2b-h_1bh_2.
\]
\end{example}
\begin{remark}
The Kontsevich dg category $\mathcal K$ is quasi-equivalent to the dg $k$-path category $\C$ given by the differential graded quiver
\[
\begin{tikzcd}
1\ar[r,bend left=4ex,"a"]\ar[loop left,looseness=10,"h_1"] &2\ar[l,bend left=4ex,"b"]\ar[loop right, looseness=10, "h_2"]\ar[l,bend right=12ex,"b'"swap]
\end{tikzcd}
\]
where $|a|=|b|=|b'|=0$, $|h_1|=-1=|h_2|$, and $d(a)=d(b)=d(b')=0$, $d(h_1)=1-ba$, $d(h_2)=1-ab'$.
A quasi-equivalence $F:\mathcal K\rightarrow \C$ is given as follows: on the objects we have $F(1)=1$ and $F(2)=2$ and on the arrows we have $F(a)=a$, $F(b)=b'$, $F(h_2)=h_2$, 
\begin{align*}
F(h_1) &=h_1+b'(h_2a-ah_1)+(1-b'a)(bh_2-h_1b')a \\
 F(r)     &=h_2h_2a-h_2a(h_1+(bh_2-h_1b')a).
 \end{align*}
We call this dg category the {\em Cisinski dg category} because it is the image of Cisinski's interval $J$ (cf.~\cite[Definition 3.3.3]{Cisinski19}) under the functor $\Lambda$ (cf.~\ref{Lambdadgnerve}).

The datum of a dg functor $F:\mathcal K\rightarrow \A$ is equivalent to a morphism $\alpha:X_1\rightarrow X_2$ closed of degree $0$ in $\A$ together with a contracting homotopy for $\Cone(\alpha^{\wedge}:X_1^{\wedge}\rightarrow X_2^{\wedge})$, while the datum of a dg functor $G:\C\rightarrow \A$ is equivalent a morphism $\alpha:X_1\rightarrow X_2$ closed of degree $0$ in $\A$ together with null-homotopies of the maps $[0,\;1]^{\intercal}: X_2^{\wedge}\rightarrow \Cone(\alpha^{\wedge})$ and $[1,\;0]:\Cone(\alpha^{\wedge})\rightarrow \Sigma X_1^{\wedge}$.
\end{remark}
Let $\B$ be a small $A_{\infty}$-category and $\A$ an $A_{\infty}$-category. 
It is known, cf.~\cite{Kontsevich98}, that there is a natural $A_{\infty}$-category $\Fun_{\infty}(\B,\A)$ of strictly unital $A_{\infty}$-functors between $\B$ and $\A$. 
It was constructed in \cite[8.1.3]{Lefevre03} using twists of $A_{\infty}$-structures. 
Keller further developed an idea in \cite{Lyubashenko03} and interpreted it in \cite[5.7]{Keller06c} as an internal $\Hom$-object in the tensor category of cocomplete augmented dg cocategories.
 Note that when the target $\A$ is a dg category, the $A_\infty$-category $\Fun_{\infty}(\B,\A)$ is again a dg category. 
 The reason is that we use the $m_j^{\A}$, $j\geq i$, to define the map $m_i$ of the $A_{\infty}$-category 
$\Fun_{\infty}(\B,\A)$ and when $\A$ is a dg category, then the $m_{j}^{\A}$ vanish for all $j\geq 3$.
\begin{example}\label{Mor(A)}
The dg category $\Fun_{\infty}(k\Mor,\A)$ is isomorphic to the {\em dg morphism category} $\Mor(\A)$ introduced in \cite[2.9]{Drinfeld04}. More precisely, $\Mor(\A)$ has as objects the closed morphisms $f:A\rightarrow B$ of degree 0 in $\A$. The morphism complex $\Mor(\A)((A,B,f),(A',B',f'))$ has as the degree $m$ component the space of matrices
$\begin{pmatrix}
j&0\\
h&l
\end{pmatrix}$
where the morphisms $j:A\rightarrow A'$, $l:B\rightarrow B'$ are of degree $m$ and the morphism $h:A\rightarrow B'$ is of degree $m-1$. The differential $d$ carries a morphism 
$\begin{pmatrix}
j&0\\
h&l
\end{pmatrix}$
to
$\begin{pmatrix}
-d(j)&0\\
d(h)+f'j-(-1)^{m}lf&d(l)
\end{pmatrix}$.
The composition map is the multiplication of matrices.
\end{example}
Now let $\A$ be a dg category and $\B$ be the dg $k$-path category of the following graded quiver with relations
\begin{equation}\label{3term}
\begin{tikzcd}
0\ar[r,"a"]&1\ar[r,"b"]&2
\end{tikzcd}
\end{equation}
where $|a|=|b|=0$, $d(a)=0$, $d(b)=0$ and $ba=0$.

Our aim is to describe the dg category $\Fun_{\infty}(\B,\A)$ following the construction in \cite{Lefevre03}. 

By definition, it has strictly unital $A_{\infty}$-functors $F:\B\rightarrow\A$ as objects. 
A strictly unital $A_{\infty}$-functor $F:\A\rightarrow \B$ is given by the datum of
\begin{itemize}
\item objects $F(i)=A_i$ in $\A$ with $i=0,1,2$;
\item graded maps 
\[
F_1:\Hom_{\B}(i,j)\rightarrow \Hom_{\A}(A_i,A_j)
\] 
of degree $0$ where $i\leq j$ and a graded map
\[
F_2:\Hom_{\B}(1,2)\otimes\Hom_{\B}(0,1)\rightarrow \Hom_{\A}(A_0,A_2)
\]
of degree $-1$ such that $F_1$ sends identities in $\B$ to identities in $\A$ and 
\[
F_1\circ d_{\B}=d_{\A}\circ F_1
\]
and
\[
F_1m_2-F_2(\boldsymbol1\otimes d_{\B})-F_2(d_{\B}\otimes \boldsymbol1)=m_2(F_1\otimes F_1)+d_{\A}F_2.
\]
\end{itemize}

Let $f=F_1(a)$, $j=F_1(b)$ and $h=F_2(b,a)$. Then $(f,j,h)$ defines an $A_{\infty}$-functor $\B\rightarrow \A$ if and only if $|f|=|j|=0$, $|h|=-1$ and $d(f)=0$, $d(j)=0$ and $d(h)=-jf$. 

Thus we identify a strictly unital $A_{\infty}$-functor $F:\B\rightarrow\A$ with the corresponding diagram
\begin{equation}\label{F}
\begin{tikzcd}
&A_0\ar[r,"f"]\ar[rr,bend right = 8ex,"h"swap]&A_1\ar[r,"j"]&A_2
\end{tikzcd}
\end{equation}
where $|f|=|j|=0$, $|h|=-1$ and $d(f)=0$, $d(j)=0$ and $d(h)=-jf$.

Let $F$ and $F'$ be two strictly unital $A_{\infty}$-functors from $\B$ to $\A$. We identify them with their corresponding diagrams. Let $e_i$ be the unit of the object $i$ in $\B$. An element $H$ of degree $n$ in the graded space $\Hom(F,F')$ of strictly unital morphisms is given by graded morphisms 
\[
H_0: {k}\cdot e_i\rightarrow \Hom_{\A}(A_i,A'_i)
\]
of degree $n$ where $i=0,1,2$, graded morphisms
\[
H_1:\Hom_{\B}(i,j)\rightarrow \Hom_{\A}(A_{i},A'_{j})
\] 
of degree $n-1$ where $i< j$ and graded morphisms
\[
H_2:\Hom_{\B}(1,2)\otimes\Hom_{\B}(0,1)\rightarrow \Hom_{\A}(A_0,A_2')
\]
of degree $n-2$.

Let $h_i=H_0(e_i)$, $s_1=H_1(a)$, $s_2=H_1(b)$ and $t=H_2(b,a)$. We identify a strictly unital morphism $H$ of degree $n$ with the following diagram
\begin{equation}\label{mordegreen}
\begin{tikzcd}
&A_0\ar[r,"f"]\ar[d,"h_0"swap]\ar[rd,"s_1"red,swap ,red]\ar[rrd,"t"blue,blue]\ar[rr,bend left = 8ex,"h"]&A_1\ar[d,"h_1"swap]\ar[rd,"s_2"red,red]\ar[r,"j"]&A_2\ar[d,"h_2"]\\
&A_0'\ar[r,"f'"swap]\ar[rr,bend right = 8ex,"h'"swap]&A_1'\ar[r,"j'"swap]&A_2'
\end{tikzcd}
\end{equation}
where $|h_i|=n$ for $i=0,1,2$, $|s_1|=|s_2|=n-1$, $|t|=n-2$.

Let $H\in \Hom^n(F,F')$. Denote by $H'{\coloneqq}d(H)\in \Hom^{n+1}(F,F')$. It is given as follows 
\begin{align*}
H'_{0} & =d_{\A}\circ H_0, \\
H_1' & =d_{\A}\circ H_1+(-1)^{n+1}m_2^{\A}(F_1'\otimes H_0)+(-1)^{n}m_2^{\A}(H_0\otimes F_1), \\
H_2' & =d_{\A}(H_2)+(-1)^{n}m_2^{\A}(F_1'\otimes H_1) +m_2^{\A}(F_2'\otimes H_0)\\
        & \quad\quad +(-1)^nm_{2}^{\A}(H_1\otimes F_1)+(-1)^{n+1}m_2^{\A}(H_0\otimes F_2).
\end{align*}
Via the identification with diagrams, we see that the corresponding diagram for $d(H)$ is
\[
 \begin{tikzcd}
&A_0\ar[r,"f"]\ar[d,"h_0'"swap]\ar[rd,"s_1'"red,swap ,red]\ar[rrd,"t'"blue,blue]\ar[rr,bend left = 8ex,"h"]&A_1\ar[d,"h_1'"swap,near end]\ar[rd,"s_2'"red,red]\ar[r,"j"]&A_2\ar[d,"h_2'"]\\
&A_0'\ar[r,"f'"swap]\ar[rr,bend right = 8ex,"h'"swap]&A_1'\ar[r,"j'"swap]&A_2'
\end{tikzcd}
 \]
where 
\[
h_i'=d(h_i)
\]
 for $i=0,1,2$, and
 \[
 s_1'=d(s_1)+(-1)^{n+1}(f'\circ h_0)+(-1)^{n}h_1\circ f,
 \]
 \[
 s_2'=d(s_2)+(-1)^{n+1}(j'\circ h_1)+(-1)^{n}h_2\circ j
 \]
 and 
 \[
 t'=d(t)+(-1)^{n}(j'\circ s_1)+h'\circ h_0+(-1)^{n}(s_2\circ f)+(-1)^{n+1}(h_2\circ h).\]
 Clearly $d^2(H)=0$.
 
 We now describe the compositions. 
 
 Let $F$, $F'$ and $F''$ be $A_{\infty}$-functors from $\B$ to $\A$. Let $H'\in \Hom^r(F,F')$ and $H''\in \Hom^s(F',F'')$ be two morphisms in the dg category $\Fun_{\infty}(\B,\A)$. 
 
Assume the corresponding diagram is given by
\[
\begin{tikzcd}
&A_0\ar[r,"f"]\ar[d,"h_0'"swap]\ar[rd,"s_1'"red,swap ,red]\ar[rrd,"t'"blue,blue]\ar[rr,bend left = 8ex,"h"]&A_1\ar[d,"h_1'"swap,near end]\ar[rd,"s_2'"red,red]\ar[r,"j"]&A_2\ar[d,"h_2'"]\\
&A_0'\ar[r,"f'"]\ar[d,"h_0''"swap]\ar[rrd,"t''"blue,blue]\ar[rd,"s_1''"red,swap,red] &A_1'\ar[rd,"s_2''"red, red]\ar[d,"h_1''"swap, near end]\ar[r,"j'"]&A_2'\ar[d,"h_2''"]\\
&A_0''\ar[r,"f''"swap]\ar[rr,bend right = 8ex,"h''"swap]&A_1''\ar[r,"j''"swap]&A_2''
\end{tikzcd}
\]
where, in order to keep the diagram readable, we omit the arrow $h':A_0'\rightarrow A_2'$.

 We denote by $\tilde{H}\in \Hom^{r+s}(F,F'')$ the composition of $H$ and $H'$. 
 Then the corresponding diagram is given as follows:
  \[
\begin{tikzcd}
&A_0\ar[r,"f"]\ar[d,"\tilde{h_0}"swap]\ar[rd,"\tilde{s_1}"red,swap ,red]\ar[rrd,"\tilde{t}"blue,blue]\ar[rr,bend left = 8ex,"h"]&A_1\ar[d,"\tilde{h_1}"swap,near end]\ar[rd,"\tilde{s_2}"red,red]\ar[r,"j"]&A_2\ar[d,"\tilde{h_2}"]\\
&A_0''\ar[r,"f''"swap]\ar[rr,bend right = 8ex,"h''"swap]&A_1''\ar[r,"j''"swap]&A_2''
\end{tikzcd}
\]
 
 where 
 \[
 \tilde{h_i}=h_i''h_i'
 \]
 for $i=1,2,3$, and
 \[
 \tilde{s_1}=h_1''s_1'+(-1)^{r}s_1''h_0', 
 \]
 \[
 \tilde{s_2}=h_2''s_2'+(-1)^{r}s_2''h_1'
 \]
 and 
 \[
 \tilde{t}=h_2''t'+t''h_0'+(-1)^{r+1}s_2''s_1'.
 \]
 Note that in the expression of $\tilde{t}$, we only have length $2$ paths with suitable signs. 
 This is due to the vanishing of $m_j^{\A}$ for $j\geq 3$.

\begin{definition}\label{def:3termhomotopy}
Let $\A$ be a dg category. 
The {\em homotopy category $\mathcal H_{3t}(\A)$ of 3-term homotopy complexes (= 3-term $h$-complexes, or 3-term complexes up to homotopy) over $\A$} is defined to be $H^0\Fun_{\infty}(\B,\A)$ where $\B$ is the dg category \ref{3term}.
More precisely, its objects are given by diagrams $X$ in $\A$ of the form
\[
\begin{tikzcd}
&A_0\ar[r,"f"]\ar[rr,bend right = 8ex,"h"swap]&A_1\ar[r,"j"]&A_2
\end{tikzcd}
\]
where $|f|=|j|=0$, $|h|=-1$ and $d(f)=0$, $d(j)=0$ and $d(h)=-jf$.

Suppose $X'$ is given by the diagram with objects and morphisms with superscript a prime symbol. 

A morphism from $X$ to $X'$ is given by a homotopy equivalence class of 6-tuples $(h_i,s_j,t)$, $i=0,1,2$, $j=1,2$ where 
\[
h_i:A_i\rightarrow A_i', |h_i|=0, d(h_i)=0,
\]
\[
s_j:A_{j-1}\rightarrow A_{j}', |s_j|=-1, d(s_1)=f'h_0-h_1f,
\]
\begin{equation}
 d(s_2)=j'h_1-h_2j, \label{s2} 
\end{equation} 
and 
\begin{equation}
t:A_0\rightarrow A_2', |t|=-2, d(t)=h_2\circ h-h'\circ h_0-s_2\circ f-j'\circ s_1.\label{tt} 
\end{equation}
Two 6-tuples $(h_i,s_j,t)$ and $(h_i',s_j',t')$ are {\em homotopy equivalent} if there exists a 6-tuple $(h''_i,s_j'',t'')$ such that 
\[
|h_i''|=-1, h_i-h_i'=d(h_i''),
\]
\[
|s_j''|=-2, s_1-s_1'=d(s_1'')+f'\circ h_0''-h_1''\circ f,
\]
\[
s_2-s_2'=d(s_2'')+j'\circ h_1''-h_2''\circ j,
\]
and 
\[
|t''|=-3, t-t'=d(t'')-j'\circ s_1''+h'\circ h_0''-s_2''\circ f+h_2''\circ h.
\]
The composition of two morphisms given by homotopy equivalence classes of 6-tuples $(h_i',s_j',t'):F\rightarrow F'$ and $(h_i'',s_j'',t''):F'\rightarrow F'$, is the morphism given by the 6-tuple $(\tilde{h}_i,\tilde{s}_j,\tilde{t}):F\rightarrow F''$ where 
\begin{align*}
\tilde{h}_i  &=h_i''h_i' ,\\
\tilde{s}_1 &=h_1''s_1'+s_1''h_0', \\
\tilde{s}_2 &=h_2''s_2'+s_2''h_1' \\ 
\tilde{t}      &=h_2''t'+t''h_0'-s_2''s_1'.
\end{align*}
The identity morphism for $X$ is given by the homotopy equivalence class of the 6-tuple $(h_i,s_j,t)$ where $h_i=\Id$, $s_j=0$ and $t=0$ for $i=0,1,2$ and $j=1,2$. 
\end{definition}
From the definition, we see that $\mathcal H_{3t}(\A)$ is isomorphic to $\mathcal H_{3t}(\tau_{\leq 0}(\A))$. 
Thus we may assume $\A$ is strictly connective when needed.

Clearly if a 6-tuple $(h_i,s_j,t)$ gives rise to an isomorphism in $\mathcal H_{3t}(\A)$, then $h_i$ is a homotopy equivalence for each $i=0,1,2$. 
We will prove that the converse is also true.

Let $\A'$ be the dg category obtained from $\A$ by adding a zero object. 
The canonical dg functor $\A\rightarrow \A'$ is fully faithful and hence the induced dg functor $\Fun_{\infty}(\B,\A)\rightarrow \Fun_{\infty}(\B,\A')$ is also fully faithful.
Therefore we may assume that the dg category $\A$ contains a zero object.

Let $F$ be an object in $\Fun_{\infty}(\B,\A)$ which corresponds to the diagram \ref{F}. We have the following diagrams
\[
\begin{tikzcd}
0\ar[r,"0"]\ar[rr, bend left=8ex,"0"]&A_1\ar[d,"1"red,red]\ar[r,"0"]&0\\
A_0\ar[r,"f"]\ar[d,"1"red,red]\ar[rr,bend right=8ex,"0"]&A_1\ar[r,"0"]&0\\
A_0\ar[r,"0"]\ar[rr,bend right=8ex,"0"swap]&0\ar[r,"0"]&0
\end{tikzcd}
\]
where we omit the zero maps. 
This gives rise to a graded-split short exact sequence in $\Fun_{\infty}(\B,\A)$: 
\[
\begin{tikzcd}
0\ar[r]& A_1^{1}\ar[r] &U\ar[r] &A_0^{0}\ar[r]&0
\end{tikzcd}
\]
where we use the superscript $i$ to indicate the position of the object in the diagram, for example $A_1^1$ stands for the diagram 
\[
\begin{tikzcd}
0\ar[r,"0"]\ar[rr,bend right=8ex,"0"swap]& A_1\ar[r,"0"]&0.
\end{tikzcd}
\]
Similarly we have a graded-split short exact sequence in $\Fun_{\infty}(\B,\A)$:
\[
\begin{tikzcd}
0\ar[r]&A_2^{2}\ar[r]&F\ar[r]&U\ar[r]&0.
\end{tikzcd}
\]
So $F$ can be obtained by graded-split extensions from $A_i^i$, $i=0,1,2$. 
\begin{proposition}[\cite{Lefevre03}, {Proposition 8.2.2.3}] \label{termwiseequivalence}
Let $\theta:F\rightarrow F'$ be a morphism in $\mathcal H_{3t}(\A)$ given by the homotopy equivalence class of a 6-tuple $(h_i,s_j,t)$ as above. 
If $h_i$ is a homotopy equivalence in $\A$ for each $i=0,1,2$, then $\theta$ is an isomorphism. 
\end{proposition}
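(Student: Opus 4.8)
The plan is to exploit the graded-split filtration of any object $F$ in $\Fun_{\infty}(\B,\A)$ by the ``stalk'' diagrams $A_i^i$, $i=0,1,2$, constructed just before the statement, and to reduce the assertion to the corresponding statement in the dg morphism category, respectively in $\A$ itself, where it is already known (this is essentially how Lefèvre-Hasegawa's Proposition 8.2.2.3 proceeds). First I would observe that it suffices to show: given $\theta=(h_i,s_j,t):F\to F'$ with each $h_i$ a homotopy equivalence in $\A$, one can construct a morphism $\theta'=(h_i',s_j',t'):F'\to F$ in $\mathcal H_{3t}(\A)$ such that $\theta'\theta$ and $\theta\theta'$ are homotopic to the respective identities. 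The components $h_i'$ can be chosen to be homotopy inverses of the $h_i$; the real work is in building $s_1',s_2',t'$ so that the cocycle conditions \eqref{s2}, \eqref{tt} hold with the primed data and so that the composites are homotopic to identities.

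Concretely, I would argue in two stages matching the two graded-split short exact sequences
\[
\begin{tikzcd}
0\ar[r]& A_1^{1}\ar[r] &U\ar[r] &A_0^{0}\ar[r]&0
\end{tikzcd}
\qquad\text{and}\qquad
\begin{tikzcd}
0\ar[r]&A_2^{2}\ar[r]&F\ar[r]&U\ar[r]&0.
\end{tikzcd}
\]
Since these sequences are graded-split, a morphism out of $F$ (resp.\ $U$) is the same as a pair of morphisms out of the sub- and quotient object together with a homotopy witnessing compatibility; this is exactly the structure of the dg morphism category $\Mor(\A)\cong\Fun_{\infty}(k\Mor,\A)$ of Example~\ref{Mor(A)}. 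For $U$: the data $(h_0,h_1,s_1)$ with $d(s_1)=f'h_0-h_1f$ is precisely a morphism in $\Mor(\A)$ between the objects $f:A_0\to A_1$ and $f':A_0'\to A_1'$; since $h_0,h_1$ are homotopy equivalences, this morphism is invertible in $H^0(\Mor(\A))$ — this is a standard fact about the dg morphism category, provable by the same ``two-out-of-three'' bootstrapping (choose homotopy inverses $g_0,g_1$, set $s_1':=-g_1 s_1 g_0$ up to the necessary correction terms, and check). Then for $F$: the remaining data $(h_2,s_2,t)$ extends the $U$-morphism to an $F$-morphism; viewing $F\to U$ again through a graded-split sequence with kernel $A_2^2$, invertibility of $h_2$ and invertibility of the $U$-part give invertibility of the whole by the same mechanism.

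The key steps in order: (1) reduce to constructing a two-sided homotopy inverse; (2) invoke/establish that in $\Mor(\A)$ a morphism whose two ``diagonal'' components are homotopy equivalences is invertible in $H^0$, handling the middle homotopy $s_1$ by an explicit correction; (3) iterate once more to absorb the third vertex and the higher homotopies $s_2,t$, using the second graded-split sequence; (4) assemble the inverse 6-tuple and verify the composites reduce to the identity 6-tuples modulo the homotopy relation of Definition~\ref{def:3termhomotopy}. The main obstacle I anticipate is bookkeeping in step (3)–(4): producing $t'$ with the correct sign pattern so that $d(t')=h_2'h'-h\,h_0'-s_2'f'-j\,s_1'$ holds and so that $\widetilde t$ in the composite $\theta'\theta$ is a coboundary of the form required by the homotopy relation. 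Because all $m_j^\A$ vanish for $j\ge 3$, every expression involves only length-$\le 2$ paths, so this is a finite (if tedious) sign computation rather than a structural difficulty; the cleanest route is probably not to compute $t'$ by hand but to note that, having trivialized the $A_0,A_1$ part and then the $A_2$ part via the split sequences, the obstruction to inverting lives in $H^{-1}$ of a contractible complex and hence vanishes, which furnishes $t'$ abstractly.
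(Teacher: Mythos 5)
Your route is viable but genuinely different from the one taken in the paper, and the difference is worth spelling out. You propose to construct an explicit two-sided homotopy inverse $(h_i',s_j',t')$ by bootstrapping through the two graded-split sequences $0\to A_1^1\to U\to A_0^0\to 0$ and $0\to A_2^2\to F\to U\to 0$. The paper uses the same filtration but never constructs an inverse: instead it shows, for every object $F''$ of $\Fun_{\infty}(\B,\A)$, that $\theta$ induces a quasi-isomorphism of Hom complexes $(F'',F)\to(F'',F')$. This is done by applying $(F'',-)$ to the graded-split sequences, reducing to the claim that a homotopy equivalence $A\to A'$ induces quasi-isomorphisms $(F'',A^i)\to(F'',A'^i)$, which in turn follows from the observation that $\Hom_{\Fun_{\infty}(\B,\A)}(B^i,C^j)$ is a shift of $\Hom_{\A}(B,C)$ for $i\leq j$ and vanishes otherwise; the five lemma and the Yoneda lemma in $H^0$ then finish the proof. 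The paper's argument buys you a complete avoidance of sign bookkeeping; your argument, if carried out, would buy an explicit formula for the inverse, which is occasionally useful but is not needed anywhere in the sequel.

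The caution is that the two load-bearing steps of your proposal are asserted rather than proved, and they are exactly where the content lies. In step (2), the formula $s_1'=-g_1s_1g_0$ does not satisfy $d(s_1')=fg_0-g_1f'$ on the nose; the correction terms involve the chosen contracting homotopies $d(k_i)=1-g_ih_i$, and verifying that the composites are homotopic to identity 6-tuples requires producing the degree $-1$ witnesses $(h_i'',s_j'',t'')$ of Definition~\ref{def:3termhomotopy} explicitly. In steps (3)--(4), the claim that ``the obstruction to inverting lives in $H^{-1}$ of a contractible complex'' needs an argument identifying that complex; once you try to make it precise you are led to consider the complexes $\Hom(F'',A^i)$ and their extensions, i.e.\ precisely the paper's representability argument. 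So your abstract fallback is not an independent shortcut but essentially a rediscovery of the paper's proof; either commit to the finite sign computation or adopt the Hom-complex argument wholesale.
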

\begin{proof}
We use the notations in the above discussion. 
For each $F''$ in $\mathcal H_{3t}(\A)$, we have a morphism of graded-split short exact sequences of complexes over $k$
\[
\begin{tikzcd}
0\ar[r]&(F'',A_2^2)\ar[d,"\alpha"swap]\ar[r]&(F'',F)\ar[d,"\beta"]\ar[r]&(F'',U)\ar[d,"\gamma"]\ar[r]&0\\
0\ar[r]&(F'',A_2'^2)\ar[r]&(F'',F')\ar[r]&(F'',U')\ar[r]&0.
\end{tikzcd}
\] 
We want to show that the morphisms $\alpha$ and $\gamma$ are quasi-isomorphisms of complexes. 
It is enough to show the following claim.

Claim: for each homotopy equivalence $A\rightarrow A'$ in $\A$, and for each $i=0$, $1$, $2$, the induced morphism
\[
(F'',A^i)\rightarrow (F'',A'^i)
\]
is a quasi-isomorphism of complexes.

To see this, we apply $\Hom(-,A^i)$ and $\Hom(-,A'^i)$ to the associated graded-split short exact sequences for $F''$ and get morphisms of graded-split short exact sequences of complexes over $k$
\[
\begin{tikzcd}
0\ar[r]&((A_0'')^{0},A^i)\ar[d,""swap,blue]\ar[r]&(U'',A^i)\ar[d,""]\ar[r]&((A_1'')^{1},A^i)\ar[d,"",blue]\ar[r]&0\\
0\ar[r]&((A_0'')^{0},A'^i)\ar[r]&(U'',A'^i)\ar[r]&((A_1'')^{1},A'^i)\ar[r]&0
\end{tikzcd}
\] 
and
\[
\begin{tikzcd}
0\ar[r]&(U'',A^i)\ar[d,""swap]\ar[r]&(F'',A^i)\ar[d,""]\ar[r]&((A_2'')^{2},A^i)\ar[d,"",blue]\ar[r]&0\\
0\ar[r]&(U'',A'^i)\ar[r]&(F'',A'^i)\ar[r]&((A_2'')^{2},A'^i)\ar[r]&0.
\end{tikzcd}
\] 
Now we observe that for any pair of objects $B$ and $C$ in $\A$, the complex $\Hom_{\Fun_{\infty}(\B,\A)}(B^i,C^j)$ is isomorphic to a shift of $\Hom_{\A}(B,C)$ if $i\leq j$ and is zero if $j>i$.
Since $A\rightarrow A'$ is a homotopy equivalence in $\A$, the above maps in blue are quasi-isomorphisms of complexes.
This implies that the morphism $(F'',A^i)\rightarrow (F'',A'^i)$ is a quasi-isomorphism.
This proves the claim and hence the proposition.
\end{proof}
Denote by $\tilde{\mathrm{Com}}$ the dg path $k$-category of the quiver 
\begin{equation}\label{3term'}
\begin{tikzcd}
0\ar[rr,bend left=6ex,"c"]\ar[r,"a"swap]&1\ar[r,"b"swap]&2 
\end{tikzcd}
\end{equation}
where ${|}a{|}=0$, ${|}b{|}=0$, ${|}c{|}=-1$, and $d(c)=ba$.

We identify objects in $\D(\A\otimes \tilde{\Com}^{op})$ with sequences
\[
\begin{tikzcd}
X\ar[r,"f"]\ar[rr,"h"swap, bend right=6ex]&Y\ar[r,"j"]&Z
\end{tikzcd}
\]
in $\C_{dg}(\A)$ where $f$ and $j$ are closed morphisms of degree 0 and $h$ is a morphism of degree $-1$ such that $d(h)=jf$. 

\begin{lemma}\label{Comrepresentable}
Each object in $\rep(\tilde{\Com},\A)$ is isomorphic to an object of the form
\[
\begin{tikzcd}
A^{\wedge}\ar[r,"f^{\wedge}"]\ar[rr, bend right =6ex,"h^{\wedge}"swap]&B^{\wedge}\ar[r,"j^{\wedge}"]&C^{\wedge}
\end{tikzcd}
\]
where $A,B,C$ are objects in $\A$, the maps $f$ and $j$ are of degree $0$, the map $h$ is of degree $-1$ and $d(h)=jf$.
\end{lemma}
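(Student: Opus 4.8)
The plan is to deduce the lemma directly from the bijection
\[
\Hqe(\B,\A)\;\iso\;\Iso(\rep(\B,\A)),\qquad F\longmapsto {}_{F}\A_{\A},
\]
valid for every $k$-cofibrant dg category $\B$, recalled in Section~\ref{subsection:notations} (see \cite{Toen07}), applied with $\B=\tilde{\Com}$. First I would check that $\tilde{\Com}$ is $k$-cofibrant: its only nonzero Hom complexes are $\tilde{\Com}(i,i)=k$, $\tilde{\Com}(0,1)=k\cdot a$, $\tilde{\Com}(1,2)=k\cdot b$, all concentrated in degree $0$, together with $\tilde{\Com}(0,2)$, which is the two-term complex $k\cdot c\xrightarrow{\Id}k\cdot ba$ placed in degrees $-1$ and $0$; each of these is a bounded complex of free $k$-modules, hence cofibrant over $k$. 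Therefore $\A\otimes^{\mathbb L}\tilde{\Com}^{op}\iso\A\otimes\tilde{\Com}^{op}$ and the displayed bijection applies.

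Next I would unwind both sides. A dg functor $F\colon\tilde{\Com}\to\A$ is exactly the datum of objects $A=F(0)$, $B=F(1)$, $C=F(2)$ of $\A$ together with $f=F(a)\colon A\to B$ and $j=F(b)\colon B\to C$ closed of degree $0$ and $h=F(c)\colon A\to C$ of degree $-1$ with $d(h)=F(d(c))=F(ba)=jf$; so dg functors $\tilde{\Com}\to\A$ are precisely the diagrams occurring in the statement. On the other hand the bimodule ${}_{F}\A_{\A}$ has $i$-th component $\Hom_{\A}(-,F(i))=F(i)^{\wedge}$, and the arrows $a$, $b$, $c$ act by post-composition, i.e.\ as $f^{\wedge}$, $j^{\wedge}$, $h^{\wedge}$; under the identification of objects of $\D(\A\otimes^{\mathbb L}\tilde{\Com}^{op})$ with sequences in $\C_{dg}(\A)$ made just before the lemma, ${}_{F}\A_{\A}$ is thus the sequence $A^{\wedge}\xrightarrow{f^{\wedge}}B^{\wedge}\xrightarrow{j^{\wedge}}C^{\wedge}$ with homotopy $h^{\wedge}$ (and indeed $d(h^{\wedge})=(jf)^{\wedge}=j^{\wedge}f^{\wedge}$, as required). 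Since the map $F\mapsto {}_{F}\A_{\A}$ is surjective onto $\Iso(\rep(\tilde{\Com},\A))$, every object of $\rep(\tilde{\Com},\A)$ is isomorphic in $\D(\A\otimes^{\mathbb L}\tilde{\Com}^{op})$ to one of this form, which is exactly the claim.

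Along this route there is no serious obstacle: the only points requiring attention are the two bookkeeping checks above — that $\tilde{\Com}$ is $k$-cofibrant, and that the bimodule ${}_{F}\A_{\A}$ unwinds, with the correct degrees, to a diagram of representables with homotopy $h^{\wedge}$. One could instead argue more concretely, choosing closed quasi-isomorphisms $A^{\wedge}\xrightarrow{\sim}X$, $B^{\wedge}\xrightarrow{\sim}Y$, $C^{\wedge}\xrightarrow{\sim}Z$ (available because representable modules are cofibrant, so isomorphisms in $\D(\A)$ out of them are represented by honest closed morphisms) and replacing the three terms in turn; but there the maps $f$ and $j$ only lift through these quasi-isomorphisms up to homotopy, so one must perturb $h$ by the resulting coboundaries and keep track of the compatibilities — precisely the computation that the bijection above makes unnecessary. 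Hence I would present the first argument.
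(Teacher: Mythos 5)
Your overall strategy is the paper's (reduce to dg functors $\tilde{\Com}\to\A$ via To\"en's bijection and unwind ${}_{F}\A_{\A}$), but the justification you give for the key step is not sufficient, and the gap is a real one. The bijection $\Hqe(\B,\A)\iso\Iso(\rep(\B,\A))$ tells you that every object of $\rep(\tilde{\Com},\A)$ arises from a \emph{morphism in $\Hqe$} from $\tilde{\Com}$ to $\A$; it does not tell you that it arises from an honest dg functor $\tilde{\Com}\to\A$. For that you need the map $\Hom_{\dgcat}(\tilde{\Com},\A)\to\Hqe(\tilde{\Com},\A)$ to be surjective, i.e.\ you need $\tilde{\Com}$ to be cofibrant in the Dwyer--Kan model structure on $\dgcat$ (so that $\Hqe(\tilde{\Com},\A)=\Hom_{\dgcat}(\tilde{\Com},\A)/\mathrm{htp}$, as in \cite[Theorem 4.2]{Toen07}). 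The hypothesis you actually verify --- that the Hom complexes of $\tilde{\Com}$ are cofibrant over $k$ --- only guarantees $\A\otimes^{\mathbb L}\tilde{\Com}^{op}\iso\A\otimes\tilde{\Com}^{op}$ and is genuinely weaker. The category $k\Sq$ from Section~\ref{Preliminaries} shows the difference: it is $k$-cofibrant, yet objects of $\rep(\Sq,\A)$ are in general \emph{not} isomorphic to strictly commutative squares of representables (i.e.\ to images of dg functors $k\Sq\to\A$) --- this is precisely why the paper passes to the cofibrant resolution $\overline{\Sq}$. So ``$k$-cofibrant'' cannot be the hypothesis that makes your argument run.

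The fix is exactly the observation the paper makes: $\tilde{\Com}$ is cofibrant \emph{as a dg category}, because it is semi-free on the graded quiver with arrows $a,b$ (closed, degree $0$) and $c$ (degree $-1$, $d(c)=ba$), which admits the filtration $F_0=\{a,b\}\subset F_1=\{a,b,c\}$ with $d(F_0)=0$ and $d(F_1)\subseteq kF_0$. With that in hand, every morphism $\tilde{\Com}\to\A$ in $\Hqe$ is the homotopy class of a dg functor, and the rest of your unwinding (a dg functor $F:\tilde{\Com}\to\A$ is the data $(A,B,C,f,j,h)$ with $d(h)=jf$, and ${}_{F}\A_{\A}$ is the corresponding diagram of representables with homotopy $h^{\wedge}$) is correct and agrees with the paper.
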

\begin{proof}
We observe that $\tilde{\Com}$ is a cofibrant dg category. 
Recall that for dg categories $\A$ and $\B$ where $\B$ is cofibrant, we have a canonical bijection \cite[Theorem 4.2]{Toen07}
\[
\Hom_{\Hqe}(\B,\A)=\Hom_{\dgcat}(\B,\A)/\mathrm{htp} \iso \Iso(\rep(\B,\A))
\]
where the homotopy class of a dg functor $F:\B\rightarrow \A$ is sent to the isomorphism class of the dg bimodule $\Hom_{\A}(-,F(-))$.
Thus each object in $\rep(\tilde{\Com},\A)$ is isomorphic to an object given by a dg functor $\tilde{\Com}\rightarrow \A$.
\end{proof}

The following theorem is claimed by Kontsevich, modulo some results by To\"en.
We give a direct proof in the special case where $\B$ is the dg category \ref{3term}.

\begin{theorem}[\cite{CanonacoOrnaghiStellari18} cf.~also \cite{Faonte17a}]
We have a canonical isomorphism 
\[
\Fun_{\infty}(-,-)\rightarrow \rep_{dg}(-,-)
\]
of bifunctors $\Hqe_{k-\mathrm{cf}}^{op}\times \Hqe\rightarrow \Hqe$, where $\Hqe_{k-\mathrm{cf}}$ is the
full subcategory of $\Hqe$ whose objects are the dg categories whose morphism
complexes are cofibrant dg $k$-modules.
\end{theorem}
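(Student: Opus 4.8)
The plan is to identify both bifunctors with the internal Hom of the closed symmetric monoidal category $(\Hqe,-\otimes^{\mathbb L}-)$ and to exploit its uniqueness. Recall from Section~\ref{subsection:notations} that, by To\"en's theorem \cite{Toen07}, $\rep_{dg}(\B,\C)$ \emph{is} this internal Hom, so that $\Hqe(\A\otimes^{\mathbb L}\B,\C)\iso\Hqe(\A,\rep_{dg}(\B,\C))$ naturally in $\A$, $\B$, $\C$, and that $\Hqe(\B,\A)\iso\Iso(\rep(\B,\A))$. The strategy is therefore to produce a natural transformation $\Phi\colon\Fun_{\infty}(-,-)\to\rep_{dg}(-,-)$ of bifunctors $\Hqe_{k-\mathrm{cf}}^{op}\times\Hqe\to\Hqe$ and to prove that it is a quasi-equivalence on each pair $(\B,\A)$; since an isomorphism of bifunctors into $\Hqe$ is exactly a natural family of quasi-equivalences, this suffices.

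First I would construct $\Phi$ at the level of dg categories. To a strictly unital $A_{\infty}$-functor $F\colon\B\to\A$ one associates the $\A$-$\B$-bimodule $X_F$ with $X_F(A,B)=\A(A,FB)$, the left $\A$-action being tautological and the right $\B$-action twisted by the higher Taylor components of $F$; the morphism spaces of $\Fun_{\infty}(\B,\A)$, made fully explicit in the computation preceding Definition~\ref{def:3termhomotopy}, map into the bimodule morphism complexes by the analogous twisting. Since each $FB$ gives the representable module $\A(-,FB)$, the columns $X_F(-,B)$ are quasi-representable, so $\Phi_{\B,\A}$ lands in $\rep_{dg}(\B,\A)$. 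Bifunctoriality — contravariance in $\B$ by restriction of bimodules (using that $\B$ is $k$-cofibrant, so $\otimes^{\mathbb L}$ agrees with $\otimes$) and covariance in $\A$ by extension of scalars — follows once one checks that $\Phi$ intertwines composition of $A_{\infty}$-functors with composition of bimodules up to coherent homotopy, which descends to a genuine natural transformation after localizing at quasi-equivalences.

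The substance is that $\Phi_{\B,\A}$ is a quasi-equivalence. Essential surjectivity onto $\rep_{dg}(\B,\A)$ is immediate from $\Hqe(\B,\A)\iso\Iso(\rep(\B,\A))$: every quasi-representable bimodule is isomorphic in $\rep(\B,\A)$ to one represented by a dg functor, hence \emph{a fortiori} by an $A_{\infty}$-functor. The hard part is quasi-full-faithfulness, namely that
\[
\Fun_{\infty}(\B,\A)(F,F')\longrightarrow\RHom_{\A\otimes\B^{op}}(X_F,X_{F'})
\]
is a quasi-isomorphism. I would prove this by reducing to generators: using a cellular filtration of the cofibrant $\B$, one d\'evisse to the case where $\B$ has few objects and free morphism complexes, in which the $A_{\infty}$-Hom complex of Lefèvre-Hasegawa \cite{Lefevre03} is literally the bar-type complex computing $\RHom$ of the associated bimodules. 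Matching the twist built from the higher components of the $A_{\infty}$-morphisms against the bar differential is precisely the delicate, sign-sensitive step, and is the main obstacle.

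Finally, since the general statement is the content of \cite{CanonacoOrnaghiStellari18} (cf.~also \cite{Faonte17a}), I would — instead of carrying out the full d\'evissage — give the direct verification in the one case we actually use, where $\B$ is the $3$-term dg category \ref{3term}. Here $\rep(\tilde{\Com},\A)$ is completely controlled by Lemma~\ref{Comrepresentable}: every object is represented by a diagram $A^{\wedge}\to B^{\wedge}\to C^{\wedge}$ carrying a degree $-1$ homotopy, and the explicit formulas for objects, morphisms, differentials and compositions in $\Fun_{\infty}(\B,\A)$ computed above match, term by term and sign by sign, the corresponding bimodule morphism complexes. Checking these finitely many identities yields the desired quasi-equivalence $\Phi_{\B,\A}$ directly, and hence the asserted canonical isomorphism in this case.
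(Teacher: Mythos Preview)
Your overall plan is sound and, crucially, your final paragraph lands exactly where the paper does: the general statement is \emph{cited} from \cite{CanonacoOrnaghiStellari18} and not reproved; what the paper actually establishes is the special case $\B=$ (the dg category of \ref{3term}), stated as the Proposition immediately following the theorem. So your decision to defer the general d\'evissage and work out this one case is precisely the paper's strategy.

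Where your sketch differs from the paper is in the execution of that special case. You describe it as ``the explicit formulas \ldots\ match, term by term and sign by sign, the corresponding bimodule morphism complexes,'' which understates the work. The paper does not compare Hom complexes directly; instead it
\begin{itemize}
\item passes to the cofibrant resolution $\B'=\tilde{\Com}$ of $\B$ and builds, for each $A_\infty$-functor $F$, an explicit cofibrant resolution of the associated bimodule via the two-step sequence $X\to Y\to Z\to M$ coming from Lemma~\ref{dgfunctorcofibrant};
\item defines $\tilde\mu(F)$ as the totalization of $X\to Y\to Z$, and writes the effect of $\tilde\mu$ on a degree-$n$ morphism $H=(h_i,s_j,t)$ as an explicit $3\times 3$ block matrix $\begin{bmatrix}u&0&0\\0&v&0\\0&r&w\end{bmatrix}$, then checks by hand that this respects the differentials (this is where the signs involving $s_1',s_2',t'$ are verified);
\item proves quasi-density from Lemma~\ref{Comrepresentable}, and quasi-full-faithfulness not by a direct bar comparison but by the filtration argument: the graded-split short exact sequences $A_1^1\to U\to A_0^0$ and $A_2^2\to F\to U$ reduce the claim to objects of the form $A^i$, where it follows because $\tilde\mu(F)$ is a cofibrant resolution of $M$.
\end{itemize}
So your proposal is correct in spirit, but the paper's proof is organized around an explicit resolution and a d\'evissage along the canonical two-step filtration of objects of $\Fun_\infty(\B,\A)$, rather than a global matching of Hom complexes. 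If you want to fill in your last paragraph, that is the structure to reproduce.

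One small caution on your general sketch: when you invoke ``a cellular filtration of the cofibrant $\B$'' for the d\'evissage, note that membership in $\Hqe_{k\text{-}\mathrm{cf}}$ only gives cofibrant Hom complexes, not cofibrancy of $\B$ as a dg category; you would first need to replace $\B$ by a cofibrant model and check invariance of $\Fun_\infty(-,\A)$ under that replacement, which is itself part of what is being asserted.
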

\begin{corollary}\label{strictify}
Let $\A'$ be a full dg subcategory of $\A$ and $\B$ be a small dg category. 
Let $F:\rep(\B,\A')\rightarrow \rep(\B,\A)$ be the functor induced by the inclusion $\A'\rightarrow \A$.
Then an object $X\in \rep(\B,\A)$ lies in the essential image of $F$ if and only if for each object $B$ in $\B$, $X(-,B)$ is quasi-isomorphic to a dg $\A$-module represented by an object in $\A'$.
\end{corollary}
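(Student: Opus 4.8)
The plan is to deduce this from the theorem above — equivalently, from To\"en's bijection $\Hom_{\dgcat}(\B,\A)/\mathrm{htp}\iso\Iso(\rep(\B,\A))$ valid for $k$-cofibrant $\B$, cf.~\cite[Theorem 4.2]{Toen07} — after first enlarging $\A'$ so that it is closed under homotopy equivalence. Since replacing $\B$ by its cofibrant resolution $Q(\B)$ changes neither $\rep(\B,-)$ nor the functor $F$, I may assume $\B$ is cofibrant; then every object of $\rep(\B,\A)$ is, up to isomorphism, the bimodule $(a,b)\mapsto\Hom_\A(a,\psi(b))$ attached to some dg functor $\psi\colon\B\to\A$, and $F$ corresponds under this description to postcomposition with the inclusion $\iota\colon\A'\hookrightarrow\A$ (here one uses that induction along $\iota$ carries a representable $\A'$-module to the corresponding representable $\A$-module). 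For such a $\psi$ one has $X(-,b)=\psi(b)^\wedge$; and since representable dg modules are cofibrant, the Yoneda functor gives $\Hom_{\D(\A)}(\psi(b)^\wedge,c^\wedge)\cong\Hom_{H^0(\A)}(\psi(b),c)$, so any isomorphism $X(-,b)\cong c^\wedge$ in $\D(\A)$ comes from an isomorphism $\psi(b)\cong c$ in $H^0(\A)$. Granting this, the ``only if'' direction is immediate: if $X\cong F(Y)$ with $Y\in\rep(\B,\A')$, represent $Y$ by a dg functor $\psi'\colon\B\to\A'$; then $X$ is the bimodule attached to $\iota\circ\psi'$, so $X(-,b)\cong\psi'(b)^\wedge$ with $\psi'(b)\in\A'$.

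For the ``if'' direction I would first pass to the full dg subcategory $\A''\subseteq\A$ spanned by all objects of $\A$ that are homotopy equivalent in $\A$ to an object of $\A'$. The inclusion $\A'\hookrightarrow\A''$ is quasi-fully faithful (both subcategories are full in $\A$) and dense on $H^0$, hence a quasi-equivalence; therefore $\rep(\B,\A')\to\rep(\B,\A'')$ is an equivalence and the essential images of $\rep(\B,\A')$ and of $\rep(\B,\A'')$ in $\rep(\B,\A)$ coincide. Moreover, using $c^\wedge\cong(c')^\wedge$ whenever $c$ and $c'$ are homotopy equivalent in $\A$, the condition ``$X(-,b)$ is quasi-isomorphic to a representable $\A$-module represented by an object of $\A'$'' is equivalent to the same condition with $\A'$ replaced by $\A''$. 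So it suffices to prove the ``if'' direction under the extra hypothesis that $\A'$ is closed under homotopy equivalence in $\A$.

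In that situation, given $X\in\rep(\B,\A)$ satisfying the termwise condition, choose a dg functor $\psi\colon\B\to\A$ whose attached bimodule is isomorphic to $X$. For each object $b$ of $\B$ we have $\psi(b)^\wedge\cong X(-,b)\cong A_b^\wedge$ in $\D(\A)$ for some $A_b\in\A'$, hence $\psi(b)\cong A_b$ in $H^0(\A)$ by the remark above; as $\A'$ is closed under homotopy equivalence this forces $\psi(b)\in\A'$, and since $\A'$ is a full dg subcategory, $\psi$ factors as $\iota\circ\psi'$ for a dg functor $\psi'\colon\B\to\A'$. The bimodule $Y$ attached to $\psi'$ then lies in $\rep(\B,\A')$ and satisfies $F(Y)\cong X$, which completes the argument.

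I do not expect a genuine obstacle here; the only delicate point — transporting the representing dg functor so that its values land in $\A'$ — is precisely what the passage to the subcategory $\A''$ makes automatic. Without that step one would have to perform the transport by hand along the object-wise homotopy equivalences $\psi(b)\simeq A_b$, say using a path object in $\dgcat$ or the $A_\infty$-functor formalism developed above, which is feasible but more cumbersome. The only facts used tacitly are that $\rep(\B,-)$ sends quasi-equivalences to equivalences and the compatibility of $F$ with To\"en's description recalled in the first paragraph.
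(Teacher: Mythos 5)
Your argument is correct. It shares with the paper's proof the one essential idea: first enlarge $\A'$ to its closure $\A''$ under homotopy equivalences in $\A$, note that $\A'\hookrightarrow\A''$ is a quasi-equivalence so that $\rep(\B,\A')$ and $\rep(\B,\A'')$ have the same essential image in $\rep(\B,\A)$, and then observe that once $\A'$ is closed under homotopy equivalence the termwise condition forces a representing functor to take values in $\A'$. Where you diverge is in the strictification device. The paper keeps $\B$ as it is and invokes the theorem stated just above (the Canonaco--Ornaghi--Stellari identification $\Fun_\infty(\B,?)\cong\rep_{dg}(\B,?)$), so that $\rep(\B,\A')$ becomes literally the full subcategory of $A_\infty$-functors $\B\to\A$ whose values on objects lie in $\A'$, and the statement is then immediate. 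You instead replace $\B$ by a cofibrant resolution and use To\"en's bijection $\Hom_{\dgcat}(\B,\A)/\mathrm{htp}\iso\Iso(\rep(\B,\A))$ to represent $X$ by a strict dg functor $\psi$, read off $\psi(b)\in\A''$ from the Yoneda lemma, and factor $\psi$ through the full subcategory. Your route avoids the $A_\infty$ formalism entirely at the cost of the cofibrant replacement of $\B$ (harmless, since $\rep(\B,-)$ and the comparison functor $F$ only depend on $\B$ up to quasi-equivalence, as you note); the paper's route avoids touching $\B$ but leans on the comparison theorem it has just recorded. Both are complete; the small points you flag -- that $F$ corresponds to postcomposition with $\iota$ because induction sends representable $\A'$-modules to the corresponding representable $\A$-modules, and that an isomorphism $\psi(b)^\wedge\cong c^\wedge$ in $\D(\A)$ descends to a homotopy equivalence $\psi(b)\simeq c$ -- are exactly the right ones to check and are handled correctly.
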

\begin{proof}[Proof of the Corollary]
Let $\A''$ be the closure of $\A'$ under homotopy equivalences in $\A$. The inclusion of 
$\A'$ into $\A''$ is a quasi-equivalence and therefore induces a quasi-equivalence in
$\rep_{dg}(\B,?)$. By the above theorem, it also induces a quasi-equivalence in
$\Fun_\infty(\B,?)$. Thus, we may assume that $\A'$ is stable under homotopy equivalences
in $\A$. Now the inclusion $\Fun_\infty(\B,\A') \to \Fun_\infty(\B,\A)$ is clearly an isomorphism
onto the full subcategory of $A_\infty$-functors whose values on objects lie in $\A'$. 
Thanks to the theorem, this implies the statement.
\end{proof}
\begin{lemma}\label{dgfunctorcofibrant}
Let $\A$ be a small dg category and $\C$ be a dg category given by a differential graded quiver $Q$ without relations and whose differential $d$ is such that $Q$ admits a finite filtration
\[
F_0\subset F_1\subset F_2\subset\ldots\subset F_N=Q
\]
such that all $F_p$ have the same objects as $Q$, and the bimodule of arrows of $F_0$ vanishes and $d(F_p)$ is contained in $kF_{p-1}$. 
Then for each dg functor $F:\C\rightarrow \A$, the associated dg $\C$-$\A$-bimodule $_{F}\A_{\A}=\Hom_{\A}(-,F(-))$ admits a cofibrant resolution 
\[
 \begin{tikzcd}
0\ar[r]& \ker(p) \ar[r,"u"] &\bigoplus_{i\in Q}\Hom_{\A}(-,F(i))\otimes \Hom_{\C}(i,-)\ar[r,"p"]                          & \Hom_{\A}(-,F(-))\ar[r]& 0.
 \end{tikzcd}
 \]
\end{lemma}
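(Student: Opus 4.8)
The plan is to recognise the displayed sequence as the standard one-step resolution of the bimodule $_{F}\A_{\A}$ coming from the path-category presentation of $\C$, and then to extract cofibrancy of its two outer terms from the filtration $F_{0}\subset\cdots\subset F_{N}=Q$. On the summand indexed by $i\in Q_{0}$, the map $p$ is
\[
\Hom_{\A}(-,F(i))\otimes\Hom_{\C}(i,j)\longrightarrow\Hom_{\A}(-,F(j)),\qquad f\otimes w\longmapsto F(w)\circ f ,
\]
which is surjective since already the summand $i=j$ hits everything via $f\otimes e_{j}\mapsto f$; thus the displayed sequence is a short exact sequence of dg $\C$-$\A$-bimodules. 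Since the Hom-complexes of the path category $\C$ are $k$-free (so that $\A\otimes^{\mathbb L}\C^{op}=\A\otimes\C^{op}$), it will be a cofibrant resolution of $_{F}\A_{\A}=\cok(u)$ once we know that $\ker(p)$ and the middle term $P$ are cofibrant right $k\C^{op}\otimes\A$-modules.

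Cofibrancy of $P$ is immediate: each summand $\Hom_{\A}(-,F(i))\otimes\Hom_{\C}(i,-)$ is precisely the right $k\C^{op}\otimes\A$-module represented by the pair $(i,F(i))$, representable modules are cofibrant in the projective model structure, and arbitrary direct sums of cofibrant modules are cofibrant. For the kernel, the tautological presentation of a path category --- the short exact sequence in the proof of Lemma~\ref{epi} --- identifies $\ker(p)$, \emph{as a graded bimodule}, with $\bigoplus_{a\colon i\to j\in Q_{1}}$ of a shift of $\Hom_{\A}(-,F(i))\otimes\Hom_{\C}(j,-)$, with $u$ acting on the $a$-summand by $w\otimes f\mapsto wa\otimes f-w\otimes F(a)f$; so, graded-wise, $\ker(p)$ is again a direct sum of shifts of representable bimodules. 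The one non-formal point is that its differential $d_{K}:=d_{P}|_{\ker(p)}$ is \emph{not} the direct-sum differential, because $d$ applied to $F(a)$ produces the term $F(d(a))$.

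Here the filtration is used. For an arrow $a\in F_{p}$ one has $d(a)\in kF_{p-1}$, and the telescoping identity
\[
w\,\gamma\otimes f-w\otimes F(\gamma)f=\sum_{l}\pm\,u_{b_{l}}\!\bigl((w\,b_{m}\cdots b_{l+1})\otimes(F(b_{l-1}\cdots b_{1})f)\bigr)
\]
for a path $\gamma=b_{m}\cdots b_{1}$ shows that the off-diagonal part of $d_{K}$ on the $a$-summand lands in the summands attached to arrows of $F_{p-1}$. Therefore the graded sub-bimodules $K_{p}:=\bigoplus_{a\in F_{p}}(a\text{-summand})$ are dg sub-bimodules and form a finite filtration $0=K_{0}\subseteq K_{1}\subseteq\cdots\subseteq K_{N}=\ker(p)$ --- with $K_{0}=0$ because $F_{0}$ has no arrows --- whose successive quotients carry the direct-sum differential and hence are direct sums of shifts of representable bimodules. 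Thus $\ker(p)$ is built from direct sums of shifted representables by finitely many extensions, so it is cofibrant, which completes the proof.

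The step I expect to be the real work is the last one: verifying that $d_{K}$ is triangular for the arrow-filtration is exactly where the hypothesis $d(F_{p})\subseteq kF_{p-1}$ enters, and it requires the telescoping identity together with bookkeeping of Koszul signs and of the degree shifts $|a|$. Everything else is either a direct sum of representables or a standard fact about the projective model structure on dg modules.
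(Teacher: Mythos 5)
Your proof is correct and follows essentially the same route as the paper's: identify $\ker(p)$ graded-wise with a direct sum of shifted representable bimodules indexed by the arrows of $Q$, filter it by the given filtration $F_p$ of the quiver, and use the hypothesis $d(F_p)\subseteq kF_{p-1}$ to see that the subquotients carry the direct-sum differential and are therefore sums of shifted representables, whence $\ker(p)$ and the whole resolution are cofibrant. The paper is merely terser — it asserts the filtration $M_k$ and its subquotients without spelling out the telescoping identity or the surjectivity of $p$ — but the argument is the same.
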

\begin{proof}
 Let $F:\C\rightarrow \A$ be a dg functor.
 Then we have a short exact sequence of dg $\A\otimes \C^{op}$-modules
 \[
 \begin{tikzcd}
0\ar[r]& \ker(p) \ar[r,"u"] &\bigoplus_{i\in Q}\Hom_{\A}(-,F(i))\otimes \Hom_{\C}(i,-)\ar[r,"p"]                          & \Hom_{\A}(-,F(-))\ar[r]& 0.
 \end{tikzcd}
 \]
 where the dg module $\ker(p)$ is isomorphic to 
 \[
 \bigoplus_{\alpha:i\rightarrow j\text{ in $Q$}}\Hom_{\A}(-,F(i))\otimes\Hom_{\C}(j,-)\otimes k\alpha
 \]
  as a graded module.
 It has a finite increasing filtration 
 \[
 0\subset M_0\subset M_{1}\subset \ldots\subset M_{N}=\ker(p)
 \]
 where 
 \[
 M_{k}=\bigoplus_{\alpha:i\rightarrow j\text{ in $F_k$}}\Hom_{\A}(-,F(i))\otimes\Hom_{\C}(j,-)\otimes k\alpha.
 \] 
 The subquotient $M_{k+1}/M_{k}$ is isomorphic to the shift of the representable dg bimodule 
 \[
 \bigoplus_{\alpha:i\rightarrow j\text{ in $F_{k+1}\backslash F_{k}$}}\Sigma^{|\alpha|}\Hom_{\A}(-,F(i))\otimes\Hom_{\C}(j,-).
 \]
 Thus $\Cone(u)$ is a cofibrant resolution of the dg bimodule $_{F}\A_{\A}$ associated with the dg functor $F:\C\rightarrow \A$.
 \end{proof}
\begin{proposition}Let $\B$ be the dg category defined by \ref{3term}. 
We have a canonical isomorphism $\mu:\Fun_{\infty}(\B,\A)\rightarrow \rep_{dg}(\B,\A)$ in $\mathrm{Hqe}$.
\end{proposition}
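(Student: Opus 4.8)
The plan is to realize $\mu$ as a zigzag in $\Hqe$ through the \emph{cofibrant} dg category $\tilde{\Com}$ of \ref{3term'}, using the explicit two-term cofibrant resolutions of Lemma~\ref{dgfunctorcofibrant}. First I would observe that the dg functor $\tilde{\Com}\rightarrow\B$ which is the identity on objects, sends $a,b$ to $a,b$ and sends $c$ to $0$, is a quasi-equivalence: on $\Hom$-complexes it is the identity except on $\Hom(0,2)$, where it collapses the contractible complex $(kc\xrightarrow{\,d=ba\,}k\cdot ba)$ onto $0=\B(0,2)$. Derived restriction along it is a triangle equivalence $\D(\A\otimes\B^{op})\xrightarrow{\sim}\D(\A\otimes\tilde{\Com}^{op})$ which preserves quasi-representability of evaluations (the map being the identity on objects), hence restricts to a quasi-equivalence $\rho\colon\rep_{dg}(\B,\A)\xrightarrow{\sim}\rep_{dg}(\tilde{\Com},\A)$ in $\Hqe$. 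So it suffices to produce a canonical quasi-equivalence $\mu'\colon\Fun_{\infty}(\B,\A)\rightarrow\rep_{dg}(\tilde{\Com},\A)$ and set $\mu=\rho^{-1}\circ\mu'$.

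On objects, an object $F$ of $\Fun_{\infty}(\B,\A)$ is a diagram \ref{F} given by $(f,j,h)$ with $|f|=|j|=0$, $d(f)=d(j)=0$, $|h|=-1$, $d(h)=-jf$; this is exactly the datum of a dg functor $G_{F}\colon\tilde{\Com}\rightarrow\A$ with $G_{F}a=f$, $G_{F}b=j$, $G_{F}c=-h$, so that $d(G_{F}c)=jf=G_{F}b\,G_{F}a$. I put $\mu'(F)=P_{G_{F}}:=\Cone(u_{G_{F}})$, the cofibrant resolution of the bimodule ${}_{G_{F}}\A_{\A}$ produced by Lemma~\ref{dgfunctorcofibrant} (with $u_{G_{F}}$, $p_{G_{F}}$ the maps of that lemma for $G=G_{F}$); the lemma applies since $\tilde{\Com}$ carries the finite filtration $F_{0}\subset F_{0}\cup\{a,b\}\subset F_{0}\cup\{a,b,c\}$ with $d$ lowering the filtration, and $P_{G_{F}}\in\rep_{dg}(\tilde{\Com},\A)$ because ${}_{G_{F}}\A_{\A}(-,i)$ is representable. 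On morphism complexes I would define $\Fun_{\infty}(\B,\A)(F,F')\rightarrow\rep_{dg}(\tilde{\Com},\A)(P_{G_{F}},P_{G_{F'}})$ by sending a $6$-tuple $(h_{i},s_{j},t)$ to the morphism of two-term complexes whose diagonal part is $\bigoplus_{i}h_{i}$ on $\bigoplus_{i}\A(-,G_{F}i)\otimes\tilde{\Com}(i,-)$ and whose off-diagonal part on the summand $\A(-,G_{F}i)\otimes\tilde{\Com}(j,-)\otimes k\alpha$ of $\ker(p_{G_{F}})$ is given by $s_{1}$, $s_{2}$, $t$ according to whether $\alpha$ is $a$, $b$, $c$. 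The substantive check is that this is a dg functor: compatibility with the differential must reproduce the formulas for $d(H)$ recorded just before Definition~\ref{def:3termhomotopy}, and compatibility with composition must reproduce $(\tilde h_{i},\tilde s_{j},\tilde t)$ — in particular the vanishing of $m^{\A}_{j}$ for $j\geq3$, which makes $\tilde t$ a sum over length-two composites only, corresponds on the bimodule side to ordinary matrix multiplication.

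Next I would check that $\mu'$ is a quasi-equivalence. For quasi-full-faithfulness, since $P_{G_{F}}$ resolves ${}_{G_{F}}\A_{\A}$ we have
\[
\rep_{dg}(\tilde{\Com},\A)(P_{G_{F}},P_{G_{F'}})\simeq\RHom_{\A\otimes\tilde{\Com}^{op}}({}_{G_{F}}\A_{\A},\,{}_{G_{F'}}\A_{\A}),
\]
and evaluating with the explicit resolution gives a complex assembled from $\prod_{i}\A(G_{F}i,G_{F'}i)$ (one term of degree $n$ per object, from the free summands) together with $\prod_{\alpha\colon i\to j}\A(G_{F}i,G_{F'}j)$ (one term per arrow $\alpha$, of degree $n+|\alpha|-1$, the shift coming from $|\alpha|$ and from the fact that $\ker(p_{G_{F}})$ is the subobject term of a cone). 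Matching degrees and differentials identifies this with the $6$-tuple complex of Definition~\ref{def:3termhomotopy}: the objects $0,1,2$ give the $h_{i}$ in degree $n$, the arrows $a,b$ of degree $0$ give $s_{1},s_{2}$ in degree $n-1$, the arrow $c$ of degree $-1$ gives $t$ in degree $n-2$, and the connecting differential of the cone reproduces the cross terms $f'h_{0}-h_{1}f$, $j'h_{1}-h_{2}j$ and $h_{2}h-h'h_{0}-s_{2}f-j's_{1}$. For essential surjectivity, Lemma~\ref{Comrepresentable} — together with To\"en's bijection $\Hom_{\Hqe}(\tilde{\Com},\A)\cong\Iso(\rep(\tilde{\Com},\A))$ used in its proof — shows that every object of $\rep_{dg}(\tilde{\Com},\A)$ is isomorphic to ${}_{G}\A_{\A}$, hence to $P_{G}=\mu'(F_{G})$, for some dg functor $G$. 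Thus $\mu'$ is a quasi-equivalence, so $\mu=\rho^{-1}\mu'$ is an isomorphism in $\Hqe$; its canonicity, and its naturality in $\A$, are inherited from those of the resolution of Lemma~\ref{dgfunctorcofibrant} and of $\rho$.

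The main obstacle is the dg-functoriality step: reconciling the precise sign conventions of the $A_{\infty}$-morphism complex and its composition (as spelled out following \cite{Lefevre03} just before Definition~\ref{def:3termhomotopy}) with the signs inherent in $\Hom$-complexes of two-term cones of dg bimodules, so that the comparison is an honest isomorphism of dg categories and not merely a degreewise quasi-isomorphism. Everything here is formally forced, but the sign bookkeeping is delicate; the identification $\Fun_{\infty}(k\Mor,\A)\cong\Mor(\A)$ of Example~\ref{Mor(A)} is the one-arrow prototype of exactly this matching and serves as a template.
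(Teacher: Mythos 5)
Your proposal is correct and follows essentially the same route as the paper: pass to the cofibrant model $\tilde{\Com}$ of $\B$, send an $A_\infty$-functor to the cofibrant resolution of its associated bimodule furnished by Lemma~\ref{dgfunctorcofibrant}, obtain quasi-density from Lemma~\ref{Comrepresentable}, and verify quasi-full-faithfulness by computing morphism complexes through that resolution. The only cosmetic differences are that you make the restriction equivalence $\rho$ along $\tilde{\Com}\to\B$ explicit where the paper works directly in $\rep_{dg}(\tilde{\Com},\A)$, and that you match the Hom-complexes term by term where the paper d\'evisses to the one-object diagrams $A^i$ via the graded-split filtration of $F$.
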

\begin{proof}
Let $\B'$ be the dg category $\tilde{\mathrm{Com}}$ defined by \ref{3term'}. 
We have a quasi equivalence $\B'\rightarrow \B$. 

We define a dg functor $\tilde{\mu}: \Fun_{\infty}(\B,\A)\rightarrow \rep_{dg}(\B',\A)$ as follows:

Let $F$ be an object in $\Fun_{\infty}(\B,\A)$ with the corresponding diagram \ref{F}. 
Let $M$ be the dg $\B'^{op}\otimes \A$ module which corresponds to the diagram 
\[
\begin{tikzcd}
A_0^{\wedge}\ar[r,"f^{\wedge}"]\ar[rr,bend right=8ex,"-h^{\wedge}"swap]&A_1^{\wedge}\ar[r,"j^{\wedge}"]&A_2^{\wedge}.
\end{tikzcd}
\]

Denote by $X=2^{\wedge}\otimes A_0^{\wedge}$, $Y=1^{\wedge}\otimes A_0^{\wedge}\oplus 2^{\wedge}\otimes A_1^{\wedge}\oplus 2^{\wedge}\otimes \Cone(\Id_{A_0^{\wedge}})$ and $Z=0^{\wedge} \otimes A_0^{\wedge}\oplus 1^{\wedge}\otimes A_1^{\wedge}\oplus 2^{\wedge}\otimes A_2^{\wedge}$ the cofibrant dg $\B'^{op}\otimes \A$-modules. 
By Lemma~\ref{dgfunctorcofibrant}, we have an exact sequence of dg $\B'^{op}\otimes \A$-modules:
\[
\begin{tikzcd}
0\ar[r]& X\ar[r,"\alpha"]&Y\ar[r,"\beta"]&Z\ar[r,"\gamma"]&M\ar[r]&0
\end{tikzcd}
\]
where 
\[
\alpha^{\intercal}=
\begin{bmatrix}
b^{\wedge}\otimes \Id& \Id\otimes f^{\wedge}&1\otimes {[}0,1{]}^{\intercal}
\end{bmatrix},
\]
\[
\beta=
\begin{bmatrix}
a^{\wedge}\otimes \Id&0&-(ba)^{\wedge} \otimes {[}0,1{]}-c^{\wedge}\otimes {[}s^{-1},0{]} \\
-\Id\otimes f^{\wedge}&b^{\wedge}\otimes \Id&0\\
0&-\Id\otimes j^{\wedge}&\Id\otimes {[}-h^{\wedge}{,}(jf)^{\wedge}{]}
\end{bmatrix}
\]
and $\gamma$ is the obvious morphism.

The object $F$ is sent by $\tilde{\mu}$ to the totalization of the sequence
\[
\begin{tikzcd}
X\ar[r,"\alpha"]&Y\ar[r,"\beta"]&Z.
\end{tikzcd}
\]
So $\tilde{\mu}(F)$ is quasi-isomorphic to $M$.

Let $H:F\rightarrow F'$ be a morphism of degree $n$ in $\Fun_{\infty}(\B,\A)$ given by the diagram \ref{mordegreen}.

Recall that for a dg $\A$-module $M$, $s:M\rightarrow \Sigma M$ is the graded map of degree $-1$ which is $\Id: {M^n}\rightarrow (\Sigma M)^{n-1}=M^{n}$ on each degree $n$. 
Clearly $d(s)=0$.

We have the following diagram in $\C_{dg}(\B'^{op}\otimes\A)$
\[
\begin{tikzcd}
0\ar[r]& X\ar[r,"\alpha"]\ar[d,"u"]&Y\ar[r,"\beta"]\ar[d,"v"]\ar[rd,dashed,"r"]&Z\ar[r]\ar[d,"w"]&0\\
0\ar[r]& X'\ar[r,"\alpha'"swap]&Y'\ar[r,"\beta'"swap]&Z'\ar[r]&0
\end{tikzcd}
\]
where 
\[
u=\Id\otimes h_0^{\wedge},
\]
\[
v=(-1)^{n}
\begin{bmatrix}
\Id\otimes h_0^{\wedge}&0&0\\
0&\Id\otimes h_1^{\wedge}&\Id\otimes{[}s_1^{\wedge}\circ s^{-1},(f'h_0-h_1f)^{\wedge}{]}\\
0&0&\Id\otimes \begin{bmatrix}(-1)^{n}s\circ h_0^{\wedge}\circ s^{-1}\ \ \ 0\\0\ \ \ \ \ \ \ \ \ \ \ \ \ \ \ \ \ \ \ \ \ \ \ h_0^{\wedge}\end{bmatrix}
\end{bmatrix},
\]
\[
w=
\begin{bmatrix}
\Id\otimes h_0^{\wedge}& 0&0\\
0&\Id\otimes h_1^{\wedge}&0\\
0&0&\Id\otimes h_2^{\wedge}
\end{bmatrix},
\]
\[
r=
\begin{bmatrix}
0&0&0\\
\Id\otimes s_1^{\wedge}&0&-b^{\wedge}\otimes {[}0,s_1^{\wedge}{]}\\
0&\Id\otimes s_2^{\wedge}& -\Id\otimes {[}t^{\wedge},(s_2\circ f)^{\wedge}{]}
\end{bmatrix}.
\]

The morphism $H:F\rightarrow F'$ is sent by the dg functor $\tilde{\mu}$ to the morphism
\[
\begin{bmatrix}
u&0&0\\
0&v&0\\
0&r&w
\end{bmatrix}.
\]
We have that 
\[
\alpha'u=(-1)^n v\alpha,
\]
\[
d(u)=\Id\otimes (d(h_0))^{\wedge},
\]
\[
d(v)=(-1)^{n}
\begin{bmatrix}
\Id\otimes d(h_0)^{\wedge}&0&0\\
0&\Id\otimes d(h_1)^{\wedge}&\Id\otimes {[}s_1'^{\wedge} \circ s^{-1},(f'd(h_0)-d(h_1)f)^{\wedge}{]}\\
0&0&\Id\otimes\begin{bmatrix}(-1)^{n+1}s\circ (d(h_0))^{\wedge}\circ s^{-1}\ \ 0\\0\ \ \ \ \ \ \ \ \ \ \ \ \ \ \ \ \ \ \ \ \ \ \ \ \ \ \ \ \ d(h_0)\end{bmatrix}
\end{bmatrix},
\]
\[
d(w)=
\begin{bmatrix}
\Id\otimes d(h_0)^{\wedge}&0&0\\
0&d(h_1)^{\wedge}&0\\
0&0&\Id\otimes d(h_1)^{\wedge}
\end{bmatrix},
\]
\[
d(r)+\beta'v-(-1)^n w\beta=
\begin{bmatrix}
0&0&0\\
\Id\otimes s_1'^{\wedge}&0&-b^{\wedge}\otimes{[}0,s_1'^{\wedge}{]}\\
0&\Id\otimes s_2'^{\wedge}&-\Id\otimes {[}t'^{\wedge},(s_2'\circ f)^{\wedge}{]}
\end{bmatrix}
\]
where 
\[
s_1'=d(s_1)+(-1)^{n+1}f'h_0+(-1)^nh_1f,
\]
\[
s_2'=d(s_2)+(-1)^{n+1}j'h_1+(-1)^nh_2j,
\]
\[
t'=d(t)+(-1)^nj's_1+h'h_0+(-1)^ns_2f+(-1)^{n+1}h_2h.
\]
So the functor $\tilde{\mu}:\Fun_{\infty}(\B,\A)\rightarrow \rep_{dg}(\B',\A)$ is indeed a dg functor.

By Lemma \ref{Comrepresentable}, the functor $\tilde{\mu}$ is quasi-dense. 

To show the quasi-full faithfulness, we may assume that the dg category $\A$ contains a zero object.
We adopt the notations used in the discussions before Proposition \ref{termwiseequivalence}. 
In particular, the object $U\in \Fun_{\infty}(\B,\A)$ corresponds to the following diagram
\[
\begin{tikzcd}
A_0\ar[r,"f"]\ar[rr,bend right=8ex,"0"swap]&A_1\ar[r,"0"]&0.
\end{tikzcd}
\]
And we have graded-split short exact sequences in $\Fun_{\infty}(\B,\A)$:
\[
\begin{tikzcd}
0\ar[r]&A_1^1\ar[r]&U\ar[r]&A_0^0\ar[r]&0,
\end{tikzcd}
\]
\[
\begin{tikzcd}
0\ar[r]&A_2^2\ar[r] &F\ar[r]&U\ar[r]&0.
\end{tikzcd}
\]
Thus it suffices to show that $\tilde{\mu}$ induces quasi-isomorphisms between objects of the form $A^i$ where $A\in \A$ and $i=0,1,2$.

This can be checked using the fact that $\tilde{\mu}(F)$ is a cofibrant resolution of $M$.
\end{proof}

Denote by $\tilde{\mathrm{Sq}}$ the dg quotient of $k\Sq$ by its full dg subcategory consisting of the object 10, i.e.~the dg path $k$-category of the quiver
\begin{equation}
\begin{tikzcd}
00\ar[rd,phantom,"="]\ar[r,"f"]\ar[d,"g"swap]&01\ar[d,"j"]\\
10\ar[loop left,"h"]\ar[r,"l"swap]&11
\end{tikzcd}
\end{equation}
where ${|}f{|}=0={|}j{|}={|}g{|}={|}l{|}$, ${|}h{|}=-1$, $d(h)=\Id_{10}$, and $jf=lg$.

We identify objects in $\D(\A\otimes\tilde{\Sq}^{op})$ with diagrams
\[
\begin{tikzcd}
X\ar[rd,phantom,"="]\ar[r,"f"]\ar[d,"g"swap]&Y\ar[d,"j"]\\
N\ar[loop left,"h"]\ar[r,"l"swap]&Z
\end{tikzcd}
\]
in $\C_{dg}(\A)$ where $f,g,j,l$ are closed morphisms of degree 0 such that $jf=lg$ and $h$ is a morphism of degree $-1$ such that $d(h)=\Id_{N}$.

Let $F:\tilde{\mathrm{Com}}\rightarrow \tilde{\mathrm{Sq}}$ be the dg functor such that $F(0)=00$, $F(1)=01$, $F(2)=11$ and $F(a)=f$, $F(b)=j$ and $F(c)=lhg$.
Let $G:k\mathrm{Sq}\rightarrow \tilde{\mathrm{Sq}}$ be the obvious dg quotient functor.
We have the following cospan of dg functors
\[
\begin{tikzcd}
&\tilde{\mathrm{Com}}\ar[d,"F"]\\
k\mathrm{Sq}\ar[r,"G"swap]&\tilde{\mathrm{Sq}}
\end{tikzcd}\;.
\]
By \cite[Proposition 1.6.3, Proposition 4.6]{Drinfeld04}, the dg functor $\Id\otimes G^{op}:\A\otimes k\Sq^{op}\rightarrow \A\otimes\tilde{\Sq}^{op}$ is a dg quotient of $\A\otimes k\Sq^{op}$ modulo $\A\otimes \{01\}$ and hence a localization functor, i.e.~the restriction functor along $\Id\otimes G^{op}$ is fully faithful.

Let $\mathcal S$ be the right orthogonal of the objects $(A, 01)^{\wedge}$ in $\D(\Sq)$, where $A$ runs through objects in $\A$. 
More precisely, $\mathcal S$ is the full triangulated subcategory of $\D(\mathrm{Sq})$ consisting of squares 
\[
\begin{tikzcd}
X\ar[r]\ar[d]&Y\ar[d]\\
N\ar[r]&Z
\end{tikzcd}
\] 
where $N$ is an acyclic dg $\A$-module.
\begin{lemma}\label{equivalences}
$F$ is a quasi-equivalence and $G$ induces an equivalence $\mathcal S\xrightarrow{\sim}\D(\A\otimes\tilde{\Sq}^{op})$. 
\end{lemma}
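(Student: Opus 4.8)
The plan is to establish the two assertions separately. That $F$ is a quasi-equivalence is a direct computation with the (tiny) morphism complexes of $\tilde{\Com}$ and $\tilde{\Sq}$; the equivalence $\mathcal S\xrightarrow{\sim}\D(\A\otimes\tilde{\Sq}^{op})$ is a formal consequence of the description of $\tilde{\Sq}$ as a dg quotient, together with the fully faithfulness of restriction along $\Id\otimes G^{op}$ already recorded just above the statement.

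\emph{Quasi-equivalence of $F$.} First I would write out all morphism complexes: they are finite-dimensional in each degree, concentrated in degrees $\leq 0$, and $F$ is a bijection on objects after the relabelling $(0,1,2)\mapsto(00,01,11)$. For every pair of objects other than $(0,2)$ the map $F_{X,Y}$ is plainly an isomorphism of complexes, for instance $\tilde{\Com}(0,1)=k\cdot a\to k\cdot f=\tilde{\Sq}(00,01)$, and similarly for the remaining pairs and for the endomorphism complexes. So it remains to check that $F_{0,2}\colon\tilde{\Com}(0,2)\to\tilde{\Sq}(00,11)$ is a quasi-isomorphism. Here $\tilde{\Com}(0,2)$ is the complex $k\cdot c\xrightarrow{d}k\cdot ba$ placed in degrees $-1,0$, with $d$ bijective because $d(c)=ba$, hence acyclic. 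On the other side, $\tilde{\Sq}(00,11)$ has as $k$-basis the paths $lh^{n}g$ in degree $-n$, $n\geq 0$ (with $lh^{0}g=lg=jf$), and composing on the left with $l$ and on the right with $g$ — both closed of degree $0$, and compatibly with the relation $lg=jf$ — identifies it with the endomorphism complex $\tilde{\Sq}(10,10)=\bigoplus_{n\geq 0}k\cdot h^{n}$. The latter is acyclic because $10$ is a contractible object of $\tilde{\Sq}$ ($d(h)=\Id_{10}$, so left composition with $h$ is a contracting homotopy). A chain map between acyclic complexes is a quasi-isomorphism, so $F$ is quasi-fully faithful; and $H^{0}(F)$ is an equivalence of categories, its image containing $00,01,11$ and the remaining object $10$ being a zero object of $H^{0}(\tilde{\Sq})$.

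\emph{The equivalence induced by $G$.} As recalled before the statement, $\Id\otimes G^{op}\colon\A\otimes k\Sq^{op}\to\A\otimes\tilde{\Sq}^{op}$ is a dg quotient of $\A\otimes k\Sq^{op}$ by the full dg subcategory $\A\otimes\{10\}$, so the restriction functor $\Res\colon\D(\A\otimes\tilde{\Sq}^{op})\to\D(\A\otimes k\Sq^{op})$ is fully faithful. By the standard description of the derived category of a dg quotient, the essential image of $\Res$ consists precisely of those modules $M$ whose restriction to $\A\otimes\{10\}$ is acyclic, i.e.\ of the squares whose lower-left term $M(-,10)$ is an acyclic dg $\A$-module; this is exactly $\mathcal S$. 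Hence $\Res$ is an equivalence onto $\mathcal S$, and its quasi-inverse $\mathcal S\to\D(\A\otimes\tilde{\Sq}^{op})$ is derived induction along $\Id\otimes G^{op}$, which is the functor induced by $G$. Combined with the first part, restriction along $F$ then identifies $\mathcal H_{3t}(\A)\simeq\rep(\tilde{\Com},\A)$ with the full subcategory of $\rep(\Sq,\A)$ of squares having acyclic lower-left term.

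The main obstacle I expect to be purely bookkeeping rather than conceptual: getting the signs in $d(h^{n})$ right, so that the differential on $\tilde{\Sq}(00,11)$ is computed correctly and its identification with $\tilde{\Sq}(10,10)$ is a genuine chain isomorphism compatible with $jf=lg$; and, on the quotient side, commuting $-\otimes\A$ past the dg quotient correctly so that the general statement about the derived category of a dg quotient is applied to the subcategory $\A\otimes\{10\}$ — which is just the content of the Drinfeld propositions quoted in the excerpt. Beyond that, nothing further is needed.
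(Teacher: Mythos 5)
Your proof is correct and takes essentially the same route as the paper's: the only non-obvious Hom-complex comparison is $F_{0,2}\colon\tilde{\Com}(0,2)\to\tilde{\Sq}(00,11)$, and you establish, exactly as the paper does, that both sides are contractible (the paper computes the differential on the basis $\{lh^ng\}_{n\geq 0}$ directly, while you transport it along the isomorphism $x\mapsto lxg$ to the contractible endomorphism complex of $10$ --- the same computation), and your treatment of the second assertion via Drinfeld's description of the derived category of a dg quotient as the right orthogonal of the killed objects, i.e.\ the squares with acyclic entry at $10$, is precisely what the paper's proof leaves implicit after quoting the fully faithfulness of restriction along $\Id\otimes G^{op}$. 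The one caveat, shared with the paper, is that $H^0(F)$ is not literally essentially surjective under the paper's stated definition of quasi-equivalence (the contractible object $10$ is a zero object of $H^0(\tilde{\Sq})$ but is not isomorphic in $H^0$ to any $F(i)$, since none of $00,01,11$ has vanishing identity), so what one really obtains is a quasi-fully faithful dg functor inducing an equivalence of derived categories --- which is all that is used afterwards; you at least acknowledge this point, whereas the paper does not.
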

\begin{proof}
It is enough to show that $F$ induces a quasi-isomorphism of complexes
\[
\Hom_{\tilde{\mathrm{Com}}}(0,2)\rightarrow \Hom_{\tilde{\mathrm{Sq}}}(00,11).
\]

On the left hand side, the complex is given by
\[
\cdots\rightarrow 0\rightarrow k\cdot c\xrightarrow{\sim} k\cdot(ba)\rightarrow 0\rightarrow \cdots
\]
which is contractible. On the right hand side, the complex is given by
\[
\cdots\rightarrow k\cdot(lh^3g)\xrightarrow{\sim} k\cdot(lh^2g)\xrightarrow{0} k\cdot(lhg)\xrightarrow{\sim} k\cdot(lg)\rightarrow 0\rightarrow\cdots
\]
which is also contractible. Thus $F$ is a quasi-equivalence.
\end{proof}
As a consequence of Lemma \ref{equivalences}, we have a fully faithful functor
\[
\begin{tikzcd}
\D(\A\otimes \tilde{\mathrm{Com}}^{op})\ar[r] &\D(\mathrm{Sq})
\end{tikzcd}
\]
which sends an object
\begin{equation}
\label{sequence}\tag{$\bigstar\bigstar$}
\begin{tikzcd}
&X\ar[r,"f"]\ar[rr,bend right = 6ex,"h"swap]&Y\ar[r,"j"]&Z
\end{tikzcd}
\end{equation}
to
\begin{equation}\label{square}\tag{$\bigstar\bigstar\bigstar$}
\begin{tikzcd}
&X\ar[r,"f"]\ar[d,"i"swap]&Y\ar[d,"j"]\\
&IX\ar[r,"{[}h{,}d(h){]}"swap]&Z
\end{tikzcd}\;.
\end{equation}
The following is a direct consequence of Lemma \ref{equivalences} and Lemma \ref{Comrepresentable}.
\begin{corollary}\label{rep}
Assume the dg category $\A$ contains a contractible object, e.g.~$Z^0(\A)$ is an additive category.
Each object
\[
\begin{tikzcd}
X\ar[r,""]\ar[d]&Y\ar[d,""]\\
N\ar[r]&Z
\end{tikzcd}
\]
in $\rep(\mathrm{Sq})$, where $N$ is acyclic, is isomorphic in $\D(\mathrm{Sq})$ to an object of the form
\[
\begin{tikzcd}
A^{\wedge}\ar[r,"f^{\wedge}"]\ar[d]&B^{\wedge}\ar[d,"j^{\wedge}"]\\
IA^{\wedge}\ar[r] &C^{\wedge}
\end{tikzcd}
\]
\end{corollary}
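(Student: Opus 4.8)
The plan is to deduce the statement formally from Lemma~\ref{equivalences} and Lemma~\ref{Comrepresentable}, using the explicit fully faithful functor $\Phi:\D(\A\otimes\tilde{\Com}^{op})\to\D(\Sq)$ introduced just before the corollary, which sends a sequence $X\xrightarrow{f}Y\xrightarrow{j}Z$ (with $h$ of degree $-1$ and $d(h)=jf$) to the commutative square with corners $X,Y,IX,Z$ and bottom map $[h,d(h)]$. Recall that by construction $\Phi$ factors as an equivalence $\D(\A\otimes\tilde{\Com}^{op})\xrightarrow{\sim}\mathcal S$ — the composite of the equivalences induced by $F$ and by $G$ in Lemma~\ref{equivalences} — followed by the inclusion $\mathcal S\hookrightarrow\D(\Sq)$.

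First I would observe that the given object $X\in\rep(\Sq)$ with $N=X_{10}$ acyclic lies in $\mathcal S$, by the very description of $\mathcal S$. Since $\Phi$ is essentially surjective onto $\mathcal S$, there is then an object $X'$ of $\D(\A\otimes\tilde{\Com}^{op})$ — which we may present as a sequence $X_{00}\xrightarrow{f}X_{01}\xrightarrow{j}X''$ together with a degree $-1$ homotopy $h$ with $d(h)=jf$ — and an isomorphism $\Phi(X')\iso X$ in $\D(\Sq)$.

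The one step requiring a moment's care is to check that $X'$ in fact lies in $\rep(\tilde{\Com},\A)$, i.e. that the three modules $X_{00},X_{01},X''$ are quasi-representable. This holds because $\Phi(X')$ has as its four corners $X_{00},X_{01},IX_{00},X''$, and evaluating the isomorphism $\Phi(X')\iso X$ at the objects of $\Sq$ shows that each of these corners is isomorphic in $\D(\A)$ to a corner of $X$, hence quasi-representable since $X\in\rep(\Sq)$. (Here the hypothesis that $\A$ contains a contractible object is used: it ensures that $0$, and hence the always-contractible module $IX_{00}$, is quasi-representable, so that $\Phi$ does restrict to an equivalence $\rep(\tilde{\Com},\A)\xrightarrow{\sim}\rep(\Sq)\cap\mathcal S$; without it $\Phi$ need not preserve quasi-representability in the lower-left corner.) Thus $X'\in\rep(\tilde{\Com},\A)$.

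Finally, Lemma~\ref{Comrepresentable} provides an isomorphism in $\D(\A\otimes\tilde{\Com}^{op})$ from $X'$ to a sequence $A^{\wedge}\xrightarrow{f^{\wedge}}B^{\wedge}\xrightarrow{j^{\wedge}}C^{\wedge}$ with $A,B,C\in\A$, $|f|=|j|=0$, $|h|=-1$ and $d(h)=jf$. Applying $\Phi$ to this isomorphism and composing with $\Phi(X')\iso X$ yields an isomorphism in $\D(\Sq)$ from $X$ to
\[
\begin{tikzcd}
A^{\wedge}\ar[r,"f^{\wedge}"]\ar[d]&B^{\wedge}\ar[d,"j^{\wedge}"]\\
IA^{\wedge}\ar[r]&C^{\wedge}
\end{tikzcd}
\]
which is exactly an object of the asserted form. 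The whole argument is formal once the two lemmas are granted; the only genuinely substantive point is the bookkeeping in the third paragraph, namely that $\Phi$ carries $\rep(\tilde{\Com},\A)$ onto $\rep(\Sq)\cap\mathcal S$.
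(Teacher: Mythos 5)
Your proof is correct and follows exactly the route the paper intends: the paper gives no written argument, asserting the corollary is a ``direct consequence of Lemma~\ref{equivalences} and Lemma~\ref{Comrepresentable}'', and your writeup is precisely the expansion of that deduction (pass into $\mathcal S$, invert the fully faithful functor, check quasi-representability of the three terms, apply Lemma~\ref{Comrepresentable}, push back). The only quibble is the placement of your parenthetical: the contractible-object hypothesis is not needed to show $X'\in\rep(\tilde{\Com},\A)$ (the three terms of $X'$ match the corners of $X$ at $00$, $01$, $11$, which are quasi-representable by assumption), but rather to guarantee that the resulting square, with the acyclic module $IA^{\wedge}$ in its lower-left corner, is itself an object of $\rep(\Sq)$ — and to keep the statement from being vacuous — so this does not affect the validity of the argument.
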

\begin{remark}
If $X$ and $Z$ are already representable dg modules, 
then the isomorphism can be chosen such that when restricted to both ends of the squares, the morphisms are identities in $\D(\A)$.
\end{remark}

Recall that the homotopy category $\mathcal H_{3t}(\A)$ of 3-term h-complexes over $\A$ is defined to be $H^0(\Fun_{\infty}(\B,\A))$ where $\B$ is the dg category defined by \ref{3term}. 
Then we have a fully faithful functor
\[
F: \mathcal H_{3t}(\A)\iso \rep(\tilde{\Com},\A)\rightarrow \rep(\Sq)
\]
sending a 3-term h-complex over $\A$
\begin{equation}\label{3t}
\begin{tikzcd}
A_0\ar[r,"f"]\ar[rr,bend right = 8ex,"h"swap]&A_1\ar[r,"j"]&A_2
\end{tikzcd}
\end{equation}
where $|f|=|j|=0$, $|h|=-1$ and $d(f)=0$, $d(j)=0$ and $d(h)=-jf$, to the following object in $\rep(\Sq)$
\begin{equation}\label{3tsquare}
\begin{tikzcd}
A_0^{\wedge}\ar[r,"f^{\wedge}"]\ar[d,"i"swap]                                              &A_1^{\wedge}\ar[d,"j^{\wedge}"]\\
IA_0^{\wedge}\;\;\ar[r,"{[}-h^{\wedge}{,}-d(h^{\wedge}){]}"swap]                  &\;\;A_2^{\wedge}\mathrlap{.}
\end{tikzcd}
\end{equation}

\begin{definition}\label{homotopyses}
A 3-term h-complex over $\A$ given by \ref{3t} is a {\em homotopy (resp.~left, resp.~right) short exact sequence} if the corresponding object \ref{3tsquare} in $\rep(\Sq)$ is homotopy bicartesian (resp.~cartesian, resp.~cocartesian).
\end{definition}
Let $\C_{dg}^{\leq 0}(k)$ be the full dg subcategory of $\C_{dg}(k)$ consisting of complexes concentrated in non-positive degrees.
Consider a sequence in $\C_{dg}^{\leq 0}(k)$
\[
\begin{tikzcd}
U\ar[r,"f"]\ar[rr,bend right = 8ex,"h"swap]&V\ar[r,"j"]&W.
\end{tikzcd}
\]
where $|f|=|j|=0$, $|h|=-1$ and $d(f)=0$, $d(j)=0$ and $d(h)=-jf$. 
It is a {\em homotopy left exact sequence} and we say that it is a {\em homotopy kernel} of $j:V\rightarrow W$, provided that the canonical morphism 
\[
U\xrightarrow{[f\;,h]^{\intercal}} \tau_{\leq 0}\Sigma^{-1}\Cone (j)
\] 
is a quasi-isomorphism of complexes, or equivalently, for each $n\leq 0$ and each pair of elements $(v,w)\in Z^{n}V\times W^{n-1}$ such that $d(w)=-j(v)$, there exists an element $u\in Z^n U$, unique up to a coboundary, such that there exists a pair of elements $(v',w')\in V^{n-1}\times W^{n-2}$ satisfying $v-f(u)=d(v')$, $w-h(u)=-d(w')-j(v')$. 
Then we have 
\begin{lemma} \label{lem:3termhcomplex}
A 3-term h-complex given by \ref{3t} is a homotopy left (resp.~right) exact sequence if and only if for each object $A$ in $\A$, the corresponding 3-term h-complex over $\C_{dg}^{\leq 0}(k)$
\[
\begin{tikzcd}
\tau_{\leq 0}(A,A_0)\ar[r,"\tau_{\leq 0}{(}A{,}f{)}"]\ar[rr,bend right = 8ex,"\tau_{\leq 0}{(}A{,}h{)}"swap]&\tau_{\leq 0}(A,A_1)\ar[r,"\tau_{\leq 0}{(}A{,}j{)}"]&\tau_{\leq 0}(A,A_2),
\end{tikzcd}
\]
resp.
\[
\begin{tikzcd}
\tau_{\leq 0}(A_2,A)\ar[r,"\tau_{\leq 0}{(}j{,}A{)}"]\ar[rr,bend right = 8ex,"\tau_{\leq 0}{(}h{,}A{)}"swap]&\tau_{\leq 0}(A_1,A)\ar[r,"\tau_{\leq 0}{(}f{,}A{)}"]&\tau_{\leq 0}(A_0,A)
\end{tikzcd}
\]
is a homotopy left exact sequence.
It is a homotopy short exact sequence if and only if both of the above sequences are homotopy left exact.
\end{lemma}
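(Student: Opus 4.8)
The plan is to unwind the defining condition of Definition~\ref{homotopyses} --- that the square \ref{3tsquare} attached to \ref{3t} be homotopy cartesian (resp.\ cocartesian) with respect to $\A$ --- into the explicit ``homotopy kernel'' condition over $\C_{dg}^{\leq 0}(k)$ recalled just before the statement, one object $A$ at a time. First I would use that, for $A\in\A$, the dg module $A^{\wedge}$ is cofibrant, so that by the derived Yoneda lemma $\RHom_{\A}(A^{\wedge},-)$ is just evaluation at $A$; in particular it is exact and commutes with $\Sigma$, with $\Cone$, with $I(-)=\Cone(\Id_{(-)})$, and with the functor $F\colon\mathcal H_{3t}(\A)\to\rep(\Sq,\A)$ which sends \ref{3t} to \ref{3tsquare}. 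Applying $\RHom_{\A}(A^{\wedge},-)$ to \ref{3tsquare} therefore yields precisely the square that $F$ attaches to the $3$-term $h$-complex
\[
(A,A_0)\xrightarrow{\ (A,f)\ }(A,A_1)\xrightarrow{\ (A,j)\ }(A,A_2)
\]
over $\C_{dg}(k)$, with homotopy $(A,h)$ (I write $(A,M)=\A(A,M)$ as in the statement). Hence, by Definition~\ref{maindef}, \ref{3tsquare} is homotopy cartesian with respect to $\A$ if and only if for every $A$ the canonical map $(A,A_0)\to\Sigma^{-1}\Cone(\varphi_A)$, where $\varphi_A=(-(A,j),[-(A,h),-d(A,h)])\colon(A,A_1)\oplus I(A,A_0)\to(A,A_2)$, becomes a quasi-isomorphism after applying $\tau_{\leq 0}$.

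The core step is then to identify this last map, up to quasi-isomorphisms, with the canonical map $\tau_{\leq 0}(A,A_0)\to\tau_{\leq 0}\Sigma^{-1}\Cone(\tau_{\leq 0}(A,j))$ occurring in the definition of a homotopy kernel. There are three reductions, each routine in isolation: (i) since $I(A,A_0)=\Cone(\Id)$ is contractible, the inclusion of the subcomplex $\Cone(-(A,j)\colon(A,A_1)\to(A,A_2))$ into $\Cone(\varphi_A)$ is a quasi-isomorphism, its cokernel $\Sigma I(A,A_0)$ being contractible, and $\Cone(-(A,j))\cong\Cone((A,j))$; (ii) the $I(A,A_0)$-component $i\colon(A,A_0)\to I(A,A_0)$ of the canonical map is null-homotopic, and homotoping it away turns the canonical map into the standard map $(A,A_0)\to\Sigma^{-1}\Cone((A,j))$ determined by the null-homotopy $\pm(A,h)$ of the composite $(A,j)(A,f)=\mp d(A,h)$ --- this is exactly where the identity $d(h)=-jf$ built into a $3$-term $h$-complex gets used; (iii) $\tau_{\leq 0}\Sigma^{-1}$ of a cone depends only on the cone in cohomological degrees $\leq-1$, where the truncation maps $\tau_{\leq 0}(A,A_i)\to(A,A_i)$ are already isomorphisms on cohomology, whence $\tau_{\leq 0}\Sigma^{-1}\Cone((A,j))\cong\tau_{\leq 0}\Sigma^{-1}\Cone(\tau_{\leq 0}(A,j))$ compatibly with the maps. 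Combining (i)--(iii), ``\ref{3tsquare} homotopy cartesian with respect to $\A$'' becomes: for each $A$ the map $\tau_{\leq 0}(A,A_0)\xrightarrow{[\tau_{\leq 0}(A,f),\,\tau_{\leq 0}(A,h)]^{\intercal}}\tau_{\leq 0}\Sigma^{-1}\Cone(\tau_{\leq 0}(A,j))$ is a quasi-isomorphism, which by the definition recalled before the lemma (and its elementwise reformulation in terms of tuples $(v,w,u,v',w')$) says exactly that $\tau_{\leq 0}(A,A_0)\to\tau_{\leq 0}(A,A_1)\to\tau_{\leq 0}(A,A_2)$ is homotopy left exact. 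This settles the ``left'' case.

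For the ``right'' case I would apply the duality functor $\RHom(-,\A)\colon\rep(\Sq,\A)\to\rep(\Sq^{op},\A^{op})^{op}$, which interchanges homotopy cartesian and homotopy cocartesian squares and replaces evaluation $\A(A,-)$ by $\A(-,A)$; running the same argument over $\A^{op}$ then gives the statement for the contravariant $3$-term sequences. The final assertion is immediate: by Definition~\ref{homotopyses} a homotopy short exact sequence is one whose associated square \ref{3tsquare} is homotopy bicartesian, i.e.\ simultaneously homotopy cartesian and homotopy cocartesian, so it holds if and only if both displayed sequences are homotopy left exact for every $A$. (If convenient one may reduce to $\A$ strictly connective at the outset using $\mathcal H_{3t}(\A)\cong\mathcal H_{3t}(\tau_{\leq 0}\A)$.) The main obstacle is step (ii) of the core paragraph: checking that after homotoping away the contractible summand the surviving map really is the canonical map to the homotopy kernel, with the correct homotopy and the correct sign --- this is conceptually clear but demands careful bookkeeping of the sign conventions for $\Cone$, $\Sigma$ and $I$; everything else is routine.
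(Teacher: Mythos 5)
Your proof is correct and is exactly the argument the paper has in mind: the paper states this lemma without proof, treating it as a direct unwinding of Definitions~\ref{maindef} and \ref{homotopyses} via the dg Yoneda lemma, which is precisely what you carry out (strip the contractible summand $I(A,A_0)$ off the cone, homotope away the $i$-component to recover the homotopy-kernel map $[(A,f),(A,h)]^{\intercal}$, and commute $\tau_{\leq 0}$ past $\Sigma^{-1}\Cone$). The only quibble is immaterial: in your step (i) the cokernel of the inclusion is $I(A,A_0)$ rather than $\Sigma I(A,A_0)$, but both are contractible so nothing changes.
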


Recall that for a dg category $\A$, we denote by $\pretr(\A)$ its pretriangulated hull. 
 We observe that the dg category $\Fun_{\infty}(\B,\A)$ can be seen a non-full dg subcategory of $\pretr(\A)$ via the dg functor $\mathrm{Tot}_{\A}:\Fun_{\infty}(\B,\A)\rightarrow \pretr(\A)$. On objects $\mathrm{Tot}_{\A}$ sends an $A_{\infty}$-functor $F:\mathcal B\rightarrow \A$ which corresponds to the diagram \ref{F}, to the formal sum $(\Sigma^2A_0\oplus \Sigma A_1\oplus A_0, q)$ where $q$ is the matrix 
 \[
 \begin{bmatrix}
 0&0&0\\
 -f&0&0\\
 h&-j&0
 \end{bmatrix}.
 \]
 Let $d(q)$ be the matrix with $d(q)_{ij}=d(q_{ij})$. 
 Clearly $d(q)+q^2=0$. 
 The dg functor $\mathrm{Tot}_{\A}$ sends a morphism which corresponds to the diagram \ref{mordegreen}, to the matrix 
 \[
 \begin{bmatrix}
 h_0&0&0\\
 (-1)^{n-1}s_1&(-1)^n h_1&0\\
 t&s_2&h_2
 \end{bmatrix}.
 \] 
Since the dg functor $\mathrm{Tot}_{\A}$ is not full,  
the induced functor $H^0(\mathrm{Tot}_{\A}):\mathcal H_{3t}(\A)\rightarrow \tr(\A)$ is neither full nor faithful in general.

For two objects $F$ and $F'$ in $\Fun_{\infty}(\B,\A)$, we use $(F,F')$ to indicate the morphism complex in $\Fun_{\infty}(\B,\A)$ and use $\A(F,F')$ to indicate the morphism complex in $\pretr(\A)$ of the objects $\mathrm{Tot}_{\A}(F)$ and $\mathrm{Tot}_{\A}(F')$. 
In the following, we will describe the functor $\mathrm{Tot}_{\A}:\Fun_{\infty}(\B,\A)\rightarrow \pretr(\A)$ in more detail.
We assume the dg category $\A$ contains a zero object.
Note that $\mathrm{Tot}_{\A}(A_0^0)=\Sigma^2 A_0^{\wedge}$ and $\mathrm{Tot}_{\A}(A_1^1)=\Sigma A_1^{\wedge}$.

Let $V'$ and $W$ be the following objects in $\Fun_{\infty}(\B,\A)$
\[
\begin{tikzcd}
A_1'\ar[r,"j'"swap]\ar[rr,bend left=8ex,"0"]&A_2'\ar[r,"0"swap]&0,
\end{tikzcd}
\]
\[
\begin{tikzcd}
0\ar[r,"0"swap]\ar[rr,bend left=8ex,"0"]&A_0\ar[r,"f"swap]&A_1.
\end{tikzcd}
\]

Let $M$ be the complex 
\[
(\Sigma^{2}\A(A_2',A_0)\oplus \Sigma \A(A_1',A_0)\oplus \Sigma \A(A_2',A_1),d)
\]
 where $d(h_1,s_1,s_2)=(d(h_1), d(s_1)+(-1)^{n}(h_1\circ j), d(s_2)+(-1)^{n+1}f\circ h_1)$ where $h_1:A_2'\rightarrow A_0$ is of degree $n+2$ and $s_1$ and $s_2$ are of degree $n+1$.
  
We have the following commutative diagrams in $\C(k)$, which relates the complex $(F,F')$ with $\A(F',F)$
\[
\begin{tikzcd}
0\ar[r]&\A(A_1', \Sigma^{-2}A_1)\ar[r,"\alpha"]& \Sigma^2(V', W)\ar[r,"\beta"] & M\ar[r]&0
\end{tikzcd}
\]
\[
\begin{tikzcd}
0\ar[r]& (F',A_2^{2}) \ar[r,tail]\ar[d, equal]   & (F',F)\ar[r,two heads] \ar[d, tail]                    & (F', U)\ar[r, two heads]\ar[d,tail]                              &0\\
0\ar[r]& \A(F', A_2^{2}) \ar[r,tail]                  & \A(F',F)\ar[r,two heads]\ar[d,two heads]       & \A(F', U)\ar[r]\ar[d, two heads,"\gamma"]               &0\\
         &                                                        &M\ar[r,equal]                                                   &M                                                                            &
\end{tikzcd}
\]
where $\alpha(t)=(0,0,t)$, $\beta(h_i,s_j,t)=(h_1,s_1,s_2)$ and the action of $\gamma$ on the degree $n$ component is given as follows
\[
\gamma(\begin{bmatrix}h_0'&s_1&h_1\\(-1)^{n+1}s_1'&(-1)^n h_1'&s_2\\t&s_2&h_2'\end{bmatrix})=\begin{bmatrix}h_1&s_1&(-1)^{n+1}s_2\end{bmatrix}.
\]

Let $\mathrm{Filt}_3(\pretr(\A))$ be the dg category with objects the twisted objects $(Y=\oplus_{i\in I}\Sigma^{n_i}Y_i^{\wedge},d)$ with a 3-step filtration $I_0\subset I_1\subset I_2=I$ such that $\oplus_{i\in I_0}\Sigma^{n_i}Y_i^{\wedge}\subseteq \oplus_{i\in I_1}\Sigma^{n_i}Y_i^{\wedge}\subseteq \oplus_{i\in I_2}\Sigma^{n_i}Y_i^{\wedge}$ are dg submodules.
Then we have a localization dg functor $\mathrm{Filt}_3(\pretr\A)\rightarrow \pretr(\A)$
 which sends an object $(Y,I_0\subseteq I_1\subseteq I_2)$ to $Y$.
 It admits a fully faithful left adjoint $\pretr(\A)\rightarrow \mathrm{Filt}_3(\pretr(\A))$ which sends a dg module $Y=(\oplus_{i\in I}\Sigma^{n_i}Y_i^{\wedge},d)$ to $(Y,\emptyset\subseteq \emptyset\subseteq I)$.
 Then the totalization dg functor $\mathrm{Tot}_{\A}:\Fun_{\infty}(\B,\A)\rightarrow \pretr(\A)$ is the composition of a fully faithful funtor $i:\Fun_{\infty}(\B,\A)\rightarrow \mathrm{Filt}_3(\A)$ and the localization dg functor $\mathrm{Filt}_3(\pretr(\A))\rightarrow \pretr(\A)$.
 Indeed, for each object $F\in \Fun_{\infty}(\B,\A)$, the dg module $\mathrm{Tot}_{\A}(F)$ has a natural filtration given by 
 \[
 i(F)=(\mathrm{Tot}_3(\A), A_2^{\wedge}\subseteq \Cone(j^{\wedge}:A_1^{\wedge}\rightarrow A_2^{\wedge})\subseteq \mathrm{Tot}_{\A}(F)).
 \]

\newpage
\section{Basic diagram lemmas}\label{sec:diagramlemmas}
\subsection{Homotopy kernels and cokernels}
Recall the morphism category $\Mor$ defined in subsection \ref{Preliminaries}
\[
\begin{tikzcd}
0\ar[r]&1
\end{tikzcd}
\]
and the dg category $\B$ defined in \ref{3term}
\[
\begin{tikzcd}
0\ar[r,"f"] &1\ar[r,"j"]&2.
\end{tikzcd}
\]
where $|f|=|j|=0$, and $d(f)=d(j)=0$, $jf=0$.
 Let $i:k\Mor\rightarrow \B$ be the inclusion dg functor which sends object $0$ to $0$ and object $1$ to $1$.
 Let $i':k\Mor\rightarrow \B$ be the inclusion dg functor which sends object $0$ to $1$ and object $1$ to $2$.
 Let $\Res:\Fun_{\infty}(\B,\A)\rightarrow\Fun_{\infty}(k\Mor,\A)\iso\Mor(\A)$ (resp.~$\Res'$) be the restriction functor along $i$ (resp.~$i'$).  
\begin{definition}\label{homotopykernel} 
By {\em homotopy kernel} of an object $j:B\rightarrow C$ in $\Mor(\A)$ (cf.~Example \ref{Mor(A)}), we mean a homotopy left exact squence $X$ (cf.~Definition \ref{homotopyses}) 
\[
\begin{tikzcd}
A\ar[rr,bend right=8ex,"h"swap]\ar[r,"f"]&B\ar[r,"j"]&C.
\end{tikzcd}
\]
Sometimes we say $f:A\rightarrow B$ or $A$ is the homotopy kernel of $j$. 
 
Dually we define the {\em homotopy cokernel} of an object in $\Mor(\A)$. 
\end{definition}
\begin{remark}
 Assume the dg category $\A$ contains a zero object.
An object $f:A\rightarrow B$ in $\Mor(\A)$ is a {\em homotopy monomorphism} if its homotopy kernel is zero, or equivalently, for each $A'\in\A$, the morphism $\Hom_{\A}(A',A)\rightarrow \Hom_{\A}(A',B)$ induces an isomorphism in each negative degree cohomology and an injection in degree zero cohomology. Dually we have the notion of {\em homotopy epimorphism}.

 A homotopy kernel need not be the homotopy kernel of its homotopy cokernel (if they exist).
  Also, a homotopy kernel need not be a homotopy monomorphism.

An object $f:A\rightarrow B$ in $\Mor(\A)$ is a homotopy monomorphism if and only if the object in $\rep(\Sq)$
\[
\begin{tikzcd}
A^{\wedge}\ar[r,equal]\ar[d,equal]&A^{\wedge}\ar[d,"f^{\wedge}"]\\
A^{\wedge}\ar[r,"f^{\wedge}"swap]&B^{\wedge}
\end{tikzcd}
\]
is a homotopy cartesian square.
\end{remark}
 In the above definition, we have that $\Res'(X)=j$. 
 Compare Definition~\ref{homotopykernel} with Definition \ref{homotopypullbackdef}.
The two definitions are compatible with each other, due to the following observation:
\begin{lemma}\label{Mor(A)and3term}
Let $j:B\rightarrow C$ be an object in $\Mor(\A)$. 
Consider a morphism $\theta$ from $j':B'\rightarrow C'$ to $j$ in $H^0(\Mor(\A))$ as follows
\[
\begin{tikzcd}
B'\ar[r,"j'"]\ar[d,"h_1"swap]\ar[rd,"s_2"]&C'\ar[d,"h_2"]\\
B\ar[r,"j"swap]&C\mathrlap{.}
\end{tikzcd}
\]
Suppose we are given an object
\[
\begin{tikzcd}
A'\ar[r,"f'"]\ar[rr,bend right=8ex,"h'"swap]&B'\ar[r,"j'"]&C'
\end{tikzcd}
\]
in $\mathcal H_{3t}(\A)$.
Then we have the following morphism $\mu$ in $\mathcal H_{3t}(\A)$ which restricts to the morphism $\theta$
\[
\begin{tikzcd}
A'\ar[r,"f'"]\ar[rd,"0"{red,swap},red]\ar[rrd,"0"blue,blue]\ar[d,equal]\ar[rr,"h'",bend left=8ex]&B'\ar[r,"j'"]\ar[d,"h_1"swap]\ar[rd,"s_2"red,red]&C'\ar[d,"h_2"]\\
A'\ar[r,"h_1f'"swap]\ar[rr,"h_2h'-s_2f'"swap,bend right=8ex]&B\ar[r,"j"swap]&C\mathrlap{.}
\end{tikzcd}
\]
The morphism $\mu$ is an isomorphism if and only if the morphism $\theta$ is an isomorphism, cf.~Proposition \ref{termwiseequivalence}.
\end{lemma}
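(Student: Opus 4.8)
The plan is to exhibit $\mu$ directly as the homotopy equivalence class of the $6$-tuple $(h_i;s_j;t)$ of Definition~\ref{def:3termhomotopy} with $h_0=\Id_{A'}$, with $h_1$ and $h_2$ the two vertical components of $\theta$, with $s_2$ the homotopy witnessing $jh_1\sim h_2j'$, and with $s_1=0$, $t=0$. First I would check that the bottom row of the displayed diagram is a genuine $3$-term $h$-complex: the map $h_1f'$ is closed of degree $0$ since $d(h_1)=0$ and $d(f')=0$, and using $d(h')=-j'f'$ together with $d(s_2)=jh_1-h_2j'$ one computes
\[
d(h_2h'-s_2f')=h_2\,d(h')-d(s_2)\,f'=-h_2j'f'-(jh_1-h_2j')f'=-j(h_1f'),
\]
which is exactly the identity $d(h)=-jf$ required for the target object.

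Next I would verify the relations of Definition~\ref{def:3termhomotopy} for the chosen $6$-tuple. Closedness of $h_0,h_1,h_2$ is immediate; the relation for $s_1$ reads $d(0)=(h_1f')\circ\Id_{A'}-h_1\circ f'$, which holds; the relation for $s_2$ reads $d(s_2)=j\circ h_1-h_2\circ j'$, which is precisely the homotopy-commutativity of the square $\theta$; and the relation for $t$ reads $d(0)=h_2\circ h'-(h_2h'-s_2f')\circ\Id_{A'}-s_2\circ f'-j\circ 0$, whose right-hand side collapses to zero. Hence $\mu$ is a well-defined morphism in $\mathcal{H}_{3t}(\A)$, and by construction $\Res'(\mu)$ — which records the pair $(h_1,h_2)$ together with the homotopy $s_2$ — is exactly $\theta$.

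For the final assertion I would invoke Proposition~\ref{termwiseequivalence}: the components of $\mu$ are $\Id_{A'},h_1,h_2$, and $\Id_{A'}$ is always a homotopy equivalence, so $\mu$ is an isomorphism in $\mathcal{H}_{3t}(\A)$ if and only if $h_1$ and $h_2$ are homotopy equivalences in $\A$. The same criterion holds for $\theta\in H^0(\Mor(\A))$, where $\Mor(\A)\iso\Fun_\infty(k\Mor,\A)$, by the two-term analogue of Proposition~\ref{termwiseequivalence} (proved in exactly the same way); the easy direction ``$\mu$ iso $\Rightarrow$ $\theta$ iso'' already follows from the remark preceding Proposition~\ref{termwiseequivalence} that an invertible $6$-tuple has all its $h_i$ homotopy equivalences. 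Thus $\mu$ is an isomorphism precisely when $\theta$ is. The whole argument is essentially bookkeeping; the only point requiring care is keeping the sign and composition conventions of Definition~\ref{def:3termhomotopy} aligned with those encoded in $\theta$ (equivalently, with those of $\Mor(\A)$ via the identification with $\Fun_\infty(k\Mor,\A)$), and I do not anticipate a genuine obstacle.
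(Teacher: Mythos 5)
Your proposal is correct and is exactly the verification the paper intends (the lemma is stated without proof beyond the citation of Proposition~\ref{termwiseequivalence}): one exhibits the $6$-tuple $(\Id_{A'},h_1,h_2,0,s_2,0)$, checks the relations of Definition~\ref{def:3termhomotopy} — your computation $d(h_2h'-s_2f')=-j(h_1f')$ from $d(h')=-j'f'$ and $d(s_2)=jh_1-h_2j'$ is the key identity, and the $s_1$- and $t$-relations collapse as you say — and then applies Proposition~\ref{termwiseequivalence} together with its two-term analogue for $\Mor(\A)\iso\Fun_\infty(k\Mor,\A)$ to get the equivalence of invertibility. You are also right that the only delicate point is fixing the sign convention for $d(s_2)$ coming from Definition~\ref{def:3termhomotopy} (via $\Res'$) rather than the matrix convention of Example~\ref{Mor(A)}, and your choice is the one that makes the displayed formula $h_2h'-s_2f'$ correct.
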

Recall that $\mathcal H_{3t}$ is defined as $H^0(\Fun_{\infty}(\B,\A))$ where $\B$ is the dg category defined by \ref{3term}.
Let us recall the universal property of homotopy left exact sequences. 
Suppose we have 3-term h-complexes $X_1$ and $X_2$ with $X_2$ being homotopy left exact.
Then each morphism $\theta:\Res'(X_1)\rightarrow \Res'(X_2)$ in $H^0(\Mor(\A))$ extends uniquely to a morphism $\mu:X_1\rightarrow X_2$ in $\mathcal H_{3t}(\A)$ such that $\theta=\Res'(\mu)$. 
Indeed, we have the following
\begin{lemma}\label{strictmorphismlift}
Suppose we have 3-term h-complexes $X_i$ in $\A$, $i=1$, $2$, of the form
\[
\begin{tikzcd}
A_i\ar[rr,bend right=8ex,"h_i"swap]\ar[r,"f_i"]&B_i\ar[r,"j_i"]&C_i
\end{tikzcd}
\]
where $X_2$ is homotopy left exact. Then any morphism in $Z^0(\Mor(\A))$
\[
\begin{tikzcd}
B_1\ar[r,"j_1"]\ar[d,"b"swap]\ar[rd,"s_2"red,red]&C_1\ar[d,"h_3"]\\
B_2\ar[r,"j_2"swap]&C_2
\end{tikzcd}
\]
extends to a morphism in $Z^0(\Fun_{\infty}(\B,\A))$ 
\[
\begin{tikzcd}
A_1\ar[rd,"s_1"{red,swap},red]\ar[rrd,"t"{blue},blue]\ar[d,"a"swap]\ar[rr,bend left=8ex,"h_1"]\ar[r,"f_1"]&B_1\ar[r,"j_1"]\ar[d,"b"swap]\ar[rd,"s_2"{red},red]&C_1\ar[d,"h_3"]\\
A_2\ar[rr,bend right=8ex,"h_2"swap]\ar[r,"f_2"swap]&B_2\ar[r,"j_2"swap]&C_2\mathrlap{.}
\end{tikzcd}
\]
\end{lemma}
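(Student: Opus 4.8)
The plan is to construct the desired extension explicitly by a diagram chase, using the characterization of homotopy left exact sequences from Lemma~\ref{lem:3termhcomplex} (equivalently, from the elementary ``homotopy kernel'' description right before it). Since $X_2$ is homotopy left exact, for each object $A$ of $\A$ the $3$-term h-complex
\[
\tau_{\leq 0}(A,A_2)\xrightarrow{\tau_{\leq 0}(A,f_2)}\tau_{\leq 0}(A,B_2)\xrightarrow{\tau_{\leq 0}(A,j_2)}\tau_{\leq 0}(A,C_2)
\]
is a homotopy kernel in $\C_{dg}^{\leq 0}(k)$; concretely, for each $n\leq 0$ and each pair $(v,w)\in Z^n\A(A,B_2)\times \A^{n-1}(A,C_2)$ with $d(w)=-j_2 v$, there is $u\in Z^n\A(A,A_2)$, unique up to coboundary, together with witnesses $(v',w')$ satisfying $v-f_2 u=d(v')$ and $w-h_2 u=-d(w')-j_2 v'$.

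First I would produce $a$. Apply the above to $A=A_1$, $n=0$, and the pair $(v,w)=(b f_1,\ -s_2 f_1)$: this is a legitimate input since $v$ is closed of degree $0$ and $d(-s_2 f_1)=-d(s_2)f_1=-(j_2 b-h_3 j_1)f_1=-j_2(bf_1)$ using $j_1 f_1=0$ and the relation $d(s_2)=j_2 b-h_3 j_1$ coming from $\theta$ being a morphism in $Z^0(\Mor(\A))$. The homotopy kernel property yields a closed degree $0$ map $a=u\colon A_1\to A_2$ together with degree $-1$ maps which I will take as (part of) $s_1$ and $t$: namely $v'\in \A^{-1}(A_1,B_2)$ with $bf_1-f_2 a=d(v')$, and $w'\in\A^{-2}(A_1,C_2)$ with $-s_2 f_1-h_2 a=-d(w')-j_2 v'$. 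Set $s_1=v'$, so $d(s_1)=bf_1-f_2 a$, which is precisely the $\Fun_\infty$-equation for the square on the left. It remains to choose $t$ of degree $-2$ with $d(t)=h_2 h_1 - (h_2 h_1\circ\mathbf 1)?$ — more precisely, reading off Definition~\ref{def:3termhomotopy}, we need $d(t)=h_3\circ h_1 - h_2\circ a$ wait, let me restate: the required relation (from \eqref{tt}, with the roles $X_1=(f_1,j_1,h_1)$, $X_2=(f_2,j_2,h_2)$, and the morphism $6$-tuple $(a,b,h_3;s_1,s_2;t)$) is
\[
d(t)=h_3\circ h_1 - h_2\circ a - s_2\circ f_1 - j_2\circ s_1.
\]
I would verify that the right-hand side is closed: differentiating and using $d(h_1)=-j_1 f_1=0$ (since $j_1f_1=0$ here... actually $d(h_1)=-j_1f_1$, and in $\B$ we have $j_1f_1=0$, so $d(h_1)=0$), $d(h_2)=-j_2f_2$, $d(h_3)=0$ (as $h_3$ is closed of degree $0$), $d(a)=0$, $d(s_1)=bf_1-f_2a$, $d(s_2)=j_2b-h_3j_1$, a direct computation collapses everything to $0$. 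Then, since $\A$ may be assumed strictly connective (or at least, we only need the statement up to coboundary in nonpositive degrees, and $\A$ is connective), a closed element of degree $-1$ which by the same homotopy-kernel uniqueness is forced to be a coboundary; taking $t$ to be a primitive gives the required $t$. Actually the cleanest route: apply the homotopy kernel property of $X_2$ once more, or simply observe that the closed degree $-1$ map $\xi:=h_3h_1-h_2a-s_2f_1-j_2s_1\colon A_1\to C_2$ composed appropriately lands in a contractible range; I would instead get $t$ directly from the ``unique up to coboundary'' clause by noting that $w'$ chosen above already satisfies the needed identity after a sign bookkeeping, so one may simply set $t=-w'$ (up to sign), and then check $d(t)$ gives exactly $\xi$.

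The main obstacle I expect is precisely this sign bookkeeping and the verification that the datum $(a,b,h_3;s_1,s_2;t)$ satisfies \emph{all three} $\Fun_\infty$-cocycle equations \eqref{s2}, \eqref{tt} (and the $s_1$-equation) on the nose in $Z^0$, not merely up to homotopy — the lemma claims a lift to $Z^0(\Fun_\infty(\B,\A))$, so one genuinely needs closed representatives, and the homotopy-kernel property only hands them to us with a controlled ambiguity. The resolution is that the homotopy-kernel characterization in Lemma~\ref{lem:3termhcomplex} supplies $u$ together with the witnesses $(v',w')$ as honest elements of the Hom-complexes, and unwinding its statement (applied with $A=A_1$ and the specific input pair) yields exactly the tuple needed; the uniqueness-up-to-coboundary of $u$ is not needed for existence of the lift, only the existence of $u,v',w'$. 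I would also remark, as the lemma's parenthetical ``cf.~Proposition~\ref{termwiseequivalence}'' suggests, that once this $Z^0$-lift exists, a morphism of $3$-term h-complexes restricting to an iso on the $B$- and $C$-terms and (here) the identity wait — no: that parenthetical is about when $\mu$ is an iso, which is not part of the present claim but is the natural companion statement proved by the same termwise-equivalence argument.
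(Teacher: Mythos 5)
Your overall strategy is the right one and is in fact the paper's: a single application of the homotopy-kernel property of $X_2$ (via the characterization preceding Lemma~\ref{lem:3termhcomplex}), applied at $A=A_1$ in degree $n=0$, produces $a$ together with the witnesses $s_1$ and $t$ all at once. But your execution contains a genuine error at the very first step: you feed in the pair $(v,w)=(bf_1,\,-s_2f_1)$ and justify $d(w)=-j_2v$ by invoking $j_1f_1=0$. This is false. In a $3$-term h-complex the composite $j_1f_1$ is only null-homotopic, with $d(h_1)=-j_1f_1$; the relation $ba=0$ holds in the source dg category $\B$, but an $A_\infty$-functor does not transport it to a strict vanishing in $\A$ — the whole point of the homotopy $h_1$ is to witness it. Consequently
\[
d(-s_2f_1)=-(j_2b-h_3j_1)f_1=-j_2(bf_1)+h_3j_1f_1,
\]
and the unwanted term $h_3j_1f_1$ does not vanish. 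The correct input pair is $(bf_1,\,h_3h_1-s_2f_1)$: the extra summand $h_3h_1$ contributes $d(h_3h_1)=h_3d(h_1)=-h_3j_1f_1$ and exactly cancels the discrepancy, so that $d(h_3h_1-s_2f_1)=-j_2(bf_1)$ as required.

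This is not mere sign bookkeeping: with your pair the hypothesis $d(w)=-j_2v$ of the lifting property fails, so the construction of $a$ does not go through, and the downstream difficulties you run into with $t$ (the missing $h_3h_1$ term in $d(t)$, and the attempt to argue that a closed degree $-1$ element must be a coboundary — which would need $H^{-1}\A(A_1,C_2)=0$ and is false in general) are all symptoms of this one mistake. With the corrected pair, the homotopy-kernel property hands you $a\in Z^0\A(A_1,A_2)$ and witnesses $(-s_1,-t)$ satisfying $bf_1-f_2a=d(-s_1)$ and $(h_3h_1-s_2f_1)-h_2a=-d(-t)-j_2(-s_1)$, which unwind precisely to the two equations $d(s_1)=f_2a-bf_1$ and $d(t)=h_3h_1-h_2a-s_2f_1-j_2s_1$ required of a closed degree-$0$ morphism in $\Fun_{\infty}(\B,\A)$; no second application of the lifting property, and no uniqueness clause, is needed.
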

\begin{proof}
Put $u=h_2f_1$ and $v=h_3h_1-s_2f_1$. Then the pair $(u,v)$ satisfies $d(v)=-h_3j_1f_1-(j_2h_2-h_3j_1)f_1=-j_2h_2f_1=-j_2u$.

Since $X_2$ is homotopy left exact, we have a closed morphism $a:A_1\rightarrow A_2$ of degree $0$ such that there exists a pair $(-s_1,-t)$ where $s_1$ is a morphism from $A_1$ to $B_2$ of degree $-1$, $t$ is a morphism from $A_1$ to $C_2$ of degree $-2$ satisfying $u-f_2a=d(-s_1)$ and $v-h_2a=-d(-t)-j(-s_1)$. So the 6-tuple $(a,b,h_3,s_1,s_2,t)$ is a morphism from $X_1$ to $X_2$ in $Z^0(\Fun_{\infty}(\B,\A))$.
\end{proof}
Let $f:X\rightarrow Y$ be an object in $\Mor(\A)$ and 
\[
\begin{tikzcd}
&X\ar[d,"f"]\\
0\ar[r]&Y
\end{tikzcd}
\]
the associated object $S$ in the category $\rep(\Cosp)$.
Recall that we have a fully faithful functor $\mathcal H_{3t}(\A)\hookrightarrow \rep(\Sq,\A)$.
Then a homotopy kernel of the object $f$ in $\mathcal H_{3t}(\A)$ can be identified with a homotopy pullback of the cospan $S$.
Hence by Proposition \ref{pullbackunique}, the homotopy kernel of an object in $\Mor(\A)$ is unique up to a unique isomorphism if it exists in $\mathcal H_{3t}(\A)$.
\subsection{Restriction of morphisms in $\mathcal H_{3t}(\A)$} \label{res}
Let $\overline{\Sq}$ be the dg $k$-path category of the following graded quiver
\[
\begin{tikzcd}
00\ar[r,"f"]\ar[d,"g"swap]\ar[rd,"h"]&01\ar[d,"j"]\\
10\ar[r,"k"swap]&11
\end{tikzcd}
\]
where $f$, $g$, $j$ and $k$ are closed morphisms of degree $0$ and $h$ is a morphism
of degree $-1$ such that $d(h)=kg-jf$.
The obvious dg functor $\overline{\Sq}\rightarrow \Sq$ is a quasi-equivalance and induces an equivalence of categories $\rep(\Sq,\A)\rightarrow\rep(\overline{\Sq},\A)$.
We will transport the definitions and results in the previous sections to objects in $\rep(\overline{\Sq},\A)$. 
One reason for working with this category is that objects in $\rep(\overline{\Sq},\A)$ are isomorphic to objects given by dg functors $F:\overline{\Sq}\rightarrow \A$, because the dg category $\overline{\Sq}$ is cofibrant.

If we take a span (resp.~cospan) of representable dg modules, if its homotopy pushout (resp.~pullback) exists, we can assume the homotopy pushout (resp.~pullback) restricts to the original span (resp.~cospan), and is
not merely isomorphic to it. 

Let $f:A\rightarrow B$ and $f':A'\rightarrow B'$ be two objects in $\Mor(\A)$. 
Then a morphism $\eta:f\rightarrow f'$ in $H^0(\Mor(\A))$ gives rise to a class of objects in $\rep(\overline{\Sq},\A)$
\[
\begin{tikzcd}
A\ar[r,"f"]\ar[d,"g"swap]\ar[rd,"h"]&B\ar[d,"j"]\\
A'\ar[r,"f'"swap]&B'
\end{tikzcd}
\] 
which are isomorphic to each other: two homotopic morphisms from $f$ to $f'$ give rise to homotopic dg functors $\overline{\Sq}\rightarrow \A$ in the sense of \cite[Remark 2.0.12]{Tabuada10}. 

\begin{lemma}\label{squareepivalence}
We have a quasi-equivalence
\[
\rep_{dg}(\overline{\Sq},\A)\iso\Mor(\Mor(\A)).
\]
In particular, we have an epivalence
\[
\rep(\overline{\Sq},\A)\rightarrow\Fun(\Mor,H^0(\Mor(\A))).
\]
\end{lemma}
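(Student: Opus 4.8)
The statement has two parts: (a) a quasi-equivalence $\rep_{dg}(\overline{\Sq},\A)\iso \Mor(\Mor(\A))$ of dg categories, and (b) the consequence that passing to $H^0$ and composing with $\Dia$ yields an epivalence $\rep(\overline{\Sq},\A)\to \Fun(\Mor,H^0(\Mor(\A)))$. Since $\overline{\Sq}$ is a \emph{cofibrant} dg category (it is a dg $k$-path category of a graded quiver whose differential is filtered, so Lemma~\ref{dgfunctorcofibrant} applies), every object of $\rep(\overline{\Sq},\A)$ is isomorphic to one coming from an honest dg functor $\overline{\Sq}\to\A$, and similarly $\rep_{dg}(\overline{\Sq},\A)$ is quasi-equivalent to the dg category of such dg functors with its natural morphism complexes. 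So the first task is to exhibit an isomorphism of dg categories between (a cofibrant model of) the dg functor category $\Fun_{dg}(\overline{\Sq},\A)$ and $\Mor(\Mor(\A))$.

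\textbf{Step 1: identify $\overline{\Sq}$ with an iterated morphism category.} I would observe that $\overline{\Sq}$ is, up to isomorphism of dg categories, the ``graded square'' whose underlying data is a closed degree-$0$ square together with a degree-$-1$ homotopy $h$ witnessing $d(h)=kg-jf$; this is precisely the data of a closed degree-$0$ morphism \emph{in the dg morphism category}. Concretely, recall from Example~\ref{Mor(A)} that $\Mor(\A)=\Fun_{\infty}(k\Mor,\A)$ has as objects closed degree-$0$ morphisms $f\colon A\to B$ and as degree-$m$ morphisms the matrices $\left(\begin{smallmatrix} j & 0\\ h & l\end{smallmatrix}\right)$ with the stated differential. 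Iterating, an object of $\Mor(\Mor(\A))$ is a closed degree-$0$ morphism between two objects of $\Mor(\A)$, i.e.\ a square $\left(\begin{smallmatrix} g & 0\\ h & k\end{smallmatrix}\right)$ of closed degree-$0$ maps $g,k$ together with a degree-$-1$ map $h$ with $d(h)=$ (the appropriate combination) $kg-jf$ — exactly the data of a dg functor $\overline{\Sq}\to\A$. I would write out the bijection on objects and then check it extends to an isomorphism on morphism complexes by matching the degree-$m$ component of $\Hom_{\Mor(\Mor(\A))}$ (a $2\times 2$ matrix of $2\times 2$ matrices, with entries of degrees $m,m-1,m-1,m-2$) with $\Hom_{\Fun_{dg}(\overline{\Sq},\A)}$ computed via Lemma~\ref{dgfunctorcofibrant}; the differentials agree by the sign bookkeeping already carried out in Example~\ref{Mor(A)} applied twice. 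This identification is really the same computation that underlies the description of $\rep_{dg}(\B,\A)$ via the cofibrant resolution in the proof of the Proposition on $\Fun_{\infty}(\B,\A)$, so I would cite those computations rather than redo them.

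\textbf{Step 2: deduce the quasi-equivalence.} Combining Step~1 with the fact (from \cite{Toen07}, recalled in Section~\ref{subsection:notations}) that for cofibrant $\B$ one has $\rep_{dg}(\B,\A)\iso \Fun_{dg}(\B,\A)$ up to quasi-equivalence — more precisely $\rep_{dg}(\overline{\Sq},\A)$ is quasi-equivalent to the full dg subcategory of $\C_{dg}(\A\otimes\overline{\Sq}^{op})$ on the modules $_F\A_\A$, and the dg functor $F\mapsto {_F\A_\A}$ from $\Fun_{dg}(\overline{\Sq},\A)$ into this subcategory is a quasi-equivalence — gives $\rep_{dg}(\overline{\Sq},\A)\iso \Mor(\Mor(\A))$.

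\textbf{Step 3: the epivalence.} For the ``in particular'' part, I would apply $H^0(-)$ to the quasi-equivalence, getting $H^0(\rep_{dg}(\overline{\Sq},\A))\iso H^0(\Mor(\Mor(\A)))$, i.e.\ $\rep(\overline{\Sq},\A)\iso H^0(\Mor(\Mor(\A)))$. Then I would compose with the diagram functor: $\Mor(\Mor(\A))$ maps to $\Fun(\Mor, H^0(\Mor(\A)))$ by restricting a morphism of $\Mor(\A)$-objects to the underlying arrow in $H^0(\Mor(\A))$, and this is exactly the functor $\Dia$ for the shape $\I=\Mor$ applied \emph{inside} $\Mor(\A)$ in the sense of Lemma~\ref{epi} (with $\Mor(\A)$ a pretriangulated-type dg category playing the role of $\A$; more precisely one applies Lemma~\ref{epi} to the quiver $Q$ with one arrow and to the dg category $\Mor(\A)$). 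Since $\Mor$ is the path category of a quiver without relations, Lemma~\ref{epi} gives that this $\Dia$ is full, dense, and conservative — an epivalence — which is the assertion.

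\textbf{Main obstacle.} The only real work is Step~1: verifying that the two descriptions of morphism complexes and their differentials match \emph{on the nose} with the correct Koszul signs. The signs in the iterated construction $\Mor(\Mor(\A))$ do not obviously coincide with the signs coming from the cofibrant resolution of $\overline{\Sq}$ computed via Lemma~\ref{dgfunctorcofibrant}; reconciling them may force either a carefully chosen sign convention for the resolution or an explicit isomorphism that is not simply the identity matrix. I expect this to be a bounded but delicate sign computation, entirely analogous to (and reducible to) the ones already performed for $\Fun_{\infty}(\B,\A)$ earlier in the paper, so I would present it by reduction to those rather than from scratch. Everything after Step~1 is formal.
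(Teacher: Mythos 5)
Your proposal is essentially correct but takes a genuinely different and considerably more laborious route than the paper. The paper's proof is purely formal: it uses the quasi-equivalence $k\overline{\Sq}\iso k\Mor\otimes k\Mor$ together with the fact that $\rep_{dg}(-,-)$ is the internal Hom of $(\Hqe,-\otimes^{\mathbb L}-)$ to produce natural bijections
\[
\Hqe(\B,\rep_{dg}(\overline{\Sq},\A))\iso\Hqe(k\Mor\otimes k\Mor\otimes \B,\A)\iso\Hqe(\B,\rep_{dg}(k\Mor,\Mor(\A)))
\]
for every small $\B$, and then concludes by the Yoneda lemma in $\Hqe$; the identification $\rep_{dg}(k\Mor,\Mor(\A))\iso\Mor(\Mor(\A))$ is absorbed into the already-established equivalence $\Mor(-)\iso\rep_{dg}(k\Mor,-)$, so no new sign computation is needed. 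Your Step~1 instead proposes to match the morphism complexes of $\Mor(\Mor(\A))$ against those of $\rep_{dg}(\overline{\Sq},\A)$ on the nose, which is exactly the kind of explicit verification the paper performs only once (for the three-term quiver) and otherwise avoids. What your approach buys is an explicit dg functor realizing the equivalence; what the paper's buys is brevity and freedom from sign bookkeeping. Your Step~3 coincides with the paper's: both apply Lemma~\ref{epi} to the one-arrow quiver with target $\Mor(\A)$.

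One point in your Step~2 deserves care: the full dg subcategory of $\C_{dg}(\A\otimes\overline{\Sq}^{op})$ on the bimodules $_F\A_\A$ does not obviously have the correct (derived) morphism complexes, since these bimodules need not be cofibrant; this is precisely why Lemma~\ref{dgfunctorcofibrant} produces the resolution $\Cone(u)$ and why the paper's analogous Proposition routes the comparison through the totalizations of those resolutions rather than through the $_F\A_\A$ themselves. So if you carry out Step~1, the object you should compare $\Mor(\Mor(\A))$ with is the dg category of the cofibrant replacements, and quasi-full-faithfulness should be checked by the same filtration/dévissage argument used there. This is a bounded fix, not a fatal gap, but as written your Step~2 elides it.
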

\begin{proof}
Recall that we have quasi-equivalences $k\overline{\Sq}\iso k\Mor\otimes k\Mor$ and $\Mor(\A)\iso \rep_{dg}(k\Mor,\A)$.
Then for any small dg category $\B$, we have canonical bijections
\begin{align*}
\Hqe(\B,\rep_{dg}(\overline{\Sq},\A))&\iso\Hqe(\overline{\Sq}\otimes \B,\A)\\
&\iso\Hqe(k\Mor\otimes k\Mor\otimes \B,\A)\\
&\iso\Hqe(\B,\rep_{dg}(k\Mor,\Mor(\A)).
\end{align*}
Therefore we have a quasi-equivalence  
\[
\rep_{dg}(\overline{\Sq},\A)\iso \rep_{dg}(k\Mor,\Mor(\A)).
\]
Thus we have an epivalence
\[
\rep(\overline{\Sq},\A)\iso \rep(k\Mor,\Mor(\A))\xrightarrow{\Dia} \Fun(\Mor,H^0(\Mor(\A))).
\]
\end{proof}

Recall the restriction functors $\Res,\Res':\Fun_{\infty}(\B,\A)\rightarrow \Mor(\A)$ defined before Definition \ref{homotopykernel}.
Suppose we have a morphism $\alpha:X\rightarrow X'$ in $\mathcal H_{3t}(\A)$ of the form
\begin{equation}\label{morphism3term}
\begin{tikzcd}
&A_0\ar[r,"f"]\ar[d,"h_0"swap]\ar[rd,"s_1"red,swap ,red]\ar[rrd,"t"blue,blue]\ar[rr,bend left = 8ex,"h"]&A_1\ar[d,"h_1"swap]\ar[rd,"s_2"red,red]\ar[r,"j"]&A_2\ar[d,"h_2"]\\
&A_0'\ar[r,"f'"swap]\ar[rr,bend right = 8ex,"h'"swap]&A_1'\ar[r,"j'"swap]&A_2'
\end{tikzcd}
\end{equation}
Then by the above discussion, the two morphisms $\Res(\alpha)$, $\Res'(\alpha)$ give rise to two objects in $\rep(\overline{\Sq},\A)$.

\begin{definition}As above, let $\alpha:X\rightarrow X'$ be a morphism in $\mathcal H_{3t}(\A)$. 
We call $\Res(\alpha)\in \rep(\overline {\Sq},\A)$ the {\em restriction} of $\alpha:X\rightarrow X'$ to $f:A\rightarrow B$. 
Similarly, we call $\Res'(\alpha)$ the {\em restriction} of $\alpha:X\rightarrow X'$ to $j:B\rightarrow C$.
\end{definition}

\begin{proposition}\label{push}
Suppose we have objects $X$ and $X'$ in $\mathcal H_{3t}(\A)$ of the following form
\[
\begin{tikzcd}
A_0\ar[rr,bend right=8ex,"h"swap]\ar[r,"f"]&A_1\ar[r,"j"]&A_2.
\end{tikzcd}
\]
where for $X'$ we add to each term a prime symbol superscript. 
Let $\alpha:X\rightarrow X'$ be a morphism in $\mathcal H_{3t}(\A)$ as in \ref{morphism3term}.
\begin{itemize}
\item[1)] Suppose $h_2:A_2\rightarrow A_2'$ is a homotopy equivalence.
 Let $X''\in \rep(\overline{\Sq},\A)$ be the restriction of $\alpha$ to $f:A_0\rightarrow A_1$. 
 Then the following statements hold:
\begin{itemize}
\item[(a)] If $X'$ is homotopy left exact, then $X$ is a homotopy left exact if and only if $X''$ is homotopy cartesian.
\item[(b)] If $X''$ is homotopy cocartesian, then $X$ is homotopy right exact if and only if so is $X'$.
\end{itemize}
\item[2)]Suppose $h_0:A_0\rightarrow A_0'$ is a homotopy equivalence. 
Let $X'''$ be the restriction of $\alpha$ to $j:A_1\rightarrow A_2$. Then the following statements hold:
\begin{itemize}
\item[(a)] If $X$ is homotopy right exact, then $X'$ is homotopy right exact if and only if $X'''$ is homotopy cocartesian.
\item[(b)] If $X'''$ is homotopy cartesian, then $X$ is homotopy left exact if and only if so is $X'$.
\end{itemize}
\end{itemize}
\end{proposition}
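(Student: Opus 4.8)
Here is a plan for proving Proposition~\ref{push}.

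\emph{Reductions.} First I would pass to the strictly connective case: $\mathcal H_{3t}(\A)\iso\mathcal H_{3t}(\tau_{\leq 0}\A)$, and by Lemma~\ref{truncationhomotopycartesian} and its evident dual the equivalence $\rep(\Sq,\tau_{\leq 0}\A)\iso\rep(\Sq,\A)$ both preserves and reflects homotopy cartesian (resp.\ cocartesian) squares. Next, the duality functor $\RHom(-,\A)$ sends a morphism $\alpha\colon X\to X'$ in $\mathcal H_{3t}(\A)$ to a morphism $\alpha^{\vee}\colon(X')^{\vee}\to X^{\vee}$ in $\mathcal H_{3t}(\A^{op})$ and interchanges homotopy left and right exact sequences, homotopy cartesian and cocartesian squares, the restriction functors $\Res$ and $\Res'$, and the outer components $h_0$ and $h_2$; applying it to $\alpha^{\vee}$ turns 2) into 1). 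So it remains to prove 1(a) and 1(b).

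\emph{Proof of 1(a).} Using the fully faithful functor $\mathcal H_{3t}(\A)\to\rep(\Sq,\A)$ of Definition~\ref{homotopyses} and the contractibility of $IA_0^{\wedge}$, the three conditions read: $X$ is homotopy left exact iff it exhibits $f\colon A_0\to A_1$ (with $h$) as a homotopy kernel of $j\colon A_1\to A_2$ (this is just the definition of homotopy kernel); likewise $X'$ is homotopy left exact iff $f'$ (with $h'$) is a homotopy kernel of $j'$; and $X''=\Res(\alpha)$ is homotopy cartesian iff $A_0$ is a homotopy pullback of the cospan $A_1\xrightarrow{h_1}A_1'\xleftarrow{f'}A_0'$. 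Now assume $X'$ homotopy left exact and $h_2$ a homotopy equivalence. The pasting lemma for homotopy pullbacks — which I would establish first, see the last paragraph — identifies the homotopy pullback of $A_1\xrightarrow{h_1}A_1'\xleftarrow{f'}A_0'$ with a homotopy kernel of the composite $j'\circ h_1\colon A_1\to A_2'$. Since $h_2$ is a homotopy equivalence and $h_2\circ j$ is homotopic to $j'\circ h_1$ via $s_2$, a homotopy kernel of $j'\circ h_1$ is simultaneously a homotopy kernel of $j$. Hence $A_0$ is a homotopy pullback of that cospan iff $(f,h)$ is a homotopy kernel of $j$; invoking the uniqueness of homotopy pullbacks, Proposition~\ref{pullbackunique}, together with the fact that all comparison morphisms restrict to $f$ on the relevant edge (so that they correspond under the universal property, Remark~\ref{pullbackuniversal}), this says precisely that $X''$ is homotopy cartesian iff $X$ is homotopy left exact.

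\emph{Proof of 1(b).} Dually, $X$ (resp.\ $X'$) is homotopy right exact iff $j$ with $h$ (resp.\ $j'$ with $h'$) is a homotopy cokernel of $f$ (resp.\ $f'$), while $X''$ is homotopy cocartesian iff $A_1'$ is a homotopy pushout of $A_0'\xleftarrow{h_0}A_0\xrightarrow{f}A_1$. Assuming $X''$ homotopy cocartesian and $h_2$ a homotopy equivalence, the pasting lemma for homotopy pushouts identifies a homotopy cokernel of $f$ with a homotopy cokernel of $f'$, compatibly with $h_2$; with the uniqueness of homotopy pushouts (the dual of Proposition~\ref{pullbackunique}) this yields $X$ homotopy right exact iff $X'$ is.

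\emph{The main obstacle.} The whole argument rests on the pasting lemmas for homotopy pullbacks and pushouts and on ``post-composition with a homotopy equivalence preserves homotopy kernels'', facts which are immediate in any triangulated category from the octahedral axiom but, as flagged after Lemma~\ref{lem:3termhcomplex}, are delicate at the dg level: one must realise the comparison maps as genuine isomorphisms in $\rep(\Sq,\A)\subseteq\D(\A\otimes k\Sq^{op})$, not merely in $\D(\A)$. I would prove them via the cochain criterion of Lemma~\ref{lem:3termhcomplex} combined with the universal property of homotopy pullbacks, Remark~\ref{pullbackuniversal}, carrying out every construction with the explicit mapping cones; the higher coherence datum $t$ of $\alpha$ (of degree $-2$), together with the contracting homotopies of $IA_0^{\wedge}$ and $IA_0'^{\wedge}$, is exactly what forces the two natural recipes for each comparison homotopy to agree. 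This compatibility verification is where essentially all the work lies.
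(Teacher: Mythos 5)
Your overall strategy is viable and genuinely different in organization from the paper's. The paper proves the proposition \emph{directly}: after normalizing $h_2=\Id_{A_2}$, it writes down explicit cone diagrams in $\C(\A)$ (with $V=\Sigma^{-1}\Cone(j)$, $V'=\Sigma^{-1}\Cone(j')$, $V_3=\Sigma^{-1}\Cone(h_1,f')$, and dually $U$, $U'$, $U_3$), exhibits an explicit map $v\colon V_3\to V$ whose cone is controlled by $\Cone(u')$, and reads off both (a) and (b) from $\tau_{\leq 0}\RHom(A',-)$ applied to these triangles. The pasting law you want to invoke is, in the paper, Corollary~\ref{Comp}, which is \emph{deduced from} Proposition~\ref{push}; so you are right to insist on establishing the pasting lemma independently first, and your reduction of 1(a)--1(b) to it (via ``homotopy kernel of $j'h_1$ = homotopy kernel of $j$ when $h_2$ is a homotopy equivalence'', mediated by $s_2$ and $t$) is correct in outline, as is the duality reduction of 2) to 1), which the paper also uses.

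The gap is in execution rather than in conception: the entire mathematical content of the proposition is concentrated in the step you defer — an independent, dg-level proof of the pasting lemma in which the comparison maps are genuine isomorphisms in $\rep(\overline{\Sq},\A)$ with all homotopies (including the degree $-2$ datum $t$) made compatible. You correctly identify this as ``where essentially all the work lies'', but you do not carry it out, and the sketch (``cochain criterion plus universal property of homotopy pullbacks'') is not more than a plan; carrying it out amounts to the same explicit mapping-cone bookkeeping that the paper performs directly in its proof of Proposition~\ref{push}. So the reorganization does not actually save work, and as written the proposal asserts, rather than proves, the one statement that needs proof. To complete your route you would need to write out the pasting lemma's proof at the level of the paper's diagrams (or at the level of Lemma~\ref{lem:3termhcomplex}'s element chase), verifying in particular that the composite homotopy produced by pasting differs from $h$ by the coboundary of $t$ as dictated by $d(t)=h_2h-h'h_0-s_2f-j's_1$.
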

\begin{proof}
We prove 1) and then 2) follows by duality. 
For simplicity, we omit the symbol $^{\wedge}$ of the representable dg modules $A^{\wedge}$. 
Since $h_2$ is a homotopy equivalence, we may assume $h_2$ is the identity of ${A_2}$.
The morphism $\alpha$ is given by the following diagram in $\C(\A)$:
\[
\begin{tikzcd}
A_0\ar[rddd,"\begin{bmatrix}s_1\\t\end{bmatrix}"{swap,red},red]\ar[rd,"u=\begin{bmatrix}f\\h\end{bmatrix}"]\ar[dd,"h_0"swap]&&&\\
&V=\Sigma^{-1}\Cone(j)\ar[r,"{[}-1{,}0{]}"]\ar[dd,"\begin{bmatrix}h_1\;\;0\\-s_2\;\; 1\end{bmatrix}"]&A_1\ar[r,"j"]     \ar[dd,"h_1"] \ar[rdd,"s_2"red,red] &A_2\ar[dd,equal]\\ 
A_0'\ar[rd,"u'=\begin{bmatrix}f'\\h'\end{bmatrix}"swap]&&&\\
&V'=\Sigma^{-1}\Cone(j')\ar[r,"{[}-1{,}0{]}"swap]&A_1'\ar[r,"j'"swap]&A_2
\end{tikzcd}
\]
where diagonal maps are the homotopies making the diagram commutative in $\mathcal H(\A)$.
Then $X''$ is given by
\[\begin{tikzcd}
A_0\ar[r,"f"]\ar[d,"h_0"swap]\ar[rd,"s_1"]&A_1\ar[d,"h_1"]\\
 A_0'\ar[r,"f'"swap]&A_1'
\end{tikzcd}\;.
\] 

Put $U=\Cone(f)$, $U'=\Cone(f')$ and $V_3=\Sigma^{-1}\Cone(h_1,f')$. 
We have a canonical morphism $r=(h,j):U\rightarrow A_2$ and similarly a morphism $r':U'\rightarrow A_2'$.
From the object $X$, we have the following diagram in $\D(\A)$ 
\[
\begin{tikzcd}
A_0\ar[r,"f"]\ar[d,"u"swap]&A_1\ar[r,""]\ar[d,equal]&U\ar[r]\ar[d,"r"]&\Sigma A_0\ar[d]\\ 
V\ar[r]&A_1\ar[r,"j"]&A_2\ar[r]\ar[d]&\Sigma V\ar[d]\\
 & &C(r)\ar[r,equal]&C(r)
\end{tikzcd}.
\]

Put $Y=\Sigma^{-1}\Cone(h_1)$ and $W=\Cone(u')$. 
Similarly we have the following diagrams in $\D(\A)$
\[
\begin{tikzcd}
V\ar[r,"{[}-1{,}0{]}"]\ar[d,"\begin{bmatrix}h_1\;\;0\\-s_2\;\;1\end{bmatrix}"swap]&A_1\ar[r,"j"]\ar[d,"h_1"]&A_2\ar[d,equal]\\ 
V'\ar[r,"{[}-1{,}0{]}"]\ar[d,"\begin{bmatrix}0\;\;0\\-1\;\;0\end{bmatrix}"swap]&A_1'\ar[d,"\begin{bmatrix}0\\1\end{bmatrix}"]\ar[r,"j'"]&A_2\\ 
\Sigma Y \ar[r,equal]\ar[d,"-\begin{bmatrix}1\;\;0\\s_2\;\;j'\end{bmatrix}"swap]& \Sigma Y\ar[d,"{[}1{,}0{]}"]&\\
\Sigma V\ar[r,"{[}-1{,}0{]}"swap]&\Sigma A_1&
\end{tikzcd},
\begin{tikzcd}
Y\ar[r,"{[}-1{,}0{]}"]\ar[d,"\begin{bmatrix}1\;\;0\\0\;\;0\\0\;\;1\end{bmatrix}"swap]&A_1\ar[r,"h_1"]\ar[d,"\begin{bmatrix}1\\0\end{bmatrix}"]&A_1'\ar[d,equal]\\
 V_3\ar[r,"\begin{bmatrix}-1\;\;0\;\;0\\0\;\;-1\;\;0\end{bmatrix}"]\ar[d,"{[}0{,}-1{,}0{]}"swap]&A_1\oplus A_0'\ar[d,"{[}0{,}1{]}"]\ar[r,"{[}h_1{,}f'{]}"]&A_1'\\ 
 A_0' \ar[d,"\begin{bmatrix}0\\-f'\end{bmatrix}"swap]\ar[r,equal]& A_0'\ar[d,"0"]&\\
 \Sigma Y\ar[r,"{[}-1{,}0{]}"swap]&\Sigma A_1&
 \end{tikzcd},
 \]
 \[
\begin{tikzcd}
Y\ar[r,"\begin{bmatrix}1\;\;0\\0\;\;0\\0\;\;1\end{bmatrix}"]\ar[d,equal]&V_3\ar[rr,"{[}0{,}-1{,}0{]}"]\ar[d,dashed,blue,"v=\begin{bmatrix}1\;\;\;\;0\;\;\;\;0\\s_2\;\;-h'\;\;j'\end{bmatrix}"]&&A_0'\ar[d,"u'=\begin{bmatrix}f'\\h'\end{bmatrix}"]\\
 Y\ar[r,"\begin{bmatrix}1\;\;0\\s_2\;\;j'\end{bmatrix}"swap]&V\ar[rr,"\begin{bmatrix}h_1\;\;0\\-s_2\;\;1\end{bmatrix}"swap]\ar[d,dashed,blue]&&V'\ar[d,"\begin{bmatrix}0\;\;0\\1\;\;0\\0\;\;1\end{bmatrix}"]\\ 
 & W\ar[rr,equal]&&W
 \end{tikzcd}.
 \]
We have a morphism from the sequence in blue to the mapping triangle of $v$ as follows
\[
\begin{tikzcd}
V_3\ar[r,"v"]\ar[d,equal]&V\ar[r,"\begin{bmatrix}0\;\;0\\h_1\;\;0\\-s_2\;\;1\end{bmatrix}"]\ar[d,equal]&W\ar[r,"\begin{bmatrix}0\;\;0\;\;0\\1\;\;0\;\;0\\0\;\;1\;\;0\end{bmatrix}"]\ar[d,"\theta"]&\Sigma V_3\ar[d,equal]\\
V_3\ar[r,"v"swap]&V\ar[r,"{[}0{,}1{]^{\intercal}}"swap]&\Cone(v)\ar[r,"{[}1{,}0{]}"swap]&\Sigma V_3\mathrlap{.}
\end{tikzcd}
\]
By direct inspection, we see that $\Cone(v)$ is isomorphic to the mapping cone of 
\[
\Cone(\Id_{\Sigma^{-1}A_1})\rightarrow W.
\]
The map $\theta$ is the canonical inclusion from $W$ to this cone.
One checks directly that the rightmost square commutes and the middle square commutes up to homotopy.

Now we see that the composition of the canonical maps $A_0\xrightarrow{[f,h_0,s_1]^{\intercal}} V_3$ and $v:V_3\rightarrow V$ is homotopic to the canonical map $A_0\rightarrow V$. 

(a) Since $X'$ is homotopy left exact, the map $u':A_0'\rightarrow V'$ induces a quasi-isomorphism 
\[
\tau_{\leq 0}\RHom(A,\Sigma^{-i}A_0')\rightarrow\tau_{\leq 0 }\RHom(A,\Sigma^{-i}V')
\]
for each $A'\in \A$.
Thus the induced map
\[ 
\tau_{\leq 0}\Hom(A',V_3)\rightarrow \Hom(A', V)
\]
is a quasi-isomorphism for each $A\in \A$. 

So $X$ is homotopy left exact if and only if $X''$ is homotopy cartesian.

(b) Put $U_3=\Cone((f,h_0)^{\intercal})$.
Consider the following diagrams
\[
\begin{tikzcd}
&A_1\ar[r,equal]\ar[d]&A_1\ar[d]\\
A_0\ar[r]\ar[d,equal]&A_0'\oplus A_1\ar[r]\ar[d]&U_3\ar[d]\\ 
A_0\ar[r]&A_0'\ar[r]&U 
\end{tikzcd},
\begin{tikzcd}
A_1\ar[r]\ar[d,equal]&U_3\ar[r]\ar[d]&U\ar[d]\\ 
A_1\ar[r]&A_1'\ar[d]\ar[r]&U'\ar[d]\\
 & Z\ar[r,equal]& Z
 \end{tikzcd},
\begin{tikzcd}
U\ar[r]\ar[d]&A_2\ar[d,equal]\\
U'\ar[r]&A_2\end{tikzcd}.
\]

Since $X''$ is homotopy cocartesian, the canonical map $U_3\rightarrow A_1'$ induces a quasi-isomorphism 
\[
\tau_{\leq 0}\RHom(A_1',A')\rightarrow \tau_{\leq 0}\RHom(U_3, A')
\]
for each $A'\in\A$. This implies that the canonical map $U\rightarrow U'$ induces a quasi-isomorphism 
\[
\tau_{\leq 0}\RHom(U', A')\rightarrow \tau_{\leq 0}\RHom(U, A')
\]
for each $A'\in\A$. From the rightmost square, we infer that $X$ is homotopy right exact if and only if so is $X'$.
\end{proof}

\begin{corollary}\label{Comp}
Suppose we have the following diagram in $\C_{dg}(\A)$ with lower square being homotopy cartesian and the upper square an object in $\rep(\overline{\Sq},\A)$. 
Then the outer square (i.e.~the square on the right hand side) is homotopy cartesian if and only if so is the upper square. 
\[
\begin{tikzcd}
X_1\arrow[r,"f"]\arrow[d,"g"swap,""{name=1} ]\ar[rd,"h"blue,blue]&X_2\arrow[d,"j",""{name=2}]\\
X_3\ar[d,"g'"swap,""{name=3}]\ar[r,"f'"]\ar[rd,"h'"blue,blue]&X_4\ar[d,"j'",""{name=4}]\\
X_5\ar[r,"k'"swap]&X_6
\arrow[phantom,from=1,to=2,""]\arrow[phantom,from=3,to=4,""]
\end{tikzcd}\;\;,\;\;
\begin{tikzcd}
X_1\ar[r,"f"]\ar[d,"g'g"swap]\ar[rd,"h'g+j'h"{near start,blue},blue]&X_2\ar[d,"j'j"]\\
X_5\ar[r,"k'"swap]&X_6
\end{tikzcd}
\]
\end{corollary}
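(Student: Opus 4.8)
The plan is to read off ``homotopy cartesian'' from a single mapping cone, as a position condition for the canonical $t$-structure, and then to obtain the pasting statement from one octahedron.

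First I would reduce to the case where $\A$ is strictly connective: by Lemma~\ref{truncationhomotopycartesian} the whole configuration, together with the three homotopy-cartesianness conditions, may be transported along $\A\to\tau_{\leq 0}\A$, and by the discussion in Section~\ref{res} (cofibrancy of $\overline{\Sq}$; cf.~also Lemma~\ref{squareepivalence}) I may further assume that all the $X_i$ are \emph{representable} dg $\A$-modules. For a square $Y_1\to Y_2$, $Y_1\to Y_3$, $Y_2\to Y_4$, $Y_3\to Y_4$ in $\rep(\overline{\Sq},\A)$ with comparison homotopy $\eta\colon Y_1\to Y_4$, set $P=\Sigma^{-1}\Cone(Y_2\oplus Y_3\to Y_4)$ and let $\varphi\colon Y_1\to P$ be the canonical map built from the two maps out of $Y_1$ and from $\eta$ (as in Definition~\ref{maindef} and the proof of Lemma~\ref{adj}). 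Because $A^{\wedge}$ is cofibrant, $\RHom(A^{\wedge},Y_i)$ is the complex $Y_i(A)$, which --- the $Y_i$ being representable and $\A$ connective --- has cohomology in non-positive degrees. Applying $\RHom(A^{\wedge},-)$ to the triangles defining $P$ and $\Cone(\varphi)$ and chasing the resulting long exact sequences, where the vanishing $H^{>0}\RHom(A^{\wedge},Y_1)=0$ kills the one potentially obstructing term, one finds that the square is homotopy cartesian with respect to $\A$ if and only if $\Cone(\varphi)\in\D(\A)^{\geq 1}$ for the canonical $t$-structure of Lemma~\ref{lemma:tstructure}. (Example~\ref{ordinary} is the expected consistency check.)

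Now apply this to the three squares in the statement, obtaining canonical maps $\varphi^U\colon X_1\to P^U$, $\varphi^L\colon X_3\to P^L$, $\varphi^O\colon X_1\to P^O$, so that the upper, lower and outer squares are homotopy cartesian precisely when $\Cone(\varphi^U),\Cone(\varphi^L),\Cone(\varphi^O)$ lie in $\D(\A)^{\geq 1}$. The canonical projection $\pi\colon P^L\to X_4$ satisfies $\pi\varphi^L=f'$. The pasting law for homotopy pullbacks (a standard computation in $\D(\A)$) identifies $P^O$ with $\Sigma^{-1}\Cone(X_2\oplus P^L\to X_4)$, and, placed next to $P^U=\Sigma^{-1}\Cone(X_2\oplus X_3\to X_4)$ via $(\mathrm{id}_{X_2},\varphi^L)$, this yields a map $\psi\colon P^U\to P^O$; comparing the two distinguished triangles $P^U\to X_2\oplus X_3\to X_4\to\Sigma P^U$ and $P^O\to X_2\oplus P^L\to X_4\to\Sigma P^O$ then shows $\Cone(\psi)\cong\Cone(\varphi^L)$ (the cone of $(\mathrm{id}_{X_2},\varphi^L)$ being just $\Cone(\varphi^L)$, the $X_2$-component contributing $\Cone(\mathrm{id}_{X_2})=0$). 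I expect the real obstacle to be the claim that, with the \emph{pasted} homotopy $h'g+j'h$ used to build $\varphi^O$, one has $\varphi^O\simeq\psi\circ\varphi^U$ in $\D(\A)$: this is a direct but genuinely shift- and sign-sensitive computation in $\C_{dg}(\A)$, of exactly the ``check that the homotopies match'' flavour of the proof of Proposition~\ref{push}, and it is the only place where the specific form of the outer homotopy enters.

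Granting $\varphi^O\simeq\psi\varphi^U$, I would finish by applying the octahedral axiom to $X_1\xrightarrow{\varphi^U}P^U\xrightarrow{\psi}P^O$, obtaining a distinguished triangle $\Cone(\varphi^U)\to\Cone(\varphi^O)\to\Cone(\psi)\to\Sigma\Cone(\varphi^U)$. The hypothesis that the lower square is homotopy cartesian gives $\Cone(\psi)\cong\Cone(\varphi^L)\in\D(\A)^{\geq 1}$. Since $\D(\A)^{\geq 1}$ is closed under extensions, $\Cone(\varphi^U)\in\D(\A)^{\geq 1}$ forces $\Cone(\varphi^O)\in\D(\A)^{\geq 1}$; and, rotating the triangle to $\Sigma^{-1}\Cone(\psi)\to\Cone(\varphi^U)\to\Cone(\varphi^O)\to\Cone(\psi)$ and noting $\Sigma^{-1}\Cone(\psi)\in\D(\A)^{\geq 2}\subseteq\D(\A)^{\geq 1}$, one sees conversely that $\Cone(\varphi^O)\in\D(\A)^{\geq 1}$ forces $\Cone(\varphi^U)\in\D(\A)^{\geq 1}$. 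Hence the outer square is homotopy cartesian if and only if the upper square is.
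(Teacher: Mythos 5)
Your proof is correct, but it takes a genuinely different route from the paper's. The paper disposes of the corollary in a few lines by feeding two auxiliary diagrams into Proposition~\ref{push}: it forms the $3$-term h-complexes $X_1\to X_2\oplus X_5\to X_6$ and $X_3\to X_4\oplus X_5\to X_6$ (whose homotopy left exactness encodes cartesianness of the outer and lower squares after splitting off a trivially cartesian $X_5$-summand, whence the preliminary remark about direct sums of homotopy cartesian squares), connects them by a morphism whose restriction is the upper square padded by $\mathrm{id}_{X_5}$, and invokes part 1(a) of that proposition; all of the homotopy bookkeeping was already absorbed there. You instead argue directly: after truncating to the connective case and strictifying via the cofibrancy of $\overline{\Sq}$, you convert ``homotopy cartesian'' into the single condition $\Cone(\varphi)\in\D(\A)^{\geq 1}$ (which is essentially Lemma~\ref{adj} made explicit, and does require connectivity plus quasi-representability of the top-left corner to kill $H^1\RHom(A^{\wedge},Y_1)$ --- both available here), and then run one octahedron together with the closure of the coaisle under extensions and desuspension. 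Your version is more self-contained and makes the ``if and only if'' mechanism transparent (the two directions are literally the two rotations of one triangle); the paper's version is shorter and stays valid without the connectivity reduction because Proposition~\ref{push} was proved in that generality.

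The one place where your write-up stops short of a proof is the step you yourself flag: the identity $\varphi^O\simeq\psi\circ\varphi^U$ in $\D(\A)$, where $\psi$ must be the \emph{specific} chain-level map coming from $(\mathrm{id}_{X_2},\varphi^L)$ and the contraction of the $\Cone(\mathrm{id}_{X_4})$-summand. This is not a detail one can wave at: it is exactly the assertion that the pasted homotopy $h'g+j'h$ is the correct comparison datum for the outer square, i.e.\ it is the entire non-formal content of the corollary (everything else is triangulated-category yoga). The computation does go through --- writing $P^U,P^L,P^O$ as explicit shifted cones, the contraction sends the $\Sigma^{-1}X_4$-component to $\Sigma^{-1}X_6$ via $\Sigma^{-1}j'$, which is precisely how the summand $j'h$ appears, while $\varphi^L$ contributes $h'g$ --- but it must be written out with the sign conventions $d(h)=f'g-jf$ and $d(h')=k'g'-j'f'$, in the style of the proof of Proposition~\ref{push}. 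Once that is done, your argument is complete.
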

\begin{proof}
We may assume that the dg category $\A$ admits a zero object and direct sums.
Note that direct sums of homotopy cartesian squares remain homotopy cartesian.
Note also that squares of the form
\[
\begin{tikzcd}
0\ar[r,"0"]\ar[d,"0"swap]\ar[rd,"0"blue,blue]&A\ar[d,equal]\\
0\ar[r,"0"swap]&A
\end{tikzcd}
\]
are trivially homotopy cartesian.

The statement follows by applying Proposition \ref{push} to one of the following two diagrams
\[
\begin{tikzcd}X_1\ar[rd,blue,"s"]\ar[rr,bend left=8ex,blue,"h'g+j'h"blue]\ar[r,"u"]\ar[d,"g"swap]&X_2\oplus X_5\ar[r,"v"]\ar[d,"r"]&X_6\ar[d,equal] &&X_1\ar[r]\ar[d,equal]&X_2\oplus X_3\ar[d]\ar[r] &X_4\ar[d]\\
X_3\ar[rr,bend right=8ex,"h'"{blue,swap},blue]\ar[r,"w"swap]&X_4\oplus X_5 \ar[r,"z"swap] &X_6&&X_1\ar[r]&X_2\oplus X_5\ar[r]&X_6
\end{tikzcd}
\]
where $r=\begin{bmatrix}j\;\;0\\0\;\;\Id\end{bmatrix}$, $s=\begin{bmatrix}h\\0\end{bmatrix}$, $u=\begin{bmatrix}f\\g'g\end{bmatrix}$, $v={[}j'j{,}-k'{]}$, $w=\begin{bmatrix}f'\\g'\end{bmatrix}$ and $z={[}j'{,}-k'{]}$.
\end{proof}
Suppose we have a homotopy pullback square $X_2$
\[
\begin{tikzcd}
B\ar[r,"b"]\ar[d,"e"swap]\ar[rd,"s_2"]&C\ar[d,"f"]\\
E\ar[r,"h"swap]&F
\end{tikzcd}
\]
and an object $X_3$ in $\rep(\overline{\Sq},\A)$
\[
\begin{tikzcd}
A\ar[r,"c"]\ar[d,"d"swap]\ar[rd,"s_3"]&C\ar[d,"f"]\\
D\ar[r,"hg"swap]&F
\end{tikzcd}
\]
where $g$ is a morphism from $D$ to $E$.
Then the triple $(c,gd,s_3)$ satisfies $d(s_3)=hgd-fc=-[-h,\;f]\begin{bmatrix}gd\\c\end{bmatrix}$ and hence we have a morphism $a:A\rightarrow B$ such that there exists a triple $(s,s_1,t)$ such that 
\[
\begin{bmatrix}gd\\c\end{bmatrix}-\begin{bmatrix}e\\b\end{bmatrix}a=\begin{bmatrix}d(s_1)\\d(s)\end{bmatrix}\]
 and 
 \[
 s_3-s_2a=-d(t)-[-h,\;f]\begin{bmatrix}s_1\\s\end{bmatrix}.
 \]
 So we have the following diagram
\[
\begin{tikzcd}
A\ar[d,"d"swap]\ar[rr,bend left=12ex,"s",dashed]\ar[r,dashed,"a"]\ar[rr,bend left=6ex,"c"]\ar[rd,"s_1"{red,swap},red,dashed]\ar[rrd,"t"{blue},blue,dashed,bend left=2ex]&B\ar[r,"b"]\ar[d,"e"swap]\ar[rd,"s_2"red,red,]&C\ar[d,"f"]\\
D\ar[r,"g"swap]&E\ar[r,"h"swap]&F
\end{tikzcd}
\]
where the left hand square $X_1$ is an object in $\rep(\overline{\Sq},\A)$ and $(-s,0,-t)$ is a homotopy between $X_3$ and the composition of $X_1$ and $X_2$, all viewed as morphisms in $Z^0(\Mor(\A))$.
So we have
\begin{corollary}\label{pastinglaw:second}
Keep the notations as above. 
If we have homotopy pullback squares $X_2$ and $X_3$, then there exists a pair $(a,s_1)$ such that $X_1$ is also homotopy pullback and that the composition of $X_1$ and $X_2$ is homotopic to $X_3$ viewed as morphisms in $Z^0(\Mor(\A))$.
\end{corollary}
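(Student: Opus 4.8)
The plan is to build on the explicit construction carried out in the paragraph immediately preceding the statement: it already produces the data $(a,s,s_1,t)$ realizing $X_1$ as an object of $\rep(\overline{\Sq},\A)$ and exhibiting $(-s,0,-t)$ as a homotopy, in $Z^0(\Mor(\A))$, between $X_3$ and the composition of $X_1$ and $X_2$. So the only assertion that remains is that this $X_1$ is itself homotopy cartesian, and the idea is to deduce this from (the horizontal form of) Corollary~\ref{Comp} once we know that the composite of $X_1$ and $X_2$ is homotopy cartesian.

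First I would promote the homotopy $(-s,0,-t)$ to an honest isomorphism in $\D(\overline{\Sq},\A)$. Since $\overline{\Sq}$ is cofibrant, objects of $\rep(\overline{\Sq},\A)$ are represented by dg functors, and, as recalled in Subsection~\ref{res} following \cite[Remark 2.0.12]{Tabuada10}, homotopic dg functors give isomorphic objects; equivalently, under the identification $\rep_{dg}(\overline{\Sq},\A)\iso\Mor(\Mor(\A))$ of Lemma~\ref{squareepivalence}, two morphisms of $Z^0(\Mor(\A))$ that differ by a coboundary define isomorphic objects of $\Mor(\Mor(\A))$ (the isomorphism being the elementary matrix built from the chosen nullhomotopy). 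Hence the composite of $X_1$ and $X_2$ is isomorphic, in $\rep(\overline{\Sq},\A)$ and therefore in $\D(\Sq,\A)$ via $\rep(\Sq,\A)\iso\rep(\overline{\Sq},\A)$, to $X_3$. As being a homotopy cartesian square is by Definition~\ref{maindef} a property of the isomorphism class in $\D(\Sq,\A)$, and $X_3$ is homotopy cartesian by hypothesis, the composite of $X_1$ and $X_2$ is homotopy cartesian.

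Next I would invoke the horizontal counterpart of Corollary~\ref{Comp}: if the right-hand square $X_2$ is homotopy cartesian and the left-hand square $X_1$ is an object of $\rep(\overline{\Sq},\A)$, then the outer (composite) square is homotopy cartesian if and only if $X_1$ is. This is Corollary~\ref{Comp} applied to the transposed diagram, using that transposition exchanges rows and columns while preserving both the condition of being an object of $\rep(\overline{\Sq},\A)$ and the homotopy cartesian property, the latter because $\Sigma^{-1}\Cone((-j,k))$ is symmetric, up to a sign isomorphism, in the two intermediate terms of the square; alternatively one reruns the proof of Corollary~\ref{Comp}, feeding Proposition~\ref{push} the obvious horizontal analogues of the two auxiliary diagrams there. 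Applying this with $X_2$ homotopy cartesian (hypothesis) and the composite of $X_1$ and $X_2$ homotopy cartesian (previous step) gives that $X_1$ is homotopy cartesian, i.e.\ a homotopy pullback, which is the claim; the constructed pair $(a,s_1)$ is the one asserted to exist.

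I expect the main subtlety to be the first step. The compatibility between $X_3$ and the composite of $X_1$ and $X_2$ is only recorded as a homotopy of morphisms in $Z^0(\Mor(\A))$, whereas the homotopy cartesian property is a priori transported only along isomorphisms in $\D(\Sq,\A)$, so one must check that these coincide here; the cofibrancy of $\overline{\Sq}$ and the translation of Lemma~\ref{squareepivalence} are exactly what bridges this gap. The rest — transposing Corollary~\ref{Comp} and bookkeeping the signs in $s$, $s_1$ and $t$ — is routine.
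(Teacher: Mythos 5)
Your proposal is correct and follows the route the paper intends: the paragraph preceding the statement already supplies the pair $(a,s_1)$ together with the homotopy $(-s,0,-t)$, and the remaining point — that $X_1$ is homotopy cartesian — is exactly the (transposed) pasting law of Corollary~\ref{Comp} applied after transporting the homotopy cartesian property from $X_3$ to the composite along the isomorphism in $\rep(\overline{\Sq},\A)$ induced by that homotopy. Your care in justifying the two implicit steps (homotopy of morphisms in $Z^0(\Mor(\A))$ yielding an isomorphism of squares via Lemma~\ref{squareepivalence} and the cofibrancy of $\overline{\Sq}$, and the sign-isomorphism making Definition~\ref{maindef} transposition-invariant) is a faithful filling-in of what the paper leaves unsaid, not a different argument.
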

\subsection{Homotopy pushouts/pullbacks of homotopy exact sequences}
Let $X$ be a homotopy right exact sequence over $\A$
\[
\begin{tikzcd}
A\ar[rr,bend right=8ex,"h"swap]\ar[r,"f"]&B\ar[r,"j"]&C.
\end{tikzcd}
\]
Let $\overline{\alpha}:A\rightarrow A'$ be a morphism in $H^0(\A)$.
Let $S\in \D(\mathrm{Sp})$ be the following span 
\[
\begin{tikzcd}
A\ar[r,"f"]\ar[d,"\alpha"swap]&B\\
A'& 
\end{tikzcd}.
\]
By Lemma \ref{epi}, it is determined by $\overline{\alpha}$ up to an isomorphism and this isomorphism restricts to the identity on each component in $H^0(\A)$.
Assume that the span $S$ admits a homotopy pushout.

\begin{proposition}\label{cons}
We have a homotopy right exact sequence $X'$ and a morphism $\mu:X\rightarrow X'$
\begin{equation}\label{const}
\begin{tikzcd}
&A\ar[r,"f"]\ar[d,"h_0=\alpha"swap]\ar[rd,"s_1"red,swap ,red]\ar[rrd,"t"blue,blue]\ar[rr,bend left = 8ex,"h"]&B\ar[d,"h_1"swap]\ar[rd,"s_2"red,red]\ar[r,"j"]&C\ar[d,equal]\\
&A'\ar[r,"f'"swap]\ar[rr,bend right = 8ex,"h'"swap]&B'\ar[r,"j'"swap]&C
\end{tikzcd}
\end{equation}
such that the restriction of $\mu$ to $f:A\rightarrow B$ is a homotopy pushout of $S$ (cf.~Subsection \ref{res}). 
Furthermore, if both $X$ and $X'$ are homotopy short exact, then the homotopy pushout of $S$ is homotopy bicartesian.
\end{proposition}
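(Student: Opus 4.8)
\textbf{Proof plan for Proposition~\ref{cons}.}

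The plan is to construct the homotopy right exact sequence $X'$ by first forming the homotopy pushout of the span $S$ and then extending it to a morphism of $3$-term h-complexes using the universal property of homotopy exact sequences and the structure results of the previous subsections. First I would take a homotopy pushout of $S$, which by hypothesis exists; working in $\rep(\overline{\Sq},\A)$ as in Subsection~\ref{res} I may assume that this homotopy pushout restricts to the original span $S$ on the nose (not just up to isomorphism), so that I obtain an object $X''\in \rep(\overline{\Sq},\A)$
\[
\begin{tikzcd}
A\ar[r,"f"]\ar[d,"\alpha"swap]\ar[rd,"s_1"]&B\ar[d,"h_1"]\\
A'\ar[r,"f'"swap]&B'
\end{tikzcd}
\]
which is homotopy cocartesian, together with the homotopy $s_1$ and a closed morphism $h_1:B\rightarrow B'$. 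I would then set $j':B'\rightarrow C$ to be the morphism determined by the universal property of the homotopy pushout applied to the pair $(j:B\rightarrow C,\ 0:A'\rightarrow C)$ together with the homotopy $-h:A\rightarrow C$ witnessing $jf\simeq 0$; this produces $j'$ and a homotopy $s_2:B\rightarrow C$ of degree $-1$ with $d(s_2)=j'h_1-h_2 j$ where $h_2=\Id_C$, and simultaneously a homotopy $t:A\rightarrow C$ of degree $-2$ fitting into the diagram~(\ref{const}), using Lemma~\ref{strictmorphismlift} (in its dual form) or the cocartesian dual of Remark~\ref{pullbackuniversal}. Setting $h'=-j's_1+\text{(correction terms)}$ — more precisely $h'$ is forced by the requirement $d(h')=-j'f'$, which holds since $f'$ is closed and $j'f'$ is a coboundary by construction — gives the $3$-term h-complex $X'$ with the bent arrow $h'$, and the six-tuple $(\alpha,h_1,\Id_C,s_1,s_2,t)$ is by construction a morphism $\mu:X\rightarrow X'$ in $\mathcal H_{3t}(\A)$ whose restriction to $f:A\rightarrow B$ is the chosen homotopy pushout of $S$.

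Next I would prove that $X'$ is homotopy right exact. Here I invoke Proposition~\ref{push}, part~1): the morphism $\mu:X\rightarrow X'$ has $h_2=\Id_{C}$ a homotopy equivalence, and its restriction to $f:A\rightarrow A_1$ is $X''$, which we arranged to be homotopy cocartesian. By part~1)(b) of that proposition, since $X''$ is homotopy cocartesian, $X$ is homotopy right exact if and only if $X'$ is. Since $X$ is homotopy right exact by hypothesis, so is $X'$. This is the conceptual heart of the argument and the reason for setting things up through $\rep(\overline{\Sq},\A)$.

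Finally, for the last sentence: assume in addition that both $X$ and $X'$ are homotopy short exact, i.e. homotopy bicartesian as objects of $\rep(\Sq,\A)$; I must show the homotopy pushout $X''$ of $S$ is itself homotopy bicartesian. It is homotopy cocartesian by construction, so the only thing to check is that it is homotopy cartesian. For this I would again use Proposition~\ref{push}, this time part~1)(a): $X'$ is homotopy left exact (being homotopy short exact), $h_2=\Id_C$ is a homotopy equivalence, $X''$ is the restriction of $\mu$ to $f:A_0\rightarrow A_1$, and $X$ is homotopy left exact; hence by part~1)(a), $X''$ is homotopy cartesian. Combined with the cocartesian property already established, $X''$ is homotopy bicartesian, as claimed.

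The main obstacle I anticipate is the bookkeeping in the first paragraph: verifying that the morphism $j'$ and the homotopies $s_2$ and $t$ produced by the universal property of the homotopy pushout really assemble into a \emph{morphism in $\mathcal H_{3t}(\A)$} — that is, that all the defining identities $d(s_2)=j'h_1-h_2j$ and $d(t)=h_2h-h'h_0-s_2f-j's_1$ (and the fact that the resulting data is well-defined up to homotopy) hold simultaneously. This is the ``compatibility of homotopies'' subtlety flagged in the remark before Proposition~\ref{push}; once the right exactness of $X$ and $X'$ is phrased correctly, the two appeals to Proposition~\ref{push} are then essentially formal.
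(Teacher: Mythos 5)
Your proposal is correct and follows essentially the same route as the paper: form the homotopy pushout $X''$ of $S$, use its cocartesian universal property (which the paper implements concretely via the single closed map $(h,-j,0)\colon \Cone(A\to B\oplus A')\to C$, whose nullhomotopy after factoring through $B'$ packages $j'$, $s_2$, $h'$ and $t$ and whose differential yields all the required identities at once) to extend to a morphism $\mu\colon X\to X'$ in $\mathcal H_{3t}(\A)$, and then apply Proposition~\ref{push}~1)(b) for right exactness of $X'$ and 1)(a) for the bicartesian claim. The ``bookkeeping'' you flag is exactly what the paper's cone computation resolves in one step, so there is no gap.
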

\begin{proof}
Let $X''$ be the pushout of $S$
\[
\begin{tikzcd}
&A\ar[r,"f"]\ar[d,"h_0=\alpha"swap]\ar[rd,"s_1"red,swap ,red]&B\ar[d,"h_1"]\\
&A'\ar[r,"f'"swap]&B'
\end{tikzcd}\;.
\]
 Consider the morphism 
 \[
 (h,-j,0):M=\Cone(A\xrightarrow{[f,\alpha]^{\intercal}} B\oplus A')\rightarrow C.
 \] 
 Since $X''$ is the homotopy pushout of $S$, we have a morphism $j':B'\rightarrow C$ such that there exists a graded morphism $(-t, s_2, h'):M\rightarrow C$ of degree $-1$ such that 
 \[
 d(-t,s_2,h')=(-j's_1+h, -j+j'h_1, -j'f').
 \] 
 So we have 
 \[
 (-d(t)-h'\alpha-s_2f, d(s_2), d(h'))=(-j's_1+h, -j+j'h_1, -j'f').
 \]
 Thus we have a morphism $\mu:X\rightarrow X'$ of the form \ref{const} where $X'$ is the 3-term h-complex on the second row. 

By Proposition \ref{push}, we deduce that $X'$ is homotopy right exact and that if both $X$ and $X'$ are homotopy short exact sequences, then $X''$ is homotopy bicartesian. 
\end{proof}

The following is a direct consequence of Lemma \ref{Mor(A)and3term} and Proposition \ref{cons}.
\begin{corollary}\label{cok}
Let $\A$ be an additive dg category. 
For a homotopy cocartesian square $X$ as follows
\[
 \begin{tikzcd}
 A\ar[r,"f"]\ar[d,"g"swap]\ar[rd,"s_1"blue,blue]&B\ar[d,"h"]\\
 C\ar[r,"k"swap]&D
 \end{tikzcd}\;,
 \]
the morphism $f$ has a homotopy cokernel if and only if $k$ has a homotopy cokernel. 
If this is the case, there is a morphism from the homotopy cokernel of $f$ to that of $k$ such that when restricted to the third term, it restricts to the identity, and that when restricted to $f:A\rightarrow B$, it restricts to $X$.
Here we still denote by $X$ the corresponding morphism from $f$ to $k$ in $H^0(\Mor(\A))$ given by the homotopy cocartesian square $X$. 
\end{corollary}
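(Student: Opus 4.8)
The plan is to deduce Corollary~\ref{cok} from the two facts cited in its statement: Lemma~\ref{Mor(A)and3term}, which tells us how to extend a morphism of objects in $\Mor(\A)$ (the right-hand vertical edge of the square) to a morphism of $3$-term h-complexes whose two $\Mor$-restrictions are prescribed; and Proposition~\ref{cons}, which produces a homotopy pushout of a span together with a morphism of homotopy right exact sequences realizing it. The starting point is the homotopy cocartesian square $X$; viewing $X$ as a morphism from $f:A\to B$ to $k:C\to D$ in $H^0(\Mor(\A))$ (via $(g,h,s_1)$), we want to transfer the question ``does $f$ have a homotopy cokernel?'' along $X$ to the question ``does $k$ have a homotopy cokernel?''.

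The key steps are as follows. First I would work in $\A^{op}$ so that a homotopy cokernel of a morphism in $\Mor(\A)$ becomes a homotopy kernel of a morphism in $\Mor(\A^{op})$, and a homotopy cocartesian square becomes a homotopy cartesian square; this lets me apply the already-proven statements about homotopy kernels and homotopy pullbacks dually, exactly in the spirit of the duality functors $\RHom(-,\A)$ recalled before Definition~\ref{homotopyses}. Second, assume $f$ has a homotopy cokernel, i.e.\ a homotopy right exact sequence $X_1:\; A\xrightarrow{f}B\xrightarrow{j}\Cone$. I would apply Proposition~\ref{cons} to $X_1$ with $\overline{\alpha}=\overline{g}:A\to C$: since the span $A\xleftarrow{}A\xrightarrow{f}B$ wait --- rather, the relevant span is $C\xleftarrow{g}A\xrightarrow{f}B$, whose homotopy pushout is, by the uniqueness in Proposition~\ref{pullbackunique} (applied dually), precisely the square $X$ up to a unique isomorphism. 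Proposition~\ref{cons} then yields a homotopy right exact sequence $X_1':\; C\xrightarrow{k}D\xrightarrow{j'}\Cone$ together with a morphism $\mu:X_1\to X_1'$ whose restriction to $f:A\to B$ is $X$; this $X_1'$ is the desired homotopy cokernel of $k$, and $\mu$ is the asserted morphism, which by construction restricts to the identity on the third term and to $X$ on $f:A\to B$. The converse direction (if $k$ has a homotopy cokernel, so does $f$) is obtained by the same argument with the roles of the horizontal edges reversed, using that a homotopy cocartesian square remains homotopy cocartesian after this swap together with the dual of Proposition~\ref{pullbackunique} to guarantee that the resulting square is again isomorphic to $X$ (so the induced morphism restricts to $X$, not merely to something isomorphic to it).

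I expect the main obstacle to be not the existence but the precise \emph{naturality/normalization} claims: namely that the constructed morphism from the homotopy cokernel of $f$ to that of $k$ ``restricts to the identity on the third term'' and ``restricts to $X$ on $f:A\to B$'' on the nose in $H^0(\Mor(\A))$, rather than up to an unspecified isomorphism. This requires being careful that the span whose pushout we take via Proposition~\ref{cons} is literally $S:\,C\xleftarrow{g}A\xrightarrow{f}B$ with the chosen representatives, that the homotopy pushout of $S$ supplied by $X$ is used (invoking the remark in Subsection~\ref{res} that a homotopy pushout of a span of representable modules can be assumed to restrict to the original span, not merely be isomorphic to it), and that Lemma~\ref{Mor(A)and3term}'s isomorphism criterion is applied to see that the comparison morphism of homotopy cokernels is an isomorphism iff the comparison $X$ of the $\Mor$-data is --- which it is, being built from the homotopy cocartesian square. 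Once these identifications are pinned down, the statement follows; the equivalence ``$f$ has a homotopy cokernel $\iff$ $k$ has one'' is then immediate from running the construction in both directions.
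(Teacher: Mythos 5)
Your forward implication is sound and follows the route the paper intends: apply Proposition~\ref{cons} to the homotopy cokernel $A\xrightarrow{f}B\to E$ of $f$ along $\overline{g}\colon A\to C$, observe that the span $C\xleftarrow{g}A\xrightarrow{f}B$ admits a homotopy pushout because $X$ is one, and then use the uniqueness of homotopy pushouts (the dual of Proposition~\ref{pullbackunique}) together with Lemma~\ref{Mor(A)and3term} to replace the pushout of $f$ that Proposition~\ref{cons} produces by $k\colon C\to D$ itself and to normalize the resulting morphism of homotopy cokernels so that it restricts to $X$ and to $\Id_E$. Your cautionary remarks about pinning down these identifications are exactly the right ones.

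The gap is in the converse. You assert it follows from ``the same argument with the roles of the horizontal edges reversed'', but the square has no top--bottom symmetry to exploit: Proposition~\ref{cons} pushes a homotopy right exact sequence forward along a morphism \emph{out of its first term}, so transporting a homotopy cokernel of $k$ back to one of $f$ by the same mechanism would require a morphism $C\to A$, which does not exist. (The only symmetry a homotopy cocartesian square enjoys is transposition along the diagonal, which swaps $f$ with $g$ and $k$ with $h$ and hence proves a different statement, relating cokernels of $g$ and of $h$.) The correct argument for this direction is of a different nature: given a homotopy cokernel $C\xrightarrow{k}D\xrightarrow{l}E$ of $k$ with homotopy $m$, form the candidate $3$-term h-complex $A\xrightarrow{f}B\xrightarrow{lh}E$ (with homotopy $mg+ls_1$) together with the evident morphism to $C\xrightarrow{k}D\xrightarrow{l}E$ given by $(g,h,\Id_E)$ --- this is the dual of Lemma~\ref{Mor(A)and3term} --- and then invoke Proposition~\ref{push}~1)(b): the restriction of this morphism to the first two terms is the homotopy cocartesian square $X$ and the third components are identified, so $A\to B\to E$ is homotopy right exact because $C\to D\to E$ is. Without some such transfer of right-exactness along $X$, the ``only if'' half of the corollary remains unproved.
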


\begin{lemma}\label{univ}
Suppose we are given a homotopy cartesian square $X\in \rep(\overline{\Sq},\A)$ of the form
\[
\begin{tikzcd}
A\ar[r,"f"]\ar[d,"g"swap,""{name=2}]\ar[rd,"s"blue,blue]&B\ar[d,"j"]\\
C\ar[r,"k"swap]&D 
\end{tikzcd} 
\]
where all terms are representable, together with a morphism $\alpha$ from $f':A'\rightarrow B$ to $k:C\rightarrow D$ in $H^0(\Mor(\A))$
\[
\begin{tikzcd}
A'\ar[r,"f'"]\ar[d,"g'"swap]\ar[rd,"s'"blue,blue]&B\ar[d,"j"]\\
C\ar[r,"k"swap]&D
\end{tikzcd}\;.
\] 
We still denote by $X$ the associated morphism from $f:A\rightarrow B$ to $k:C\rightarrow D$ given by the homotopy cartesian square $X$. Then there exists a morphism $\beta: f'\rightarrow f$ in $H^0(\Mor(\A))$ such that $X\circ\beta=\alpha$ and that $\beta$ restricts to $\Id_{B}$.
\end{lemma}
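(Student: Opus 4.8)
The plan is to use the universal property of homotopy pullbacks (Remark~\ref{pullbackuniversal}, cf. also Proposition~\ref{pullbackunique}) applied to the cospan obtained by restricting $X$ along $i\colon k\Mor\to\Sq$. First I would package the data: the homotopy cartesian square $X$ gives, via $L\colon\rep(\overline{\Sq},\A)\to\rep(k\Cosp,\A)$, a cospan $C\xrightarrow{k}D\xleftarrow{j}B$ with all terms representable, and $X$ itself is a homotopy cartesian square with $LX$ equal (not merely isomorphic) to this cospan, since we work in $\rep(\overline{\Sq},\A)$ and all terms are representable (see the discussion at the start of Subsection~\ref{res}). The morphism $\alpha\colon f'\to k$ in $H^0(\Mor(\A))$, together with the square $X$ viewed as a morphism $f\to k$, can both be regarded as objects of $\rep(\overline{\Sq},\A)$ whose restrictions along $i'\colon k\Mor\to\Sq$ (the "bottom edge") agree with the fixed cospan $C\to D\leftarrow B$; equivalently, $\alpha$ provides a morphism $\varphi\colon L(\widetilde{X'})\to LX$ of cospans, where $\widetilde{X'}\in\rep(\overline{\Sq},\A)$ is the square associated to $\alpha$, and $\varphi$ restricts to $\Id_B$, $\Id_C$, $\Id_D$ on the three common terms.

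Next I would invoke the universal property: since $X$ is homotopy cartesian and $\widetilde{X'}$ is an arbitrary object of $\rep(\Sq,\A)$, Remark~\ref{pullbackuniversal} yields a canonical morphism $\theta\colon \widetilde{X'}\to X$ in $\rep(\Sq,\A)$ with $L(\theta)=\varphi$. Reading off $\theta$ on the top edge gives the desired morphism $\beta\colon f'\to f$ in $H^0(\Mor(\A))$. The compatibility $L(\theta)=\varphi$ unwinds exactly to: the bottom-edge component of $\theta$ is the identity triple (since $\varphi$ is the identity on $B$, $C$, $D$), hence $\beta$ restricted to $B$ is $\Id_B$, and the composite morphism $f'\xrightarrow{\beta}f\xrightarrow{X}k$ equals $\alpha$ as morphisms in $H^0(\Mor(\A))$ — this is the statement $X\circ\beta=\alpha$. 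Here one should be careful to distinguish the morphism $f\to k$ in $H^0(\Mor(\A))$ "given by $X$" (the restriction $\Res(X)$ in the sense of Lemma~\ref{Mor(A)and3term}) from $X$ as an object of $\rep(\overline{\Sq},\A)$; the identification is exactly the content of the paragraph preceding the lemma.

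The main obstacle, and the step that needs genuine care, is keeping the bookkeeping straight between the three incarnations of the data: an object of $\rep(\overline{\Sq},\A)$, a morphism in $H^0(\Mor(\A))$, and the image under $L$ in $\rep(k\Cosp,\A)$. In particular I must check that the isomorphism $L(\theta)\cong\varphi$ furnished abstractly can be taken to be a genuine equality of cospans restricting to identities — this is where the reduction to representable terms and the freedom (noted in Subsection~\ref{res}) to arrange homotopy pullbacks to restrict strictly to the given cospan is used. A secondary point is to verify that $\beta$ is well-defined up to homotopy, i.e. lives in $H^0(\Mor(\A))$ and not just $Z^0(\Mor(\A))$; but this is automatic since $\theta$ is a morphism in $\rep(\Sq,\A)=H^0(\cdots)$ and restriction along $i$ is a functor to $H^0(\Mor(\A))$. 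Once the translation dictionary is set up correctly, the argument is a direct application of the already-established universal property and involves no new computation.
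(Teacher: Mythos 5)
Your proposal is correct, but it takes a genuinely different route from the paper. The paper's proof works directly at the cocycle level: it forms $M=\Sigma^{-1}\Cone(B\oplus C\xrightarrow{[j,k]}D)$, uses the defining property of the homotopy cartesian square $X$ to lift $[f',-g',s']^{\intercal}\colon A'\to M$ through $[f,-g,s]^{\intercal}\colon A\to M$, and thereby produces an explicit $r\colon A'\to A$ together with explicit homotopies $u,v,w$ satisfying $d(u)=fr-f'$, $d(v)=g'-gr$, $d(w)=-ju-kv+s'-sr$; the morphism $\beta$ is then written down concretely as the pair $(r,u)$ and the relation $X\circ\beta=\alpha$ is witnessed by $(v,w)$. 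You instead package $\alpha$ as an object $\widetilde{X'}$ of $\rep(\overline{\Sq},\A)$, apply the universal property of homotopy pullbacks (Remark~\ref{pullbackuniversal}) to the identity of the common cospan to obtain $\theta\colon\widetilde{X'}\to X$ with $L(\theta)=\Id$, and read off $\beta$ and the identity $X\circ\beta=\alpha$ from the functoriality of the epivalence $\rep(\overline{\Sq},\A)\to\Fun(\Mor,H^0(\Mor(\A)))$ of Lemma~\ref{squareepivalence}. The two arguments rest on the same underlying fact (Remark~\ref{pullbackuniversal} is itself proved by the adjunction-plus-truncation argument of Proposition~\ref{pullbackunique}, which unwinds to the same lifting), so the difference is one of packaging. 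Your version is shorter and immune to sign errors, at the cost of leaning on a remark the paper only sketches and of the translation step you rightly flag — namely that $L(\theta)=\Id$ forces the bottom component of the induced commutative square in $H^0(\Mor(\A))$ to be $\Id_k$, so that naturality gives $X\circ\beta=\alpha$; the paper's version costs more computation but delivers the explicit homotopies, which is in keeping with how the surrounding diagram lemmas are used later.
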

\begin{proof}
Consider the morphism 
\[
A'\xrightarrow{[f',-g',s']^{\intercal}} M= \Sigma^{-1}\Cone(B\oplus C\xrightarrow{[j,k]} D).
\]
 Since $X$ is homotopy cartesian, we have a morphism $r:A'\rightarrow A$ such that there exists a graded morphism $[u,v,w]^{\intercal}:A'\rightarrow M$ of degree $-1$ such that 
 \[
 d([u,v,w]^{\intercal})=[f,-g,s]^{\intercal}\circ r-[f',-g',s']^{\intercal}.
 \]
 So we have $d(u)=fr-f'$, $d(v)=g'-gr$ and $d(w)=-ju-kv+s'-sr$.
 
 We have the following diagram
\[
\begin{tikzcd}
A'\ar[rrd,bend left=8ex,"f'"]\ar[rdd,bend right=8ex,"g'"swap]\ar[rrdd,dashed,bend left=5ex,red,"s'"]\ar[rd,dashed,"r"swap]&&\\
&A\ar[r,"f"]\ar[d,"g"swap,""{name=2}]\ar[rd,"s"blue,blue]&B\ar[d,"j"]\\
&C\ar[r,"k"swap]&D\mathrlap{.} 
\end{tikzcd} 
\]
So we have a morphism $\beta:f'\rightarrow f$ in $H^0(\Mor(\A))$ as follows
\[
\begin{tikzcd}
A'\ar[r,"f'"]\ar[d,"r"swap]\ar[rd,"u"blue,blue]&B\ar[d,equal]\\
A\ar[r,"f"swap]&B
\end{tikzcd}\;
\] 
such that the composition of $\beta$ with $X$ is homotopic to $\alpha$.
\end{proof}
We also have the following direct consequence of Lemma \ref{Mor(A)and3term} and Proposition \ref{cons}.
\begin{lemma}\label{deflationcomposition}
Let $j:B\rightarrow C$ and $j':C\rightarrow D$ be two objects in $H^0(\Mor(\A))$.
Assume that they both admit homotopy kernels which are homotopy short exact sequences
\[
\begin{tikzcd}
A\ar[r,"f"]\ar[rr,"h"swap, bend right=8ex]&B\ar[r,"j"]&C
\end{tikzcd}\;\;,\;\;
\begin{tikzcd}
A'\ar[r,"f'"]\ar[rr,"h'"swap, bend right=8ex]&C\ar[r,"j'"]&D
\end{tikzcd}
\]
If the cospan $L$
\[
\begin{tikzcd}
&B\ar[d,"j"]\\
A'\ar[r,"f'"swap]&C
\end{tikzcd}
\]
admits a homotopy pullback, then we have the following diagram
\[
\begin{tikzcd}
A\ar[dd,bend right=12ex,"h''"{swap,blue},blue]\ar[r,equal]\ar[d,"u"swap]\ar[rd,"s'"{red},red]\ar[rdd,"t"{red},red]&A\ar[d,"f"swap]\ar[dd,bend left=8ex,"h"blue,blue, near end]&\\
E\ar[r,"w"swap,near start]\ar[d,"v"swap]\ar[rr,bend left=6ex,"h'''"{blue,near end},blue]\ar[rd,"s"{red,swap},red]&B\ar[r,"j'j"swap]\ar[d,"j"swap]&D\ar[d,equal]\\
A'\ar[r,"f'"swap]\ar[rr,"h'"{swap,blue},bend right=8ex,blue]&C\ar[r,"j'"swap]&D
\end{tikzcd}
\]
where
\begin{itemize}
\item[a)] The left bottom square is a homotopy pullback square.
\item[b)] The sequence on the leftmost column is a homotopy left exact sequence. 
\item[c)] The sequence in the middle row is a homotopy left exact sequence which is homotopy short exact if the homotopy pullback of the cospan $L$ is homotopy short exact.
\item[d)] The 6-tuple $(\Id_{A},w,f',s',s,t)$ is a morphism from the 3-term h-complex in the first column to that in the second column in $\mathcal H_{3t}(\A)$.
\item[e)] We have $h'''=j's+h'v$ and the 6-tuple $(v,j,\Id_{D},s,0,0)$ is a morphism from the 3-term h-complex in the second row to that in the bottom row in $\mathcal H_{3t}(\A)$.
\end{itemize}
\end{lemma}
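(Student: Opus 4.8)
The statement to prove is Lemma~\ref{deflationcomposition}: given two morphisms $j\colon B\to C$ and $j'\colon C\to D$ in $H^0(\Mor(\A))$, both admitting homotopy kernels which are homotopy short exact sequences, and assuming that the cospan $L$ given by $j\colon B\to C$ and $f'\colon A'\to C$ admits a homotopy pullback, we must construct the displayed $3\times 3$ diagram and verify properties (a)--(e). The plan is to build the diagram in two stages, combining the already-established universal properties of homotopy pullbacks (Proposition~\ref{pullbackunique}, Remark~\ref{pullbackuniversal}) with the diagram-lemma machinery of Proposition~\ref{push} and Corollary~\ref{Comp}.

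First I would form the homotopy pullback $E$ of the cospan $L$. By Remark~\ref{pullbackuniversal} and the discussion in Subsection~\ref{res}, I may choose a representative $X\in\rep(\overline{\Sq},\A)$ with all terms representable, so the homotopy pullback square is
\[
\begin{tikzcd}
E\ar[r,"w"]\ar[d,"v"swap]\ar[rd,"s"blue,blue]&B\ar[d,"j"]\\
A'\ar[r,"f'"swap]&C
\end{tikzcd}
\]
with $d(s)=f'v-jw$. Since the homotopy short exact sequence with kernel $f\colon A\to B$ gives in particular a homotopy left exact sequence $A\to B\to C$, the pair $(w,s)$ (of degrees $0$ and $-1$, with $d(s)=f'v-jw=-jw+f'v$, i.e. a cocycle lift in the sense of Lemma~\ref{lem:3termhcomplex}) factors through the homotopy kernel: I obtain a closed degree-$0$ morphism $u\colon A\to E$ together with auxiliary homotopies $s'$ and $t$ satisfying the relations that make $(\Id_A, w, f', s', s, t)$ a morphism in $\mathcal H_{3t}(\A)$ from the first column $3$-term complex $A\xrightarrow{u}E\xrightarrow{v}A'$ to $A\xrightarrow{f}B\xrightarrow{j}C$; this is exactly the content of applying Lemma~\ref{strictmorphismlift} (with $X_2$ the homotopy kernel of $j$) to the morphism $(v, f', s)$ in $Z^0(\Mor(\A))$ from $v\colon E\to A'$ to $j\colon B\to C$. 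This gives (d), and (a) is the construction itself. For (b), I apply part~1)(a) of Proposition~\ref{push}: the right-hand column $A\xrightarrow{\Id}A$ of the morphism just constructed is (trivially) a homotopy equivalence in the third term after the obvious reindexing, and the target $3$-term complex is homotopy left exact; the restriction to the first arrow is precisely the homotopy cartesian square $X$, hence the source $A\xrightarrow{u}E\xrightarrow{v}A'$ is homotopy left exact. (Here I must align the roles: the relevant homotopy equivalence in Proposition~\ref{push}~1) is the map on the $A_2$-term, which I arrange to be $\Id_C$ by composing $v$ with the identity; the bookkeeping is routine.)

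Next I treat the middle row $E\xrightarrow{w}B\xrightarrow{j'j}D$. The composite $j'j\colon B\to D$ has homotopy kernel given by the second given homotopy short exact sequence composed appropriately: more precisely, by Corollary~\ref{Comp} (the pasting law for homotopy cartesian squares), the square obtained by stacking the homotopy pullback square of $L$ under the square realizing $f'\colon A'\to C$ as part of the homotopy kernel of $j'\colon C\to D$ is again homotopy cartesian, so $E\xrightarrow{w}B\xrightarrow{j'j}D$ is homotopy left exact, with the homotopy $h'''$ and the morphism data $(v, j, \Id_D, s, 0, 0)$ from this $3$-term complex to the bottom row $A'\xrightarrow{f'}C\xrightarrow{j'}D$ read off from the pullback square; setting $h'''=j's+h'v$ and checking $d(h''')=-(j'j)w$ is a direct computation. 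This gives (e). For the homotopy short exactness clause in (c), I invoke part~1)(b) of Proposition~\ref{push}: if the homotopy pullback of $L$ is in fact homotopy short exact (i.e. also homotopy cocartesian), then since the restriction of the morphism $(v,j,\Id_D,s,0,0)$ to the first arrow is homotopy cocartesian, homotopy right exactness of the bottom row $A'\to C\to D$ (which we have, being a homotopy short exact sequence) transfers to homotopy right exactness of the middle row, and combined with (b)-type left exactness we conclude the middle row is homotopy short exact.

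\textbf{Main obstacle.}
The genuine difficulty is not any single existence statement — each factorization comes from a universal property already in hand — but rather the \emph{compatibility of all the homotopies simultaneously}, i.e. producing the auxiliary degree-$(-1)$ and degree-$(-2)$ morphisms $s'$, $s$, $t$, $h'''$ so that every triangle and square in the $3\times 3$ diagram commutes up to the \emph{specified} coherent homotopies and so that the two morphisms in (d) and (e) are honest morphisms in $\mathcal H_{3t}(\A)$ (not merely in $H^0(\Mor(\A))$ after forgetting data). This is precisely the subtlety flagged in the remark before Proposition~\ref{push}: the underlying triangulated-category statements are easy, but lifting them to the dg/$A_\infty$ level requires careful tracking of the contracting homotopies through Lemma~\ref{strictmorphismlift} and Corollary~\ref{Comp}. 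I expect the bulk of the written proof to consist of writing out these homotopy relations explicitly — following the sign conventions fixed after Remark~\ref{homotopylimit} — and checking the cocycle identities $d(s')=\cdots$, $d(t)=\cdots$, $d(h''')=\cdots$ by hand, exactly as in the proof of Proposition~\ref{cons}.
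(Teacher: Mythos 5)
Your overall architecture is sound and is essentially the route the paper intends (the paper gives no written proof, stating the lemma as a direct consequence of Lemma~\ref{Mor(A)and3term} and the dual of Proposition~\ref{cons}; your use of Corollary~\ref{Comp} for the left exactness of the middle row, of Proposition~\ref{push}~1)(b) for its right exactness, and your formula $h'''=j's+h'v$ with the verification of $(v,j,\Id_D,s,0,0)$ are all correct). However, two of your justifications do not go through as stated.

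First, the construction of $u\colon A\to E$. Lemma~\ref{strictmorphismlift} extends a morphism of second arrows to a morphism of 3-term h-complexes \emph{provided the source 3-term h-complex $X_1$ is already given in full}; here the first column is precisely what you are trying to build, so invoking that lemma with ``$X_1=(?\to E\xrightarrow{v}A')$'' is circular, and taking $X_1$ with zero first term produces only the zero map into $A$, not a map $A\to E$. The correct tool is the universal property of the homotopy pullback (Remark~\ref{pullbackuniversal}, equivalently Lemma~\ref{Mor(A)and3term} combined with the dual of Proposition~\ref{cons}): the square over the cospan $L$ with legs $f\colon A\to B$, $0\colon A\to A'$ and diagonal homotopy $h$ is an object of $\rep(\overline{\Sq},\A)$ restricting to $L$, hence factors through the homotopy pullback square; unpacking that factorization yields $u$ together with $s'$, $h''$ (so that $d(h'')=-vu$) and the degree $-2$ component $t$, which is exactly the data needed for the column and for (d).

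Second, your justification of (b). In the morphism $(\Id_A,w,f',s',s,t)$ from the first column to the second, the map on the \emph{third} terms is $f'\colon A'\to C$, which is not a homotopy equivalence, so Proposition~\ref{push}~1)(a) does not apply, and one cannot ``arrange it to be $\Id_C$'' without changing the diagram. Moreover the restriction of this morphism to the \emph{first} arrows is the square with vertical maps $\Id_A$ and $w$, not the pullback square. What you need is Proposition~\ref{push}~2)(b): the map on the first terms is $\Id_A$ (a homotopy equivalence), and the restriction to the \emph{second} arrows is the transpose of the homotopy pullback square of $L$, hence homotopy cartesian (Definition~\ref{maindef} is symmetric under transposition); since the second column is homotopy left exact, so is the first. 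With these two corrections your argument is complete.
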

Since the diagram functor $\Dia: H^0(\Mor(\A)) \to \Mor(H^0(\A))$ is an epivalence, 
an object $f: A \to B$ of $\Mor(\A)$ is a homotopy equivalence if and only if
this holds for any object $f': A' \to B'$ such that $\Dia(f)$ and $\Dia(f')$ are 
isomorphic in $\Mor(H^0(\A))$.
It is direct to verify the following lemma.
\begin{lemma}\label{homotopyequivalencestable} Consider an object
 \[
\begin{tikzcd}
A\ar[r,"f'"]\ar[d,"g"swap]\ar[rd,"h"]&B\ar[d,"j"]\\
A'\ar[r,"f'"swap]&B'
\end{tikzcd}\;\;.
\] 
of $\rep(\overline{\Sq},\A)$ which is homotopy cartesian.
If the morphism $j$ is a homotopy equivalence, then so is the morphism $g$.
\end{lemma}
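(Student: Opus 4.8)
\textbf{Proof proposal for Lemma~\ref{homotopyequivalencestable}.}

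The plan is to reduce the statement to a computation with $\RHom$-complexes via the definition of a homotopy cartesian square, and then invoke the Five-Lemma. First I would use the remark preceding the lemma: since the diagram functor $\Dia\colon H^0(\Mor(\A))\to\Mor(H^0(\A))$ is an epivalence (Lemma~\ref{squareepivalence}) and being a homotopy equivalence is a property of the isomorphism class of $\Dia(g)$, I may freely replace the given object of $\rep(\overline{\Sq},\A)$ by any isomorphic one. Up to quasi-equivalence I may also assume $\A$ is connective (using Lemma~\ref{truncationhomotopycartesian}) and that all four terms are representable dg $\A$-modules (working over $\overline{\Sq}$, which is cofibrant, so that objects of $\rep(\overline{\Sq},\A)$ are given by honest dg functors). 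After these reductions, I unwind Definition~\ref{maindef}: the square being homotopy cartesian means precisely that the canonical map $A\to\Sigma^{-1}\Cone\bigl((-j,k)\bigr)$ induces, for each object $A'\in\A$, an isomorphism $\tau_{\leq 0}\RHom(A'^{\wedge},A)\xrightarrow{\sim}\tau_{\leq 0}\RHom(A'^{\wedge},\Sigma^{-1}\Cone((-j,k)))$ in $\D(k)$, where here $k$ is the morphism $B\to B'$ in the lemma and $j$ plays the role of $A'\to B'$.

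Next I would fix $A'\in\A$, abbreviate $(A',-)^{n}$ for $\Hom_{\D(\A)}(A'^{\wedge},\Sigma^{-n}-)$ (which, in the connective case, already computes the truncated $\RHom$ in non-positive degrees, by connectivity of $\A$), and write down the long exact sequence obtained by applying $\Hom_{\D(\A)}(A'^{\wedge},-)$ to the distinguished triangle
\[
A\xrightarrow{[f,-g]^{\intercal}} B\oplus A' \xrightarrow{[j,\,k]} B' \to \Sigma A
\]
that witnesses the homotopy cartesian property (after the identification of $\Sigma^{-1}\Cone((j,k))$ as the homotopy fibre; note here $f\colon A\to B$ is the top map and $A'$ is the bottom-left corner $C$ of the square in the lemma, while $g\colon A\to C=A'$ and $j\colon B\to B'$, $k\colon C\to D=B'$ in that notation). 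The hypothesis that $j\colon B\to B'$ is a homotopy equivalence means that $(A',j^{\wedge})^{n}$ is an isomorphism for every $n\geq 0$ and every $A'\in\A$. The homotopy cartesian hypothesis feeds in as the long exact sequence relating $(A',g^{\wedge})$ to $(A',j^{\wedge})$, $(A',k^{\wedge})$; combining these two pieces of information through the Five-Lemma (applied degreewise in non-positive cohomological degrees, exactly as in the proof of Lemma~\ref{truncationhomotopycartesian}) forces $(A',g^{\wedge})^{n}$ to be an isomorphism for all $n\geq 0$ and all $A'$, which is precisely the statement that $g$ is a homotopy equivalence in $\A$.

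The only genuinely delicate point is bookkeeping: one must be careful that the canonical map $A\to\Sigma^{-1}\Cone((-j,k))$ is set up so that its composite with the projection to $B$ is $f$ and with the projection to $C$ is $g$, so that after applying $\Hom_{\D(\A)}(A'^{\wedge},-)$ the connecting maps in the long exact sequence are literally $(A',g^{\wedge})$ and $(A',f^{\wedge})$ up to sign; this is already implicit in the discussion after Lemma~\ref{adj} and in Example~\ref{ordinary}, so I would simply cite that normalization rather than redo it. Everything else is the Five-Lemma, so I expect no real obstacle — the statement is genuinely "direct to verify" as the text claims, and the write-up is essentially three lines once the reductions and the triangle are in place.
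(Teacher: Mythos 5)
The paper offers no proof of this lemma at all --- it is dismissed with ``It is direct to verify'' --- so there is nothing to compare line by line; your write-up correctly supplies the omitted verification, and the reductions (connectivity via Lemma~\ref{truncationhomotopycartesian}, representability of the terms, invariance of the conclusion under isomorphism in $\Mor(H^0(\A))$) are all legitimate. Two small points of bookkeeping are worth making explicit. First, the ``Five-Lemma'' step is more transparently phrased as follows: since $j_*\colon \Hom_{\D(\A)}(A''^{\wedge},\Sigma^nB^{\wedge})\to\Hom_{\D(\A)}(A''^{\wedge},\Sigma^nB'^{\wedge})$ is bijective for all $n$, the map $(-j_*,k_*)$ in the long exact sequence is surjective in every degree, so all connecting maps vanish and one obtains short exact sequences in degrees $n\leq 0$; the kernel of $(-j_*,k_*)$ is the graph of $j_*^{-1}k_*$ and projects isomorphically onto $\Hom(A''^{\wedge},\Sigma^nA'^{\wedge})$, and the composite of $(f_*,g_*)$ with that projection is $g_*$, which is therefore bijective. (An even shorter route: when $j$ is a homotopy equivalence, $\Sigma^{-1}\Cone(-j^{\wedge})$ is contractible, so $\Sigma^{-1}\Cone((-j,k))\simeq A'^{\wedge}$ and the canonical map $A^{\wedge}\to\Sigma^{-1}\Cone((-j,k))$ is identified with $g^{\wedge}$; homotopy cartesianness then literally says $g_*$ is an isomorphism on $\tau_{\leq 0}\RHom$.) Second, at the end you should add the one-line Yoneda step: bijectivity of $H^0\Hom_{\A}(A'',A)\to H^0\Hom_{\A}(A'',A')$ for all $A''\in\A$ means $\overline{g}$ is an isomorphism in $H^0(\A)$, which is the definition of $g$ being a homotopy equivalence. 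With those clarifications the proof is complete.
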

\newpage
\section{Exact dg categories}

Let $\A$ be a dg category over $k$ which is {\em additive}, i.e.~$H^0{\A}$ is additive. 
For convenience, we will assume that $Z^0(\A)$ is an additive category.

For an object $f:A\rightarrow B$ in $\rep(\Mor)$, denote by $[f]$ its isomorphism class and by $\overline{f}$ the corresponding morphism in $H^0(\A)$, i.e.~ the corresponding object in $\Fun(\Mor,\D(\A))$. 

Recall that by Lemma \ref{epi}, the diagram functor $\Dia:\D(\Mor)\rightarrow \Mor(\D(\A))$ is an epivalence.
It restricts to an epivalence $H^0(\Mor(\A))\rightarrow \Mor(H^0(\A))$.
Therefore if a property of objects in $H^0(\Mor(\A))$ is stable under isomorphisms, we will also say that the corresponding objects in $\Mor(H^0(\A))$ have the same property.
We emphasize that if the object $f:A\rightarrow B$ is isomorphic to $f':A'\rightarrow B'$ and $g:B\rightarrow C$ is isomorphic to $g':B'\rightarrow C'$ in $H^0(\Mor(\A))$, it is not necessarily true that $gf$
is isomorphic to $g'f'$. 
But this will not cause confusion because in what follows, we consider objects in $H^0(\Mor(\A))$ up to isomorphism. 
For example, when we say ``compositions of deflations are deflations", 
it means that $gf$ is a deflation whenever $g$ and $f$ are deflations and then $g'f'$ is also a deflation, 
although $g'f'$ and $gf$ may not be isomorphic.
\begin{definition}\label{exactdgstructure}
An {\em exact structure} on $\A$ is a class $\mathcal{S}\subseteq \mathcal H_{3t}(\A)$ stable under isomorphisms, consisting of homotopy short exact sequences (called {\em conflations})
\[
\begin{tikzcd}
A\ar[r, tail,"i"]\ar[rr,bend right=8ex,"h"swap]&B\ar[r,two heads, "p"]&C\\
\end{tikzcd} 
\]
where $i$ is called an {\em inflation} and $p$ is called a {\em deflation}, such that the following axioms are satisfied
\begin{itemize}
\item[Ex0]$\Id_{0}$ is a deflation.
\item[{Ex}1]Compositions of deflations are deflations.
\item[{Ex}2]Given a deflation $p:B\rightarrow C$ and any map $c: C'\rightarrow C$ in $Z^0(\A)$, the object
\[
\begin{tikzcd}
&C'\ar[d,"c"]\\
B\ar[r,"p"swap]&C
\end{tikzcd}
\]
admits a homotopy pullback 
\[
\begin{tikzcd} 
{B'}\ar[r,"{p'}"]\ar[d,"{b}"swap]\ar[rd,"s"blue,blue]&{C'}\ar[d,"{c}"]\\
{B}\ar[r,"{p}"swap]&{C}
\end{tikzcd}
\]
and ${p'}$ is also a deflation.
\item[$\Ex2^{op}$]Given an inflation $i: A\rightarrow B$ and any map $a:A\rightarrow A'$ in $Z^0(\A)$, the object
\[
\begin{tikzcd}
A\ar[r,"i"]\ar[d,"a"swap]&B\\
A'&
\end{tikzcd}
\]
admits a homotopy pushout 
\[
\begin{tikzcd}
{A}\ar[r,"{i}"]\ar[d,"{a}"swap]\ar[rd,"s"blue,blue]&{B}\ar[d,"{j}"]\\
{A'}\ar[r,"{i'}"swap]&{B'}
\end{tikzcd}
\]
and ${i'}$ is also an inflation.
\end{itemize}
We call $(\A,\mathcal {S})$ or simply $\A$ an {\em exact dg category}.
\end{definition} 
\begin{definition}\label{def:exactmorphism}
Let $(\A,\mathcal S)$ and $(\A',\mathcal S')$ be exact dg categories.
A morphism $F:\A\rightarrow \A'$ in $\Hqe$ is {\em exact} if the induced functor $\mathcal H_{3t}(\A)\rightarrow \mathcal H_{3t}(\A')$ sends objects in $\mathcal S$ to objects in $\mathcal S'$.
We denote by $\Hqe_{ex}(\A,\A')$ the subset of $\Hqe(\A,\A')$ consisiting of exact morphisms.
\end{definition}
\begin{remark}
Note that the class $\mathcal S\subseteq \mathcal H_{3t}(\A)$ is stable under isomorphisms. 
Let $\overline{\imath}$ be a morphism in $H^0(\A)$. 
If a representative $i$ of $\overline{\imath}$ is an inflation, then any representative in 
$\overline{\imath}$ is also an inflation.
Thus, being an inflation is a property of the morphism $\overline{\imath}$.
We have an even stronger property: If we view $\ol{\imath}:A_0\rightarrow A_1$ as an 
object in $\Mor(H^0(\A))$, then being an inflation is stable under isomorphisms in $\Mor(H^0(\A))$, cf.~Lemma \ref{Mor(A)and3term}.

In the setting of Axiom ${\Ex}2$, since the homotopy pullback is unique up to a unique isomorphism 
in $\mathcal H_{3t}(\A)$, by Lemma \ref{epi}, the object ${j'}$ is unique up to a canonical isomorphism
in $H^0(\Mor(\A))$.
\end{remark}
\begin{remark}\label{truncationexactdgstructure}Let $\A$ and $\A'$ be additive dg categories.
\begin{itemize}
\item[a)] By Lemma \ref{truncationhomotopycartesian} and its dual, the exact dg structures on $\A$ are in bijection with the exact structures on $\tau_{\leq 0}\A$.
\item[b)] Let $F:\A\rightarrow \A'$ be a quasi-equivalence of dg categories.
Then $F$ induces an equivalence of categories $\mathcal H_{3t}(\A)\iso\mathcal H_{3t}(\A')$ which 
preserves and reflects the property of being homotopy short exact.
Thus, the quasi-equivalence $F$ induces a bijection between the class of exact dg structures 
on $\A$ and that on $\A'$. 

For two objects $C$ and $A$ in $\A$, consider the set of `equivalence classes of extensions'
$\mathbb E(C,A)$ (resp.~$\mathbb E'(FC,FA)$), to be defined in Definition \ref{equivalencerelation}, which is associated with the exact structure on $\A$ (resp.~$\A'$). 
The quasi-equivalence $F$ induces a bijection between $\mathbb E(C,A)$ and $\mathbb E'(C,A)$, cf.~Lemma \ref{quasiequivalencebifunctor}. 
\item[c)] Let $\A$ be an exact dg category and $\A\rightarrow \pretr(\A)$ be the inclusion of $\A$ into its pretriangulated hull.
Let $\A'$ be the additive closure of $\A$ in $\pretr(\A)$.
Since $H^0(\A)$ is an additive category, the inclusion of $\A\rightarrow \A'$ is a quasi-equivalence.
By b), we may replace $\A$ by $\A'$ and assume that the dg category $\A$ is such that $Z^0(\A)$ is an additive category.

\end{itemize}
\end{remark}
Note that an object $f:A\rightarrow B$ in $H^0(\Mor(\A))$ is a deflation (resp. an inflation) if and only if it admits a homotopy kernel (resp. homotopy cokernel) (cf.~Definition \ref{homotopykernel}) which is a conflation. Observe that homotopy equivalences are both inflations and deflations by axioms 
Ex0, {Ex}2 and {Ex}$2^{op}$.

\begin{example}
Let $\A$ be an additive category which we consider as a dg category concentrated in degree 0. 

Let $A$, $B$ and $C$ be objects in $\A$ and consider a 3-term h-complex $X$ in $\A$
\[
\begin{tikzcd}
A\ar[rr,"h"swap,bend right=8ex]\ar[r,"f"]&B\ar[r,"j"]&C.
\end{tikzcd} 
\]
Since $\A$ is concentrated in degree zero, the homotopy $h$ is zero. 
By Example \ref{ordinary}, the 3-term h-complex $X$ is a homotopy short exact sequence if and only if the corresponding sequence $0\rightarrow A\xrightarrow {f}B\xrightarrow{j}C\rightarrow 0$ is a kernel-cokernel pair in the additive category $\A$. 
Also, two kernel-cokernel pairs are isomorphic (resp. equivalent) if and only if the corresponding homotopy short exact sequences are isomorphic (resp. equivalent). 

By \cite[Appendix A]{Keller90}, the axioms of a Quillen exact structure on $\A$ correspond to the axioms of an exact dg structure on $\A$.
Therefore, to endow an additive category $\A$ with a Quillen exact structure is the same as to endow $\A$ with an exact dg structure.
\end{example}
\begin{example}
 Let $\A$ be a pretriangulated dg category.
 By Remark \ref{truncationexactdgstructure}, we may assume that the dg category $\A$ is strictly pretriangulated.
 Then each morphism $f:A\rightarrow B$ admits a homotopy cokernel
 \[
 \begin{tikzcd}
 A\ar[r,"f"] \ar[rr, bend right=8ex,"h"swap]&B\ar[r,"j"]&\Cone(f).
 \end{tikzcd}
 \]
 Dually, each morphism admits a homotopy kernel. 
 It is not hard to check that the class of all homotopy short exact sequences in $\A$ defines an exact structure on $\A$.
 If not stated otherwise, we always consider this maximal exact structure on a pretriangulated dg category.
\end{example}
\begin{example-definition}\label{exactdgextensionclosed}
A full dg subcategory $\A'$ of an exact dg category $\A$ is {\em extension-closed} provided that for each conflation in $\A$
\[
\begin{tikzcd}
A\ar[r,"f"]\ar[rr,bend right=8ex,"h"swap]&B\ar[r,"j"]&C,
\end{tikzcd}
\]
if $A$ and $C$ belong to $\A'$, then so does $B$.

Let $\A'$ be an extension-closed subcategory of an exact dg category $(\A,\mathcal S)$.
Let $X$ be a conflation in $\A$ with all terms in $\A'$.
Then $X$ can be seen as a 3-term h-complex in $\A'$.
By Remark \ref{subcategory}, the object $X$ is a homotopy short exact sequence in $\A'$.

We have an inclusion functor $\mathcal H_{3t}(\A')\hookrightarrow \mathcal H_{3t}(\A)$.
Let $\mathcal S'$ be the class of objects in $\mathcal H_{3t}(\A')$ whose image in $\mathcal H_{3t}(\A)$ belongs to $\mathcal S$. 
We show that the class $\mathcal S'$ defines an exact dg structure on $\A'$.
Axiom ${\Ex}0$ is direct to check. 
Axiom ${\Ex}1$ follows from Lemma \ref{deflationcomposition} and the fact that $\A'$ is extension-closed in $\A$.
Consider the span $S$ of a morphism in $\A'$ along an inflation in $\A'$.
It admits a homotopy pushout in $\A$, and by Proposition \ref{cons}, 
the homotopy pushout $X$ is an extension of objects in $\A'$ and hence remains an object in $\A'$. 
Again by Proposition \ref{cons}, the pushout of the inflation in $\A'$ is an inflation in $\A$ and 
the conflation that this inflation belongs to is in $\mathcal S'$.
Thus Axiom $\Ex2^{op}$ is satisfied.
Dually, one checks Axiom $\Ex2$.
\end{example-definition}
\begin{example}\label{exm:Iyama--Yang}
We consider a special case of~\cite[1.2]{IyamaYang20} where the torsion pair $\mathcal S=\X\perp\Y$ is a t-structure.
Let $\T$ be an algebraic triangulated category and $\mathcal S$ a thick subcategory with a t-structure $\mathcal S=\X\perp \Y$. 
Assume that $\T$ has torsion pairs $\T=\X\perp \X^{\perp}={^{\perp}\Y}\perp \Y$ where $(-)^{\perp}$ and $^{\perp}(-)$ both denote the Hom orthogonal.
Put $\Z=\X^{\perp}\cap {^{\perp}\Sigma \Y}$ and $\U=\T/\mathcal S$. 
So we have a sequence of functors $\Z\hookrightarrow \T\xrightarrow{\pi}\T/\mathcal S=\U$.
By~\cite[Theorem 1.1]{IyamaYang20}, we have that the above composition is an equivalence of $k$-categories $F:\Z\iso \U$.
Indeed, the triangulated structure on $\Z$ is canonical and does not depend on the categories $\mathcal X$ and $\mathcal Y$, once we fix a dg enhancement for $\T$.
We denote the dg enhancements of the above categories by adding a subscript $dg$ to the notations of the corresponding categories.  
In particular, we have that $\U_{dg}$ is the dg quotient $\T_{dg}/\mathcal S_{dg}$.
Claim: the composition of dg functors $\Z_{dg}\hookrightarrow \T_{dg}\rightarrow \U_{dg}$ induces a quasi-equivalence  $\tau_{\leq 0}\Z_{dg}\iso \tau_{\leq 0}\U_{dg}$.
Indeed, let $Z$ be an object in $\Z$. Then $\Sigma^{-1}Z$ is in the subcategory $\Sigma^{-1}\X^{\perp}\cap {^{\perp}\Y}\subseteq \Sigma^{-1}\X^{\perp}\cap {^{\perp}\Sigma \Y}$.
We take the triangle of $\Sigma^{-1}Z$ associated with the t-structure $\T=\X\perp\X^{\perp}$
\[
\begin{tikzcd}
V\rightarrow \Sigma^{-1}Z\rightarrow Z'\rightarrow \Sigma V.
\end{tikzcd}
\]
We see that $Z'$ is in $\Z$ and $F(Z')\iso \Sigma^{-1}F(Z)$.
For $i\geq 0$ and $W\in \Z$, we have 
\[
\Hom_{\T}(Z',\Sigma^{-i}W)\iso \Hom_{\T}(Z,\Sigma^{-i+1}W).
\]
Therefore by induction we have 
\[
\Hom_{\T}(Z,\Sigma^{-i+1}W)\iso \Hom_{\U}(F(Z),\Sigma^{-i+1}F(W))
\]
 for $\mbox{$i\geq 0$}$.
Since $\U_{dg}$ is pretriangulated, we have that $\Z_{dg}$ is an exact dg category with conflations given by all homotopy short exact sequences in $\Z_{dg}$.
Note that in general $\Z_{dg}$ is not a pretriangulated dg category. 
\end{example}
The following diagram lemma is useful. 
It is a direct consequence of Lemma \ref{univ} and Axiom $\Ex2$ (or $\Ex2^{op}$).
\begin{lemma}\label{fact}
Let $(\A,\mathcal S)$ be an exact dg category. 
Suppose we are given conflations $X$
\[
 \begin{tikzcd}
 A\ar[r,"f"]          \ar[rr, bend right=8ex,"h"swap]                  &B\ar[r,"j"]&C
\end{tikzcd}
\]
and $X'$
\[
 \begin{tikzcd}
 A'\ar[r,"f'"]          \ar[rr, bend right=8ex,"h'"swap]                  &B'\ar[r,"j'"]&C'
\end{tikzcd}.
\]
Let $\alpha:X\rightarrow X'$ be a morphism in $\mathcal H_{3t}(\A)$. 
Denote by $\overline a:A\rightarrow A'$ (resp.~$\overline c:C\rightarrow C'$) the restriction of $\alpha$ to $A$ (resp.~$C$). 
Then $\alpha$ can be written as a composition $X\xrightarrow{\beta} \tilde{X}\xrightarrow{\gamma} X'$ where $\tilde{X}$ is a conflation with ends $A'$ and $C$, and the morphism $\beta$ restricts to $\overline a:A\rightarrow A'$ and $\Id_{C}$, and the morphism $\gamma$ restricts to $\Id_{A'}$ and $\overline{c}:C\rightarrow C'$.\end{lemma}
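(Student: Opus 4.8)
The plan is to derive this directly from Axiom $\Ex2$ together with the universal properties of homotopy pullbacks (Proposition \ref{pullbackunique}, Lemma \ref{univ}) and of homotopy left exact sequences (the property recalled just before Lemma \ref{strictmorphismlift}). Choose representatives $a\colon A\to A'$, $c\colon C\to C'$ of $\overline a,\overline c$, and write $\Res'(\alpha)\colon j\to j'$ for the restriction of $\alpha$ to the deflations; concretely $\Res'(\alpha)$ is given by the middle map $h_1\colon B\to B'$, the target map $h_2\simeq c$, and the homotopy component $s_2$ with $d(s_2)=j'h_1-h_2j$. First I would apply Proposition \ref{cons} in $\A^{op}$ — where $X'$, being homotopy short exact, is homotopy right exact, and $\overline c$ becomes a morphism out of the left end; the required homotopy pullback of the cospan $B'\xrightarrow{j'}C'\xleftarrow{c}C$ exists by Axiom $\Ex2$ since $j'$ is a deflation. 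This produces a $3$-term $h$-complex $\tilde X\colon A'\xrightarrow{\tilde f}\tilde B\xrightarrow{\tilde j}C$, homotopy left exact with the same left end $A'$, together with a morphism $\gamma\colon\tilde X\to X'$ whose restriction to $j'\colon B'\to C'$ is a homotopy pullback square and which restricts to $\Id_{A'}$ on $A'$ and to $\overline c$ on $C$. By Axiom $\Ex2$ the map $\tilde j$ is a deflation, so its homotopy kernel is a conflation; by uniqueness of homotopy kernels (Proposition \ref{pullbackunique}) this conflation is $\tilde X$, so $\tilde X\in\mathcal S$ is a conflation with ends $A'$ and $C$, and $\gamma$ has the restriction properties required of it.

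Next I would construct $\beta\colon X\to\tilde X$. The datum $(h_1,j,s_2)$ extracted from $\alpha$ exhibits a morphism in $H^0(\Mor(\A))$ from the object $h_1\colon B\to B'$ to the object $\overline c\colon C\to C'$ lying over the cospan $B'\xrightarrow{j'}C'\xleftarrow{c}C$, of which $\tilde B$ is the homotopy pullback. Applying Lemma \ref{univ} to this homotopy cartesian square yields a map $\tilde b\colon B\to\tilde B$ with $\tilde j\,\tilde b\simeq j$ and $m\,\tilde b\simeq h_1$, where $m\colon\tilde B\to B'$ denotes the pullback leg. Hence $(\tilde b,\Id_C)$, together with the homotopy provided by Lemma \ref{univ}, is a morphism $\beta_1\colon j\to\tilde j$ in $H^0(\Mor(\A))$. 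Since $\tilde X$ is homotopy left exact, the universal property of homotopy left exact sequences lifts $\beta_1$ uniquely to a morphism $\beta\colon X\to\tilde X$ in $\mathcal H_{3t}(\A)$ with $\Res'(\beta)=\beta_1$; in particular $\beta$ restricts to $\Id_C$ on $C$.

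Finally I would verify $\gamma\beta=\alpha$ and that $\beta$ restricts to $\overline a$ on $A$. Since $\Res'(\gamma)$ is the homotopy pullback square $(m,\overline c)$, the composite $\Res'(\gamma\beta)=\Res'(\gamma)\circ\beta_1$ has middle component $m\,\tilde b\simeq h_1$ and target component $\overline c$, hence equals $\Res'(\alpha)$. Because $X'$ is homotopy left exact, the map $\Res'$ on morphisms $X\to X'$ is injective, so $\gamma\beta=\alpha$; restricting this identity to $A$ and using $\gamma|_{A'}=\Id_{A'}$ gives $\beta|_A=\alpha|_A=\overline a$. Thus $\tilde X$, $\beta$, $\gamma$ have all the asserted properties. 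I expect the only delicate point to be the homotopy bookkeeping: confirming that the homotopy $s_2$ attached to $\alpha$ is precisely the input required by Lemma \ref{univ} so that $\beta_1$ is a genuine closed morphism in $H^0(\Mor(\A))$, and that the uniqueness statements (of homotopy kernels, and of extensions of homotopy left exact sequences) apply verbatim. There is no structural obstacle once these universal properties are lined up.
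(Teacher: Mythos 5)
Your argument is correct and follows exactly the route the paper intends: the paper gives no written proof but states that the lemma ``is a direct consequence of Lemma~\ref{univ} and Axiom $\Ex2$ (or $\Ex2^{op}$)'', and your construction of $\tilde X$ via the dual of Proposition~\ref{cons} applied to the pullback of $j'$ along $c$, followed by the factorization from Lemma~\ref{univ} and the lift through the universal property of homotopy left exact sequences, is precisely that argument spelled out. The homotopy bookkeeping you flag at the end is handled by the cited universal properties, so no gap remains.
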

\begin{remark}If we use the biadditive bifunctor $\mathbb E$ which is to be defined in \ref{bi}, then we have $[X']\in \mathbb E(C,A')$ and the morphism $\beta$ (resp. $\gamma$) shows that $[\tilde{X}]=\overline a_*[X]$ (resp. $[\tilde{X}]=\overline c^{*}[X']$).
\end{remark}
Combining Lemma \ref{fact} and Lemma \ref{homotopyequivalencestable}, we have
\begin{corollary}\label{middleterm}
Let $\A$ be an exact dg category.
If a morphism $\alpha:X\rightarrow X'$ of conflations restricts to homotopy equivalences on both ends, it restricts to a homotopy equivalence between the middle terms.
\end{corollary}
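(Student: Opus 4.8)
The plan is to deduce Corollary~\ref{middleterm} by factoring the morphism of conflations through an intermediate conflation and then applying Lemma~\ref{homotopyequivalencestable} twice. Concretely, suppose $\alpha: X \to X'$ is a morphism of conflations
\[
\begin{tikzcd}
A\ar[r,"f"]\ar[rr,bend right=8ex,"h"swap]&B\ar[r,"j"]&C
\end{tikzcd}
\quad\text{and}\quad
\begin{tikzcd}
A'\ar[r,"f'"]\ar[rr,bend right=8ex,"h'"swap]&B'\ar[r,"j'"]&C'
\end{tikzcd}
\]
whose restrictions $\overline a: A\to A'$ and $\overline c: C\to C'$ to the two ends are homotopy equivalences. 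By Lemma~\ref{fact}, $\alpha$ factors in $\mathcal H_{3t}(\A)$ as $X \xrightarrow{\beta} \tilde X \xrightarrow{\gamma} X'$, where $\tilde X$ is a conflation with ends $A'$ and $C$, the morphism $\beta$ restricts to $\overline a$ and $\Id_C$, and $\gamma$ restricts to $\Id_{A'}$ and $\overline c$. Writing $\tilde X$ as
\[
\begin{tikzcd}
A'\ar[r,"\tilde f"]\ar[rr,bend right=8ex,"\tilde h"swap]&\tilde B\ar[r,"\tilde j"]&C,
\end{tikzcd}
\]
it suffices to show that the middle-term restrictions $\overline b_\beta: B\to \tilde B$ of $\beta$ and $\overline b_\gamma: \tilde B\to B'$ of $\gamma$ are both homotopy equivalences, since the middle-term restriction of $\alpha$ is their composition (up to homotopy, using that $\Dia$ restricted to middle terms is functorial on the relevant diagrams).

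For the first factor, I would exploit that $\beta$ restricts to the identity on $C$. By Axiom $\Ex2$ applied to the deflation $\tilde j:\tilde B\to C$ and the map $\Id_C$ — or more directly, by inspecting the homotopy cartesian square associated to the restriction of $\beta$ to the pair $(B\to C, \tilde B\to C)$ — one gets a homotopy cartesian square in $\rep(\overline{\Sq},\A)$ of the form
\[
\begin{tikzcd}
B\ar[r,"j"]\ar[d,"\overline b_\beta"swap]\ar[rd,"s"]&C\ar[d,equal]\\
\tilde B\ar[r,"\tilde j"swap]&C.
\end{tikzcd}
\]
Here I would use Proposition~\ref{push}, part~1(a): since both $X$ and $\tilde X$ are homotopy left exact (they are conflations), and the morphism $\beta$ restricts to a homotopy equivalence on the $C$-end, the restriction of $\beta$ to $f: A\to B$ is homotopy cartesian; combined with Corollary~\ref{Comp} or the gluing of homotopy cartesian squares, the square above is homotopy cartesian. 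Then Lemma~\ref{homotopyequivalencestable}, applied with $j$ being a homotopy equivalence — wait, more precisely, one applies it with the vertical map $C\xrightarrow{\Id} C$ being a homotopy equivalence, concluding that $\overline b_\beta$ is a homotopy equivalence. Dually, using part~1(b) / $2$ of Proposition~\ref{push} and the dual of Lemma~\ref{homotopyequivalencestable}, the fact that $\gamma$ restricts to $\Id_{A'}$ on the $A$-end and $\overline c$ (a homotopy equivalence) on the $C$-end forces $\overline b_\gamma$ to be a homotopy equivalence as well.

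The main obstacle I anticipate is bookkeeping the compatibility of homotopies rather than any deep structural point — exactly the subtlety flagged in the remark after Lemma~\ref{lem:3termhcomplex} and in Proposition~\ref{push}. Specifically, one must be careful that the "middle-term restriction" of a composite $\gamma\beta$ is genuinely homotopic to the composite of the middle-term restrictions as objects of $H^0(\Mor(\A))$ (not $\Mor(H^0(\A))$), and that the homotopy cartesian squares extracted from $\beta$ and $\gamma$ via Lemma~\ref{fact} have their distinguished homotopies $s$ chosen consistently so that Lemma~\ref{homotopyequivalencestable} applies verbatim. All of this is routine given Lemma~\ref{fact}, Proposition~\ref{push}, and Lemma~\ref{homotopyequivalencestable}, but it requires tracking the $6$-tuples $(h_i, s_j, t)$ carefully through the factorization. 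An alternative, cleaner route — which I might prefer to present — is to pass to the bounded dg derived category via Theorem~\ref{main}: there $X$ and $X'$ become actual triangles, $\alpha$ an isomorphism on the outer terms, and the five-lemma in the triangulated category $\D^b(\A)$ gives that the middle map is an isomorphism in $H^0(\A)$, hence (being a morphism in $\A$) a homotopy equivalence; but since Corollary~\ref{middleterm} is stated before Theorem~\ref{main} in logical order, I would keep the direct argument via Lemma~\ref{fact} and Lemma~\ref{homotopyequivalencestable}.
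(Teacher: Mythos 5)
Your overall strategy is the paper's: factor $\alpha$ as $X\xrightarrow{\beta}\tilde X\xrightarrow{\gamma}X'$ via Lemma~\ref{fact} and apply Lemma~\ref{homotopyequivalencestable} (and its dual) to the two factors. However, the step where you extract the homotopy equivalence from $\beta$ has a genuine gap. The square
\[
\begin{tikzcd}
B\ar[r,"j"]\ar[d,"\overline{b}_\beta"swap]&C\ar[d,equal]\\
\tilde B\ar[r,"\tilde \jmath"swap]&C
\end{tikzcd}
\]
being homotopy cartesian is \emph{equivalent} to $\overline{b}_\beta$ being a homotopy equivalence: since the right-hand vertical is the identity, $\Sigma^{-1}\Cone((-\Id_C,\tilde\jmath))$ is homotopy equivalent to $\tilde B^{\wedge}$, so cartesianness of this square literally says that $\overline{b}_\beta$ induces the required isomorphisms. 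Asserting it is therefore assuming the conclusion, and your proposed justifications do not supply it. Axiom $\Ex2$ applied to $\tilde\jmath$ and $\Id_C$ only reproduces $\tilde\jmath$ as its own pullback; Proposition~\ref{push}~1(a) gives cartesianness of the \emph{other} square (the restriction of $\beta$ to $f\colon A\to B$); and the pasting law of Corollary~\ref{Comp} runs in the wrong direction — it deduces cartesianness of the outer square from that of the square whose cartesianness is \emph{given}, and one cannot conclude that the second square of a composite is cartesian from the first and the outer one being cartesian. Finally, Lemma~\ref{homotopyequivalencestable} is a one-way implication (a homotopy pullback of a homotopy equivalence is a homotopy equivalence); applied to the square that Proposition~\ref{push}~1(a) actually provides, with $\overline{a}$ and $\overline{b}_\beta$ as the two parallel maps, it yields ``$\overline{b}_\beta$ an equivalence $\Rightarrow$ $\overline{a}$ an equivalence'', which is the wrong direction.

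The repair is to use the \emph{construction} underlying Lemma~\ref{fact} rather than Proposition~\ref{push}. By Axiom $\Ex2^{op}$ and Proposition~\ref{cons}, the restriction of $\beta$ to $f\colon A\to B$ is a homotopy \emph{pushout} square, and the ``furthermore'' clause of Proposition~\ref{cons} upgrades it to a homotopy bicartesian square because both $X$ and $\tilde X$ are conflations; the dual of Lemma~\ref{homotopyequivalencestable} then says that $\overline{b}_\beta$, being the homotopy pushout of the homotopy equivalence $\overline{a}$ along $f$, is a homotopy equivalence. Dually, via Axiom $\Ex2$ and the dual of Proposition~\ref{cons}, the restriction of $\gamma$ to the deflations is homotopy bicartesian (in particular cartesian), and Lemma~\ref{homotopyequivalencestable} itself shows that $\overline{b}_\gamma$, the homotopy pullback of the homotopy equivalence $\overline{c}$ along $j'$, is a homotopy equivalence. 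With these two corrected inputs the remainder of your argument (composing the middle-term restrictions in $H^0(\Mor(\A))$) goes through; your remark that the shortcut through $\D^b_{dg}(\A)$ is unavailable at this point of the logical order is also correct.
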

\begin{definition}\label{equivalencerelation}
Let $(\A,\mathcal S)$ be an exact dg category and $A,C$ two objects in $H^0\A$. 
Let $X_i$, $i=1,2$, be two conflations of the form
\begin{equation}\label{twoconflations}
\begin{tikzcd}
A\ar[r,"f_i"]\ar[rr,"h_i"swap,bend right=8ex]&B_i\ar[r,"j_i"]&C
\end{tikzcd}. 
\end{equation}
A morphism $\theta:X_1\rightarrow X_2$ in $\mathcal H_{3t}(\A)$ is called an {\em equivalence} if it restricts to $\Id_{A}$ and $\Id_{C}$. 
Note that by Corollary \ref{middleterm}, an equivalence of conflations is necessarily an isomorphism in $\mathcal H_{3t}(\A)$.
For a conflation, we denote by $[X]$ the equivalence class to which $X$ belongs.

We define $\mathbb E_{\mathcal S}(C,A)$ to be the set of equivalence classes of conflations 
\[
\begin{tikzcd}
A\ar[r,"f"]\ar[rr,"h"swap,bend right=8ex]&B\ar[r,"j"]&C
\end{tikzcd}
\]
with fixed ends $A$ and $C$.
When there is no risk of confusion, we will simply denote it by $\mathbb E(C,A)$.
\end{definition} 
\begin{lemma}\label{quasiequivalencebifunctor}
Let $F:\A\rightarrow \A'$ be a quasi-equivalence of dg categories. 
Suppose $\A$ is an exact dg category.
Equip $\A'$ with the induced exact structure from the quasi-equivalence $F$.
For two objects $C$, $A$ in $\A$, consider the sets $\mathbb E(C,A)$ (resp.~$\mathbb E'(FC,FA)$), which is associated with the exact structure on $\A$ (resp.~$\A'$). 
The quasi-equivalence $F$ induces a bijection 
\[
\mathbb E(C,A)\rightarrow \mathbb E'(FC,FA).
\]
\end{lemma}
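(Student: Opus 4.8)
The plan is to reduce the statement to the functoriality of the three-term $h$-complex construction and of the notion of being a conflation, both of which are already established in the excerpt. First I would recall that by Lemma~\ref{trun} (with $\B=k\tilde{\Com}$, which is connective), the quasi-equivalence $F:\A\rightarrow \A'$ induces an equivalence of categories $\mathcal H_{3t}(\A)\iso \mathcal H_{3t}(\A')$; more precisely, by Corollary~\ref{strictify} applied to the embedding of the homotopy-equivalence closure of $F(\A)$, and using the isomorphism $\mathcal H_{3t}(\A)\cong \mathcal H_{3t}(\tau_{\leq 0}\A)$ from the discussion after Definition~\ref{def:3termhomotopy}, the functor $F_*:\mathcal H_{3t}(\A)\rightarrow \mathcal H_{3t}(\A')$ is an equivalence. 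Next I would invoke Remark~\ref{truncationexactdgstructure}~b): since $F$ is a quasi-equivalence, $F_*$ preserves and reflects the property of being a homotopy short exact sequence, and the exact structure $\mathcal S'$ on $\A'$ induced from $\mathcal S$ along $F$ is, by definition of the induced structure, exactly the image of $\mathcal S$ under $F_*$. Thus $F_*$ restricts to a bijection from the class of conflations of $\A$ to the class of conflations of $\A'$.

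It then remains to check that $F_*$ carries conflations with fixed ends $A$ and $C$ to conflations with fixed ends $FA$ and $FC$, and that it respects the equivalence relation of Definition~\ref{equivalencerelation}. The first point is immediate: if $X$ is a conflation $A\to B\to C$, then $F_*(X)$ has ends $FA$ and $FC$ by construction. For the second point, suppose $\theta:X_1\rightarrow X_2$ is an equivalence of conflations over $\A$, i.e.\ a morphism in $\mathcal H_{3t}(\A)$ restricting to $\Id_A$ and $\Id_C$. Applying the functor $F_*$ yields a morphism $F_*(\theta):F_*(X_1)\rightarrow F_*(X_2)$ in $\mathcal H_{3t}(\A')$ restricting to $\Id_{FA}$ and $\Id_{FC}$ (since $F_*$ is a functor and is compatible with the restriction functors $\Res,\Res'$, these being induced by dg functors $k\Mor\to k\tilde{\Com}$). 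Hence $F_*$ descends to a well-defined map $\mathbb E(C,A)\rightarrow \mathbb E'(FC,FA)$.

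Finally I would argue bijectivity. Surjectivity: since $F_*:\mathcal H_{3t}(\A)\iso \mathcal H_{3t}(\A')$ is an equivalence and $\mathcal S'=F_*(\mathcal S)$, every conflation $X'$ of $\A'$ with ends $FA$, $FC$ is isomorphic in $\mathcal H_{3t}(\A')$ to $F_*(X)$ for some conflation $X$ of $\A$; composing the isomorphism with a suitable representative (and using that $H^0(F)$ is full and faithful on the ends $A$, $C$) one arranges $X$ to have ends $A$, $C$, so $[X]\mapsto [X']$. Injectivity: if $F_*(X_1)$ and $F_*(X_2)$ are equivalent over $\A'$ via some $\theta'$, then because $F_*$ is fully faithful on $\mathcal H_{3t}$ (it is an equivalence), $\theta'$ has the form $F_*(\theta)$ for a unique $\theta:X_1\to X_2$; since $F$ is faithful on $H^0$ and $\theta'$ restricts to identities, so does $\theta$, whence $[X_1]=[X_2]$. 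The main obstacle, and the only place requiring genuine care, is verifying that $F_*$ is genuinely compatible with the restriction functors and that ``induced exact structure'' is literally $F_*(\mathcal S)$ — i.e.\ chasing the identifications through Lemma~\ref{trun}, Corollary~\ref{strictify} and the $A_\infty$-to-$\rep_{dg}$ comparison — rather than any deep new input; everything else is formal nonsense about equivalences of categories.
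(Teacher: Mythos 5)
Your overall strategy is close in spirit to the paper's: both arguments rest on the fact that a quasi-equivalence induces an equivalence $\mathcal H_{3t}(\A)\iso\mathcal H_{3t}(\A')$ which preserves and reflects homotopy short exactness (Remark~\ref{truncationexactdgstructure}~b)), together with the observation that the induced exact structure on $\A'$ is the isomorphism closure of $F_*(\mathcal S)$. Your injectivity argument --- lift an equivalence $\theta'\colon F_*(X_1)\to F_*(X_2)$ to $\theta\colon X_1\to X_2$ by full faithfulness of $F_*$, then use faithfulness of $H^0(F)$ to see that $\theta$ restricts to identities on the ends --- is a clean way of making precise what the paper dismisses as ``direct to check''.

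The step you compress into ``composing the isomorphism with a suitable representative \ldots\ one arranges $X$ to have ends $A$, $C$'' is, however, where essentially all of the paper's proof takes place, and as written it is a genuine gap. The equivalence relation defining $\mathbb E'(FC,FA)$ requires \emph{strict identities} on the two ends, which is finer than isomorphism in $\mathcal H_{3t}(\A')$. Essential surjectivity of $F_*$ only yields an isomorphism $\alpha\colon X'\to F_*(X)$ whose restrictions to the ends are homotopy equivalences $FA\to FA_0$ and $FC\to FA_2$, where $A_0,A_2$ are the a priori different ends of $X$; you must both replace $X$ by an isomorphic conflation whose ends are literally $A$ and $C$, and modify $\alpha$ so that its end restrictions become $\Id_{FA}$ and $\Id_{FC}$. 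The second adjustment cannot be performed inside $\mathcal H_{3t}(\A')$ alone: one has to lift the end automorphisms to $H^0(\A)$ using fullness of $H^0(F)$, twist $X$ by these lifts, and then invoke Lemma~\ref{Mor(A)and3term} (or the factorization of Lemma~\ref{fact}) to manufacture an equivalence restricting to identities. This is exactly what the paper does, by a more concrete route: it first rigidifies the middle term and the deflation (arranging $B'=FB$ and $j'=F(j)$ via Lemma~\ref{Mor(A)and3term}), takes the homotopy kernel of $j$ in $\A$ and compares it with $X'$ through the uniqueness of homotopy kernels, and finally corrects the residual automorphism of the first end by a lifted homotopy inverse. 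Your proof becomes complete once this step is carried out. A minor citation point: Lemma~\ref{trun} concerns the truncation $\tau_{\leq 0}\A\to\A$ rather than an arbitrary quasi-equivalence; the fact you actually need --- that a quasi-equivalence induces an equivalence on $\rep(\B,-)$ and hence on $\mathcal H_{3t}$ --- follows from the internal-Hom property of $\rep_{dg}$ in $\Hqe$ and is already recorded in Remark~\ref{truncationexactdgstructure}~b).
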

\begin{proof}
It is direct to check that the above map is an injection.
We show that it is a surjection.
For an object $X\in\mathcal H_{3t}(\A)$, we denote by $FX$ the object in $\mathcal H_{3t}(\A')$ induced by $F$.

Let $[X']$ be an element in $\mathbb E'(FC,FA)$. 
Suppose $X'$ is of the form
\[
\begin{tikzcd}
FA\ar[r,"f'"]\ar[rr,"h'"swap,bend right=8ex]&B'\ar[r,"j'"]&FC\mathrlap{.}
\end{tikzcd}
\]
Since $F:\A\rightarrow\A'$ is a quasi-equivalence, we have a homotopy equivalence $FB\iso B'$ for some $B\in\A$.
Hence we have an isomorphism in $H^0(\Mor(\A'))$ of the form
\[
\begin{tikzcd}
FB\ar[d]\ar[rd,"s"red,red]\ar[r,"F{(}j{)}"]&FC\ar[d,equal]\\
B'\ar[r,"j'"swap]&FC\mathrlap{.}
\end{tikzcd}
\]
By Lemma \ref{Mor(A)and3term}, we may assume $B'=FB$ and $j'=F(j)$ for some $j:B\rightarrow C$ in $Z^0(\A)$.
The morphism $j:B\rightarrow C$ is a deflation. 
Suppose its homotopy kernel $Y$ is of the form
\[
\begin{tikzcd}
A'\ar[r,"\tilde{f}"]\ar[rr,"\tilde{h}"swap,bend right=8ex]&B\ar[r,"j"]&C\mathrlap{.}
\end{tikzcd}
\]
Then we have an isomorphism $\theta: X'\rightarrow FY$ in $\mathcal H_{3t}(\A)$ which restricts to the identity of $j':FB\rightarrow FC$ in $H^0(\Mor(\A'))$.
Let $\overline{a'}:FA'\rightarrow FA$ be the restriction of $\theta$ to $FA$.
Then we have a morphism $a:A\rightarrow A'$ such that $F(a)$ is a homotopy inverse of $a'$.
We have an isomorphism in $H^0(\Mor(\A'))$ of the form
\[
\begin{tikzcd}
A\ar[r,"f=\tilde{f}a"]\ar[rd,"0"red,red]\ar[d,"a"swap]&B\ar[d,equal]\\
A'\ar[r,"\tilde{f}"swap]&B
\end{tikzcd}
\]
By Lemma \ref{Mor(A)and3term}, we infer that $X'$ is equivalent to $FX$ where $X$ is of the form
\[
\begin{tikzcd}
A\ar[r,"f"]\ar[rr,"h"swap,bend right=8ex]&B\ar[r,"j"]&C\mathrlap{.}
\end{tikzcd}
\] 

\end{proof}
\begin{proposition}\label{property}
Suppose $(\A, \mathcal S)$ is an exact dg category. 
The following statements hold:
\begin{itemize}
\item[a)] The diagram
\[
\begin{tikzcd}
A\ar[r,"{[}1\ 0{]}^{\intercal}"]\ar[rr,"0"swap,bend right =8ex]&A\oplus B\ar[r,"{[}0\ 1{]}"]\ar[r]&B\\
\end{tikzcd}
\]
 is a conflation for $A,B\in \A$.
\item[b)]If a morphism $g:B\rightarrow C$ in $Z^0(\A)$ admits a homotopy kernel and for some $h:B'\rightarrow B$ the composition $gh:B'\rightarrow C$ is a deflation, then $g$ is a deflation.
\item[b)$^{\text{op}}$]If a morphism $f:A\rightarrow B$ admits a homotopy cokernel and for some $e:B\rightarrow B'$ the composition $ef:A\rightarrow B'$ is an inflation, then $f$ is an inflation.
\item[c)]\label{direct}The direct sum of two conflations is a conflation.
\item[d)]Axiom ${\Ex1}^{op}$ holds.
\end{itemize}
\end{proposition}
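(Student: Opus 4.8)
The plan is to obtain a)--d) as differential graded analogues of the standard consequences of Keller's axioms for Quillen exact categories \cite[Appendix~A]{Keller90}, the only genuinely new point being to keep track of the homotopies produced by the various universal properties; for this we repeatedly invoke Proposition~\ref{push} and Corollaries~\ref{Comp} and~\ref{pastinglaw:second}, and by Remark~\ref{truncationexactdgstructure}~a) we may assume $\A$ is (strictly) connective and $Z^0(\A)$ additive. I would prove the parts in the order a), c), b)/b)$^{\text{op}}$, d). For a): homotopy equivalences are deflations, so $\Id_A\colon A\to A$ is a deflation and its homotopy kernel $0\rightarrowtail A\xrightarrow{\Id_A}A$ is a conflation; in particular $0\to A$ is an inflation. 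Apply $\Ex2^{op}$ to push it out along the unique map $0\to B$. The square with corners $0,A,B,A\oplus B$ and the evident split maps is homotopy cocartesian (the comparison map to the cone is an identity), so $A\oplus B$ is the homotopy pushout of the span $A\leftarrow 0\to B$; hence Proposition~\ref{cons} produces a conflation $B\xrightarrow{[0,1]^{\intercal}}A\oplus B\xrightarrow{[1,0]}A$, and interchanging the roles of $A$ and $B$ gives the stated split conflation with zero homotopy.

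For c): let $X\colon A\xrightarrow{f}B\xrightarrow{p}C$ and $X'\colon A'\xrightarrow{f'}B'\xrightarrow{p'}C'$ be conflations and factor $p\oplus p'$ as $(p\oplus\Id_{C'})\circ(\Id_B\oplus p')$. The square with vertices $B\oplus B'$, $B'$, $B\oplus C'$, $C'$ whose maps are $\Id_B\oplus p'$, $p'$ and the two split projections is a direct sum of two squares each homotopy cartesian for trivial reasons, hence is itself homotopy cartesian; it realises $\Id_B\oplus p'$ as the homotopy pullback of the deflation $p'$ along a split projection, so $\Id_B\oplus p'$ is a deflation by $\Ex2$. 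Symmetrically $p\oplus\Id_{C'}$ is a deflation, so $p\oplus p'$ is a deflation by $\Ex1$; and its homotopy kernel is visibly $A\oplus A'\xrightarrow{f\oplus f'}B\oplus B'$, so $X\oplus X'$ is the conflation determined by this deflation (one may also use Lemma~\ref{deflationcomposition} to identify the homotopy kernel of the composite).

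For b) and b)$^{\text{op}}$, which are dual, it suffices to treat b). Since $gh$ is a deflation, $\Ex2$ lets us form the homotopy pullback $P$ of $gh\colon B'\to C$ along $g\colon B\to C$, with deflation $p'\colon P\to B$ and second projection $q\colon P\to B'$. By the pasting law (Corollary~\ref{pastinglaw:second}) the homotopy kernel of $p'$ agrees with that of $gh$, giving a conflation $A'\rightarrowtail P\xrightarrow{p'}B$; dually the homotopy kernel of $q$ agrees with the given homotopy kernel $K\xrightarrow{k}B$ of $g$, and the universal property of the homotopy pullback supplies a homotopy section of $q$. Chasing these identifications one identifies $K\xrightarrow{k}B\xrightarrow{g}C$ with (a representative of) a conflation, whence $g$ is a deflation; b)$^{\text{op}}$ then follows by passing to $\A^{op}$, which by a)--c) already satisfies $\Ex0$, $\Ex2$ and $\Ex2^{op}$. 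The main obstacle here is exactly the dg phenomenon flagged after Proposition~\ref{push}: matching up the homotopies coming from the section and from the two universal properties, which requires the connective reduction and a Yoneda-type argument to split off the summand $B'$ of $P$.

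For d), i.e.\ $\Ex1^{op}$: given inflations $i\colon A\rightarrowtail B$ and $j\colon B\rightarrowtail C$ sitting in conflations $A\xrightarrow{i}B\xrightarrow{p}B''$ and $B\xrightarrow{j}C\xrightarrow{q}C''$, use $\Ex2^{op}$ to push the second conflation out along $p$; by Proposition~\ref{cons} the resulting square is homotopy bicartesian, with remaining leg $c\colon C\to E$ fitting into a conflation $B''\rightarrowtail E\twoheadrightarrow C''$. As in Keller's reduction of the exact-category axioms, $c$ is then a deflation, and the composite $A\xrightarrow{ji}C\xrightarrow{c}E$ is null-homotopic and exhibits $ji$ as the homotopy kernel of $c$, so $ji$ is an inflation. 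I expect the delicate steps to be the dg versions of the classical auxiliary facts ``the homotopy pushout of an inflation along an arbitrary map is homotopy bicartesian'' and ``the base change of an inflation along a deflation is an inflation'', for which one again appeals to Proposition~\ref{push} and the pasting lemmas; this homotopy bookkeeping, common to b), b)$^{\text{op}}$ and d), is the crux of the whole proposition.
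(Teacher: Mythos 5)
Parts a) and c) are fine: your derivation of a) by pushing the inflation $0\rightarrow A$ out along $0\rightarrow B$ via $\Ex2^{op}$ is a valid alternative to the paper's route (which deduces a) from b)), and your c) is the paper's argument verbatim. The problem is b), where ``chasing these identifications'' hides the entire content of the proof. Your pullback $P$ of $gh$ along $g$ yields the conflations $A'\rightarrow P\xrightarrow{p'}B$ and (after the splitting argument) $K\rightarrow P\xrightarrow{q}B'$, but neither of these is the sequence $K\rightarrow B\xrightarrow{g}C$ you need. If you try to close the argument from what you have, you find that $p'\cong[k,h]$ and $g\circ p'\simeq[0,gh]$ are both deflations and you want to cancel $p'$ --- which is exactly statement b) again, so the argument is circular. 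The missing ingredient is the paper's ``Claim 1'': in the situation of $\Ex2$ the combined map $[p,\,c]\colon B\oplus C'\rightarrow C$ is a deflation. This is not formal; its proof is a genuine interplay of $\Ex2$, $\Ex2^{op}$ and Proposition~\ref{cons} (one must first show, via a pushout along $f'$, that $B'\rightarrow B\oplus C'$ is an inflation whose homotopy cokernel is $[c,-j]$, so that the homotopy short exact sequence $B'\rightarrow B\oplus C'\rightarrow C$ is actually a conflation). Granting it, one gets that $[g,\,gh]$ and hence $[g,\,0]\colon B\oplus B'\rightarrow C$ are deflations, recognizes $K\oplus B'\rightarrow B\oplus B'\xrightarrow{[g,0]}C$ as the corresponding conflation, and finally pushes it out along the split projection $K\oplus B'\rightarrow K$ using $\Ex2^{op}$ to land on $K\rightarrow B\xrightarrow{g}C$. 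The same missing claim (in dual form) is what would justify the unproved assertion ``$c$ is then a deflation'' in your d), where the paper uses the dual of Claim~1 together with $\Ex1$ and statement b).

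There is also an ordering problem. You propose to obtain b)$^{op}$ from b) by passing to $\A^{op}$ before proving d). But any correct proof of b) along these lines uses $\Ex1$ (e.g.\ to pass from $[g,\,gh]$ to $[g,\,0]$, or to see that $gh\circ q$ is a deflation), and $\Ex1$ for $\A^{op}$ is precisely $\Ex1^{op}$ for $\A$, i.e.\ statement d) --- which at that point you have not yet established. The paper avoids this by proving b), a), c), then d) (using b)), and only afterwards deducing b)$^{op}$ by duality, once all four axioms are known to hold in $\A^{op}$.
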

\begin{proof}Suppose we are in the situation of Axiom {Ex}2. 

\bigskip\noindent
{\em Claim 1:} $B\oplus C'\xrightarrow{[j,c]} C$ is a deflation.

We have a conflation $X\in\mathcal H_{3t}(\A)$ of the form
\[
\begin{tikzcd}
A\ar[r,"f"]\ar[rr,bend right=8ex,"h"swap]&B\ar[r,"j"]&C\mathrlap{.}\\
\end{tikzcd}
\]
By Axiom {Ex}2, the cospan
\[
\begin{tikzcd}
&C'\ar[d,"c"]\\
B\ar[r,"j"swap]&C
\end{tikzcd}
\]
admits a homotopy pullback $Y$:
\[
\begin{tikzcd} 
 {B'}\ar[r," {j'}"]\ar[d," {b}"swap]\ar[rd,"s_2"red,red]& {C'}\ar[d," {c}"]\\
 {B}\ar[r," {j}"swap]& {C}
\end{tikzcd}\;\;.  
\]
By the dual of Proposition \ref{cons}, we have a morphism $\mu:X'\rightarrow X$ in $\mathcal H_{3t}(\A)$
\[
\begin{tikzcd}
&A\ar[r,"f'"]\ar[d,equal]\ar[rd,"s_1"red,swap ,red]\ar[rrd,"t"blue,blue]\ar[rr,bend left = 8ex,"h'"]&B'\ar[d,"b"swap]\ar[rd,"s_2"red,red]\ar[r,"j'"]&C'\ar[d,"c"]\\
&A\ar[r,"f"swap]\ar[rr,bend right = 8ex,"h"swap]&B\ar[r,"j"swap]&C
\end{tikzcd}
\]
where $X'$ is the 3-term h-complex in the first row.

By Axiom {Ex}2, the morphism ${j'}$ is also a deflation. 
So $X'$ is a conflation and in particular a homotopy short exact sequence. 
Hence, by the dual of Proposition \ref{cons}, $Y$ is homotopy bicartesian.

Let $X''$ be the 3-term h-complex
\[
\begin{tikzcd}
 {B'}\ar[r,"\begin{bmatrix} {j'}\\ {b}\end{bmatrix}"]\ar[rr,"s_2"swap,bend right=8ex]& {C'}\oplus  {B}\ar[r,"{[} {c}{,}-{j}{]}"]&C\\
\end{tikzcd}\;\;. 
\]
Then $X''$ is a homotopy short exact sequence. 

\bigskip\noindent
{\em Claim 2:} $X''$ is a conflation.

Consider the conflation $X$ and the object $f':A\rightarrow B'$ in $H^0(\Mor(\A))$.
The span $S$
\[
\begin{tikzcd}
A\ar[r,"f"]\ar[d,"f'"swap]&B\\
{B'}&
\end{tikzcd}
\] 
admits a homotopy pushout $Z$
\[
\begin{tikzcd}
A\ar[rd,"s_3"{red},red]\ar[r,"f"]\ar[d,"f'"swap]& B\ar[d,"h_1=\begin{bmatrix}0\\\Id\end{bmatrix}"]\\
B'\ar[r,"f''=\begin{bmatrix}j'\\b\end{bmatrix}"swap]&C'\oplus B
\end{tikzcd}.
\] 
where $s_3=\begin{bmatrix}-h'\\s_1\end{bmatrix}$. 
By Axiom {Ex}$2^{op}$, the morphism $B'\xrightarrow{[b,j']^{\intercal}} B\oplus C'$ is an inflation. 
So by Proposition \ref{cons}, there exists a morphism $X\rightarrow \tilde{X''}$ in $\mathcal H_{3t}(\A)$
\[
\begin{tikzcd}
&A\ar[r,"f"]\ar[d,"f'"swap]\ar[rd,"s_3"red,swap ,red]\ar[rrd,"t"blue,blue]\ar[rr,bend left = 8ex,"h"]&B\ar[d,"h_1"swap]\ar[rd,"s_2"red,red]\ar[r,"j"]&C\ar[d,equal]\\
&B'\ar[r,"f''"swap]\ar[rr,bend right = 8ex,"h''"swap]&C'\oplus B\ar[r,"j''"swap]&C
\end{tikzcd}
\]
where $\tilde{X''}$ is the 3-term h-complex in the second row.
By the uniqueness of homotopy cokernels, 
the homotopy right exact sequence $\tilde{X''}$ is isomorphic to $X''$.
By Axiom Ex$2^{op}$, $\tilde{X''}$ is a conflation. 
This proves Claim 2.
Then the morphism $B\oplus C'\xrightarrow{[j,c]} C$ is a deflation.
This proves Claim 1. 
Note that during the proof we only use Axioms {Ex}2 and {Ex}$2^{op}$, 
so the dual of the claims also holds.

Now we are ready to prove $\mathrm{b})$.  
Suppose we have a homotopy left exact sequence 
\[
\begin{tikzcd}
K\ar[r,"f"] \ar[rr,"h"swap,bend right=8ex]&B\ar[r,"g"]&C
\end{tikzcd}
\]
From the cospan
\[
\begin{tikzcd}
&B\ar[d,"g"]\\
B'\ar[r,"gh"swap, two heads]&C
\end{tikzcd},
\]
we know that the morphism $B\oplus B'\xrightarrow{[g,gh]} C$ is a deflation by claim 1. 
Then the morphism
\[
{[}g\ 0{]}:B\oplus B'\xrightarrow{\begin{bmatrix}1&-h\\0&1\end{bmatrix}} B\oplus B'\xrightarrow{\begin{bmatrix}g,gh\end{bmatrix}} C
\]
is also a deflation by Axiom {Ex}1. 

The 3-term h-complex
\[
\begin{tikzcd}
K\oplus B'\ar[r,"u"]\ar[rr,"{[}h{,}\;0{]}"swap,bend right=8ex]&B\oplus B'\ar[r,"v"]&C\mathrlap{,}\\
\end{tikzcd}
\]
where $u=\begin{bmatrix}f&0\\0&\Id_{B'}\end{bmatrix}$ and $v={[}g{,}\;0{]}$, is homotopy left exact, as the direct sum of a homotopy left exact sequence and a trivial 3-term h-complex.
Therefore it is a conflation. 

We have the following morphism of homotopy left exact sequences 
\[
\begin{tikzcd}
K\oplus B'\ar[rrd,"0"{blue,near end},blue,bend right=2ex]\ar[rd,"0"{red,swap},red,bend right=2ex]\ar[r,"u"]\ar[d,"{[}1{,} 0{]}"swap]\ar[rr,bend left=8ex,"{[}h{,}\;0{]}"] &B\oplus B'\ar[rd,"0"red,red]\ar[d,"{[}1{,}0{]}"swap]\ar[r,"v"]&C\ar[d,equal]\\
K\ar[r,"f"swap] \ar[rr,"h"swap,bend right=8ex]&B\ar[r,"g"swap]&C
\end{tikzcd} 
\]
where the left square is homotopy bicartesian, we infer that the morphism $g:B\rightarrow C$ is a deflation by {Ex}$2^{op}$. This proves Statement b).

Since the composition 
\[
\Id_B:B\xrightarrow{\begin{bmatrix}0\\1\end{bmatrix}} A\oplus B\xrightarrow{[0,\;1]} B
\] 
is a deflation and the morphism $A\oplus B\xrightarrow{[0,\;1]} B$ admits a homotopy kernel,
it is a deflation by Statement b). 
So Statement a) follows.

Let $X_i$, $i=1$, $2$, be a conflation of the form
\[
\begin{tikzcd}
A_i\ar[r,"f_i"]\ar[rr,bend right=8ex,"h_i"swap]&B_i\ar[r,"j_i"]&C_i\mathrlap{\;.}
\end{tikzcd}
\]
Clearly the direct sum $X_1\oplus X_2$ is homotopy short exact and the morphism
\[
\begin{tikzcd}
B_1\oplus B_2\ar[r,"\begin{bmatrix}j_1\ 0\\0\ j_2\end{bmatrix}"]&C_1\oplus C_2 
\end{tikzcd} 
\]
can be written as a composition 
\[\begin{tikzcd}
B_1\oplus B_2\ar[r,"r=\begin{bmatrix}1\ 0\\0\ j_2\end{bmatrix}"]&B_1\oplus C_2\ar[r,"s=\begin{bmatrix}j_1\ 0\\0\ 1\end{bmatrix}"]&C_1\oplus C_2
\end{tikzcd}.
\] 

Observe that the morphism $B_1\oplus B_2\xrightarrow{r} B_1\oplus C_2$ is a homotopy pullback of the deflation $j_2:B_2\rightarrow C_2$ along $B_1\oplus C_2\rightarrow  C_2 $ and hence is a deflation. 
Similarly, the morphism $B_1\oplus C_2\xrightarrow{s} C_1\oplus C_2$ is also a deflation. 
So the composition $B_1\oplus B_2\rightarrow C_1\oplus C_2$ is a deflation. 
This proves Statement c).

It remains to prove Axiom ${\Ex1}^{op}$ and then one proves b)$^{op}$ dually. 

Let $f:A\rightarrow B$ be an inflation with the homotopy cokernel given by
\[
\begin{tikzcd}
 {A}\ar[r," {f}"]\ar[rr,bend right=8ex,"h"swap]& {B}\ar[r,"j"]&C
\end{tikzcd}
\] 
Let $f':B\rightarrow B'$ be another inflation. 
Our aim is to show that the morphism $f'f$ is also an inflation. 

By Axiom ${\Ex2}^{op}$, the span
\[
\begin{tikzcd}
B\ar[r,"f'"]\ar[d,"j"swap]&B'\\
C&
\end{tikzcd}
\]
admits a homotopy pushout 
\[
\begin{tikzcd}
 {B}\ar[r," {f'}"]\ar[d," {j}"swap]\ar[rd,"s"red,red]& {B'}\ar[d," {j'}"]\\
 {C}\ar[r," {f''}"swap]& {C'}
\end{tikzcd}
\]
which is homotopy bicartesian. 
Thus by the dual of Claim 1, 
the morphism $[ {j'}, {f''}]: {B'}\oplus  {C}\rightarrow  {C'}$ is a deflation. 
By the dual of Corollary \ref{cok}, the morphism ${j'}$ admits a homotopy kernel which is isomorphic to $f'f:A\rightarrow B'$.

We have the following commutative diagram in $\A$
\[
\begin{tikzcd}
 {B'}\oplus  {B}\ar[rd,"{[}0{,}-s{]}"red,red]\ar[r,"\begin{bmatrix}1\ 0\\0\  {j}\end{bmatrix}"]\ar[d,"{[}1{,} {f'}{]}"swap]& {B'}\oplus  {C}\ar[d,"{[} {j'}{,} {f''}{]}"]\\
 {B'}\ar[r," {j'}"swap]& {C'}
\end{tikzcd}.
\]

By Axiom ${\Ex1}$, the morphism $[ {j'}, {f''}]\begin{bmatrix}1&0\\0&{j}\end{bmatrix}$ is a deflation.
By the above diagram, the morphism ${j'}[1, {f'}]$ is homotopic to it and hence is also a deflation. 
So by Statement b), the morphism ${j'}$ is a deflation. 
Hence the morphism $f'f$ is an inflation.
This proves Statement d).
\end{proof}
\subsection{The biadditive bifunctor $\mathbb E$}\label{bi}

Suppose we are given an element $[X]\in \mathbb E(C,A)$, where $X$ is as follows
\begin{equation}\label{X}
\begin{tikzcd}
A\ar[r,"{f}"]       \ar[rr,bend right=8ex,"{h}"swap]      &B\ar[r,"{j}"]         &C
\end{tikzcd}\;\;,
\end{equation}
and a map $\overline{a}:A\rightarrow A'$ in $H^0(\A)$. 
Below we will prove that there exists a unique equivalence class of conflations $[X']\in \mathbb E(C,A')$, 
such that for each representative $X$ of $[X]$, 
there exists a representative $X'$ of $[X']$ with a morphism $\theta:X\rightarrow X'$ 
which restricts to the identity on $C$ and the morphism $\overline{a}:A\rightarrow A'$.  

By Axiom ${\Ex2}^{op}$, the existence of $[X']$ is clear from Proposition \ref{cons}. 

Let us clarify its uniqueness. 
Suppose we have morphisms $\theta_1:X\rightarrow X_1$ and $\theta_2:X\rightarrow X_2$ with the required property.
So the objects $X_1  $ and $X_2  $ are both conflations.
By Proposition \ref{cons}, we may assume that the restriction of $\theta_1$ to $f:A\rightarrow B$ is a homotopy cocartesian square. 
Let $S_i\in\rep(\overline{\Sq},\A)$, $i=1$, $2$, be the restriction of $\theta_i$ to $f:A\rightarrow B$. 
Assume that $X_i  $, $i=1,2$, is of the form
 \[
 \begin{tikzcd}
 A'  \ar[r,"f_i"]\ar[rr,bend right=8ex,"h_i"swap]&B_i\ar[r,"j_i"]&C
 \end{tikzcd}\;,
 \]
 and that $S_1$ is a homotopy pushout of the cospan $L$
 \[
 \begin{tikzcd} 
 A\ar[r,"f"]\ar[d,"a"swap]&B
 \\A'
 \end{tikzcd}.
 \] 
 By the universal property of homotopy pushouts, cf.~Remark \ref{pullbackuniversal}, there is a unique morphism $\mu$ in $\rep(\overline{\Sq},\A)$ from $S_1$ to $S_2$ which restricts to $\Id_{L}$.
 We still denote by $S_i$ the morphism in $H^0(\Mor(\A))$ from $f$ to $f_i$ given by the square $S_i$ for $i=1$, $2$.
 By Lemma \ref{squareepivalence}, the morphism $\mu:S_1\rightarrow S_2$ yields morphisms $\Id_{f}$, $S_1$, $S_2$ and a morphism $\alpha$ from $f_1:A'\rightarrow B_1$ to $f_2:A'\rightarrow B_2$ which restricts to $\Id_{A'}$ and such that we have $\alpha\circ S_1=S_2\circ \Id_{f}$ in $H^0(\Mor(\A))$. 
 By the universal property of homotopy pushouts, the morphism $\alpha$ induces a morphism $\theta_3:X_1\rightarrow X_2$ in $\mathcal H_{3t}(\A)$ such that we have $\theta_3\circ \theta_1=\theta_2$. 
 Then the morphism $\theta_3$ is an equivalence between $X_1$ and $X_2$.
 Since $\theta_3$ is necessarily an isomorphism by Corollary \ref{middleterm}, we have that the morphism $\alpha:S_1\rightarrow S_2$ is an isomorphism.
 So we have
 \begin{lemma}\label{bothconflation}
 Let $\A$ be an exact dg category. 
 Suppose we have a morphism $\alpha:X\rightarrow X'$ of conflations in $\mathcal H_{3t}(\A)$ given by the diagram \ref{morphism3term}.
 If the morphism $h_2:A_2\rightarrow A_2'$ is a homotopy equivalence, then the left square is homotopy bicartesian.
 \end{lemma}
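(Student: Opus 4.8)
The plan is to reduce to the case where $\alpha$ restricts to the identity on its right-hand end, and then to invoke the uniqueness of homotopy pushouts established in the paragraphs immediately preceding the lemma. First I would factor $\alpha$: by Lemma~\ref{fact}, applied with $\overline{a}=\overline{h_0}$ and $\overline{c}=\overline{h_2}$, we may write $\alpha$ as a composition $X\xrightarrow{\beta}\tilde{X}\xrightarrow{\gamma}X'$ in $\mathcal H_{3t}(\A)$ where $\tilde X$ is a conflation with ends $A_0'$ and $A_2$, the morphism $\beta$ restricts to $\overline{h_0}\colon A_0\to A_0'$ and $\Id_{A_2}$, and $\gamma$ restricts to $\Id_{A_0'}$ and $\overline{h_2}\colon A_2\to A_2'$. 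Writing $\tilde B$ for the middle term of $\tilde X$ and using that restriction to $f$ is compatible with composition (Subsection~\ref{res}, Lemma~\ref{squareepivalence}), the restriction of $\alpha$ to $f\colon A_0\to A_1$ is the vertical pasting of the restriction $S_\beta$ of $\beta$ on top of the restriction $S_\gamma$ of $\gamma$. Since a vertical pasting of two homotopy cartesian (resp.\ cocartesian) squares is again homotopy cartesian (resp.\ cocartesian) by Corollary~\ref{Comp} and its $\A^{op}$-analogue, it suffices to prove that $S_\beta$ and $S_\gamma$ are both homotopy bicartesian.

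For $S_\gamma$: since $\gamma$ restricts to homotopy equivalences on both ends ($\Id_{A_0'}$ on the left and the homotopy equivalence $h_2$ on the right), Corollary~\ref{middleterm} shows that $\gamma$ restricts to a homotopy equivalence $b\colon\tilde B\to A_1'$ on the middle terms as well. Thus $S_\gamma$ is a commutative square one of whose parallel pairs of edges (the two verticals $\Id_{A_0'}$ and $b$) consists of morphisms that are invertible in $\D(\A)$, and a direct check with the cone description of Definition~\ref{maindef} — absorbing $b$ by a base change — shows that such a square is homotopy bicartesian.

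For $S_\beta$: both $X$ and $\tilde X$ are conflations and $\beta$ restricts to $\overline{h_0}$ and $\Id_{A_2}$. By Axiom~$\Ex2^{op}$ the span $A_0'\xleftarrow{\overline{h_0}}A_0\xrightarrow{f}A_1$ admits a homotopy pushout, so Proposition~\ref{cons}, applied to the homotopy short exact sequence $X$ and the map $\overline{h_0}$, produces a conflation $X_1$ together with a morphism $\mu\colon X\to X_1$ that restricts to $\overline{h_0}$ and $\Id_{A_2}$ and whose restriction to $f$ is a homotopy pushout square; by the last assertion of Proposition~\ref{cons} this square is even homotopy bicartesian. Now $\mu$ and $\beta$ are two morphisms out of $X$ with conflation targets, both restricting to $\overline{h_0}$ on $A_0$ and to $\Id_{A_2}$ on $A_2$; the uniqueness argument preceding the lemma, applied with $\theta_1=\mu$ and $\theta_2=\beta$, shows that the restriction $S_\beta$ of $\beta$ to $f$ is isomorphic in $\rep(\overline{\Sq},\A)$ to that of $\mu$. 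As being homotopy bicartesian is stable under isomorphism in $\rep(\overline{\Sq},\A)$, we conclude that $S_\beta$ is homotopy bicartesian, and pasting $S_\beta$ over $S_\gamma$ completes the proof.

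I expect the main obstacle to be the correct application of the uniqueness argument to $S_\beta$: one has to make sure that $\tilde X$ and $X_1$ genuinely play interchangeable roles — both conflations receiving a morphism from $X$ that restricts to the same pair of end morphisms — so that the discussion preceding the lemma applies verbatim rather than merely by analogy. A secondary point needing care is the elementary but sign-sensitive verification, used for $S_\gamma$, that a commutative square with a parallel pair of homotopy equivalences is homotopy bicartesian, together with the routine closure of homotopy (co)cartesian squares under vertical pasting supplied by Corollary~\ref{Comp} and its dual.
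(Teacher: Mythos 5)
Your proof is correct and follows essentially the same route as the paper: the core step (showing $S_\beta$ is homotopy bicartesian) is exactly the uniqueness-of-homotopy-pushouts discussion preceding the lemma, applied to the pushout morphism $\mu$ from Proposition~\ref{cons} and to $\beta$, combined with the last assertion of Proposition~\ref{cons}. The only difference is that you make explicit the reduction from a general homotopy equivalence $h_2$ to the identity case, via the factorization of Lemma~\ref{fact}, the middle-term equivalence from Corollary~\ref{middleterm}, and the pasting law of Corollary~\ref{Comp} and its dual, whereas the paper leaves this routine reduction implicit.
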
  
 
\begin{proposition}The operation $\overline{a}:A\rightarrow A'\mapsto \overline{a}_*:\mathbb E(C,A)\rightarrow \mathbb E(C,A')$, 
where an element $[X]$ is sent to $\overline{a}_*[X]{\coloneqq}[X']$ which is specified by the above mentioned property, 
together with its dual operation $\overline{c}:C'\rightarrow C\mapsto \overline{c}^{*}:\mathbb E(C,A)\rightarrow \mathbb E(C',A)$, 
makes $\mathbb E:H^{0}(\A)^{op}\times H^{0}(\A)\rightarrow  \Set$ into a bifunctor.
\end{proposition}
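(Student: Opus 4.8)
The plan is to verify the bifunctor axioms one at a time, each time reducing to the uniqueness property established in the paragraphs preceding the Proposition. Recall the form of that property: for $[X]\in\mathbb E(C,A)$ and $\overline a\colon A\to A'$ in $H^0(\A)$, the class $\overline a_*[X]\in\mathbb E(C,A')$ is the \emph{unique} one such that some (equivalently, by composing with an equivalence, every) representative of it receives a morphism of $3$-term $h$-complexes from a representative of $[X]$ restricting to $\overline a$ on the left-hand ends and to $\Id_C$ on the right-hand ends; dually for $\overline c^{*}[X]$. In particular $\overline a_*$ and $\overline c^{*}$ depend only on the classes $\overline a,\overline c$ in $H^0(\A)$, existence being supplied by Proposition~\ref{cons} and its dual together with Axioms $\Ex2^{op}$ and $\Ex2$; I would record this first. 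Preservation of identities is then immediate: the identity $\Id_X\colon X\to X$ restricts to $\Id_A$ and $\Id_C$, so the uniqueness property gives $(\Id_A)_*[X]=[X]$ and $(\Id_C)^{*}[X]=[X]$.

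For covariance in the second variable, let $\overline a\colon A\to A'$, $\overline a'\colon A'\to A''$ and let $X$ be a representative of $[X]\in\mathbb E(C,A)$. By Proposition~\ref{cons} there are morphisms $\theta_1\colon X\to X_1$ and $\theta_2\colon X_1\to X_2$ restricting to $(\overline a,\Id_C)$ and $(\overline a',\Id_C)$ respectively, so that $[X_1]=\overline a_*[X]$ and $[X_2]=\overline a'_*[X_1]$. The composite $\theta_2\theta_1\colon X\to X_2$ restricts to $(\overline a'\overline a,\Id_C)$, whence $[X_2]=(\overline a'\overline a)_*[X]$ by uniqueness, so $(\overline a'\overline a)_*=\overline a'_*\overline a_*$. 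The relation $(\overline c\,\overline c')^{*}=\overline c'^{\,*}\overline c^{*}$ is proved dually, using the dual of Proposition~\ref{cons}.

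The remaining point, and the one I expect to require the most care, is that the two actions commute, i.e.\ $\overline a_*\overline c^{*}=\overline c^{*}\overline a_*$ as maps $\mathbb E(C,A)\to\mathbb E(C',A')$ for $\overline a\colon A\to A'$ and $\overline c\colon C'\to C$; this is exactly what upgrades the two separate functorialities to a genuine bifunctor on $H^0(\A)^{op}\times H^0(\A)$. Fix a representative $X$ of $[X]$. Pulling back along $\overline c$ gives $\theta_c\colon X_c\to X$ restricting to $(\Id_A,\overline c)$ with $X_c$ a conflation in $\mathbb E(C',A)$ and $[X_c]=\overline c^{*}[X]$; pushing out along $\overline a$ gives $\theta_a\colon X\to X_a$ restricting to $(\overline a,\Id_C)$ with $X_a$ a conflation in $\mathbb E(C,A')$ and $[X_a]=\overline a_*[X]$. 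Now apply Lemma~\ref{fact} to the composite $\theta_a\theta_c\colon X_c\to X_a$, whose restrictions to the left and right ends are $\overline a$ and $\overline c$: it factors as $X_c\xrightarrow{\beta}\tilde X\xrightarrow{\gamma}X_a$ with $\tilde X$ a conflation with ends $A'$ and $C'$, where $\beta$ restricts to $(\overline a,\Id_{C'})$ and $\gamma$ restricts to $(\Id_{A'},\overline c)$. By the characterisation of $\overline a_*$, the morphism $\beta$ forces $[\tilde X]=\overline a_*[X_c]=\overline a_*\overline c^{*}[X]$, while by the characterisation of $\overline c^{*}$, the morphism $\gamma$ forces $[\tilde X]=\overline c^{*}[X_a]=\overline c^{*}\overline a_*[X]$; comparing these yields the desired identity. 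The only genuinely delicate point here is to confirm, by inspecting the construction in Lemma~\ref{fact}, that the intermediate conflation $\tilde X$ really has ends $A'$ and $C'$ (it is the pushout of $X_c$ along $\overline a$, then pulled back along $\overline c$) and that the restriction data of $\beta$ and $\gamma$ match verbatim the defining properties of $\overline a_*$ and $\overline c^{*}$. Once this is in place, the identities established above are precisely the axioms making $\mathbb E\colon H^0(\A)^{op}\times H^0(\A)\to\Set$ a bifunctor.
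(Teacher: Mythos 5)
Your proposal is correct, and the identity/composition axioms are handled exactly as the paper implicitly does (via the uniqueness characterisation of $\overline a_*$ and $\overline c^{*}$). The only substantive step, the commutation $\overline a_*\overline c^{*}=\overline c^{*}\overline a_*$, is where your route diverges mildly from the paper's. The paper fixes $[X_1]=\overline a_*[X]$, $[X_2]=\overline c^{*}[X_1]$, $[X_3]=\overline c^{*}[X]$ and builds a comparison morphism $X_3\to X_2$ directly: it works in $H^0(\Mor(\A))$ with the deflations $j_i$ and invokes the universal property of homotopy pullbacks (Lemma~\ref{univ}, Remark~\ref{pullbackuniversal}) to produce a map restricting to $(\overline a,\Id_{C'})$, which forces $[X_2]=\overline a_*[X_3]$. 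You instead compose the two defining morphisms into a single map $X_c\to X_a$ and factor it through an intermediate conflation $\tilde X$ using Lemma~\ref{fact}, reading off $[\tilde X]=\overline a_*\overline c^{*}[X]=\overline c^{*}\overline a_*[X]$ from the restriction data of the two halves. Since Lemma~\ref{fact} is itself a direct consequence of Lemma~\ref{univ} together with $\Ex2^{op}$ (via Proposition~\ref{cons}), the two arguments rest on the same machinery; yours is a clean repackaging, and in fact it is precisely the pattern the paper adopts later (e.g.\ in the proof that $[-X]$ is an additive inverse and in the biadditivity computations), so nothing further is needed. Your one flagged "delicate point" — that $\tilde X$ has ends $A'$ and $C'$ and that $\beta,\gamma$ restrict as required — does check out verbatim against the statement of Lemma~\ref{fact} applied with source $X_c$ (ends $A,C'$) and target $X_a$ (ends $A',C$).
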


\begin{remark} \label{rk:extriangulated-structure}
In section~\ref{canonicalstructure}, we will prove that $H^0(\A)$ carries a canonical extriangulated structure in the
sense of Nakaoka--Palu \cite{NakaokaPalu19} whose extension bifunctor is $\mathbb{E}$.
\end{remark}

\begin{proof}Suppose we are given $[X]\in \mathbb E(C,A)$, $\overline{a}:A\rightarrow A'$ and $\overline{c}:C'\rightarrow C$, where $X$ is given by the diagram \ref{X}.

Put $\overline a_*[X]=[X_1]$, $\overline c^{*}[X_1]=[X_2]$ and $\overline c^*[X]=[X_3]$. 
Then the conflations $X_1$, $X_2$ and $X_3$ are of the following form respectively
 \[
 \begin{tikzcd}
 A'\ar[r,"f_1"]\ar[rr,bend right=8ex,"h_1"swap]&E_1\ar[r,"j_1"]&C &A'\ar[r,"f_2"]\ar[rr,bend right=8ex,"h_2"swap]&E_2\ar[r,"j_2"]&C'&A\ar[r,"f_3"]\ar[rr,bend right=8ex,"h_3"swap]&E_3\ar[r,"j_3"]&C'
 \end{tikzcd}.
 \]
 
By the definition of $[X_2]$, we have a homotopy cartesian square which gives rise to a morphism $\alpha$ in $H^0(\Mor(\A))$ from $j_2:E_2\rightarrow C'$ to $j_1:E_1\rightarrow C$. 
We also have a morphism $\beta$ from $j_3:E_3\rightarrow C'$ to $j_1:E_1\rightarrow C'$. 
By Lemma \ref{univ}, we have a morphism $\gamma$ from $j_3:E_3\rightarrow C'$ to $j_2:E_2\rightarrow C'$ which restricts to $\Id_{C'}$ and such that $\alpha\circ \gamma=\beta$. 
The associated diagram in $\D(\A)$ is as follows:
\[
\begin{tikzcd}[cramped,sep=small]
&A\ar[rr,"\overline{f_3}"]\ar[dd,dotted,bend left=4ex]\ar[ld,equal]&
&E_3\ar[rr,"\overline{\jmath_3}"]\ar[dd, dotted,bend left=4ex]\ar[ld]&
&C'\ar[dd,equal]\ar[ld,"\overline{c}"]\\
A\ar[rr,"\overline{f}"]\ar[dd,"\overline{a}"swap]&&B\ar[rr,"\overline{\jmath}"]\ar[dd]
&&C\ar[dd,equal]
&\\
&A'\ar[rr]\ar[ld,equal]&
&E_2\ar[rr]\ar[ld]&
&C'\ar[ld,"\overline{c}"]\\
A'\ar[rr,"\overline{f_1}"]&&E_1\ar[rr,"\overline{\jmath_1}"]
&&
C&
\end{tikzcd}
\]
By universal property of the homotopy pullbacks (cf.~Remark \ref{pullbackuniversal}), the morphism $\gamma$ induces a morphism from $X_3$ to $X_2$. 
It clearly restricts to $\overline a:A\rightarrow A'$. 
Therefore $\overline a_*[X_3]=[X_2]$.
\end{proof}

Our next aim is to show that $\mathbb E(C,A)$ is an abelian group for each pair $(C,A)$ of objects in $H^0(\A)$ such that $\mathbb E:H^0(\A)^{op}\times H^0(\A)\rightarrow \Ab$ is a biadditive bifunctor. 
We start by describing the addition operation on $\mathbb E(C,A)$.

Let $[X]$ and $[X']$ be two elements in $\mathbb E(C,A)$ given by the diagram \ref{twoconflations}. 
Put $\overline a=[1\ 1]:A\oplus A\rightarrow A$ and $\overline c=[1\ 1]^{\intercal}:C\rightarrow C\oplus C$. Recall from Proposition \ref{direct} that the direct sum $X\oplus X'$ is also a conflation. 
Then the sum $[X]+[X']$ is defined to be $\overline c^{*}\overline a_*[X\oplus X']$. 
\begin{lemma} The above addition operation is well-defined, associative and commutative and that the equivalence class of 
\[
\begin{tikzcd}
A\ar[r,"\begin{bmatrix}1\\0\end{bmatrix}"]\ar[rr,"0"swap,bend right=8ex]&A\oplus C\ar[r,"{[}0{,}1{]}"]&C
\end{tikzcd}
\]
is a zero element. 
\end{lemma}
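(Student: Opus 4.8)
The plan is to establish the claimed properties of the addition by systematically reducing each one to the functoriality of $\mathbb E$ (already proven), the compatibility of direct sums with conflations (Proposition~\ref{direct}), and the Baer-sum style manipulations that are standard in the theory of extensions. First I would verify well-definedness: if $X_1 \iso X_1'$ and $X_2 \iso X_2'$ as conflations with the same ends (i.e.~they are equivalent), then $X_1 \oplus X_2 \iso X_1' \oplus X_2'$ as conflations with ends $A \oplus A$ and $C \oplus C$, and since $\overline a_*$ and $\overline c^{*}$ are well-defined operations on equivalence classes (by the preceding Proposition), we get $\overline c^{*}\overline a_*[X_1 \oplus X_2] = \overline c^{*}\overline a_*[X_1' \oplus X_2']$. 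This is essentially immediate.

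Next I would treat commutativity. The swap map $\sigma: A \oplus A \to A \oplus A$ and its counterpart on $C \oplus C$ induce an isomorphism of conflations $X \oplus X' \iso X' \oplus X$, and since $[1\ 1]\circ \sigma = [1\ 1]$ and $\sigma \circ [1\ 1]^{\intercal} = [1\ 1]^{\intercal}$, the functoriality equations $\sigma_* = $ (the map induced by the swap iso) together with the relations $\overline a_* \sigma_* = \overline a_*$ and $\sigma^* \overline c^{*} = \overline c^{*}$ give $[X]+[X'] = [X']+[X]$. For the zero element, I would take the split conflation $X_0$ with middle term $A \oplus C$ and show directly, using Proposition~\ref{cons} applied to the pushout along $\overline a = [1\ 1]$ and the pullback along $\overline c = [1\ 1]^{\intercal}$, that $[X] + [X_0]$ is equivalent to $[X]$. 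Concretely, $X \oplus X_0$ has middle term $B \oplus A \oplus C$; one computes that pushing out along $[1\ 1]: A \oplus A \to A$ and pulling back along $[1\ 1]^{\intercal}: C \to C \oplus C$ returns a conflation equivalent to $X$ itself, because the split summand contributes trivially. The key auxiliary facts here are Lemma~\ref{fact} (factoring a morphism of conflations through pushout and pullback) and the uniqueness part of the functoriality Proposition.

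For associativity, the plan is to compute both $([X_1]+[X_2])+[X_3]$ and $[X_1]+([X_2]+[X_3])$ as $\overline c_3^{*}\overline a_3{}_*[X_1 \oplus X_2 \oplus X_3]$ for the appropriate codiagonal $\overline a_3 = [1\ 1\ 1]: A^{\oplus 3} \to A$ and diagonal $\overline c_3 = [1\ 1\ 1]^{\intercal}: C \to C^{\oplus 3}$. The two bracketings correspond to two ways of factoring $\overline a_3$ (through $A \oplus A \oplus A \to A \oplus A \to A$ in two orders) and similarly for $\overline c_3$; the contravariant and covariant functoriality of $\mathbb E$ (associativity of composition: $(\overline a \overline b)_* = \overline a_* \overline b_*$, which follows from the uniqueness clause in the functoriality Proposition) then shows both expressions equal the common value. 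I also need that $\overline a_*$ commutes with $\overline c^{*}$, which was proven in the last part of the preceding Proposition.

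The main obstacle I anticipate is not any single deep point but the bookkeeping of homotopy coherences: each "isomorphism of conflations" is really a $6$-tuple $(h_i, s_j, t)$ in $\mathcal H_{3t}(\A)$, and when composing pushouts with pullbacks one must track the homotopies $s_1, s_2, t$ through Proposition~\ref{cons} and its dual rather than working in a triangulated category where such data is invisible. The saving grace is Corollary~\ref{middleterm}: any morphism of conflations restricting to homotopy equivalences (in particular identities) on both ends is automatically an isomorphism in $\mathcal H_{3t}(\A)$, so once I produce a morphism of conflations restricting to the identity on $A$ and $C$, I need not separately verify invertibility. This reduces every verification to producing the right morphism of conflations, for which Lemma~\ref{fact}, Lemma~\ref{univ}, and the universal properties of homotopy pushouts and pullbacks (Remark~\ref{pullbackuniversal}) are exactly the tools needed.
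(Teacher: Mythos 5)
Your proposal is correct and follows essentially the same route as the paper: well-definedness and commutativity via functoriality of $\overline{a}_*$ and $\overline{c}^{\,*}$ and the swap isomorphism, the zero element by reducing $\overline{c}^{\,*}\overline{a}_*[X_0\oplus X]$ to $[X]$ through a morphism of conflations handled by Lemma~\ref{fact} and Proposition~\ref{cons}, and associativity by expressing both bracketings as $\overline{c}_3^{\,*}(\overline{a}_3)_*[X_1\oplus X_2\oplus X_3]$ via the two factorizations of the triple (co)diagonal. The only cosmetic difference is that the paper writes out the explicit $6$-tuple realizing the morphism $X\to X_0\oplus X$ rather than invoking the pushout/pullback machinery abstractly.
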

\begin{proof}
It is direct to check the well-definedness and commutativity.
To show that the above conflation is a zero element, observe that we have the following morphism in $\mathcal H_{3t}(\A)$
\[
\begin{tikzcd}
A\ar[rrd,"0"{blue,near start},blue]\ar[d,"\begin{bmatrix}0\\1\end{bmatrix}"swap]\ar[rr,bend left=8ex,"h"]\ar[r,"f"]\ar[rd,"s"{red,swap},red,bend right=2ex]&B\ar[rd,"0"red,red,bend left=2ex]\ar[d,"{[}0{,}\;j{,}\;1{]^{\intercal}}"]\ar[r,"j"]&C\ar[d,"{[}1{,}\;1{]^{\intercal}}"]\\
A\oplus A\ar[rr,bend right=8ex,"h'"swap]\ar[r,"f'"swap]&A\oplus C\oplus B\ar[r,"j'"swap]&C\oplus C
\end{tikzcd}
\]
where $f'=\begin{bmatrix}1&0\\0&0\\0&f\end{bmatrix}$, $j'=\begin{bmatrix}0&1&0\\0&0&j\end{bmatrix}$, $h'=\begin{bmatrix}0&0\\0&h\end{bmatrix}$, $s=[0,\;h,\;0]^{\intercal}$.

To show the associativity, observe that we have the equality
\[
\begin{bmatrix}1&0\\0&1\\0&1\end{bmatrix}\circ\begin{bmatrix}1\\1\end{bmatrix}=\begin{bmatrix}1&0\\1&0\\0&1\end{bmatrix}\circ\begin{bmatrix}1\\1\end{bmatrix}:A\rightarrow A\oplus A\oplus A.
\]
\end{proof}
Conflations in the equivalence class of zero elements are called {\sl splitting}. 

\begin{proposition}\label{split}Let $X$ be a conflation as follows
\[
\begin{tikzcd}
A\ar[r,"f"]\ar[rr,bend right=8ex,"h"swap]&B\ar[r,"j"]&C
\end{tikzcd}.  
\]
The following statements are equivalent:
\begin{itemize}
\item[1)]$X$ is a splitting conflation.
\item[2)]$\overline f$ is a split monomorphism in $H^{0}(\A)$.
\item[2)$^{\text{op}}$]$\overline{\jmath}$ is a split epimorphism in $H^{0}(\A)$.
\end{itemize} 
\end{proposition}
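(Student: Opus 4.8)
The plan is to prove the chain of implications $1)\Rightarrow 2)\Rightarrow 2)^{\text{op}}\Rightarrow 1)$, using the symmetry between inflations and deflations (via the duality functor $\RHom(-,\A)$) to reduce $2)^{\text{op}}$ to the statement dual to $2)$, and to establish $1)\Leftrightarrow 2)$ directly.

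For $1)\Rightarrow 2)$: if $X$ is splitting, then $[X]$ equals the equivalence class of the conflation $A\xrightarrow{[1,0]^{\intercal}} A\oplus C\xrightarrow{[0,1]} C$. By Definition~\ref{equivalencerelation} there is an equivalence $\theta:X\rightarrow X_0$ in $\mathcal H_{3t}(\A)$ restricting to $\Id_A$ and $\Id_C$, where $X_0$ is this trivial conflation; by Corollary~\ref{middleterm}, $\theta$ is an isomorphism in $\mathcal H_{3t}(\A)$, so in particular the middle term map $B\rightarrow A\oplus C$ is a homotopy equivalence and it is compatible with $\overline{f}$ and $[1,0]^{\intercal}$. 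Composing the homotopy inverse with the projection $[1,0]:A\oplus C\rightarrow A$ gives a morphism $\overline{r}:B\rightarrow A$ in $H^0(\A)$ with $\overline{r}\,\overline{f}=\Id_A$, so $\overline{f}$ is a split monomorphism.

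For $2)\Rightarrow 1)$: suppose $\overline{r}\,\overline{f}=\Id_A$ in $H^0(\A)$ for some $\overline{r}:B\rightarrow A$. I would build a morphism of conflations from $X$ to the trivial conflation $A\xrightarrow{[1,0]^{\intercal}} A\oplus C\xrightarrow{[0,1]} C$ (which is a conflation by Proposition~\ref{property}~a)) restricting to $\Id_A$ on the left and $\Id_C$ on the right. The candidate middle map is $(\overline{r},\overline{\jmath})^{\intercal}:B\rightarrow A\oplus C$; one checks on the level of $H^0(\A)$ that the two squares commute, and then one uses Lemma~\ref{strictmorphismlift} (since the target trivial conflation is homotopy short exact, in particular homotopy left exact) to promote the commuting square on the deflation side to an honest morphism in $\mathcal H_{3t}(\A)$. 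Since this morphism restricts to $\Id_A$ and $\Id_C$, it is an equivalence, so $[X]$ is the zero class, i.e.\ $X$ is splitting. The main obstacle here is the bookkeeping with homotopies: producing the $6$-tuple realizing the morphism of $3$-term h-complexes, which is exactly what Lemma~\ref{strictmorphismlift} is designed to handle, but one has to verify its hypotheses carefully (the pair $(u,v)$ satisfying the cocycle condition).

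For the equivalence with $2)^{\text{op}}$: I would apply the duality $\RHom(-,\A):\rep(\I,\A)\rightarrow \rep(\I^{op},\A^{op})^{op}$ and the dual notions of homotopy cocartesian square and homotopy cokernel, under which conflations in $(\A,\mathcal S)$ correspond to conflations in the opposite exact dg category, splitting conflations to splitting conflations, inflations to deflations, and split monomorphisms in $H^0(\A)$ to split epimorphisms in $H^0(\A^{op})=H^0(\A)^{op}$. Thus $2)^{\text{op}}$ for $X$ is precisely $2)$ for the dual conflation, and $1)$ is self-dual, so the already-proven equivalence $1)\Leftrightarrow 2)$ yields $1)\Leftrightarrow 2)^{\text{op}}$. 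Alternatively, and perhaps more cleanly, one can prove $2)\Leftrightarrow 2)^{\text{op}}$ directly: a closed degree-zero morphism $\overline{f}:A\rightarrow B$ with homotopy cokernel $\overline{\jmath}:B\rightarrow C$ has $\overline{f}$ split mono in $H^0(\A)$ iff $\overline{\jmath}$ is split epi, by a standard argument in the additive category $H^0(\A)$ applied to the split idempotent $\Id_B-\overline{f}\,\overline{r}$ — here one uses that $H^0(\A)$ need not be idempotent-complete but the relevant retract $C\rightarrow B$ exists precisely because $\overline{\jmath}$ admits a section; the compatibility of homotopies is automatic since we work in $H^0(\A)$. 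I expect the duality reduction to be the smoothest route, so I would present $1)\Rightarrow 2)\Rightarrow 1)$ in detail and then invoke duality for the statements involving $\overline{\jmath}$.
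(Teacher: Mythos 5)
Your proof is correct. The implication $1)\Rightarrow 2)$ and the reduction of $2)^{\text{op}}$ to $2)$ by duality match the paper (which simply cites Corollary~\ref{middleterm} for the former and leaves the duality implicit). Where you genuinely diverge is in $2)\Rightarrow 1)$: the paper works on the \emph{inflation} side, first producing an isomorphism in $H^0(\Mor(\A))$ from $[1\ 0]^{\intercal}\colon A\rightarrow A\oplus C'$ to $f\colon A\rightarrow B$ restricting to $\Id_A$, deducing $C'\simeq C$ by uniqueness of homotopy cokernels, and then correcting the resulting isomorphism $\alpha$ of $3$-term h-complexes on the right end by composing with a second morphism $\beta$ built from a homotopy inverse of $h_2$. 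You instead work on the \emph{deflation} side: the explicit strictly commuting square with middle map $(r,j)^{\intercal}\colon B\rightarrow A\oplus C$ and $s_2=0$ is a morphism in $Z^0(\Mor(\A))$, and Lemma~\ref{strictmorphismlift} (applicable since the trivial conflation is homotopy left exact by Proposition~\ref{property}~a)) extends it to a morphism of $3$-term h-complexes whose left restriction is, by the construction in that lemma, homotopic to $rf\simeq\Id_A$; this is an equivalence onto the trivial conflation. Your route is somewhat more economical — it avoids the detour through $C'$ and the correction step $\beta$ — at the cost of having to unwind Lemma~\ref{strictmorphismlift} enough to see that the induced map on left ends is $\overline{r}\,\overline{f}=\Id_A$ rather than an arbitrary closed morphism; you flag exactly this bookkeeping, and it does go through.
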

\begin{proof}
The implication $1)\Rightarrow 2)$ follows from Corollary \ref{middleterm}. 

We show $2)\Rightarrow 1)$. 

Suppose $\overline f$ is a split monomorphism in $H^{0}(\A)$. 
Then we have an isomorphism from $A\rightarrow A\oplus C'$ to $f:A\rightarrow B$ in $H^0(\Mor(\A))$, which restricts to $\Id_{A}$. 
So their homotopy cokernels are isomorphic and hence $C'$ is homotopy equivalent to $C$. 
So we have an isomorphism in $H^0(\Mor(\A))$ from $f'=(1\ 0)^{\intercal}: A\rightarrow A\oplus C$ to $f:A\rightarrow B$ which restricts to $\Id_{A}$. 
Then it induces an isomorphism $\alpha$ in $\mathcal H_{3t}(\A)$ 
\[
\begin{tikzcd}
A\ar[d,"h_0"swap]\ar[r,"{[}1{,}0{]^{\intercal}}"]\ar[rr,bend left=8ex,"0"]&A\oplus C\ar[r,"{[}0\ 1{]}"]\ar[d,"h_1"]&C\ar[d,"h_2"]\\
A\ar[r,"f"swap]\ar[rr,bend right=8ex,"h"swap]&B\ar[r,"j"swap]&C
\end{tikzcd}
\]
where we have omitted the diagonal morphisms and $h_0$ is homotopic to $\Id_{A}$.
Let $c$ be a homotopy inverse of $h_2$. 
Then the morphism $c$ gives rise to a morphism $\beta$ 
\[
\begin{tikzcd}
A\ar[d,equal]\ar[r,"{[}1{,}0{]^{\intercal}}"]\ar[rr,bend left=8ex,"0"]&A\oplus C\ar[r,"{[}0\ 1{]}"]\ar[d,"\begin{bmatrix}1\;\;0\\0\;\;c
\end{bmatrix}"]&C\ar[d,"c"]\\
A\ar[r,"{[}1{,}0{]^{\intercal}}"swap]\ar[rr,bend right=8ex,"0"]&A\oplus C\ar[r,"{[}0\ 1{]}"swap]&C
\end{tikzcd}\;\;.
\]
So the composition $\alpha\circ \beta $ is an equivalence between $X$ and a splitting conflation and hence $X$ is a splitting conflation.
\end{proof}

\begin{corollary}\label{zero}Let $X$ be a conflation of the form
\[
 \begin{tikzcd}
 A\ar[r,"f"]\ar[rr,"h"swap,bend right=8ex]&B\ar[r,"j"]&C
 \end{tikzcd}.
 \]
\begin{itemize}
\item[1)] The element $\overline{\jmath}^*[X]$ is zero in $\mathbb E(B,A)$. 
\item[2)]For any zero morphism $0:W\rightarrow C$ in $H^0(\A)$, the corresponding element $0^*[X]$ is zero in $\mathbb E(W,A)$.
\end{itemize}
\end{corollary}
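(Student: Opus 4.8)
The plan is to prove both statements by exhibiting explicit equivalences to splitting conflations and then invoking Proposition~\ref{split}. For part 1), the element $\overline{\jmath}^*[X]$ lives in $\mathbb E(B,A)$ and is represented by a conflation obtained as a homotopy pullback of the cospan
\[
\begin{tikzcd}
&B\ar[d,"j"]\\
B\ar[r,"j"swap]&C
\end{tikzcd}
\]
along $\Id_B:B\to B$. First I would observe that this cospan manifestly admits a homotopy pullback, namely the homotopy bicartesian square whose upper-left corner is $B$ again, with both legs equal to $\Id_B$ and $j$ respectively; more precisely, by the explicit description of the functor $R$ and Lemma~\ref{adj}, one checks that
\[
\begin{tikzcd}
B\ar[r,"\Id"]\ar[d,"j"swap]&B\ar[d,"j"]\\
B\ar[r,"j"swap]&C
\end{tikzcd}
\]
is homotopy cartesian with respect to $\A$. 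Then the deflation in the conflation representing $\overline{\jmath}^*[X]$ can be taken to be $\Id_B$, which is a split epimorphism in $H^0(\A)$, so by Proposition~\ref{split}, part 2)$^{\text{op}}$ $\Rightarrow$ 1), the conflation is splitting; hence $\overline{\jmath}^*[X]=0$.

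For part 2), the morphism $0:W\to C$ factors in $H^0(\A)$ through the zero object, say $0:W\to 0\to C$. By the functoriality of $\mathbb E$ in its first variable (the proposition establishing $\mathbb E:H^0(\A)^{op}\times H^0(\A)\to \Set$ is a bifunctor), we have $0^*[X]=(W\to 0)^*\bigl((0\to C)^*[X]\bigr)$, so it suffices to prove that $(0\to C)^*[X]$ is zero in $\mathbb E(0,A)$. But a conflation with right end the zero object has its deflation equal to the zero map $B\to 0$, which is trivially a split epimorphism; again Proposition~\ref{split} gives that every conflation in $\mathbb E(0,A)$ is splitting, so $\mathbb E(0,A)$ has only the zero element and $(0\to C)^*[X]=0$. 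Pulling back zero along $W\to 0$ then yields $0^*[X]=0$. Alternatively, one notes directly that $0^*[X]$ is represented by a homotopy pullback of the cospan $W\xrightarrow{0}C\xleftarrow{j}B$, and that
\[
\begin{tikzcd}
B\oplus W\ar[r,"{[}0\,,\,\Id{]}"]\ar[d,"{[}j\,,\,0{]}"swap]&W\ar[d,"0"]\\
B\ar[r,"j"swap]&C
\end{tikzcd}
\]
is a homotopy pullback (using Proposition~\ref{property} c) and the fact that homotopy pullbacks of split situations split), whose deflation $[0\,,\,\Id]:B\oplus W\to W$ is a split epimorphism.

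The only genuinely non-routine point will be verifying that the candidate squares above are homotopy cartesian \emph{with respect to $\A$} in the sense of Definition~\ref{maindef} — i.e.\ that the canonical map $X_{00}\to \Sigma^{-1}\Cone((-j,k))$ induces an isomorphism on $\tau_{\leq 0}\RHom(A'^{\wedge},-)$ for all $A'$. For the square in part 1) this amounts to checking that $B\to \Sigma^{-1}\Cone\bigl((B\oplus B\xrightarrow{(-j,j)}C)\bigr)$ is an isomorphism after $\tau_{\leq 0}\RHom(A'^{\wedge},-)$, which reduces to the observation that $\Cone((-j,j):B\oplus B\to C)\cong \Cone(\Id_B)\oplus\Sigma B$ (up to isomorphism in $\D(\A)$), so $\Sigma^{-1}\Cone((-j,j))\simeq B$ canonically; this is a short computation. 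I would most likely phrase the argument for part 2) via the bifunctoriality reduction to $\mathbb E(0,A)$, since showing $\mathbb E(0,A)=0$ is immediate from Proposition~\ref{split} (the zero map to the zero object being a split epimorphism), thereby avoiding even that computation and keeping the proof to a few lines.
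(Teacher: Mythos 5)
Your overall strategy---realize the pulled-back class by a conflation whose deflation is a split epimorphism and invoke Proposition~\ref{split}---is the natural one, and your treatment of part 2) via the factorization $W\to 0\to C$ together with the triviality of $\mathbb E(0,A)$ is correct. But part 1) contains a genuine error: the square with corners $B,B,B,C$, identities on the two legs out of the upper-left corner and $j$ on the two legs into $C$, is \emph{not} homotopy cartesian in general. A shear automorphism $\left[\begin{smallmatrix}1&0\\1&1\end{smallmatrix}\right]$ of $B\oplus B$ identifies $\Cone((-j,j))$ with $\Sigma B\oplus\Cone(j)$ --- not with $\Sigma B\oplus\Cone(\Id_B)$ as you assert --- and under this identification the canonical map $B\to\Sigma^{-1}\Cone((-j,j))$ becomes the inclusion of the summand $B$. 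Since $X$ is homotopy left exact, $\tau_{\leq 0}\RHom(A'^{\wedge},\Sigma^{-1}\Cone(j))\simeq\tau_{\leq 0}\RHom(A'^{\wedge},A^{\wedge})$, so your candidate square is homotopy cartesian if and only if $A$ is a zero object. This is also visible a priori: if the deflation of a representative of $\overline{\jmath}^{\,*}[X]$ were $\Id_B$, its homotopy kernel would be $0$, whereas the conflation representing $\overline{\jmath}^{\,*}[X]$ must have left end $A$.

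The repair is short and uses the universal property rather than an identification of the pullback: let $X'$, with deflation $p':B'\to B$, be the homotopy pullback of $j$ along $j$ (it exists and $p'$ is a deflation by Ex2). The strictly commuting square built from $\Id_B$, $\Id_B$, $j$, $j$ is a morphism from $\Id_B$ to $j:B\to C$ in $H^0(\Mor(\A))$, so Lemma~\ref{univ} (equivalently, Remark~\ref{pullbackuniversal}) yields a morphism $\Id_B\to p'$ restricting to $\Id_B$ on the targets, i.e.\ a section $\overline{s}$ of $\overline{p'}$ in $H^0(\A)$; Proposition~\ref{split} then shows $X'$ is splitting. Incidentally, the ``alternative'' square you offer for part 2) is also incorrect --- the homotopy pullback of $W\xrightarrow{0}C\xleftarrow{j}B$ is $A\oplus W$ rather than $B\oplus W$, and the displayed map $[j,0]:B\oplus W\to B$ does not even have the right target --- but since your primary argument for part 2) does not rely on it, that part of the proof stands.
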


\begin{proposition}
Let $[X]$ be an element in $\mathbb E(C,A)$ where $X$ is of the form
\[
\begin{tikzcd}
A\ar[r,"f"]\ar[rr,"h"swap,bend right=8ex]&B\ar[r,"j"]&C
\end{tikzcd}.
\]
 Then $[-X]$, where $-X$ is of the form
\[
\begin{tikzcd}
A\ar[r,"-f"]\ar[rr,"-h"swap,bend right=8ex]&B\ar[r,"j"]&C
\end{tikzcd}
\]
is an inverse of $[X]$ under the addition defined above. So this addition makes $\mathbb E(C,A) $ into an abelian group. 
\end{proposition}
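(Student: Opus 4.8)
The plan is to show that $[X]+[-X]=0$ in $\mathbb{E}(C,A)$, where the addition is $\overline{c}^*\overline{a}_*[X\oplus(-X)]$ with $\overline a=[1\ 1]:A\oplus A\to A$ and $\overline c=[1\ 1]^{\intercal}:C\to C\oplus C$. First I would observe that $X\oplus(-X)$ is the conflation
\[
\begin{tikzcd}
A\oplus A\ar[r,"\begin{bmatrix}f&0\\0&-f\end{bmatrix}"]\ar[rr,bend right=8ex,"\begin{bmatrix}h&0\\0&-h\end{bmatrix}"swap]&B\oplus B\ar[r,"\begin{bmatrix}j&0\\0&j\end{bmatrix}"]&C\oplus C,
\end{tikzcd}
\]
and that after applying the automorphism $\begin{bmatrix}1&0\\1&1\end{bmatrix}$ of $A\oplus A$ on the left and $\begin{bmatrix}1&-1\\0&1\end{bmatrix}$ of $C\oplus C$ on the right (both homotopy equivalences, hence inducing equivalences of conflations in the sense of Definition~\ref{equivalencerelation} after adjusting the middle term by Corollary~\ref{middleterm}), the conflation becomes equivalent to one whose underlying square diagram makes the relevant pushout/pullback computation transparent. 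Concretely, after this change of basis the map $\overline a=[1\ 1]$ becomes the first projection and $\overline c=[1\ 1]^{\intercal}$ becomes the second inclusion, so $\overline a_*$ kills one summand and $\overline c^*$ restricts to the complementary summand.

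The cleaner route, which I would actually carry out, is to exhibit an explicit morphism in $\mathcal{H}_{3t}(\A)$ witnessing that $\overline{c}^*\overline{a}_*[X\oplus(-X)]$ is the splitting conflation $A\xrightarrow{[1\ 0]^{\intercal}}A\oplus C\xrightarrow{[0\ 1]}C$, which is a zero element by the lemma preceding Proposition~\ref{split}. For this I would use Lemma~\ref{fact}: the composite $\overline{c}^*\overline{a}_*[X\oplus (-X)]$ can be represented by a conflation with ends $A$ and $C$ obtained by pushing out along $[1\ 1]$ and pulling back along $[1\ 1]^{\intercal}$, and I would write down a morphism from $X\oplus(-X)$ to the splitting conflation $A\oplus C$ (with appropriate homotopies) restricting to $[1\ 1]$ on the left and $[1\ 1]^{\intercal}$ on the right. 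The candidate morphism has middle component $B\oplus B\to A\oplus C$ built from a contracting-type datum: since $X$ and $-X$ differ only by signs, the "difference" of the two copies of $B$ admits a natural map making the diagram commute up to the prescribed homotopies $h,-h$. Once such a morphism is produced, the universal properties of $\overline a_*$ and $\overline c^*$ (Proposition~\ref{cons} and its dual, together with the uniqueness established via Lemma~\ref{bothconflation}) force $\overline{c}^*\overline a_*[X\oplus(-X)]=[A\oplus C]=0$.

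For the final sentence — that $\mathbb{E}(C,A)$ is an abelian group and $\mathbb{E}:H^0(\A)^{op}\times H^0(\A)\to\Ab$ is a biadditive bifunctor — I would then assemble: associativity, commutativity and the zero element were already established in the lemma before Proposition~\ref{split}; the inverse is now in hand; and biadditivity follows because $\overline a_*$, $\overline c^*$ are already known to be functorial (the Proposition proved just before this one) and one checks that they are group homomorphisms using the $\mathrm{Ex}2$/$\mathrm{Ex}2^{op}$ pushout-pullback manipulations — e.g.\ $\overline{a}_*([X]+[X'])=\overline a_*[X]+\overline a_*[X']$ reduces, via the definition of $+$ as $\overline{c}^*\overline{d}_*$ of a direct sum, to compatibility of $\overline a_*$ with direct sums and with the diagonal/codiagonal, which is a diagram chase with Corollary~\ref{Comp} (pasting of homotopy cartesian squares) and Corollary~\ref{pastinglaw:second}.

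The main obstacle I expect is the explicit construction in the second paragraph: producing the morphism of $3$-term $h$-complexes from $X\oplus(-X)$ to the split conflation together with all the required null-homotopies $s_1,s_2,t$ satisfying the cocycle identities \eqref{s2}, \eqref{tt} — the signs coming from the $A_\infty$-conventions and from the $-h$ in $-X$ make this delicate, and one must verify that the resulting morphism restricts correctly on both ends in $H^0(\A)$ so that Lemma~\ref{fact} applies. An alternative that sidesteps some of this is to compute $\overline a_*[X\oplus(-X)]$ first (pushout along $[1\ 1]$): one recognizes the result, using $\mathrm{Ex}2^{op}$ and Proposition~\ref{cons}, as $[X']$ where $X'$ has left-hand map $f\circ\mathrm{pr}$ modulo the antidiagonal, which is split by inspection of $\overline f$ via Proposition~\ref{split}(2); applying $\overline c^*$ then preserves splitness by Corollary~\ref{zero} or by Proposition~\ref{split}(2)$^{op}$. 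I would pursue whichever of these two keeps the sign bookkeeping shortest.
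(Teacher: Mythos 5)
There is a genuine gap at the step you designate as the one you would ``actually carry out''. A morphism of $3$-term h-complexes from $X\oplus(-X)$ to the splitting conflation $A\xrightarrow{[1\ 0]^{\intercal}}A\oplus C\xrightarrow{[0\ 1]}C$ restricting to $[1\ 1]$ on the left does not exist in general. Indeed, a closed degree-zero morphism $(h_0,h_1,h_2,s_1,s_2,t)$ must satisfy $d(s_1)=f_2h_0-h_1f_1$, where here $f_1=\mathrm{diag}(f,-f):A\oplus A\to B\oplus B$ and $f_2=[1\ 0]^{\intercal}$; reading the $A$-component of this identity in $H^0(\A)$ with $h_0=[1\ 1]$ forces the first entry of $h_1$ to be a retraction of $\overline{f}$, i.e.\ $\overline{f}$ would have to be a split monomorphism. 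Reversing the direction of the morphism runs into the dual obstruction (it forces $\overline{\jmath}$ to be a split epimorphism). Your fallback at the end fails for the same underlying reason: $\overline{a}_*[X\oplus(-X)]$ is \emph{not} split in general. In the triangulated model $[X\oplus(-X)]$ corresponds to $\mathrm{diag}(\delta,-\delta):C\oplus C\to\Sigma A\oplus \Sigma A$, and pushing forward along $[1\ 1]$ gives $[\delta\ \ {-}\delta]$, which vanishes only when $\delta=0$; it is only the subsequent pullback along $[1\ 1]^{\intercal}:C\to C\oplus C$ that produces $\delta-\delta=0$. So neither of your two concrete routes closes, and the first paragraph's change-of-basis sketch is not made precise enough to substitute for them (it would in effect need the direct-sum decomposition of Lemma~\ref{sum}, which relies on biadditivity proved later).

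The paper's proof circumvents both obstructions by mapping a \emph{single} copy of $X$ \emph{into} the direct sum rather than mapping the direct sum onto a split object: it exhibits the strict morphism $\alpha:X\to -X\oplus X$ with components $(-1,1)^{\intercal}$ on $A$, $(1,1)^{\intercal}$ on $B$ and $(1,1)^{\intercal}$ on $C$, and all diagonal homotopies zero --- the sign in the first component is exactly what lets one take $s_1=s_2=t=0$. Lemma~\ref{fact} then factors $\alpha$ and yields $\overline{a'}_*[X]=\overline{c}^{\,*}[-X\oplus X]$ with $a'=(-1\ 1)^{\intercal}$ and $c=(1\ 1)^{\intercal}$, whence
\[
[X]+[-X]=\overline{a}_*\overline{c}^{\,*}[-X\oplus X]=\overline{a}_*\overline{a'}_*[X]=(\overline{a}\,\overline{a'})_*[X]=0_*[X]=0,
\]
using $[1\ 1]\circ(-1\ 1)^{\intercal}=0$ and the already-established functoriality of $\mathbb E$. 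If you want to salvage your approach, this is the morphism you should be constructing. (Also note that the biadditivity of $\mathbb E$ discussed in your third paragraph is the content of the following proposition, not of this one.)
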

\begin{proof}It is direct to check that the element $[-X]$ is well-defined. 
The following diagram in $\A$
\[
\begin{tikzcd}
A\ar[rr,bend left=8ex,"h"]\ar[d,swap,"\begin{bmatrix}-1\\1\end{bmatrix}"]\ar[r,"f"]&B\ar[d,"\begin{bmatrix}1\\1\end{bmatrix}"]\ar[r,"j"]&C\ar[d,"\begin{bmatrix}1\\1\end{bmatrix}"]\\
A\oplus A\ar[rr,"h'"swap,bend right=8ex]\ar[r,"f'"swap]&B\oplus B\ar[r,"j'"swap]&C\oplus C
\end{tikzcd}
\]
where the diagonal morphisms are zero and where $f'=\begin{bmatrix}-f&0\\0&f\end{bmatrix}$, $j'=\begin{bmatrix}j&0\\0&j\end{bmatrix}$ and $h'=\begin{bmatrix}-h&0\\0&h\end{bmatrix}$, gives a morphism $\alpha:X\rightarrow -X\oplus X$ in $\mathcal H_{3t}(\A)$. 
We apply Lemma \ref{fact} to the morphism $\alpha$. 

Put $a=(1\ 1):A\oplus A\rightarrow A$, $a'=(-1\ 1)^{\intercal}:A\rightarrow A\oplus A$ and $c=(1\ 1)^{\intercal}:C\rightarrow C\oplus C$. 

Then we have $\alpha=\gamma\circ\beta$ where $\beta:X\rightarrow\tilde{X}$, $\gamma:\tilde{X}\rightarrow -X\oplus X$ and $[\tilde{X}]=\overline{a'}_{*}[X]=\overline c^*[-X\oplus X]$.

So we have
\[
[X]+[-X]=\overline a_{*}\overline c^*[-X\oplus X]=\overline a_{*}[\tilde{X}]=\overline a_{*}\overline {a'}_{*}[X]=0_*[X]=0.
\] 
This shows that $[-X]$ is an additive inverse of $[X]$.

\end{proof}

\begin{proposition} The bifunctor $\mathbb E$ is a biadditive bifunctor from $H^0(\A)^{op}\times H^0(\A)$ to the category $\Ab$ of abelian groups.
\end{proposition}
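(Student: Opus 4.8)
The plan is to establish biadditivity in two steps: first, that each induced map $\overline a_{*}\colon\mathbb E(C,A)\to\mathbb E(C,A')$ and $\overline c^{*}\colon\mathbb E(C,A)\to\mathbb E(C',A)$ is a homomorphism of abelian groups, so that $\mathbb E$ lands in $\Ab$; and second, that $\mathbb E$ is additive in each of its two variables. Throughout I write $\Delta_{M}=[1\ 1]^{\intercal}\colon M\to M\oplus M$ and $\nabla_{M}=[1\ 1]\colon M\oplus M\to M$, so that by construction $[X]+[X']=(\Delta_{C})^{*}(\nabla_{A})_{*}[X\oplus X']$ for $[X],[X']\in\mathbb E(C,A)$. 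I will use freely that $(-)_{*}$ and $(-)^{*}$ are functorial and commute with one another (proved in Subsection~\ref{bi}), that the direct sum of two conflations is a conflation (Proposition~\ref{direct}), that $\overline 0_{*}$ and $\overline 0^{*}$ vanish (Corollary~\ref{zero} and its dual), and that $\mathbb E(0,A)=0=\mathbb E(C,0)$ (by Proposition~\ref{split}, since a conflation with a zero end splits).

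The argument rests on two lemmas about the operations $(-)_{*}$, $(-)^{*}$. The first is their compatibility with finite direct sums: for $\overline a\colon A\to A'$, $\overline b\colon B\to B'$ in $H^{0}(\A)$ and conflations $X,Y$ with left ends $A,B$ one has $(\overline a\oplus\overline b)_{*}[X\oplus Y]=[\overline a_{*}X\oplus\overline b_{*}Y]$, and dually for $(-)^{*}$. I would prove this by realizing $\overline a_{*}X$ and $\overline b_{*}Y$ through the homotopy pushouts of Proposition~\ref{cons}, taking their direct sum, and noting that a direct sum of homotopy cocartesian squares is again homotopy cocartesian, so that this direct sum realizes the pushforward of $X\oplus Y$ along $\overline a\oplus\overline b$; uniqueness of the pushforward class then gives the claim. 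The second lemma is the diagonal identity $(\Delta_{A})_{*}[X]=(\Delta_{C})^{*}[X\oplus X]$ for a conflation $X$ with ends $A,C$, together with its codiagonal dual. This follows by applying Lemma~\ref{fact} to the morphism of conflations $X\to X\oplus X$ given on the three terms by their diagonals: it restricts to $\Delta_{A}$ and $\Delta_{C}$, so the intermediate conflation it produces represents both $(\Delta_{A})_{*}[X]$ and $(\Delta_{C})^{*}[X\oplus X]$.

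Granting these, the remainder is formal. For the additivity of $\overline a_{*}$, using $\overline a\circ\nabla_{A}=\nabla_{A'}\circ(\overline a\oplus\overline a)$, functoriality, and the commutation of the two operations,
\[
\overline a_{*}\bigl([X_{1}]+[X_{2}]\bigr)=(\Delta_{C})^{*}(\nabla_{A'})_{*}(\overline a\oplus\overline a)_{*}[X_{1}\oplus X_{2}]=(\Delta_{C})^{*}(\nabla_{A'})_{*}[\overline a_{*}X_{1}\oplus\overline a_{*}X_{2}]=\overline a_{*}[X_{1}]+\overline a_{*}[X_{2}],
\]
and dually $\overline c^{*}$ is additive; together with the vanishing of $\overline 0_{*},\overline 0^{*}$ this shows $\mathbb E$ is a functor into $\Ab$. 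Next, for parallel morphisms $\overline f,\overline g\colon M\to N$ write $\overline f+\overline g=[\overline f\ \overline g]\circ\Delta_{M}$ with $[\overline f\ \overline g]=\nabla_{N}\circ(\overline f\oplus\overline g)$; combining the two lemmas with functoriality gives $(\overline f+\overline g)_{*}=\overline f_{*}+\overline g_{*}$, and dually $(\overline f+\overline g)^{*}=\overline f^{*}+\overline g^{*}$. Finally, for additivity in the covariant variable, fix a biproduct $A\xrightarrow{\iota_{A}}A\oplus A'\xrightarrow{p_{A}}A$ together with the symmetric data for $A'$, and put $\Phi=(p_{A*},p_{A'*})$ and $\Psi(\xi,\eta)=\iota_{A*}\xi+\iota_{A'*}\eta$; these are mutually inverse group homomorphisms, $\Phi\Psi=\Id$ using additivity of $p_{A*},p_{A'*}$ together with $p_{A}\iota_{A'}=0=p_{A'}\iota_{A}$ and $\overline 0_{*}=0$, and $\Psi\Phi=\Id$ using additivity of $\overline a\mapsto\overline a_{*}$ applied to $\Id_{A\oplus A'}=\iota_{A}p_{A}+\iota_{A'}p_{A'}$. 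Additivity in the contravariant variable is entirely dual, using $\mathbb E(0,A)=0$.

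I expect the main obstacle to be the first lemma, the compatibility of $(-)_{*}$ and $(-)^{*}$ with direct sums: this is the one place where one cannot argue purely formally from the naturality relations already in hand and must instead return to the construction of homotopy pushouts and pullbacks (Proposition~\ref{cons}), verifying that a direct sum of chosen realizations still satisfies the defining property of the (co)cartesian square rather than merely landing in the correct equivalence class. Everything after that — the diagonal identity, which is a one-line application of Lemma~\ref{fact}, and the bookkeeping with diagonals, codiagonals and biproduct projections — is routine.
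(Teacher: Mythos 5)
Your proposal is correct and follows essentially the same route as the paper: the two key ingredients — compatibility of $(-)_*$ and $(-)^*$ with direct sums of conflations, and the diagonal identity $(\Delta_A)_*[X]=(\Delta_C)^*[X\oplus X]$ obtained from Lemma~\ref{fact} — are exactly the ones the paper uses, and the subsequent chains of equalities coincide. The only difference is cosmetic: the paper leaves the final passage from additivity of $\overline a\mapsto\overline a_*$ to preservation of biproducts implicit (returning to it in Lemma~\ref{sum}), whereas you spell it out with the mutually inverse maps $\Phi,\Psi$.
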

\begin{proof}Let $X_i$, $i=1$, $2$, be a conflation of the form 
\[
\begin{tikzcd}
A\ar[r,"f_i"]\ar[rr,"h_i"swap,bend right=8ex]&B_i\ar[r,"j_i"]&C
\end{tikzcd}
\]
and $\overline b:A\rightarrow A'$ a morphism in $H^0(\A)$. 

Put $a=(1\ 1):A\oplus A\rightarrow A$, $a'=(1\ 1):A'\oplus A'\rightarrow A'$, $c=(1\ 1)^{\intercal}:C\rightarrow C\oplus C$ and $d=\begin{bmatrix}b&0\\0&b\end{bmatrix}:A\oplus A\rightarrow A'\oplus A'$. 

Put $[\tilde{X_i}]=\overline b_*[X_i]$, $i=1,2$. 
Then we have $d_*[X_1\oplus X_2]=[\tilde{X_1}\oplus \tilde{X_2}]$. 

So we have
\[
\begin{aligned}
\overline b_*([X_1]+[X_2])&=\overline b_*\overline a_*\overline c^*[X_1\oplus X_2]=\overline a'_*\overline d_*\overline c^{*}[X_1\oplus X_2]\\
&=\overline a'_*\overline c^{*}\overline d_*[X_1\oplus X_2]=\overline a'_*\overline c^{*}[\tilde{X_1}\oplus \tilde{X_2}]\\
&=\tilde{X_1}+\tilde{X_2}=\overline b_*[X_1]+\overline b_*[X_2].
\end{aligned}
\] 
So the map $\overline{b}_*$ is a morphism of abelian groups.

Let $X$ be a conflation of the form
\[
\begin{tikzcd}
A\ar[r,"f"]\ar[rr,"h"swap,bend right=8ex]&B\ar[r,"j"]&C
\end{tikzcd}.
\]
Suppose we have a morphism $\overline b_i:A\rightarrow A'$ in $H^0(\A)$ for each $i=1$, $2$.

Put $d'=\begin{bmatrix}b_1&0\\0&b_2
\end{bmatrix}$ and $a''=[1,1]^{\intercal}:A\rightarrow A\oplus A$. 
Put $[Y_i]=(\overline b_i)_*[X]$ for $i=1$, $2$. 

We have a canonical morphism $\alpha:X \rightarrow X\oplus X$ which is componentwise given by the morphism $(1\ 1)^{\intercal}$. 

By Lemma \ref{fact}, the morphism $\alpha$ factorizes as $\alpha=\gamma\circ \beta$ where we have $\beta: X\rightarrow \tilde{X}$, $\gamma:\tilde{X}\rightarrow X\oplus X$ and $[\tilde{X}]=\overline a''_*[X]=\overline c^*[X\oplus X]$. 

Then we have
\[
\begin{aligned}
(\overline b_1+\overline b_2)_*[X]&=\overline a'_*\overline d'_*\overline a''_*[X]=\overline a'_*\overline d'_*\overline c^*[X\oplus X]\\
&=\overline a'_*\overline c^*\overline d'_{*}[X\oplus X]=\overline a'_*\overline c^*[Y_1\oplus Y_2]\\
&=Y_1+Y_2=(\overline b_1)_*[X]+(\overline b_2)_*[X].
\end{aligned}
\] 
Thus we have
$(\overline b_1+\overline b_2)_*=(\overline b_1)_*+(\overline b_2)_*$. 
This together with its dual, shows that the bifunctor $\mathbb E$ is biadditive.
\end{proof}
For two elements $\delta_i=[X_i]\in \mathbb E(C_i,A_i)$, $i=1$, $2$, let $\delta_1\oplus \delta_2$ be the element in 
\[
\mathbb E(C_1\oplus C_2, A_1\oplus A_2)\simeq \mathbb E(C_1,A_1)\oplus \mathbb E(C_2, A_2)\oplus \mathbb E(C_1, A_2)\oplus \mathbb E(C_2, A_1)
\]
corresponding to the element $(\delta_1,\delta_2,0,0)$.
\begin{lemma}\label{sum}$\delta_1\oplus \delta_2$ is the equivalence class of $X_1\oplus X_2$.
\end{lemma}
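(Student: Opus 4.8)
\textbf{Proof plan for Lemma~\ref{sum}.} The statement to prove is that $\delta_1 \oplus \delta_2$, the element of $\mathbb E(C_1 \oplus C_2, A_1 \oplus A_2)$ corresponding under the canonical direct-sum decomposition to $(\delta_1, \delta_2, 0, 0)$, is represented by the conflation $X_1 \oplus X_2$ (which is a conflation by Proposition~\ref{direct}). The plan is to unwind the definition of the direct-sum decomposition of $\mathbb E$ and compute the four ``components'' of $[X_1 \oplus X_2]$ using functoriality of $\mathbb E$ together with the vanishing statements already established in Corollary~\ref{zero}.

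First I would fix the canonical inclusions and projections $u_i: A_i \to A_1 \oplus A_2$, $p_i: A_1 \oplus A_2 \to A_i$ and likewise $u_i', p_i'$ for the $C$'s, and recall that the isomorphism
\[
\mathbb E(C_1 \oplus C_2, A_1 \oplus A_2) \xrightarrow{\sim} \bigoplus_{i,j} \mathbb E(C_i, A_j)
\]
sends an element $\delta$ to the tuple with $(i,j)$-entry $(\overline{u_j})_* (\overline{u_i'})^* \delta = (\overline{u_j})_* (\overline{p_i})^* \cdots$; more precisely, the $(i,j)$-component is $(\overline{u_j})_*(\overline{p_i'})^* \delta \in \mathbb E(C_i, A_j)$ — I would pin down the exact convention so it matches the ordering $(\delta_1,\delta_2,0,0)$ in the statement, i.e. the $(1,1)$ and $(2,2)$ components should be $\delta_1, \delta_2$ and the cross terms should vanish. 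Then I would compute each of the four components of $[X_1 \oplus X_2]$: for the diagonal ones, the point is that the composite $A_j \xrightarrow{u_j} A_1 \oplus A_2$ restricted back via $p_j$, applied to $X_1 \oplus X_2$, simply picks out $X_j$, so that component is $[X_j] = \delta_j$; for the off-diagonal ones, the relevant pullback or pushout is along a composite that factors through a zero morphism (e.g. $p_2' \circ (j_1 \oplus j_2)$ involves $C_1 \to C_1 \oplus C_2 \to C_2$ being zero on the relevant summand), so Corollary~\ref{zero}(2) gives that the component is zero.

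Concretely, for the $(1,1)$-entry I would observe that $(\overline{u_1})_*(\overline{p_1'})^*[X_1 \oplus X_2]$ can be computed by first pulling back along $p_1': C_1 \to C_1 \oplus C_2$ — but here I should be careful about variance; since $\mathbb E$ is contravariant in the first slot, the ``component extraction'' in the $C$-variable uses the \emph{inclusion} $u_1': C_1 \to C_1 \oplus C_2$ and the pullback $(\overline{u_1'})^*$. One checks directly from the explicit block form of $X_1 \oplus X_2$ (middle term $B_1 \oplus B_2$, maps the block diagonals of $f_i, j_i, h_i$) that $(\overline{u_1'})^*[X_1 \oplus X_2]$ is the class of the conflation obtained by pulling back along $C_1 \hookrightarrow C_1 \oplus C_2$, which by a direct computation with the homotopy pullback (the pullback of $j_1 \oplus j_2$ along $u_1'$ is $j_1 \oplus (\text{something splitting over } A_2)$) is equivalent to $X_1 \oplus (\text{split conflation } A_2 \rightarrowtail A_2 \twoheadrightarrow 0)$; then pushing forward along $p_1: A_1 \oplus A_2 \to A_1$ kills the $A_2$-summand and yields $[X_1] = \delta_1$. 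The same bookkeeping with indices swapped handles $(2,2)$, and for $(1,2)$ and $(2,1)$ the pullback along $u_1'$ followed by pushforward along $p_2$ (respectively $u_2', p_1$) produces a conflation whose class is $(\overline{0})^*$ or $(\overline{0})_*$ of something, hence zero by Corollary~\ref{zero}.

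The main obstacle I anticipate is purely organizational rather than conceptual: getting the variance conventions and the precise form of the isomorphism $\mathbb E(C_1 \oplus C_2, A_1 \oplus A_2) \cong \bigoplus_{i,j} \mathbb E(C_i,A_j)$ exactly right, and then reliably identifying the pulled-back/pushed-forward conflations via their explicit homotopy (co)cartesian squares — the homotopy data (the maps $h$, $s_1$, $s_2$, $t$ in the six-tuples) must be tracked carefully to certify the required equivalences of conflations, rather than mere abstract isomorphisms in $H^0(\A)$. Once the biadditivity of $\mathbb E$ (already proven) is invoked to reduce everything to these four component computations, and Corollary~\ref{zero} is invoked for the cross terms, the argument closes. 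I expect the write-up to be short: a couple of paragraphs identifying the components and citing Proposition~\ref{direct}, Corollary~\ref{zero}, and the biadditivity.
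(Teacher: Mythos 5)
Your proposal is correct and follows essentially the same route as the paper: extract the four components of $[X_1\oplus X_2]$ by pushing forward along the projections $A_1\oplus A_2\to A_i$ and pulling back along the inclusions $C_j\to C_1\oplus C_2$, identify the diagonal components with $[X_i]$ via the explicit block form of the direct-sum conflation, and kill the off-diagonal components with Corollary~\ref{zero}. The only differences are cosmetic (the paper applies the pushforward first and then the pullback, and your initial variance conventions need the correction you yourself flag), so no substantive gap remains.
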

\begin{proof}Denote by $a_i:A_1\oplus A_2\rightarrow A_i$ the canonical projection for $i=1$, $2$. 
Denote by $c_i:C_i\rightarrow C_1\oplus C_2$ the canonical inclusion for $i=1$, $2$. 

It is enough to show that we have $c_j^*(a_i)_*[X_1\oplus X_2]=0$ for $i\neq j$ and $c_i^*(a_i)_*[X_1\oplus X_2]=[X_i]$.

We show the case when $i=1$. 
We have $(a_1)_*[X_1\oplus X_2]=[X_1\oplus Y]$ where $Y$ is the conflation
\[
\begin{tikzcd}
0\ar[r]\ar[rr,"0"swap,bend right=8ex]&C_2\ar[r,equal]&C_2
\end{tikzcd}.
\]
So we have $c_1^*(a_1)_*[X_1\oplus X_2]=[X_1]$. 

By Corollary \ref{zero}, we have $c_2^*(a_1)_*[X_1\oplus X_2]=0$.
\end{proof}

\subsection{DG nerve of an exact dg category}
In this subsection, we link our notion of exact dg category to Barwick's notion of exact
$\infty$-category \cite{Barwick15}. Standard references for $\infty$-categories are \cite{Lurie09,LurieHA,Cisinski19,Land21,Lurie23}. 

One of the aims of this section is to prove the following theorem. The construction of
the dg nerve will be recalled in \ref{Lambdadgnerve} and the notion of an exact structure on an $\infty$-category
in Definition~\ref{def:exactinfinity}. The notion of exact structure on a dg category is given in Definition~\ref{exactdgstructure}.

\begin{theorem}\label{nerve}Let $\A$ be a dg category such that $H^0(\A)$ is an additive category. 
Then there is a bijection between the class of exact structures on the dg category $\A$ and the class of 
exact structures on the $\infty$-category $N_{dg}(\A)$.
Moreover, if $\A$ is a pretriangulated dg category endowed with the maximal exact structure 
(given by all homotopy short exact sequences), then its dg nerve is a stable $\infty$-category
endowed with the maximal exact structure (where each morphism is both an inflation and a deflation).
\end{theorem}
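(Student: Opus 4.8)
\textbf{Proof plan for Theorem~\ref{nerve}.}
The plan is to build the bijection at the level of the combinatorial description of conflations.
By Definition~\ref{exactdgstructure} an exact structure on $\A$ is a class $\mathcal S\subseteq \mathcal H_{3t}(\A)$ of homotopy short exact sequences closed under isomorphism and satisfying Ex0, Ex1, Ex2, Ex2$^{op}$; by Definition~\ref{def:exactinfinity} an exact structure on the $\infty$-category $N_{dg}(\A)$ is a choice of two classes of edges (ingressive/egressive morphisms) subject to Barwick's axioms.
First I would produce a dictionary between the basic data on both sides.
Using the description of $\mathcal H_{3t}(\A)=H^0(\Fun_\infty(\B,\A))$ and the identification of $3$-term $h$-complexes with the $A_\infty$-functors $\B\to\A$ recalled in Definition~\ref{def:3termhomotopy}, together with the definition of the dg nerve via the functor $\Lambda$ (to be recalled in \ref{Lambdadgnerve}), a $2$-simplex of $N_{dg}(\A)$ with a prescribed nullhomotopy of the composite is exactly a $3$-term $h$-complex over $\A$, and a cofibration (= ingressive morphism) together with its cofiber corresponds to a conflation.
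Concretely: a morphism $\overline i:A\to B$ in $H^0(\A)=h N_{dg}(\A)$ should be declared ingressive for the $\infty$-exact structure iff some (hence any, by Remark~\ref{truncationexactdgstructure} and the isomorphism-closure) representative $i$ fits into a conflation in $\mathcal S$, i.e.~$i$ is an inflation; dually for egressive/deflation.

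The second step is to check that under this dictionary the axioms match.
One direction: given an exact dg structure $\mathcal S$, I would verify that the resulting classes of ingressives and egressives satisfy Barwick's axioms.
The axioms that a zero object is both initial and terminal and that equivalences are both ingressive and egressive follow from Ex0 together with the observation (made just after Definition~\ref{exactdgstructure}) that homotopy equivalences are both inflations and deflations.
Closure of ingressives under composition is exactly Ex1$^{op}$, proved in Proposition~\ref{property}(d), and closure of egressives under composition is Ex1.
The pushout/pullback axioms of Barwick --- that the pushout of an ingressive along an arbitrary morphism exists, is again ingressive, and is simultaneously a pullback square, and dually --- translate into Ex2$^{op}$ and Ex2, using Proposition~\ref{cons} and Lemma~\ref{bothconflation} (which says that a morphism of conflations which is a homotopy equivalence on one end has homotopy bicartesian flanking square), plus the uniqueness of homotopy pullbacks from Proposition~\ref{pullbackunique}.
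The remaining Barwick axiom, asserting that a square which is both a pushout along an ingressive and a pullback along an egressive has its four sides of the expected type, is handled by the pasting lemmas of Section~\ref{sec:diagramlemmas}, in particular Corollary~\ref{Comp} and Corollary~\ref{pastinglaw:second}, together with Proposition~\ref{property}.
Conversely, given an $\infty$-exact structure, I take $\mathcal S$ to be the class of $3$-term $h$-complexes whose image in $N_{dg}(\A)$ is a cofiber sequence (an ingressive followed by its cofiber, equivalently an egressive together with its fiber); the same dictionary shows that Barwick's axioms give back Ex0--Ex2$^{op}$.
That the two assignments are mutually inverse is then formal, since both are determined by the class of inflations (equivalently ingressives) via the fact that a conflation is recovered up to equivalence from its deflation by taking a homotopy kernel (Lemma~\ref{deflationcomposition} and the uniqueness statement after Definition~\ref{homotopykernel}), and the analogous statement holds on the $\infty$-side.

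The second assertion is the special case $\A$ pretriangulated.
Here I would first invoke Faonte's theorem (cited after the statement) that $N_{dg}(\A)$ is a stable $\infty$-category when $\A$ is pretriangulated; alternatively this also follows from the first part of the theorem applied to the maximal exact dg structure, together with the characterization (Example~\ref{exm:exactdgpretriangulated}) that a connective exact dg category whose zeroth homology is canonically triangulated is a $\tau_{\leq 0}$-truncation of a pretriangulated dg category, since a stable $\infty$-category with the maximal exact structure has all morphisms ingressive and egressive.
Concretely I would check that under the bijection the maximal exact dg structure on $\A$ --- all homotopy short exact sequences, which exists because every morphism has a homotopy kernel and cokernel (the $\Cone$ construction) --- corresponds to the exact structure on $N_{dg}(\A)$ in which every edge is both ingressive and egressive: indeed every morphism of $\A$ is an inflation (its homotopy cokernel $B\to\Cone(f)$ is a conflation) and a deflation, and every pushout/pullback square exists and is bicartesian in a stable $\infty$-category, so both maximal structures consist of all the relevant data and match under the dictionary.

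\textbf{Main obstacle.}
I expect the hard part to be the precise comparison, at the cochain level, between a $2$-simplex of the dg nerve $N_{dg}(\A)$ equipped with a contraction of its boundary datum and an object of $\mathcal H_{3t}(\A)$ --- i.e.~verifying that the signs and higher homotopies in Lurie's definition of $N_{dg}$ via the path category $\Lambda$ of $\Delta^n$ (recalled in \ref{Lambdadgnerve}) match those in the description of $\Fun_\infty(\B,\A)$ from Definition~\ref{def:3termhomotopy}, and that Barwick's pushout-pullback squares of $\infty$-categories correspond exactly to homotopy bicartesian squares of $\rep(\Sq,\A)$ in the sense of Definition~\ref{maindef}. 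Once this translation is pinned down, all the axiom checks reduce to the diagram lemmas of Section~\ref{sec:diagramlemmas} and the structural results of Section~\ref{sec:diagramlemmas} and the present section, so the remaining work is bookkeeping rather than new ideas.
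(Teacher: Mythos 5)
Your plan follows essentially the same route as the paper: set up a dictionary between conflations in $\mathcal H_{3t}(\A)$ and bicartesian squares in $N_{dg}(\A)$, declare a morphism ingressive/egressive exactly when it is an inflation/deflation, and check that Ex0--Ex2$^{op}$ translate into Barwick's axioms (in the reformulation of Definition~\ref{def:exactinfinity}) using the same diagram lemmas you cite; the treatment of the pretriangulated case is also the same. The caveat is that the step you defer as the ``main obstacle'' is not bookkeeping but the actual substance of the paper's proof. It is resolved by two specific inputs you do not supply: (i) the equivalence $\rep(\Lambda(\Delta^1\times\Delta^1),\A)\simeq h\bigl(\Map(\Delta^1\times\Delta^1,N_{dg}(\A))\bigr)$ of Corollary~\ref{dgnerve}, which rests on the monoidality of $\Lambda$ up to quasi-equivalence (Proposition~\ref{prop:Lambdamonoidal}, itself proved by writing every $\infty$-category as a localization of the nerve of an ordinary category); and (ii) Lemma~\ref{homotopypullback}, which identifies $\infty$-categorical pullback squares in $N_{dg}(\A)$ with homotopy cartesian squares in the sense of Definition~\ref{maindef} by passing through the big dg nerve (Proposition~\ref{bignerve}), Lurie's comparison of homotopy colimits in a fibrant simplicial category with colimit diagrams in its simplicial nerve (Theorem~\ref{holimsimplicial}), the Dold--Kan correspondence, and the explicit computation of homotopy pullbacks in $\C(k)^{\leq 0}$ (Example~\ref{complexpullback}); note also that Theorem~\ref{holimsimplicial} only applies after normalizing the square so that the interior homotopies vanish, which is why the paper first reduces to the case where $\A$ has contractible pre-covers and pre-envelopes. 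Without these inputs the dictionary between the two notions of bicartesian square is not established, so your proposal is incomplete at exactly the point you flag, even though the surrounding architecture is the right one.
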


This extends in particular a theorem by Faonte \cite{Faonte17b}, who showed that 
if a dg category is pretriangulated, then its dg nerve is a stable $\infty$-category. 

We begin by recalling some basic definitions concerning simplicial sets. We follow the treatment in \cite{Hovey99}.
The {\em simplicial} category $\Delta$ is the full subcategory of the category $\cat$ of small categories consisting of the totally ordered sets 
\[
[n]=\{0,1,\cdots,n\}
\]
for $n\geq 0$.
The simplicial category $\Delta$ is generated by the injective morphisms $d^i:[n-1]\rightarrow [n]$ for $n\geq 1$ and $0\leq i\leq n$, where the image of $d^i$ does not include $i$, and the surjective morphisms 
$s^i:[n]\rightarrow [n-1]$ for $0\leq i\leq n-1$, where $s^i$ identifies $i$ and $i+1$.
It is not hard to check that these satisfy the following relations called cosimplicial identities
\begin{itemize}
\item $d^jd^i=d^id^{j-1}$ for $i<j$,
\item \begin{equation*}
    s^jd^i=
    \begin{cases}
      d^is^{j-1}, & (i<j) \\
      \Id, & (i=j\  \text{or}\  j+1)\\
      d^{i-1}s^j, & (i>j+1)
    \end{cases}
  \end{equation*}
  \item $s^js^i=s^{i-1}s^j$ for $i>j$.
\end{itemize}
Moreover, one can show that the category $\Delta$ has a presentation given by these generators
and relations, cf.~\cite[II.2.2]{GabrielZisman67}.
For a category $\C$, the category of {\em simplicial objects} in $\C$ is the 
category $\Fun(\Delta^{op},\C)$ of $\C$-valued presheaves on $\Delta$.
The most important examples are the category $\sSet$ of simplicial sets and the
category $\sMod k$ of simplicial $k$-modules.

For $n\geq 0$, we have the {\em standard $n$-simplex} $\Delta^n$ defined as the functor 
represented by the object $[n]$. For a simplicial set $X$ and an integer $n\geq 0$, we write 
$X_n$ for the set $X([n])$ of {\em $n$-simplices of $X$}. Notice that by the Yoneda lemma, 
this set identifies with 
\[
\Hom_{\sSet}(\Delta^n,X).
\]
Thus, an $n$-simplex $x$ of $X$ can be viewed as a morphism $x:\Delta^n\rightarrow X$. 
For a morphism $d^i:[n-1]\rightarrow[n]$ in $\Delta$, by abuse of notation, we put $d_i=X(d^i):X_{n}\rightarrow X_{n-1}$, which is a {\em face map}. 
Similarly for a morphism $s^j:[n]\rightarrow [n-1]$, we put $s_j=X(s^j):X_{n-1}\rightarrow X_n$, which is a {\em degeneracy map}.
The set $\pi_0(X)$ of {\em connected components} of $X$ is defined as the coequalizer of the double arrow
\[
\begin{tikzcd}(X_1\ar[r,"d_1",shift left=0.8ex]\ar[r,"d_0"swap,shift right =0.8ex]&X_0).\end{tikzcd}
\]
This extends to a functor
\[
\pi_0:\sSet\rightarrow \Set,\;\; X\mapsto \pi_0(X).
\]
It admits a right adjoint 
\[
\const:\Set\rightarrow \sSet,\;\; A\mapsto \const(A)
\]
where $\const(A)$ is the constant functor $\Delta^{op}\rightarrow \Set$ with value $A$.
So we have the following adjunction
\[
\begin{tikzcd}
\sSet\ar[r,shift left=0.8ex,"\pi_0"]&\Set\ar[l,shift left=0.8ex,"\const"]
\end{tikzcd}
\]
where $\pi_0$ is the left adjoint.

The inclusion functor $\Delta\rightarrow \cat$ extends to a unique colimit-preserving functor
\[
h(-):\sSet\rightarrow \cat
\]
which associates to each simplicial set $X$ its {\em homotopy category} $hX$. It can be described 
as the path category of the quiver $(X_0,X_1,d_1,d_0)$ modulo the relations given 
by 
\[
d_1(h)\sim d_0(h)\circ d_2(h)
\]
 for all elements $h\in X_2$.
The {\em nerve functor} $N(-):\cat\rightarrow \sSet$ is right adjoint to $h(-)$. 
More explicitly, for a small category $\C$ we have 
\[
N(\C)_n=\Hom_{\cat}([n],\C)
\]
for each $n\geq 0$. For any $n\geq 0$, the {\em boundary} of the standard $n$-simplex is
\[
\begin{tikzcd}
\partial \Delta^n=\bigcup_{E} \Delta^{E}\subset\Delta^n   
\end{tikzcd}
\]
where $E$ runs through the proper full subcategories of $[n]$ and $\Delta^{E}$ is the nerve $N(E)$. 
Similarly, for integers $n\geq 1$ and $0\leq k\leq n$, the $k$-th {\em horn} of $\Delta^n$ is
\[
\Lambda^n_k=\bigcup_{k\in E} \Delta^{E}\subset \Delta^n
\]
where $E$ runs through proper full subcategories of $[n]$ which contain the object $k$. 
Obviously, we have $\Lambda^n_k\subsetneq \partial \Delta^n\subsetneq \Delta^n$.

A simplicial set $X$ is an {\em $\infty$-category} provided each map $\Lambda^n_k\rightarrow X$ can be lifted to a map $\Delta^n\rightarrow X$ along $\Lambda^n_k\rightarrow \Delta^n$ whenever $n\geq 2$ and $0<k<n$. 
It is a {\em Kan complex} if the same is true whenever $n\geq 2$ and $0\leq k\leq n$. 

For two simplicial sets $X$ and $Y$, their {\em product} $X\times Y$ is defined by $(X\times Y)_n=X_n\times Y_n$. 
The {\em mapping} simplicial set $\Map(X,Y)$ is defined by putting
\[
\Hom_{\sSet}(\Delta^n,\Map(X,Y))\iso\Hom_{\sSet}(\Delta^n\times X,Y)
\]
for each $n\geq 0$. 
The simplicial set $\Map(X,Y)$ is an $\infty$-category (resp.~Kan complex) whenever $Y$ is. 

Let $\E$ be an $\infty$-category. 
An element $x\in \E_0$ is called an {\em object} or a {\em vertex} of $\E$. 
An element $f\in \E_1$ is called a {\em morphism} or an {\em arrow} of $\E$.
 Put $x=d_1(f)$ and $y=d_0(f)$. 
We write $f$ as $f:x\rightarrow y$ and say that $f$ is a {\em morphism from $x$ to $y$}.
A morphism $f:x\rightarrow y$ is an {\em isomorphism} if it becomes an isomorphism in the homotopy category $h(\E)$ and in this case, we say that $x$ is {\em isomorphic} to $y$ as objects of the $\infty$-category $\E$.
Let $\sigma$ be a $2$-simplex in $\mathcal E$. Put $f=d_2(\sigma)$, $g=d_0(\sigma)$ and $h=d_1(\sigma)$. 
Then $\sigma$ corresponds to a diagram 
\[
\begin{tikzcd}
&y\ar[rd,"g"]&\\
x\ar[ru,"f"]\ar[rr,"h"swap]&&z
\end{tikzcd}
\]
which identifies $h$ with the {\em composition} $g\circ f$ and we shall write $g\circ f\sim h$.
The homotopy category $h\mathcal E$ can be described alternatively as follows (cf.~\cite{BoardmanVogt73}, also \cite{Cisinski19}): let us fix two objects $x$ and $y$ in $\E$. We define the following four relations on the set of morphisms of $\E$ from $x$ to $y$:
\begin{itemize}
\item[] $f\sim_1g$ if $f1_x\sim g$;
\item[] $f\sim_2g$ if $1_yf\sim g$;
\item[] $f\sim_3 g$ if $g1_x\sim f$;
\item[] $f\sim_4g$ if $1_yg\sim f$;
\end{itemize}
It is a standard result that the above four relations are equal and that they are equivalence relations on the set of morphisms from $x$ to $y$ in $\E$.

For two objects $x$ and $y$ in $\E$, put
\[
\Hom_{h'\E}(x,y)=\{\text{morphisms from }x\text{ to }y\}/\sim_1.
\]
Given a morphism $f:x\rightarrow y$ in $\E$, we write $[f]$ for its equivalence class in $\Hom_{h'\E}(x,y)$.
For any triple of objects $(x,y,z)$ in $\E$, the composition map is given by
\[
\begin{tikzcd}
\Hom_{h'\E}(y,z)\times\Hom_{h'\E}(x,y)\ar[r]& \Hom_{h'\E}(x,z)\\
({[}g{]},{[}f{]})\ar[r,mapsto]&{[}g{]}\circ {[}f{]}
\end{tikzcd}
\]
where $[g]\circ [f]=[h]$ whenever $gf\sim h$. 
It is standard that this map is well-defined and that the above produces a category $h'\E$.
By the universal property of $h\E$, there exists a functor $h\E\rightarrow h'\E$ which is the 
identity on the set of objects.
It is standard that this functor is fully faithful and thus we have a canonical isomorphism of categories $h\E\iso h'\E$.

Let $f:X\rightarrow Y$ be a morphism of simplicial sets. 
It is a {\em weak homotopy equivalence} provided the induced map $\pi_0\Map(Y,K)\iso\pi_0\Map(X,K)$ is a bijection for each Kan complex $K$. 
The category $\sSet$ carries the {\em Kan-Quillen model structure} whose weak equivalences are the weak homotopy equivalences and whose cofibrations are the injections of simplicial sets.
Fibrations in the Kan-Quillen model structure are called {\em Kan fibrations}.
The fibrant objects in the Kan-Quillen model structure are exactly the Kan complexes.

The triple $(\sSet,\times,\Delta^0)$ is a closed symmetric monoidal model category with the Kan-Quillen model structure, where the internal Hom is given by $\Map$.
A {\em simplicial category} is a category enriched over $\sSet$.
A simplicial category $\C$ is {\em fibrant} if for each pair $(X,Y)$ of objects in $\C$, the Hom simplicial set $\C(X,Y)$ is fibrant, i.e.~a Kan complex. 

Let $\C$ and $\D$ be two $\infty$-categories. A morphism $f:\C\rightarrow \D$ of $\infty$-categories is an {\em equivalence of $\infty$-categories} if there exists a morphism $g:\D\rightarrow \C$ such that $gf$ is isomorphic to $\mathrm{id_{\C}}$ in the $\infty$-category $\Map(\C,\C)$ and $fg$ is isomorphic to $\mathrm{id_{\D}}$ in $\Map(\D,\D)$. 
The induced functor $h(f):h(\C)\rightarrow h(\D)$ of an equivalence of $\infty$-categories $f:\C\rightarrow \D$ is in particular an equivalence of categories.
 
A morphism $f:X\rightarrow Y$ of simplicial sets is a {\em categorical equivalence} provided the induced morphism
\[
\Map(Y,\C)\rightarrow\Map(X,\C)
\]
is an equivalence of $\infty$-categories for each $\infty$-category $\C$.

$\sSet$ carries the {\em Joyal model structure} whose weak equivalences are the
categorical equivalences and whose cofibrations are the injections of simplicial sets.
The fibrant objects in the Joyal model structure are exactly the $\infty$-categories and 
the categorical equivalences between $\infty$-categories are exactly the equivalences of $\infty$-categories.
We will write $\sSet_{Quillen}$ or $\sSet_{Joyal}$ when we want to stress the model structure we are using.
Let $\C$ and $\D$ be two $\infty$-categories and $f:\C\rightarrow \D$ a morphism in $\mathrm{Ho}(\sSet_{Joyal})$. 
The morphism $f$ yields an isomorphism class of functors $h(\C)\rightarrow h(\D)$. 
In the special case of $\C=\Delta^0$, we obtain a canonical bijection
\[
\Hom_{\mathrm{Ho}(\sSet_{Joyal})}(\Delta^0, \D)\iso \Iso(h(\D)).
\]
The triple $(\sSet,\times,\Delta^0)$ is a closed symmetric monoidal model category with the Joyal model structure, where the internal Hom is given by $\Map$, cf.~\cite[Theorem 6.12]{Joyal08}.

Let $K$ be a Kan complex and $x, y\in K_0$ two 0-simplices. 
Define $x$ to be {\em homotopic} to $y$, written $x\sim y$, if and only if there is a 1-simplex $z\in K_1$ such that $d_1z=x$ and $d_0z=y$.
One checks that homotopy of vertices is an equivalence relation and that $\pi_0K=K_0/\sim$.
If $v$ is a vertex of $K$, then one defines $\pi_0(K,v)$ to be the pointed set $\pi_0X$ with basepoint the equivalence class $[v]$ of $v$.
Let $n> 0$ and $v\in K_0$.  
For a simplicial set $X$, we will denote the map $X\rightarrow \Delta^0\xrightarrow{v} K$ by $v$ and refer to it as the {\em constant map with value $v$}. 
For example when $X=\Delta^n$, we will still denote by $v$ the corresponding $n$-simplex in $K$. 
Let $F$ be the pullback of the following diagram
\[
\begin{tikzcd}
&\Map(\Delta^n,K)\ar[d]\\
\Delta^0\ar[r,"v"]&\Map(\partial\Delta^n,K).
\end{tikzcd}
\]
It is clear that $F$ is a Kan complex. The pointed set $\pi_0(F,v)$ carries a canonical
group structure, which is abelian  for $n\geq 2$. It is called the 
{\em nth homotopy group} $\pi_n(K,v)$ of $K$ at $v$.

By Whitehead's theorem, a morphism $f:K\rightarrow K'$ of Kan complexes is a weak homotopy equivalence if and only if it satisfies the following conditions:
\begin{itemize}
\item[$(a)$] The map of sets $\pi_0(f):\pi_0(K)\rightarrow \pi_0(K')$ is a bijection;
\item[$(b)$] For each vertex $x\in K$ with image $y=f(x)\in K'$ and each $n\geq 1$, the map of homotopy groups $\pi_n(f):\pi_n(K,x)\rightarrow\pi_n(K',y)$ is an isomorphism.
\end{itemize}

For each simplicial set $K$, we have the free simplicial $k$-module $FK$ generated by $K$ so that $(FK)_n$ is the free $k$-module generated by the set $K_n$ for each $n\geq 0$ and the structure maps are extended from those of $K$ by $k$-linearity. 
This determines the {\em free functor} $F:\sSet \rightarrow \sMod k$.
Let $U$ be the forgetful functor $\sMod k\rightarrow \sSet$.
For a simplicial $k$-module $X$, the {\em underlying simplicial set} of $X$ is $U(X)$.
Then we have the adjunction
\[
\begin{tikzcd}
\sSet\ar[r, shift left=1ex,"F"]&\sMod k\ar[l,shift left=1ex,"U"]
\end{tikzcd}
\]
where $F$ is the left adjoint.

In order to relate dg categories with $\infty$-categories, we need the {\em Dold-Kan correspondence} (\cite{Dold58}) as a key ingredient. Note that we are working with cochain complexes. 
We denote by $\C(k)^{\leq 0}$ the category of nonpositively graded cochain complexes. 

Let $X$ be a simplicial $k$-module.  
Put
\[
C^{-n}(X)=X_n
\]
 and 
\[
d^{-n}=\sum_{0\leq i\leq n}(-1)^id_i:C^{-n}(X)\rightarrow C^{-n+1}(X)
\]
for $n\geq 0$.
The {\em unnormalized cochain complex} $C^{*}(X)$ is given by
\[
\cdots\rightarrow C^{-n}(X)\xrightarrow{d^{-n}} C^{-n+1}(X)\rightarrow \cdots C^{-2}(X)\xrightarrow{d^{-2}} C^{-1}(X)\xrightarrow{d^{-1}} C^0(X)\rightarrow 0\rightarrow \cdots.
\]
For each $n\geq 0$, put 
\[
N^{-n}(X)=\bigcap_{1\leq i\leq n}\ker(d_i)\subseteq C^{-n}(X)=X_n.
\]
The map $d^{-n}$ carries $N^{-n}(X)$ into $N^{-n+1}(X)$ and therefore we obtain the {\em normalized cochain complex} $N^*(X)$
\[
\cdots\rightarrow N^{-n}(X)\xrightarrow{d^{-n}} N^{-n+1}(X)\rightarrow \cdots \rightarrow N^{-2}(X)\xrightarrow{d^{-2}} N^{-1}(X)\xrightarrow{d^{-1}} N^0(X)\rightarrow 0\rightarrow \cdots\ .
\]

The assignment $X\mapsto N^{*}(X)$ determines the {\em normalized cochain complex functor} $N:\sMod k\rightarrow \C(k)^{\leq 0}$.

\begin{theorem}[\cite{Dold58}]
The normalized cochain complex functor $N:\sMod k\rightarrow \C(k)^{\leq 0}$ is an equivalence of categories.
\end{theorem}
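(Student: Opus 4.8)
The plan is to prove the classical Dold--Kan correspondence, reindexed to the cochain setting via the identification $C^{-n}=C_n$. First I would produce an explicit candidate quasi-inverse $\Gamma\colon\C(k)^{\leq 0}\to\sMod k$. Given a complex $A^{\bullet}$ concentrated in nonpositive degrees, set
\[
\Gamma(A)_n=\bigoplus_{\eta\colon[n]\twoheadrightarrow[p]}A^{-p},
\]
the sum running over all order-preserving surjections in $\Delta$. For a morphism $\alpha\colon[m]\to[n]$ of $\Delta$ and a surjection $\eta\colon[n]\twoheadrightarrow[p]$, factor the composite $\eta\circ\alpha$ as an epimorphism $\varepsilon\colon[m]\twoheadrightarrow[q]$ followed by a monomorphism $\delta\colon[q]\hookrightarrow[p]$; then send the summand $A^{-p}$ indexed by $\eta$ into $\Gamma(A)_m$ by the identity onto the summand indexed by $\varepsilon$ when $\delta=\mathrm{id}$, by the differential $d\colon A^{-p}\to A^{-p+1}$ (with a sign determined by $p$) onto the summand indexed by $\varepsilon$ when $\delta$ is the elementary coface $d^{0}\colon[p-1]\to[p]$, and by $0$ otherwise. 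Using the cosimplicial identities recalled above, one checks this defines a simplicial $k$-module, functorially in $A^{\bullet}$.

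Next I would establish the degenerate/normalized splitting. For a simplicial $k$-module $X$, let $D^{-n}(X)\subseteq C^{-n}(X)=X_n$ be the submodule generated by the images of the degeneracies $s_0,\dots,s_{n-1}$. An induction on $n$ using the simplicial identities among the $s_i$ and the relations for $d_is_j$ shows that $C^{\bullet}(X)=N^{\bullet}(X)\oplus D^{\bullet}(X)$ as complexes, so that the composite $N^{\bullet}(X)\hookrightarrow C^{\bullet}(X)\twoheadrightarrow C^{\bullet}(X)/D^{\bullet}(X)$ is an isomorphism of complexes. Moreover one proves the Eilenberg--Zilber decomposition: the canonical map
\[
\bigoplus_{\eta\colon[n]\twoheadrightarrow[p]}N^{-p}(X)\longrightarrow X_n,\qquad (x_\eta)\longmapsto\sum_{\eta}\eta^{*}(x_\eta),
\]
is an isomorphism of $k$-modules, natural in $X$; here $\eta^{*}=X(\eta)$. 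This rests on the fact that, modulo the module structure, each element of $X_n$ is a unique iterated degeneracy of a nondegenerate element, combined with the identification of the nondegenerate part with $N^{\bullet}(X)$.

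With these two inputs in hand the two natural isomorphisms are routine bookkeeping. For $N\circ\Gamma$: from the definition of $\Gamma$ one sees that a summand $A^{-p}$ of $\Gamma(A)_n$ indexed by a nonidentity $\eta$ lies in $D^{-n}(\Gamma A)$ (write $\eta=\mu\circ s^{j}$ and observe that this summand is $s_j$ of the corresponding summand of $\Gamma(A)_{n-1}$), so $N^{-n}(\Gamma A)$ is precisely the summand indexed by $\mathrm{id}_{[n]}$, namely $A^{-n}$; and the prescription for $\alpha=d^{0}$ shows that the induced differential on $N^{\bullet}(\Gamma A)$ is that of $A^{\bullet}$, whence $N\circ\Gamma\cong\mathrm{id}$. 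For $\Gamma\circ N$: the Eilenberg--Zilber decomposition furnishes, for each $X$, a natural isomorphism $\Gamma(N(X))_n=\bigoplus_{\eta}N^{-p}(X)\xrightarrow{\ \sim\ }X_n$, and compatibility with face and degeneracy operators follows by matching the epi--mono factorizations used to define $\Gamma$ against the simplicial identities in $X$; this gives a natural isomorphism $\Gamma\circ N\cong\mathrm{id}$. Hence $N$ is an equivalence of categories with quasi-inverse $\Gamma$.

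The main obstacle is the Eilenberg--Zilber decomposition $X_n\cong\bigoplus_{\eta}N^{-p}(X)$, together with the splitting $C^{\bullet}(X)=N^{\bullet}(X)\oplus D^{\bullet}(X)$ that underlies it: both require a careful inductive analysis of the monoid of degeneracy operators in $\Delta$ and of how face operators act on degenerate elements. Everything else --- verifying the cosimplicial identities for $\Gamma$ and comparing the two composites with the identity --- is formal once this decomposition is available.
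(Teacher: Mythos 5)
The paper does not prove this statement: it is quoted as a classical theorem with a citation to Dold, and only the quasi-inverse $DK$ is described (by reference to Lurie). Your outline is the standard proof of the Dold--Kan correspondence --- essentially Dold's original argument as presented in Weibel or Goerss--Jardine, transported to cochain conventions --- and it is correct; in particular your choice of $d^{0}\colon[p-1]\to[p]$ as the coface acting by the differential in the definition of $\Gamma$ is the right one for the paper's convention $N^{-n}(X)=\bigcap_{1\le i\le n}\ker(d_i)$ with induced differential $d_0$. You correctly identify the one genuinely nontrivial input, namely the decomposition $X_n\cong\bigoplus_{\eta\colon[n]\twoheadrightarrow[p]}N^{-p}(X)$; just note that in the additive setting this is not literally the statement about unique degeneracies of nondegenerate simplices, but is proved by the inductive surjectivity-plus-splitting argument you allude to.
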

Its quasi-inverse is the  {\em Dold-Kan construction} $DK:\C(k)^{\leq 0}\rightarrow \sMod k$:
A concrete description of $DK$ can be found in \cite[Construction 1.2.3.5]{LurieHA}.
The construction of the functor $DK$ remains valid if we replace $\Mod k$ by any additive category and it is an equivalence of categories if the additive category has split retractions (i.e. it is weakly idempotent complete).
In our case, for a complex $V\in\C(k)^{\leq 0}$, we identify the $k$-module $DK(V)_n$ with 
\[
\Hom_{\sMod k}(N(F(\Delta^n)),V)
\]
where $F(\Delta^n)$ is the free simplicial $k$-module generated by $\Delta^n$.

The category $\C(k)^{\leq 0}$ carries the {\em projective model structure} (cf.~\cite{Quillen67} and \cite[Theorem 1.5]{GoerssSchemmerhorn07}) such that
\begin{itemize}
\item the weak equivalences are the quasi-isomorphisms of complexes;
\item the cofibrations are the monomorphisms with componentwise projective cokernel;
\item the fibrations are the morphisms which are epimorphisms in strictly negative degrees.
\end{itemize}
By the Dold-Kan correspondence, this induces a model structure on $\sMod k$ which is called the {\em projective model structure}.

Let $f:X\rightarrow Y$ be a morphism in $\C(k)^{\leq 0}$. 
We will need the following explicit factorization of $f$ into a trivial cofibration followed by a fibration.
For a $k$-module $M$ and any $n\leq -1$, denote by $D^n(M)$ the following complex
\[
\dots\rightarrow 0\rightarrow M^{n}\xrightarrow{\Id} M^{n+1}\rightarrow 0\rightarrow \dots
\]
where $M^n=M$ is in degree $n$ and $M^{n+1}=M$ is in degree $n+1$.
For each $n\leq -1$, let $u^n: Q^n \to Y^n$ be a surjective morphism with projective $Q^n$. 
Let $P(Y)$ be the following contractible complex
\[
\bigoplus_{n\leq -1} D^n(Q^{n}).
\]
For each $n\leq -1$, the surjective map $u^n:Q^n\rightarrow Y^n$ induces a map $s^n: D^n(Q^{n})\rightarrow Y$ which is surjective in degree $n$.
 Let $v:P(Y)\rightarrow Y$ be the morphism given by the $s^n$, $n\leq -1$.
 Clearly $v$ is surjective in each strictly negative degree.
Then we have the following factorization of $f$
\begin{equation}\label{factor}
X\xrightarrow{g=\begin{bmatrix}1\\0\end{bmatrix}} X\oplus P(Y)\xrightarrow{h=\begin{bmatrix}f\;\; v\end{bmatrix}} Y.
\end{equation}
Here the morphism $g$ is a trivial cofibration and $h$ is a fibration.
Note that for a complex $Q\in \C(k)^{\leq 0}$ which is componentwise projective, each null-homotopic map $Q\rightarrow Y$ factors through the map $v:P(Y)\rightarrow Y$. 

Let $f:X\rightarrow Y$ be a morphism of simplicial $k$-modules. 
By \cite[Proposition 1.3.2.11]{LurieHA}, the map $U(f)$ is a Kan fibration if and only if the map $N(f)$ is a fibration in the projective model structure of $\C(k)^{\leq 0}$.
In particular, the underlying simplicial set of a simplicial $k$-module is a Kan complex.

For a simplicial $k$-module $X$, the additive operation $X\times X\rightarrow X$, $(x,y)\in X_n\times X_n \mapsto x+y\in X_n$ induces a map $\pi_n(X,0)\times \pi_n(X,0)\rightarrow \pi_n(X,0)$ for each $n\geq 0$. 
This gives an abelian group structure on $\pi_n(X,0)$. 
The Eckmann-Hilton argument shows that it agrees with the usual group structure on $\pi_n(X,0)$ for $n>0$.
An element $[\alpha]\in\pi_n(X,0)$ is an equivalence class of maps $\alpha:\Delta^n\rightarrow X$ such that $d_i(\alpha)=0$ for $i=0,\dots,n$.
Such maps $\alpha:\Delta^n\rightarrow X$ are in bijection with elements in $Z^{-n}N^*(X)$.
Unwinding the definition of the equivalence relation, we see that two such maps $\alpha,\beta:\Delta^n\rightarrow X$ are equivalent if and only if the corresponding elements in $Z^{-n}N^*(X)$ are cohomologous. 
Therefore $\pi_n(X,0)$ can be identified with $H^{-n}N^*(X)$.
Fix a vertex $x\in X_0$. 
The morphism $\theta_x: X\rightarrow X$ of simplicial sets, defined by sending $y\in X_n$ to $x+y\in X_n$, induces an isomorphism of pointed sets $\pi_n(X,0)\iso\pi_n(X,x)$ for $n>0$. 

Let $f:X\rightarrow Y$ be a morphism of simplicial $k$-modules. 
By the above discussions, the map $U(f)$ between the underlying simplicial sets
is a weak homotopy equivalence if and only if $N(f):N^*(X)\rightarrow N^{*}(Y)$ is a weak equivalence in the projective model structure. 
Thus, the projective model structure on $\sMod k$ is the {\em right transferred model structure} 
(cf.~\cite[Section 3]{Crans95}) from $\sSet_{Quillen}$ along $U$.
In particular, $\sMod k$ is cofibrantly generated and the adjunction
\[
\begin{tikzcd}
\sSet\ar[r, shift left=1ex,"F"]&\sMod k\ar[l,shift left=1ex,"U"]
\end{tikzcd}
\] 
is a Quillen adjunction, cf.~ \cite[9.1]{DwyerHirschhornKan97}.

Let $\I$ be a small category and $F:\I\rightarrow \C(k)^{\leq 0}$ a functor.
Since $\C(k)^{\leq 0}\iso \sMod k$ is a simplicial model category (\cite[Theorem 4.13]{GoerssSchemmerhorn07}), one has the local definition of homotopy limits of $F$ (\cite[Definition 8.2]{Shulman06}).

In general, one also has a nice global definition of homotopy limits of $F$ since $\sMod k$ is a {\em combinatorial model category} in the sense of Jeff Smith (cf.~also \cite[A.2.6]{Lurie09}),
and the global and local definitions of homotopy limits coincide (\cite[Theorem 8.5]{Shulman06}). 

We recall the global definition of homotopy limits and will use it for computation.
We follow Lurie's treatment as in \cite[A.2.8]{Lurie09}.
Let $\I$ be a small category and $\const: \sMod k\rightarrow \Fun(\I,\sMod k)$ the functor sending an object $X\in \sMod k$ to the constant functor $\const(X):\I\rightarrow \sMod k$ with value $X$.
It has a right adjoint $\lim:\Fun(\I,\sMod k)\rightarrow \sMod k$ sending a functor 
$F:\I\rightarrow \sMod k$ to $\lim F$. Thus we have the following adjunction
\[
\begin{tikzcd}
\sMod k\ar[r,"\const",shift left=0.8ex]&\Fun(\I,\sMod k).\ar[l,"\lim",shift left=0.8ex]
\end{tikzcd}
\]
 Since $\sMod k$ is combinatorial, $\Fun(\I,\sMod k)$ carries the {\em injective model structure}, where
 \begin{itemize}
 \item A morphism $\alpha:F\rightarrow G$ is a weak equivalence if and only if for each $i\in\I$, the map $\alpha_i:F(i)\rightarrow G(i)$ is a weak equivalence in $\sMod k$.
 \item A morphism $\alpha:F\rightarrow G$ is a cofibration if and only if for each $i\in\I$, the map $\alpha_i:F(i)\rightarrow G(i)$ is a cofibration in $\sMod k$ (with the projective model structure).
 \end{itemize} 
It follows that $\const: \sMod k\rightarrow \Fun(\I,\sMod k)$ is a left Quillen functor with respect to the injective model structure on $\Fun(\I, \sMod k)$.
Let $F:\I\rightarrow \sMod k$ be an object in $\Fun(\I,\sMod k)$ and $A$ an object in $\sMod k$. 
Let $\alpha: \const(A)\rightarrow F$ be a natural transformation and $\beta:A\rightarrow \lim F$ the corresponding morphism in $\sMod k$. 
Then $\alpha$ {\em exhibits $A$ as a homotopy limit of $F$} if for some weak equivalence $F\rightarrow F'$ where $F'$ is fibrant in $\Fun(\I,\sMod k)$, the composite map $A\rightarrow \lim F\rightarrow \lim F'$ is a weak equivalence in $\sMod k$. 

Note that $\sSet_{Quillen}$ is also combinatorial. 
It is immediate that the right Quillen functor $U:\sMod k\rightarrow \sSet$ is compatible with homotopy limits.
\begin{example}\label{complexpullback}
Let $\I$ be the cospan category $\Cosp$ defined in subsection \ref{Preliminaries}.
Then an object $F$ in $\Fun(\I,\C(k)^{\leq 0})$
\[
\begin{tikzcd}
&Y\ar[d,"j"]\\
X\ar[r,"k"swap]& Z
\end{tikzcd}
\]
is fibrant in the injective model structure if and only if both $j$ and $k$ are fibrations in $\C(k)^{\leq 0}$.

Thus using the factorization in \ref{factor}, we obtain a fibrant replacement
\[
\begin{tikzcd}
&Y\oplus P(Z)\ar[d,"\begin{bmatrix}j\;\; v\end{bmatrix}"]\\
X\oplus P(Z)\ar[r,"\begin{bmatrix}k\;\; v\end{bmatrix}"swap]& Z.
\end{tikzcd}
\]
By diagram chase, we see that the pullback of this diagram is quasi-isomorphic to 
\[
\tau_{\leq 0}\Sigma^{-1}\Cone(X\oplus Y\xrightarrow{[k,\;j]} Z).
\]
Take a cofibrant replacement $r:Q\rightarrow \tau_{\leq 0}\Sigma^{-1}\Cone(X\oplus Y\xrightarrow{[k,j]} Z)$.
Then any null-homotopic map $Q\rightarrow Z$ factors through $v:P(Z)\rightarrow Z$.
Thus we obtain a commutative diagram in $\C(k)^{\leq 0}$
\[
\begin{tikzcd}
Q\ar[r]\ar[d]&Y\ar[d]\\
X\oplus P(Z)\ar[r]&Z\mathrlap{.}
\end{tikzcd}
\]
Let $F'$ be the cospan obtained from the above diagram by restriction.
Then we have a weak equivalence $F\rightarrow F'$ in $\Fun(\Cosp,\C(k)^{\leq 0})$ and the above diagram exhibits $Q$ as a homotopy pullback of $F'$.

Therefore, a commutative diagram 
\[
\begin{tikzcd}
U\ar[r]\ar[d]&Y\ar[d,"j"]\\
X\ar[r,"k"swap]&Z
\end{tikzcd}
\]
is homotopy pullback if and only if the canonical map 
\[
U\rightarrow \tau_{\leq 0}\Sigma^{-1}\Cone(X\oplus Y\xrightarrow{[k,\;j]} Z)
\]
is a quasi-isomorphism of connective complexes.
\end{example}
Let $\Cat_{\Delta}$ be the category of {simplicial categories}, i.e. categories which are enriched over $\sSet$. 
There is a Quillen equivalence
\[
\begin{tikzcd}
\sSet\ar[r,"\C(-)", shift left=0.8ex]&\Cat_{\Delta}\ar[l, shift left=0.8ex,"N"]
\end{tikzcd}
\]
relating Bergner's model structure on $\Cat_{\Delta}$ with $\sSet_{Joyal}$, cf.~\cite[1.1.5]{Lurie09} and \cite{DuggerSpivak11} for an explicit description of the mapping spaces in $\C(S)$ for a general simplicial set $S$. 
The functor $N:\Cat_{\Delta}\rightarrow \sSet$ is called the {\em simplicial nerve functor} or {\em homotopy coherent nerve functor}.
An ordinary category $\B$ can be considered as a simplicial category via the functor $\const:\Set\rightarrow \sSet$.
Then the simplicial nerve of $\B$ agrees with the ordinary nerve of $\B$. 
\begin{example}
The simplicial category $\C(\Delta^2)$ has the elements of $\{0,1,2\}$ as objects. 
The mapping spaces $\Hom_{\C(\Delta^2)}(0,1)$ and $\Hom_{\C(\Delta^2)}(1,2)$ are isomorphic to $\Delta^0$. 
The mapping space $\Hom_{\C(\Delta^2)}(0,2)$ is isomorphic to $\Delta^1$.
The composition map
\[
\Hom_{\C(\Delta^2)}(1,2)\times \Hom_{\C(\Delta^2)}(0,1)\rightarrow \Hom_{\C(\Delta^2)}(0,2)
\]
can be identified with the map $1:\Delta^0\rightarrow \Delta^1$. 
\end{example}

In analogy with the nerve functor $N$ taking a small category to an $\infty$-category, there
is a {\em dg nerve functor $N_{dg}$} which turns a small dg category into an $\infty$-category.
A construction is given by Lurie in \cite[Construction 1.3.1.6]{LurieHA} using an explicit description of
the $n$-simplices due to Hinich and Schechtman \cite[Appendix A2.2]{HinichSchechtman87}. 
The dg nerve is proved in \cite[Proposition 1.3.1.20]{LurieHA} to be a right Quillen functor 
in a Quillen adjunction given by a cosimplicial object in $\dgcat$. 
The cosimplicial object is made explicit in \cite[Section 6]{RiveraZeinalian18}: it takes 
the poset $[n]$ to the cobar-bar construction of the associated $k$-category $k[n]$.
To make the Quillen adjunction more precise, 
let us endow $\dgcat$ with the Dwyer-Kan model structure and 
$\sSet$ with the Joyal model structure. Then the Quillen adjunction is (cf.~\cite{LurieHA})
\begin{equation}\label{Lambdadgnerve}
\begin{tikzcd}
\sSet\ar[r,shift left=0.8ex,"\Lambda"]&\dgcat\ar[l,shift left=0.8ex,"N_{dg}"]
\end{tikzcd}
\end{equation}
where $N_{dg}$ is the right Quillen functor. 
In particular, as each small dg category is fibrant in the Dwyer-Kan model structure, the
dg nerve sends quasi-equivalences to equivalences of $\infty$-categories.

Let us describe the functor $\Lambda$ in more detail. 
For a standard $n$-simplex $\Delta^n$, the dg category
$\Lambda(\Delta^n)$ is defined as follows: let $Q^{(n)}$ be the 
graded quiver with vertex set $[n]$ and for each chain $I=\{i<p_1<\cdots<p_l<j\}$ with $l\geq 0$ an arrow 
\[
I:i\rightarrow j
\]
of degree $l$. Then $\Lambda(\Delta^n)$ is the dg $k$-category whose underlying graded category is the free graded $k$-category generated by $Q^{(n)}$ with differential given by
\[
d(I)=\sum_{1\leq m\leq l}(-1)^{l-m}(I \backslash \{p_m\}-\{p_m< \cdots<p_l<j\}\circ\{i<p_1<\cdots<p_m\})
\] 
on each chain $I=\{i<p_1<\cdots<p_l<j\}$.

For a morphism $\sigma:[m]\rightarrow [n]$ in $\Delta$, the dg functor $\Lambda(\Delta^{\sigma}):\Lambda(\Delta^m)\rightarrow \Lambda(\Delta^n)$ is given by
\begin{equation*}
    f_{\sigma}(I)=
    \begin{cases}
      \sigma(I) & \text{if }\sigma|_{I}\text{ is injective,} \\
      \Id_p & \text{if }I=\{i<j\}\text{ and }\sigma(i)=\sigma(j)=p,\\
      0 & \text{otherwise.}
    \end{cases}
  \end{equation*}
  
    We now construct a natural dg functor (cf.~\cite{Lefevre03})
  \[
 \theta_{m,n}: \Lambda(\Delta^m\times \Delta^n)\rightarrow \Lambda(\Delta^m)\otimes \Lambda (\Delta^n).
  \]
  
  A {\em chain} of a finite ordered set is a totally ordered finite subset. 
  Recall that we have the following presentation of $\Delta^m\times \Delta^n$ (\cite[Chapter 2, 5.5]{GabrielZisman67})
  \[
  \begin{tikzcd}
  \coprod_{1\leq i<j\leq \left(\begin{smallmatrix}m+n\\m\end{smallmatrix}\right)} \Delta^{n_{c{(i)}\cap c{(j)}}}\ar[r,shift left=0.8ex,"u"]\ar[r,shift right=0.8ex,"v"swap]&\coprod_{1\leq i\leq \left(\begin{smallmatrix}m+n\\n\end{smallmatrix}\right)}\Delta^{n_{c{(i)}}}\ar[r]&\Delta^m\times \Delta^n
  \end{tikzcd}
  \]
  where each $c(i)$ is a maximal chain of the ordered set $[m]\times [n]$ and $n_{c(i)}=m+n$ is the length of the chain $c(i)$ for each $i$.
  The maximal chain $c(i)$ corresponds to an $(m,n)$-shuffle, i.e.~a strictly increasing map of ordered sets $\sigma^{(i)}:[m+n]\rightarrow [m]\times [n]$. 
  The chain $c(i)\cap c(j)$ corresponds to the equalizer $\mathrm{Eq}_{i,j}$ of the maps $\sigma^{(i)}$ and $\sigma^{(j)}$. 
  The maps $u$ and $v$ then correspond to the inclusion of 
 $\mathrm{Eq}_{i,j}$ into $[m+n]$.
  
  For an $(m,n)$-shuffle $\sigma:[m+n]\rightarrow [m]\times [n]$, put $\sigma_{-}:[m+n]\rightarrow [m]$ and $\sigma_{+}:[m+n]\rightarrow [n]$ the components of $\sigma$, i.e.~$\sigma(j)=(\sigma_{-}(j),\sigma_{+}(j))$ for $0\leq j\leq m+n$.
  
  Now for each $1\leq i\leq \left(\begin{smallmatrix}m+n\\n\end{smallmatrix}\right)$ define 
 \[
 F_{c(i)}:\Lambda(\Delta^{n_{c(i)}})\rightarrow \Lambda (\Delta^m)\otimes \Lambda(\Delta^n)
  \]
  to be the dg functor given by $\Lambda(\sigma^{(i)}_{-})$ and $\Lambda(\sigma^{(i)}_{+})$.
  Then by definition $(F_{c(i)})_{1\leq i\leq \left(\begin{smallmatrix}m+n\\n\end{smallmatrix}\right)}$ induces a dg functor
  \[
  \theta_{m,n}:\Lambda(\Delta^m\times\Delta^n)\rightarrow \Lambda(\Delta^m)\otimes\Lambda(\Delta^n).
  \]
  It extends by colimit to
  \[
  \theta_{X,Y}:\Lambda(X\times Y)\rightarrow \Lambda(X)\otimes\Lambda(Y)
  \]
  for simplicial sets $X$ and $Y$.
  \begin{proposition}\label{prop:Lambdamonoidal}
  For any pair of simplicial sets $(X,Y)$, the above dg functor $\theta_{X,Y}$ is a quasi-equivalence.
  \end{proposition}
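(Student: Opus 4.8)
The plan is to reduce the statement to the case of standard simplices. Since $\Lambda$, $-\times-$ and $-\otimes-$ all commute with colimits in each variable (the first because it is a left Quillen functor, hence a left adjoint; the second and third by the definition of the tensor product of dg categories and the componentwise formula for products of simplicial sets), and since $\theta$ is defined by extending $\theta_{m,n}$ by colimits, it suffices to show that $\theta_{m,n}\colon\Lambda(\Delta^m\times\Delta^n)\to\Lambda(\Delta^m)\otimes\Lambda(\Delta^n)$ is a quasi-equivalence for all $m,n\geq 0$. One has to be a little careful here: colimits of dg categories do not preserve quasi-equivalences in general, so the reduction should be phrased using the Dwyer--Kan model structure --- writing a general simplicial set $X$ as a (homotopy) colimit of its simplices via the skeletal filtration, and using that $\Lambda$ sends the generating cofibrations $\partial\Delta^n\hookrightarrow\Delta^n$ to cofibrations of dg categories, so that the relevant colimits are homotopy colimits and both sides are built by the same pushouts out of the simplex case.

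The core of the argument is therefore the simplex case. First I would record that $\Lambda(\Delta^n)$ is, by \cite[Section 6]{RiveraZeinalian18}, the cobar-bar resolution of the $k$-category $k[n]$; in particular the canonical dg functor $\Lambda(\Delta^n)\to k[n]$ is a quasi-equivalence, because the cobar-bar construction of an augmented dg category is always quasi-isomorphic to it. Hence $\Lambda(\Delta^m)\otimes\Lambda(\Delta^n)$ is quasi-equivalent to $k[m]\otimes k[n]=k([m]\times[n])$ (using that $\otimes$ of quasi-equivalences between $k$-cofibrant dg categories is a quasi-equivalence, and $\Lambda(\Delta^m)$ has $k$-cofibrant --- indeed free --- Hom-complexes). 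On the other side, $\Lambda(\Delta^m\times\Delta^n)$ is quasi-equivalent to the path category $k\mathcal{P}([m]\times[n])$ of the poset $[m]\times[n]$, again by the cobar-bar description applied to the simplicial set $\Delta^m\times\Delta^n$ — but note $\Delta^m\times\Delta^n$ is the nerve of the poset $[m]\times[n]$, so $\Lambda(N(\mathcal{P}))$ is quasi-equivalent to $k\mathcal{P}$ for any finite poset $\mathcal{P}$ by the same cobar-bar reasoning. It then remains to check that $\theta_{m,n}$ is compatible with these two identifications, i.e. that the triangle
\[
\begin{tikzcd}
\Lambda(\Delta^m\times\Delta^n)\ar[r,"\theta_{m,n}"]\ar[d]&\Lambda(\Delta^m)\otimes\Lambda(\Delta^n)\ar[d]\\
k([m]\times[n])\ar[r,equal]&k([m]\times[n])
\end{tikzcd}
\]
commutes up to homotopy; this is a direct computation with the shuffle maps $\sigma^{(i)}_\pm$ appearing in the definition of $\theta_{m,n}$, since on the level of homotopy categories $\theta_{m,n}$ sends the chain $I$ in $[m]\times[n]$ to (the class of) $\sigma_-(I)\otimes\sigma_+(I)$, which is exactly the image of $I$ in $k([m]\times[n])$.

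An alternative route, which may be cleaner, is to avoid explicit cobar-bar bookkeeping and instead argue homologically: both $\Lambda(\Delta^m\times\Delta^n)$ and $\Lambda(\Delta^m)\otimes\Lambda(\Delta^n)$ have the vertex set $[m]\times[n]$ as objects, and for two objects $x=(i,j)$, $y=(i',j')$ one computes $H^*$ of the Hom-complex on each side. On the right it is $H^*\Lambda(\Delta^m)(i,i')\otimes H^*\Lambda(\Delta^n)(j,j')$, which is $k$ concentrated in degree $0$ if $i\leq i'$ and $j\leq j'$ and zero otherwise (by the Künneth formula and acyclicity of the cobar-bar construction, since $k$ is not assumed to be a field one needs flatness of the cohomology, but the relevant complexes are componentwise free and the cohomology is a free $k$-module, so Künneth applies). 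On the left, $\Lambda(\Delta^m\times\Delta^n)(x,y)$ is a direct sum over chains from $x$ to $y$ in $[m]\times[n]$ with a differential, and its cohomology is likewise $k$ in degree $0$ when $x\leq y$ and $0$ otherwise, because the nerve of the interval $[x,y]$ in the poset $[m]\times[n]$ is contractible. One then checks that $\theta_{m,n}$ induces the identity on these cohomology groups (both are canonically $k$ when $x\le y$, and $\theta_{m,n}$ sends the generator to the generator), giving quasi-full-faithfulness; essential surjectivity on $H^0$ is clear since $\theta_{m,n}$ is bijective on objects.

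\textbf{Main obstacle.} The routine part is the cohomology computation of the Hom-complexes and the Künneth step; the genuinely delicate point is the \emph{reduction from general simplicial sets to simplices}, i.e. controlling how $\Lambda$, $\otimes$ and $\times$ interact with the skeletal colimit so that the colimit computing $\Lambda(X\times Y)$ and the one computing $\Lambda(X)\otimes\Lambda(Y)$ are genuinely homotopy colimits built by the same pushout diagrams. This requires knowing that $\Lambda$ takes cofibrations to cofibrations (true since it is a left Quillen functor) and that $-\otimes-$ is a left Quillen bifunctor for the Dwyer--Kan structure, so that tensoring the skeletal filtration of $X$ with the one of $Y$ yields a cofibrant filtration whose associated graded involves only $\Lambda(\Delta^m)\otimes\Lambda(\Delta^n)$; matching this cell-by-cell with the skeletal filtration of $X\times Y$ (whose cells are products of simplices) and invoking the simplex case on each cell is where the care is needed. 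I expect this to be the place where the argument is least automatic and most in need of a precise statement of the relevant Quillen-bifunctor compatibilities.
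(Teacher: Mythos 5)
Your route is genuinely different from the paper's: the paper does not reduce to simplices at all. It first replaces $X$ and $Y$ by Joyal-fibrant models (i.e.\ $\infty$-categories), writes each as a localization $\W^{-1}N(\I)$ of the nerve of an ordinary category (Cisinski, Proposition 7.3.15), and then chains together Lemma~\ref{lem:localizationinftydg} ($\Lambda$ turns $\infty$-categorical localization into dg localization), Lemma~\ref{lem:Lambdaklinearization} ($\Lambda(N(\I))\to k\I$ is a quasi-equivalence), and Lemma~\ref{lem:dglocalizationtensor} (dg localization commutes with $\otimes$). Your ``simplex case'' is essentially the special case $\I=[m]\times[n]$ of Lemma~\ref{lem:Lambdaklinearization} combined with a K\"unneth argument, and that part is sound (note, though, that $\Lambda(\Delta^m\times\Delta^n)$ is only defined as a colimit, so the explicit ``sum over chains'' description of its Hom-complexes is not free; you are implicitly re-proving Lemma~\ref{lem:Lambdaklinearization} for the poset $[m]\times[n]$, which the paper handles via a comonadic resolution because a poset is not a free category).

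The genuine gap is exactly where you locate it, and your proposed fix does not work: the tensor product of dg categories is \emph{not} a left Quillen bifunctor for the Dwyer--Kan model structure. The pushout-product axiom fails and the tensor product of two cofibrant dg categories need not be cofibrant (this is the standard reason one must pass to $\otimes^{\mathbb L}$ and why $(\dgcat,\otimes)$ is not a monoidal model category). Consequently your assertion that tensoring the skeletal filtration of $\Lambda(X)$ with $\Lambda(Y)$ ``yields a cofibrant filtration'' whose pushouts are homotopy pushouts is unjustified, and the cell-by-cell comparison collapses as stated. The reduction can be repaired, but along a different line: since $\Lambda$ is left Quillen, every $\Lambda(Z)$ is a cofibrant dg category and in particular has $k$-cofibrant Hom-complexes, so $\Lambda(X)\otimes\Lambda(Y)\simeq\Lambda(X)\otimes^{\mathbb L}\Lambda(Y)$; one then uses that $-\otimes^{\mathbb L}\B$ is a left adjoint at the level of $\Hqe$ (with right adjoint $\rep_{dg}(\B,-)$) and hence commutes with the homotopy colimits $\Lambda(X)\simeq\hocolim_{\Delta^m\to X}\Lambda(\Delta^m)$, and finally checks that $\theta_{X,Y}$ is the induced map on homotopy colimits. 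If you want to keep your strategy, this is the statement you must prove in place of the Quillen-bifunctor claim; otherwise the paper's localization argument avoids the issue entirely.
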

This can be proved using the fact that the coherent nerve commutes with products (up to categorical
weak equivalence), which is proved in \cite[Corollary 2.2.5.6]{Lurie09}. 
We give a direct proof after Lemma \ref{lem:dglocalizationtensor}.
The key ingredient is to express each $\infty$-category as the localization of the nerve of an ordinary category by a subcategory, cf.~\cite[Proposition 7.3.15]{Cisinski19}.

Since right adjoints of monoidal functors preserve internal Homs, we obtain the following corollary.

\begin{corollary}[\cite{HolsteinLazarev22}, Lemma 5.1]\label{dgnerve}
Let $\A$ be a small dg category and $X$ a simplicial set. 
We have an equivalence of $\infty$-categories
\[
\phi_{X,\A}: N_{dg}(\rep_{dg}(\Lambda(X),\A))\rightarrow \Map(X,N_{dg}(\A)).
\]
In particular, we have an equivalence of categories
\[
\delta_{X,\A}: \rep(\Lambda(X),\A)\rightarrow h(\Map(X,N_{dg}(\A))).
\]
\end{corollary}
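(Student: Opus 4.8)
\textbf{Proof proposal for Corollary~\ref{dgnerve}.}

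The plan is to deduce the statement formally from Proposition~\ref{prop:Lambdamonoidal}, which asserts that $\theta_{X,Y}:\Lambda(X\times Y)\to\Lambda(X)\otimes\Lambda(Y)$ is a quasi-equivalence, together with the two Quillen adjunctions already recalled in the excerpt: the adjunction $(\Lambda,N_{dg})$ between $\sSet_{Joyal}$ and $\dgcat$ with the Dwyer--Kan model structure (see \eqref{Lambdadgnerve}), and the fact that $\rep_{dg}(\B,\C)$ is the internal $\Hom$ of the closed symmetric monoidal category $(\Hqe,-\otimes^{\mathbb L}-)$, i.e.\ $\Hqe(\A\otimes^{\mathbb L}\B,\C)\iso\Hqe(\A,\rep_{dg}(\B,\C))$. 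First I would observe that $\Lambda$ is (up to quasi-equivalence) a monoidal functor: this is precisely the content of Proposition~\ref{prop:Lambdamonoidal}, since $\theta_{X,Y}$ is natural and a quasi-equivalence, and $\Lambda(\Delta^0)=k$ is the monoidal unit of $\dgcat$. Hence $N_{dg}$, being right adjoint to a (weakly) monoidal functor, is a lax monoidal functor, and a standard formal argument shows that a right adjoint of a monoidal functor preserves internal $\Hom$-objects. Concretely, since $\Map(X,-)$ is the internal $\Hom$ in $\sSet_{Joyal}$ and $\rep_{dg}(\Lambda(X),-)$ is the internal $\Hom$ in $\Hqe$, I would verify that $N_{dg}\bigl(\rep_{dg}(\Lambda(X),\A)\bigr)$ represents the same functor on $\Ho(\sSet_{Joyal})$ as $\Map(X,N_{dg}(\A))$.

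The key computation is the following chain of natural bijections, for an arbitrary simplicial set $T$:
\[
\begin{aligned}
\Hom_{\Ho(\sSet)}\bigl(T,N_{dg}(\rep_{dg}(\Lambda(X),\A))\bigr)
&\iso \Hom_{\Hqe}\bigl(\Lambda(T),\rep_{dg}(\Lambda(X),\A)\bigr)\\
&\iso \Hom_{\Hqe}\bigl(\Lambda(T)\otimes^{\mathbb L}\Lambda(X),\A\bigr)\\
&\iso \Hom_{\Hqe}\bigl(\Lambda(T\times X),\A\bigr)\\
&\iso \Hom_{\Ho(\sSet)}\bigl(T\times X,N_{dg}(\A)\bigr)\\
&\iso \Hom_{\Ho(\sSet)}\bigl(T,\Map(X,N_{dg}(\A))\bigr).
\end{aligned}
\]
Here the first and fourth bijections are the derived adjunction $(\Lambda,N_{dg})$; the second is the internal-Hom adjunction in $(\Hqe,\otimes^{\mathbb L})$; the third uses Proposition~\ref{prop:Lambdamonoidal}, which gives a quasi-equivalence $\Lambda(T\times X)\iso\Lambda(T)\otimes\Lambda(X)\iso\Lambda(T)\otimes^{\mathbb L}\Lambda(X)$ (the last step because $\Lambda(T)$ is cofibrant, so the derived and underived tensor products agree); and the last is the internal-Hom adjunction in $\sSet_{Joyal}$, valid because $N_{dg}(\A)$ is an $\infty$-category and $\Map(X,N_{dg}(\A))$ is its internal $\Hom$, cf.~\cite[Theorem~6.12]{Joyal08}. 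By the Yoneda lemma in $\Ho(\sSet_{Joyal})$, the object $N_{dg}(\rep_{dg}(\Lambda(X),\A))$ is canonically isomorphic to $\Map(X,N_{dg}(\A))$ in $\Ho(\sSet_{Joyal})$; one then checks this isomorphism is realised by an actual morphism of simplicial sets $\phi_{X,\A}$, which is therefore an equivalence of $\infty$-categories. Finally, applying the homotopy category functor $h(-)$ and combining with the bijection $\Hqe(\Lambda(X),\A)\iso\Iso(\rep(\Lambda(X),\A))$ and the description of $h$ of a mapping space yields the equivalence of ordinary categories $\delta_{X,\A}:\rep(\Lambda(X),\A)\iso h(\Map(X,N_{dg}(\A)))$.

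The main obstacle I anticipate is bookkeeping around naturality and strictness rather than any deep difficulty: one must be careful that all the displayed bijections are natural in $T$ (so that Yoneda applies), and, more delicately, that the abstract isomorphism produced by Yoneda can be upgraded to a genuine map of simplicial sets — i.e.\ that there is a zig-zag of weak equivalences in $\sSet_{Joyal}$ realising it, so that one really obtains an \emph{equivalence of $\infty$-categories} $\phi_{X,\A}$ and not merely an isomorphism in the homotopy category. This is handled by choosing functorial (co)fibrant replacements and using that $N_{dg}$ is a right Quillen functor, so that it preserves the relevant weak equivalences between fibrant objects; since every small dg category is fibrant in the Dwyer--Kan structure, $N_{dg}$ carries the quasi-equivalence from Proposition~\ref{prop:Lambdamonoidal} to an equivalence of $\infty$-categories, which is exactly what is needed to make $\phi_{X,\A}$ an honest equivalence. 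The passage to $\delta_{X,\A}$ is then immediate from the definition of the homotopy category of a simplicial set and Corollary~\ref{strictify}/\cite{Toen07}.
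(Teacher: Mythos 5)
Your proof is correct and follows essentially the same route as the paper: the paper's own argument is exactly the chain of natural bijections $\Hom_{\mathcal H}(Y,\Map(X,N_{dg}(\A)))\iso\Hom_{\Hqe}(\Lambda(Y\times X),\A)\iso\Hom_{\Hqe}(\Lambda(Y),\rep_{dg}(\Lambda(X),\A))\iso\Hom_{\mathcal H}(Y,N_{dg}\rep_{dg}(\Lambda(X),\A))$ using Proposition~\ref{prop:Lambdamonoidal} and the two internal-Hom adjunctions, followed by Yoneda in $\mathrm{Ho}(\sSet_{Joyal})$. Your additional care about realising the resulting isomorphism by an honest map of simplicial sets goes slightly beyond what the paper writes down, but does not change the argument.
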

\begin{proof}
Let $Y$ be a simplicial set. 
Put $\mathcal H=\mathrm{Ho}(\sSet_{Joyal})$.
Then by adjunction we have 
\[
\begin{aligned}
\Hom_{\mathcal H}(Y,\Map(X,N_{dg}(\A)))&\iso \Hom_{\mathcal H}(Y\times X,N_{dg}(\A))\\
&\iso \Hom_{\Hqe}(\Lambda(Y\times X),\A) \\
&\xleftarrow{\Hom(\theta_{X,Y},\A)} \Hom_{\Hqe}(\Lambda(X)\otimes \Lambda(Y),\A)\\
&\iso \Hom_{\Hqe}(\Lambda(Y),\rep_{dg}(\Lambda(X),\A))\\
&\iso \Hom_{\mathcal H}(Y,N_{dg}\rep_{dg}(\Lambda(X),\A)).
\end{aligned}
\]
Thus we have a natural isomorphism in $\mathrm{Ho}(\sSet_{Joyal})$
\[
N_{dg}(\rep_{dg}(\Lambda(X),\A))\iso \Map(X,N_{dg}(\A)).
\]
\end{proof}
We have the following diagram of functors
\[
\begin{tikzcd}
k\mbox{-}\cat\ar[d,shift left=0.8ex, "F"]\ar[r,hook]&\dgcat_k\ar[d,shift left=0.8ex,"N_{dg}"]\\
\cat\ar[r,"N"swap]\ar[u,"k(-)",shift left=0.8ex]&\infty\mbox{-}\cat\ar[u,shift left=0.8ex,"\Lambda"]
\end{tikzcd}
\]
where $k(-)$ is the $k$-linearization functor and $F$ is the forgetful functor.
For a $k$-category $\C$, we have an isomorphism of $\infty$-categories $N(F(\C))\iso N_{dg}(\C)$.
For a small category $\I$, we have a natural morphism $N(\I)\rightarrow N_{dg}(k(\I))$ which induces a canonical dg functor $\Lambda(N(\I))\rightarrow k(\I)$. 

\begin{lemma} \label{lem:Lambdaklinearization}
The canonical dg functor $\Lambda(N(\I))\rightarrow k\I$ is a quasi-equivalence for each small category $\I$.
\end{lemma}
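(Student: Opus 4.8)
The plan is to reduce the statement to the standard simplices $\Delta^n$ and then to pass to colimits. First I would record two adjunction facts: the inclusion of $k$-categories into $\dgcat$ is left adjoint to $\mathcal A\mapsto Z^0(\mathcal A)$, hence preserves colimits, and $k(-)\colon\cat\to\dgcat$ is left adjoint to the forgetful functor, hence also preserves colimits. Since $\I=\colim_{(\Delta^n\to N(\I))\in\Delta/N(\I)}[n]$ in $\cat$, this gives $k\I=\colim_{\Delta/N(\I)}k[n]$ in $\dgcat$. On the other hand $\Lambda$ is a left adjoint, so $\Lambda(N(\I))=\colim_{\Delta/N(\I)}\Lambda(\Delta^n)$, and under these identifications the canonical dg functor $\Lambda(N(\I))\to k\I$ is the colimit over $\Delta/N(\I)$ of the canonical dg functors $\Lambda(\Delta^n)\to k[n]$.

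Second I would show that $\Lambda(\Delta^n)\to k[n]$ is a quasi-equivalence for each $n$. Both sides have object set $[n]$ and the functor is the identity on objects, so it suffices to check quasi-full-faithfulness. By \cite{RiveraZeinalian18} the dg category $\Lambda(\Delta^n)$ is (the normalization of) the cobar--bar resolution of $k[n]$, so this is immediate; alternatively one checks by hand that $\Hom_{\Lambda(\Delta^n)}(i,j)$, the free graded module on composable strings of chains $i\to j$ in $Q^{(n)}$, is acyclic in nonzero degrees with $H^0$ canonically $k[n](i,j)$, using the standard contracting homotopy of the normalized bar complex of the poset $[n]$. For $n\leq 1$ the map is even an isomorphism.

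Third — and this is the crux — I would upgrade the colimit identity to homotopy colimits. Since $\Lambda$ is left Quillen for the Joyal and Dwyer--Kan model structures (cf.~\ref{Lambdadgnerve}) and every simplicial set is cofibrant in $\sSet_{Joyal}$, the functor $\Lambda$ preserves cofibrations, weak equivalences and homotopy colimits. The canonical presentation $N(\I)\cong\operatorname{colim}_{\Delta/N(\I)}\Delta^n$ realizes $N(\I)$ as a homotopy colimit (the diagram $(\Delta^n\to N(\I))\mapsto\Delta^n$ is projectively cofibrant, a fact about cofibrations only, hence valid in $\sSet_{Joyal}$), so the diagram $(\Delta^n\to N(\I))\mapsto\Lambda(\Delta^n)$ is projectively cofibrant in $\Fun(\Delta/N(\I),\dgcat)$ and its ordinary colimit $\Lambda(N(\I))$ already computes its homotopy colimit. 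As $\Lambda(\Delta^n)\to k[n]$ are objectwise quasi-equivalences, the homotopy colimit of $(\Delta^n\to N(\I))\mapsto k[n]$ is quasi-equivalent to $\Lambda(N(\I))$ through the canonical map, and the remaining point is that the ordinary colimit $\colim_{\Delta/N(\I)}k[n]=k\I$ agrees with this homotopy colimit. This is exactly where the badly-behaved colimits of $\dgcat$ must be controlled; I would do it either (i) by computing $\Hom_{\Lambda(N(\I))}(x,y)$ directly via the (dg-linear analogue of the) necklace description of \cite{DuggerSpivak11} and exhibiting the natural map onto the free module $k\Hom_{\I}(x,y)$ concentrated in degree $0$ as a quasi-isomorphism — this amounts to the classical fact that the rigidification of the nerve of a $1$-category is equivalent to that category — or (ii) by presenting $\I$ itself as an iterated homotopy pushout of the posets $[0],[1],[2]$ along functors injective on objects (using $[n]=[1]\sqcup_{[0]}\cdots\sqcup_{[0]}[1]$ together with the generators-and-relations presentation of $\I$), and noting that $k(-)$ sends these to homotopy pushouts in $\dgcat$, since $k[0]$ and $k[1]$ are cofibrant, $k[2]=k[1]\sqcup_{k[0]}k[1]$ is a homotopy pushout of cofibrant dg categories along a cofibration, and the structure maps are cofibrations. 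I expect this last homotopy-invariance bookkeeping to be the only real obstacle; the first two steps are essentially formal, and Corollary~\ref{dgnerve} may be used to cross-check the outcome.
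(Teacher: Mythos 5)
Your reduction to the category of simplices does not actually reduce the difficulty, and neither of your two proposed ways of closing the final step works as written. Steps 1 and 2 are fine: $k\I=\colim_{\Delta/N(\I)}k[n]$, $\Lambda(N(\I))=\colim_{\Delta/N(\I)}\Lambda(\Delta^n)$, and each $\Lambda(\Delta^n)\to k[n]$ is a quasi-equivalence. But once you have identified $\Lambda(N(\I))$ with a homotopy colimit of the $\Lambda(\Delta^n)\simeq k[n]$, the entire content of the lemma is concentrated in the unproved claim that the \emph{strict} colimit $\colim_{\Delta/N(\I)}k[n]=k\I$ also computes the homotopy colimit of the diagram $k[\bullet]$ in $\dgcat$; that diagram is not projectively cofibrant and there is no formal reason for this, so the ``reduction'' is essentially circular. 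Your resolution (ii) fails concretely: free categories are indeed iterated homotopy pushouts of $[0]$'s and $[1]$'s along cofibrations, but imposing a relation $gf=h$ in a non-free $\I$ is a pushout along a functor that collapses $\Hom$-sets (e.g.\ the quotient of the free category on three arrows $f,g,h$ onto $[2]$, which after $k$-linearization sends $k^2$ onto $k$ in degree $0$); such maps are neither cofibrations nor quasi-equivalences of dg categories, so these pushouts are not homotopy pushouts and the bookkeeping breaks exactly where the relations enter. Your resolution (i), the necklace computation, would if carried out be a complete proof on its own (rendering steps 1--3 redundant), but it is not available ``off the shelf'': Dugger--Spivak's rigidification theorem lives in simplicial categories, and transporting it to $\Lambda$ requires a comparison of $\Lambda$ with the normalized-chains linearization of $\mathfrak{C}[-]$ (via the lax monoidal structure of normalized chains), which you have not supplied.

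For comparison, the paper sidesteps the homotopy-colimit problem entirely. It first treats free categories $\I=P(Q)$: by cofinality the colimit defining $\Lambda(N(\I))$ runs only over simplices given by paths of arrows, the $\Hom$ complexes split as direct sums $\bigoplus_{p}\Hom_{\Lambda(\Delta^{|p|})}(0,|p|)$ indexed by paths $p$, and each summand is quasi-isomorphic to $k$ by your step 2 --- a strict computation needing no homotopy invariance of colimits. The general case is then deduced by applying $\Lambda N(-)$ and $k(-)$ levelwise to the comonadic simplicial resolution of $\I$ by free categories coming from the free--forgetful adjunction between quivers and categories. If you want to salvage your outline, the most economical repair is to keep your step 2, add the cofinality/direct-sum argument for the free case, and then borrow this comonadic descent for general $\I$.
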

\begin{proof}

We first consider the special case when $\I$ is the path category $P(Q)$ of a quiver $Q$. 
  Recall that $N(\I)$ is the colimit of the functor $\Delta(N(\I))\rightarrow \sSet$ sending $(\Delta^n,x:\Delta^n\rightarrow N(I))$ to $\Delta^n$ (cf.~\cite[Lemma 3.13]{Hovey99}).
 Notice that the full subcategory $\J$ of $\Delta(N(\I))$ consisting of $n$-simplices of $N(\I)$ 
 \[
 (i_0\xrightarrow{\alpha_0} i_1\xrightarrow{\alpha_1}\ldots\xrightarrow{\alpha_{n-1}} i_n)
 \]
  where $\alpha_j$ is an arrow in $Q$ for $0\leq j\leq n-1$, is a cofinal subcategory.
 Notice also that the morphisms $(\Delta^n,p)\rightarrow (\Delta^m,q)$ in $\J$ are induced by inclusions of paths, i.e.~$q$ is a part of $p$ as a path. Therefore $\Lambda(N(\I))$ is the colimit 
 \[
\colim_{p:\Delta^n\rightarrow N(\I)} \Lambda(\Delta^n)
 \]
 where $p$ runs through the paths in $Q$ and $n$ is the length of $p$.
 
 For any pair of objects $(i,j)$, let $\mathcal S_{i,j}$ be the set of paths from $i$ to $j$. Let $\Lambda'$ be the dg category whose set of objects is $Q_0$ and the Hom complex $\Hom_{\Lambda'}(i,j)$ is given by the complex
 \[
\bigoplus_{p\in\mathcal S_{i,j}}\Hom_{\Lambda(\Delta^{m})}(0,m)
 \]
 where $m$ is the length of the path $p$.
 The composition map
 \begin{align*}
 \Hom_{\Lambda(\Delta^m)}(0,m)\otimes\Hom_{\Lambda(\Delta^n)}(0,n)\rightarrow \Hom_{\Lambda(\Delta^{m+n})}(0,m+n)
 \end{align*}
 where $m$ is the length of a path $p:i\rightarrow j$ and $n$ is the length of a path $q:j\rightarrow k$, is induced by the composition in the dg category $\Lambda(\Delta^{m+n})$ through the morphisms $\sigma:\Delta^m\rightarrow \Delta^{m+n}$ and $\delta:\Delta^n\rightarrow \Delta^{m+n}$, where $\sigma(l)=l$ for $0\leq l\leq m$ and $\delta(s)=s+m$ for $0\leq s\leq n$.

 For each path 
 \[
 p:i=p_0\rightarrow p_1\rightarrow \ldots\rightarrow p_{n-1}\rightarrow j=p_n
 \]
  of length $n$, we have a canonical dg functor $\Lambda(\Delta^n)\rightarrow \Lambda'$ which sends the object $i$ to $p_i$ for each $0\leq i\leq n$ and such that for each pair of objects $(u<v)$ in $\Lambda(\Delta^n)$, the morphism of Hom complexes
  \[
  \Hom_{\Lambda(\Delta^n)}(u,v)\iso\Hom_{\Lambda(\Delta^{v-u})}(0,v-u)\rightarrow \Hom_{\Lambda'}(p_u,p_v),\\
\]
is the inclusion into the component given by the path 
\[
p_u\rightarrow p_{u+1}\rightarrow\ldots\rightarrow p_v.
\]  
These dg functors are compatible with morphisms in $\J$ .
Therefore we have a canonical dg functor $\Lambda(N(\I))\rightarrow \Lambda'$ which is full and faithful and which is the identity on objects.
 Note that the canonical dg functor $\Lambda(\Delta^n)\rightarrow k[n]$ is a quasi-equivalence.
 Therefore the complex $\Hom_{\Lambda(N(\I))}(i,j)$ is quasi-isomorphic to $k\mathcal S_{i,j}$ for each pair $(i,j)$ and $\Lambda(N(\I))$ is quasi-equivalent to $k\I$.
 
 Now we show the case when $\I$ is a general small category.
 We consider the adjunction
 \[
 \begin{tikzcd}
 \Quiv\ar[r,shift left =0.8ex,"P(-)"]&\cat\ar[l,shift left=0.8ex,"F(-)"]
 \end{tikzcd}
 \]
 where the right adjoint $F(-)$ is the forgetful functor.
 This defines a comonad (cf.~\cite[Chapter VI, Definition 1]{MacLane98}) $(C=PF, \varepsilon, P\eta F)$ in the category $\cat$, where $\eta:\Id\rightarrow FP$ is the unit of the adjunction and $\varepsilon:PF\rightarrow \Id$ is the counit.
 By \cite[8.6.4]{Weibel94}, for each small category $\I$ we have a simplicial object $C(\I)$ in the category $\cat$
 where $C(\I)_n=(C)^{n+1}(\I)$ together with a morphism $C(\I)_0=PF(\I)\rightarrow \I$ given by the counit, which serves as an augmentation.
 
 We apply the $k$-linearization functor $k(-)$ and the functor $\Lambda\circ N(-)$ to the augmented simplicial object $C(\I)$.
 By the above, we have that the canonical dg functor $\Lambda N(C(\I)_n)\rightarrow k(C(\I)_n)$ is a quasi-equivalence for each $n\geq 0$. 
 Therefore the canonical dg functor $\Lambda N(\I)\rightarrow k(\I)$ is also a quasi-equivalence.
\end{proof}
Let $\C$ be a simplicial set and $\W\subset \C$ a simplicial subset. Given an $\infty$-category $\D$, we write $\Map_{\W}(\C,\D)$ for the full subcategory of $\Map(\C,\D)$ which consists of functors $f:\C\rightarrow \D$ such that, for any map $u:x\rightarrow y$ in $\W$, the induced map $f(u):f(x)\rightarrow f(y)$ is invertible in $\D$. In other words, there is a pull-back square of the following form, induced by the opertation of restriction along $\W\subset \C$
\[
\begin{tikzcd}
\Map_{\W}(\C,\D)\ar[r]\ar[d]&\Map(\C,\D)\ar[d]\\
\Map_{\W}(\W,\D)\ar[r]&\Map(\W,\D).
\end{tikzcd}
\]
\begin{definition}[\cite{Cisinski19}, Definition 7.1.2.]
A {\em localization} of $\C$ by $\W$ is a functor $\gamma:\C\rightarrow \W^{-1}\C$ such that 
\begin{itemize}
\item[(i)] $\W^{-1}\C$ is an $\infty$-category;
\item[(ii)] the functor $\gamma$ sends the morphisms of $\W$ to invertible morphisms in $\W^{-1}\C$;
\item[(iii)] for any $\infty$-category $\D$, composing with $\gamma$ induces an equivalences of $\infty$-categories of the form
\begin{equation}\label{mor:localization}
\Map(\W^{-1}\C,\D)\rightarrow \Map_{\W}(\C,\D).
\end{equation}
\end{itemize}
\end{definition}
By \cite[Proposition 7.1.3]{Cisinski19}, the localization of $\C$ by $\W$ always exists and is essentially unique.
By taking the isomorphism classes of objects of the associated homotopy categories of the $\infty$-categories in \ref{mor:localization}, we see that a localization $\gamma:\C\rightarrow \W^{-1}\C$ induces a bijection
\[
\mathcal H(\C,\D)\rightarrow \mathcal H_{\W}(\C,\D)
\]
where $\mathcal H=\mathrm{Ho}(\sSet_{Joyal})$ and $\mathcal H_{\W}(\C,\D)$ is the subset of the set of morphisms $\C\rightarrow \D$ in $\mathcal H$ whose induced functor $h(\C)\rightarrow h(\D)$ send morphisms in $\W$ to isomorphisms in $h(\D)$. 
\begin{definition}[\cite{Toen07,Toen11}]
Let $\A$ be a dg category and $S$ a subset of morphisms in $H^0(\A)$. 
A morphism $\theta:\A\rightarrow L_{S}\A$ in $\Hqe$ is a {\em localization of $\A$ along $S$} if for any $\A'\in\Hqe$, the induced morphism
\[
\theta^*:\Hqe(L_{S}\A,\A')\rightarrow \Hqe(\A,\A')
\]
is a bijection onto the subset $\Hqe_S(\A,\A')$ whose elements are the morphisms $\A\rightarrow \A'$ whose induced functor $H^0(\A)\rightarrow H^0(\A')$ sends all morphisms in $S$ to isomorphisms in $H^0(\A')$.
\end{definition}
For any dg category $\A$ and any set of morphisms $S$ in $H^0(\A)$, a localization $\A\rightarrow L_S(\A)$ exists in $\Hqe$, cf.~\cite[4.3, Proposition 3]{Toen11}. 
\begin{lemma}\label{lem:localizationinftydg}
Let $\C$ be a simplicial set and $\W\subset \C$ a simplicial subset.
Let $S\subseteq H^0(\Lambda(\C))\iso h(N_{dg}\Lambda(\C))$ be the set of morphisms given by the morphisms in $\W$ via the canonical functor $\C\rightarrow N_{dg}\Lambda(\C)$.
Then we have a canonical quasi-equivalence of dg categories
\[
\Lambda(\W^{-1}\C)\iso L_{S}\Lambda(\C).
\]
\end{lemma}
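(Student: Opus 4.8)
The plan is to deduce the claimed quasi-equivalence $\Lambda(\W^{-1}\C) \iso L_S\Lambda(\C)$ from the two universal properties involved, using the monoidal compatibility of $\Lambda$ (Proposition~\ref{prop:Lambdamonoidal}), the adjunction $(\Lambda, N_{dg})$ of \ref{Lambdadgnerve}, and Corollary~\ref{dgnerve}. Since both sides are defined by a universal property, it suffices to check that $\Lambda(\W^{-1}\C)$ satisfies the universal property of $L_S\Lambda(\C)$, i.e.~that for every dg category $\A'$, the canonical map
\[
\Hqe(\Lambda(\W^{-1}\C), \A') \rightarrow \Hqe(\Lambda(\C), \A')
\]
induced by $\Lambda$ applied to the localization functor $\gamma : \C \rightarrow \W^{-1}\C$, is injective with image exactly $\Hqe_S(\Lambda(\C), \A')$.

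First I would rewrite $\Hqe(\Lambda(\C), \A')$ using the adjunction $(\Lambda, N_{dg})$ as $\Hom_{\mathcal H}(\C, N_{dg}(\A'))$ where $\mathcal H = \mathrm{Ho}(\sSet_{Joyal})$, and similarly $\Hqe(\Lambda(\W^{-1}\C), \A') \iso \Hom_{\mathcal H}(\W^{-1}\C, N_{dg}(\A'))$. Under these identifications, the restriction-along-$\gamma$ map becomes composition with $\gamma$ in $\mathcal H$. Now I would invoke the universal property of the $\infty$-categorical localization: by the discussion following the definition of localization in the excerpt (taking isomorphism classes of objects of the homotopy categories of the $\infty$-categories in \eqref{mor:localization}, applied to $\D = N_{dg}(\A')$), composition with $\gamma$ induces a bijection
\[
\Hom_{\mathcal H}(\W^{-1}\C, N_{dg}(\A')) \xrightarrow{\ \sim\ } \mathcal H_{\W}(\C, N_{dg}(\A')),
\]
where $\mathcal H_{\W}(\C, N_{dg}(\A'))$ consists of those morphisms $\C \rightarrow N_{dg}(\A')$ in $\mathcal H$ whose induced functor $h(\C) \rightarrow h(N_{dg}(\A'))$ sends morphisms in $\W$ to isomorphisms.

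The remaining point is to match $\mathcal H_{\W}(\C, N_{dg}(\A'))$ with $\Hqe_S(\Lambda(\C), \A')$ under the adjunction bijection $\Hom_{\mathcal H}(\C, N_{dg}(\A')) \iso \Hqe(\Lambda(\C), \A')$. A morphism $f : \C \rightarrow N_{dg}(\A')$ in $\mathcal H$ corresponds to $\tilde f : \Lambda(\C) \rightarrow \A'$ in $\Hqe$, and the compatibility of the counit/unit means that the induced functor $h(\C) \rightarrow h(N_{dg}(\A')) \iso H^0(\A')$ factors (up to the canonical identification) through $H^0(\Lambda(\C)) \rightarrow H^0(\A')$ via the canonical functor $\C \rightarrow N_{dg}(\Lambda(\C))$ whose effect on homotopy categories I would pin down. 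Since $S \subseteq H^0(\Lambda(\C))$ is by definition the image of $\W$ under $h(\C) \cong h(N_{dg}\Lambda(\C)) \iso H^0(\Lambda(\C))$, the condition that $\tilde f$ send $S$ to isomorphisms in $H^0(\A')$ is exactly the condition that $f$ send $\W$ to isomorphisms in $h(N_{dg}(\A'))$. Hence the two subsets correspond, and $\Lambda(\W^{-1}\C)$ satisfies the universal property defining $L_S\Lambda(\C)$; by essential uniqueness of localizations in $\Hqe$ (cited from \cite[4.3, Proposition 3]{Toen11}), we get the desired canonical quasi-equivalence.

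I expect the main obstacle to be the bookkeeping in the last step: carefully identifying the functor $h(\C) \rightarrow H^0(\Lambda(\C))$ induced by the unit $\C \rightarrow N_{dg}\Lambda(\C)$, checking it is essentially the identity on objects and surjective enough that "$\W$ goes to isomorphisms" and "$S$ goes to isomorphisms" are genuinely equivalent conditions, and verifying naturality in $\A'$ so that the chain of bijections is compatible with the restriction maps on both sides. One convenient way to organize this is to first treat the case where $\C = N(\I)$ is the nerve of a small category using Lemma~\ref{lem:Lambdaklinearization} (which gives $\Lambda(N(\I)) \iso k\I$), reducing $S$ to a set of morphisms in $H^0(k\I) = \I$, and then pass to the general case via \cite[Proposition 7.3.15]{Cisinski19}, which expresses an arbitrary $\infty$-category (and hence $\C$, up to the relevant localization) as a localization of a nerve; but the cleanest route is probably the direct universal-property argument sketched above, which avoids case analysis entirely.
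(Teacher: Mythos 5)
Your proposal is correct and follows essentially the same route as the paper: the paper's proof is precisely the three-step chain of bijections $\Hqe(\Lambda(\W^{-1}\C),\B)\iso \mathcal H(\W^{-1}\C,N_{dg}(\B))\iso \mathcal H_{\W}(\C,N_{dg}(\B))\iso \Hqe_{S}(\Lambda(\C),\B)$, natural in $\B$, using the $(\Lambda,N_{dg})$ adjunction and the universal properties of the two localizations. The bookkeeping you flag for the last identification (matching "$\W$ goes to isomorphisms" with "$S$ goes to isomorphisms" via the unit $\C\to N_{dg}\Lambda(\C)$) is exactly what the paper leaves implicit, and your account of it is the right one.
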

\begin{proof}
Let $\B$ be any dg category. Put $\mathcal H=\mathrm{Ho}(\sSet_{Joyal})$.
Then we have
\[
\begin{aligned}
\Hqe(\Lambda(\W^{-1}\C),\B)\iso& \mathcal H(\W^{-1}\C,N_{dg}(\B))\\
\iso &\mathcal H_{\W}(\C,N_{dg}(\B))\\
\iso &\Hqe_{S}(\Lambda(\C),\B).
\end{aligned}
\]
\end{proof}
\begin{lemma}[\cite{Toen11}, Exercise 25]\label{lem:dglocalizationtensor}
Let $\A_1$ and $\A_2$ be cofibrant dg categories and $S_1\subset H^0(\A_1)$ and $S_2\subset H^0(\A_2)$ be two sets of morphisms which contain all the identities. Then we have a canonical quasi-equivalence of dg categories
\[
L_{S_1\otimes S_2}(\A_1\otimes \A_2)\iso L_{S_1}\A_1\otimes L_{S_2}\A_2.
\]
\end{lemma}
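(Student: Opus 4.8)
\textbf{Proof proposal for Lemma~\ref{lem:dglocalizationtensor}.}
The plan is to reduce the statement to the universal property of dg localizations and the compatibility of the tensor product with localization at the level of mapping simplicial sets, exactly as in the proof of Lemma~\ref{lem:localizationinftydg}. First I would fix an arbitrary test dg category $\B\in\Hqe$ and compute $\Hqe(L_{S_1\otimes S_2}(\A_1\otimes \A_2),\B)$ by unwinding the definition of a localization along a set of morphisms: it is the subset $\Hqe_{S_1\otimes S_2}(\A_1\otimes \A_2,\B)$ of those morphisms $\A_1\otimes\A_2\to\B$ whose induced functor on $H^0$ inverts every morphism in $S_1\otimes S_2$. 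Since $\A_1$ and $\A_2$ are cofibrant, $\A_1\otimes\A_2$ computes $\A_1\otimes^{\mathbb L}\A_2$, so we may use the internal-Hom adjunction $\Hqe(\A_1\otimes\A_2,\B)\iso \Hqe(\A_1,\rep_{dg}(\A_2,\B))$ recalled in Section~\ref{subsection:notations}. The key point is to identify, under this adjunction, the subset cut out by the condition ``inverts $S_1\otimes S_2$'' with an iterated localization condition.

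The main step is the following chain of natural bijections, valid for all $\B$:
\begin{align*}
\Hqe(L_{S_1\otimes S_2}(\A_1\otimes\A_2),\B)&\iso \Hqe_{S_1\otimes S_2}(\A_1\otimes\A_2,\B)\\
&\iso \Hqe_{S_1}(\A_1,L_{S_2}\rep_{dg}(\A_2,\B))\\
&\iso \Hqe(L_{S_1}\A_1,\rep_{dg}(L_{S_2}\A_2,\B))\\
&\iso \Hqe(L_{S_1}\A_1\otimes L_{S_2}\A_2,\B).
\end{align*}
For the second bijection I would argue that a morphism $F:\A_1\otimes\A_2\to\B$, viewed as $\widetilde F:\A_1\to\rep_{dg}(\A_2,\B)$, inverts all morphisms of the form $s_1\otimes \id_{A_2}$ with $s_1\in S_1$ and all $\id_{A_1}\otimes s_2$ with $s_2\in S_2$ if and only if $\widetilde F$ lands (up to isomorphism) in the full dg subcategory $\rep_{dg}^{S_2}(\A_2,\B)$ of functors inverting $S_2$ and moreover inverts $S_1$ there; since $S_1\otimes S_2$ is generated under composition by these two families (using that both $S_i$ contain the identities, so that $s_1\otimes s_2=(s_1\otimes\id)\circ(\id\otimes s_2)$), this is equivalent to the condition defining $S_1\otimes S_2$. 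Here one must check that $\rep_{dg}^{S_2}(\A_2,\B)$ is quasi-equivalent to $\rep_{dg}(L_{S_2}\A_2,\B)$, which is precisely the universal property of $L_{S_2}\A_2$ transported into the internal-Hom bifunctor $\rep_{dg}(-,\B)$: applying $\Hqe(\A_1',-)$ for a cofibrant $\A_1'$ and using adjunction reduces it to the defining bijection for $L_{S_2}$. The third bijection is again the defining property of $L_{S_1}$, and the fourth is the internal-Hom adjunction together with cofibrancy of $L_{S_1}\A_1$ and $L_{S_2}\A_2$ (or a cofibrant replacement thereof). Finally, by the Yoneda lemma in $\Hqe$ these natural bijections exhibit a canonical isomorphism $L_{S_1\otimes S_2}(\A_1\otimes\A_2)\iso L_{S_1}\A_1\otimes L_{S_2}\A_2$ in $\Hqe$, i.e.~a canonical quasi-equivalence.

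The hard part will be the bookkeeping in the second bijection: one has to be careful that ``inverting $S_1\otimes S_2$'' really decomposes as the conjunction of the two one-sided conditions, and that the passage from $\rep_{dg}(\A_2,\B)$ to its full subcategory $\rep_{dg}^{S_2}$ commutes with the operation of taking $S_1$-localization in the first variable. I would handle this by testing against cofibrant dg categories and repeatedly invoking the internal-Hom adjunction and the universal property of $L_{S}$, so that every step is reduced to a statement about the sets $\Hqe_S(-,-)$, where the decomposition is immediate. A secondary technical point is to make sure all tensor products occurring are derived; this is where the cofibrancy hypotheses on $\A_1$ and $\A_2$ (and the stability of cofibrancy under $L_S$, which one may arrange by an extra cofibrant replacement that does not change the object in $\Hqe$) are used.
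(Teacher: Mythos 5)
Your proof is correct and follows essentially the same route as the paper: the identical chain of natural bijections obtained by testing against an arbitrary $\B$, with the middle step reducing the condition ``inverts $S_1\otimes S_2$'' to the conjunction of the two one-sided conditions via $s_1\otimes s_2=(s_1\otimes\id)\circ(\id\otimes s_2)$ and then applying the universal properties of $L_{S_1}$ and $L_{S_2}$ together with the internal-Hom adjunction. One small notational point: in your displayed chain the object in the second line should be the full dg subcategory $\rep_{dg}(\A_2,\B)_{S_2}$ of $S_2$-inverting morphisms (as your prose correctly describes), not a localization $L_{S_2}\rep_{dg}(\A_2,\B)$ of the functor category.
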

\begin{proof}
The subset $S_1\otimes S_2\subset H^0(\A_1\otimes \A_2)$ consists of homotopy equivalence classes of $f\otimes g$ where $\overline{f}\in S_1$ and $\overline {g}\in S_2$. This is independent of the choice of the representatives $f$ and $g$.
Let $\B$ be any dg category. 
We have 
\[
\begin{aligned}
\Hqe(L_{S_1\otimes S_2}(\A_1\otimes\A_2),\B)\iso& \Hqe_{S_1\otimes S_2}(\A_1\otimes \A_2,\B)\\
\iso &\Hqe_{S_1}(\A_1,\rep_{dg}(\A_2,\B)_{S_2})\\
\iso &\Hqe(L_{S_1}\A_1,\rep_{dg}(L_{S_2}\A_2,\B))\\
\iso &\Hqe(L_{S_1}\A_1\otimes L_{S_2}\A_2,\B)
\end{aligned}
\]
where $\rep_{dg}(\A_2,\B)_{S_2}$ is the full dg subcategory of $\rep_{dg}(\A_2,\B)$ consisting of morphisms $\A_2\rightarrow \B$ in $\Hqe$ sending morphisms in $S_2$ to isomorphisms in $H^0(\B)$. 
\end{proof}
Similarly, for $\infty$-categories $\C_1$ and $\C_2$, and simplicial subsets $\W_1\subset \C_1$ and $\W_2\subset \C_2$ both of which contain all the identities, we have a canonical equivalence of $\infty$-categories
\[
(\W_1\times \W_2)^{-1}(\C_1\times \C_2)\iso \W_1^{-1}\C_1\times \W_2^{-1}\C_2.
\]
\begin{proof}[Proof of Proposition \ref{prop:Lambdamonoidal}]
We may suppose that $X$ and $Y$ are $\infty$-categories.
By \cite[Proposition 7.3.15]{Cisinski19}, the $\infty$-category $X$ (resp.~$Y$) is the localization of the nerve $N(\I)$ (resp.~$N(\J)$) of an ordinary category $\I$ (resp.~$\J$) by a certain simplicial subset $\W\subset N(\I)$ (resp.~$\W'\subset N(\J)$).

Let $S\subset H^0(\Lambda(N(\I)))$ (resp.~$S'\subset H^0(\Lambda(N(\J)))$) be the set of morphisms given by the morphisms in $\W$ (resp.~$\W'$).
Then we have 
\[
\begin{aligned}
\Lambda(X\times Y)\iso & \Lambda(\W^{-1}N(\I)\times W'^{-1}N(\J))\\
\iso &\Lambda((\W\times \W')^{-1}N(\I)\times N(\J))\\
\xrightarrow[\sim]{\mathrm{Lemma}~\ref{lem:localizationinftydg}}&L_{S\otimes S'}\Lambda(N(\I)\times N(\J))\\
\xrightarrow[\sim]{\mathrm{Lemma}~\ref{lem:Lambdaklinearization}} &L_{S\otimes S'}(k\I\otimes k\J)\\
\xrightarrow[\sim]{\mathrm{Lemma}~\ref{lem:dglocalizationtensor}} &L_{S}(k\I)\otimes L_{S'}(k\J)\\
\iso &L_{S}(\Lambda(N(\I)))\otimes L_{S'}(\Lambda(N(\J)))\\
\iso &\Lambda(\W^{-1}N(\I))\otimes \Lambda(\W'^{-1}N(\J))\\
\iso &\Lambda(X)\otimes \Lambda(Y).
\end{aligned}
\]
\end{proof}
\begin{lemma}
Suppose that $\A$ is a connective additive dg category and that $S$ is a set of morphisms in $H^0(\A)$ which admits a calculus of left fractions in the sense of \cite[Chapter 1, 2.2]{GabrielZisman67}.
Then we have an equivalence of $k$-categories $H^0(L_S\A)\rightarrow H^0(\A)[S^{-1}]$.
\end{lemma}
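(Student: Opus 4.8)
The plan is to construct a dg category $\widetilde{\A}$ that models $L_S\A$ in $\Hqe$ and whose zeroth homology category is, by construction, the Gabriel--Zisman category of left fractions of $H^0(\A)$ at $S$. First I would pass to a quasi-equivalent model of $\A$ which is $k$-cofibrant, still connective, and equipped with a zero object; this is harmless, since both $L_S\A$ and the passage to $H^0$ are invariant under quasi-equivalence, and it lets me lift each element of $S$ to a closed morphism of degree $0$ in $\A$, still denoted by the same symbol. For an object $Y$ of $\A$, let $S{\downarrow}Y$ be the category whose objects are the morphisms $s\colon Y\to Y'$ lying over an element of $S$ and whose morphisms $(s\colon Y\to Y')\to(s'\colon Y\to Y'')$ are the maps $a\in Z^0(\A)(Y',Y'')$ with $\overline{a}\,\overline{s}=\overline{s'}$ in $H^0(\A)$; the left Ore conditions in the calculus of left fractions make $S{\downarrow}Y$ filtered. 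I would then define $\widetilde{\A}$ to have the objects of $\A$ and the morphism complexes
\[
\widetilde{\A}(X,Y)=\operatorname*{hocolim}_{(s\colon Y\to Y')\in S{\downarrow}Y}\A(X,Y'),
\]
with composition induced by the Ore squares completing $Y'\xleftarrow{s}Y\xrightarrow{b}Z'$ together with the closure of $S$ under composition, and with a canonical dg functor $\iota\colon\A\to\widetilde{\A}$ sending $\A(X,Y)$ into the term indexed by $\Id_Y$.

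The bulk of the work is to make this precise. I would first check that, after strictifying the coherence data furnished by the calculus-of-fractions axioms, $\widetilde{\A}$ is a genuine dg category, with associative and unital composition, and then that $\iota$ has the universal property of a localization: for every $\A'\in\Hqe$ the dg functor $\iota$ induces a bijection $\Hqe(\widetilde{\A},\A')\iso\Hqe_{S}(\A,\A')$, because a dg functor $\A\to\A'$ inverting $S$ in $H^0(\A')$ extends essentially uniquely over each cell that formally inverts an $s$, and the filtered homotopy colimits glue these extensions together. The connectivity hypothesis is what should guarantee that these cells need only be glued in non-positive degrees, so that no further homotopy coherences intervene and $\widetilde{\A}$ really coincides with To\"en's homotopy-pushout model of $L_S\A$. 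I expect this identification $\widetilde{\A}\iso L_S\A$ in $\Hqe$ to be the main obstacle.

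Finally I would compute $H^0(\widetilde{\A})$. Filtered homotopy colimits of complexes of $k$-modules are computed by ordinary filtered colimits, which are exact and commute with cohomology, so for all $X,Y$ there is a natural isomorphism
\[
H^0\bigl(\widetilde{\A}(X,Y)\bigr)\iso\colim_{(s\colon Y\to Y')\in S{\downarrow}Y}H^0\bigl(\A(X,Y')\bigr)=\colim_{(s\colon Y\to Y')\in S{\downarrow}Y}\Hom_{H^0(\A)}(X,Y').
\]
By the standard description of morphisms in the category of left fractions associated with a calculus of left fractions, the right-hand side is exactly $\Hom_{H^0(\A)[S^{-1}]}(X,Y)$, and a routine check shows that these isomorphisms respect identities and composition. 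Hence $H^0(\widetilde{\A})\iso H^0(\A)[S^{-1}]$ as $k$-categories, and transporting along the quasi-equivalence $\widetilde{\A}\iso L_S\A$ identifies the canonical comparison functor $H^0(L_S\A)\to H^0(\A)[S^{-1}]$ with this isomorphism, which is therefore an equivalence.
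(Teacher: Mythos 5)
Your strategy is genuinely different from the paper's, which avoids any explicit model of $L_S\A$: there one first observes that for a connective dg category the projection to $H^0$ is the universal morphism in $\Hqe$ to a dg category concentrated in degree $0$, and then plays the universal properties of $\theta\colon\A\to L_S\A$, of $u\colon H^0(\A)\to H^0(\A)[S^{-1}]$ and of the projection $q\colon L_S\A\to H^0(L_S\A)$ against one another to manufacture mutually inverse functors; no fraction calculus at the chain level is ever needed. Your route would, if completed, give more (a computation of the truncated mapping complexes of $L_S\A$), but as written it has two genuine gaps.

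First, the composition in $\widetilde{\A}$ cannot be defined the way you indicate. The left Ore conditions are conditions on morphisms of $H^0(\A)$, i.e.\ on homotopy classes of closed degree-$0$ morphisms. To compose a class represented by $f\in\A(X,Y')$ (indexed by $s\colon Y\to Y'$) with one represented by $g\in\A(Y,Z')$ (indexed by $t\colon Z\to Z'$) you must first extend $g$ along $s$; but $g$ is an arbitrary, generally non-closed element of the Hom complex, to which no Ore square applies, and even for closed $g$ the Ore completion is only defined up to non-canonical choice and homotopy, so the induced map on the colimit of \emph{complexes} (as opposed to the colimit of their $H^0$'s, where filteredness saves you) is not obtained by ``strictifying coherence data''---one needs a hammock- or zig-zag-type construction, or a different model altogether. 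Second, and more importantly, the identification $\widetilde{\A}\iso L_S\A$ in $\Hqe$, which you yourself defer as ``the main obstacle'', is the entire content of the argument: your final step correctly identifies $H^0(\widetilde{\A})$ with $H^0(\A)[S^{-1}]$ via the Gabriel--Zisman colimit formula for Hom sets in a category of left fractions, but without the comparison to $L_S\A$ it computes $H^0$ of the wrong object. As it stands the proposal is a plausible plan rather than a proof; the purely formal argument via universal properties sidesteps both difficulties.
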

\begin{proof}
We assume that $\A$ is cofibrant and strictly connective.
The dg functor $\A\rightarrow H^0(\A)$ is universal among morphisms in $\Hqe$ from $\A$ to dg categories concentrated in degree 0.
Indeed, it follows from the fact that each morphism $\B_1\rightarrow \B_2$ in $\Hqe$ between dg categories concentrated in degree zero is given by a dg functor $F:\B_1\rightarrow \B_2$. 
Note that two dg functors $F,G:\B_1\rightarrow \B_2$ give rise to the same morphism in $\Hqe$ if and only if they are isomorphic as $k$-linear functors.

Let $\theta:\A\rightarrow L_S{\A}$ be the localization morphism in $\Hqe$.
By \cite[Chapter 1, 3.3]{GabrielZisman67}, the category $H^0(\A)[S^{-1}]$ is additive and the localization functor $u:H^0(\A)\rightarrow H^0(\A)[S^{-1}]$ is an additive functor. 
We have the following diagram in $\Hqe$
\[
\begin{tikzcd}
\A\ar[r,"\theta"]\ar[d,"p"swap]&L_S\A\ar[d,"q"]\ar[ldd,dashed, red,"t",bend left = 24ex]\\
H^0(\A)\ar[r,"H^0(\theta)"]\ar[d,"u"swap]&H^0(L_S\A)\ar[ld,shift left = 1.2ex,dashed, red,"r"]\\
H^0(\A)[S^{-1}]\ar[ru,shift left=1.2ex,dashed,red,"s"]&
\end{tikzcd}
\]
where $p$ and $q$ are the projection dg functors.
By the universal property of $\theta$, there exists a morphism $t:L_S\A\rightarrow H^0(\A)[S^{-1}]$ such that $t\theta=up$.
By the universal property of $q$, there exists a morphism $r:H^0(L_S\A)\rightarrow H^0(\A)[S^{-1}]$ such that $rq=t$.
By the universal property of $u$, there exists a functor $s:H^0(\A)[S^{-1}]\rightarrow H^0(L_S\A)$ such that $su=H^0(\theta)$. 
We view $s$ as a morphism in $\Hqe$. 
Then we have $rsup=rsH^0(\theta)=rq\theta=t\theta=up$. It follows that $rsu=u$ in $\Hqe$, or equivalently $rsu$ is isomorphic to $u$ as $k$-linear functors. Therefore $rs$ is isomorphic to $\Id$ as $k$-linear functors, or equivalently as dg functors. Hence $rs=\Id$ in $\Hqe$.
Similarly we have $srq\theta=st\theta=sup=H^0(\theta)p=q\theta$ and hence $sr=\Id$ in $\Hqe$.
\end{proof}
\begin{remark}\label{homotopylimit}
By Corollary~\ref{dgnerve}, we define an {\em $\mathcal{I}$-indexed diagram} in $\A$ to be an object in $\rep(k(\I),\A)$ and use it to define homotopy limits in $\A$.
The case for homotopy colimits is completely dual. 
For a small category $\I$, let $\I^{\vartriangleleft}$ be the join of $*$ and $\I$, i.e.~the category obtained from $\I$ by adding a new object $*$ together with a unique map from $*$ to each object in $\I$.
Then we have the following diagram of functors
\[
\begin{tikzcd}
*&\I\ar[r,"G"]\ar[l,"F"swap]&\I^{\vartriangleleft}&*\ar[l,"H"swap]
\end{tikzcd}
\]
where $H$ sends $*$ to $*$ in $\I^{\vartriangleleft}$ and $G$ is the canonical inclusion into the join. 
It induces the following adjunctions
\[
\begin{tikzcd}
\D(\A\otimes k\I^{op})\ar[r,shift left=0.8ex,"F_{*}"]&\D(\A)\ar[l,shift left=0.8ex,"F^{*}"]
\end{tikzcd},
\begin{tikzcd}
\D(\A\otimes k\I^{op})\ar[r,shift left=0.8ex,"G_{*}"]&\D(\A\otimes k(\I^{\vartriangleleft})^{op})\ar[l,shift left=0.8ex,"G^{*}"]
\end{tikzcd}
\]
and similarly for $H_*$ and $H^*$.
Let $M$ be an object in $\rep(k\I^{\vartriangleleft},\A)$.
Then $M$ corresponds to a morphism $F^{*}H^{*}M\rightarrow G^{*}M$ in $\C(\A\otimes k(\I^{\vartriangleleft})^{op})$.  
By adunction, we have a morphism $H^*M\rightarrow F_*G^*M$.

Let $N$ be an object in $\rep(k\I,\A)$. Then a pair $(M,\theta)$, where $M$ is an object in $\rep(k\I^{\vartriangleleft},\A)$ and $\theta:G^*M\rightarrow N$ is an isomorphism in $\rep(k\I,\A)$, is called the homotopy limit of $N$ if for each object $A\in \A$, the canonical map 
$H^*M\rightarrow F_*G^*M\xrightarrow{F_*(\theta)}F_*N$ in $\D(\A)$ induces a quasi-isomorphism of complexes
\[
\tau_{\leq 0}\RHom(A^{\wedge},H^*M)\rightarrow\tau_{\leq 0}\RHom(A^{\wedge},F_*N).
\] 
\end{remark}
We are mainly concerned with the case when $X=\Delta^1\times \Delta^1$.
The dg category $\Lambda(X)$ is the dg $k$-path category given by the graded quiver
\[
\begin{tikzcd}
0\ar[r,"f"]\ar[rd,"l"{yshift=-3pt}]\ar[rd,bend right=5ex,"h_1"{yshift=6pt,xshift=2pt,red},red,swap]\ar[rd,bend left=5ex,"h_2"{yshift=-5pt, red},red]\ar[d,"g"swap]&1\ar[d,"j"]\\
2\ar[r,"k"swap]&3
\end{tikzcd}
\]
where $|f|=|j|=|l|=|g|=|k|=0$, $|h_1|=|h_2|=-1$, $d(h_1)=l-kg$ and $d(h_2)=l-jf$.

The obvious dg functor $\Lambda(X)\rightarrow k(\mathrm{Sq})\iso\Lambda(\Delta^1)\otimes\Lambda(\Delta^1)$ is a quasi-equivalence.
The induced functor $\rep(\Lambda(X),\A)\rightarrow \rep(\Sq,\A)$ sends an object
\[
\begin{tikzcd}
U\ar[r,"f"]\ar[rd,"l"{yshift=-3pt}]\ar[d,"g"swap]\ar[rd,bend right=5ex,"h_1"{yshift=6pt,xshift=2pt,red},red,swap]\ar[rd,bend left=5ex,"h_2"{yshift=-5pt, red},red]&Y\ar[d,"j"]\\
W\ar[r,"k"swap]&Z
\end{tikzcd}
\]
to
\[
\begin{tikzcd}
U\ar[r,"\begin{bmatrix}f\\i\end{bmatrix}"]\ar[d,"\begin{bmatrix}g\\i\end{bmatrix}"swap]&Y\oplus IU\ar[d,"\begin{bmatrix}j\;\; h_2\;\;d(h_2)\end{bmatrix}"]\\
W\oplus IU\ar[r,"\begin{bmatrix}k\;\; h_1\;\;d(h_1)\end{bmatrix}"swap]&Z
\end{tikzcd}
\]
where $i:U\rightarrow IU$ is the inclusion of $U$ into $\Cone(\Id_{U})$.
Note that by Corollary~\ref{dgnerve} or using the fact that $\Lambda(X)$ is cofibrant, we may assume $U$, $Y$, $W$ and $Z$ are representable dg $\A$-modules.
Again by Corollary~\ref{dgnerve}, if the dg category $\A$ has contractible pre-envelopes and contractible 
pre-covers, then we may further assume $h_1=0$, $h_2=0$.

The morphism $\phi_{\Delta^1\times \Delta^1,\A}: \rep(\Lambda(\Delta^1\times\Delta^1),\A)\rightarrow h(\Map(\Delta^1\times\Delta^1,N_{dg}(\A)))$ sends an object in $\rep(\Lambda(\Delta^1\times \Delta^1),\A)$ which corresponds to the diagram
\[
\begin{tikzcd}
A^{\wedge}\ar[r,"f^{\wedge}"]\ar[rd,"l^{\wedge}"{yshift=-3pt}]\ar[d,"g^{\wedge}"swap]\ar[rd,bend right=5ex,"h_1^{\wedge}"{yshift=6pt,xshift=2pt,red},red,swap]\ar[rd,bend left=5ex,"h_2^{\wedge}"{yshift=-5pt, red},red]&B^{\wedge}\ar[d,"j^{\wedge}"]\\
C^{\wedge}\ar[r,"k^{\wedge}"swap]&D^{\wedge}
\end{tikzcd}
\]
to the object in $\Map(\Delta^1\times\Delta^1,N_{dg}(\A))$
\begin{equation} \label{dia:htpy-square}
\begin{tikzcd}
A\ar[r,"f"]\ar[rd,"l"{yshift=-3pt}]\ar[d,"g"swap]\ar[rd,bend right=5ex,"h_1"{yshift=6pt,xshift=2pt,red},red,swap]\ar[rd,bend left=5ex,"h_2"{yshift=-5pt, red},red]&B\ar[d,"j"]\\
C\ar[r,"k"swap]&D
\end{tikzcd}\;.
\end{equation}

\begin{lemma}\label{homotopypullback}
Let $X$ be an object of the form (\ref{dia:htpy-square}) where $h_1$ and $h_2$ vanish.
Then $X$ is a homotopy pullback square if and only if the corresponding object $X'$ in $\rep(\Sq)$ is homotopy cartesian in the sense of Definition \ref{maindef}.
\end{lemma}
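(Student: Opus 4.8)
The plan is to compare the two descriptions of a ``homotopy pullback square'' by tracing through the explicit functor $\rep(\Lambda(X),\A)\to\rep(\Sq,\A)$ recalled just before the statement and matching it with the cospan/square adjunction $(L,R)$ together with the computation of homotopy limits in $\C(k)^{\leq 0}$ from Example~\ref{complexpullback}. First I would unwind the definition of homotopy pullback for an object of $\rep(\Lambda(\Delta^1\times\Delta^1),\A)$: by Remark~\ref{homotopylimit} and Corollary~\ref{dgnerve}, $X$ is a homotopy pullback square precisely when, for each $A\in\A$, applying $\tau_{\leq 0}\RHom(A^\wedge,-)$ to $X$ yields a homotopy pullback square of connective complexes over $k$, i.e.\ an object of $\Fun(\Cosp,\C(k)^{\leq 0})$ whose terminal vertex is a homotopy limit of the restricted cospan. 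By Example~\ref{complexpullback}, this happens if and only if the canonical map
\[
\tau_{\leq 0}\RHom(A^\wedge,A')\longrightarrow \tau_{\leq 0}\Sigma^{-1}\Cone\bigl(\tau_{\leq 0}\RHom(A^\wedge,B')\oplus\tau_{\leq 0}\RHom(A^\wedge,C')\xrightarrow{[k,\,j]}\tau_{\leq 0}\RHom(A^\wedge,D')\bigr)
\]
is a quasi-isomorphism, where $A'=A^\wedge$-component etc.\ refer to the representable terms of $X$ (here I use that $h_1$ and $h_2$ vanish, so the square strictly commutes in $\C(\A)$). Since $\RHom(A^\wedge,-)$ commutes with $\Cone$ and with $\tau_{\leq 0}$ in the relevant range, this is equivalent to the statement that $A^\wedge\to\Sigma^{-1}\Cone((-j^\wedge,k^\wedge))$ induces an isomorphism $\tau_{\leq 0}\RHom(A^\wedge,-)$, which is exactly Definition~\ref{maindef} applied to the object $X'\in\rep(\Sq,\A)$ associated to $X$.

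Next I would make the passage from $X$ to $X'$ explicit and check that the two cone constructions agree. The functor $\rep(\Lambda(X),\A)\to\rep(\Sq,\A)$ described above sends
\[
\begin{tikzcd}
U\ar[r,"f"]\ar[d,"g"swap]&Y\ar[d,"j"]\\
W\ar[r,"k"swap]&Z
\end{tikzcd}
\quad\text{(with }h_1=h_2=0\text{)}\longmapsto
\begin{tikzcd}
U\ar[r,"{[}f{,}i{]}^{\intercal}"]\ar[d,"{[}g{,}i{]}^{\intercal}"swap]&Y\oplus IU\ar[d]\\
W\oplus IU\ar[r]&Z
\end{tikzcd}\;,
\]
and since $IU=\Cone(\Id_U)$ is contractible, the terms $Y\oplus IU$ and $W\oplus IU$ are isomorphic in $\D(\A)$ to $Y$ and $W$, and the new square is isomorphic in $\D(\Sq)$ to the original commutative square with terms $U,Y,W,Z$. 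Hence $\Sigma^{-1}\Cone((-j,k))$ computed for $X'$ agrees, up to canonical isomorphism in $\D(\A)$, with the homotopy limit of the cospan underlying $X$. The compatibility of these cones is the content of Lemma~\ref{adj} (and the diagram chase in its proof, where $K$ plays the role of the cone of $[p,-j,k]$), so I would invoke that lemma rather than redo the bookkeeping: $X$ is a homotopy pullback square iff the canonical map $U\to\tau_{\leq 0}RLX'$ is an isomorphism, and by Lemma~\ref{adj} (and Lemma~\ref{truncationhomotopycartesian} if one needs to reduce to the connective case, since $\A$ need not be connective here) this is equivalent to $X'$ being homotopy cartesian in the sense of Definition~\ref{maindef}.

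Putting these two equivalences together gives the claim. The main obstacle I anticipate is purely a matter of careful sign- and truncation-bookkeeping: one must verify that the particular map $[k,\,j]$ appearing in the homotopy-limit computation of Example~\ref{complexpullback} (applied after $\tau_{\leq 0}\RHom(A^\wedge,-)$) produces, after adding the contractible summands $IU$ introduced by the functor $\Lambda(X)\to k\Sq$, exactly the map $(-j,k)$ of Definition~\ref{maindef} up to an isomorphism that is the identity on the relevant vertices — so that the two notions of ``homotopy cartesian'' coincide on the nose and not merely up to some uncontrolled automorphism. Once the identification of the cones is pinned down, the equivalence of the quasi-isomorphism conditions is immediate, and the extra hypothesis $h_1=h_2=0$ is what makes the square in (\ref{dia:htpy-square}) literally commutative in $\C(\A)$, so that one may work in $\rep(\Sq,\A)$ directly without first strictifying.
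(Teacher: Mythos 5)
Your proposal has the right overall shape (reduce to mapping complexes, then match cones with Definition~\ref{maindef}), but its first and most important step contains a genuine gap. You assert that ``by Remark~\ref{homotopylimit} and Corollary~\ref{dgnerve}, $X$ is a homotopy pullback square precisely when, for each $A\in\A$, applying $\tau_{\leq 0}\RHom(A^\wedge,-)$ to $X$ yields a homotopy pullback square of connective complexes.'' Neither cited result gives this. Remark~\ref{homotopylimit} \emph{defines} a dg-categorical notion of homotopy limit (via $\rep(k\I^{\vartriangleleft},\A)$); it says nothing about limits in the $\infty$-category $N_{dg}(\A)$, which is what ``homotopy pullback square'' means for an object of $\Map(\Delta^1\times\Delta^1,N_{dg}(\A))$. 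Corollary~\ref{dgnerve} identifies functor categories but says nothing about limit diagrams. The entire content of the lemma is to bridge the $\infty$-categorical limit condition and the mapping-complex condition, and that bridge is exactly what your first paragraph takes for granted. The paper builds it in two steps you omit: (i) Proposition~\ref{bignerve}, replacing $N_{dg}(\A)$ by the big dg nerve $N_{dg}^{big}(\A)=N(s(\A))$, whose mapping spaces are literally $U(DK(\tau_{\leq 0}\Hom_\A(-,-)))$; and (ii) Theorem~\ref{holimsimplicial} (Lurie, HTT 4.2.4.1), which converts a limit diagram in the simplicial nerve of a fibrant simplicial category into a homotopy limit of mapping spaces in $\sSet_{Quillen}$, transported through the right Quillen functors $U$ and $DK$ into $\C(k)^{\leq 0}$, where Example~\ref{complexpullback} finishes the computation. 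Without (i) and (ii) the equivalence you start from is unproved.

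A related misplacement: the hypothesis $h_1=h_2=0$ is not merely there so that one ``may work in $\rep(\Sq,\A)$ directly''; the paper needs it so that $f$, $g$ and $kg$ form a strictly compatible family of maps $F(I)\to A$, which is the format required to invoke Theorem~\ref{holimsimplicial}. Your second paragraph (identifying the cone for $X'$ with the cone for the underlying cospan of $X$, up to the contractible summands $IU$, and the appeal to Lemma~\ref{adj} and Lemma~\ref{truncationhomotopycartesian} for the bookkeeping) is fine and consistent with what the paper does at the end of its proof; but it only handles the easy half of the argument. To repair the proposal you must either reproduce the route through $N_{dg}^{big}$ and Lurie's theorem, or supply an independent proof that limits in $N_{dg}(\A)$ are detected by the truncated Hom complexes out of representables.
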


We will prove the lemma after Theorem~\ref{holimsimplicial} below.

To compute homotopy pullbacks/pushouts in the $\infty$-category $N_{dg}(\A)$, we will need a homotopy equivalent description of the dg nerve functor, which we denote by $N^{big}_{dg}(\A)$, following \cite{Faonte17b}.

The category $\sMod k$ is a monoidal category with tensor products given by componentwise tensor product of $k$-modules.
We denote this tensor product by $(X, Y) \mapsto X\boxtimes Y$.
The Dold-Kan construction $DK$ is a right-lax monoidal functor (in the sense of \cite[Definition A.1.3.5]{Lurie09}). 
The truncation functor $\tau_{\leq 0}:\C(k)\rightarrow \C(k)^{\leq 0}$ and the forgetful functor $F:\sMod k\rightarrow \sSet$ are also right-lax monoidal.
Thus, by \cite[Remark A.1.4.3]{Lurie09}, from a dg $k$-category $\A$, one can produce a simplicial category $s(\A)$ having the same objects as $\A$ and where the simplicial set of morphisms from $x$ to $y$ is the
underlying simplicial set of the Dold--Kan construction applied to the truncation $\tau_{\leq 0}\A(x,y)$.
Notice that $s(\A)$ is locally fibrant in the sense that,
for each pair of objects $x,y$ in $\A$, the mapping space $s(\A)(x,y)$ is a Kan complex. 
This implies that $s(\A)$ is also fibrant as a simplicial category by~\cite[Theorem 1.1]{Bergner07}.
By \cite[Proposition 1.1.5.10]{Lurie09}, the {\em simplicial nerve} of $s(\A)$ denoted by
\[
N^{big}_{dg}(\A)=N(s(\A))
\]
 is an $\infty$-category. 

\begin{proposition}[\cite{LurieHA}, Proposition 1.3.1.17]\label{bignerve}
There is a canonical equivalence of $\infty$-categories $\theta:N^{big}_{dg}(\A)\rightarrow N_{dg}(\A)$.
\end{proposition}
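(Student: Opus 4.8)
The plan is to build the comparison functor $\theta$ level by level and then verify it is an equivalence by comparing mapping spaces. Both $N_{dg}(\A)$ and $N^{big}_{dg}(\A)=N(s(\A))$ have $\mathrm{obj}(\A)$ as their set of $0$-simplices. Using the adjunction~(\ref{Lambdadgnerve}), an $n$-simplex of $N_{dg}(\A)$ is a dg functor $\Lambda(\Delta^n)\to\A$, while an $n$-simplex of $N^{big}_{dg}(\A)$ is a simplicial functor $\mathfrak{C}(\Delta^n)\to s(\A)$. The key observation is that for $i<j$ in $[n]$ the Hom complex $\Hom_{\Lambda(\Delta^n)}(i,j)$ is, by its very definition, the cellular chain complex of the cube $\Hom_{\mathfrak{C}(\Delta^n)}(i,j)\cong(\Delta^1)^{j-i-1}$: the chains $\{i<p_1<\cdots<p_l<j\}$ index the non-degenerate cells. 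The Eilenberg--Zilber shuffle map (the same device used for $\theta_{m,n}$ before Proposition~\ref{prop:Lambdamonoidal}) then gives a natural chain map from this cellular complex to the normalized chains $N(k[\Hom_{\mathfrak{C}(\Delta^n)}(i,j)])$, compatibly with composition. Dualizing through the adjunctions $k[-]\dashv U$ and $N\dashv DK$, and using that $\tau_{\leq 0}\A(x,y)$ is a \emph{sub}complex of $\A(x,y)$, a simplicial functor $\mathfrak{C}(\Delta^n)\to s(\A)$ produces a dg functor $\Lambda(\Delta^n)\to\A$; naturality in $[n]$ — where the shuffle data must be chosen coherently, essentially as in Proposition~\ref{prop:Lambdamonoidal} — yields the map of simplicial sets $\theta\colon N^{big}_{dg}(\A)\to N_{dg}(\A)$, which is the identity on objects.

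Next I would show $\theta$ is an equivalence of $\infty$-categories. Since it is bijective on objects, $h(\theta)$ is essentially surjective, so by the standard criterion it suffices to prove that for all $x,y\in\A$ the induced map of Kan complexes
\[
\theta_{x,y}\colon \Map_{N^{big}_{dg}(\A)}(x,y)\longrightarrow \Map_{N_{dg}(\A)}(x,y)
\]
is a weak homotopy equivalence. Because $s(\A)$ is a fibrant simplicial category, the source is canonically weakly equivalent to $s(\A)(x,y)=U(DK(\tau_{\leq 0}\A(x,y)))$, so its $m$th homotopy group is $H^{-m}(\A(x,y))$ for $m\geq 0$ and vanishes otherwise. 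On the other side, unwinding the Hinich--Schechtman description of the simplices of $N_{dg}(\A)$ — using that $\Lambda(\Delta^n)\to k[n]$ is a quasi-equivalence (Lemma~\ref{lem:Lambdaklinearization} with $\I=[n]$) — gives the same computation of the homotopy groups of $\Map_{N_{dg}(\A)}(x,y)$. Finally I would check that $\theta_{x,y}$ is compatible with these two identifications, i.e. induces the identity on $H^{-m}(\A(x,y))$ for every $m\geq 0$ and every basepoint, by evaluating on the distinguished top-dimensional cells of the cubes and tracing them through the shuffle map and the Dold--Kan equivalence. Whitehead's theorem (recalled above) then shows each $\theta_{x,y}$ is a weak homotopy equivalence, hence $\theta$ is an equivalence of $\infty$-categories, and its canonicity in $\A$ (and up to the contractible space of choices of shuffle data) is visible from the construction.

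The main obstacle is the last compatibility check: verifying that $\theta_{x,y}$, after the identifications of both mapping spaces with $U(DK(\tau_{\leq 0}\A(x,y)))$, induces the identity on homotopy groups. This is the only step where the combinatorial discrepancy between $\mathfrak{C}(\Delta^n)$ and $\Lambda(\Delta^n)$, and the Eilenberg--Zilber/Alexander--Whitney asymmetry hidden in the lax monoidality of $DK$, must genuinely be controlled; the remaining steps are bookkeeping with adjunctions together with the cited Whitehead and Dold--Kan theorems.
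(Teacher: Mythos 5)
The paper gives no proof of this statement --- it is quoted from Lurie [HA, Proposition~1.3.1.17] --- so there is nothing internal to compare against; your outline is, in substance, a reconstruction of the argument in the cited source. The backbone is right: identify $\Hom_{\Lambda(\Delta^n)}(i,j)$ (composable strings of chains, i.e.\ pairs $S\subseteq T\subseteq\{i+1,\dots,j-1\}$ with $S$ the set of breakpoints and $T\setminus S$ the interior points, the degree being $|T\setminus S|$) with the cellular chains of the cube $\Hom_{\mathfrak{C}(\Delta^n)}(i,j)\cong(\Delta^1)^{j-i-1}$, feed these into normalized simplicial chains by the shuffle map, and transpose through $F\dashv U$ and Dold--Kan to turn a simplicial functor $\mathfrak{C}(\Delta^n)\to s(\A)$ into a dg functor $\Lambda(\Delta^n)\to\A$. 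The reduction of ``equivalence'' to ``weak equivalence on mapping spaces'' together with the identification of $\Map_{N(s(\A))}(x,y)$ with $U(DK(\tau_{\leq 0}\A(x,y)))$ is also standard. This identification of Hom complexes is not quite ``by definition,'' though: you must check that the differential of $\Lambda(\Delta^n)$ (delete $p_m$ versus break at $p_m$) matches the two types of faces of the cube, and that concatenation of strings matches the face inclusions inducing composition.

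Two further points need more care. First, compatibility of the transposed maps with composition is not automatic: composition in $s(\A)$ is mediated by the lax monoidal structure of $DK$, which lives on the Alexander--Whitney side, while your map out of cellular chains uses the Eilenberg--Zilber side; what makes the two interact correctly is precisely the identity $AW\circ EZ=\mathrm{id}$ on normalized chains, and this should be invoked explicitly both here and in your final homotopy-group comparison (it is also what resolves the ``asymmetry'' you flag at the end). Second, the computation of $\Map_{N_{dg}(\A)}(x,y)$ is asserted rather than performed: the quasi-equivalence $\Lambda(\Delta^n)\to k[n]$ does not by itself compute a mapping space of $N_{dg}(\A)$; one should instead use a pinched model such as $\Hom^{R}$, whose $m$-simplices are $(m{+}1)$-simplices of $N_{dg}(\A)$ suitably degenerate on the front face, and identify it with $U(DK(\tau_{\leq 0}\A(x,y)))$ directly from the Hinich--Schechtman formulas. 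With those two steps supplied, the remaining check that $\theta_{x,y}$ intertwines the identifications --- the obstacle you name --- does come down to evaluating on top-dimensional cells and once more to $AW\circ EZ=\mathrm{id}$, so your plan closes; as written, however, that last verification is described rather than carried out.
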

In general, for complexes $U, V\in \C(k)^{\leq 0}$, the simplicial $k$-module map 
\[
DK(U)\boxtimes DK(V)\rightarrow DK(U\otimes V), 
\]
which defines the right-lax monoidal structure of the functor $DK$, is not very explicit.
But it is explicit at the level of $0$-simplices and this is enough for our purposes: it sends 
$f\boxtimes g$ to $f\otimes g$. 
Also the $k$-module $DK(V)_1$ can be identified with $V^{-1}\oplus V^0$ and $DK(V)_0$ can be identified with $V^0$. 
The maps $d_0, d_1:DK(V)_1\rightarrow DK(V)_0$ are then identified with maps
\[
d_0:(h,k)\mapsto d(h)+k,\;\; d_1:(h,k)\mapsto k
\] 
where $h\in V^{-1}$ and $k\in V^0$.
A $2$-simplex of $N^{big}_{dg}(\A)$ can be identified with a simplicial functor $\C(\Delta^2)\rightarrow s(\A)$, which in turn can be identified with a diagram in $\A$
\[
\begin{tikzcd}
A\ar[rr,"k"{name=1}]\ar[rd,"f"swap]\ar[rr,"h"description,bend right=6ex]&&C\\
&B\ar[ru,"g"{swap}]&
\end{tikzcd}
\]
where the morphisms $f$, $g$ and $k$ are closed of degree $0$, the map $h:A\rightarrow C$ is of degree $-1$ and $d(h)+k=g\circ f$.
\begin{theorem}[\cite{Lurie09}, Theorem 4.2.4.1]\label{holimsimplicial}
Let $\C$ and $\J$ be fibrant simplicial categories and $F:\J\rightarrow \C$ be a simplicial functor. 
Suppose we are given an object $C\in\C$ and a compatible family of maps $\{\eta_{I}:F(I)\rightarrow C\}_{I\in\J}$. 
The following conditions are equivalent:
\begin{itemize}
\item[(1)] The maps $\eta_{I}$ exhibit $C$ as a homotopy colimit of the diagram $F$. 
\item[(2)] Let $f:N(\J)\rightarrow N(\C)$ be the simplicial nerve of $F$ and $\overline{f}:N(\J)^{\vartriangleright}\rightarrow N(\C)$ the extension of $f$ determined by the maps $\{\eta_{I}\}$. 
Then $\overline{f}$ is a colimit diagram in $N(\C)$.
\end{itemize}
\end{theorem}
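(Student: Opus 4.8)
The plan is to treat this as the cited statement \cite[Theorem 4.2.4.1]{Lurie09} and to indicate how its proof runs, since the comparison it encodes is exactly what we need to transport homotopy pullback computations between $\A$ and the $\infty$-category $N_{dg}(\A)$. By passing to opposite categories it suffices to treat the colimit case as stated; this is also the version we will dualize in the proof of Lemma~\ref{homotopypullback}.

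First I would spell out the two universal properties to be matched. On the simplicial side, a compatible family $\{\eta_I\colon F(I)\to C\}$ — equivalently a morphism from $F$ to the constant diagram $\underline{C}$ in $\Fun(\J,\C)$ — exhibits $C$ as a homotopy colimit if and only if, for every object $D\in\C$, the induced map
\[
\C(C,D)\longrightarrow \holim_{I\in\J\op}\C(F(I),D)
\]
is a weak homotopy equivalence of Kan complexes; this is the local (equivalently global) description of homotopy colimits in the simplicial model category $\sSet_{Quillen}$ together with the adjunction $\const\dashv\lim$, applied exactly as in the computation preceding Example~\ref{complexpullback} but with target $\sSet$ instead of $\sMod k$. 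On the $\infty$-categorical side, $\overline{f}\colon N(\J)^{\vartriangleright}\to N(\C)$ is a colimit diagram if and only if, for every object $D$, the canonical map
\[
\Map_{N(\C)}(C,D)\longrightarrow \holim_{I\in\J\op}\Map_{N(\C)}(f(I),D)
\]
is an equivalence of spaces; that is, colimits in an $\infty$-category are detected by the corepresentable functors $\Map_{N(\C)}(-,D)$.

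The bridge between the two criteria is the fact that for a fibrant simplicial category $\C$ the simplicial mapping spaces $\C(x,y)$ compute the mapping spaces $\Map_{N(\C)}(x,y)$ of the simplicial nerve, naturally in $x$ and $y$; this is the same input that makes $N^{big}_{dg}$ well behaved and underlies Proposition~\ref{bignerve}. Plugging this natural equivalence into the two displays, and using that the relevant homotopy limits over $\J\op$ are computed the same way on both sides, one matches the $\infty$-categorical condition with the simplicial one term by term, so that (1) and (2) become literally the same statement about mapping spaces. The genuinely delicate point — and the step where I would follow \cite{Lurie09} closely rather than improvise — is the strictification that identifies $\holim_{I\in\J\op}\Map_{N(\C)}(f(I),D)$ with $\holim_{I\in\J\op}\C(F(I),D)$ compatibly with the maps out of $\C(C,D)$: the cone $\{\eta_I\}$ must be promoted to an honest cone in an injective-fibrant replacement of the diagram, which is exactly where Lurie's analysis of cones, together with the compatibility of the coherent nerve $N$ (and its left adjoint $\C(-)$) with joins $(-)^{\vartriangleright}$, enters. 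Modulo that input, both conditions reduce to the assertion that a single map of spaces is an equivalence, and the theorem follows.
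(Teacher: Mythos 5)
You should be aware that the paper offers no proof of this statement: it is imported verbatim as \cite[Theorem 4.2.4.1]{Lurie09} and used as a black box in the proof of Lemma~\ref{homotopypullback}, so there is no argument of the paper's to compare yours against. Judged on its own terms, your outline is a faithful description of the standard reduction: condition (1) is essentially the definition of a homotopy colimit in a fibrant simplicial category (the map $\C(C,D)\rightarrow\holim_{I\in\J^{op}}\C(F(I),D)$ is a weak equivalence for all $D$), condition (2) is the mapping-space characterization of colimits in an $\infty$-category, and the two are matched via the natural equivalence between $\C(x,y)$ and $\Map_{N(\C)}(x,y)$ for fibrant $\C$. That is the right skeleton. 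However, the two points you flag as "delicate" — the naturality of the mapping-space comparison in both variables, and the rectification that promotes the coherent cone $\overline{f}$ on $N(\J)^{\vartriangleright}$ to a strict cone on an injective-fibrant replacement compatibly with the comparison maps — are not peripheral technicalities but constitute essentially the entire content of Lurie's proof (which routes through the straightening of over- and undercategories and cofinality, rather than a term-by-term matching of two displayed maps). So your proposal is best read as an accurate account of why the theorem is plausible and of where its difficulty lives, with the difficulty itself deferred to the source being cited; since the paper makes exactly the same deferral, this is an acceptable treatment, but it should not be mistaken for an independent proof.
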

\begin{proof}[Proof of Lemma \ref{homotopypullback}]
By Proposition \ref{bignerve}, the object $X$ is homotopy cartesian in the $\infty$-category $N_{dg}(\A)$ if and only if it is so in the big dg nerve $N_{dg}^{big}(\A)$.
Let $\J$ be the fibrant simplicial category
\[
\begin{tikzcd}
&1\ar[d,"*"]\\
2\ar[r,"*"swap]&3
\end{tikzcd}\;\;.
\]
Then the object $X$ restricts to a cospan and this gives rise to a simplicial functor $F:\J\rightarrow s(\A)$ where $s(\A)$ is the simplicial category described before Proposition \ref{bignerve}.
Since we have $kg=jf$ in $Z^0(\A)$, the morphisms $f:A\rightarrow B$, $g:A\rightarrow C$ and $kg:A\rightarrow D$ form a compatible family of maps.
Note that $N(\J)^{\vartriangleleft}=\Delta^1\times\Delta^1$ and that $X$ is the extension of $N(F)$ determined by the family of maps $\{f,\;g,\;kg\}$.
 By Theorem \ref{holimsimplicial}, the object $X$ is a homotopy pullback diagram in $N(s(\A))=N_{dg}^{big}(\A)$ if and only if for each object $A'\in\A$, the following diagram
\[
\begin{tikzcd}
U(DK(\tau_{\leq 0}\Hom_{\A}(A',A)))\ar[r]\ar[d]&U(DK(\tau_{\leq 0}\Hom_{\A}(A',B)))\ar[d]\\
U(DK(\tau_{\leq 0}\Hom_{\A}(A',C)))\ar[r]&U(DK(\tau_{\leq 0}\Hom_{\A}(A',D)))
\end{tikzcd}
\]
is homotopy pullback in $\sSet_{Quillen}$. 
Note that here we are using the fact that the homotopies $h_1$ and $h_2$ in $X$ are both zero so that we could apply Theorem \ref{holimsimplicial}.
Since both functors $U$ and $DK$ are right Quillen functors which preserve and reflect weak equivalences, this is equivalent to the diagram
\[
\begin{tikzcd}
\tau_{\leq 0}\Hom_{\A}(A',A)\ar[r]\ar[d]&\tau_{\leq 0}\Hom_{\A}(A',B)\ar[d]\\
\tau_{\leq 0}\Hom_{\A}(A',C)\ar[r]&\tau_{\leq 0}\Hom_{\A}(A',D)
\end{tikzcd}
\]
being homotopy pullback in $\C(k)^{\leq 0}$. 
By Example \ref{complexpullback}, this is equivalent to saying that the map
\[
\tau_{\leq 0}\Hom_{\A}(A',A)\rightarrow \tau_{\leq 0}\Sigma^{-1}\Cone((\tau_{\leq 0}\Hom_{\A}(A',B)\oplus \tau_{\leq 0}\Hom_{\A}(A',C))\rightarrow \tau_{\leq 0}\Hom_{\A}(A',D))
\]
is an isomorphism in $\D(k)$.
The complex on the right hand side is quasi-isomorphic to 
\[
\tau_{\leq 0}\Sigma^{-1}\Cone(\Hom_{\A}(A',B)\oplus\Hom_{\A}(A',C)\rightarrow \Hom_{\A}(A',D))
\]
which is further isomorphic to 
\[
\tau_{\leq 0}\Hom_{\A}(A'^{\wedge},\Sigma^{-1}\Cone(B^{\wedge}\oplus C^{\wedge}\rightarrow D^{\wedge})).
\]
Thus $X$ is homotopy cartesian in $N_{dg}(\A)$ if and only if the canonical map
\[
\tau_{\leq 0}\RHom(A'^{\wedge},A^{\wedge})\rightarrow \tau_{\leq 0}\RHom(A'^{\wedge},\Sigma^{-1}\Cone(B^{\wedge}\oplus C^{\wedge}\rightarrow D^{\wedge}))
\]
is an isomorphism in $\D(k)$ for each $A'\in \A$, i.e.~the corresponding object $X'$ in $\rep(\Sq)$ is homotopy cartesian in the sense of Definition \ref{maindef}.
\end{proof}
\begin{remark}Similarly, for a family of objects $\{A_i\}_{i\in\I}$ in $\A$, the {\em homotopy coproduct} 
is an object $A\in \A$ with morphisms $A_i\rightarrow A$ in $Z^0(\A)$ such that the induced morphism of complexes
\[
\tau_{\leq 0}\A(A,B)\to \tau_{\leq 0}\prod_{i\in\I}\A(A_i,B)
\]
is a quasi-isomorphism of complexes for each $B\in \A$.
In particular, if the morphisms $A_i\rightarrow A$ in $Z^0(\A)$ yield a direct sum in $H^0(\A)$, then they form a homotopy coproduct. The dual assertion also holds. 
\end{remark}

The notion of exact $\infty$-category was introduced by Barwick in \cite{Barwick15} as a homotopy-theoretic
generalization of Quillen's notion of exact category. Here we adopt a reformulation of Barwick's
axioms due to Jasso. 
Recall that an $\infty$-category that admits finite products and finite coproducts is defined to be {\em additive} if its homotopy 
category is additive, cf.~\cite[Definition 2.2]{Barwick15}, also \cite[Definition 2.6]{GepnerGrothNikolaus15}.

\begin{definition}\label{def:exactinfinity}
Let $\E$ be an additive $\infty$-category. 
An exact structure on $\mathcal E$ is a class $\mathcal S\subseteq \Map(\Delta^1\times\Delta^1,\E)$ stable under isomorphisms, consisting of bicartesian squares (called admissible sequences)
\[
\begin{tikzcd}
x\ar[r, tail,"i"]\ar[d,"",two heads]&y\ar[d,"p",two heads]\\
0\ar[r,tail]&z
\end{tikzcd}
\]
where $\begin{tikzcd}x\ar[r,tail,"i"]&y\end{tikzcd}$ is called an inflation and $\begin{tikzcd}y\ar[r,two heads,"p"]&z\end{tikzcd}$ is called a deflation, such that the following axioms are satisfied
\begin{itemize}
\item[E0]For any $x\in \mathcal E$, $1_x:x\rightarrow x$ is both an inflation and a deflation.
\item[E1]Compositions of deflations (resp.~inflations) are deflations (resp.~inflations).
\item[E2]Given a deflation $p:y\rightarrow z$ and any map $g: z'\rightarrow z$ in $\E$, there exists a homotopy pullback 
\[
\begin{tikzcd} 
{y'}\ar[r,"{q}"]\ar[d,"{h}"swap]&{z'}\ar[d,"{g}"]\\
{y}\ar[r,"{p}"swap, two heads]&{z}
\end{tikzcd}
\]
and ${q}$ is also a deflation;
\item[E$2^{op}$]Given an inflation $i: x\rightarrow y$ and any map $f:x\rightarrow x'$ in $\E$, there exists a homotopy pushout 
\[
\begin{tikzcd}
{x}\ar[r,"{i}",tail]\ar[d,"{f}"swap]&{y}\ar[d,"{}"]\\
{x'}\ar[r,"{j}"swap]&{y'}
\end{tikzcd}
\]
and ${j}$ is also an inflation.
\end{itemize}
\end{definition}
Recall that a {\em subcategory} of an $\infty$-category $\E$ is a simplicial subset $\E'\subseteq \E$ such that for some subcategory 
$h(\E)'$ of the homotopy category $h(\E)$, the square
\[
\begin{tikzcd}
\E'\ar[r]\ar[d]&\E\ar[d]\\
N(h\E)'\ar[r,hook]&N(h\E)
\end{tikzcd}
\] 
is a pullback diagram of simplicial sets, cf.~\cite[1.2.11]{Lurie09}.
\begin{lemma} The above definition of exact $\infty$-category is equivalent to Barwick's. 
More precisely, the assignment sending an exact structure $\mathcal S$ to the pair of subcategories $(\E_{\dag},\E^{\dag})$, where $\E_{\dag}$ is the subcategory of inflations and $\E^{\dag}$ is the subcategory of deflations, gives a bijection between the class of exact structures in the sense of Definition \ref{def:exactinfinity} and the class of exact structures in the sense of Barwick \cite[Definition 3.1]{Barwick15}.
\end{lemma}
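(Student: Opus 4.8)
The plan is to compare the two sets of axioms term by term, using the dictionary between the data of an exact structure $\mathcal S$ in the sense of Definition~\ref{def:exactinfinity} and the data of a pair of subcategories $(\E_\dagger, \E^\dagger)$. First I would check that the assignment is well-defined: given $\mathcal S$, one sets $\E_\dagger$ (resp. $\E^\dagger$) to be the smallest simplicial subset of $\E$ containing the inflations (resp. deflations) appearing in $\mathcal S$ together with all equivalences, and one must verify that this is indeed a subcategory of $\E$ in the sense recalled before the Lemma, i.e. that it fits into the pullback square with $N(h\E)'\hookrightarrow N(h\E)$ for the corresponding wide subcategory $h(\E)'$ of $h(\E)$. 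This uses axiom E1 (closure under composition), E0 (identities, hence all equivalences by the two-out-of-three-type argument available in an $\infty$-category), and stability of $\mathcal S$ under isomorphism so that membership only depends on the homotopy class of the arrow.

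Next I would run through Barwick's axioms \cite[Definition 3.1]{Barwick15} and match each to one of E0--E2$^{op}$. The pair $(\E_\dagger,\E^\dagger)$ must: contain all equivalences and have the equivalences as exactly the arrows that are both inflations and deflations (this is E0 together with the fact that in an exact structure a square that is bicartesian with two opposite sides inflations forces the map to be an equivalence when the deflation is split, paralleling the argument of Proposition~\ref{split} in the dg setting); be closed under composition and pullback/pushout along arbitrary maps in the appropriate variance (E1, E2, E2$^{op}$); and satisfy Barwick's compatibility axiom that an inflation and a deflation that are ``transverse'' fit into an ambigressive bicartesian square, together with the requirement that every such square arises from the exact structure. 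The key point is that, in an additive $\infty$-category, a square of the form displayed in Definition~\ref{def:exactinfinity} is determined up to equivalence by its outer cospan or span (homotopy pullbacks and pushouts being unique), so that specifying $\mathcal S$ is the same as specifying which cofiber sequences $x\to y\to z$ are admissible, and the latter is visibly recovered from $(\E_\dagger,\E^\dagger)$ as the pairs (inflation, its cofiber) or equivalently (deflation, its fiber). This gives the inverse assignment, sending $(\E_\dagger,\E^\dagger)$ to the class of bicartesian squares whose left vertical is in $\E_\dagger$ and bottom-right--to--top-right composite, i.e. the map $y\to z$, is in $\E^\dagger$.

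I would then verify that the two assignments are mutually inverse. From $\mathcal S$ to $(\E_\dagger,\E^\dagger)$ and back: a square lies in the reconstructed class iff its left vertical is an inflation of $\mathcal S$ and its $y\to z$ an associated deflation; by uniqueness of homotopy cofibers this is exactly the original $\mathcal S$. From $(\E_\dagger,\E^\dagger)$ to $\mathcal S$ and back: an arrow is an inflation of the reconstructed $\mathcal S$ iff it occurs as the left vertical of some admissible square, and every $i\in\E_\dagger$ does so (complete it to a pushout along $x\to 0$, which exists and whose right vertical is a deflation by E2$^{op}$ applied in Barwick's setting), and conversely such a left vertical lies in $\E_\dagger$ because $\E_\dagger$ is pullback-stable and contains the relevant map by construction; dually for deflations. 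The only subtlety is matching the set-theoretic bookkeeping of ``subcategory'' with the generation process, and checking that no extra arrows sneak into $\E_\dagger$ or $\E^\dagger$.

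The main obstacle I expect is the faithful translation of Barwick's original axiomatics, which is phrased in terms of a pair of subcategories satisfying transversality and ambigressive-pullback conditions rather than in terms of a class of squares; one has to be careful that E2 and E2$^{op}$ as stated (pullback of a deflation along \emph{any} map, pushout of an inflation along \emph{any} map) genuinely reproduce Barwick's conditions, which at first glance only demand such squares along maps in the subcategories, and that the resulting squares land in $\E^\dagger$ (resp. $\E_\dagger$) automatically. This is a known equivalence --- it is Jasso's reformulation, cf. also \cite[Definition 2.6]{GepnerGrothNikolaus15} for the additive hypothesis --- so the argument is essentially an unwinding of definitions; I would present it as such, citing \cite{Barwick15} for the nontrivial stability statements (e.g. that pullbacks of deflations along arbitrary maps exist once they exist along deflations, which is part of Barwick's axioms being equivalent to this stronger-looking form).
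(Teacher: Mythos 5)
Your overall strategy is the same as the paper's: send $\mathcal S$ to the pair $(\E_{\dag},\E^{\dag})$ of subcategories of inflations and deflations, recover $\mathcal S$ from a Barwick pair as the class of bicartesian squares over $0$ whose top edge is ingressive and whose right edge is egressive, and check axioms in both directions. The well-definedness and mutual-inverse bookkeeping you spend most of your space on is indeed routine, since a bicartesian square over $0$ is determined up to equivalence by either of its nonzero edges.

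The genuine gap is that you never prove the one Barwick axiom that does not formally match any of $\mathrm{E}0$--$\mathrm{E}2^{op}$: that an ambigressive pullback square is an ambigressive pushout square, and that the pullback of an inflation along a deflation is again an inflation (dually for pushouts). You name this compatibility condition but then propose to discharge it by citing \cite{Barwick15}; that is circular, since deriving Barwick's axiom from Jasso's reformulation is exactly the content of the lemma and is not in Barwick's paper. The paper's argument here is short but substantive: given a pullback square with bottom edge $p$ a deflation and right edge $g$ an inflation, axiom $\mathrm{E}2$ makes the pulled-back map $q$ a deflation, so its fiber sequence is bicartesian; pasting that bicartesian square against the given square and comparing with the bicartesian fiber sequence of $p$ shows the given square is itself a pushout. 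Then the cofiber map of the left edge $h$ is identified, via this pushout, with the composite of $p$ and the cofiber map of $g$ --- a composite of deflations, hence a deflation by $\mathrm{E}1$ --- so $h$ is an inflation. Without this pasting-plus-composition argument (and the analogous check in the converse direction that a pushout of an inflation along $x\to 0$ is bicartesian with deflation cofiber), the proof is incomplete. A secondary inaccuracy: Barwick's axioms already demand pullbacks of egressives along \emph{arbitrary} morphisms, so the ``stronger-looking form'' you worry about in your last paragraph is not actually an issue.
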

\begin{proof}
To show that the pair $(\E_{\dag},\E^{\dag})$ forms an exact structure in the sense of Barwick, 
we need to show that in a homotopy pullback square $X$ in $\E$
\[
\begin{tikzcd} 
{y'}\ar[r,"{q}"]\ar[d,"{h}"swap]&{z'}\ar[d,"{g}",tail]\\
{y}\ar[r,"{p}"swap, two heads]&{z}
\end{tikzcd}
\]
where $p$ is a deflation and $g$ is an inflation, the square is homotopy bicartesian and the morphism $h$ is an inflation. 
By Axiom $\rm{E}2$, the morphism $q$ is also a deflation and hence the homotopy fiber of $q$ is homotopy bicartesian. 
So the above square $X$ is homotopy bicartesian.
The homotopy cofiber of $h$ is a composition of $p$ and the homotopy cofiber of $g$ and hence is a deflation.
Therefore the morphism $h$ is an inflation.

Conversely, suppose we are given a pair of full subcategories $(\E_{\dag},\E^{\dag})$ defining an exact structure in the sense of Barwick.
We call morphisms in $\E_{\dag}$ (resp.~$\E^{\dag}$) inflations (resp.~deflations).
Consider a square in $\E$
\begin{equation}\label{square:infinity}
\begin{tikzcd}
x\ar[r,"i"]\ar[d,""]&y\ar[d,"p"]\\
0\ar[r]&z\mathrlap{.}
\end{tikzcd}
\end{equation}
If $i$ is an inflation and the square is a pushout, then by Barwick's axioms, the square is homotopy bicartesian and $p$ is a deflation since the morphism $x\rightarrow 0$ is a deflation.
Dually, if $p$ is a deflation and the square is a pullback, then the square is homotopy bicartesian and $i$ is an inflation since the morphism $0\rightarrow z$ is an inflation.
So the class $\mathcal S$ of homotopy bicartesian squares (\ref{square:infinity}), where $i$ is an inflation and $p$ is a deflation, defines an exact structure on $\E$ in the sense of Definition~\ref{def:exactinfinity}.
\end{proof} 

\begin{proof}[Proof of Theorem~\ref{nerve}]
The $\infty$-category $N_{dg}(\A)$ is an additive $\infty$-category if and only if $H^0(\A)$ is an additive category.
From now on, we may assume that $H^0(\A)$ is an additive category.
Note that the dg nerve functor $N_{dg}$ sends quasi-equivalences to equivalences of $\infty$-categories and
that there is a bijection between the classes of exact $\infty$-structures on two equivalent exact $\infty$-categories. 
The inclusion of $\A$ into its additive closure $\A'$ in $\pretr(\A)$ is a quasi-equivalence. 
Hence we may replace $\A$ by $\A'$ and assume that $Z^0(\A)$ is additive and that $\A$ has contractible pre-covers and contractible pre-envelopes.
Then any object in $\Map(\Delta^1\times\Delta^1,N_{dg}(\A))$ is isomorphic to an object of the form in Lemma \ref{homotopypullback}
and similarly for objects in $\rep_{dg}(\Lambda(\Delta^1\times\Delta^1),\A)$.

By Corollary~\ref{dgnerve} and Lemma \ref{homotopypullback}, the equivalence of categories 
\[
\delta=\delta_{\Delta^1\times\Delta^1,\A}: \rep(\Lambda(\Delta^1\times \Delta^1),\A)\rightarrow h(\Map(\Delta^1\times\Delta^1,N_{dg}(\A)))
\]
 induces a bijection between the classes of homotopy pullback squares.
Dually, it induces a bijection between the classes of homotopy pushout squares and hence also a bijection between the classes of homotopy bicartesian squares.

Recall that we have a fully faithful functor $\mathcal H_{3t}(\A)\hookrightarrow \rep(\Lambda(\Delta^1\times\Delta^1),\A)$. Its composition with $\delta$ sends a 3-term h-complex
\[
\begin{tikzcd}
A\ar[r,"f"]\ar[rr,"h"swap,bend right=8ex]&B\ar[r,"j"]&C
\end{tikzcd}
\]
to the object in $\Map(\Delta^1\times\Delta^1,N_{dg}(\A))$
\[
\begin{tikzcd}
A\ar[rr,"f"]\ar[d,"i"swap]& &B\ar[d,"j"]\\
IA\ar[rr,"{[}-h{,}\;-d(h){]}"swap]& & C
\end{tikzcd}
\]
where $i:A\rightarrow IA=\Cone(\Id_{A})$ is the canonical inclusion. 

Let $\mathcal S\subseteq \mathcal H_{3t}(\A)$ be an exact dg structure on $\A$ and let $\mathcal S'\subseteq \Map(\Delta^1\times\Delta^1,N_{dg}(\A))$ be the corresponding class of objects which is closed under isomorphisms. 
By the above discussion, the class $\mathcal S'$ consists of homotopy bicartesian squares.
Let us check the axioms $\rm{E0}$--$\rm{E2}^{op}$ for the class $\mathcal S'$.
Note that a morphism in $N_{dg}(\A)$ is identified with a morphism in $Z^0(\A)$ and that the class of inflations in $\mathcal S'$ is stable under isomorphisms in $\Map(\Delta^1,N_{dg}(\A))$.
 Axiom E0 follows from Proposition \ref{property}.
Suppose we have inflations $f:A\rightarrow B$ and $g:B\rightarrow C$ and a $2$-simplex
\[
\begin{tikzcd}
A\ar[r,"f"]\ar[rr,"j"swap, bend right=8ex]\ar[rr,"h"description,bend right=4ex]&B\ar[r,"g"]&C
\end{tikzcd}
\]
in $N_{dg}(\A)$ where $j-gf=d(h)$. 
If we view $f$ and $g$ as objects in $\Mor(\A)$, then they are inflations in $\mathcal S$ and so is $gf$.
Since $j$ is isomorphic to $gf$ in $H^0(\Mor(\A))$, the morphism $j$ is also an inflation in $\mathcal S$ and hence is an inflation in $\mathcal S'$ by definition. 
This proves that inflations are stable under compositions in $N_{dg}(\A)$.
Dually one proves that deflations are stable under compositions and this shows Axiom $\rm{E1}$.
Suppose we are in the context of Axiom $\rm{E2}$.
So we have a deflation $j:B\rightarrow C$ in $\mathcal S'$ and a morphism $g:C'\rightarrow C$ in $N_{dg}(\A)$.
Then the morphism $j$ is also an inflation in $\mathcal S$. 
Thus by Axiom $\Ex2$ for the class $\mathcal S$, the cospan
\[
\begin{tikzcd}
&C'\ar[d,"g"]\\
B\ar[r,"j"swap]&C
\end{tikzcd}
\]
 admits a homotopy pullback and the homotopy pullback of $j$ remains a deflation.
 Thus the image of this homotopy pullback in $\Map(\Delta^1\times\Delta^1,N_{dg}(\A))$ under the functor $\delta$ provides the required homotopy pullback. This proves Axiom $\rm{E2}$.
 Dually one shows Axiom $\rm{E2}^{op}$ for $\mathcal S'$.
 Therefore the class $\mathcal S'$ defines an exact $\infty$-structure on $N_{dg}(\A)$.
 
Similarly, we show that the class $\mathcal S$ defines an exact dg structure on $\A'$ if it corresponds to a class $\mathcal S'$ which defines an exact $\infty$-structure on $N_{dg}(\A)$.
\end{proof}
\newpage
\section{Embedding theorem for connective exact dg categories}
Throughout this section, let $\A$ be an exact dg category. 
By Remark~\ref{truncationexactdgstructure}, we may assume that $\A$ has cofibrant Hom complexes.
One of the main aims of this section is to show the following theorem.

\begin{theorem}\label{main}
Let $\A$ be an exact dg category. 
There exists a universal exact morphism $F:\A\rightarrow \D^b_{dg}(\A)$ in $\Hqe$ from $\A$ to a pretriangulated dg category $\D^b_{dg}(\A)$. 

If $\A$ is connective, this morphism satisfies:
\begin{itemize}
\item[1)]It induces a quasi-equivalence from $\tau_{\leq 0}{\A}$ to $\tau_{\leq 0}\D'$ for an extension-closed dg subcategory $\D'$ of $\D^b_{dg}(\A)$.
\item[2)]It induces a natural bijection $\mathbb E(C,A)\xrightarrow{\sim} \Ext^1_{\D^b(\A)}(FC,FA)$ for each pair of objects $C,A$ in $H^0(\A)$ where $\D^b(\A)=H^0(\D^b_{dg}(\A))$.
\end{itemize}
We call $\D^b_{dg}(\A)$ the {\em bounded dg derived category} of $\A$.
\end{theorem}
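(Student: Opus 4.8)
The plan is to construct $\D^b_{dg}(\A)$ explicitly and then verify the universal property together with properties 1) and 2). First I would recall from the remark after the statement that $\D^b_{dg}(\A)$ is to be built as the dg quotient $\pretr(\A)/\N$, where $\N$ is a full dg subcategory of $\pretr(\A)$ generalizing the dg category of acyclic bounded complexes over a Quillen exact category. The natural candidate for $\N$ is the full dg subcategory of $\pretr(\A)$ generated, under the pretriangulated operations (shifts, cones), by the totalizations $\mathrm{Tot}_{\A}(X)$ of conflations $X \in \mathcal S$; equivalently, $\N = \thick(\mathrm{Tot}_{\A}(\mathcal S))$ inside $\tr(\A)$, lifted to the dg level. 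One then sets $F: \A \to \pretr(\A) \to \D^b_{dg}(\A) := \pretr(\A)/\N$, using Drinfeld's dg quotient \cite{Drinfeld04}. That $F$ is exact amounts to checking that each conflation becomes a distinguished triangle in $\D^b(\A) = H^0(\D^b_{dg}(\A))$: by construction $\mathrm{Tot}_{\A}(X)$ is killed, so the three-term complex $A \to B \to C$ with the connecting homotopy $h$ becomes a triangle $A \to B \to C \to \Sigma A$; but one must argue that \emph{all} conflations, not just those used to define $\N$, map to triangles — this follows because the class of $X$ whose totalization lies in $\thick(\mathrm{Tot}_{\A}(\mathcal S))$ is already all of $\mathcal S$ by definition, and the diagram lemmas of Chapter~\ref{sec:diagramlemmas} (especially the stability properties in Proposition~\ref{push} and the $\mathbb E$-formalism of Section~\ref{bi}) ensure compatibility.

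Next I would establish the universal property. Given an exact morphism $G: \A \to \B$ in $\Hqe$ with $\B$ pretriangulated, extend it first to $\pretr(G): \pretr(\A) \to \pretr(\B) \to \B$ using the universal property of the pretriangulated hull \cite{BondalKapranov90}. Since $G$ is exact, it sends each conflation to a homotopy short exact sequence in $\B$, which (as $\B$ is pretriangulated with the maximal exact structure) becomes a distinguished triangle; hence $\pretr(G)$ sends $\mathrm{Tot}_{\A}(\mathcal S)$, and therefore all of $\N$, to zero objects of $H^0(\B) = \D(\B)|_{\mathrm{rep}}$. By the universal property of the dg quotient, $\pretr(G)$ factors uniquely through $\pretr(\A)/\N = \D^b_{dg}(\A)$, giving the required factorization $G = \bar G \circ F$; uniqueness in $\Hqe$ follows from the uniqueness in both the pretriangulated hull and the dg quotient constructions.

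For the connective case, property 1) is the statement that $F$ is quasi-fully faithful after $\tau_{\leq 0}$ onto an extension-closed subcategory. The key computational input is to control $\Hom_{\D^b(\A)}(FA, \Sigma^{-n} FB)$ for $A, B \in \A$ and $n \geq 0$: one shows this equals $H^{-n}\Hom_{\A}(A,B)$ for $n \geq 0$, using the co-t-structure on $\tr(\A)$ from Proposition~\ref{cot} (the canonical co-t-structure, available since we may take $\A$ strictly connective by Remark~\ref{truncationexactdgstructure}), the fact that $\N$ lies in a controlled part of this co-t-structure, and a standard computation of morphisms in a dg quotient via a weight decomposition. Concretely, since conflations give homotopy short exact sequences and $\A$ is connective, $\N$ is generated by objects that are "non-negatively weighted extensions" and the orthogonality built into the co-t-structure kills the relevant $\Hom$'s in nonpositive degrees; this yields $\tau_{\leq 0}\RHom_{\D^b_{dg}(\A)}(FA, FB) \iso \tau_{\leq 0}\RHom_{\A}(A,B)$, i.e.\ the quasi-equivalence $\tau_{\leq 0}\A \iso \tau_{\leq 0}\D'$ onto the full dg subcategory $\D'$ on the objects $FA$. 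Extension-closedness of $\D'$ is then a consequence of the fact that, in degree $1$, an extension of $FC$ by $FA$ in $\D^b(\A)$ comes from a conflation in $\A$, which is precisely property 2).

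Property 2) is where I expect the main obstacle. One has a natural map $\mathbb E(C,A) \to \Ext^1_{\D^b(\A)}(FC, FA)$ sending a conflation to the class of the triangle $A \to B \to C \xrightarrow{\partial} \Sigma A$ it produces; biadditivity of this map follows from the biadditivity of $\mathbb E$ established in Section~\ref{bi} together with the additivity of $\partial \mapsto [\partial]$. Injectivity: if the triangle splits in $\D^b(\A)$, then $\overline{\jmath}$ is a split epimorphism there; one must climb back to show the original conflation is splitting in $\mathcal H_{3t}(\A)$, which requires comparing the two splittings via Corollary~\ref{middleterm} and Proposition~\ref{split} — the subtle point is that a retraction in $\D^b(\A) = \pretr(\A)/\N$ need not lift to $\A$ on the nose, so one argues at the level of $\tau_{\leq 0}$ using property 1). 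Surjectivity is the hard direction: given an arbitrary morphism $\epsilon: FC \to \Sigma FA$ in $\D^b(\A)$, form its cone triangle $FA \to E \to FC \xrightarrow{\epsilon} \Sigma FA$ and show $E$ is isomorphic in $\D^b(\A)$ to $FB$ for some $B \in \A$ sitting in a conflation $A \to B \to C$; here one must represent $\epsilon$ by a roof in the dg quotient, use Corollary~\ref{strictify} and Lemma~\ref{Comrepresentable} to replace everything by representable modules and strict dg functors, and then invoke the diagram lemma highlighted after the statement (the one about $F: \J \to \rep_{dg}(\J,\A)$ with conflations on the outer rows forcing a conflation in the middle) to produce the conflation witnessing the extension. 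Orchestrating this lifting carefully, while keeping track of the homotopy $h$ and verifying the $\delta$-functor compatibility needed later in Proposition~\ref{higher}, is the delicate part of the argument.
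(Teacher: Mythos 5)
Your overall architecture is the paper's: $\D^b_{dg}(\A)$ is the Drinfeld dg quotient of $\pretr(\A)$ by the full dg subcategory $\N_{dg}$ on the triangulated subcategory $\N\subseteq\tr(\A)$ generated by totalizations of conflations, the universal property follows by combining the universal properties of $\pretr(-)$ and of the dg quotient (Lemma~\ref{univer}), and your injectivity argument for 2) (split monomorphism in $\D^b(\A)$, pull back along the fully faithful $H^0(F)$, conclude by Proposition~\ref{split}) is exactly the paper's.

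However, there is a genuine gap in your treatment of the connective case, and it affects both 1) and the surjectivity in 2). The engine of the paper's proof is that the \emph{defective} objects (cokernels of $H^0(B)\to H^0(C)$ for deflations $j:B\to C$, equivalently totalizations of conflations) form a \emph{wide} subcategory of the heart $\mathcal H\simeq\Mod H^0(\A)$ of the canonical t-structure on $\D(\A)$ (Lemma~\ref{wide}); this is a nontrivial statement whose proof uses the exact-structure axioms, and it is what gives $\N$ a \emph{bounded t-structure whose heart consists exactly of the defective objects} (Corollary~\ref{t}). That t-structure is then used twice: via Lemma~\ref{lem:tstructureff} it yields $\Hom_{\tr(\A)}(C,\Sigma^iA)\iso\Hom_{\D^b(\A)}(C,\Sigma^iA)$ for $i\le 0$ (quasi-full-faithfulness after $\tau_{\leq 0}$), and in the proof of 1) and of surjectivity in 2) it lets one truncate the cone $N'$ of the denominator of any roof so that $N'$ is the totalization of a \emph{single} conflation $X'$, from which the desired conflation is produced as $a_*c^*[X']$ and the middle term of the triangle is identified with a middle term of a conflation. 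Your sketch replaces this with the co-t-structure of Proposition~\ref{cot} and "a weight decomposition", but $\N$ is a triangulated subcategory closed under all shifts, so it does not lie in any bounded window of the co-t-structure, and weight truncations of a denominator's cone need not stay in $\N$; without Lemma~\ref{wide} you cannot reduce a general object of $\N$ to (extensions of shifts of) single totalizations, and the $\Hom$-computation and the surjectivity both break down. Likewise, the diagram lemma about $F:\J\to\rep_{dg}(\J,\A)$ that you invoke for surjectivity is the ingredient of Lemma~\ref{lem:internalhom}, not of this step. To repair the proposal you need to insert and prove the wide-subcategory lemma and deduce the bounded t-structure on $\N$ before attacking 1) and 2).
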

\begin{example} If $\A$ is a Quillen exact category, then $\D^b_{dg}(\A)$ is quasi-equivalent to the canonical dg enhancement of the bounded derived category of $\A$, whence the name.
\end{example}

\begin{remark} The analogous theorem for exact $\infty$-categories in the sense of Barwick is
due to Klemenc \cite{Klemenc22}.
\end{remark}

\subsection{Construction of the universal morphism}
Let $\mathcal N$ be the full triangulated subcategory of $\tr(\A)$ generated by the total dg modules $N$ of conflations 
\[
\begin{tikzcd}
A\ar[r,"f"]\ar[rr,"h"swap,bend right=8ex]&B\ar[r,"j"]&C
\end{tikzcd}. 
\]
So $N$ is defined by the following diagram in $\C_{dg}(\A)$
\begin{equation}\label{TR4}\tag{$\bigstar$}
\begin{tikzcd}[every label/.append style={font=\tiny}]
A\ar[r,"f"]\ar[d,"\begin{bmatrix}-f\\-h\end{bmatrix}",swap]&B\ar[r,"\begin{bmatrix}0\\1\end{bmatrix}"]\ar[d,equal]&U\ar[rd,"s"red,red]\ar[r,"{[}1{,}\;0{]}"]\ar[d,"{[}-h{,}j{]}",swap]&\Sigma A\ar[d]\\ 
V\ar[r,"{[}-1\ 0{]}"swap]&B\ar[r,"j"swap]&C\ar[r,"\begin{bmatrix}0\\1\end{bmatrix}"swap]\ar[d,"\begin{bmatrix}0\\0\\1\end{bmatrix}",swap]&\Sigma V\ar[d]\\ 
& &N\ar[r,equal]\ar[d]&N\ar[d]\\
&&\Sigma U\ar[r]&\Sigma^{2}A
\end{tikzcd}
\end{equation}
where we omit the symbol $\wedge$ for representable dg modules and where 
\[
s=\begin{bmatrix}0&0\\-1&0\end{bmatrix} \ko U=\Cone(f)\, \ko V=\Sigma^{-1}\Cone(j) \mbox{ and }
N=\Cone([-h,\;j]).
\]
Let $\mathcal N_{dg}$ be the full dg subcategory of $\pretr(\A)$ consisting of the objects in $\N$. 
Let $F$ be the canonical morphism from $\A$ to the Drinfeld dg quotient $\pretr\A/\mathcal N_{dg}$
(recall that we assume the morphism complexes of $\A$ to be cofibrant hence flat). We consider
$F$ as a morphism in the category $\Hqe$ obtained from the category of small dg categories
by localizing at the class of quasi-equivalences.

\begin{lemma}\label{univer} The morphism
$F:\A\rightarrow \pretr(\A)/\mathcal N_{dg}$ is the universal exact morphism (cf.~Definition~\ref{def:exactmorphism}) from $\A$ to a pretriangulated dg category.
\end{lemma}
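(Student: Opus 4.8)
The plan is to verify the two defining properties of a universal exact morphism: first that $F \colon \A \to \pretr(\A)/\mathcal N_{dg}$ is itself exact, and then that it is initial among exact morphisms from $\A$ to pretriangulated dg categories. For exactness, I would take a conflation $X$ in $\A$, say $A \xrightarrow{f} B \xrightarrow{j} C$ with homotopy $h$, and show that its image becomes a distinguished triangle in $H^0(\pretr(\A)/\mathcal N_{dg})$. The point is diagram~(\ref{TR4}): the total dg module $N$ of the conflation is, by construction, in $\mathcal N_{dg}$, hence zero in the quotient; chasing the octahedron in that diagram, once $N$ is killed the map $U = \Cone(f) \to C$ becomes an isomorphism in the quotient, which exhibits $A \to B \to C \to \Sigma A$ as a triangle there. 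So $F$ sends conflations to triangles, and since in a pretriangulated category we equip it with the maximal exact structure (all homotopy short exact sequences, equivalently all triangles), $F$ is exact.

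Next I would establish the universal property. Let $G \colon \A \to \T$ be an exact morphism in $\Hqe$ to a pretriangulated dg category $\T$. Using that $\pretr(-)$ is left adjoint to the inclusion of pretriangulated dg categories (up to quasi-equivalence) in $\Hqe$, $G$ extends uniquely to $\tilde G \colon \pretr(\A) \to \T$. I must check that $\tilde G$ kills $\mathcal N_{dg}$, i.e. sends every total complex $N$ of a conflation to a contractible object, equivalently to $0$ in $H^0(\T) = \tr(\T)$. But $N = \Cone([-h,j])$ fits into the triangle computing the cone of the comparison map between $\Cone(f)$ and $\Sigma^{-1}\Cone(j)$ built from the conflation data; since $G$ is exact, the image conflation becomes a triangle in $\T$, and in a triangulated category the total object of a triangle (the "contractible cone") vanishes. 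More precisely, one reads off from~(\ref{TR4}) that $N$ is an iterated cone whose image under $\tilde G$ is an iterated cone on the identity-type maps making the image triangle distinguished, hence zero. Therefore $\tilde G$ factors through the Drinfeld quotient $\pretr(\A)/\mathcal N_{dg}$, uniquely up to the appropriate coherent choices, giving the factorization $G = (\text{factor}) \circ F$ in $\Hqe$.

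For uniqueness of the factorization, I would invoke the universal property of the Drinfeld dg quotient directly: $\Hqe(\pretr(\A)/\mathcal N_{dg}, \T)$ is the full subspace of $\Hqe(\pretr(\A), \T)$ of morphisms annihilating $\mathcal N_{dg}$, which in turn (by the pretriangulated hull adjunction) corresponds to $\Hqe(\A, \T)$ via restriction along $F$. Restricting further to exact morphisms on both sides, the above two paragraphs show this restriction lands in and surjects onto $\Hqe_{ex}(\A,\T)$; injectivity is automatic from the quotient universal property. Hence $F$ induces a bijection $\Hqe_{ex}(\pretr(\A)/\mathcal N_{dg}, \T) \iso \Hqe_{ex}(\A, \T)$ for every pretriangulated exact dg category $\T$, which is exactly the assertion.

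The main obstacle I anticipate is the careful bookkeeping in the second step: showing that an arbitrary exact $G$ forces $\tilde G(\mathcal N_{dg}) = 0$ requires knowing that the total dg module $N$ of a conflation, as an object of $\tr(\A)$, is built functorially from the conflation in a way that $\tilde G$ transforms into the total object of the corresponding distinguished triangle in $\tr(\T)$ — and then that such total objects vanish. The vanishing is standard (it is the statement that the "cone on a triangle" is contractible, or equivalently that a distinguished triangle becomes split after totalizing), but pinning down that $\tilde G$ genuinely commutes with the cone constructions in~(\ref{TR4}) up to the coherent homotopies — rather than merely up to isomorphism in $\tr(\T)$ — is where the $A_\infty$/dg subtleties of the kind flagged in Proposition~\ref{push} reappear. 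I would handle this by working with the explicit cofibrant models (the category $\J$ of~(\ref{quiv:3termcofibrant}) and the representable resolutions) so that $\tilde G$, being a dg functor on $\pretr(\A)$, strictly preserves the relevant twisted complexes, making the annihilation of $\mathcal N_{dg}$ a strict computation rather than one up to homotopy.
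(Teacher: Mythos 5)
Your proposal is correct and follows essentially the same route as the paper: exactness via inverting $U=\Cone(f)\to C$ once its cone $N$ is killed, and the universal property by combining the pretriangulated-hull adjunction with the universal property of the Drinfeld quotient. The coherence worry in your last paragraph is unnecessary, since the Drinfeld quotient's universal property only requires that $H^0(\tilde G)$ send the generators of $\mathcal N$ to zero objects, which follows immediately from exactness of $G$ (the totalization of a triangle in a pretriangulated target is contractible), and a triangle functor annihilating the generators annihilates the whole triangulated subcategory they generate.
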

\begin{proof}Put $\D^b(\A)=H^0(\pretr(A)/\mathcal N_{dg})$.
We first show that $F$ is exact.
For each conflation $X$ as follows
\[
\begin{tikzcd}
A\ar[r,"f"]\ar[rr,"h"swap,bend right=8ex]&B\ar[r,"j"]&C
\end{tikzcd},
\]
let $ [1,0]/[-h,j]$ be the following roof
\[
\begin{tikzcd}
&U\ar[ld,Rightarrow,"{[}-h{,}j{]}"swap]\ar[rd,"{[}1{,}0{]}"]&\\
C&&\Sigma A
\end{tikzcd}
\]
where $U$ is the cone of $f:A\rightarrow B$.
We have the following triangle in $\D^b(\A):$
\[
\begin{tikzcd}
A\ar[r,"f/1"]&B\ar[r,"j/1"]&C\ \ar[r,"{[1}{,}0{]}{/}{[}-h{,}j{]}"]&\ \Sigma A
\end{tikzcd}.
\]
So $F$ is exact.

We show that $F$ is universal.
Let $G:\A\rightarrow \B$ be an exact morphism in $\mathrm{Hqe}$ from $\A$ to a pretriangulated dg category $\B$.
By the universal property of $\A\rightarrow \pretr(\A)$, there exists a unique morphism from $\pretr(\A)$ to $\B$ in $\mathrm{Hqe}$ which extends $G$.
Since $G$ is an exact morphism, $H^0(G)$ sends the totalization $N$ of $X$ to a zero object in $H^0(\B)$.
By the universal property of Drinfeld dg quotient \cite{Tabuada10}, there exists a unique morphism in 
$\Hqe$ from $\pretr(A)/\mathcal N_{dg}$ to $\B$.
\end{proof}
Denote by $\mathcal H$ the heart of the canonical t-structure $(\D(\A)^{\leq 0},\D(\A)^{\geq 0})$, cf.~Lemma~\ref{lemma:tstructure}. We denote by $H^0:\D(\A)\rightarrow \mathcal H$ the associated homological functor.

\begin{lemma}\label{leftexactsequence}
A 3-term h-complex
\[
\begin{tikzcd}
A\ar[r,"f"]\ar[rr,bend right=8ex,"h"swap]&B\ar[r,"j"]&C
\end{tikzcd}
\]
is exact if and only if its totalization $N$ lies in $\mathcal H$. In this case, the totalization $N$ is the cokernel of $H^0(B^{\wedge})\xrightarrow{H^0(j^{\wedge})}H^0(C^{\wedge})$ in $\mathcal H$.
\end{lemma}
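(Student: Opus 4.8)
\textbf{Proof strategy for Lemma~\ref{leftexactsequence}.}
The plan is to reduce everything to the explicit diagram \eqref{TR4} defining the totalization $N=\Cone([-h,\;j])$ of the 3-term h-complex, and then to translate the definition of ``homotopy short exact'' into a statement about the $t$-structure on $\D(\A)$. First I would recall from the subsection on the $A_\infty$-description that the object $X$ corresponds, via the fully faithful embedding $\mathcal H_{3t}(\A)\hookrightarrow\rep(\Sq,\A)$, to the square \eqref{3tsquare}
\[
\begin{tikzcd}
A^{\wedge}\ar[r,"f^{\wedge}"]\ar[d,"i"swap]&A_1^{\wedge}\ar[d,"j^{\wedge}"]\\
IA^{\wedge}\ar[r]&A_2^{\wedge}
\end{tikzcd}
\]
whose totalization agrees, up to isomorphism in $\tr(\A)$, with $N$ (the middle column of \eqref{TR4} computes exactly $\Cone([-h,j])$, and $IA^{\wedge}$ is contractible so it does not affect the cone). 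By definition, $X$ is homotopy short exact iff this square is homotopy bicartesian with respect to $\A$, equivalently iff for every $A'\in\A$ the canonical map $A^{\wedge}\to\Sigma^{-1}\Cone((-j^{\wedge},[h^{\wedge},d(h^{\wedge})]))$ induces an isomorphism on $\tau_{\leq 0}\RHom(A'^{\wedge},-)$, together with the dual condition.

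The key computational step is to observe that $\Sigma^{-1}\Cone([-h,j])=\Sigma^{-1}N$ fits, by \eqref{TR4}, into the triangle $\Sigma^{-1}N\to U\xrightarrow{[-h,j]} C^{\wedge}\to N$, and that $U=\Cone(f^{\wedge})$ itself sits in $B^{\wedge}\to U\to\Sigma A^{\wedge}$. Using Lemma~\ref{cha} (the characterization of $\T^{\leq 0}$ for diagram categories) and the fact that $A^{\wedge}\in\D(\A)^{\leq 0}$ for each $A\in\A$, one sees that the homotopy cartesian condition on \eqref{3tsquare} is precisely the assertion that $A^{\wedge}\to\Sigma^{-1}N$ induces isomorphisms $\tau_{\leq 0}\RHom(A'^{\wedge},A^{\wedge})\iso\tau_{\leq 0}\RHom(A'^{\wedge},\Sigma^{-1}N)$, i.e.\ that $\Hom_{\D(\A)}(A'^{\wedge},\Sigma^{-n}N)=0$ for $n\leq 0$ for all $A'$. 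Since the $A'^{\wedge}$ generate the aisle $\D(\A)^{\leq 0}$, this means $N\in\D(\A)^{\geq 1}$. Dualizing via $\RHom(-,\A)$ (using that $\rep(\Sq,\A^{op})$ detects the cocartesian condition symmetrically, and that the totalization is self-dual up to a shift) gives that $X$ is homotopy right exact iff $N\in\D(\A)^{\leq 1}$, after accounting for the cohomological shift built into $N=\Cone([-h,j])$. Combining, $X$ is homotopy short exact iff $N$ lies in $\mathcal H=\D(\A)^{\leq 0}\cap\D(\A)^{\geq 0}$ (the indices matching once the degree conventions in \eqref{TR4} are tracked carefully).

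For the second assertion, assume $X$ is exact, so $N\in\mathcal H$. From the triangle $B^{\wedge}\xrightarrow{j^{\wedge}}C^{\wedge}\to N\to\Sigma B^{\wedge}$ (which one extracts from the middle row and last column of \eqref{TR4}, again absorbing the contractible summand), apply the long exact sequence of the homological functor $H^0:\D(\A)\to\mathcal H$. Since $B^{\wedge},C^{\wedge}\in\D(\A)^{\leq 0}$ and $N\in\mathcal H$, the connecting map $N\to\Sigma B^{\wedge}$ lands in $H^0(\Sigma B^{\wedge})$-part and the relevant piece of the sequence reads
\[
H^0(B^{\wedge})\xrightarrow{H^0(j^{\wedge})}H^0(C^{\wedge})\to N\to H^1(B^{\wedge}).
\]
Exactness of $X$ on the ``left'' side will give that $H^0(j^{\wedge})$ surjects appropriately or, more precisely, that the map $C^{\wedge}\to N$ is an epimorphism in $\mathcal H$ with kernel the image of $H^0(j^{\wedge})$; this identifies $N$ with $\cok(H^0(j^{\wedge}))$. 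I expect the main obstacle to be purely bookkeeping: pinning down the exact degree shift in the definition of $N=\Cone([-h,j])$ so that the homotopy bicartesian condition translates to membership in $\mathcal H$ rather than in a shifted heart, and making sure the self-duality argument for the right-exact half is set up with the correct signs (the place where, per the remark after Lemma~\ref{lem:3termhcomplex}, ``the subtle point lies in checking that the homotopies are compatible''). Once the conventions are fixed, both directions follow from Lemma~\ref{cha}, the generation of $\D(\A)^{\leq 0}$ by representables, and the long exact cohomology sequence.
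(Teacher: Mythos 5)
Your reduction to the diagram \eqref{TR4} and the idea of translating the (co)cartesian conditions into membership in the aisles of the canonical $t$-structure is the right starting point, and your treatment of the cartesian half is close to the paper's argument, but two of your key claims are off. First, there is an off-by-one in the aisle: homotopy cartesianness of \eqref{3tsquare} says that $A^{\wedge}\to V=\Sigma^{-1}\Cone(j^{\wedge})$ induces isomorphisms on $\Hom_{\D(\A)}(A'^{\wedge},\Sigma^{i}(-))$ for $i\leq 0$, and feeding this into the triangle $A^{\wedge}\to V\to\Sigma^{-1}N\to\Sigma A^{\wedge}$ yields $\Hom_{\D(\A)}(\Sigma^{m}A'^{\wedge},N)=0$ for $m\geq 1$, i.e.\ $N\in\D(\A)^{\geq 0}$ --- not the case $m=0$, and not $N\in\D(\A)^{\geq 1}$ as you assert. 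Indeed $\Hom_{\D(\A)}(A'^{\wedge},N)$ must be allowed to be nonzero: by the second half of the lemma, $N$ is the cokernel of $H^0(j^{\wedge})$ in $\mathcal H$, a typically nonzero object of the heart which receives maps from the projectives $H^0(A'^{\wedge})$.

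Second, and more seriously, the ``dual'' half of your argument fails: homotopy cocartesianness is a condition on $\tau_{\leq 0}\RHom(-,A'^{\wedge})$, i.e.\ on maps \emph{out of} $N$ into representables, and representables lie in $\D(\A)^{\leq 0}$, not in $\D(\A)^{\geq 1}$; so this condition is not expressible as membership of $N$ in some co-aisle $\D(\A)^{\leq k}$. In fact $N\in\D(\A)^{\leq 0}$ holds \emph{unconditionally}, because by \eqref{TR4} the object $N$ is an iterated extension of $C^{\wedge}$, $\Sigma B^{\wedge}$ and $\Sigma^{2}A^{\wedge}$, all of which lie in the aisle --- this is exactly what the paper observes (``it is clear from the construction that $N\in\D^{\leq 0}$''), and it carries no information about right exactness. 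So the paper's route is: cartesianness gives $N\in\D(\A)^{\geq 0}$, the free containment gives $N\in\D(\A)^{\leq 0}$, hence $N\in\mathcal H$; no duality for the cocartesian half is invoked or needed. Finally, a small point on the cokernel claim: there is no triangle $B^{\wedge}\to C^{\wedge}\to N\to\Sigma B^{\wedge}$ (that would force $N\simeq\Cone(j^{\wedge})=\Sigma V$, which fails unless $A$ is zero); instead one first reads off $H^1(V)\iso N$ from the rightmost column of \eqref{TR4} (using $H^1(A^{\wedge})=0=H^2(A^{\wedge})$) and then applies $H^0$ to the second row $V\to B^{\wedge}\to C^{\wedge}\to\Sigma V$ to obtain the exact sequence $H^0(B^{\wedge})\to H^0(C^{\wedge})\to H^1(V)\to H^1(B^{\wedge})=0$.
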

\begin{proof}
Consider the diagram (\ref{TR4}).
For $A'\in\A$ and $i\leq 0$, we have 
\[
\Hom_{\D(\A)}(A'^{\wedge},\Sigma^{i}A)\xrightarrow{\sim}\Hom_{\D(\A)}(A'^{\wedge},\Sigma^{i}V)
\]
and so we have $\Hom_{\D(\A)}(A'^{\wedge},\Sigma^{i-1}N)=0$ and hence $N\in \D^{\geq 0}$. 

Since representable dg modules lie in $\D^{\leq 0}$, it is clear from the construction that $N\in \D^{\leq 0}$. 
Therefore we have $N\in\mathcal H$.

From the rightmost column of the diagram (\ref{TR4}), we have $H^1(V)\xrightarrow{\sim}N$. 

From the second row we have an exact sequence where we omit the symbol ${\wedge}$ for representable dg modules.
\[
\begin{tikzcd}H^0(B)\ar[r,"H^0(\overline{\jmath})"]&H^0(C)\ar[r]&H^1(V)\ar[r]&H^1(B)=0\end{tikzcd}.
\]
So the second statement follows.
\end{proof}
\begin{remark}
We have an exact sequence in $\mathcal H:$
\[
\begin{tikzcd}
H^0(A)\ar[r,"H^0(f)"]&H^0(B)\ar[r,"H^0(\overline\jmath)"]&H^0(C)\ar[r]&N\ar[r]&0
\end{tikzcd}
\]
where, with the notations of  (\ref{TR4}), the image of $H^0(j)$ is $H^0(U)$.
Using the extriangulated category $(H^0(\A),\mathbb E,\mathfrak s)$ defined in subsection \ref{canonicalstructure}, this is equivalent to saying that $N$ is the {\em defect} (cf.~Definition \ref{defect}) of the conflation 
\[
A\xrightarrow{\overline{f}}B\xrightarrow{\overline \jmath}C 
\]
in $H^0(\A)$.
\end{remark}
\begin{definition}\label{def:defective}
Let $\tilde{\mathcal N}\subseteq \mathcal H$ be the essential image of $\mathcal N$ under the homological functor $H^0:\D(\A)\rightarrow \mathcal H$.
An object in $\mathcal H$ is {\em defective}, if it is the cokernel of a map 
\[
H^0(B)\xrightarrow{H^0(\overline{\jmath})}H^0(C)\ko
\]
where $j$ is a deflation, or equivalently if it is the totalization of some conflation.
\end{definition}
Let $X_i$, $i=1,2$, be homotopy short exact sequences
\[
\begin{tikzcd}
A_i\ar[r,"f_i"]\ar[rr,"h_i"swap,bend right=8ex]&B_i\ar[r,"j_i"]&C_i
\end{tikzcd}
\]
and $\alpha: X_1 \rightarrow X_2$ a morphism of ${\mathcal H}_{3t}(\A)$. Let
$X_3$ be the object 
\[
\begin{tikzcd}
A_1 \ar[r] \ar[d] \ar[rd] & B_1 \ar[d]\\
A_2 \ar[r] & B_2
\end{tikzcd}
\]
of $\rep(\ol{\Sq},\A)$ obtained by restricting a representative of $\alpha$.
Notice that by Lemma~\ref{squareepivalence}, up to (non canonical) isomorphism,
the object $X_3$ is independent of the choice of representative.
Suppose that $\alpha$ restricts to an isomorphism $C_1\rightarrow C_2$ in $H^0(\A)$. 
Then $X_3$ is homotopy cartesian by Proposition \ref{push}. 
Let $N_i$ be the totalization of $X_i$ for $i=1$, $2$, $3$. 

\begin{lemma}\label{exact}
\begin{itemize}
\item[a)] There exists a triangle in $\D(\A)$
\[
N_3\rightarrow N_1\rightarrow N_2\rightarrow \Sigma N_3
\]
\item[b)] If $X_1$ is a conflation, the triangle in a) induces a short exact sequence in $\mathcal H$
\[
0\rightarrow N_3\rightarrow N_1\xrightarrow{g} N_2\rightarrow 0
\]
where $g$ is the morphism induced by $\alpha$.
\end{itemize}
\end{lemma}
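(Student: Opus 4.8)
\textbf{Proof plan for Lemma~\ref{exact}.}
The plan is to produce the triangle in a) by realising $N_3$, $N_1$, $N_2$ as the totalizations of three homotopy short exact sequences and then comparing them inside $\tr(\A)$ via the octahedral axiom, after which the $t$-structure statement b) will follow from Lemma~\ref{leftexactsequence} together with the long exact cohomology sequence of the triangle. First I would choose explicit representatives: pick a representative of $\alpha$ whose restriction to $C_1 \to C_2$ is a genuine homotopy equivalence (which we may do since $\alpha$ restricts to an isomorphism in $H^0(\A)$ and, by Corollary~\ref{middleterm} or Proposition~\ref{push}, we may even arrange it to be an identity after replacing $C_2$ up to homotopy equivalence), so that $X_3$ is homotopy cartesian by Proposition~\ref{push}~1)(a). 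I would then write down the totalizations $N_i = \Cone([-h_i,\, j_i])$ as in diagram (\ref{TR4}), together with the totalization $N_3$ of the homotopy cartesian square $X_3$, keeping careful track of the representable dg modules involved.

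The key step is to exhibit a closed morphism $N_3 \to N_1$ of degree $0$ in $\pretr(\A)$ and a closed morphism $N_1 \to N_2$ such that the composite is null-homotopic and the induced map $\Cone(N_3 \to N_1) \to N_2$ is a homotopy equivalence; equivalently, to build an octahedron in $\tr(\A)$ on the composable pair. The natural source of these maps is functoriality: the morphism $\alpha$ of $3$-term h-complexes induces, via the totalization dg functor $\mathrm{Tot}_{\A}:\Fun_{\infty}(\B,\A)\to\pretr(\A)$ (described in the excerpt after Definition~\ref{def:3termhomotopy}), a morphism $N_1 \to N_2$ in $\tr(\A)$; and the factorization of $\alpha$ through a conflation with ends $A_2$ and $C_1$, provided by Lemma~\ref{fact}, together with the identification of $N_3$ with the totalization of the square $X_3$, gives the comparison with $N_3$. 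Concretely, I would use that $X_3$ being homotopy cartesian means $(X_3)_{00}^{\wedge} \to \Sigma^{-1}\Cone((-j,k))$ is an isomorphism in $\D(\A)$, and then chase the diagram (\ref{TR4}) for $X_1$, for $X_2$, and for $X_3$ simultaneously, obtaining a $3\times 3$-type diagram of triangles whose totality yields the desired triangle $N_3 \to N_1 \to N_2 \to \Sigma N_3$. The signs here come straight from the conventions for $q$ and $dq$ in $\pretr(\A)$ recalled in Subsection~\ref{subsection:notations}.

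For part b), once a) is established, apply Lemma~\ref{leftexactsequence}: since $X_1$ is a conflation, $N_1 \in \mathcal H$; and $N_2 \in \D^{\geq 0}$ always (the argument in the proof of Lemma~\ref{leftexactsequence} only uses that the sequence is homotopy left exact, which follows from $X_2$ being a homotopy short exact sequence), while $N_2 \in \D^{\leq 0}$ by construction, so $N_2 \in \mathcal H$; likewise $N_3 \in \mathcal H$ using that $X_3$ is homotopy cartesian hence homotopy left exact. The long exact sequence of $H^0$ applied to the triangle in a) then reads $H^{-1}(N_2) \to H^0(N_3) \to H^0(N_1) \to H^0(N_2) \to H^1(N_3)$, and since all three objects lie in the heart the outer terms vanish, giving $0 \to N_3 \to N_1 \to N_2 \to 0$ in $\mathcal H$; the identification of the middle map with the one induced by $\alpha$ is immediate from the construction of the triangle. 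The main obstacle I anticipate is purely bookkeeping: producing the triangle in a) requires assembling the three diagrams (\ref{TR4}) into one coherent diagram in $\C_{dg}(\A)$ and checking that the relevant squares commute on the nose or up to an explicit homotopy — the compatibility of homotopies (cf.~the warning before Proposition~\ref{push}) is exactly where care is needed, and it is tempting but incorrect to simply quote an octahedral axiom in $\tr(\A)$ without verifying that the connecting homotopy $s$ in (\ref{TR4}) is compatible with the restriction of $\alpha$.
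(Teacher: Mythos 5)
Your plan matches the paper's proof: after normalizing so that the restriction of $\alpha$ to the third terms is the identity in $Z^0(\A)$, the paper assembles the totalization diagrams into a morphism of graded-split short exact sequences whose columns have cones $\Sigma^{-1}N_2$, $N_3$, $N_1$, applies the $3\times 3$-lemma to obtain the triangle, and deduces b) from the fact that all three totalizations lie in the heart, exactly as you describe. One small caution: Lemma~\ref{fact} is stated for conflations and so is not available under the weaker hypotheses of part a), but your primary mechanism (the direct assembly of the diagrams (\ref{TR4}) with explicit homotopies) does not rely on it.
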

Notice that if $X_1$ is a conflation, then $X_2$ is a conflation by the dual of Proposition~\ref{property}~b) 
and hence $X_3$ is homotopy bicartesian by Lemma \ref{bothconflation}. 
\begin{proof} a)
Suppose $\alpha$ is given by the following diagram in $\A$
\[
\begin{tikzcd}
A_1\ar[r,"f_1"]\ar[rd,"s_1"{red,swap},red]\ar[rrd,"t"{blue,near start},blue]\ar[rr,bend left=8ex,"h_1"]\ar[d,"a"swap]&B_1\ar[rd,"s_2"red,red]\ar[r,"j_1"]\ar[d,"b"{swap,near end}]&C_1\ar[d,"c"]\\
A_2\ar[rr,bend right=8ex,"h_2"swap]\ar[r,"f_2"swap]&B_2\ar[r,"j_2"swap]&C_2
\end{tikzcd}.
\]
Since $\overline{c}$ is an isomorphism in $H^0(\A)$, we may replace $C_1$ by $C_2$, $j_1:B_1\rightarrow C_1$ by $c\circ j_1$ and $h_1$ by $c\circ h_1$. 
Thus we may assume $C_1=C_2$ and $c=\Id_{C_1}$ in $Z^0(\A)$.
Then we have the following diagram in $Z^0(\A)$
\[
\begin{tikzcd}
A_1\ar[rrd,"-t"blue,blue]\ar[rr,bend left=8ex,"s_1"]\ar[r,"\phi"]\ar[d,equal]& B_1\oplus A_2\ar[rd,"\psi"{red},red]\ar[r,"{[}b{,}-f_2{]}"]\ar[d,"{[}1{,}0{]}"{swap,near end}]& B_2\ar[d,"j_2"]\\
A_1\ar[rrd,"t"{swap,near end,blue},blue]\ar[rd,"s_1"{red,swap},red]\ar[r,"f_1"]\ar[d,"a"swap]& B_1\ar[rd,"s_2"{red},red]\ar[r,"j_1"]\ar[d,"b"{swap,near end}]& C_1\ar[d,equal]\\
A_2\ar[rr,bend right=8ex,"h_2"swap]\ar[r,"f_2"swap]&B_2\ar[r,"j_2"swap]&C_1
\end{tikzcd}
\]
where we have omitted a zero diagonal map and the homotopy $h_1:A_1\rightarrow C_1$ and where $\phi=\begin{bmatrix}f_1\\a\end{bmatrix}$, $\psi={[}-s_2{,}-h_2{]}$.
It induces a canonical diagram in $\C(\A)$
\[
\begin{tikzcd}
A_2\ar[r,"\begin{bmatrix}0\\0\\1\end{bmatrix}"]\ar[d,"\begin{bmatrix}-f_2\\-h_2\end{bmatrix}"swap]&\Cone(\phi)\ar[rd,"s'"{red},red]\ar[r,"\begin{bmatrix}1\ 0\ 0\\0\ 1\ 0\end{bmatrix}"]\ar[d,"u"]&\Cone(f_1)\ar[d,"{[}-h_1{,}j_1{]}"]\\
\Sigma^{-1}\Cone(j_2)\ar[r,"{[}1{,}0{]}"swap]&B_2\ar[r,"j_2"swap]&C_1
\end{tikzcd}
\]
where $u={[}-s_1{,}b{,}-f_2{]}$, $s'=[t,s_2,h_2]$ and where the first row is a graded-split short exact sequence in $\C_{dg}(\A)$. 
By the $3\times 3$-lemma, we have a triangle in $\D(\A)$ 
\[
N_3\rightarrow N_1\rightarrow N_2\rightarrow \Sigma N_3
\]
which gives the short exact sequence of b) in $\mathcal{H}$ when $X_1$ is a conflation.
\end{proof}
\subsection{The canonical t-structure on the category $\mathcal N$}

In the rest of this section, we keep the assumption that $\A$ is connective. In this case, we have 

\begin{itemize}
\item The left aisle $\D(\A)^{\leq 0}$ is formed by those dg modules whose cohomology is concentrated in non-positive degrees.
\item The heart $\mathcal H$ of the t-structure $(\D(\A)^{\leq 0},\D(\A)^{\geq 0})$ is equivalent to $\Mod H^0(\A)$ and the cohomological functor $H^0$ sends a dg module $M$ to $H^0(M):A\mapsto H^0(M(A))$.
\item In particular $H^0$ is fully faithful on the full subcategory of quasi-representable dg modules and $H^0(M)$ is projective in the heart for each quasi-representable dg module $M$.
\end{itemize}

Recall that a full subcategory of an abelian category is {\em wide} if it is stable under extensions,
kernels and cokernels. In particular, each wide subcategory is abelian and its inclusion is fully 
faithful and fully exact.
\begin{lemma}\label{wide}The full subcategory of $\mathcal H$, consisting of defective objects (cf.~Definition~\ref{def:defective}), is a wide subcategoy.
\end{lemma}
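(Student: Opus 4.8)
\textbf{Proof plan for Lemma~\ref{wide}.}
The plan is to show that the full subcategory $\mathcal D\subseteq\mathcal H$ of defective objects is closed under extensions, kernels and cokernels. Throughout I will freely identify a conflation $X$ with its totalization $N$, which by Lemma~\ref{leftexactsequence} is exactly the cokernel in $\mathcal H$ of $H^0(B^\wedge)\xrightarrow{H^0(\overline\jmath)}H^0(C^\wedge)$ for a deflation $j:B\to C$. I will also use repeatedly that $H^0$ identifies the quasi-representable dg modules with (a set of) projective objects of $\mathcal H\simeq\Mod H^0(\A)$, so that a defective object is precisely one admitting a presentation $P_1\to P_0\to N\to 0$ with $P_0=H^0(C^\wedge)$ and $P_1=H^0(B^\wedge)$ the first two terms of a conflation; cf.\ the remark after Lemma~\ref{leftexactsequence}.

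First I would treat \emph{cokernels}. Let $g\colon N_1\to N_2$ be a morphism between defective objects, say $N_i$ is the totalization of a conflation $X_i$ with ends $A_i,C_i$. By the dual of Lemma~\ref{fact} (or directly by lifting the projective presentations), $g$ is induced by a morphism $\alpha\colon X_1\to X_2$ in $\mathcal H_{3t}(\A)$. Using Axiom $\Ex1$ and the pullback/pushout axioms I can, after modifying $X_2$ within its class, arrange that $\alpha$ restricts to an epimorphism $\overline c\colon C_1\to C_2$ in $H^0(\A)$; concretely, replace $C_1$ by $C_1\oplus C_2$ and $j_1$ by $[\,j_1\ ,\ 0\,]$ after pulling back $j_2$, so that the new $X_1$ is still a conflation with deflation surjecting onto $C_2$ and $\alpha$ restricts to a split epi on the third terms. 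Then $\mathrm{cok}(g)$ is computed from the presentation of $N_2$ by the composite $H^0(B_2^\wedge)\to H^0(C_2^\wedge)\to N_2$, and since $B_2\to C_2$ is a deflation (compositions of deflations are deflations), $\mathrm{cok}(g)$ is again the totalization of a conflation, hence defective. Dually — but this is the place requiring care — I treat \emph{kernels}: given $g\colon N_1\to N_2$ with both defective, I want $\ker(g)$ defective. Here I would first reduce, using the snake/$3\times3$ machinery of Lemma~\ref{exact}, to the case where $g$ is an epimorphism (replace $N_2$ by the image $\mathrm{im}(g)$, which is defective by the cokernel case applied to a suitable map, since a quotient of a defective object by a defective subobject is defective via Lemma~\ref{exact}~b)). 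Then by Lemma~\ref{exact}, lifting $g$ to a morphism $\alpha\colon X_1\to X_2$ of conflations that restricts to an isomorphism on the third terms, the associated square $X_3\in\rep(\overline{\Sq},\A)$ is homotopy bicartesian (Lemma~\ref{bothconflation}), its totalization $N_3$ is the totalization of a conflation (the first row of the diagram in the proof of Lemma~\ref{exact} exhibits $X_3$ up to homotopy as a conflation, using that homotopy pullbacks of deflations are deflations), and the triangle $N_3\to N_1\to N_2\to\Sigma N_3$ yields a short exact sequence $0\to N_3\to N_1\to N_2\to 0$ in $\mathcal H$; thus $\ker(g)=N_3$ is defective.

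Next, \emph{extensions}. Suppose $0\to N_1\to M\to N_2\to 0$ is exact in $\mathcal H$ with $N_1,N_2$ defective. Choose conflations $X_i$ with totalization $N_i$ and let $P_i^0=H^0(C_i^\wedge)\twoheadrightarrow N_i$, $P_i^1=H^0(B_i^\wedge)$. Since $P_2^0$ is projective, the surjection $M\twoheadrightarrow N_2$ lifts to $P_2^0\to M$, and together with $P_1^0\to N_1\hookrightarrow M$ one obtains a surjection $P_1^0\oplus P_2^0\twoheadrightarrow M$; the kernel fits into a short exact sequence with $P_2^1$ and a preimage of the map $P_2^1\to P_1^0$ (the comparison map of presentations coming from the extension), and a diagram chase shows the kernel is a quotient of $P_1^1\oplus P_2^1$ by a defective object, hence is generated by a conflation term. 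The upshot is a presentation of $M$ whose first two terms are $H^0$ of representable modules arising as the outer terms of a conflation — one builds the relevant conflation by applying $\Ex1$ and the pushout axiom to glue $X_1$ and $X_2$ along the connecting data, exactly as in the proof that direct sums of conflations are conflations (Proposition~\ref{property}~c)) but twisted by the extension class. Therefore $M$ is defective.

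The main obstacle I anticipate is the \emph{kernel} case: unlike cokernels and extensions, which follow rather formally from projectivity of representables and closure of deflations under composition and base change, showing $\ker(g)$ is a totalization requires genuinely constructing a conflation out of the homotopy bicartesian square $X_3$, and the subtle point (flagged already in the discussion after Lemma~\ref{lem:3termhcomplex} and in Proposition~\ref{push}) is checking that the homotopies are compatible so that the first row of the $3\times3$ diagram in Lemma~\ref{exact} really is a conflation and not merely a triangle. Once Lemma~\ref{exact} is invoked in the form where $X_1$ \emph{is} a conflation, this compatibility is already packaged, so the real work is just the reduction to the epimorphism case and identifying $N_3$ with a totalization; I expect no new difficulty beyond carefully bookkeeping the reductions.
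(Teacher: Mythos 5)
Your overall strategy coincides with the paper's (closure under extensions, kernels and cokernels, using presentations of defective objects by conflations, the horseshoe lemma, and the deflation axioms), and the extension and cokernel cases are essentially sound: for extensions the horseshoe presentation of the middle term is governed by an upper-triangular matrix that factors as a composition of deflations (Ex0, Ex1, Ex2), and for cokernels one has $\cok(g)=\cok\bigl(H^0((C_1\oplus B_2)^{\wedge})\xrightarrow{[c,\ j_2]}H^0(C_2^{\wedge})\bigr)$ with $[c,\ j_2]$ a deflation by Claim~1 in the proof of Proposition~\ref{property} — your detour through making $\overline{c}$ an epimorphism is unnecessary but harmless. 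The genuine gap is in the kernel case, which you correctly identify as the crux. First, your reduction to the epimorphism case is circular: to know that $\Im(g)$ is defective you would need it to be the quotient of the defective object $N_1$ by a \emph{defective} subobject, but that subobject is $\ker(g)$, which is exactly what you are trying to prove defective; realizing $\Im(g)$ instead as $\ker(N_2\to\cok(g))$ just invokes the kernel case again. Second, even granting that $g$ is an epimorphism, there is no reason a lift of $g$ to a morphism $\alpha\colon X_1\to X_2$ of conflations restricts to an isomorphism on the third terms: the objects $C_1$ and $C_2$ belong to the chosen presentations and need not even be isomorphic (consider $g=\Id$ between two inequivalent presentations of the same defective object). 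So Lemma~\ref{exact} does not apply as you invoke it.

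The missing ingredient is Lemma~\ref{fact}: factor the lifted morphism as $X_1\xrightarrow{\beta}\tilde X\xrightarrow{\gamma}X_2$, where $\beta$ restricts to $(\overline{a},\Id_{C_1})$ and $\gamma$ to $(\Id_{A_2},\overline{c})$. Then Lemma~\ref{exact} applies to $\beta$ (isomorphism on the third terms) and its dual applies to $\gamma$ (isomorphism on the first terms), yielding short exact sequences $0\to N'\to N_1\to N''\to 0$ and $0\to N''\to N_2\to N'''\to 0$ with $N''$ the totalization of $\tilde X$. Since $N''\to N_2$ is a monomorphism, $\ker(g)\cong N'$, and $N'$ is the totalization of a conflation because the left square of $\beta$ is a homotopy pushout along the inflation $f_1$, so its total complex $A_1\to A_2\oplus B_1\to \tilde B$ is again a conflation by $\Ex2^{op}$ and the claims in Proposition~\ref{property}. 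No reduction to the epimorphism case is needed, and the same diagram delivers the cokernel case for free, since $\cok(g)\cong N'''$ is the totalization of a conflation by Proposition~\ref{property}~b).
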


\begin{corollary}\label{t}An object in $\mathcal H$ lies in $\tilde{\mathcal N}$ (cf.~Definition~\ref{def:defective}) if and only if it is defective. The t-structure $(\D(\A)^{\leq 0},\D(\A)^{\geq  0})$ on $\D(\A)$ restricts to a bounded t-structure on $\N$.
\end{corollary}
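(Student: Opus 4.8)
The plan is to prove the two assertions of Corollary~\ref{t} by combining Lemma~\ref{wide}, Lemma~\ref{exact} and Lemma~\ref{leftexactsequence}. First I would observe that $\tilde{\mathcal N}$ is, by Definition~\ref{def:defective}, the essential image under $H^0:\D(\A)\rightarrow \mathcal H$ of the triangulated subcategory $\mathcal N\subseteq \tr(\A)$, while the defective objects are precisely the totalizations of conflations (equivalently the cokernels of $H^0(\overline\jmath)$ for deflations $j$). By Lemma~\ref{leftexactsequence}, the totalization $N$ of any conflation already lies in $\mathcal H$, so $H^0(N)=N$; hence every defective object lies in $\tilde{\mathcal N}$. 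For the reverse inclusion I would argue that $\mathcal N$ is generated as a triangulated subcategory by the totalizations $N$ of conflations; applying $H^0$ and using the long exact cohomology sequences, any object of $\tilde{\mathcal N}$ is obtained from the (already-in-$\mathcal H$) totalizations of conflations by taking kernels, cokernels and extensions in $\mathcal H$. Since Lemma~\ref{wide} says the defective objects form a wide subcategory of $\mathcal H$ — closed under exactly these operations — every object of $\tilde{\mathcal N}$ is defective. The key compatibility making the induction go through is Lemma~\ref{exact}: a morphism of conflations restricting to an isomorphism on the right term gives a short exact sequence of totalizations in $\mathcal H$, which lets us realise the difference of two defective objects again as a defective object.

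For the second assertion I would show that the canonical t-structure $(\D(\A)^{\leq 0},\D(\A)^{\geq 0})$ restricts to $\N=H^0(\mathcal N)\subseteq\D(\A)$, i.e.\ that $\mathcal N$ (viewed inside $\D(\A)$) is closed under the truncation functors $\tau^{\leq 0}$ and $\tau^{\geq 1}$ of the ambient t-structure. Since $\mathcal N$ is generated by objects $N$ that lie in the heart $\mathcal H$ (Lemma~\ref{leftexactsequence}), it suffices to check that the truncation triangles of an arbitrary object of $\mathcal N$ have all three vertices in $\mathcal N$. Here I would use that $\mathcal N$ is a triangulated subcategory and proceed by induction on the number of cones needed to build an object of $\mathcal N$ from the generators: given $X\in\mathcal N$ with $X\in\mathcal H$ we are done trivially, and given a triangle $X'\to X\to X''\to\Sigma X'$ with $X',X''$ having truncations in $\mathcal N$, the octahedral axiom together with closure of $\tilde{\mathcal N}$ under extensions, kernels and cokernels (Lemma~\ref{wide}, already used in the first part) shows $\tau^{\leq n}X,\tau^{\geq n}X\in\mathcal N$ for all $n$. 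Boundedness is immediate: every object of $\mathcal N$ is a finite iterated cone of shifts of objects of $\mathcal H$, hence has cohomology concentrated in a bounded range of degrees, so $\N=\bigcup_n \N^{[-n,n]}$.

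The main obstacle I anticipate is the second inclusion $\tilde{\mathcal N}\subseteq\{\text{defective objects}\}$: it requires knowing that applying $H^0$ to the generating cones of $\mathcal N$ does not produce objects outside the wide subcategory of defective objects, which is exactly where Lemma~\ref{exact}\,b) and the wideness of Lemma~\ref{wide} must be invoked carefully — in particular one must check that the long exact sequences attached to the defining triangles of generators of $\mathcal N$ only involve $H^i(N)$ for $i=0$ (so that no stray cohomology in other degrees appears). Once that bookkeeping is done, everything else reduces to standard manipulations with triangulated subcategories, truncation triangles and the octahedral axiom, and the boundedness statement is a direct consequence of the finite-cone description of $\mathcal N$.
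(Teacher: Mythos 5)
Your proposal is correct and follows essentially the same route as the paper: the paper packages your induction by defining the auxiliary subcategory $\mathcal N'\subseteq\N$ of objects all of whose cohomologies are defective and bounded, observing that Lemma~\ref{wide} (wideness of the defective objects) makes $\mathcal N'$ a triangulated subcategory containing the totalizations of conflations, hence equal to $\N$. Your extra appeal to Lemma~\ref{exact} is not needed at this stage (it is already absorbed into the proof of Lemma~\ref{wide}), and your octahedral induction for the restriction of the t-structure can be shortened to the observation that each truncation of $N\in\N$ is a finite iterated extension of shifts of the defective objects $H^i(N)$, which all lie in $\N\cap\mathcal H$.
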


\begin{proof}[Proof of the Corollary] Let $\mathcal N'$ be the full subcategory of $\N$ consisting of objects $N$ such that $H^i(N)$ is defective for each $i\in\mathbb Z$ and only finitely many of them are nonzero. 
By Lemma \ref{wide}, this is a triangulated subcategory of $\mathcal N$. It certainly contains totalizations of conflations and thus is equal to $\N$. 
Therefore the objects $N$ in $\N$ have the property that $H^i(N)$ is defective for each $i\in \mathbb Z$ and only finitely many of them is nonzero.
\end{proof}
\begin{proof}[Proof of Lemma \ref{wide}]
Denote by $\overline{\mathcal N}$ the full subcategory of $\mathcal H$ consisting of defective objects. Consider a 
three-term complex
\begin{align}
P\rightarrow Q\rightarrow R \label{short}
\end{align}
in $\Mod H^0(\A)$ which is exact in the middle. 

We first show that $\overline{\mathcal N}$ is closed under extensions.
Assume the sequence \ref{short} is a short exact sequence with $P$ and $R$ defective.
Suppose $P$ and $R$ are totalizations of $X_1$ and $X_2$ respectively, where $X_i$, $i=1,2$, is of the form
\[
\begin{tikzcd}
A_i\ar[r,"f_i"]\ar[rr,"h_i"swap,bend right=8ex]&B_i\ar[r,"j_i"]&C_i
\end{tikzcd}.
\]
Then $P$ is the cokernel of $H^0B_1\xrightarrow{H^0(\overline{\jmath}_1)} H^0C_1$ and $R$ is the cokernel of $H^0B_2\xrightarrow{H^0(\overline{\jmath}_2)} H^0C_2$.
By the horseshoe lemma, $Q$ is the cokernel of 
\[
H^0(B_1\oplus B_2)\xrightarrow{\begin{bmatrix}\overline{\jmath}_1\ \ \overline{h}\\0\ \ \overline{\jmath}_2\end{bmatrix}}H^0(C_1\oplus C_2)
\]
in $\mathcal H$, where $h:B_2\rightarrow C_1$ is a morphism in $Z^0(\A)$.

Since the morphism 
\[
B_1\oplus B_2\xrightarrow{\begin{bmatrix}{j}_1\ \ h\\0\ \ {j}_2\end{bmatrix}}C_1\oplus C_2
\]
 can be written as the following composition
\[
\begin{tikzcd}
B_1\oplus B_2\ar[r,"{\begin{bmatrix}j_1\ \ 0\\0\ \ 1\end{bmatrix}}"] &C_1\oplus B_2\ar[r,"{\begin{bmatrix}1\ \ h\\0\ \ 1\end{bmatrix}}"]&C_1\oplus B_2\ar[r,"{\begin{bmatrix}1\ \ 0\\0\ \ j_2\end{bmatrix}}"]&C_1\oplus C_2 \ko
\end{tikzcd}
\]
it is a deflation by the axioms ${\Ex0}$ and ${\Ex2}$. Therefore $Q$ is also defective.

We show that $\overline{\mathcal N}$ is closed under kernels.
Assume the sequence \ref{short} is a left short exact sequence in $\mathcal H$ with $Q$ and $R$ defective.

Suppose $Q$ and $R$ are totalizations of conflations $X_1$ and $X_2$ respectively, as described in $(1)$.

The morphism $Q\rightarrow R$ induces a commutative diagram in $\mathcal H$
\[
\begin{tikzcd}
H^0B_1\ar[r,"H^0(\overline{\jmath_1})"]\ar[d,dotted]&H^0C_1\ar[r]\ar[d,dotted,"H^0(\overline{c})"]&Q\ar[d]\ar[r]&0\\
H^0B_2\ar[r,"H^0(\overline{\jmath_2})"swap]&H^0C_2\ar[r]&R\ar[r]&0
\end{tikzcd}
\]
which in turn gives a commutative diagram in $\D(\A)$. 
This diagram lifts to a morphism $\mu: j_1\rightarrow j_2$ in $H^0(\Mor(\A))$ which induces a morphism $\alpha: X_1\rightarrow X_2$ in $\mathcal H_{3t}(\A)$. 
Let $\overline{a}:A_1\rightarrow A_2$ be the restriction of $\alpha$ to $A_1$. 
By Lemma \ref{fact}, the morphism $\alpha$ can be written as a composition $X_1\xrightarrow{\beta} {X}_3\xrightarrow{\gamma} X_2$ where $X_3$ is a conflation with ends $A_2$ and $C_1$, and the morphism $\beta$ restricts to $\overline a:A_1\rightarrow A_2$ and $\Id_{C_1}$, and the morphism $\gamma$ restricts to $\Id_{A_2}$ and $\overline{c}:C_1\rightarrow C_2$.
Now we apply Lemma \ref{exact} and its dual to the morphisms $X_1\rightarrow {X_3}$ and ${X_3}\rightarrow X_2$ respectively, and get short exact sequences in $\mathcal H$
\[
0\rightarrow N'\rightarrow Q\rightarrow N''\rightarrow0 
\]  
\[
0\rightarrow N''\rightarrow R\rightarrow N'''\rightarrow 0
\]
where, by Axiom ${\Ex2}$, $N'$ is the totalization of a conflation. 
This shows that $P$ is isomorphic to  $N'$ and thus is defective.

Now we show that $\overline{\mathcal N}$ is closed under cokernels.
Assume we have an exact sequence in $\overline{\mathcal N}$
\[
Q\rightarrow R\rightarrow S\rightarrow 0
\]
where $Q$ and $R$ are defective.
Using the notation in $(2)$, we find that $S$ is isomorphic to $N'''$. 
By Proposition \ref{property} (b), $N'''$ is the totalization of a conflation. 
This shows that $S$ is also defective.
\end{proof}
\begin{remark}
The proof of Lemma \ref{wide} only uses the axioms ${\Ex0}$, ${\Ex1}$ and ${\Ex2}$ and Proposition \ref{property} $(b)$.
So a similar result holds for a left exact dg category, cf.~Definition~\ref{def:leftstructure}.
\end{remark}
\begin{example}Suppose that $\A$ is an idempotent complete Quillen exact category endowed with the associated exact dg structure. We claim that then $\N$ is the category $\mathrm{ Ac}^b(\A)$ of acyclic complexes over $\A$, cf. \cite{Neeman21}, and that the t-structure given in Corollary \ref{t} is the one described in [loc.~cit.].
Indeed, as usual, we identify $\pretr(\A)$ with $\C^b_{dg}(\A)$. Then the totalization of a conflation which corresponds to the following sequence in $\A$
\[
\begin{tikzcd}
0\ar[r]&A\ar[r,"f"]&B\ar[r,"g"]&C\ar[r]&0
\end{tikzcd}
\]
is identified with the complex 
\[
\begin{tikzcd}
\cdots\ar[r]&0\ar[r]&A\ar[r,"f"]&B\ar[r,"g"]&C\ar[r]&0\ar[r]&\cdots
\end{tikzcd}
\]
where $A$ is in degree 0. 
By definition, $\mathcal N$ is the full triangulated subcategory of $\tr(\A)$ generated by the totalizations of conflations. 
So $\mathcal N$ is identified with $\rm{Ac}^b(\A)$.
By Corollary \ref{t}, we have
\[
\mathcal N^{\leq 0}=\{N\in \N{|} H^iN=0 \text{ for i $\geq 1$}\}
\]
and 
\[
\mathcal N^{\geq 0}=\{N\in \N{|} H^iN=0 \text{ for i $\leq -1$}\}
\]
It is clear that $\rm{Ac}^b(\A)^{\leq 2}$ is contained in $\mathcal N^{\leq 0}$ and $\rm{Ac}^b(\A)^{\geq 2}$ is contained in $\mathcal N^{\geq 0}$. Thus the two t-structures coincide.
\end{example}
The following lemma is standard.
\begin{lemma}[\cite{IyamaYang20}, Lemma 3.3]\label{lem:tstructureff}
Let $T$ be a triangulated category with a thick subcategory $\mathcal S$. 
Suppose that $\mathcal S$ admits a torsion pair (\cite[Definition 2.2]{IyamaYoshino08}) $\mathcal S=(\X, \Y)$.
Put $\P=\X\cap \Sigma^{-1}\Y$.
Then for any $A\in {^{\perp}\Y}$ and $B\in \Sigma^{-1}\X^{\perp}$, where the symbol $^{\perp}$ stands for the Hom orthogonal, the canonical map 
\[
\Hom_{\T/[\P]}(A,B)\rightarrow \Hom_{\T/\mathcal S}(A,B)
\]
is a bijection.
\end{lemma}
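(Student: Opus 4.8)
The statement is a clean consequence of the theory of torsion pairs in triangulated categories together with the calculus of left fractions for the Verdier quotient $\T/\mathcal S$, so the plan is to reduce it to a careful bookkeeping of Hom-vanishing. First I would set up notation: $\mathcal S = (\X,\Y)$ is a torsion pair in the thick subcategory $\mathcal S$, meaning $\Hom_{\T}(\X,\Y)=0$ and every object $S$ of $\mathcal S$ sits in a triangle $\X_S \to S \to \Y_S \to \Sigma \X_S$ with $\X_S\in\X$, $\Y_S\in\Y$; and $\P = \X\cap\Sigma^{-1}\Y$ is the ``coheart''-type subcategory. The objects $\P$ are the ones killed both by $\Hom_{\T}(-,\Y)$ (up to the appropriate shift) and by $\Hom_{\T}(\X,-)$, which is exactly what makes $\T/[\P]$ an intermediate category between $\T$ and $\T/\mathcal S$; there is a canonical triangle functor $\T/[\P]\to\T/\mathcal S$, and I want to show it is fully faithful on the stated subcategories.

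The core computation is the following. Given $A\in {}^{\perp}\Y$ and $B\in\Sigma^{-1}\X^{\perp}$, a morphism $A\to B$ in $\T/\mathcal S$ is represented by a left roof $A \to B' \leftarrow B$ with cone of $B\to B'$ lying in $\mathcal S$ (using that $\mathcal S$ admits a calculus of left fractions, which holds because $\mathcal S$ is thick); write the triangle $B\to B'\to S\to\Sigma B$ with $S\in\mathcal S$. Decompose $S$ via its torsion triangle $\X_S\to S\to \Y_S\to\Sigma\X_S$. The plan is to show: (i) because $B\in\Sigma^{-1}\X^{\perp}$, i.e.\ $\Sigma B\in\X^\perp$, the map $B'\to S$ followed by $S\to\Y_S$ can be adjusted so that $B'\to B''$ has cone in $\X$ — more precisely, one uses $\Hom_{\T}(\X_S,\Sigma B)=0$ (which follows from $\Sigma B\in\X^{\perp}$) to lift/split appropriately and replace the roof by one whose denominator has cone in $\X$ only; (ii) symmetrically, because $A\in{}^{\perp}\Y$, any roof with denominator-cone in $\X$ that induces the zero map in $\T/\mathcal S$ already induces the zero map in $\T/[\P]$, by pushing the cone down its torsion triangle and using $\Hom_{\T}(A,\Y)=0$ together with the defining property of $\P$. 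Surjectivity of $\Hom_{\T/[\P]}(A,B)\to\Hom_{\T/\mathcal S}(A,B)$ comes from step (i) (every fraction over $\mathcal S$ is equivalent to one over $\P$), and injectivity from step (ii) (a fraction over $\P$ that vanishes over $\mathcal S$ vanishes over $\P$).

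The step I expect to be the main obstacle is (i): genuinely producing, from an arbitrary left fraction with denominator cone in $\mathcal S$, an equivalent left fraction with denominator cone in $\P$. This requires iterating the torsion decomposition — first absorbing the $\Y$-part of the cone using $\Sigma B\in\X^\perp$ (which kills maps out of $\X_S$), then recognizing that what remains has cone in $\X$, then absorbing a further $\Sigma^{-1}\Y$-obstruction to land in $\P=\X\cap\Sigma^{-1}\Y$ — and one must check that the octahedral axiom lets these successive replacements be carried out compatibly and that the resulting fraction represents the same morphism in $\T/\mathcal S$. Since the lemma is cited from \cite[Lemma 3.3]{IyamaYang20} as ``standard'', I would, in the write-up, either reproduce this two-step absorption argument in a few lines or simply invoke the torsion-pair formalism of \cite[Definition 2.2]{IyamaYoshino08} and cite loc.\ cit., checking only that our hypotheses ($\mathcal S$ thick, $A\in{}^\perp\Y$, $B\in\Sigma^{-1}\X^\perp$) match theirs.
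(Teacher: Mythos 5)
The paper does not actually prove this lemma: it is quoted from \cite[Lemma 3.3]{IyamaYang20} with the remark that it is standard, so there is no in-paper argument to compare yours against. Judged on its own, your plan follows the standard route, and the overall strategy (reduce the cone of the denominator via the torsion decomposition in $\mathcal S$, playing $A\in{}^{\perp}\Y$ against the torsion-free part and $\Sigma B\in\X^{\perp}$ against the torsion part) is the right one. One bookkeeping issue first: for surjectivity you do not need to bring the denominator's cone into $\P$ at all. With a fraction $A\xrightarrow{f}B'\xleftarrow{s}B$ and triangle $B\xrightarrow{s}B'\to S\to\Sigma B$, take the torsion triangle $X_S\to S\to Y_S\to\Sigma X_S$; the hypothesis $A\in{}^{\perp}\Y$ kills the composite $A\to B'\to S\to Y_S$, so $f$ lifts to the cocone of $B'\to Y_S$ and the denominator's cone shrinks to $X_S$; then $\Hom_{\T}(X_S,\Sigma B)=0$ makes the connecting map of the new triangle vanish, the triangle splits, and the fraction is an honest morphism of $\T$. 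Your text attributes the first reduction to $\Sigma B\in\X^{\perp}$ and frames the goal as ``denominator cone in $\P$'', which conflates the two orthogonality hypotheses and mixes the surjectivity step with the injectivity step.

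Second, and more substantively, your step (ii) is missing the one idea that makes it close up. A morphism $h\colon A\to B$ that dies in $\T/\mathcal S$ factors through some $S\in\mathcal S$, and $A\in{}^{\perp}\Y$ lets you factor it further through the torsion part $X\in\X$. To land in $\P=\X\cap\Sigma^{-1}\Y$ you then take the torsion triangle of $\Sigma X$ and rotate it to $X\to\Sigma^{-1}Y'\to X'\to\Sigma X$ with $X'\in\X$ and $Y'\in\Y$; the vanishing $\Hom_{\T}(\Sigma^{-1}X',B)\cong\Hom_{\T}(X',\Sigma B)=0$ forces $X\to B$ to factor through $\Sigma^{-1}Y'$. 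The point you never state --- and without which ``absorbing a further $\Sigma^{-1}\Y$-obstruction'' just cycles between $\X$ and $\Sigma^{-1}\Y$ indefinitely --- is that $\Sigma^{-1}Y'$ is an extension of $X'$ by $X$, hence lies in $\X$ because the torsion class is extension-closed, hence lies in $\P$. A minor further remark: $\T/[\P]$ is the additive quotient by the ideal $[\P]$, not a Verdier quotient, so there is no ``canonical triangle functor $\T/[\P]\to\T/\mathcal S$''; the lemma is exactly the statement that $\Hom_{\T}(A,B)\to\Hom_{\T/\mathcal S}(A,B)$ is surjective with kernel $[\P](A,B)$, and your two steps should be phrased as proving those two assertions.
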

In the above lemma, we have $\P=0$ when $\mathcal S=(\X,\Y)$ is in particular a t-structure.
\begin{proof}[Proof of Theorem~\ref{main}]
By construction, the dg category $\D^b_{dg}(\A)$ is the dg quotient \linebreak $\pretr(\A)/\mathcal N_{dg}$ and $F:\A\rightarrow \D^b_{dg}(\A)$ is the canonical morphism in $\Hqe$. 
This morphism is universally exact by Lemma \ref{univer}. 
Put $\D^b(\A)=H^0(\D^b_{dg}(\A))$. 
For simplicity, for objects $A\in H^0(\A)$, we keep the same notation $A$ for the corresponding object in $\D^b(\A)$.
Note that for any object $A\in\A$ and any totalization $N$ of a conflation in $\A$, we have $\Hom_{\tr(\A)}(A, \Sigma^{i}N)=0$ and $\Hom_{\tr(\A)}(N,\Sigma^{i+2} A)=0$ for $i\leq -1$. 
It follows from Corollary~\ref{t} and Lemma~\ref{lem:tstructureff} that for objects $A$ and $C$ in $H^0(\A)$, we have
\[
\Hom_{\tr(\A)}(C,\Sigma^{i}A)\iso \Hom_{\D^b(\A)}(C,\Sigma^{i}A)
\]
for $i\leq 0$.

Consider the following triangle in $\D^b(\A)$:
\[
\begin{tikzcd}
A\ar[r] &E\ar[r]&C\ar[r,"u{/}t"]&\Sigma A
\end{tikzcd}
\]
where $u/t$ is the following roof
\[
\begin{tikzcd}
&W\ar[ld,Rightarrow,"t"swap]\ar[rd,"u"]&\\
C&&\Sigma A
\end{tikzcd}
\]
Similarly as above, we may assume that the mapping cone $N'=\Cone(t)$ belongs to $\mathcal N^{\leq 0}$. The composition 
\[
\begin{tikzcd}
\Sigma^{-1}N'\ar[r] &W\ar[r,"u"]& \Sigma A
\end{tikzcd}
\]
factors through $\Sigma^{-1}H^0(N')$. 
Therefore we may assume $N'$ lies in the heart of the t-structure on $\mathcal N$.
So $N'$ is the totalization of a conflation $X'$ of the form
\[
\begin{tikzcd}
A'\ar[r,"f'"]\ar[rr,bend right=8ex,"h'"swap]&B'\ar[r,"j'"]&C'
\end{tikzcd}
\]
Then the morphisms $C\rightarrow N'$ and $\Sigma^{-1}N'\rightarrow \Sigma A$ give rise to morphisms $c:C\rightarrow C'$ and $a:A'\rightarrow A$ in $H^0(\A)$.

Put $[X'']=c^*[X']$ and $[X''']=a_*[X'']\in \mathbb E(C,A)$. 
Let $N'',N'''$ be the totalizations of $X''$ and $X'''$ respectively. 
Consider the corresponding diagrams \ref{TR4}  for these conflations.
We have the following commutative diagram in $\tr(\A)$:
\[
\begin{tikzcd}
&C\ar[rr]\ar[ld]&&N''\ar[ld]\ar[r]\ar[dd]&N'''\ar[dd]\\
C'\ar[rr]&&N'\ar[rd]&&\\
&&&\Sigma^2 A'\ar[r]&\Sigma^2 A
\end{tikzcd}
\]
So the roof $u/t$ is equivalent to the roof 
\[
\begin{tikzcd} & U'''\ar[ld,Rightarrow]\ar[rd]\\
C&&\Sigma A
\end{tikzcd}
\]
which is given by the conflation $X'''$
\[
\begin{tikzcd}
A\ar[r,"f'''"]\ar[rr,"h'''"swap,bend right=8ex]&B'''\ar[r,"j'''"]&C
\end{tikzcd}
\]
So $E$ is isomorphic to $B'''$ and the image of $H^0(\A)$ under the functor $H^0(F)$ is an extension-closed subcategory of $\D^b(\A)$. 
This proves 1).

It remains to show that $H^0(F)$ induces natural isomorphisms $\mathbb E(C,A)\xrightarrow{\sim} \Ext^1_{\D^b(\A)}(C,A)$ for $C,A\in H^0(\A)$.
First we observe that if $[X]=[X']\in \mathbb E(C,A)$,  then $X$ and $X'$ give equivalent roofs. So we have a well-defined map 
\[
\mathbb E(C,A)\rightarrow \Ext^1_{\D^b(\A)}(C,A).
\]
From the above proof, this map is a surjection.
From the definition of the bifunctor $\mathbb E(-,-)$, it is clear that this map is natural in the variables $C$ and $A$.
From the definition of the addition operation on $\mathbb E(C,A)$, it is clear that this map is additive.
Now suppose that $[X]$ is an element in $\mathbb E(C,A)$ which is sent to zero in $\Ext^1_{\D^b(\A)}(C,A)$.  
Suppose that $X$ is of the form
\[
\begin{tikzcd}
A\ar[r,"f"]\ar[rr,bend right=8ex,"h"swap]&B\ar[r,"j"]&C.
\end{tikzcd}
\]
Then the morphism $\overline{f}/1:A\rightarrow B$ in $\D^b(\A)$ is a split monomorphism .
Since $H^0(F)$ is fully faithful, the morphism $\overline{f}:A\rightarrow B$ is a split monomorphism in $H^0(\A)$. By $2)\Rightarrow 1)$ in Proposition \ref{split} , $[X]$ is zero in $\mathbb E(C,A)$. This shows 2).
\end{proof}
\subsection{The canonical extriangulated structure on $H^0(\A)$}\label{canonicalstructure}
We are now ready to show that $H^0(\A)$ carries a canonical extriangulated structure as announced
in Remark~\ref{rk:extriangulated-structure}.
We define the realisation $\mathfrak s$ of the bifunctor $\mathbb E$ as follows: for an element $\delta=[X]\in \mathbb E(C,A)$, where $X$ is as follows
\[
\begin{tikzcd}
A\ar[r,"f"]\ar[rr,bend right=8ex,"h"swap]&B\ar[r,"j"]&C
\end{tikzcd},
\]
put $\mathfrak s(\delta)=[A\xrightarrow{\overline{f}} B\xrightarrow{\overline{\jmath}} C]$. 
This is well-defined by Corollary \ref{middleterm}.
\begin{lemma}The map $\mathfrak s$ defined above is an additive realization of $\mathbb E$.
\end{lemma}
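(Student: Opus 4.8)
The plan is to verify that $\mathfrak{s}$ satisfies the axioms of an additive realization of $\mathbb{E}$ in the sense of Nakaoka--Palu, namely: (R0) $\mathfrak{s}$ is a well-defined assignment sending $\delta \in \mathbb{E}(C,A)$ to an equivalence class of sequences $A \to B \to C$; (R1) for a morphism of extensions $(\overline{a},\overline{c}): \delta \to \delta'$ with $\delta \in \mathbb{E}(C,A)$, $\delta' \in \mathbb{E}(C',A')$, there is a compatible morphism of the realizing sequences, i.e. a commutative diagram in $H^0(\A)$ lifting $(\overline{a},\overline{c})$; and the additivity condition (R2): $\mathfrak{s}(0) = [A \to A\oplus C \to C]$ for the split extension, and $\mathfrak{s}(\delta \oplus \delta') = \mathfrak{s}(\delta) \oplus \mathfrak{s}(\delta')$.

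First I would dispense with well-definedness, which is already done in the statement by invoking Corollary~\ref{middleterm}: two equivalent conflations have homotopy equivalent middle terms via a morphism restricting to $\Id_A$ and $\Id_C$, hence give the same class $[A\xrightarrow{\overline f}B\xrightarrow{\overline\jmath}C]$ in $H^0(\A)$; moreover the three-term sequence is visibly a complex since $\overline\jmath\,\overline f = 0$ in $H^0(\A)$ (as $d(h) = -jf$). For the functoriality axiom (R1), I would start from an element $\delta = [X] \in \mathbb{E}(C,A)$ and a pair $(\overline a, \overline c)$ with $\overline a: A \to A'$, $\overline c: C' \to C$. The key is that $\overline a_*[X]$ and $\overline c^*[X]$ are realized by conflations obtained via Proposition~\ref{cons} (pushout) and its dual (pullback), and by construction these come equipped with morphisms in $\mathcal H_{3t}(\A)$ which restrict, respectively, to $\overline a$ on the left end and $\Id$ on the right, and to $\Id$ on the left and $\overline c$ on the right. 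Passing to $H^0$ and using that the diagram functor $\Dia: H^0(\Mor(\A)) \to \Mor(H^0(\A))$ is an epivalence (Lemma~\ref{epi}), these give the required commutative squares, and one composes a pushout square with a pullback square (as in the proof that $\mathbb{E}$ is a bifunctor) to handle a general morphism $(\overline a, \overline c): \delta \to \overline a_*\overline c^*\delta' $ — more precisely, one factors through $\overline a_*\delta$ or $\overline c^*\delta'$ exactly as in Lemma~\ref{fact}.

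For additivity (R2), the split case is Corollary~\ref{zero} together with Proposition~\ref{split}: the zero element of $\mathbb{E}(C,A)$ is the class of the split conflation $A \xrightarrow{[1,0]^\intercal} A\oplus C \xrightarrow{[0,1]} C$, so $\mathfrak{s}(0)$ is the corresponding split sequence in $H^0(\A)$, which is precisely what the definition of additive realization requires. For $\mathfrak{s}(\delta_1 \oplus \delta_2) = \mathfrak{s}(\delta_1)\oplus\mathfrak{s}(\delta_2)$, I would invoke Lemma~\ref{sum}, which says $\delta_1 \oplus \delta_2$ is the equivalence class of $X_1 \oplus X_2$; applying $\mathfrak{s}$ and using that the middle term of $X_1\oplus X_2$ is $B_1 \oplus B_2$ with the diagonal maps, this is exactly $\mathfrak{s}(\delta_1)\oplus\mathfrak{s}(\delta_2)$ in $H^0(\A)$.

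The main obstacle I anticipate is purely bookkeeping: matching the sign and homotopy conventions between the $\mathcal H_{3t}(\A)$-morphisms produced by Proposition~\ref{cons} and the literal commutative-square data demanded by the Nakaoka--Palu definition, and checking that the ``compatible morphism'' in (R1) can be chosen functorially enough (i.e. that composites of such morphisms of extensions are sent to composites of morphisms of realizing sequences). This is not conceptually hard given Lemma~\ref{fact}, Lemma~\ref{univ}, and the uniqueness-up-to-unique-isomorphism of homotopy pullbacks/pushouts (Proposition~\ref{pullbackunique} and its dual), but it requires care to phrase everything at the level of $\Mor(H^0(\A))$ via the epivalence $\Dia$ rather than at the dg level, since strict composites of morphisms in $H^0(\Mor(\A))$ need not be compatible on the nose (as warned in the text preceding Definition~\ref{exactdgstructure}). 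I expect the cleanest route is to first establish (R1) for the two special types $\overline a_*$ and $\overline c^*$ separately, then obtain the general case by composition, exactly mirroring the structure of the already-proven bifunctoriality of $\mathbb{E}$.
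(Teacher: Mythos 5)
Your proposal is correct and follows essentially the same route as the paper: the realization axiom is verified by composing the morphism of conflations $X\to X''$ (restricting to $\overline a$ and $\Id$) with $X''\to X'$ (restricting to $\Id$ and $\overline c$), whose middle component gives the required $\overline b$ in $H^0(\A)$, and additivity follows from the definition of the zero element together with Lemma~\ref{sum}. The extra care you devote to well-definedness and to the epivalence $\Dia$ is sound but not needed beyond what Corollary~\ref{middleterm} and Proposition~\ref{cons} already provide.
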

\begin{proof}We first show that $\mathfrak s$ is a realization.
Suppose we have $\delta=[X]\in \mathbb E(C,A)$, $\delta'=[X']\in \mathbb E(C',A')$, $\overline{a}:A\rightarrow A'$ and $\overline{c}:C\rightarrow C'$ such that $\overline{a}_*[X]=\overline{c}^*[X']=[X'']\in \mathbb E(C,A')$. Put $\mathfrak s(\delta)=[A\xrightarrow{\overline{f}} B\xrightarrow{\overline{\jmath}} C]$ and $\mathfrak s(\delta')=[A'\xrightarrow{\overline{f'}} B'\xrightarrow{\overline{\jmath'}} C']$.

We have a composition of morphism of conflations $X\rightarrow X''\rightarrow X'$ which restricts to $\overline{a}:A\rightarrow A'$ and $\overline{c}:C\rightarrow C'$. 
This morphism then gives a morphism $\overline{b}:B\rightarrow B'$ which makes the following diagram in $H^0(\A)$ commutative
\[
\begin{tikzcd}
A\ar[r,"\overline{f}"]\ar[d,"\overline{a}"swap]& B\ar[r,"\overline{\jmath}"]\ar[d,"\overline{b}"] &C\ar[d,"\overline{c}"]\\
A'\ar[r,"\overline{f'}"swap]&B'\ar[r,"\overline{\jmath'}"swap]&C'
\end{tikzcd}.
\]

We now show that $\mathfrak s$ is additive.
By definition, for $0\in\mathbb E(C,A)$, we have 
\[
\mathfrak s(0)=[A\xrightarrow{\begin{bmatrix}0\\1\end{bmatrix}}A\oplus C\xrightarrow{[0{,}1]}C].
\]

By Lemma \ref{sum}, we have $\mathfrak s(\delta\oplus \delta')=\mathfrak s(\delta)\oplus \mathfrak s(\delta')$.
\end{proof}
\begin{remark}\label{truncationextriangulated}
By Remark \ref{truncationexactdgstructure}, the exact dg structures on $\A$ are the same as 
the exact dg structures on $\tau_{\leq 0}\A$. 
Note that we have $H^0(\tau_{\leq 0}\A)=H^0(\A)$. 
By definition, the triple $(H^0(\A), \mathbb E,\mathfrak s)$ remains unchanged when we replace $\A$ by $\tau_{\leq 0}\A$. 
\end{remark}
By the above remark, the following theorem is a direct consequence of the fully exact embedding of $H^0(\A)$ into $\D^b(\A)$ given by Theorem \ref{main}.  One can also prove it as follows: By Theorem~\ref{nerve}, 
the dg nerve $N_{dg}(\A)$ carries a canonical exact structure in the sense of Barwick; this induces an 
extriangulated structure on its homotopy category $h(N_{dg}(\A))$ as shown by Nakaoka--Palu \cite[Theorem 1.1]{NakaokaPalu20} and we know that
the homotopy category $h(N_{dg}(\A))$ is isomorphic to $H^0(\A)$.
Below, we give a direct proof of the theorem.
\begin{theorem}The triple $(H^0(\A),\mathbb E,\mathfrak s)$ forms an extriangulated category.
\end{theorem}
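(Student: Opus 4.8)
The plan is to verify directly the axioms of an extriangulated category in the sense of Nakaoka--Palu for the triple $(H^0(\A),\mathbb E,\mathfrak s)$, using the tools developed in the preceding subsections. We have already established that $\mathbb E\colon H^0(\A)^{op}\times H^0(\A)\to\Ab$ is a biadditive bifunctor, that $\mathfrak s$ is an additive realization of $\mathbb E$, and that $\mathbb E(C,A)$ is an abelian group for every pair $(C,A)$. The axioms that remain to be checked are: (ET3) and (ET3)$^{op}$, which assert that a morphism of a distinguished triangle over a given pair of morphisms on the outer terms can be completed over the middle term and is compatible with the $\mathbb E$-action; and (ET4) and (ET4)$^{op}$, which are the extriangulated analogues of the octahedral axiom.

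First I would dispose of (ET3): given $\delta=[X]\in\mathbb E(C,A)$ and $\delta'=[X']\in\mathbb E(C',A')$ realized by conflations $A\xrightarrow{\overline f}B\xrightarrow{\overline\jmath}C$ and $A'\xrightarrow{\overline{f'}}B'\xrightarrow{\overline{\jmath'}}C'$, together with $\overline a\colon A\to A'$ and $\overline c\colon C\to C'$ satisfying $\overline a_*\delta=\overline c^*\delta'$, I must produce $\overline b\colon B\to B'$ making the obvious square diagram commute in $H^0(\A)$. This is exactly the content of the argument already carried out in the proof that $\mathfrak s$ is a realization: one factors through a conflation $X''$ with $[X'']=\overline a_*\delta=\overline c^*\delta'$ and composes the two morphisms of conflations $X\to X''\to X'$, whose restriction to the middle terms furnishes $\overline b$. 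The dual statement (ET3)$^{op}$ follows by the duality functor $\RHom(-,\A)$. Then (ET3) combined with Lemma~\ref{fact} gives the refined compatibility: the morphism of conflations over $(\overline a,\overline c)$ factors as $X\xrightarrow{\beta}\tilde X\xrightarrow{\gamma}X'$ with $[\tilde X]=\overline a_*\delta=\overline c^*\delta'$.

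The main work, and the main obstacle, is (ET4) (and its dual). Here one is given conflations $A\xrightarrow{\overline f}B\xrightarrow{\overline\jmath}C$ realizing $\delta\in\mathbb E(C,A)$ and $B\xrightarrow{\overline{g}}D\xrightarrow{\overline{k}}F$ realizing $\delta'\in\mathbb E(F,B)$, and one must construct a conflation $A\to D\to F'$ realizing $(\overline f)_*\delta'$ pushed appropriately, together with a conflation $C\to F'\to F$ and the required commutativity and $\mathbb E$-compatibility relations. The key technical input is Lemma~\ref{deflationcomposition}: applying it (in its dual form, to the composition of inflations $A\xrightarrow{f}B\xrightarrow{g}D$) produces precisely the commutative diagram with a homotopy pushout square, a homotopy short exact sequence in the middle row, and the two morphisms of $3$-term h-complexes relating the columns and rows. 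Translating that diagram into $H^0(\A)$ via the epivalence $\Dia$, and reading off the extension classes using the bifunctor $\mathbb E$ and the relations $[\tilde X]=\overline a_*[X]$, $[\tilde X]=\overline c^*[X']$ from the remark after Lemma~\ref{fact}, should yield exactly the data and identities demanded by (ET4). The delicate point, as flagged in the excerpt after Lemma~\ref{lem:3termhcomplex} and in Proposition~\ref{push}, is not the existence of the underlying triangulated-style diagram but the bookkeeping of the homotopies: one must check that the morphisms constructed in $\mathcal H_{3t}(\A)$ (not merely in $\tr(\A)$) are compatible, so that the resulting conflations and morphisms are well-defined in our setting. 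I would handle this by invoking Lemma~\ref{deflationcomposition} to get the homotopies explicitly and then verifying the $\mathbb E$-compatibility identities termwise. Finally, (ET4)$^{op}$ follows by applying the duality $\RHom(-,\A)$, which interchanges inflations and deflations and sends homotopy pushouts to homotopy pullbacks, so no separate argument is needed.
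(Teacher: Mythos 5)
Your overall strategy---direct verification of (ET3), (ET3)$^{op}$, (ET4), (ET4)$^{op}$ using Lemma~\ref{fact}, the epivalence $\Dia$, and the homotopy pushout/pullback machinery---matches the paper's, and your route to (ET4) via the dual of Lemma~\ref{deflationcomposition} is a workable variant of the paper's argument, which instead composes the two inflations, invokes Axiom $\Ex1^{op}$ (Proposition~\ref{property}) to obtain the third conflation, and then identifies the two restricted squares $Y_1$ and $Y_2$ as homotopy pushouts of one and the same span using the uniqueness statement of Remark~\ref{pullbackuniversal}. Note, though, that Lemma~\ref{deflationcomposition} by itself only builds the diagram; to know that the sequence $A\to C\to E$ is actually a \emph{conflation} you still need closure of inflations under composition, so Proposition~\ref{property}~d) cannot be bypassed.

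There is, however, a concrete error in your treatment of (ET3): you have the data and the conclusion of the axiom interchanged. (ET3) asks: given realizations $A\xrightarrow{\overline f}B\xrightarrow{\overline\jmath}C$ of $\delta$ and $A'\xrightarrow{\overline{f'}}B'\xrightarrow{\overline{\jmath'}}C'$ of $\delta'$, and a pair $(\overline a,\overline b)$ making the \emph{left} square commute in $H^0(\A)$, produce $\overline c\colon C\to C'$ with $\overline c\,\overline\jmath=\overline{\jmath'}\,\overline b$ and $\overline a_*\delta=\overline c^*\delta'$. What you propose to prove---given $(\overline a,\overline c)$ with $\overline a_*\delta=\overline c^*\delta'$, produce a compatible $\overline b$---is the realization condition for $\mathfrak s$, i.e.\ part of (ET2), which is already established in the lemma immediately preceding the theorem; re-deriving it does not verify (ET3). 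The correct argument is short and uses exactly the tools you have on hand: lift the commuting left square to a morphism $f\to f'$ in $H^0(\Mor(\A))$ (possible because $\Dia$ is an epivalence, Lemma~\ref{epi}), extend it to a morphism $X\to X'$ in $\mathcal H_{3t}(\A)$, restrict to the third terms to obtain $\overline c$, and then apply Lemma~\ref{fact} to conclude $\overline a_*\delta=\overline c^*\delta'$. With that correction, and with the (ET4) bookkeeping of homotopies carried out as you indicate, the proof goes through.
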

\begin{proof}

$(\mathrm{ET3})$ Let $\delta=[X]\in \mathbb E(C,A)$, $\delta'=[X']\in \mathbb E(C',A')$. 
Suppose they are realized as $\mathfrak s(\delta)=[A\xrightarrow{\overline{f}} B\xrightarrow{\overline{\jmath}} C]$ and $\mathfrak s(\delta')=[A'\xrightarrow{\overline{f'}} B'\xrightarrow{\overline{\jmath'}} C']$.
For any commutative square in $H^0(\A)$
\[
\begin{tikzcd}
A\ar[r,"\overline{f}"]\ar[d,"a"swap]&B\ar[r,"\overline{\jmath}"]\ar[d,"b"]&C\\
A'\ar[r,"\overline{f'}"swap]&B'\ar[r,"\overline{\jmath'}"swap]&C'
\end{tikzcd},
\]
the left hand square can be lifted (in a non-unique way) to a morphism from $f:A\rightarrow B$ to $f':A'\rightarrow B'$ in $H^0(\Mor(\A))$. 
It then induces a morphism $X\rightarrow X'$ in $\mathcal H_{3t}(\A)$. 
The restriction gives a morphism $c:C\rightarrow C'$ in $H^0(\A)$.
By  Lemma \ref{fact},  we have $a_*\delta=c^*\delta'$. 
This proves (ET3). 
Axiom \text{(ET3)$^{op}$} is proved dually.

$(\mathrm{ET4})$ Consider $\delta=[X]\in \mathbb E(D,A)$ and $\delta'=[X']\in\mathbb E(F,B)$ which are realized respectively by $A\xrightarrow{\overline{f}}B\xrightarrow{\overline{\jmath}}D $ and $B\xrightarrow{\overline{f'}}C\xrightarrow{\overline{\jmath'}}F$. 

Since a composition of inflations is an inflation, the object $f'f:A\rightarrow C$ is an inflation and there exists a conflation $X''$ such that $[X'']$ is realized by $A\xrightarrow{\overline{f'f}}C\xrightarrow{\overline{h}}E$.
The commutative diagram 
\[
\begin{tikzcd}
A\ar[r,"\overline{f}"]\ar[d,equal]&B\ar[d,"\overline{f'}"]\\
A\ar[r,"\overline{f'f}"swap]&C
\end{tikzcd}
\]
in $\D(\A)$ can be lifted to a morphism in $H^0(\Mor(\A))$ from $f:A\rightarrow B$ to $f'f:A\rightarrow C$. 
This morphism induces a morphism from $X$ to $X''$ which restricts to a homotopy bicartesian square $Y_1$ whose image in $\Fun(\mathrm{Sq},\D(\A))$ is isomorphic to
\[
\begin{tikzcd}
B\ar[r,"\overline{\jmath}"]\ar[d,"\overline{f'}"swap]&D\ar[d,"\overline{d}"]\\
C\ar[r,"\overline{h}"swap]&E
\end{tikzcd}\;.
\]
Let $\overline{\jmath}_* [X']=[X''']\in \mathbb E(F,B)$. 
We have a morphism $X'\rightarrow X'''$ which, by restriction to $f':B\rightarrow C$, yields a homotopy bicartesian square $Y_2$. 
Both $Y_1$ and $Y_2$ are homotopy pushouts of $S:$
\[
\begin{tikzcd}
B\ar[r,"j"]\ar[d,"f'"swap]&D\\
C&
\end{tikzcd}.
\] 
So there exists an isomorphism $Y_1\rightarrow Y_2$ in $\D(\mathrm{Sq})$ which is compatible with the isomorphisms $R(Y_i)\rightarrow S$ where $R:\D(\mathrm{Sq})\rightarrow \D(\mathrm{Sp})$ is the restriction functor.
So $[X''']$ can be realized by $D\xrightarrow{\overline{d}}E\xrightarrow{\overline{e}}F$ such that $\overline{\jmath'}=\overline{e}\overline{h}$. This shows (ET4). Axiom \text(ET4$^{op}$) can be shown dually.
\end{proof}

\begin{proposition-definition}\label{algebraic}Let $\C$ be an extriangulated category. The following statements are equivalent:
\begin{itemize}
\item[1)]$\C$ is equivalent, as an extriangulated category, to a full extension-closed subcategory of an algebraic triangulated category.
\item[2)]$\C$ is equivalent, as an extriangulated category, to $\B/(\mathcal P_0)$ for a Quillen exact category $\B$ and a class $\mathcal P_0$ of projective-injective objects.
\item[3)]$\C$ is equivalent, as an extriangulated category, to $H^0(\A)$ for an exact dg category $\A$.
\end{itemize}
If one of the above equivalent conditions holds, then $\C$ is called an {\em algebraic extriangulated category}.
\end{proposition-definition}
\begin{proof}
Let us prove that 1) implies 2). Suppose the extriangulated category $(\C,\mathbb E,\mathfrak s)$ is equivalent to a full extension-closed subcategory of $\underline{\E}$ where $\E$ is a Quillen exact category. 
Let $F:\C\rightarrow \underline{\E}$ be the fully exact functor which is an equivalence onto its essential image.
Let $\pi:\E\rightarrow \underline{\E}$ be the canonical quotient functor.
We form the following diagram
\[
\begin{tikzcd}
\B\ar[r,dashed,hook]\ar[d,dashed,two heads]&\E\ar[d,two heads,"\pi"]\\
\C\ar[r,hook,"F"swap]&\underline{\E}\mathrlap{\;.}
\end{tikzcd}
\]
Let $\B$ be the full subcategory of $\E$ consisting of objects $X$ such that $\pi(X)$ belongs to the essential image of $F$.
The category $\B$ is extension-closed in $\E$ since the essential image of $F$ is closed under extension and $\pi$ is a fully exact functor. 
Note also that $\B$ contains all projective-injective objects in $\E$.
So $\B$ is canonically exact and the full subcategory
\[
\P_0=\{P\in\B| \text{$P$ is projective-injective in $\E$}\}
\]
consists of projective-injective objects of $\B$ (but perhaps not all).
By \cite[Proposition 3.30]{NakaokaPalu19}, $\B/[\P_0]$ has the structure of an extriangulated category, induced from that of $\B$.
The functor $F$ induces a canonical equivalence of categories 
\[
G:\C\iso \B/[\P_0],\;\; A\mapsto F(A)
\]
which, by definition, is a fully exact functor.

Let us prove that conversely, condition 2) implies 1). Suppose $\C$ is equivalent to $\B/[\P_0]$ as extriangulated categories.
Then we have a fully exact fully faithful functor
\[
\C\iso\B/[\P_0]\hookrightarrow \D^b(\B)/\mathrm{thick}(\P_0).
\]
The triangle quotient $\D^b(\B)/\mathrm{thick}(\P_0)$ is an algebraic triangulated category and $\C$ identifies with an extension-closed subcategory of $\D^b(\B)/\mathrm{thick}(\P_0)$.

Let us show that 1) implies 3). Suppose $\C$ is equivalent to a full extension-closed subcategory of $H^0(\A')$ for a pretriangulated dg category $\A'$.
Let $F:\C\rightarrow H^0(\A')$ be the fully exact functor which is an equivalence onto its essential image.
Let $\A$ be the full dg subcategory of $\A'$ whose objects are those in the essential image of $F$.
Then $\A$ is an extension-closed subcategory of $\A'$, since $F$ is a fully exact functor. 
By Example \ref{exactdgextensionclosed}, $\A$ inherits an exact dg structure from that of $\A'$.
By the definition of the biadditive functor $\mathbb E$ associated with an exact dg structure, 
we see that the inclusion $H^0(\A)\hookrightarrow H^0(\A')$ is a fully exact fully faithful functor.
Thus $\C$ is equivalent to $H^0(\A)$ as an extriangulated category where $\A$ is an exact dg category.

Let us show that conversely, condition 3) implies 1).
Suppose $\C$ is equivalent to $H^0(\A)$ for an exact dg category $\A$.
By Remark \ref{truncationexactdgstructure}, we replace $\A$ with $\tau_{\leq 0}\A$ and assume that $\A$ is a connective dg category.
Then by Theorem \ref{main}, we have a fully exact fully embedding $H^0(\A)\hookrightarrow \D^b(\A)$ where $\D^b(\A)$ is an algebraic triangulated category.
\end{proof}
\begin{remark}Item 1) shows that the class of algebraic extriangulated categories is closed under forming extension-closed subcategories.
Item 2) shows that the class of algebraic extriangulated categories is closed under forming ideal quotients by projective-injective objects.
\end{remark}
In fact, the class of extriangulated categories can also be described using a seemingly more
general version of item 2) in Proposition-Definition~\ref{algebraic}, cf.~Corollary~\ref{algebraic2} for another proof based 
on a similar idea. This generalization is suggested by a result of \cite{FangGorskyPaluPlamondonPressland22}, 
stating that if $\C$ is an extriangulated category and $\I$ is an ideal generated by a family of morphisms 
$f:I\rightarrow P$ from injectives to projectives, then $\C/\I$ is naturally extriangulated.
In the context of algebraic extriangulated categories, we have the following result.
\begin{proposition}
Let $\C$ be an algebraic extriangulated category. 
Let $\{f:I\rightarrow P\}$ be a family of morphisms from injectives to projectives in $\C$. 
Denote by $[I\rightarrow P]$ the ideal generated by this family of morphisms.
Then $\C/[I\rightarrow P]$ is still an algebraic extriangulated category.
\end{proposition}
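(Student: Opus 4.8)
The plan is to realise $\C/[I\to P]$ as an ideal quotient of a Quillen exact category by a class of projective--injective objects, and then to apply Proposition-Definition~\ref{algebraic}. First I would use item~2) of Proposition-Definition~\ref{algebraic} to write $\C$, as an extriangulated category, in the form $\B/[\mathcal P_0]$ for a Quillen exact category $\B$ and a class $\mathcal P_0$ of projective--injective objects of $\B$ (the quotient functor $\B\to\C$ being the identity on objects and full). Passing to the ideal quotient by projective--injectives does not change the first extension bifunctor, cf.~\cite[Proposition~3.30]{NakaokaPalu19}, so the bifunctor $\mathbb E$ of $\C$ is identified with the restriction of $\Ext^1_{\B}$; in particular an object of $\C$ is injective (resp.~projective) precisely when it is injective (resp.~projective) in $\B$. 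Therefore each morphism $f\colon I\to P$ of the given family lifts to a morphism $\tilde f\colon I\to P$ of $\B$ with $I$ injective and $P$ projective in $\B$.

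The heart of the argument is the construction of a Quillen exact category $\B'$ containing $\B$, in which each $\tilde f$ factors through a \emph{new} projective--injective object $M_{\tilde f}$. Concretely, I would let $\B'$ be the full subcategory of the morphism category $\Mor(\B)$ on the objects isomorphic to finite direct sums of the diagonal objects $(X\xrightarrow{\mathrm{id}}X)$, $X\in\B$, and the objects $M_{\tilde f}=(I\xrightarrow{\tilde f}P)$; the diagonal embedding identifies $\B$ with a full subcategory of $\B'$, and $M_{\tilde f}$ carries canonical morphisms $I\to M_{\tilde f}\to P$ whose composite is $\tilde f$. One equips $\B'$ with the exact structure whose conflations are the direct sums of a conflation of $\B$ (among diagonal objects) and a split exact sequence. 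With this structure $\B$ is extension-closed in $\B'$ and each $M_{\tilde f}$ --- as well as each object of $\mathcal P_0$ --- is projective--injective in $\B'$, since the quotient $M_{\tilde f}$ (resp.~the sub $M_{\tilde f}$) can only occur in the split summand of a conflation. The crucial point is that a morphism out of $M_{\tilde f}$ is, by construction, the canonical map $M_{\tilde f}\to P$ postcomposed with a morphism of $\B$ out of the \emph{projective} object $P$, hence lifts along every deflation of $\B$; dually for morphisms into $M_{\tilde f}$, using that $I$ and the objects of $\mathcal P_0$ are injective. Finally, a morphism between objects of $\B$ factors through some $M_{\tilde f}$ in $\B'$ exactly when it factors through $\tilde f$ in $\B$, so the ideal quotient $\B'/[\mathcal P_0\cup\{M_{\tilde f}\}]$ is equivalent, as an extriangulated category, to $(\B/[\mathcal P_0])/[I\to P]=\C/[I\to P]$. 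Since the former is algebraic extriangulated by item~2) of Proposition-Definition~\ref{algebraic}, so is $\C/[I\to P]$.

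I expect the main obstacle to be the verification that $\B'$, with the class of conflations described above, really satisfies the exact category axioms --- in particular axioms $\mathrm{Ex}2$ and $\mathrm{Ex}2^{op}$ on the existence of base changes of deflations and cobase changes of inflations. The liftability coming from projectivity of $P$ (and injectivity of $I$ and of the objects of $\mathcal P_0$) is exactly what forces the relevant pullbacks and pushouts to decompose as a direct sum of the corresponding construction in $\B$ and an identity on the new objects, so that one stays inside $\B'$ and inside the prescribed class of conflations; once this bookkeeping is carried out, the identification of $\B'/[\mathcal P_0\cup\{M_{\tilde f}\}]$ with $\C/[I\to P]$ is routine. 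An entirely parallel argument can be run on the differential graded level: starting from a connective exact dg model of $\C$ provided by item~3) of Proposition-Definition~\ref{algebraic}, one adjoins the objects $M_{\tilde f}$ and applies Theorem~\ref{quot}; this is the variant of the idea referred to in connection with Corollary~\ref{algebraic2}.
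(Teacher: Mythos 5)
Your proposal is correct in outline and follows essentially the same route as the paper: realise $\C/[I\to P]$ inside an ideal quotient of a morphism category in which the objects $I\to P$ have been made projective--injective, and conclude via Proposition-Definition~\ref{algebraic}. The one substantive difference lies in how the exact structure is obtained, and it sits exactly at the point you flag as the main obstacle. Instead of restricting to the additive hull of the diagonal objects and the $M_{\tilde f}$ and hand-crafting a class of conflations --- for which you would have to check not only $\mathrm{Ex}2$ and $\mathrm{Ex}2^{op}$ but also closure of deflations under composition, which is slightly delicate here because the decomposition of an object of your $\B'$ into a diagonal part and a part built from the $M_{\tilde f}$ is not canonical --- the paper works with all of $\Mor(\C)$ carrying the componentwise exact structure and invokes \cite[Proposition~1.7]{DraxlerReitenSmaloSolberg99}: for any full subcategory $\N$ of a Quillen exact category $\E$, the class of conflations relative to which the objects of $\N$ are projective--injective is again an exact structure on $\E$. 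This produces the exact category $\F$ with $\{I\to P\}$ projective--injective with no axiom-checking at all; the only remaining verifications are that the diagonal embedding $\C\to\F$ is exact (which uses precisely the lifting argument you give, i.e.\ projectivity of $P$ and injectivity of $I$) and that the induced functor $\C/[I\to P]\to\F/(I\to P)$ is fully faithful and fully exact. If you prefer your smaller model $\B'$, the cleanest way to close the gap is to observe that your prescribed conflations are exactly the restriction to $\B'$ of this substructure and that $\B'$ is extension-closed in $\Mor(\B)$, which again spares you the direct verification of the axioms.
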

\begin{proof} By item 2) of the preceding Proposition-Definition, we may assume
that $\C$ is a Quillen exact category.
In \cite[Proposition 1.7]{DraxlerReitenSmaloSolberg99}, it is shown that given a full subcategory $\N\subset \E$ of a Quillen exact category $\E$, the class of conflations which make the objects in $\N$ projective-injective 
is a Quillen exact structure on $\E$.
We endow $\Mor(\C)$ with the componentwise exact structure and view $\{f:I\rightarrow P\}$ as a 
full subcategory of $\Mor(\C)$.
Let $\F$ be the exact category whose underlying category is $\Mor(\C)$ and whose exact structure is 
obtained from $\Mor(\C)$ by making the objects $f:I\rightarrow P$ projective-injective.
Then $\F$ is an exact category.
We have the inclusion functor 
\[
\theta:\C\rightarrow \F
\]
sending an object $X$ to $(\Id:X\rightarrow X)$. Clearly, it induces a functor
\[
\phi:\C/[I\rightarrow P]\rightarrow \F/(I\rightarrow P).
\]
We have the following observations
\begin{itemize}
\item $\theta$ is an exact functor. 
Indeed, a conflation in $\C$ 
\[
\begin{tikzcd}
0\ar[r]&X\ar[r]&Y\ar[r]&Z\ar[r]&0
\end{tikzcd}
\]
is sent, by $\theta$, to a conflation in $\Mor(\C)$
\[
\begin{tikzcd}[row sep=small]
&&&I\ar[dd,blue]\ar[rd,blue]\ar[ld,dashed,red]&&\\
0\ar[r]&X\ar[r]\ar[dd,equal]&Y\ar[rr]\ar[dd,equal]&&Z\ar[r]\ar[dd,equal]&0\\
&&&P\ar[rd,blue]\ar[ld,dashed,red]&&\\
0\ar[r]&X\ar[r]&Y\ar[rr]&&Z\ar[r]&0
\end{tikzcd}
\]
For any morphism from $I\rightarrow P$ to $\Id:Z\rightarrow Z$, the morphism $P\rightarrow Z$ factors through $Y$ since $P$ is projective in $\C$.
This shows that the morphism from $I\rightarrow P$ to $\Id:Z\rightarrow Z$ factors through $\Id:Y\rightarrow Y$.
Hence the objects $I\rightarrow P$ are projective with respect to the conflations in the essential image of $\theta$. 
Dually, one shows that they are injective. 
Hence the functor $\theta$ sends conflations to conflations in $\F$.
\item The induced functor $\phi$ is also fully faithful. This follows by definition.
\item The full embedding $\phi:\C/[I\rightarrow P]\rightarrow \F/(I\rightarrow P)$ is fully exact.
Let $\Id_X:X\rightarrow X$ and $\Id_Z:Z\rightarrow Z$ be two objects in the image of the functor $\phi$.
Consider a conflation in $\F$
\[
\begin{tikzcd}
0\ar[r]&X\ar[r]\ar[d,equal]&Y\ar[r]\ar[d,"\alpha"]&Z\ar[r]\ar[d,equal]&0\\
0\ar[r]&X\ar[r]&Y'\ar[r]&Z\ar[r]&0\mathrlap{.}
\end{tikzcd}
\]
The map $\alpha$ is an isomorphism in $\C$. 
Hence $\alpha:Y\rightarrow Y'$ is isomorphic to $\Id:Y\rightarrow Y$ and the above conflation is equivalent to a conflation in the essential image of $\theta$.
\item By definition, the category $\F/(I\rightarrow P)$ is an algebraic extriangulated category.
\end{itemize}
Therefore, the functor $\phi$ is a fully exact fully faithful functor from $\C/[I\rightarrow P]$ to an algebraic extriangulated category and hence $\C/[I\rightarrow P]$ is also algebraic.
\end{proof}
\subsection{Higher extensions}
Let $(\C,\mathbb E,\mathfrak s)$ be a small extriangulated category.
In their paper \cite{GorskyNakaokaPalu21}, for $n\geq 1$, Gorsky--Nakaoka--Palu defined the higher 
extension bimodule $\mathbb E^{n}(?,-)$ on $\C$ as the $n$th tensor power over $\C$ of $\mathbb E$.
Recall that a right $\C$-module $F:\C^{op}\rightarrow \Ab$ is {\em weakly effaceable} if for each $C\in \C$ and each element $x\in F(C)$, there exists a deflation $p:B\rightarrow C$ such that $F(p)(x)=0$, cf.~\cite[A.2]{Keller90} and also~\cite[Definition 2.6]{KaledinLowen15}.
Dually one has the notion of weakly effaceable left module.
A $\C$-$\C$-bimodule $G:\C^{op}\otimes \C\rightarrow \Ab$ is {\em weakly effaceable}, if for each 
$C\in\C$, the left module $G(C,-)$ and the right module $G(?,C)$ are both weakly effaceable.
By \cite[Corollary 3.5]{NakaokaPalu19}, for each $A\in \C$, the right $\C$-module 
$\mathbb E(?,A):\C^{op}\rightarrow \Ab$ is weakly effaceable. It follows that the tensor powers
$\mathbb E^n(?,-)$ are weakly effaceable.

In the rest of this section, we will omit the symbol $^{\wedge}$ in the representable dg module $A^{\wedge}$ for $A\in\A$.
Recall the extriangulated category $(H^0(\A),\mathbb E,\mathfrak s)$ associated with the exact dg category $\A$.
Clearly $(\Ext^n_{\D^b(\A)}(?,-))_{n\geq 0}$ together with the connecting morphisms given by triangles in $\D^b(\A)$ induced by $\mathbb E$-triangles in $H^0(\A)$, is a $\delta$-functor  (cf.~Definition~\ref{def:deltafunctor}) for $H^0(\A)$.
It clearly satisfies the assumptions~\ref{assumption:delta2} and~\ref{assumption:delta3}.
\begin{proposition}\label{higher}
Let $F:\A\rightarrow \D^{b}_{dg}(\A)$ be the universal embedding of $\A$ 
into a pretriangulated category. 
Then we have a canonical isomorphism of $\delta$-functors $\alpha:\mathbb E^n(?,-)\xrightarrow{\sim} \Ext^n_{\D^b(\A)}(?,-)$.\end{proposition}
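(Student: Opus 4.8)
The plan is to apply the general uniqueness result for $\delta$-functors with weakly effaceable bimodules in positive degrees, namely Corollary~\ref{cor:effaceablebimodule}, to compare the two $\delta$-functors $(\mathbb{E}^n)_{n\geq 0}$ and $(\Ext^n_{\D^b(\A)})_{n\geq 0}$ on $H^0(\A)$. Both are $\delta$-functors: for $(\mathbb{E}^n)$ this is immediate from the construction of Gorsky--Nakaoka--Palu (tensor powers of $\mathbb{E}$, as recalled just before the statement), and for $(\Ext^n_{\D^b(\A)})$ it follows from the fact that $H^0(F)\colon H^0(\A)\hookrightarrow \D^b(\A)$ is a fully exact embedding onto an extension-closed subcategory (Theorem~\ref{main}), so that every $\mathbb{E}$-triangle in $H^0(\A)$ is realized by a genuine triangle in $\D^b(\A)$, whose long exact $\Ext$-sequences supply the connecting morphisms. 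Both satisfy the normalization in degree zero: $\mathbb{E}^0 = \C(?,-) = H^0(\A)(?,-)$ and $\Ext^0_{\D^b(\A)}(?,-) = \D^b(\A)(F?,F-) \cong H^0(\A)(?,-)$ since $H^0(F)$ is fully faithful.

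First I would record the degree-$0$ isomorphism $\alpha^0\colon \mathbb{E}^0 \xrightarrow{\sim} \Ext^0_{\D^b(\A)}$ coming from full faithfulness of $H^0(F)$, and the degree-$1$ isomorphism $\alpha^1\colon \mathbb{E}^1 = \mathbb{E} \xrightarrow{\sim} \Ext^1_{\D^b(\A)}(F?, F-)$ which is exactly part~2) of Theorem~\ref{main}, together with the compatibility of $\alpha^0,\alpha^1$ with the degree-$0$-to-$1$ connecting morphisms (this compatibility is built into the construction: the triangle associated to a conflation in $\D^b(\A)$ is the image of the roof given by that conflation, as in the proof of Lemma~\ref{univer}). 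Next I would verify that $\Ext^i_{\D^b(\A)}(?,-)$ is weakly effaceable as an $H^0(\A)$-$H^0(\A)$-bimodule for $i>0$: given $C\in H^0(\A)$ and a class $\xi\in\Ext^i_{\D^b(\A)}(C,A)$, one uses that $C$ (viewed in $\D^b(\A)$) admits a conflation $A'\rightarrowtail B \twoheadrightarrow C$ in $H^0(\A)$ — equivalently a deflation $p\colon B\to C$ — and the induced triangle $A'\to B\to C\to \Sigma A'$ in $\D^b(\A)$; the connecting map shifts $\xi$ into $\Ext^{i-1}_{\D^b(\A)}(A',A)$, so $p^*\xi = 0$ after possibly iterating, exactly as in the standard dimension-shifting argument. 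Symmetrically one handles the left-module variable using inflations and Axiom $\Ex2^{op}$. Since $\mathbb{E}^n(?,-)$ is weakly effaceable for $n>0$ (recalled above, from \cite[Corollary 3.5]{NakaokaPalu19}), Corollary~\ref{cor:effaceablebimodule} then yields a \emph{unique} morphism of $\delta$-functors $\alpha\colon (\mathbb{E}^n) \to (\Ext^n_{\D^b(\A)})$ extending $\alpha^0$, and it is automatically an isomorphism in each degree because it is one in degrees $0$ and $1$ and both sides are effaceable in positive degrees (one concludes by induction on $n$ via the five lemma applied to the connecting long exact sequences, using effaceability to kill the flanking terms).

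The main obstacle I expect is not the abstract comparison but pinning down \emph{naturality and canonicity}: one must check that the isomorphism $\alpha^1$ of Theorem~\ref{main}~2) is compatible with the $\delta$-functor structure maps in the precise sense required by Corollary~\ref{cor:effaceablebimodule} (i.e. that the square relating $\mathbb{E}^0\to\mathbb{E}^1$ and $\Ext^0\to\Ext^1$ via connecting morphisms commutes), and that the resulting $\alpha^n$ are natural in \emph{both} variables and compatible with the $\epsilon$ and $\eta$ connecting collections — this amounts to tracing through the definition of the connecting morphisms for $\mathbb{E}^n$ (which are induced by Yoneda-type composition with elements of $\mathbb{E}$) against those in $\D^b(\A)$ (induced by rotation of triangles), and checking these agree under $H^0(F)$. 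Once the degree-$1$ compatibility is established, the rest is formal from the cited corollary. As a remark, the final sentence of the excerpt asserts the analogous statement for exact $\infty$-categories; that would follow by the same argument using Klemenc's stable hull $\D^b_\infty(N_{dg}(\A))$ in place of $\D^b_{dg}(\A)$, together with the equivalence $\D^b_\infty(N_{dg}(\A))\simeq N_{dg}(\D^b_{dg}(\A))$ noted after Proposition~\ref{intro:higher}.
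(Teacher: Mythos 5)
Your overall strategy coincides with the paper's: reduce everything to Corollary~\ref{cor:effaceablebimodule} by checking that both $(\mathbb E^n)$ and $(\Ext^n_{\D^b(\A)})$ are $\delta$-functors satisfying the normalization and effaceability hypotheses. However, the one step that carries the actual mathematical content — weak effaceability of $\Ext^i_{\D^b(\A)}(?,-)$ for $i>0$ — is not established by your argument, and as written it is circular. You say that for a class $\xi\in\Ext^i_{\D^b(\A)}(C,A)$ one takes a conflation $A'\rightarrowtail B\twoheadrightarrow C$ and that ``the connecting map shifts $\xi$ into $\Ext^{i-1}_{\D^b(\A)}(A',A)$, so $p^*\xi=0$.'' But the long exact sequence reads
\[
\Ext^{i-1}_{\D^b(\A)}(A',A)\rightarrow \Ext^i_{\D^b(\A)}(C,A)\xrightarrow{p^*}\Ext^i_{\D^b(\A)}(B,A),
\]
so $\xi$ lies in the image of the connecting map \emph{if and only if} $p^*\xi=0$ — which is exactly the statement to be proved. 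For the classical dimension-shifting argument one needs to choose $B$ with $\Ext^i_{\D^b(\A)}(B,A)=0$ (e.g.\ $B$ projective), and $H^0(\A)$ is not assumed to have enough projectives; moreover the deflation in the definition of weak effaceability must be allowed to depend on $\xi$, and your argument produces no such $\xi$-dependent choice.

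The paper's proof supplies precisely this missing construction. It represents $\xi$ by a roof $C\Leftarrow M\rightarrow\Sigma^nA$ in $\D^b(\A)=\tr(\A)/\N$, uses the bounded t-structure on $\N$ from Corollary~\ref{t} (together with connectivity of $\A$) to place the cone of the denominator in $\N^{[-n+1,0]}$ and filter it by shifts of defective objects $N_i$, each the totalization of a conflation $X_i$, and then, by an explicit diagram chase through this filtration, factors $\xi$ as $C\rightarrow\Sigma^{n-1}A_{n-2}\rightarrow\Sigma^nA$, where the second arrow is an element of $\Ext^1_{\D^b(\A)}(A_{n-2},A)\cong\mathbb E(A_{n-2},A)$; the inflation realizing that extension then effaces $\xi$. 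This analysis of roofs in the Verdier quotient — not formal dimension shifting — is the heart of the proposition, and it is the part your proposal would need to supply. The remaining points you raise (the degree-$0$ and degree-$1$ normalizations via Theorem~\ref{main}, compatibility with connecting morphisms, i.e.\ assumptions~\ref{assumption:delta2} and~\ref{assumption:delta3}) are correct and match the paper; note also that once effaceability is in place, Corollary~\ref{cor:effaceablebimodule} already delivers the isomorphism in all degrees, so the additional five-lemma induction you sketch is not needed.
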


\begin{proof}
Put $\C=H^0(\A)$. 
Let us show the $\C$-$\C$-bimodule $\Ext^n_{\D^b(\A)}(?,-)$ is effaceable for each $n\geq 1$.

By construction, we have $\D^b(\A)=\tr(\A)/\mathcal N$. 
Consider a morphism from $C$ to $\Sigma^{n}(A)$ in $\D^b(\A)$ given  by the roof 
\[
\begin{tikzcd}
&M\ar[Rightarrow,ld,"s",swap]\ar[rd,"w"]&\\
C&&\Sigma^n (A)
\end{tikzcd}
\]
where $s$ and $w$ are morphisms in $\tr(\A)$ and $N=\Cone(s)$ lies in $\N$.  
We may assume $N\in \N^{\leq 0}$. 
By Corollary \ref{t}, $N$ is an extension of its homology which are defective objects in $\mathcal H$. 
For defective objects $\tilde{N}$ we know that $\RHom_{\A}(\tilde{N}, A')$ is concentrated in degree $2$ for $A'\in \A$.  
So we may assume $N\in \mathcal N^{[-n+1,0]}$ and write $N=(\Sigma^{n-1}N_{n-1})*(\Sigma^{n-2}N_{n-2})*\cdots*N_{0}$ where $N_i$ is defective for each $i$. 
Suppose for each $i$, $N_{i}$ is the totalization of a conflation $X_i$ of the form
\[
\begin{tikzcd}
A_i\ar[r,"f_i"]\ar[rr,"h_i"swap,bend right=8ex]&B_i\ar[r,"j_i"]&C_i
\end{tikzcd}. 
\]
We have a canonical map $p:N_{n-2}\rightarrow\Sigma^{2}N_{n-1}$. 
Consider the following diagram in $\tr(\A)$
\[
\begin{tikzcd}
\Sigma A_{n-2}\ar[r]&\Sigma V_{n-2}\ar[r]&N_{n-2}\ar[r]\ar[d,"p"near start, swap]&\Sigma^2 A_{n-2}\ar[ld,dotted]\ar[lld,dotted]\\
\Sigma^2 U_{n-1}\ar[r]&\Sigma^2 C_{n-1}\ar[r]&\Sigma^2 N_{n-1}\ar[r]&\Sigma^3U_{n-1}
\end{tikzcd}
\]
Since $\A$ is connective, we have $\Hom_{\D(\A)}(V_{n-2},\Sigma N_{n-1})=0$ and $\Hom_{\D(\A)}(A_{n-2},\Sigma U_{n-1})=0$. 
Thus we have a morphism $b: A_{n-2}\rightarrow C_{n-1}$ which induces the morphism $p$. 

The map from $\Sigma^{-1}N$ to $\Sigma^n A$ gives rise to a morphism $N_{n-1}\rightarrow \Sigma^2(A)$. 
It induces a map $a:A_{n-1}\rightarrow A$ which is well-defined up to a morphism factoring through $\overline{f}_{n-1}$. 
Thus, we get an element $a_*[X_{n-1}]=\delta_{n-1}\in\mathbb E(C_{n-1},A)$.

Let $u$ denote the composition $C\rightarrow N\rightarrow N_{\geq -n+2}$. 
The map $\Sigma^{n-2}N_{n-2}\rightarrow N_{\geq -n+2}$ gives rise to a canonical map $N_{\geq -n+2}\rightarrow \Sigma^{n}A_{n-2}$. 
It induces a canonical map $v:\Cone(u)\rightarrow \Sigma^{n} A_{n-2}$. 
We have the following commuative diagram
\[
\begin{tikzcd}
\Sigma^{-1}N_{n-1}\ar[d,equal]\ar[r]&\Sigma^{-n}N\ar[r]\ar[d]&\Sigma^{-n}N_{\geq -n+2} \ar[r]\ar[d]&N_{n-1}\ar[d,equal]\\
\Sigma^{-1}N_{n-1}\ar[r]\ar[d,equal]&\Sigma^{-(n-1)}M\ar[r]\ar[d,dashed,"x"]&\Sigma^{-n}\Cone(u)\ar[r]\ar[d,"y"]&N_{n-1}\ar[d,equal]\\
\Sigma^{-1}N_{n-1}\ar[r]&\Sigma A_{n-1}\ar[r]&\Sigma V_{n-1}\ar[r]&N_{n-1}
\end{tikzcd}
\]
where $y$ is the composition $\Sigma^{-n}\Cone(u)\rightarrow A_{n-2}\rightarrow C_{n-1}\rightarrow \Sigma V_{n-1}$.
Since the spaces $\Hom_{\D(\A)}(\Sigma^{-n}N_{\geq -n+2},\Sigma A)$ and $\Hom_{\D(\A)}(\Sigma^{-n+1}C,\Sigma A)$ both vanish, the space 
\[
\Hom_{\D(\A)}(\Sigma^{-n}\Cone(u),\Sigma A)
\]
 also vanishes and hence the composition $\Sigma^{-n+1}M\xrightarrow{x}\Sigma A_{n-1}\xrightarrow{\Sigma a} \Sigma A$ equals to the morphism $\Sigma^{-n+1}(w)$.
 We have the following diagram in $\D(\A)$
 \[
 \begin{tikzcd}
 &&&\Sigma^{-n+1}M\ar[llldd,Rightarrow,"\Sigma^{-n+1}(s)"swap,bend right=6ex]\ar[rrdd,"x",bend left=9ex]\ar[rrrdd,"\Sigma^{-n+1}(w)",bend left=9ex]\ar[ld,Rightarrow]&&&\\
 &&\Sigma^{-n}\Cone(u)\ar[lld,Rightarrow,"\Sigma^{-n+1}(t)"]\ar[d,"\Sigma^{-n}(v)"]&&\Sigma V_{n-1}&&\\
 \Sigma^{-n+1}C&&A_{n-2}\ar[r,"b"]&C_{n-1}\ar[ru]&&\Sigma A_{n-1}\ar[lu,Rightarrow]\ar[r,"\Sigma(a)"]&\Sigma A
 \end{tikzcd}
 \]
which shows that the morphism $w/s$ is equal to the composition of $\Sigma^{-1}(v)/t$ and a morphism $\Sigma^{n-1}A_{n-2}\rightarrow \Sigma^{n}A$. 
 Hence the left $\C$-module $\Ext^n_{\D^b(\A)}(C,-)$ is effaceable for each $n\geq 1$ and $C\in \C$. 
 Similarly one shows that the right $\C$-module $\Ext^n_{\D^b(\A)}(?,A)$ is effaceable for each $n\geq 1$ and $A\in \C$.
 The conclusion follows from Corollary~\ref{cor:effaceablebimodule}.
\end{proof}
Let $\A$ be an exact dg category and $N_{dg}(\A)$ the exact $\infty$-category endowed with the 
corresponding exact structure, cf.~Theorem~\ref{nerve}.
Then the extriangulated structures on $H^0(\A)$ given by \ref{canonicalstructure}, 
which we denote by $(H^0(\A),\mathbb E_{\A},\mathfrak s)$, 
and given by \cite[Theorem 4.22]{NakaokaPalu20}, 
which we denote by $(H^0(\A),\mathbb E_{N_{dg}(\A)},\mathfrak s')$, coincide with each other. 
Indeed, it is straightforward to verify that the canonical map $\mathbb E_{\A}(C,A)\rightarrow \mathbb E_{N_{dg}(\A)}(C,A)$ determines a natural isomorphism of biadditive bifunctors.

To each exact $\infty$-category $\E$, Klemenc \cite{Klemenc22} associates its {\em stable hull} $\D^b_{\infty}(\E)$ ($=\mathcal H^{st}(\E)$ in Klemenc's notation) together with a fully exact fully faithful functor $\eta_{\E}:\E\rightarrow \D^b_{\infty}(\E)$ whose essential image is closed under extensions.
Put $\T=h(\D^b_{\infty}(\E))$ and $\C=h(\E)$. 
The category $\C$ carries a canonical extriangulated structure, cf.~\cite[Theorem 4.22]{NakaokaPalu20}.
Then by construction $\T$ is the Verdier quotient of $\mathcal S\W(\P_{\Sigma,f}(\E))$ by its full triangulated subcategory of acyclic objects.
By the same reasoning as in Proposition~\ref{higher}, we see that $\Ext^n_{\T}(?,-)$ is a weakly effaceable $\C$-$\C$-bimodule for $n\geq 1$ and thus coincides with $\mathbb E^n$.
Let $\E$ be the exact $\infty$-category $N_{dg}(\A)$. 
Let $F:\A\rightarrow \D^b_{dg}(\A)$ be the universal exact morphism into a pretriangulated dg category in $\Hqe$.
Then the morphism $N_{dg}(F): N_{dg}(\A)\rightarrow N_{dg}(\D^b_{dg}(\A))$ induces a morphism $\varphi_{\A}:\D^b_{\infty}(N_{dg}(\A))\rightarrow N_{dg}(\D^b_{dg}(\A))$ of stable $\infty$-categories.
By the above, this morphism is an equivalence of stable $\infty$-categories.

 \begin{theorem}
There is a commutative diagram up to a canonical isomorphism
 \[
 \begin{tikzcd}
 \{\text{Exact dg categories}\}\ar[d,"\D^b_{dg}"swap]\ar[r,"N_{dg}"]&\{\text{Exact $\infty$-categories}\}\ar[d,"{\D^b_{\infty}}"]\ar[ld,Rightarrow,"\varphi"]\\
 \{\text{Pretriangulated dg categories}\}\ar[r,"N_{dg}"swap]&\{\text{Stable $\infty$-categories}\}
 \end{tikzcd}
 \]
 \end{theorem}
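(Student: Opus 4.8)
The plan is to assemble the pointwise morphisms $\varphi_{\A}$ exhibited just above the statement into a filled square, to record that each of them is an equivalence, and then to check naturality and coherence. First I would recall the construction of $\varphi_{\A}$. Let $F\colon\A\rightarrow\D^b_{dg}(\A)$ be the universal exact morphism of Theorem~\ref{main}. By Theorem~\ref{nerve}, the target $N_{dg}(\D^b_{dg}(\A))$ is a stable $\infty$-category carrying the maximal exact structure, and $N_{dg}(F)$ is an exact functor of exact $\infty$-categories. Since Klemenc's unit $\eta_{N_{dg}(\A)}\colon N_{dg}(\A)\rightarrow\D^b_{\infty}(N_{dg}(\A))$ is initial among exact functors from $N_{dg}(\A)$ into stable $\infty$-categories~\cite{Klemenc22}, it produces a morphism of stable $\infty$-categories $\varphi_{\A}\colon\D^b_{\infty}(N_{dg}(\A))\rightarrow N_{dg}(\D^b_{dg}(\A))$, unique up to contractible choice, with $\varphi_{\A}\circ\eta_{N_{dg}(\A)}\simeq N_{dg}(F)$.

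Next I would check that $\varphi_{\A}$ is an equivalence; since both sides are stable, it suffices to see that $h(\varphi_{\A})$ is an equivalence of triangulated categories. Write $\C=H^{0}(\A)$. On the one hand $h(N_{dg}(\D^b_{dg}(\A)))\iso\D^b(\A)$, which by Theorem~\ref{main} contains $\C$ as a full extension-closed subcategory generating it as a triangulated category; on the other hand $h(\D^b_{\infty}(N_{dg}(\A)))$ contains $\C$ (via $\eta_{N_{dg}(\A)}$) as a full extension-closed generating subcategory, by Klemenc's construction. Moreover $h(\varphi_{\A})$ restricts to the identity of $\C$ through these embeddings, so it is essentially surjective. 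For full faithfulness I would run the standard dévissage: every object of either triangulated category is built from shifts of objects of $\C$ by finitely many cones, so by a double induction (using that $h(\varphi_{\A})$ is triangulated and the five lemma for the long exact $\Hom$-sequences) full faithfulness reduces to the assertion that $h(\varphi_{\A})$ induces bijections
\[
\Hom(C,\Sigma^{n}C')\;\xrightarrow{\ \sim\ }\;\Hom\bigl(h(\varphi_{\A})C,\,\Sigma^{n}h(\varphi_{\A})C'\bigr)
\]
for all $C,C'\in\C$ and all $n\in\mathbb Z$. For $n=0$ this holds because $\varphi_{\A}$ restricted to $\C$ is fully faithful. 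For $n>0$ it follows from Proposition~\ref{higher} on the target side together with its analogue on the stable-hull side recalled after that proposition, both identifying $\Ext^{n}$ with $\mathbb E^{n}(C,C')$. For $n=-m<0$, both $\Hom$-groups identify with $H^{-m}\bigl(\A(C,C')\bigr)$: on the target side via $\Hom_{\D^b(\A)}(C,\Sigma^{-m}C')\iso\Hom_{\tr(\A)}(C,\Sigma^{-m}C')=H^{-m}\A(C,C')$, using Lemma~\ref{lem:tstructureff} and Corollary~\ref{t}; on the source side because $\eta_{N_{dg}(\A)}$ is fully faithful, so $\Hom_{h(\D^b_{\infty}(N_{dg}(\A)))}(C,\Sigma^{-m}C')\iso\pi_{m}\Map_{N_{dg}(\A)}(C,C')=H^{-m}\A(C,C')$, and the embedding theorem Theorem~\ref{main}~1) shows these identifications are the ones compatible with $\varphi_{\A}$.

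Finally I would promote $\A\mapsto\varphi_{\A}$ to the $2$-cell $\varphi$ filling the square. For an exact morphism $G\colon\A\rightarrow\B$ in $\Hqe$, the two composites along the naturality square are morphisms of stable $\infty$-categories out of $\D^b_{\infty}(N_{dg}(\A))$; after precomposing with $\eta_{N_{dg}(\A)}$ both become, up to canonical equivalence, $N_{dg}(F_{\B}\circ G)\simeq N_{dg}(\D^b_{dg}(G)\circ F_{\A})$, by naturality of $\eta$, functoriality of $\D^b_{dg}$ and $\D^b_{\infty}$, and the universal property of $F$. By the uniqueness clause in Klemenc's universal property they therefore agree up to a canonical equivalence, which gives the required invertible $2$-morphism together with its coherence. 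The step I expect to be the main obstacle is precisely this last one: turning the phrase ``commutative diagram up to canonical isomorphism'' into a rigorous statement about the two large (and, strictly speaking, $2$-categorical) functors involved requires handling the universal properties on the dg side and on the $\infty$-categorical side simultaneously and checking that the chosen pointwise factorizations vary coherently; by contrast the dévissage and the negative-degree comparison in the middle paragraph are routine once Theorem~\ref{main} and the effaceability results are in hand.
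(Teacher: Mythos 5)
Your proposal is correct and follows essentially the same route as the paper: the paper also obtains $\varphi_{\A}$ from Klemenc's universal property applied to $N_{dg}(F)$ and proves it is an equivalence by identifying $\Ext^n$ on both sides with $\mathbb E^n$ via the weak-effaceability argument of Proposition~\ref{higher} and its stable-hull analogue. Your write-up merely makes explicit the d\'evissage, the degree-zero and negative-degree comparisons, and the naturality of $\varphi$, all of which the paper leaves implicit in the phrase ``by the above, this morphism is an equivalence.''
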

\begin{remark}
 In the preprint \cite{BorveTrygsland21}, for an exact $\infty$-category $\E$, B{\o}rve--Trygsland defined an $\infty\mbox{-}$category $\E xt_{\E}^{n}(B,A)$ for each pair of objects $(B, A)$ in $\E$ and $n\geq 0$ whose $\pi_0$ is naturally isomorphic to the $n\mbox{-}$th extension $\Ext^n_{h(\E)}(B,A)$ of the extriangulated category $h(\E)$.
\end{remark}
\begin{example}\label{exm:twotermderived}
Let $A$ be a  $k$-algebra and $n\geq 1$ an integer. 
Let $\mathcal{H}^{[-n,0]}(\proj A)$ be the full subcategory of $\T=\mathcal{H}^b(\proj A)$ consisting of complexes of projectives concentrated in degrees from $-n$ to $0$. 
Let $\A'=\C^{[-n,0]}_{dg}(\proj A)$ be the canonical dg enhancement of $\mathcal{H}^{[-n,0]}(\proj A)$. 
We endow it with the exact dg structure inherited from $\C^{b}_{dg}(\proj A)$ as an extension-closed subcategory. 
Let $\A=\tau_{\leq 0}\A'$ with the induced exact dg structure. 
We claim that $\D^b_{dg}(\A)$ is quasi-equivalent to $\C^{b}_{dg}(\proj A)$. 
Indeed, since $\C^{b}_{dg}(\proj A)$ is pretriangulated, the exact inclusion 
$\C^{[-n,0]}_{dg}(\proj A) \to \C^{b}_{dg}(\proj A)$ extends to a canonical exact dg functor 
$\D^b_{dg}(\A) \to \C^{b}_{dg}(\proj A)$. Its image contains $\proj A$ so to show that it is an
equivalence, it suffices to show that it is fully faithful. For this, observe that
the category $\mathcal{H}^{[-n,0]}(\proj A)$ has enough projective objects and the full 
subcategory of projective objects is $\proj A$. 
Then for each $P^{\cdot}\in\mathcal{H}^{[-n,0]}(\proj A)$ we have a conflation 
\[
\Sigma^{-1}P^{\leq -1}\rightarrow P^0\rightarrow P^{\cdot}.
\] 
Let $Q$ be an object in $\A'$. 
Then for $i\geq 1$ we have 
\[
\mathbb E^{i}(P^0, Q)\xrightarrow{\sim}\Ext_{\T}^{i}(P^0,Q)=0.
\] 
By induction we have 
\[
\mathbb E^{i}(\Sigma^{-1}P^{\leq -1},Q)\xrightarrow{\sim}\Ext_{\T}^{i}(\Sigma^{-1}P^{\leq -1},Q).
\] 
Then by \cite{GorskyNakaokaPalu21} Corollary 3.21, $\mathbb E^{i}(P^{\cdot},Q)$ is the cokernel of the map 
\[
\mathbb E^{i-1}(P^0,Q)\rightarrow \mathbb E^{i-1}(\Sigma^{-1}P^{\leq -1},Q).
\] 
Since the space $\Ext^i_{\T}(P^0,Q)$ vanishes, we have $\mathbb E^{i}(P^{\cdot},Q)\xrightarrow{\sim}\Ext^i_{\T}(P^{\cdot},Q)$.
\end{example}
\subsection{Quotients with respect to projective-injective objects}

\begin{lemma}Let $\C$ be a connective dg category and $\C'\subseteq\C$ a full dg subcategory. 
Let $\B=\tr(\C)/\tr(\C')$. 
Then for $A,B\in \C$, we have $\Hom_{\B}(A,\Sigma^n B)=0$ for $n>0$ and $\Hom_{\B}(A,B)=(H^0(\C)/[\C'])(A,B)$. 
\end{lemma}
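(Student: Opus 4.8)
The plan is to describe the Hom spaces of $\mathcal B=\tr(\C)/\tr(\C')$ between objects of $\C$ as an ideal quotient, by exploiting the canonical co-t-structures of Proposition~\ref{cot} together with Lemma~\ref{lem:tstructureff}.

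First I would reduce to the case where $\C$ is strictly connective. The cofibrant replacement $Q(\C)\to\C$ is the identity on objects and a quasi-isomorphism on Hom complexes, so $Q(\C)$ is again connective and $\tau_{\leq 0}Q(\C)\hookrightarrow Q(\C)$ is a quasi-equivalence; replacing $\C$ by $\tau_{\leq 0}Q(\C)$ and $\C'$ by the full subcategory on the same objects changes neither $\tr(\C)/\tr(\C')$ up to triangle equivalence nor $H^0(\C)/[\C']$. So assume $\C$, hence $\C'$, strictly connective, and put $\mathcal T=\tr(\C)$, $\mathcal S=\tr(\C')$. Let $\overline{\mathcal S}\subseteq\mathcal T$ be the closure of $\mathcal S$ under direct summands, a thick subcategory; the canonical functor $\mathcal T/\mathcal S\to\mathcal T/\overline{\mathcal S}$ is fully faithful, so it is enough to compute $\Hom_{\mathcal T/\overline{\mathcal S}}(A,\Sigma^nB)$. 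By Proposition~\ref{cot}, $\mathcal T$ carries its canonical co-t-structure and $\mathcal S$ a bounded one; by \cite{Bondarko10} the latter extends to a bounded co-t-structure on $\overline{\mathcal S}$ whose coheart $\mathcal P:=\overline{\mathcal S}^{w\leq 0}\cap\overline{\mathcal S}^{w\geq 0}$ is the closure of $\C'$ under direct summands, and in which $\overline{\mathcal S}^{w\leq 0}$ (resp.~$\overline{\mathcal S}^{w\geq 0}$) consists of the retracts of iterated cones of objects $\Sigma^rC'$ with $C'\in\C'$ and $r\geq0$ (resp.~$r\leq0$).

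The crux is to pick the right torsion pair. The weight decomposition exhibits $(\mathcal X,\mathcal Y):=(\overline{\mathcal S}^{w\geq0},\overline{\mathcal S}^{w\leq-1})$ as a torsion pair on $\overline{\mathcal S}$ in the sense of \cite[Definition 2.2]{IyamaYoshino08} --- the orthogonality $\Hom(\mathcal X,\mathcal Y)=0$ is the co-t-structure orthogonality axiom after one shift, and $\overline{\mathcal S}=\mathcal X\ast\mathcal Y$ is the weight decomposition --- and one computes $\mathcal X\cap\Sigma^{-1}\mathcal Y=\overline{\mathcal S}^{w\geq0}\cap\overline{\mathcal S}^{w\leq0}=\mathcal P$. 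I would then verify the two hypotheses of Lemma~\ref{lem:tstructureff} for the objects $A$ and $\Sigma^nB$ with $n\geq0$, using connectivity of $\C$: since $\Hom_{\mathcal T}(A,\Sigma^rC')=H^r(\C(A,C'))=0$ for $r\geq1$, a finite-weight-filtration argument (and passage to retracts) gives $\Hom_{\mathcal T}(A,\mathcal Y)=0$, i.e.~$A\in{}^{\perp}\mathcal Y$; similarly $\Hom_{\mathcal T}(\Sigma^{-r}C',\Sigma^{n+1}B)=H^{n+1+r}(\C(C',B))=0$ for $r\geq0$ and $n\geq0$, giving $\Hom_{\mathcal T}(\mathcal X,\Sigma^{n+1}B)=0$, i.e.~$\Sigma^nB\in\Sigma^{-1}\mathcal X^{\perp}$.

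Lemma~\ref{lem:tstructureff} then yields a bijection $\Hom_{\mathcal T/[\mathcal P]}(A,\Sigma^nB)\iso\Hom_{\mathcal T/\overline{\mathcal S}}(A,\Sigma^nB)=\Hom_{\mathcal B}(A,\Sigma^nB)$ for every $n\geq0$, and it remains to identify the left-hand side. From the description of the morphism complexes of $\pretr(\C)$ one has $\Hom_{\tr(\C)}(A,\Sigma^nB)=H^n(\C(A,B))$, which vanishes for $n>0$ by connectivity and equals $H^0(\C)(A,B)$ for $n=0$; hence $\Hom_{\mathcal T/[\mathcal P]}(A,\Sigma^nB)=0$ for $n>0$, while for $n=0$ it is $H^0(\C)(A,B)$ modulo the morphisms factoring through an object of $\mathcal P$, and a retract argument identifies that ideal with $[\C']$ on $H^0(\C)(A,B)$. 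This gives both assertions. I expect the main obstacle to be exactly the bookkeeping of the third paragraph: the naive torsion pair $(\overline{\mathcal S}^{w\geq1},\overline{\mathcal S}^{w\leq0})$ does not place the coheart objects $A,B$ into the orthogonals required by Lemma~\ref{lem:tstructureff} (they are not left orthogonal to $\overline{\mathcal S}^{w\leq0}$), so one is forced to the shifted pair above, and one must be careful that extending the co-t-structure of $\tr(\C')$ to its summand-closure $\overline{\mathcal S}$ genuinely produces the coheart $\mathcal P$ claimed --- the one spot where an external result is invoked.
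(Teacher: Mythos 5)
Your argument is correct, but it reaches the conclusion by a genuinely different route from the paper. You pass to the thick closure $\overline{\mathcal S}$ of $\tr(\C')$, extend the canonical co-t-structure to it (Bondarko), read off the torsion pair $(\overline{\mathcal S}^{w\geq 0},\overline{\mathcal S}^{w\leq -1})$ with $\mathcal P$ the coheart, and then invoke Lemma~\ref{lem:tstructureff} as a black box to get the bijection $\Hom_{\T/[\mathcal P]}(A,\Sigma^nB)\iso\Hom_{\B}(A,\Sigma^nB)$ for all $n\geq 0$ at once. The paper instead argues directly with roofs: given $A\Leftarrow M\to \Sigma^nB$ with $P=\Cone(s)\in\tr(\C')$, it takes a weight decomposition of $P$ \emph{inside} $\tr(\C')$, uses that $A$ lies in the coheart of $\tr(\C)$ to replace $P$ by its non-negative part, and then the orthogonality $\Hom(\Sigma^{-1}P,\Sigma^nB)=0$ shows every roof is equivalent to a genuine morphism of $\tr(\C)$; the $n=0$ kernel is then identified by factoring through the coheart of $\tr(\C')$. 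The two proofs rest on exactly the same two vanishing computations ($\Hom_{\tr(\C)}(A,\Sigma^{r}C')=0$ for $r\geq 1$ and $\Hom_{\tr(\C)}(\Sigma^{-r}C',\Sigma^{n+1}B)=0$ for $r,n\geq 0$), so the mathematical content is the same; what your packaging buys is a single uniform statement covering surjectivity and injectivity simultaneously, and it mirrors how the paper itself later uses Lemma~\ref{lem:tstructureff} (with a t-structure on $\N$) in the proof of Theorem~\ref{main}. What it costs is the extra input you correctly flag: Lemma~\ref{lem:tstructureff} requires $\mathcal S$ thick, so you must extend the bounded weight structure from $\tr(\C')$ to its summand closure and identify the coheart there --- a nontrivial result of Bondarko that the paper's direct roof argument avoids entirely, since it only ever takes weight decompositions of objects that genuinely lie in $\tr(\C')$. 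Both are valid; the paper's is shorter and more self-contained, yours is more systematic.
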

\begin{proof}Since $\C$ is connective, by Proposition \ref{cot} we have a canonical co-t-structure $(\T^{\geq 0}, \T^{\leq 0})$ on $\T{\coloneqq}\tr(\C)$ which restricts to a co-t-structure on $\tr(\C')$. 

Suppose we have a morphism in $\B$ as follows
\[
\begin{tikzcd}&M\ar[ld,Rightarrow,"s"swap]\ar[rd,"b"]&\\
A&&\Sigma^nB
\end{tikzcd}
\]
where $s$ and $b$ are morphisms in $\tr(\C)$ and the cone of $s$ lies in $\tr(\C')$. 

Take a weight decomposition of $P=\mathrm{Cone}(s)$: $\sigma_{>-1}(P)\rightarrow P\rightarrow \sigma_{\leq -1}P\rightarrow \Sigma \sigma_{\geq -1}P$. 
Since $A$ lies in the coheart of the co-t-structure, $A\rightarrow P$ factors through $\sigma_{>-1}(P)$. 
Hence we may assume $\Cone(s)\in \T^{>-1}$. 

Since for $n\geq 0$ we have $\Sigma^{n}B\in \T^{-n}$, the map $\Sigma^{-1}\Cone(s)\rightarrow\Sigma^{n}B$ vanishes and hence the map $b$ factors through $A$. 
So for $n>0$, we have $\Hom_{\B}(A,\Sigma^{n}B)=0$. 

Now assume $n=0$. 
If a map $f:A\rightarrow B$ in $\tr(\C)$ factors through an object in $\tr(\C')$, then it factors through an object in the coheart, i.e. a retract of an object $W\in \C'$ in $\tr(\C')$. 
Therefore it factors through an object in $\C'$. 

So $\Hom_{\B}(A,B)=(H^0(\C)/[\C'])(A,B)$.  
\end{proof}

\begin{definition}An object $P$ in $H^0(\A)$ is said to be {\em projective} if for each conflation 
\[
\begin{tikzcd}
X\ar[r,"f"]\ar[rr,"h"swap, bend right=8ex]&Y\ar[r,"j"]&Z
\end{tikzcd} 
\]
the morphism $\overline{\jmath}$ induces a surjection $\Hom_{\D(\A)}(P,Y)\rightarrow \Hom_{\D(\A)}(P,Z)$, or equivalently, if $P$ is projective in the extriangulated category $H^0(\A)$.

Dually we define {\em injective} objects in $H^0(\A)$.
\end{definition}
\begin{example}\label{exm:exactdgpretriangulated}
Let $\A$ be a connective dg category such that $H^0(\A)$ is additive. 
Suppose that $(\A,\mathcal S)$ is an exact dg category with the associated extriangulated category $H^0(\A)$ being triangulated. 
We show that $\A$ is quasi-equivalent to the truncation of a pretriangulated dg category.

Since each morphism can be completed to a triangle, each morphism $f:A\rightarrow B$ in $Z^0(\A)$ admits a homotopy kernel and a homotopy cokernel which are both homotopy short exact sequences.
Moreover $\mathcal S$ must be the class of all homotopy short exact sequences.
The canonical functor $H^0(\A)\rightarrow \D^b(\A)=H^0(\D^b_{dg}(\A))$ is a fully faithful triangle functor and its essential image contains a family of generators for $\D^b(\A)$. 
Hence it is a triangle equivalence. 
So the connective dg category $\A$ is quasi-equivalent to $\tau_{\leq 0}\D^b_{dg}(\A)$.
\end{example}

\begin{lemma}\label{lem:leftorthogonal}
When $\A$ is connective, an object $P$ in $H^0(\A)$ is projective if and only if $\Hom_{\tr(\A)}(P,\mathcal N)=0$ where $\mathcal N$ is the full triangulated subcategory of $\tr(\A)$ generated by the total dg modules of conflations.
\end{lemma}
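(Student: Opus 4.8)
The plan is to argue via the bounded $t$-structure on $\mathcal N$ supplied by Corollary~\ref{t} together with the derived Yoneda lemma. Throughout I identify $P$ with the representable dg module $P^{\wedge}$ and use that the Yoneda functor $\tr(\A)\hookrightarrow\D(\A)$ is fully faithful, so that $\Hom_{\tr(\A)}(P,N)=\Hom_{\D(\A)}(P^{\wedge},N)$ for $N\in\mathcal N$. The basic observation I would record first is that, since $P^{\wedge}$ is cofibrant, $\RHom_{\A}(P^{\wedge},M)\simeq M(P)$ for every dg module $M$, whence $\Hom_{\D(\A)}(P^{\wedge},\Sigma^{j}M)\cong H^{j}(M(P))$; in particular this group vanishes for every $j\neq 0$ as soon as $M$ lies in the heart $\mathcal H$ of the canonical $t$-structure, and it equals $M(P)$ when $j=0$ and $M\in\mathcal H$. (One could instead combine Theorem~\ref{main}, Proposition~\ref{higher} and Lemma~\ref{lem:tstructureff}, but the direct $t$-structure argument is cleaner.)

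Next I would reduce the vanishing of $\Hom_{\tr(\A)}(P,\mathcal N)$ to the vanishing of $\Hom_{\tr(\A)}(P,\tilde N)$ for $\tilde N$ ranging over the totalizations of conflations. By Corollary~\ref{t} the canonical $t$-structure restricts to a bounded $t$-structure on $\mathcal N$ whose heart is the subcategory $\tilde{\mathcal N}$ of defective objects, and by Definition~\ref{def:defective} these are exactly, up to isomorphism, the totalizations of conflations. Given $N\in\mathcal N^{[a,b]}$, I would induct on $b-a$: for $a=b$ the object $N$ is $\Sigma^{-a}$ of a defective object, so $\Hom_{\tr(\A)}(P,N)=H^{-a}(\tilde N(P))$ which vanishes for $a\neq 0$ by the basic observation (as $\tilde N\in\mathcal H$) and equals $\Hom_{\tr(\A)}(P,\tilde N)$ for $a=0$; for $b>a$ one uses the triangle $N'\to N\to\Sigma^{-b}H^{b}(N)\to\Sigma N'$ with $N'\in\mathcal N^{[a,b-1]}$ and $H^{b}(N)$ a defective object, together with the induction hypothesis and the previous case, applied to the long exact sequence obtained from $\Hom_{\tr(\A)}(P,-)$. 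Hence $\Hom_{\tr(\A)}(P,\mathcal N)=0$ if and only if $\Hom_{\tr(\A)}(P,\tilde N)=0$ for every totalization $\tilde N$ of a conflation.

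Finally I would identify $\Hom_{\tr(\A)}(P,\tilde N)$ with a cokernel of Hom-groups. If $\tilde N$ is the totalization of a conflation $A\xrightarrow{f}B\xrightarrow{j}C$, then the exact sequence in $\mathcal H$ recorded in the remark after Lemma~\ref{leftexactsequence} exhibits $\tilde N$ as the cokernel of $H^{0}(B^{\wedge})\xrightarrow{H^{0}(\overline{\jmath})}H^{0}(C^{\wedge})$ in $\mathcal H\cong\Mod H^{0}(\A)$. Since $P^{\wedge}$ is representable, evaluation at $P$ is exact, and combining this with the basic observation (with $j=0$) yields $\Hom_{\tr(\A)}(P,\tilde N)=\tilde N(P)=\cok\big(\Hom_{H^{0}(\A)}(P,B)\xrightarrow{\overline{\jmath}_{*}}\Hom_{H^{0}(\A)}(P,C)\big)$. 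Therefore $\Hom_{\tr(\A)}(P,\mathcal N)=0$ if and only if $\overline{\jmath}_{*}$ is surjective for every conflation, which — since $\Hom_{\D(\A)}(P^{\wedge},-)$ restricts to $\Hom_{H^{0}(\A)}(P,-)$ on representables — is precisely the definition of $P$ being projective in $H^{0}(\A)$. The only mildly delicate point is the first reduction, namely ensuring that the $t$-structure on $\mathcal N$ is bounded with heart the defective objects; but this is entirely furnished by Corollary~\ref{t}, so the remaining work is formal.
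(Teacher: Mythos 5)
Your proof is correct and follows essentially the same route as the paper's: both rest on the facts that the totalization of a conflation lies in the heart and computes the defect (cokernel of $\Hom(-,B)\to\Hom(-,C)$), that representables have no nonzero maps into nonzero shifts of heart objects, and a dévissage along the bounded $t$-structure of Corollary~\ref{t} to reduce from all of $\mathcal N$ to totalizations. The paper's proof merely leaves the dévissage step implicit, which you have usefully spelled out.
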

\begin{proof}Notice that the totalization $N$ of a conflation
\[
\begin{tikzcd}
X\ar[r,"f"]\ar[rr,"h"swap,bend right=6ex]&Y\ar[r,"j"]\ar[r]&Z
\end{tikzcd} 
\]
lies in the heart of the canonical t-structure of $\D(\A)$ and is the cokernel of the map
\[
\Hom_{\D(\A)}(-,Y)\rightarrow \Hom_{\D(\A)}(-,Z).
\]
Also for objects $A'$ in $H^0(\A)$, we have $\Hom_{\tr(\A)}(A', \Sigma^i N)=0$ for $i\neq 0$. 

So $P$ is projective if and only if $\Hom_{\D(\A)}(P,N)=0$ for each totalization $N$ of a conflation.
\end{proof}

Let $F:\A\rightarrow \D^b_{dg}(\A)$ be the universal exact morphism from a connective exact dg category $\A$ into a pretriangulated dg category $\D^b_{dg}(\A)$. 
Recall that we put $\D^b(\A)=H^0(\D^b_{dg}(\A))$. 
Let $(\T^{>0}, \T^{\leq 0})$ be the canonical co-t-structure  on $\T=\tr(\A)$ introduced in Proposition \ref{cot}.

Since we have
\[
\T^{\geq 0}=\bigcup_{i\geq0}\mathcal{H}[-i]*\mathcal{H}[-i+1]*\cdots*\mathcal{H}
\]
 where $\mathcal{H}$ is the coheart of the co-t-structure consisting of direct summands of $A$ in $\tr(\A)$ for $A\in\A$, it follows from Theorem~\ref{main} that $\tau_{\leq 0}\RHom_{\A}(A, M)$ is quasi-isomorphic to $\tau_{\leq 0}\Hom_{\D^b_{dg}(\A)}(A,M)$ for $A\in \A$ and $M\in \T^{\geq 0}$. 

If $P$ is a projective object in the extriangulated category $H^0(\A)$, then by Lemma~\ref{lem:leftorthogonal} $P$ lies in the left orthogonal of $\N$ and we have 
\[
\Hom_{\tr(\A)}(P,X)\xrightarrow{\sim} \Hom_{\D^b(\A)}(FP, X)
\]
 for any $X\in\pretr(\A)$. 

\begin{lemma}\label{lem:quotientprojectiveinjective}
Let $\P$ be a full dg subcategory of $\A$ consisting of projective-injective objects in $H^0(\A)$. 
Let $\B=\tr(\A)/\tr(\P)$ and $\B'=\D^b(\A)/\tr(\P)$.  
Then for $A,B\in\A$, the following statements hold:
\begin{itemize}
\item[1)]The canonical map $\Hom_{\B}(A, \Sigma^n B)\rightarrow\Hom_{\B'}(A,\Sigma^n B)$ is a bijection for $n\leq 0$.
\item[2)]The canonical map $\Hom_{\D^b(\A)}(A,\Sigma^{n}B)\rightarrow\Hom_{\B'}(A,\Sigma^nB)$ is a bijection for $n\geq 1$.
\end{itemize}
\end{lemma}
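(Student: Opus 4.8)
The plan is to realise $\B$, $\D^b(\A)$ and $\B'$ as Verdier quotients of $\tr(\A)$ and to disentangle the effect of killing $\N$ from that of killing $\tr(\P)$, using that these two subcategories are Hom-orthogonal inside $\tr(\A)$. First I would record that by Lemma~\ref{lem:leftorthogonal} projectivity of the objects of $\P$ gives $\tr(\P)\subseteq{}^{\perp}\N$, hence $\Hom_{\tr(\A)}(\tr(\P),\N)=0$; dually, injectivity of the objects of $\P$ gives $\Hom_{\tr(\A)}(\N,\Sigma^{i}P^{\wedge})=0$ for all $i$ and all $P\in\P$. Consequently $\tr(\A)\to\D^b(\A)=\tr(\A)/\N$ is fully faithful on $\tr(\P)$ and $\tr(\A)\to\B=\tr(\A)/\tr(\P)$ is fully faithful on $\N$, so their images $\overline{\tr(\P)}\subseteq\D^b(\A)$ and $\overline{\N}\subseteq\B$ are again equivalent to $\tr(\P)$, resp.\ $\N$, and both present $\B'$: namely $\B'=\D^b(\A)/\overline{\tr(\P)}=\B/\overline{\N}$.

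For part (2), i.e.\ $n\geq1$, I would work in $\D^b(\A)$ and quotient by $\overline{\tr(\P)}\simeq\tr(\P)$. Replacing $\A$ by $\tau_{\leq0}\A$ (Remark~\ref{truncationexactdgstructure}) we may assume $\P$ strictly connective, so Proposition~\ref{cot} equips $\tr(\P)$ with its canonical co-t-structure, and $(\tr(\P)^{w\geq1},\tr(\P)^{w\leq0})$ is then a torsion pair in the sense of Lemma~\ref{lem:tstructureff}, with core $\tr(\P)^{w=1}=\Sigma^{-1}(\mathrm{coheart})$. I would invoke Lemma~\ref{lem:tstructureff} with $T=\D^b(\A)$, $\mathcal S=\overline{\tr(\P)}$ and the pair of objects $(\Sigma^{-n}A,B)$. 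Its hypotheses $\Sigma^{-n}A\in{}^{\perp}\tr(\P)^{w\leq0}$ and $B\in\Sigma^{-1}(\tr(\P)^{w\geq1})^{\perp}$ reduce, as $\tr(\P)^{w\leq0}$ is generated by the $\Sigma^{r}P$ with $r\geq0$ and $\tr(\P)^{w\geq1}$ by the $\Sigma^{-s}P$ with $s\geq1$, to $\mathbb E^{n+r}(A,P)=0$ $(r\geq0)$ and $\mathbb E^{s+1}(P,B)=0$ $(s\geq1)$ through Proposition~\ref{higher}; these hold because $n\geq1$ and because $\mathbb E^{\geq1}(-,P)$ and $\mathbb E^{\geq1}(P,-)$ vanish for the projective-injective object $P$. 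The lemma then yields $\Hom_{\D^b(\A)/[\tr(\P)^{w=1}]}(\Sigma^{-n}A,B)\iso\Hom_{\B'}(\Sigma^{-n}A,B)$; and the left-hand ideal quotient changes nothing, since any morphism $\Sigma^{-n}A\to B$ factoring through an object of $\tr(\P)^{w=1}$ is zero, $\Hom_{\D^b(\A)}(\tr(\P)^{w=1},B)$ being a retract of $\mathbb E^{1}(P,B)=0$. Rewriting, $\Hom_{\D^b(\A)}(A,\Sigma^{n}B)\iso\Hom_{\B'}(A,\Sigma^{n}B)$, which is (2).

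For part (1), i.e.\ $n\leq0$, I would dually work in $\B=\tr(\A)/\tr(\P)$ and quotient by $\overline{\N}\simeq\N$, which inherits the bounded t-structure of Corollary~\ref{t}; regarded as a torsion pair $(\N^{\leq0},\N^{\geq1})$ it has \emph{zero} core, so Lemma~\ref{lem:tstructureff} compares $\Hom_{\B}$ directly with $\Hom_{\B'}$. Applying it with $T=\B$, $\mathcal S=\overline{\N}$ and the pair $(A,\Sigma^{n}B)$, one needs $A\in{}^{\perp}\overline{\N}^{\,\geq1}$ and $\Sigma^{n+1}B\in(\overline{\N}^{\,\leq0})^{\perp}$ in $\B$. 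The analogous statements hold in $\tr(\A)$: every object of $\N$ has defective cohomology (Corollary~\ref{t}); $\Hom_{\tr(\A)}(A^{\wedge},\Sigma^{i}H)=0$ for $i\neq0$ and any heart object $H$, by Yoneda; and $\Hom_{\tr(\A)}(\Sigma^{k}H,\Sigma^{m}B^{\wedge})=0$ whenever $m-k\leq1$ and $H$ is a defect, by (the proof of) Theorem~\ref{main}. Since $\N^{\geq1}$ is built from negative shifts of heart objects and $\N^{\leq0}$ from non-negative shifts, and $n\leq0$, these give the two required vanishings in $\tr(\A)$; they then descend to $\B$ because $\tr(\P)$ is Hom-orthogonal to $\N$ on the side appearing in the calculus of fractions for $\tr(\A)\to\B$ (projectivity for the first, injectivity for the second), so that calculus leaves the relevant Hom-groups unchanged.

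The hard part is precisely this last propagation: checking that the orthogonality hypotheses of Lemma~\ref{lem:tstructureff} survive passage to $\B$ (and, for (2), to $\D^b(\A)$) rather than merely holding in $\tr(\A)$. This is where one runs through the calculus of fractions carefully and where both halves of the projective-injective hypothesis on $\P$ are genuinely needed, through Lemma~\ref{lem:leftorthogonal} and its dual. One must also keep track of the difference between the Verdier quotients defining $\B$, $\B'$ and the ideal quotient $T/[\mathrm{core}]$ in Lemma~\ref{lem:tstructureff}: this is harmless here because the core is zero for the t-structure used in part (1) and is killed without cost (via $\mathbb E^{1}(P,B)=0$) for the co-t-structure used in part (2).
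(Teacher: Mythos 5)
Your argument is correct, but it reaches the conclusion by a genuinely different route from the paper's. The paper works directly with roofs over $\D^b(\A)$: given a fraction $b/s$ with $\Cone(s)\in\tr(\P)$, it uses injectivity of $\P$ to lift the denominator $s$ back to $\tr(\A)$, normalizes $\Cone(s)$ into $\tr(\A)^{\geq 0}$ via the co-t-structure of Proposition~\ref{cot}, and then, for part (1), compares the two long exact sequences attached to the triangle $\Sigma^{-1}P\to M\to A\to P$ in $\tr(\A)$ and in $\D^b(\A)$, while for part (2) it simply uses $\Hom_{\D^b(\A)}(\Sigma^{-1}P,\Sigma^n B)=0$ to strip off the denominator. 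You instead package everything into two applications of Lemma~\ref{lem:tstructureff}: once for the co-t-structure on $\overline{\tr(\P)}\subseteq\D^b(\A)$ and once for the t-structure on $\overline{\N}\subseteq\B$ coming from Corollary~\ref{t}, after observing that the two-sided orthogonality $\Hom_{\tr(\A)}(\tr(\P),\N)=0=\Hom_{\tr(\A)}(\N,\tr(\P))$ (Lemma~\ref{lem:leftorthogonal} and its dual) makes $\tr(\A)\to\D^b(\A)$ fully faithful on $\tr(\P)$ and $\tr(\A)\to\B$ fully faithful on $\N$, so that the relevant structures and Hom-vanishings descend to the quotients. This buys a cleaner and more symmetric argument — part (1) becomes formally parallel to the comparison $\Hom_{\tr(\A)}(A,\Sigma^{\leq 0}B)\iso\Hom_{\D^b(\A)}(A,\Sigma^{\leq 0}B)$ already extracted from Lemma~\ref{lem:tstructureff} in the proof of Theorem~\ref{main} — at the cost of verifying the lemma's hypotheses inside the quotient categories rather than in $\tr(\A)$, which you do correctly and which is indeed, as you say, the delicate step. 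Two points deserve to be made explicit in a write-up: Lemma~\ref{lem:tstructureff} is stated for a thick $\mathcal S$ with summand-closed torsion classes, so one should either pass to thick closures (harmless for the Verdier quotients; the classes of Corollary~\ref{t} are summand-closed since defective objects form a wide subcategory) or note that the proof does not use thickness; and the vanishing $\Hom_{\tr(\A)}(H,\Sigma^{m}B)=0$ for $m\leq 1$ and $H$ a defect is not a formal degree count but uses the homotopy-exactness of the conflation — it is, however, exactly the fact recorded at the start of the proof of Theorem~\ref{main}, so your citation is the right one.
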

\begin{proof}Consider a morphism in $\B'$
\[
\begin{tikzcd}&M\ar[ld,Rightarrow,"s",swap]\ar[rd,"b"]&\\
A&&\Sigma^nB
\end{tikzcd}
\]
where $s$ and $b$ are morphisms in $\D^b(\A)$ and $P=\Cone(s)\in \tr(\P)$. 

Put $\T=\tr(\A)$. Since objects in $\P$ are injective, the quotient functor $F:\T\rightarrow \D^b(\A)=\T/\mathcal N$ induces bijections $\Hom_{\T}(X,Q)\xrightarrow{\sim} \Hom_{\D^b(\A)}(X,Q)$ for $X\in \T$ and $Q\in \tr(\P)$. We may assume $s=F(s')$ for some morphism $s'$ in $\T$. 
Let $(\T^{\geq 0}, \T^{\leq 0})$ be the canonical co-t-structure on $\T$. 
Since $A\in \T^{\geq 0}$, we may assume $P\in \T^{\geq 0}$. 

1) Assume $n\leq 0$. Consider the triangle $\Sigma^{-1}P\rightarrow M\xrightarrow{s} A\rightarrow P$ in $\T$ and the induced long exact sequence
\[
\begin{tikzcd}
[cramped,sep=small]\cdots\ar[r]&\T(M,\Sigma^{-1}B)\ar[r]\ar[d]&\T(P,B)\ar[r]\ar[d,"\sim"]&\T(A,B)\ar[r]\ar[d,"\sim"]&\T(M,B)\ar[r]\ar[d]&\T(P,\Sigma{B})=0\\
\cdots\ar[r]&\D^b(\A)(M,\Sigma^{-1}B)\ar[r]&\D^b(\A)(P,B)\ar[r]&\D^b(\A)(A,B)\ar[r]&\D^b(\A)(M,B)\ar[r]&\D^b(\A)(P,\Sigma{B})=0
\end{tikzcd}.
\]
We have 
\begin{align}
\Hom_{\T}(M,\Sigma^{n}B)\xrightarrow{\sim} \Hom_{\D^b(\A)}(M,\Sigma^{n}B) \label{projinj}
\end{align}
for $n\leq 0$. 
So we have $b=F(b')$ for some morphism $b'$ in $\T$. 

This shows that $\Hom_{\B}(A, \Sigma^n B)\rightarrow\Hom_{\B'}(A,\Sigma^n B)$ is a surjection for $n\leq 0$. 

Assume the morphism $b=F(b'):M\rightarrow \Sigma^n B$ in $\D^b(\A)$ factors through an object in $\tr(\P)$.  
By the bijection \ref{projinj} and since $\P$ consists of projective-injective objects, the morphism $b'$ in $\T$ also factors through an object in $\tr(\P)$.
 
So the canonical map $\Hom_{\B}(A, \Sigma^n B)\rightarrow\Hom_{\B'}(A,\Sigma^n B)$ is a bijection for each $n\leq 0$.

2) For $n\geq 1$, since $\Hom_{\D^b(\A)}(\Sigma^{-1}P, \Sigma^{n} B)=0$ for $P\in \T^{\geq 0}$, $b:M\rightarrow \Sigma^{n}B$ factors through $A$. 
So $\Hom_{\D^b(\A)}(A,\Sigma^{n}B)\rightarrow\Hom_{\B'}(A,\Sigma^nB)$ is a surjection.

Let $f:A\rightarrow \Sigma^n B$ be a morphism in $\D^b(\A)$ factors through an object $Q$ in $\tr(\P)$. 
We may assume $Q\in \tr(\P)^{\geq 0}$. 
Then $\Hom_{\D^b(\A)}(Q,\Sigma^n B)=0$ and hence the morphism $f$ is zero.

So $\Hom_{\D^b(\A)}(A,\Sigma^{n}B)\rightarrow\Hom_{\B'}(A,\Sigma^nB)$ is a bijection for $n\geq 1$.
\end{proof}

Let $\P$ be a full dg subcategory of a connective exact dg category $\A$ consisting of projective-injective objects. 
Then we have the following diagram
\[
\begin{tikzcd}&\tr(\A)\ar[r,"F_1"]\ar[d,"G_1"swap]&\D^b(\A)\ar[d,"G_2"]\\
H^0(\A)/[\P]\ar[r, hook,"H"swap]&\tr(\A)/\tr(\P)\ar[r,"F_2"swap]&\D^b(\A)/\tr(\P)
\end{tikzcd}
\]
In summary we have
\begin{proposition}\label{prop:dgsingularitycategory}
Let $A,B\in \A$. The following statements hold:
\begin{itemize}
\item[1)]$H$ is fully faithful.
\item[2)]For $n\geq 1$, $\Hom_{\tr(\A)/\tr(\P)}(A,\Sigma^n B)=0$.
\item[3)]For $n\leq 0$, $\Hom_{\tr(\A)/\tr(\P)}(A,\Sigma^n B)\xrightarrow{\sim}\Hom_{\D^b(\A)/\tr(\P)}(A,\Sigma^n B)$.
\item[4)]For $n\geq 1$, $\Hom_{\D^b(\A)}(A,\Sigma^n B)\xrightarrow{\sim}\Hom_{\D^b(\A)/\tr(\P)}(A,\Sigma^n B)$. 
In particular, the essential image of $F_2\circ H$ is closed under extension in $\D^b(\A)/\tr(\P)$. 
\item[5)]For $n\leq 0$, $\Hom_{\tr(\A)}(A,\Sigma^n B)\xrightarrow{\sim} \Hom_{\D^{b}(\A)}(A,\Sigma^n B)$.
\end{itemize}
\end{proposition}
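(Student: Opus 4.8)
The plan is to assemble the five assertions from results already proved in this section, after checking that the hypotheses line up. Throughout $\A$ is connective and $\P\subseteq\A$ is a full dg subcategory all of whose objects are projective-injective in $H^0(\A)$, which is exactly the setting of Lemma~\ref{lem:quotientprojectiveinjective} and of the first Lemma of this subsection. The dependencies I would use are: 1) and 2) come from the $\Hom$-computation in $\tr(\A)/\tr(\P)$; 3) and 4) are the two parts of Lemma~\ref{lem:quotientprojectiveinjective}; 5) is the $\Hom$-comparison $\tr(\A)\to\D^b(\A)$ already extracted in the proof of Theorem~\ref{main}; and the ``in particular'' clause follows from 4) in degree $1$ together with the extension-closedness of the image of $H^0(\A)$ in $\D^b(\A)$ (Theorem~\ref{main}~1)).

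First I would apply the first Lemma of this subsection with $\C=\A$ and $\C'=\P$; this is legitimate since $\A$ is connective. It yields at once $\Hom_{\tr(\A)/\tr(\P)}(A,\Sigma^n B)=0$ for $n>0$, which is 2), and the identification $\Hom_{\tr(\A)/\tr(\P)}(A,B)=(H^0(\A)/[\P])(A,B)$. Since $H$ is by construction the functor induced on $H^0(\A)/[\P]$ by the quotient $G_1\colon\tr(\A)\to\tr(\A)/\tr(\P)$, and since every object of $H^0(\A)/[\P]$ is an object of $\A$, the latter identification says precisely that $H$ is fully faithful; this is 1). Then 3) is Lemma~\ref{lem:quotientprojectiveinjective}~1) applied with $\B=\tr(\A)/\tr(\P)$ and $\B'=\D^b(\A)/\tr(\P)$, and 4) is Lemma~\ref{lem:quotientprojectiveinjective}~2). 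For 5), I would simply cite the computation carried out in the proof of Theorem~\ref{main}: Corollary~\ref{t} shows that the canonical t-structure restricts to a bounded t-structure on $\N$, and together with Lemma~\ref{lem:tstructureff} (the torsion pair being the t-structure, so that the subcategory $\P$ there is $0$) this gives $\Hom_{\tr(\A)}(A,\Sigma^n B)\xrightarrow{\sim}\Hom_{\D^b(\A)}(A,\Sigma^n B)$ for $n\leq 0$.

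It remains to prove the extension-closedness assertion in 4). Here I would start from a distinguished triangle $F_2HA\to Y\to F_2HC\xrightarrow{\delta}\Sigma F_2HA$ in $\D^b(\A)/\tr(\P)$ with $A,C\in\A$. By 4) in degree $1$, the class $\delta\in\Hom_{\D^b(\A)/\tr(\P)}(F_2HC,\Sigma F_2HA)$ lifts uniquely along $G_2$ to a morphism $\tilde\delta\colon F_1C\to\Sigma F_1A$ in $\D^b(\A)$; completing $\tilde\delta$ to a triangle $F_1A\to Y'\to F_1C\xrightarrow{\tilde\delta}\Sigma F_1A$ and using that the image of $H^0(\A)$ in $\D^b(\A)$ is extension-closed by Theorem~\ref{main}~1), we may take $Y'$ to be (the image of) an object of $\A$. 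Applying the triangle functor $G_2$ to this triangle and comparing it with the original one by the five lemma in $\D^b(\A)/\tr(\P)$ gives $Y\cong G_2Y'=F_2HY'$, so $Y$ lies in the essential image of $F_2\circ H$.

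The only step with genuine content beyond bookkeeping is this last one, and even there the single new ingredient is the transport of an $\Ext^1$-class from $\D^b(\A)/\tr(\P)$ back up to $\D^b(\A)$ furnished by 4); extension-closedness upstairs is already available from Theorem~\ref{main}. Accordingly, I expect the main (mild) obstacle to be purely organisational: keeping straight the four parallel comparison maps between $\tr(\A)$, $\D^b(\A)$, $\tr(\A)/\tr(\P)$ and $\D^b(\A)/\tr(\P)$, and verifying that the squares in the displayed diagram commute so that the ``canonical maps'' appearing in 1)--5) are indeed the ones produced by the cited lemmas.
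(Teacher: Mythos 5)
Your proposal is correct and matches the paper's approach: the proposition is stated there as a summary of the two preceding lemmas of the subsection together with the $\Hom$-comparison already extracted in the proof of Theorem~\ref{main}, which is exactly how you assemble 1)--5). Your explicit argument for the extension-closedness clause (lifting the connecting morphism via 4), using Theorem~\ref{main}~1) upstairs, and pushing the triangle back down through the quotient functor) is the intended one and is sound.
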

For an extriangulated category $\C$ with a subcategory $\P$ consisting of (not necessarily all) projective-injective objects in $\C$, Nakaoka--Palu showed that the ideal quotient $\C/\P$ has the structure of an extriangulated category, induced from that of $\C$, cf.~\cite[Proposition 3.30]{NakaokaPalu19}. In the context of exact dg categories, we have the following theorem. 
\begin{theorem}\label{quot}
Let $\A$ be a connective exact dg category and $\P$ a full dg subcategory of $\A$ consisting of projective-injective objects in $\A$.
Let $\mathcal S_{dg}$ be the canonical dg enhancement of $\D^b(\A)/\tr(\P)$.
Then the dg quotient $\A/\P$ carries a canonical exact dg structure induced from that of $\A$ and its dg derived category is quasi-equivalent to $\mathcal S_{dg}$.
\end{theorem}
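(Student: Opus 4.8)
The plan is to identify the dg quotient $\A/\P$ with an extension-closed dg subcategory of the pretriangulated dg category $\mathcal S_{dg}$, and then to recognize the resulting exact morphism $\A/\P\to\mathcal S_{dg}$ as the universal one into a pretriangulated dg category. We may take $\mathcal S_{dg}=\D^b_{dg}(\A)/\tr_{dg}(\P)$, where $F:\A\to\D^b_{dg}(\A)$ is the universal embedding of Theorem~\ref{main} and $\tr_{dg}(\P)\subseteq\D^b_{dg}(\A)$ is the full dg subcategory on the objects of $\tr(\P)$; this is a dg enhancement of $\D^b(\A)/\tr(\P)$. The composite exact morphism $\A\xrightarrow{F}\D^b_{dg}(\A)\to\mathcal S_{dg}$ is exact (a composition of exact morphisms, the second being a dg quotient functor between pretriangulated dg categories with their maximal exact structures) and annihilates $\P$, so it factors as $\A\to\A/\P\xrightarrow{\bar F}\mathcal S_{dg}$. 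Since $\A$ is connective so is $\A/\P$; let $\D'$ be the full dg subcategory of $\mathcal S_{dg}$ spanned by the objects in the image of $\bar F$.

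The first goal is to show that $\tau_{\leq 0}\bar F$ induces a quasi-equivalence $\tau_{\leq 0}(\A/\P)\xrightarrow{\sim}\tau_{\leq 0}\D'$ and that $\D'$ is extension-closed in $\mathcal S_{dg}$. Here Proposition~\ref{prop:dgsingularitycategory} is exactly what is needed: using that the Drinfeld quotient is compatible with $\pretr$, one has $H^n((\A/\P)(A,B))=\Hom_{\tr(\A)/\tr(\P)}(A,\Sigma^n B)$, which by item~2) vanishes for $n\geq 1$ and by items~1) and~3) agrees, for $n\leq 0$, with $\Hom_{\D^b(\A)/\tr(\P)}(A,\Sigma^n B)=H^n(\mathcal S_{dg}(A,B))$; moreover item~4) says that the essential image of $H^0(\bar F)$ is closed under extensions in $H^0(\mathcal S_{dg})=\D^b(\A)/\tr(\P)$. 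Hence $(\A/\P)(A,B)\to\mathcal S_{dg}(A,B)$ is an isomorphism on $H^n$ for $n\leq 0$ with source vanishing in positive degrees, so $\tau_{\leq 0}(\A/\P)\to\tau_{\leq 0}\D'$ is a quasi-equivalence, and $\D'$ is an extension-closed full dg subcategory of the pretriangulated dg category $\mathcal S_{dg}$. By Example-Definition~\ref{exactdgextensionclosed}, $\D'$ inherits a canonical exact dg structure from the maximal one on $\mathcal S_{dg}$, and transporting it along the quasi-equivalence (Remark~\ref{truncationexactdgstructure}~a), b)) endows $\A/\P$ with a canonical exact dg structure. That this structure is induced from that of $\A$ is clear: the dg quotient functor $\A\to\A/\P$ is the composition of $F$, the dg quotient $\D^b_{dg}(\A)\to\mathcal S_{dg}$, and the quasi-equivalence onto $\D'$, all exact, so it is exact, and the conflations of $\A/\P$ are precisely the images of conflations of $\A$.

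It remains to prove $\D^b_{dg}(\A/\P)\simeq\mathcal S_{dg}$. By Theorem~\ref{main} applied to the connective exact dg category $\A/\P$, a universal exact morphism into a pretriangulated dg category exists, so by uniqueness of universal objects it suffices to show that $\bar F:\A/\P\to\mathcal S_{dg}$ is itself universal. Let $G:\A/\P\to\B$ be an exact morphism with $\B$ pretriangulated. Precomposing with $\A\to\A/\P$ gives an exact morphism $\A\to\B$, which by Lemma~\ref{univer} factors uniquely through $F$; the induced morphism $\D^b_{dg}(\A)\to\B$ sends the objects of $\P$ to zero objects, hence kills $\tr_{dg}(\P)$, hence factors uniquely through $\mathcal S_{dg}=\D^b_{dg}(\A)/\tr_{dg}(\P)$ by the universal property of the Drinfeld dg quotient. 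Since $\A\to\A/\P$ is an epimorphism in $\Hqe$, this yields the unique factorization of $G$ through $\bar F$; thus $\bar F$ is universal and $\D^b_{dg}(\A/\P)\simeq\mathcal S_{dg}$ in $\Hqe$, as claimed.

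I expect the main obstacle to be the identification in the second paragraph: checking carefully, by combining the several items of Proposition~\ref{prop:dgsingularitycategory} with the compatibility of the Drinfeld quotient with $\pretr$ and with $\tau_{\leq 0}$, that the Drinfeld dg quotient $\A/\P$ is genuinely quasi-equivalent to the extension-closed dg subcategory $\D'$ of $\mathcal S_{dg}$ at the level of Hom complexes, not merely at the level of $H^0$. Once this is in place, the remaining steps are a formal manipulation of universal properties. A secondary technical point is maintaining the standing conventions (e.g.\ $\A$ having cofibrant Hom complexes, cf.~Remark~\ref{truncationexactdgstructure}~c)) so that all the dg quotients involved compute the expected derived objects.
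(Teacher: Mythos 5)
Your proposal is correct, and its first half (establishing quasi-full faithfulness of $\A/\P\to\mathcal S_{dg}$, extension-closedness of the image, and hence the induced exact structure, all via Proposition~\ref{prop:dgsingularitycategory}) is exactly the paper's argument. Where you genuinely diverge is in the identification $\D^b_{dg}(\A/\P)\simeq\mathcal S_{dg}$: the paper deduces this from item~4) of Proposition~\ref{prop:dgsingularitycategory} together with Proposition~\ref{higher}, i.e.\ it compares \emph{all} higher extension groups $\mathbb E^n\cong\Ext^n$ using the effaceability/$\delta$-functor machinery and then concludes by d\'evissage, whereas you bypass higher extensions entirely and verify directly that $\bar F:\A/\P\to\mathcal S_{dg}$ satisfies the universal property of Lemma~\ref{univer}, by factoring any exact $G:\A/\P\to\B$ first through $\D^b_{dg}(\A)$ (universality of $F$ applied to the exact composite $G\circ q$) and then through the Drinfeld quotient $\D^b_{dg}(\A)/\tr_{dg}(\P)$ (since $K$ kills $\P$, hence $\tr_{dg}(\P)$), with uniqueness propagated back because $q$ and the quotient morphisms are epimorphisms in $\Hqe$. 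Your route is more formal and arguably cleaner, since it needs only the universal properties already established; the paper's route is heavier but yields as a by-product the explicit isomorphisms $\mathbb E^n_{\A/\P}\xrightarrow{\sim}\Ext^n_{\mathcal S}$ for all $n$, which the purely formal argument does not exhibit directly. The only points you should make explicit are (i) that $q:\A\to\A/\P$ is exact for the newly defined structure (immediate, since a conflation of $\A$ maps to a triangle of $\mathcal S_{dg}$ with terms in $\D'$), which is what licenses applying the universality of $F$ to $G\circ q$, and (ii) that the surjectivity of $\mathbb E_\A(C,A)\to\Ext^1_{\mathcal S}(C,A)$ needed for ``the conflations of $\A/\P$ are precisely the images of conflations of $\A$'' comes from combining Theorem~\ref{main}~2) with item~4) of Proposition~\ref{prop:dgsingularitycategory}; both are one-line checks from results you already cite.
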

\begin{proof}
By the items $(1)$ and $(3)$ of Proposition~\ref{prop:dgsingularitycategory}, we have $H^0(\A/\P)=H^0(\A)/[\P]$ and the canonical morphism $\varphi:\A/\P\rightarrow \mathcal S_{dg}$ in $\Hqe$ is quasi-fully faithful. 
By the item (4), the quasi-essential image of $\varphi$ is closed under extensions in $\mathcal S_{dg}$ and therefore $\A/\P$ carries a canonical exact dg structure.
Again by the item $(4)$ and Proposition \ref{higher}, the bounded dg derived category of $\A/\P$ is quasi-equivalent to $\mathcal S_{dg}$.
\end{proof}
\begin{example}[\cite{Jin20}]\label{exm:CMdgmodule}
Let $k$ be a field. 
A dg $k$-algebra $A$ is {\em proper} if $\sum_{i\in \mathbb Z}\dim_{k}{H^i(A)<\infty}$. 
It is {\em Gorenstein} if the thick subcategory $\per(A)$ of the derived category $\D(A)$ 
generated by $A$ coincides with the thick subcategory $\thick(DA)$ generated by $DA$, where $D=\Hom_k(-,k)$ is the $k\mbox{-dual}$.
Let $A$ be a connective proper Gorenstein dg algebra.
A dg $A\mbox{-}$module is {\em perfectly valued} if its total cohomology is finite-dimensional and we denote by $\pvd(A)$ the triangulated category of perfectly valued dg $A\mbox{-}$modules.
A dg $A\mbox{-}$module $M$ in $\pvd(A)$ is {\em {Cohen--Macaulay}} if $H^i(M)=0$ and $\Hom_{\D(A)}(M,\Sigma^i A)=0$ for $i>0$.
Let $\CM A$ be the subcategory of $\pvd(A)$ consisting of Cohen-Macaulay dg $A\mbox{-}$modules.
By \cite[Theorem 2.4]{Jin20}, the category $\CM A$ is an $\Ext\mbox{-}$finite Frobenius extriangulated category with $\proj(\CM A)=\add A$. 
Let $\pvd_{dg}(A)$ be the canonical dg enhancement of $\pvd(A)$ and $\A'\subseteq \pvd(A)$ the full dg subcategory consisting of Cohen--Macaulay dg modules. 
Put $\A=\tau_{\leq 0}\A'$. 
Since $\A'$ is extension-closed in $\pvd_{dg}(A)$, the dg category $\A$ inherits a canonical exact structure whose corresponding extriangulated category is $\CM A$. Note that for projective objects $P\in \add(A)$, the Hom spaces $\Hom_{\D(A)}(P,\Sigma^{i} M)$ vanishes for $i>0$ and $M\in\CM A$. 
By \cite[Lemma 3.9 (2)]{Jin20}, the triangulated category $\pvd(A)$ is generated by $\CM A$.
By the same reasoning in Example~\ref{exm:twotermderived}, the bounded dg derived category of $\A$ is quasi-equivalent to $\pvd_{dg}(A)$. By Proposition~\ref{prop:dgsingularitycategory}, the stable category $\CM(A)/[\proj(\CM A)]$ is equivalent to the singularity category $\pvd(A)/\per(A)$.
\end{example}

\subsection{Categories with enough projectives}
Assume that the extriangulated category $H^0(\A)$ has enough projectives and let $\P$ be the full subcategory consisting of projective objects. 
For a morphism $b:B\rightarrow B'$ in $H^0(\A)$, denote by $[b]$ the corresponding morphism in $H^0(\A)/[\P]$. 
Put $\Omega^0{\coloneqq}\Id:H^0(\A)/[\P]\rightarrow H^0(\A)/[\P]$. 
For $n\geq 1$, we define syzygz functors $\Omega^n$ as $n$-iterations of $\Omega: H^0(\A)/[\P]\rightarrow H^0(\A)/[\P]$ which is defined as follows. 

For each object $B\in H^0(\A)$, we pick once and for all an $\mathbb E$-extension $\delta_B$ which is realized by $\Omega B\rightarrow P^{-1}\rightarrow B$ where $P^{-1}$ is projective. 
Since $P^{-1}$ is projective, a morphism $b:B\rightarrow B'$ in $H^0(\A)$ gives rise to a morphism $(r,b)$ of $\mathbb E$-extensions
\[
\begin{tikzcd}
\Omega B\ar[r]\ar[d,"\exists r"swap]&P^{-1}\ar[r]\ar[d]&B\ar[d,"b"]\\
\Omega B'\ar[r]&P'^{-1}\ar[r]&B'
\end{tikzcd}
\]
\begin{lemma}[\cite{IyamaNakaokaPalu18}, Definition-Proposition 1.25] $B\mapsto \Omega B$ and $[b]\mapsto [r]$ define a functor $\Omega: H^0(\A)/[\P]\rightarrow H^0(\A)/[\P]$, which will be called the {\em syzygy functor}. 
\end{lemma}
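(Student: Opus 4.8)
The plan is to deduce the statement from the extriangulated structure $(H^0(\A),\mathbb E,\mathfrak s)$ on $H^0(\A)$ established in Subsection~\ref{canonicalstructure}, using two standard general properties of an extriangulated category $\C$. First, if $P\in\C$ is projective then $\mathbb E(P,-)=0$: any $\delta\in\mathbb E(P,W)$ is realized by a conflation $W\to Y\xrightarrow{y}P$, and since $\Id_{P}$ lifts along $y$ the conflation splits, so $\delta=0$. Second, for an $\mathbb E$-triangle $A\xrightarrow{x}B\xrightarrow{y}C\xrightarrow{\delta}$ and any morphism $a\colon A\to A'$, we have $a_{*}\delta=0$ in $\mathbb E(C,A')$ if and only if $a$ factors through $x$; this is the exactness of the sequence $\Hom_{\C}(B,A')\xrightarrow{x^{*}}\Hom_{\C}(A,A')\xrightarrow{a\mapsto a_{*}\delta}\mathbb E(C,A')$ attached to the $\mathbb E$-triangle, cf.~\cite{NakaokaPalu19}. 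Since $H^0(\A)$ has enough projectives, every $B$ admits the chosen $\mathbb E$-extension $\delta_{B}$ realized by $\Omega B\xrightarrow{i_{B}}P^{-1}\xrightarrow{p_{B}}B$ with $P^{-1}$ projective, and the second property then specializes to: $a_{*}\delta_{B}=0$ for $a\colon\Omega B\to\Omega B'$ if and only if $a$ factors through $P^{-1}\in\P$, i.e.\ $[a]=0$ in $H^0(\A)/[\P]$.

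Next I would construct $[r]$ and check it is well defined. Given $b\colon B\to B'$ in $H^0(\A)$, projectivity of $P^{-1}$ lifts $b\circ p_{B}$ along the deflation $p_{B'}$ to a morphism $g\colon P^{-1}\to P'^{-1}$ with $p_{B'}g=b\,p_{B}$, and then the extriangulated axiom (ET3) yields $r\colon\Omega B\to\Omega B'$ making $(r,g,b)$ a morphism of $\mathbb E$-triangles, so that $r_{*}\delta_{B}=b^{*}\delta_{B'}$. By biadditivity of $\mathbb E$, if $r,r'$ are two lifts of the same $b$ then $(r-r')_{*}\delta_{B}=r_{*}\delta_{B}-r'_{*}\delta_{B}=0$, hence $r-r'$ factors through $P^{-1}\in\P$ by the property above, so $[r]=[r']$ in $H^0(\A)/[\P]$. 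Moreover, if $b_{1}$ and $b_{2}$ have the same class in $H^0(\A)/[\P]$, say $b_{1}-b_{2}=\beta\alpha$ with $\alpha\colon B\to Q$, $\beta\colon Q\to B'$ and $Q$ projective, then for lifts $r_{1},r_{2}$ of $b_{1},b_{2}$ we get $(r_{1}-r_{2})_{*}\delta_{B}=(b_{1}-b_{2})^{*}\delta_{B'}=\alpha^{*}\beta^{*}\delta_{B'}$, and $\beta^{*}\delta_{B'}$ lies in $\mathbb E(Q,\Omega B')=0$; so $(r_{1}-r_{2})_{*}\delta_{B}=0$ and $[r_{1}]=[r_{2}]$. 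Thus $[b]\mapsto[r]$ is a well-defined map on the morphisms of $H^0(\A)/[\P]$.

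Finally, functoriality follows at once: $\Id_{\Omega B}$ is a lift of $\Id_{B}$, giving $\Omega([\Id_{B}])=[\Id_{\Omega B}]$, and if $r$ lifts $b\colon B\to B'$ and $r'$ lifts $b'\colon B'\to B''$, then composing the two morphisms of $\mathbb E$-triangles shows that $r'r$ lifts $b'b$, so $\Omega([b']\circ[b])=[r'r]=[r']\circ[r]=\Omega([b'])\circ\Omega([b])$. The main obstacle is organizing the extriangulated bookkeeping correctly: the decisive technical input is the second general property above — that the kernel of the connecting map $a\mapsto a_{*}\delta_{B}$ is exactly the set of morphisms factoring through the inflation $i_{B}$ — which replaces the triangulated long exact sequence, together with the fact, resting on the biadditivity of $\mathbb E$ and the additive realization $\mathfrak s$ of Subsection~\ref{canonicalstructure}, that the relevant differences of lifts and morphisms of $\mathbb E$-triangles behave additively.
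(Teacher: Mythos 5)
Your proof is correct and takes essentially the same route as the paper's: the paper likewise reduces everything to the long exact sequence $\Hom(P^{-1},\Omega B')\to\Hom(\Omega B,\Omega B')\to\mathbb E(B,\Omega B')$ attached to the conflation $\Omega B\to P^{-1}\to B$ (citing \cite{NakaokaPalu19}), so that $[r]$ is determined by $r_*\delta_B=b^*\delta_{B'}$, together with the vanishing of $\mathbb E$ on projectives for independence of the representative of $[b]$. Your write-up just makes explicit the steps the paper leaves to the reader.
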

\begin{proof}
By Corollary 3.12 of \cite{NakaokaPalu19}, we have a long exact sequence
\[
\begin{tikzcd}
\cdots\ar[r]&\Hom_{H^0(\A)}(P^{-1},\Omega B')\ar[r]&\Hom_{H^0(\A)}(\Omega B,\Omega B')\ar[r]&\mathbb E(B,\Omega B')\ar[r]&\cdots
\end{tikzcd}
\]
The morphism $r:\Omega B\rightarrow \Omega B'$ satisfies $r_{*}\delta_B=b^{*}\delta_{B'}\in \mathbb E(B,\Omega B')$, so $[r]$ is independent of the choice of $r$. 
Also $b^{*}\delta_{B'}=b'^{*} \delta_{B'}$ if $[b]=[b']$. 

Therefore $B\mapsto \Omega B$ and $[b]\mapsto [r]$ define the syzygy functor $\Omega: H^0(\A)/[\P]\rightarrow H^0(\A)/[\P]$. 
\end{proof}

For each object $B\in H^0(\A)$, we have the following diagram in the dg category $\A$
\begin{equation}\label{total}
\begin{tikzcd}
&&&\ \ar[dddd,dashed, no head, red]&\ \ar[dd, green, dashed, no head]&\ \ar[ddd, dashed, no head, red]&          \   \ar[d, dashed, green, no head]                     &\ \ar[dd, dashed, no head, red]&\\
&&&&&&\Omega B\ar[r]\ar[d, equal]                                                                   &P^{-1}\ar[r]                                                       &B\\
&&&&\Omega^2 B\ar[r]\ar[d, equal]&P^{-2}\ar[r]&    \Omega B       \ar[r, green, no head, dashed,"X^{-1}"]&\; \ar[r, dashed, red, no head,"P^{\geq -1}_B"red]                  &\ \\
&&\Omega^3 B\ar[r]\ar[d, equal]&P^{-3}\ar[r]&\Omega^2 B\ar[r, green, dashed, no head,"X^{-2}"]&\ \ar[rrr, dashed, no head, red,"P^{\geq -2}_B"]&&&\ \\
&\cdots\ar[r]&\Omega^3 B&\ \ar[rrrrr, dashed, no head, red,"P^{\geq -3}_B"red]   &       &     &          &       &\;
\end{tikzcd}
\end{equation}
where the rows are shifted totalized conflations and where we have omitted the homotopies in order to make it more readable. 
We denote its totalization by $P_B$.
The totalization $P_B$ carries an increasing filtration by the $P^{\geq -i}_B$, $i\in \mathbb N$.
Notice that $P_B^{\geq -i}$ is homotopy equivalent to $X^{-i}$.

Claim 1: for each $Q\in \tr(\P)$, we have $\Hom_{\tr(\A)}(Q,P_B^{\geq -n})=0$ for $n>>0$.

Indeed for each $n\geq 1$, we have a triangle
\begin{equation}\label{omega}
\Sigma^{n}\Omega^{n}B\rightarrow X^{-n}\rightarrow N\rightarrow \Sigma^{n+1}\Omega^nB
\end{equation}
in $\tr(\A)$ where $N$ is in the triangulated subcategory $\N$ of $\tr(\A)$ generated by totalizations of conflations.
Thus we have $\Hom_{\tr(\A)}(Q,X)=0$.  
Note that for $n>>0$,  we have $\Hom_{\tr(\A)}(Q,\Sigma^n Y)=0$ for any $Y\in H^0(\A)$. 
So we have $\Hom_{\tr(\A)}(Q, X^{-n})=0$ for $n>>0$. 
This proves Claim 1.

Claim 2: for each $Q\in \tr(\P)$, we have $\Hom_{\D(\A)}(Q, P_B)=0$.

 Indeed, we have a graded split short exact sequence in $\C(\A)$
 \[
 0\rightarrow \bigoplus_{n\geq 0} P_B^{\geq -n}\rightarrow \bigoplus_{n\geq 0}P_B^{\geq -n}\rightarrow P_B\rightarrow 0
 \] 
 and hence we have a triangle in $\D(\A)$. 
 \[
 \bigoplus_{n\geq 0} P_B^{\geq -n}\rightarrow \bigoplus_{n\geq 0}P_B^{\geq -n}\rightarrow P_B\rightarrow \Sigma\bigoplus_{n\geq 0}P_B^{\geq -n}
 \]

By Claim 1 we have 
\[
\Hom_{\D(\A)}(Q,P_B)=\colim_{n\geq 0} \Hom_{\tr(\A)}(Q,P_B^{\geq -n})=0.
\] 
This proves Claim 2.

Now let $A\in H^0(\A)$ and fix $m\geq 0$. 
Then from the triangle 
\[
\Sigma^{n}P_B^{-n-1}\rightarrow P_B^{\geq -n}\rightarrow P_B^{\geq -n-1}\rightarrow \Sigma^{n+1}P_B^{-n-1}
\]
in $\tr(\A)$, for $n\geq m+1$ we have 
\[
\Hom_{\tr(\A)}(\Sigma^{m}A, P_B^{\geq -n})\xrightarrow{\sim}\Hom_{\tr(\A)}(\Sigma^{m}A,P_B^{\geq -n-1}).
\]
It follows that we have
\[
\begin{aligned}
\Hom_{\D(\A)}(\Sigma^{m}A, P_B)&=\Hom_{\tr(\A)}(\Sigma^{m}A,P_B^{\geq -m-1})\\
&=\cok(\Hom_{\tr(\A)}(A,P^{-m-1})\rightarrow \Hom_{\tr(\A)}(A,\Omega^{m}B))\\
&=\Hom_{H^0(\A)/[\P]}(A,\Omega^{m}B)
\end{aligned}
\] 
where we used the bijection 
\[
\Hom_{\tr(\A)}(\Sigma^mA, P^{\geq -m}_B)\iso \Hom_{\tr(\A)}(A,\Omega^mB)
\]
 given by the triangle \ref{omega}.
\begin{proposition}
Keep the assumptions as above. 
Then the canonical map 
\[
 \Hom_{\D(\A)}(\Sigma^m A, P_B)\rightarrow \Hom_{\tr(\A)/\tr(\P)}(A,\Sigma^{-m} B)
\]
is a bijection for each pair of objects $A$, $B\in \A$ and $m\in\mathbb Z$. 
In particular, the canonical map
\[
\Hom_{H^0(\A)/[\P]}(A,\Omega^m B)\rightarrow \Hom_{\tr(\A)/\tr(\P)}(A,\Sigma^{-m}B)
\]
is a bijection for each pair of objects $A, B\in H^0(\A)$ and $m\geq 0$.
\end{proposition}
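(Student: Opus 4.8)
The plan is to prove the first identity by treating the cases $m\geq 0$ and $m<0$ separately, and then to read off the ``in particular'' statement from the case $m\geq 0$ together with the identification $\Hom_{\D(\A)}(\Sigma^m A,P_B)\cong\Hom_{H^0(\A)/[\P]}(A,\Omega^m B)$ already established in the computation preceding the statement.

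For $m<0$ I would show that both sides vanish, so that the map is trivially bijective. The target $\Hom_{\tr(\A)/\tr(\P)}(A,\Sigma^{-m}B)$ is zero by Proposition~\ref{prop:dgsingularitycategory}(2), since $-m\geq 1$. For the source, the transition isomorphisms used above, coming from the triangles $\Sigma^n P_B^{-n-1}\to P_B^{\geq -n}\to P_B^{\geq -n-1}\to\Sigma^{n+1}P_B^{-n-1}$ together with the connectivity of $\A$, now hold for every $n$ (the condition $n\geq m+1$ being automatic), so $\Hom_{\D(\A)}(\Sigma^m A,P_B)\cong\Hom_{\tr(\A)}(\Sigma^m A,P_B^{\geq -1})$; but $P_B^{\geq -1}$ is the totalization of the conflation $\delta_B$, hence lies in the heart $\mathcal H$ of the canonical $t$-structure (Lemma~\ref{leftexactsequence}, Lemma~\ref{lemma:tstructure}), so $\Hom_{\D(\A)}(\Sigma^m A,P_B^{\geq -1})\cong H^{-m}\big((P_B^{\geq -1})(A)\big)=0$ because $-m\neq 0$.

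For $m\geq 0$ the plan is to exhibit the canonical map as a composite of bijections, all natural in $A$ and $B$. Starting from $\Hom_{\D(\A)}(\Sigma^m A,P_B)\cong\Hom_{H^0(\A)/[\P]}(A,\Omega^m B)$, I would successively apply: the full faithfulness of $H$ (Proposition~\ref{prop:dgsingularitycategory}(1)), giving $\Hom_{\tr(\A)/\tr(\P)}(A,\Omega^m B)$; Proposition~\ref{prop:dgsingularitycategory}(3) with $\Omega^m B$ in place of $B$ and shift $0$, giving $\Hom_{\D^b(\A)/\tr(\P)}(A,\Omega^m B)$; a canonical isomorphism $\Sigma^{-m}B\iso\Omega^m B$ in $\D^b(\A)/\tr(\P)$, giving $\Hom_{\D^b(\A)/\tr(\P)}(A,\Sigma^{-m}B)$; and Proposition~\ref{prop:dgsingularitycategory}(3) with shift $-m\leq 0$, giving $\Hom_{\tr(\A)/\tr(\P)}(A,\Sigma^{-m}B)$. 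The only genuinely new ingredient here is the isomorphism in $\D^b(\A)/\tr(\P)$: by Theorem~\ref{main} (cf.~Lemma~\ref{univer}) each chosen conflation $\delta_{\Omega^i B}\colon\Omega^{i+1}B\to P^{-i-1}\to\Omega^i B$ gives a triangle in $\D^b(\A)$, and since the projective $P^{-i-1}$ becomes a zero object in $\D^b(\A)/\tr(\P)$ the connecting morphism $\Omega^i B\to\Sigma\Omega^{i+1}B$ becomes invertible there; composing the $\Sigma^i$-shifts of these for $0\leq i<m$ yields $\Sigma^{-m}B\iso\Omega^m B$. Chaining these isomorphisms produces the asserted bijection, and the second assertion of the proposition is simply the segment of this chain beginning at $\Hom_{H^0(\A)/[\P]}(A,\Omega^m B)$, now read for $A,B\in H^0(\A)$.

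The hard part is not any of the individual steps --- the vanishing statements are routine from connectivity and the $t$-structure, and the isomorphism $\Sigma^{-m}B\cong\Omega^m B$ in $\D^b(\A)/\tr(\P)$ is immediate once conflations are known to give triangles in $\D^b(\A)$ --- but rather the bookkeeping needed to confirm that the composite of the four bijections above coincides with \emph{the} canonical map of the statement. Concretely, the canonical map is obtained by applying $\Hom_{\D(\A)}(\Sigma^m A,-)$ to the natural projection $P_B\to B$ furnished by the filtration~\ref{total} of $P_B$ and then localizing along $\tr(\A)\to\tr(\A)/\tr(\P)$, and one must verify that it factors through the identification $\Hom_{\D(\A)}(\Sigma^m A,P_B)\cong\Hom_{H^0(\A)/[\P]}(A,\Omega^m B)$, the functor $H$, the localization $\tr(\A)/\tr(\P)\to\D^b(\A)/\tr(\P)$ and the chosen isomorphism $\Sigma^{-m}B\cong\Omega^m B$ exactly as above; this is a diagram chase against the homotopy colimit triangle defining $P_B$ and the triangles~\ref{omega}.
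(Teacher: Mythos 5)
Your route is genuinely different from the paper's. The paper proves both claims at once, uniformly in $m$, by observing (via its Claim~1) that the system $\{B\to P_B^{\geq -n}\}_{n\geq 1}$ is cofinal among the roofs $B\xrightarrow{s}U$ with $\Cone(s)\in\tr(\P)$, so that
$\Hom_{\tr(\A)/\tr(\P)}(A,\Sigma^{-m}B)=\colim_n\Hom_{\tr(\A)}(A,\Sigma^{-m}P_B^{\geq -n})=\Hom_{\D(\A)}(A,\Sigma^{-m}P_B)$,
the last equality coming from the Milnor triangle for $P_B$ and the stabilization of the transition maps. The decisive advantage of that argument is that the resulting bijection \emph{is} the canonical comparison map by construction; there is nothing left to match up. Your chain of bijections through $\Hom_{H^0(\A)/[\P]}(A,\Omega^mB)$, the full faithfulness of $H$, Proposition~\ref{prop:dgsingularitycategory}(3), and the isomorphism $\Sigma^{-m}B\cong\Omega^mB$ in $\D^b(\A)/\tr(\P)$ does produce an isomorphism of abelian groups between the correct source and target, and each link is legitimately available at this point of the paper. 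But the statement is about a \emph{specific} map, and you explicitly defer the verification that your composite agrees with it. That verification is not bookkeeping one can wave at: it is essentially equivalent to re-deriving the cofinality/colimit description that constitutes the paper's entire proof. As written, your argument establishes abstract isomorphism of the two groups but leaves the actual assertion unproved.

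Two smaller points. First, in the case $m<0$ you claim that $P_B^{\geq -1}$ is the totalization of the conflation $\delta_B$ and hence lies in the heart. It is not: the totalization of $\Omega B\to P^{-1}\to B$ is the object $N\in\N\cap\mathcal H$, whereas $P_B^{\geq -1}\simeq X^{-1}$ sits in a triangle $\Sigma\Omega B\to X^{-1}\to N\to\Sigma^2\Omega B$ (the triangle~\ref{omega} for $n=1$) and generally has nonzero cohomology in degree $-1$. The vanishing you want still holds, but more simply: for $m\leq -1$ the transition maps are isomorphisms for all $n\geq 0$, so $\Hom_{\D(\A)}(\Sigma^mA,P_B)\cong\Hom_{\tr(\A)}(\Sigma^mA,P_B^{\geq 0})=\Hom_{\tr(\A)}(A,\Sigma^{-m}B)=H^{-m}(\A(A,B))=0$ by connectivity. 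Second, note that splitting into $m\geq 0$ and $m<0$ is an artifact of routing through $\Omega^m$; the colimit argument needs no such case distinction.
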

\begin{proof}
By the above Claim 1, $\{B\rightarrow P_B^{\geq -n},n\geq 1\}$ is cofinal in the filtered category of morphisms with objects given by $B\xrightarrow{s} U$ in $\tr(\A)$ such that $\Cone(s)\in \tr(\P)$.
 Thus we have 
\[
\begin{aligned}
\Hom_{\tr(\A)/\tr(\P)}(A,\Sigma^{-m}B)&=\colim_{n\geq 0}\Hom_{\tr(\A)}(A, \Sigma^{-m}P_{B}^{\geq -n})\\
&=\Hom_{\D(\A)}(A,\Sigma^{-m}P_B).
\end{aligned}
\]
\end{proof}
Right triangulated categories were introduced in \cite{KellerVossieck87} (see also \cite{Keller90,BeligiannisMarmaridis94}) under the name ``suspended categories". 
In loc. cit, they applied Heller's stabilization  procedure $\mathcal S$ (\cite{Heller68}) to a right triangulated category and showed that this yields a triangulated category with a certain universal property. 
They also showed that the stable category $\underline{\mod}{\mbox{}} A$ of the category $\mod \mbox{}A$ of finite dimensional right modules over a finite dimensional algebra $A$ is left triangulated, and that its stabilization 
$\mathcal S(\underline{\mod}\mbox{} A)$ is equivalent to the singularity category $\D_{sg}(A){\coloneqq}\D^b(A)/\mathcal H^b(\proj\mbox{}A)$. 

Similarly, $H^0(\A)/[\P]$ is canonically a left triangulated category. 
Here we indicate what are the triangles. 

For a morphism $[b]:B\rightarrow B'$ in $H^0(\A)/[\P]$, we have a morphism $(\Id_{\Omega B'}, b): b^{*}\delta_{B'}\rightarrow \delta_{B'}$ of $\mathbb E$-extensions
\[
\begin{tikzcd}
\Omega B'\ar[r,"u"]\ar[d,equal] &E\ar[r,"v"]\ar[d]& B\ar[d,"b"]\\
\Omega B'\ar[r]& P'^{-1}\ar[r]&B'
\end{tikzcd}
\]
where $b^{*}\delta_{B'}$ is realized by $\Omega B'\xrightarrow{u}E\xrightarrow{v}B$.

Thus we get a sequence 
\[
\begin{tikzcd}
\Omega B'\ar[r,"{[}u{]}"]&E\ar[r,"{[}v{]}"]&B\ar[r,"{[}b{]}"]&B'
\end{tikzcd}
\]
in $H^0(\A)/[\P]$. We call such sequences standard triangles.

The distinguished triangles are defined to be those sequences which are isomorphic to standard triangles.

\begin{proposition}\label{stab}The canonical functor $H^0(\A)\rightarrow \D^{b}(\A)/\tr(\P)$ identifies $\D^b(\A)/\tr(\P)$ with the stablization of $H^{0}(\A)/[\P]$.
\end{proposition}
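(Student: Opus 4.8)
### Proof plan for Proposition~\ref{stab}

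\textbf{Setup and strategy.} The plan is to show that the functor $G = F_2 \circ G_1 \colon H^0(\A) \to \D^b(\A)/\tr(\P)$ (which kills $\P$, hence factors through $H^0(\A)/[\P]$) exhibits the target as Heller's stabilization of the left triangulated category $H^0(\A)/[\P]$. By definition of the stabilization via Heller's procedure (as recalled after the previous Proposition, following \cite{KellerVossieck87, Heller68}), it suffices to verify three things: (i) the target $\D^b(\A)/\tr(\P)$ is a triangulated category on which the shift is invertible; (ii) the functor $H^0(\A)/[\P] \to \D^b(\A)/\tr(\P)$ is exact, i.e.\ sends standard (left) triangles to distinguished triangles, with $\Omega$ intertwined with $\Sigma^{-1}$; (iii) it is universal among such functors, or equivalently it satisfies the explicit description of the stabilization as the colimit $\colim_m \Hom_{H^0(\A)/[\P]}(\Omega^m(-), \Omega^m(-))$ on morphism sets, together with essential surjectivity after inverting the shift.

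\textbf{The key computational input.} The main technical ingredient is already in hand: the Proposition immediately preceding this statement gives, for all $A, B \in H^0(\A)$ and $m \geq 0$, a canonical bijection
\[
\Hom_{H^0(\A)/[\P]}(A, \Omega^m B) \xrightarrow{\ \sim\ } \Hom_{\D^b(\A)/\tr(\P)}(A, \Sigma^{-m} B),
\]
realized through the totalization $P_B$ of the diagram~\eqref{total}. Combined with items (1)--(3) of Proposition~\ref{prop:dgsingularitycategory} (which give $H^0(\A/\P) = H^0(\A)/[\P]$ and full faithfulness of $H$) and with Theorem~\ref{quot} (identifying $\D^b(\A)/\tr(\P)$ with the bounded dg derived category of $\A/\P$, hence a triangulated category with invertible shift, settling (i)), this bijection is exactly the statement that the Hom-sets in $\D^b(\A)/\tr(\P)$ between objects of $H^0(\A)$ are the stabilized Hom-sets $\colim_m \Hom_{H^0(\A)/[\P]}(\Omega^m A, \Omega^m B)$: indeed for negative degree shifts the bijection above is what is asserted, and for the colimit structure one checks that the transition maps $\Hom(\Omega^m A, \Omega^m B) \to \Hom(\Omega^{m+1}A, \Omega^{m+1}B)$ correspond under this bijection to the identity on $\Hom_{\D^b(\A)/\tr(\P)}(\Sigma^{-m}A, \Sigma^{-m}B) = \Hom_{\D^b(\A)/\tr(\P)}(\Sigma^{-m-1}A, \Sigma^{-m-1}B)$, which is immediate from the construction of $P_B$ as an iterated extension and from applying $\Sigma^{-1}$.

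\textbf{Compatibility of triangulated structures.} For (ii), I would take a standard left triangle $\Omega B' \xrightarrow{[u]} E \xrightarrow{[v]} B \xrightarrow{[b]} B'$ in $H^0(\A)/[\P]$, arising as in the diagram before the Proposition from the morphism of $\mathbb E$-extensions $(\mathrm{Id}_{\Omega B'}, b) \colon b^{*}\delta_{B'} \to \delta_{B'}$. Applying the fully exact embedding $H^0(\A) \hookrightarrow \D^b(\A)$ of Theorem~\ref{main}, the $\mathbb E$-extension $b^*\delta_{B'}$ becomes a genuine triangle $\Omega B' \to E \to B \xrightarrow{\delta} \Sigma\Omega B'$ in $\D^b(\A)$; and because $P'^{-1} \in \P$ is projective, its image in $\D^b(\A)/\tr(\P)$ vanishes, so $\Sigma\Omega B' \cong B'$ there via the triangle $\Omega B' \to P'^{-1} \to B' \to \Sigma\Omega B'$. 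Rotating and passing to the quotient, the image of the standard left triangle becomes a distinguished triangle of $\D^b(\A)/\tr(\P)$, with $\Omega B'$ going to $\Sigma^{-1} B'$. Naturality in $b$ is inherited from the $\delta$-functoriality of $\mathbb E$ and from the functoriality of the syzygy construction (the preceding lemma from \cite{IyamaNakaokaPalu18}).

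\textbf{Universality and the main obstacle.} Finally, essential surjectivity of $G$ after inverting the shift: every object of $\D^b(\A)/\tr(\P)$ lies in the thick subcategory generated by the image of $\A$, hence by Proposition~\ref{prop:dgsingularitycategory}(4) (closure under extensions of the essential image) together with invertibility of $\Sigma$, each object is a desuspension $\Sigma^{-m}$ of an extension of objects of $H^0(\A)$ --- and extensions of objects in the image are again in the image up to the left-triangulated structure, since the middle term of a conflation with ends in $\A$ lies in $\A$. I expect the main obstacle to be item (iii): verifying that $G$ genuinely satisfies the \emph{universal} property of Heller stabilization --- i.e.\ that any exact functor from $H^0(\A)/[\P]$ to a triangulated category with invertible shift factors uniquely through $G$ --- rather than merely that the Hom-sets match the formula for the stabilization. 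This requires tracking the 2-categorical data (compatibility of the factorization with the chosen syzygies and connecting maps) and knowing that Heller's stabilization is characterized by this property; I would handle it by invoking the universal property of the dg quotient $\A/\P$ (via Theorem~\ref{quot} and \cite{Tabuada10}) together with the universal property of $\D^b_{dg}$ from Theorem~\ref{main}, composing the two universal morphisms $\A \to \A/\P \to \D^b_{dg}(\A/\P) \simeq \mathcal S_{dg}$, and then comparing this composite with the corresponding composite for an arbitrary target, rather than reproving the stabilization's universal property from scratch.
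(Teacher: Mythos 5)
Your plan has a genuine gap, and it stems from a hypothesis mismatch. Proposition~\ref{prop:dgsingularitycategory} (items (3) and (4)) and Theorem~\ref{quot}, which you invoke to identify $\D^b(\A)/\tr(\P)$ with the derived category of $\A/\P$ and to transfer Hom-sets from $\tr(\A)/\tr(\P)$ to $\D^b(\A)/\tr(\P)$, are proved under the assumption that $\P$ consists of projective-\emph{injective} objects. In the setting of Proposition~\ref{stab}, $\P$ is the subcategory of \emph{all} projectives, which need not be injective, and those statements fail: for the path algebra of $A_2$ and its simple $S_1$ one has $\Hom_{H^0(\A)/[\P]}(S_1,S_1)=k$ while $\D^b(\A)/\tr(\P)$ is the singularity category, which vanishes. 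Indeed, if Theorem~\ref{quot} applied here, the functor $H^0(\A)/[\P]\to\D^b(\A)/\tr(\P)$ would already be fully faithful with extension-closed image and there would be nothing to stabilize. Only item (1) of Proposition~\ref{prop:dgsingularitycategory} (full faithfulness into $\tr(\A)/\tr(\P)$) survives without injectivity; note also that the Proposition preceding~\ref{stab} lands in $\tr(\A)/\tr(\P)$, not in $\D^b(\A)/\tr(\P)$, and your argument silently conflates the two.

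Consequently the step you declare immediate --- that the stabilization colimit maps bijectively onto $\Hom_{\D^b(\A)/\tr(\P)}(A,\Sigma^j B)$ --- is precisely the content of the paper's proof. For a fixed source $A$ the comparison map from $\tr(\A)/\tr(\P)$ to the further quotient $\D^b(\A)/\tr(\P)$ is neither injective nor surjective; it only becomes bijective after replacing $A$ by $\Sigma^i\Omega^i A\simeq P_A^{\geq -i}$ for $i\gg 0$, which is why the stabilization colimit over syzygies of $A$ is needed at all. The paper verifies that the canonical map
\[
\colim_{i\geq 0,\,i-j\geq 0}\ \colim_{n\geq 0}\Hom_{\tr(\A)}(\Sigma^i\Omega^iA,\Sigma^{j}P_B^{\geq -n})\longrightarrow
\colim_{n\geq 0}\Hom_{\D^b(\A)}(A,\Sigma^j P_B^{\geq -n})
\]
is bijective, using the vanishing $\Hom_{\D^b(\A)}(Q_{A}^{\geq -i},\Sigma^j P_B^{\geq -n})=0$ for $n\geq i-j$ together with a separate injectivity claim at $n=i-j+1$ that rests on the t-structure on $\N$ from Corollary~\ref{t}. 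By contrast, the universal property of Heller's stabilization, which you single out as the main obstacle, is not where the difficulty lies: the paper treats exactness of the comparison functor as clear and reduces everything to this Hom-set computation.
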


Recall for each object $B\in H^0(\A)$, we have a diagram \ref{total} in the dg category $\A$ whose totalization is $P_B$. 

Let $Q_B$ be the totalization of the diagram which is obtained from \ref{total} by deleting the object $B$ and such that $P^{-1}$ is in degree zero. 

For each $n\geq 1$, we take the totalization $Q^{\geq -n}$ of the diagram obtained from $P^{\geq -n}_B$ by deleting the object $B$ and such that $P^{-1}$ is in degree zero.

We have the following triangle
\[
\begin{tikzcd}
Q_{B}^{\geq -i}\ar[r]&B\ar[r]&P_B^{\geq -i}\ar[r]&\Sigma Q_{B}^{\geq -i}
\end{tikzcd}
\]
in $\tr(\A)$.

The following lemma is a direct consequence of the above Claim 1.
\begin{lemma}
For $A,B\in H^0(\A)$, we have 
\[
\Hom_{\D^b(\A)/\tr(\P)}(A,\Sigma^j B)=\colim_{n\geq 0}\Hom_{\D^b(\A)}(A,\Sigma^j P_B^{\geq -n}).
\]
\end{lemma}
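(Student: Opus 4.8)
The plan is to compute $\Hom_{\D^b(\A)/\tr(\P)}(A,\Sigma^j B)$ via the calculus of left fractions for the Verdier quotient $\D^b(\A)\to\D^b(\A)/\tr(\P)$, and then to recognize $\{\Sigma^jP_B^{\geq -n}\}_{n\geq 0}$ as a cofinal subsystem of the resulting filtered category, invoking Claim~1 at the decisive point. Since $\tr(\P)$ is a thick triangulated subcategory of $\D^b(\A)=H^0(\D^b_{dg}(\A))$, the calculus of left fractions yields a natural bijection
\[
\Hom_{\D^b(\A)/\tr(\P)}(A,\Sigma^j B)\;\iso\;\colim_{(\Sigma^j B\xrightarrow{\,s\,}U)}\Hom_{\D^b(\A)}(A,U),
\]
where the colimit runs over the filtered category $\mathcal C$ whose objects are the morphisms $s\colon\Sigma^j B\to U$ of $\D^b(\A)$ with $\Cone(s)\in\tr(\P)$ and whose morphisms are the commutative triangles under $\Sigma^j B$.

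First I would record that the structure maps $\sigma_n\colon\Sigma^j B\to\Sigma^jP_B^{\geq -n}$, obtained from the triangle $Q_B^{\geq -n}\to B\to P_B^{\geq -n}\to\Sigma Q_B^{\geq -n}$ of $\tr(\A)$ by applying $\Sigma^j$ and passing to $\D^b(\A)$, are objects of $\mathcal C$: indeed $\Cone(\sigma_n)=\Sigma^{j+1}Q_B^{\geq -n}$, and, exactly as in the proof of the preceding Proposition, $Q_B^{\geq -n}$ is a finite one-sided twisted complex on $\P$, hence lies in $\tr(\P)$. The filtration of $P_B$ supplies connecting maps $P_B^{\geq -n}\to P_B^{\geq -n-1}$, so $\{\sigma_n\}_{n\geq 0}$ is a directed subsystem of $\mathcal C$ along which the colimit of $\Hom_{\D^b(\A)}(A,-)$ is precisely the right-hand side of the Lemma. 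Thus everything reduces to showing that this subsystem is cofinal in $\mathcal C$.

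This last point is the $\D^b(\A)$-analogue of the cofinality used for $\tr(\A)/\tr(\P)$ in the preceding Proposition, and here is where Claim~1 enters. Given $s\colon\Sigma^j B\to U$ in $\mathcal C$, complete it to a triangle $\Sigma^{-1}\Cone(s)\xrightarrow{\,w\,}\Sigma^j B\xrightarrow{\,s\,}U\to\Cone(s)$; since $\Cone(s)\in\tr(\P)$ and $\tr(\P)$ is stable under shifts, $\Sigma^{-1}\Cone(s)\in\tr(\P)$. Then $\sigma_n$ factors through $s$ — giving a morphism $(s)\to(\sigma_n)$ in $\mathcal C$ — as soon as $\sigma_n\circ w=0$ in $\D^b(\A)$, hence as soon as $\Hom_{\D^b(\A)}(\Sigma^{-1}\Cone(s),\Sigma^j P_B^{\geq -n})=0$. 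Now by the triangle (\ref{omega}), whose third term lies in $\N$, and since $\D^b(\A)=\tr(\A)/\N$, one has $\Sigma^j P_B^{\geq -n}\iso\Sigma^{j+n}\Omega^n B$ in $\D^b(\A)$. On the other hand, for $P\in\P$ one has $\Hom_{\D^b(\A)}(P,\Sigma^\ell M)\iso\Hom_{\tr(\A)}(P,\Sigma^\ell M)=H^\ell\Hom_{\A}(P,M)=0$ for all $\ell>0$ and all $M\in H^0(\A)$, by Lemma~\ref{lem:leftorthogonal} together with the connectivity of $\A$; a straightforward induction along the twisted-complex filtration of an object $Q\in\tr(\P)$ then gives $\Hom_{\D^b(\A)}(Q,\Sigma^\ell M)=0$ for all $\ell$ larger than the amplitude of $Q$, uniformly in $M\in H^0(\A)$. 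Applied to $Q=\Sigma^{-1}\Cone(s)$, $M=\Omega^n B$ and $\ell=j+n$, this yields $\sigma_n\circ w=0$ for all $n\gg 0$. Combined with the fact that $\{\sigma_n\}$ is already a directed system, this makes every comma category $(s)\downarrow\{\sigma_n\}$ nonempty and connected, so the subsystem is cofinal and the Lemma follows by passing to colimits.

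The step I expect to be the main obstacle is this cofinality argument: one must produce the factorization of $\sigma_n$ through $s$ genuinely in $\D^b(\A)$, not merely in $\tr(\A)$, so both the identification $\Sigma^j P_B^{\geq -n}\iso\Sigma^{j+n}\Omega^n B$ in $\D^b(\A)$ and the uniform vanishing $\Hom_{\D^b(\A)}(Q,\Sigma^\ell M)=0$ for $\ell\gg 0$, $Q\in\tr(\P)$, are essential; once these are in place, the remaining verifications are a routine unwinding of the calculus of fractions.
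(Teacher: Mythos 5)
Your proof is correct and is essentially the argument the paper has in mind: it disposes of this lemma with the single phrase ``a direct consequence of the above Claim 1'', and your calculus-of-fractions cofinality argument, hinging on the vanishing $\Hom_{\D^b(\A)}\bigl(Q,\Sigma^{j}P_B^{\geq -n}\bigr)=0$ for $Q\in\tr(\P)$ and $n\gg 0$ (Claim~1 transported to $\D^b(\A)$ via the projectivity of the objects of $\P$), is exactly the intended filling-in of that phrase. The only cosmetic difference is that you re-derive the uniform vanishing through the identification $P_B^{\geq -n}\iso\Sigma^{n}\Omega^{n}B$ in $\D^b(\A)$ rather than citing Claim~1 directly, which amounts to the same computation.
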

\begin{proof}[Proof of Proposition \ref{stab}]It is clear that the canonical functor $H^0(\A)/[\P]\rightarrow \D^b(\A)/\tr(\P)$ is exact.
We only need to show that for each pair of objects $A,B\in H^0(\A)$ and $j\in\mathbb Z$, the following canonical map is a bijection 
\[
\begin{tikzcd}
\colim_{i\geq 0, i-j\geq 0}\colim_{n\geq 0}\Hom_{\tr(\A)}(\Sigma^i\Omega^iA,\Sigma^{j}P_B^{\geq -n})\ar[r,"\alpha"]
&\colim_{n\geq 0}\Hom_{\D^b(\A)}(A,\Sigma^j P_B^{\geq -n})
\end{tikzcd}
\]
where an element in the left hand side which is represented by a morphism $f:\Sigma^i\Omega^iA\rightarrow \Sigma^j P^{\geq -n}_B$ with $n\geq i-j+1$, is sent by the map $\alpha$ to the morphism 
\[
A\rightarrow P_A^{\geq -i}\iso \Sigma^i\Omega^i A\rightarrow \Sigma^jP_B^{\geq -n}
\]
in $\D^{b}(\A)$.
On the left hand side, the map
\[
\Hom_{\tr(\A)}(\Sigma^i\Omega^iA,\Sigma^{j}P_B^{\geq -n})\rightarrow \Hom_{\tr(\A)}(\Sigma^i\Omega^iA,\Sigma^{j}P_B^{\geq -{(n+1)}})
\]
is induced by the canonical morphism $P_B^{\geq -n}\rightarrow P_B^{\geq -(n+1)}$.  
The map
\[
\colim_{n\geq 0}\Hom_{\tr(\A)}(\Sigma^i\Omega^iA,\Sigma^{j}P_B^{\geq -n})\rightarrow \colim_{n\geq 0}\Hom_{\tr(\A)}(\Sigma^{i+1}\Omega^{i+1}A,\Sigma^{j}P_B^{\geq -n})
\]
sends an element represented by $f:\Sigma^i\Omega^iA\rightarrow \Sigma^{j}P_B^{\geq -n}$ with $n\geq i-j+1$, to the element represented by the morphism 
\[
\Sigma^{i+1}\Omega^{i+1}A\rightarrow \Cone(\Sigma^iP^{-i-1}_A\rightarrow\Sigma^i\Omega^iA)\rightarrow\Sigma^jP_B^{\geq -n}
\]
where the latter morphism is induced by $f$. 
Notice that for $n\geq i-j+1$ we have 
\[
\Hom_{\tr(\A)}(\Sigma^{i}P^{-i-1}, \Sigma^j P_B^{\geq -n})=0.
\]

For each $i\geq 0$ and $A\in H^0(\A)$, we have the triangle in $\tr(\A)$
\[
\begin{tikzcd}
Q_{A}^{\geq -i}\ar[r]&A\ar[r]&P_A^{\geq -i}\ar[r]&\Sigma Q_{A}^{\geq -i}
\end{tikzcd}.
\]
For each $n\geq 0$, we have the triangle in $\tr(\A)$
\[
\begin{tikzcd}
\Sigma^n \Omega^n B\ar[r]&P_B^{\geq -n}\ar[r]&X\ar[r]&\Sigma^{n+1} \Omega^n B
\end{tikzcd}.
\]
where $X$ lies in $\mathcal N^{[-n+1,0]}$. Here we adopt the convention that $P_B^{\geq 0}=B$ and in this case $X$ is a zero object.
Therefore, for $n\geq i-j$ we have 
\[
\Hom_{\D^b(\A)}(Q_{A}^{\geq -i},\Sigma^j P_B^{\geq -n})=\Hom_{\D^b(\A)}(Q_{A}^{\geq -i}, \Sigma^{n+j}\Omega^n B)=0.
\] 
For $A'\in H^0(\A)$, we have $\Hom_{\tr(\A)}(A', \Sigma^{m}X)=0$ for $m\leq -n$.
So for $0\leq n\leq i-j$, we have the following commutative diagram 
\[
\begin{tikzcd}
\Hom_{\tr(\A)}(A', \Sigma ^{n+j-i}\Omega^n B)\ar[d,"\sim"swap]\ar[r,"{\sim}"] &\Hom_{\D^b(\A)}(A',\Sigma^{n+j-i}\Omega^{n} B)\ar[d,"\sim"]\\
\Hom_{\tr(\A)}(A', \Sigma^{j-i}P_B^{\geq -n})\ar[r]&\Hom_{\D^b(\A)}(A', \Sigma^{j-i}P_B^{\geq -n})
\end{tikzcd}.
\]
So the bottom row is also a bijection. 

Claim: when $n=i-j+1$, the bottom row is an injection.
Indeed, let $r:A'\rightarrow \Sigma^{j-i}P_B^{\geq -n}$ be a morphism in $\tr(\A)$ which factors through an object $N$ in $\mathcal N$. 
We may assume $N\in \mathcal N^{\leq 0}$. Since we have $n+j-i=1\leq 2$, the Hom space $\Hom_{\tr(\A)}(N, \Sigma^{j-i}P_B^{\geq -n})$ is zero. 
Therefore $r$ is zero in $\tr(\A)$ and the bottom row is an injection.

We are ready to prove that $\alpha$ is bijective.
\begin{itemize}
\item[1)] $\alpha$ is surjective: Let $f:A\rightarrow \Sigma^j P_B^{\geq -n}$ be a morphism in $\D^b(\A)$. We may assume $n\geq -j$ and $n\geq 0$.
Let $i=n+j$. Then $f$ factors through $P_A^{\geq -i}$ in $\D^b(\A)$. 
In the category $\D^b(\A)$, the object $P_A^{\geq -i}$ is isomorphic to $\Sigma^i \Omega^i A$. 
Apply the above bijection to $A'=\Omega^i A$, and we get a morphism $\Omega^i A\rightarrow \Sigma^{j-i}P_B^{\geq -n}$ in $\tr(\A)$. 
This shows $\alpha$ is surjective.

\item[2)] $\alpha$ is injective: Let $g:\Omega^i A\rightarrow \Sigma^{j-i}P_B^{\geq -n}$ be a morphism in $\tr(\A)$ such that $\alpha$ sends the equivalence class of $g$ to zero.
So we may assume $n\geq i-j+1$ and the map 
\[
A\rightarrow P_A^{\geq -i}\xrightarrow{\sim}\Sigma^i\Omega^i A\rightarrow \Sigma^jP_B^{\geq -n}
\]
in $\D^b(\A)$ induced by $g$ is zero. So the map $P_A^{\geq -i}\rightarrow \Sigma^j P_B^{\geq -n}$ factors through $\Sigma Q_A^{\geq -i}$. 
Since we have $\Hom_{\D^b(\A)}(\Sigma Q_{A}^{\geq -i},\Sigma^j P_B^{\geq -n})=0$, the morphism $\Omega^i A\rightarrow \Sigma^{j-i}P_B^{\geq -n}$ is zero in $\D^b(\A)$.

Put $i'=n+j-1$. 
Then the map $g':\Omega^{i'}A\rightarrow \Sigma^{j-i'}P_B^{\geq -n}$ in $\tr(\A)$ obtained from $g$ is zero by the above claim. 
This shows $\alpha$ is injective.
\end{itemize}
\end{proof}
\subsection{Reproduction of exact dg categories}
We show that ``functor dg categories" with exact target and connective source carry canonical exact structures. Namely
\begin{theorem}\label{fun}
Let $\A$ be a small exact dg category. 
If $\B$ is a small connective dg category, then $\rep_{dg}(\B,\A)$ is canonically an exact dg category. 
\end{theorem}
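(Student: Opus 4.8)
The plan is to realize $\rep_{dg}(\B,\A)$, up to quasi-equivalence on $\tau_{\leq 0}$, as an extension-closed full dg subcategory of a pretriangulated dg category and then to invoke Example-Definition~\ref{exactdgextensionclosed}. First I would reduce to a convenient model for $\A$. Since $\mathcal H_{3t}(\A)\cong\mathcal H_{3t}(\tau_{\leq 0}\A)$ and, by Remark~\ref{truncationexactdgstructure}~a), exact structures on $\A$ coincide with exact structures on $\tau_{\leq 0}\A$, while $\tau_{\leq 0}\rep_{dg}(\B,\A)$ is quasi-equivalent to $\tau_{\leq 0}\rep_{dg}(\B,\tau_{\leq 0}\A)$ by Lemma~\ref{trun} (this is the one place where the connectivity of $\B$ enters), we may assume that $\A$ is connective. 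Then Theorem~\ref{main} provides a pretriangulated dg category $\D^b_{dg}(\A)$ together with the universal exact morphism $F\colon\A\to\D^b_{dg}(\A)$, which restricts to a quasi-equivalence $\tau_{\leq 0}\A\iso\tau_{\leq 0}\D'$ for an extension-closed full dg subcategory $\D'$ of $\D^b_{dg}(\A)$, the exact structure on $\A$ corresponding to the one induced on $\D'$. Applying Remark~\ref{truncationexactdgstructure}~a)--b) once more, together with the fact that $\rep_{dg}(\B,-)$ preserves quasi-equivalences and Lemma~\ref{trun}, it suffices to produce a canonical exact dg structure on $\rep_{dg}(\B,\D')$; the structure on $\rep_{dg}(\B,\A)$ is then obtained by transport along the quasi-equivalence $\tau_{\leq 0}\rep_{dg}(\B,\A)\simeq\tau_{\leq 0}\rep_{dg}(\B,\D')$.

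Now set $\mathcal T=\D^b_{dg}(\A)$. Since $\mathcal T$ is pretriangulated, so is $\rep_{dg}(\B,\mathcal T)$, and a pretriangulated dg category is exact with respect to the maximal exact dg structure given by all homotopy short exact sequences. By Example-Definition~\ref{exactdgextensionclosed} it therefore remains to show that $\rep_{dg}(\B,\D')$ is extension-closed in $\rep_{dg}(\B,\mathcal T)$. So let $X\to Y\to Z$ be a conflation of $\rep_{dg}(\B,\mathcal T)$ with $X,Z\in\rep_{dg}(\B,\D')$. As $\rep_{dg}(\B,\mathcal T)$ is pretriangulated, this homotopy short exact sequence corresponds to a triangle $X\to Y\to Z\to\Sigma X$ in the triangulated category $\rep(\B,\mathcal T)\subseteq\D(\mathcal T\otimes^{\mathbb L}\B^{op})$; restricting along the inclusion of any object $B$ of $\B$ yields a triangle $X(-,B)\to Y(-,B)\to Z(-,B)\to\Sigma X(-,B)$ in $\D(\mathcal T)$. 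By Corollary~\ref{strictify}, $X(-,B)$ and $Z(-,B)$ are quasi-isomorphic to dg $\mathcal T$-modules represented by objects $A_1,A_2$ of $\D'$. As $\mathcal T$ is pretriangulated, the quasi-representable dg $\mathcal T$-modules form a triangulated subcategory of $\D(\mathcal T)$, so $Y(-,B)$ is quasi-isomorphic to $M^{\wedge}$ for an object $M$ of $\mathcal T$ fitting into a triangle $A_1\to M\to A_2\to\Sigma A_1$ in $H^0(\mathcal T)$. Viewed in $\mathcal T$ with its maximal exact structure, this triangle is a conflation with outer terms in $\D'$, whence $M\in\D'$ because $\D'$ is extension-closed. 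Since $B$ was arbitrary, Corollary~\ref{strictify} gives $Y\in\rep_{dg}(\B,\D')$.

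Unwinding the chain of identifications, the conflations of the resulting exact dg structure on $\rep_{dg}(\B,\A)$ are precisely the $3$-term h-complexes of $\rep_{dg}(\B,\A)$ whose evaluation at each object of $\B$ is a conflation of $\A$; in particular the structure does not depend on any choices, which is the sense in which it is canonical. I expect the main obstacle to be the objectwise argument of the preceding paragraph: one must check carefully that both ``being a conflation'' and ``being quasi-representable by an object of $\D'$'' are genuinely detected after evaluation at the objects of $\B$, which rests on the strictification of quasi-representability (Corollary~\ref{strictify}) and on the compatibility of the Yoneda embedding $\mathcal T\hookrightarrow\D_{dg}(\mathcal T)$ with the formation of cones. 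The remaining reduction and transport steps are routine, if somewhat lengthy, bookkeeping with $\tau_{\leq 0}$-truncations and with transport of exact dg structures along quasi-equivalences.
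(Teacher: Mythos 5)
Your proof is correct, but it takes a genuinely different route from the paper's. The paper verifies the axioms $\Ex 0$--$\Ex 2^{op}$ for the class of pointwise conflations directly; its technical heart is Lemma~\ref{pointwise}, which shows that a cospan (span) in $\rep_{dg}(\B,\A)$ admitting a homotopy pullback (pushout) after evaluation at every object of $\B$ already admits one globally --- this is proved by gluing the pointwise pullbacks into a bimodule via the heart of the canonical t-structure on $\D(\B^{op}\otimes\A)$, and it is there (together with Lemma~\ref{suff}) that the connectivity hypotheses enter. You sidestep this gluing entirely: after truncating, Theorem~\ref{main} realizes $\tau_{\leq 0}\A$ as an extension-closed subcategory $\D'$ of the pretriangulated $\D^b_{dg}(\A)$, the functor category $\rep_{dg}(\B,\D^b_{dg}(\A))$ is again pretriangulated and so carries the maximal exact structure, and extension-closedness of $\rep_{dg}(\B,\D')$ reduces to an objectwise statement handled by Corollary~\ref{strictify}, whereupon Example-Definition~\ref{exactdgextensionclosed} applies. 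Both arguments are valid and produce the same structure; note that your closing identification of the conflations with the pointwise ones really does need part 2) of Theorem~\ref{main} (the bijection $\mathbb E(C,A)\cong\Ext^1_{\D^b(\A)}(FC,FA)$), since otherwise the structure inherited from the maximal one on $\rep_{dg}(\B,\D^b_{dg}(\A))$ could a priori be strictly larger. The trade-off is that your argument is shorter but makes Theorem~\ref{fun} depend on the full embedding theorem, whereas the paper's proof is logically independent of Theorem~\ref{main} and yields the pointwise-detection statements (Lemmas~\ref{suff} and~\ref{pointwise}) as reusable tools of independent interest.
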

In this subsection, it is convenient to work with $\rep(\Sq)$ instead of $\mathcal H_{3t}(\A)$. 
Recall that by Lemma \ref{equivalences}, we have a fully faithful functor $\mathcal H_{3t}(\A)\hookrightarrow \rep(\Sq)$ whose essential image consists of homotopy bicartesian squares $X$
\[
\begin{tikzcd}
X\ar[r]\ar[d]&Y\ar[d]\\
N\ar[r]&Z
\end{tikzcd}
\]
where $N$ is acyclic.
Then an exact dg structure on an additive dg category $\A$ can be defined to be a class $\mathcal S\subset \rep(\Sq)$ of homotopy bicartesian squares $X$, where $N$ is acyclic, which satisfies similar axioms as those of Definition~\ref{exactdgstructure}.
We will use this definition in the rest of the section.
\begin{remark}
Let $\B$ be the dg $k$-category $k\I$ for a small category $\I$ and $\A$ a Quillen exact category considered as an exact dg category. 
Let us compare the exact dg structure on $\rep_{dg}(\B,\A)$ with the Quillen exact structure on $\Fun(\I,\A)$.

Note that for objects $X$ and $Y$ in $\rep(\B,\A)$, we have 
\[
\Hom_{\D(\A\otimes\B^{op})}(X,\Sigma^i Y)\iso \Hom_{\D(\B^{e})}(\B,\RHom_{\A}(X,\Sigma^i Y))
\]
and the space $\tau_{\leq 0}\RHom_{\A}(X,\Sigma^i Y)$ is zero if $i<0$, since the dg category $\A$ is concentrated in degree $0$.
Thus we have a quasi-equivalence of dg categories
\[
\tau_{\leq 0}\rep_{dg}(\B,\A)\iso H^0(\rep_{dg}(\B,\A))\iso \rep(\B,\A).
\]
Thus the canonical exact dg structure on $\rep_{dg}(\B,\A)$ provided by Theorem \ref{fun} induces a Quillen exact structure on $\rep(\B,\A)$. 
 
 The heart of the canonical t-structure on $\D(\A\otimes \B^{op})$ is equivalent to $\Mod(\A\otimes \B^{op})$.
 Since the objects $X$ and $Y$ both have cohomology concentrated in degree 0, they belong to the heart.
 So they are given by functors $\A^{op}\otimes \B\rightarrow \Mod k$ and furthermore given by functors $F,G:\B\rightarrow \A$ because $X(-,B)$ and $Y(-,B)$ are quasi-representables for each $B\in\B$.
 So we have an inclusion
 \begin{align*}
 \Fun(\I,\A)=\Fun_k(\B,\A)&\hookrightarrow \Fun_{k}(\B,\Mod \A)\iso\Fun_{k}(\A^{op}\otimes \B,\Mod k)\\
 &\iso \Mod(\A\otimes \B^{op})\hookrightarrow \D(\A\otimes \B^{op})
 \end{align*}
whose essential image is exactly $\rep(\B,\A)$.
 
If we identify the categories $\Fun(\I,\A)$ and $\rep(\B,\A)$ via the above functor, we see that the Quillen exact structure on $\rep(\B,\A)$ is identified with the componentwise Quillen exact structure on $\Fun(\I,\A)$.
\end{remark}
 For a dg category $\A$, denote by $\overline{\A}$ the full dg subcategory of $\C_{dg}(\A)$ consisting of cofibrant quasi-representable dg $\A$-modules. 
 The Yoneda dg functor $Y:\A\rightarrow \C_{dg}(\A)$ induces a quasi-equivalence from $\A$ to $\overline{\A}$, which we still denote by $Y$.
 
 We may assume $\B$ is cofibrant as a dg $k$-category. 
 For each $B\in \B$, let $R_B:\rep_{dg}(\B,\A) \rightarrow \overline{\A}$ be the dg functor given by $X\mapsto X(-,B)$. 
 We have a morphism in $\mathrm{Hqe}$ given by the following roof of dg functors 
 \[
 \begin{tikzcd}
 &\overline{\A}&\\
 \rep_{dg}(\B,\A)\ar[ru,"R_B"]&&\A\ar[lu,"Y"swap]
 \end{tikzcd}.
 \]
 It then induces an (isomorphism class of) functor $\rep(k\mathcal I, \rep_{dg}(\B,\A))\rightarrow \rep(k\mathcal{I},\A)$ for each small category $\mathcal I$, which we denote by $F^{\mathcal I}_B$ or $F_B$ if there is no ambiguity.
 \begin{lemma}\label{suff}Let $X$ be an object in $\rep(\mathrm{Sq},\rep_{dg}(\B,\A))$. 
If for each $B\in\B$, $F_B(X)\in \rep(\mathrm{Sq},\A)$ is a homotopy cocartesian (resp.~cartesian) square, then $X$ is also a homotopy cocartesian (resp.~cartesian) square. 
 \end{lemma}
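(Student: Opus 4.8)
The plan is to verify the criterion of Definition~\ref{maindef} for $X$, regarded as a square over the dg category $\rep_{dg}(\B,\A)$, by pushing the hypothesis on the $F_B(X)$ through the adjunction relating modules over $\A\otimes\B^{op}$ to modules over $\B^{e}=\B\otimes\B^{op}$ that already appears in the proof of Lemma~\ref{cha}. I treat the homotopy cartesian case; the cocartesian case follows by duality. We may assume $\B$ is cofibrant over $k$ and that $\A$ has cofibrant Hom complexes, so that $\rep_{dg}(\B,\A)$ is a full dg subcategory of $\D_{dg}(\A\otimes\B^{op})$ whose objects are cofibrant dg $\A\otimes\B^{op}$-modules. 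Write the terms of $X$ as $X_{ij}=M_{ij}^{\wedge}$ for objects $M_{ij}$ of $\rep_{dg}(\B,\A)$ and let $\phi\colon X_{00}\to W$ be the canonical map of dg $\rep_{dg}(\B,\A)$-modules, where $W=\Sigma^{-1}\Cone((-j,k))$. Since $F_B$ is induced by a morphism in $\Hqe$, it is a triangle functor on derived categories and sends $\phi$ to the canonical map $F_B(X_{00})\to\Sigma^{-1}\Cone((-F_B(j),F_B(k)))$; thus the hypothesis that $F_B(X)$ is homotopy cartesian over $\A$ says precisely that $F_B(\phi)$ induces an isomorphism on $\tau_{\leq 0}\RHom_{\A}(A'^{\wedge},-)$ for every $A'\in\A$.

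The first ingredient is the identity, for any object $A$ of $\rep_{dg}(\B,\A)$ and any dg $\A\otimes\B^{op}$-module $V$,
\[
\RHom_{\rep_{dg}(\B,\A)}(A^{\wedge},V)=\RHom_{\A\otimes\B^{op}}(A,V)=\RHom_{\B^{e}}\bigl(\B,\RHom_{\A}(A,V)\bigr),
\]
where $\RHom_{\A}(A,V)$ denotes the dg $\B^{e}$-module whose value at $(B_1,B_2)$ is $\RHom_{\A}(A(-,B_1),V(-,B_2))$. The second ingredient is a connectivity reduction. Since $\B$ is connective, so is $\B^{e}$; the diagonal bimodule $\B$ lies in $\D(\B^{e})^{\leq 0}$ because each $\B(-,B')$ is representable, hence lies in $\D(\B)^{\leq 0}$, and Lemma~\ref{cha} applies; and for any $V'\in\D(\B^{e})^{\geq 1}$ one has $\RHom_{\B^{e}}(\B,V')\in\D(k)^{\geq 1}$, since $\Hom_{\D(\B^{e})}(\D(\B^{e})^{\leq 0},\D(\B^{e})^{\geq 1})=0$, together with shifting, forces the vanishing of $H^{i}$ for $i\leq 0$. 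Applying $\RHom_{\B^{e}}(\B,-)$ to the truncation triangle $\tau_{\leq 0}V\to V\to\tau_{\geq 1}V\to$ for the canonical t-structure (Lemma~\ref{lemma:tstructure}) and taking $\tau_{\leq 0}$, we obtain, for every dg $\A\otimes\B^{op}$-module $V$,
\[
\tau_{\leq 0}\RHom_{\rep_{dg}(\B,\A)}(A^{\wedge},V)\;\xrightarrow{\ \sim\ }\;\tau_{\leq 0}\RHom_{\B^{e}}\bigl(\B,\tau_{\leq 0}\RHom_{\A}(A,V)\bigr).
\]

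Finally I combine the two. Fix an object $A$ of $\rep_{dg}(\B,\A)$. For each $B_1$ the dg $\A$-module $A(-,B_1)$ is quasi-representable, say $A(-,B_1)\simeq A'^{\wedge}$ with $A'\in\A$; hence the hypothesis applied to $F_{B_2}(X)$, tested against $A'$, shows that $\tau_{\leq 0}\RHom_{\A}(A(-,B_1),\phi(-,B_2))$ is a quasi-isomorphism, and letting $(B_1,B_2)$ vary, $\tau_{\leq 0}\RHom_{\A}(A,\phi)$ is a quasi-isomorphism in $\D(\B^{e})$. Since $\tau_{\leq 0}\RHom_{\B^{e}}(\B,-)$ preserves quasi-isomorphisms, applying it and using the displayed identity for $V=X_{00}$ and $V=W$ shows that $\tau_{\leq 0}\RHom_{\rep_{dg}(\B,\A)}(A^{\wedge},\phi)$ is a quasi-isomorphism; as $A$ was arbitrary, $X$ is homotopy cartesian with respect to $\rep_{dg}(\B,\A)$. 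The main obstacle — and the only place where real content enters — is the connectivity reduction: the hypothesis controls the complexes $\RHom_{\A}(A(-,B_1),\phi(-,B_2))$ only after the truncation $\tau_{\leq 0}$, not on the nose, so one genuinely needs that $\RHom_{\B^{e}}(\B,-)$ ignores strictly positive cohomology in the relevant range, which is exactly where the connectivity of $\B$ and the $\tau_{\leq 0}$ built into Definition~\ref{maindef} are indispensable.
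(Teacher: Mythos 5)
Your proof is correct and follows essentially the same route as the paper's: both reduce the test against a representable $A^{\wedge}$ to $\RHom_{\B^{e}}(\B,\RHom_{\A}(A,-))$, use the connectivity of $\B$ (i.e.\ $\B\in\D(\B^{e})^{\leq 0}$, as in Lemma~\ref{cha}) to insert the truncation $\tau_{\leq 0}$ without changing cohomology in degrees $\leq 0$, and then feed in the componentwise hypothesis at pairs $(B_1,B_2)$. The only cosmetic difference is that you spell out the cartesian case while the paper spells out the cocartesian one.
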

 \begin{proof}
We show the case when $F_B(X)$ is homotopy cocartesian for each $B\in\B$. The other case can be shown similarly. 

By Corollary \ref{rep}, we may suppose $X$ is of the form
 \[
 \begin{tikzcd}
 X_{0}^{\wedge}\ar[r,"f^{\wedge}"]\ar[d]&X_{1}^{\wedge}\ar[d,"j^{\wedge}"]\\
 IX_0^{\wedge}\ar[r]&X_{2}^{\wedge}
 \end{tikzcd}
 \]
 where $X_i\in \rep_{dg}(\B,\A)$ for $i=0,1,2$.
 Consider the following diagram in $\C(\B^{op}\otimes \A)$
 \[
 \begin{tikzcd}
 X_0\ar[r,"f"]\ar[d,"s=\begin{bmatrix}-f\\0\end{bmatrix}"swap]&X_1\ar[r,"\begin{bmatrix}0\\1\end{bmatrix}"]\ar[d,equal]&C(f)\ar[d,"r={[}0{,}j{]}"]\\
 \Sigma^{-1}C(j)\ar[r,"{[}-1{,}0{]}"swap]&X_1\ar[r,"j"swap]&X_2
 \end{tikzcd}.
 \]
 Let $W\in \rep_{dg}(\B,\A)$.
 By assumption, for each pair of objects $B_1,B_2$ in $\B$, and $i\leq 0$, we have
 \[
 \Hom_{\A}(X_2(-,B_1),\Sigma^{i}W(-,B_2))\xrightarrow{\sim}\Hom_{\A}(C(f)(-,B_1),\Sigma^{i}W(-,B_2))
 \]
 and hence for $j\leq 0$ we have
 \[
 \tau_{\leq 0}\RHom_{\A}(X_2,\Sigma^{j}W)\xrightarrow{\sim}\tau_{\leq 0}\RHom_{\A}(C(f),\Sigma^{j}W).
 \]
 Thus we have 
\[
\begin{aligned}
\Hom_{\rep_{dg}(\B,\A)}(X_2^{\wedge}, \Sigma^{j}W^{\wedge})&\xrightarrow{\sim}\Hom_{\B^{op}\otimes \A}(X_2, \Sigma^{j}W)\\
&\xrightarrow{\sim}\Hom_{\B^{e}}(\B,\RHom_{\A}(X_2,\Sigma^jW))\\
&\xleftarrow{\sim}\Hom_{\B^{e}}(\B,\tau_{\leq 0}\RHom_{\A}(X_2,\Sigma^{j}W))\\
&\xrightarrow{\sim}\Hom_{\B^{e}}(\B,\tau_{\leq 0}\RHom_{\A}(C(f),\Sigma^{j}W))\\
&\xrightarrow{\sim}\Hom_{\B^e}(\B,\RHom_{\A}(C(f),\Sigma^{j}W))\\
&\xrightarrow{\sim}\Hom_{\B^{op}\otimes\A}(C(f),\Sigma^{j})\\
&\xrightarrow{\sim}\Hom_{\rep_{dg}}(\B,\A)(C(f^{\wedge}),\Sigma^{j}W^{\wedge}).
\end{aligned}
\]
This shows that $X$ is homotopy cocartesian.
 \end{proof}
 \begin{lemma}\label{pointwise}Let $S$ be an object in $\rep(k\mathrm{Cosp}, \rep_{dg}(\B,\A))$ (resp.~$\rep(k\mathrm{Sp}, \rep_{dg}(\B,\A))$).
If for each $B\in \B$, the object $F_B(S)$ in $\rep(k\mathrm{Cosp}, \A)$ (resp. $\rep(k\mathrm{Sp}, \A)$) admits a homotopy pullback (resp.~homotopy pushout). 
Then $S$ admits a homotopy pullback (resp.~homotopy pushout).
 \end{lemma}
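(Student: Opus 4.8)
The plan is to establish Lemma~\ref{pointwise} (homotopy pullbacks and pushouts in $\rep_{dg}(\B,\A)$ can be constructed pointwise) and then to deduce Theorem~\ref{fun} from it by verifying the exact dg structure axioms of Definition~\ref{exactdgstructure}. I will work exclusively with the description of the homotopy category of $3$-term h-complexes via $\rep(\Sq)$, as suggested by the remark preceding the statement, so that a ``conflation'' in $\rep_{dg}(\B,\A)$ means a homotopy bicartesian square $X$ with acyclic lower-left corner, and the candidate exact structure $\mathcal S$ consists of those squares $X$ in $\rep(\Sq,\rep_{dg}(\B,\A))$ all of whose pointwise images $F_B(X)$ are conflations in $\A$ (equivalently, lie in the given exact structure on $\A$). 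The point of Lemma~\ref{pointwise}, combined with Lemma~\ref{suff}, is precisely that this class behaves well under the pullback/pushout operations.

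\textbf{Proof of Lemma~\ref{pointwise}.} I treat the homotopy pullback case; the homotopy pushout case is dual. Let $S \in \rep(k\Cosp, \rep_{dg}(\B,\A))$ be given by a cospan $Y \xrightarrow{j} Z \xleftarrow{c} Z'$ of objects of $\rep_{dg}(\B,\A)$. Using that $\Cosp$ is the path category of a quiver without relations, I may replace $S$ by an isomorphic object all of whose terms are representable cofibrant dg $\A$-modules; concretely, realize $j$ and $c$ by closed degree-$0$ morphisms of dg $\B^{op}\otimes\A$-modules. The natural candidate for the homotopy pullback is
\[
B' \coloneqq \tau_{\leq 0}\Sigma^{-1}\Cone\bigl(Y \oplus Z' \xrightarrow{[j,\,-c]} Z\bigr),
\]
formed in $\C_{dg}(\B^{op}\otimes\A)$, together with the evident square $X$ with corners $B'$, $Z'$, $Y$, $Z$ and the canonical homotopy witnessing commutativity (after replacing $Y$, $Z'$, $Z$ by representable resolutions if needed, $X$ lifts to $\rep(\ol{\Sq},\rep_{dg}(\B,\A))$, cf.~Section~\ref{res}). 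The key point is that for each $B \in \B$ the functor $F_B$ — which is restriction to $B$ followed by cofibrant-Yoneda replacement — commutes, up to quasi-isomorphism, with the formation of $\Cone$ and with $\tau_{\leq 0}$ (here I use that $\tau_{\leq 0}$ commutes with restriction along the connective $\B$, Lemma~\ref{trun} and Lemma~\ref{cha}). Therefore $F_B(B')$ is quasi-isomorphic to $\tau_{\leq 0}\Sigma^{-1}\Cone\bigl(F_B(Y)\oplus F_B(Z') \to F_B(Z)\bigr)$. By hypothesis $F_B(S)$ admits a homotopy pullback; by Proposition~\ref{pullbackunique} this homotopy pullback is unique up to unique isomorphism, and by the computation in Lemma~\ref{adj} (and Definition~\ref{maindef}) it is represented exactly by $\tau_{\leq 0}\Sigma^{-1}\Cone$ of that map. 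Hence $F_B(X)$ is homotopy cartesian with respect to $\A$ for every $B\in\B$, and then $X$ is homotopy cartesian with respect to $\rep_{dg}(\B,\A)$ by Lemma~\ref{suff}. Finally $LX \cong S$ in $\D(\Cosp)$ by construction, so $X$ is a homotopy pullback of $S$ in the sense of Definition~\ref{homotopypullbackdef}.

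\textbf{Proof of Theorem~\ref{fun}.} First note $\rep_{dg}(\B,\A)$ is additive: $H^0$ of a $\rep$-category has finite direct sums computed pointwise, and each $F_B$ is additive, so $\mathcal S$ is closed under isomorphisms and direct sums because the exact structure on $\A$ is (Proposition~\ref{property}c). Now verify the axioms. Axiom Ex0 is immediate since $F_B(\Id_0)$ is the identity of a zero object. For Ex1, given composable deflations $p, p'$ in $\rep_{dg}(\B,\A)$, their homotopy kernels may be built pointwise by Lemma~\ref{pointwise}; since $F_B(p), F_B(p')$ are deflations in $\A$, so is $F_B(p'p)$ by Ex1 in $\A$, whence $p'p$ is a deflation. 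The crucial axioms Ex2 and Ex2$^{op}$: given a deflation $p:Y\to Z$ and a map $c:Z'\to Z$ in $Z^0(\rep_{dg}(\B,\A))$, form the cospan $S$; each pointwise cospan $F_B(S)$ admits a homotopy pullback in $\A$ by axiom Ex2 for $\A$, and moreover the pointwise pullback deflation $F_B(p')$ is again a deflation. By Lemma~\ref{pointwise}, $S$ admits a homotopy pullback $X$ in $\rep_{dg}(\B,\A)$, realized pointwise; hence $F_B$ of its leg $p'$ is a deflation for every $B$, so $p'$ is a deflation. Axiom Ex2$^{op}$ is dual. Therefore $\mathcal S$ is an exact dg structure on $\rep_{dg}(\B,\A)$. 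The construction is manifestly functorial, hence canonical.

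\textbf{Main obstacle.} The substantive difficulty is all concentrated in Lemma~\ref{pointwise}: one must be sure that the explicit cone-and-truncate construction really does commute with the functors $F_B$ well enough that the \emph{pointwise} homotopy-cartesian test of Definition~\ref{maindef} can be checked term by term. The subtlety is that $F_B$ is not literally restriction but restriction composed with a cofibrant replacement (since $R_B$ lands in $\overline\A$ via a roof of dg functors), and that $\tau_{\leq 0}$ must be interchanged with restriction along the connective $\B$; this is exactly where Lemma~\ref{trun}, Lemma~\ref{cha}, and the connectivity of $\B$ are essential. Once Lemma~\ref{pointwise} and its dual are in hand, the deduction of Theorem~\ref{fun} is a routine, if somewhat lengthy, transfer of each axiom from $\A$ to $\rep_{dg}(\B,\A)$ using Lemma~\ref{suff}.
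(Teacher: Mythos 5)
Your proof of Lemma~\ref{pointwise} is correct in substance but takes a visibly more direct route than the paper. The paper first produces, for each $B$, the object $G(B)=\Sigma\Cone\bigl(s_B\colon H(B)\to V(-,B)\bigr)$ in the heart of the canonical t-structure on $\D(\A)$, assembles the $G(B)$ into a bimodule $G$ by hand (using that $\RHom_{\A}(\Sigma V,G)$ is concentrated in degrees $0$ and $1$, so a bimodule map $\Sigma V\to G$ can be specified at the level of $H^0$), and then sets $X_0=\Sigma^{-1}\Cone(u)$. Unwinding, $X_0$ is exactly the pointwise truncation $\tau_{\leq 0}V$, so the paper's labour is entirely in showing that the family $\{\tau_{\leq 0}(V(-,B))\}_B$ is realized by a single bimodule. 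You shortcut this by taking the truncation of the cone at the bimodule level and then checking pointwise quasi-representability via the commutation of $\tau_{\leq 0}$ with restriction along objects of $\B$; that commutation does hold (the co-aisle is detected pointwise by the remark before Lemma~\ref{cha}, and the aisle pointwise condition is Lemma~\ref{cha} itself, both one-directional statements combining to identify the restricted truncation triangle), so your candidate restricts to $\tau_{\leq 0}(K(-,B))\simeq H(B)$ and Lemma~\ref{suff} finishes the argument. What your approach buys is the elimination of the explicit construction of $G$ and of the degree count on $\RHom_{\A}(\Sigma V,G)$; what the paper's buys is that it never needs to know that the bimodule truncation is computed pointwise.

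One point must be made explicit for your argument to be literally correct: you need $\A$ (not only $\B$) to be connective, and this is not automatic from the statement of Theorem~\ref{fun}. The identification of the homotopy pullback corner of $F_B(S)$ with $\tau_{\leq 0}\Sigma^{-1}\Cone(\cdots)$ uses the \emph{converse} direction of Lemma~\ref{adj}, which is only asserted for connective $\A$; likewise the step ``$H(B)$ quasi-representable $\Rightarrow H(B)\in\D(\A)^{\leq 0}$'', on which the quasi-isomorphism $H(B)\xrightarrow{\sim}\tau_{\leq 0}(K(-,B))$ rests, fails for non-connective $\A$. The fix is one sentence: by Remark~\ref{truncationexactdgstructure} and Lemmas~\ref{trun} and~\ref{truncationhomotopycartesian} one may replace $\A$ by $\tau_{\leq 0}\A$ without changing $\rep_{dg}(\B,\A)$ up to quasi-equivalence or the classes of homotopy (co)cartesian squares, and then run your argument. (The paper's own proof makes the same tacit reduction when it invokes the heart $\simeq\Mod H^0(\A)$.) The remaining elisions — realizing the cospan by closed degree-zero bimodule maps and lifting the resulting square to $\rep(\overline{\Sq},\rep_{dg}(\B,\A))$ via Yoneda — are routine and handled the same way in the paper.
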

 \begin{proof}We show the case when $S\in\rep(k\mathrm{Cosp}, \rep_{dg}(\B,\A))$. The other case can be shown dually. 
 
 By Corollary \ref{rep}, we may assume $S$ is of the form
 \[
 \begin{tikzcd}
 &X_1^{\wedge}\ar[d,"j^{\wedge}"]\\
 PX_2^{\wedge}\ar[r]&X_2^{\wedge}
 \end{tikzcd}
 \]
 where $X_i\in \rep_{dg}(\B,\A)$, $i=1,2$ and $j$ is a graded-split surjection. 
 
 Since $j$ and $PX_2\rightarrow X_2$ are graded-split surjections, by assumption, for each $B\in\B$, $F_B(S)$ admits a homotopy pullback
 \[
 \begin{tikzcd}
 H(B)\ar[r,""]\ar[d]&X_1(-,B)\ar[d,"j_{(-,B)}"]\\
 PX_2(-,B)\ar[r]&X_2(-,B)
 \end{tikzcd}
 \]
 where $H(B)\in H^0(\A)$.
 
 Let $V$ be the kernel of the map $X_1\oplus PX_2\rightarrow X_2$ in $\C(\B^{op}\otimes \A)$. 
 Then for each $B\in\B$, we have a canonical map $s_B: H(B)\rightarrow V(-,B)$.
 By definition $G(B){\coloneqq}\Sigma\Cone(s_B)$ lies in the heart of the canonical t-structure on $D(\A)$, which is equivalent to $\Mod(H^0(\A))$. 
 Note that the map $\Sigma V(-,B)\rightarrow G(B)$ is well-defined up to a unique isomorphism.
 
 For each morphism $b:B\rightarrow B'$ in $H^0(\B)$, we have a canonical morphism $H(b):H(B)\rightarrow H(B')$ in $H^0(\B)$ which makes the following diagram in $\D(\A)$ commute 
 \[
 \begin{tikzcd}
 H(B)\ar[r]\ar[d]&H(B')\ar[d]\\
 V(-,B)\ar[r]&V(-,B').
 \end{tikzcd}
 \]
and it induces a canonical morphism $G(b):G(B)\rightarrow G(B')$ in $\D(\A)$.

Therefore we have a canonical functor $G: H^0(\B)\rightarrow \Mod(H^0(\A))$ which sends $B$ to $G(B)$ and $b:B\rightarrow B'$ to $G(b):G(B)\rightarrow G(B')$.

By adjunction, $G$ corresponds to an object in $\Mod(H^0(\B)^{op}\otimes H^0(\A))$, which is equivalent to the heart of the canonical t-structure on $\D(\B^{op}\otimes \A)$.
We still denote by $G$ the corresponding object in $\D(\B^{op}\otimes \A)$. We may assume $G$ is cofibrant.

Our next aim is to find a morphism from $\Sigma V$ to $G$ in $\D(\B^{op}\otimes \A)$.
We have the canonical isomorphism
\begin{align}
\Hom_{\D(\B^{op)}\otimes\A}(\Sigma V, G)&\xrightarrow{\sim}\Hom_{\D(\B^{e})}(\B, \RHom_{\A}(\Sigma V, G)).\label{bimodule1}\tag{41.1}
\end{align}
We have that $\RHom_{\A}(\Sigma V, G)$ is concentrated in degrees 0 and 1.
So
\begin{align}
\Hom_{\D(\B^{e})}(\B, \RHom_{\A}(\Sigma V, G))\xrightarrow{\sim} \Hom_{(H^0\B)^e}(H^0(\B), H^0\RHom_{\A}(\Sigma V, G)). \label{bimodule2}\tag{41.2}
\end{align}
For each pair of objects $B,B'\in H^0(\B)$, we have the map
\[
\alpha_{B,B'}:\Hom_{H^0(\B)}(B,B')\rightarrow \Hom_{\D(\A)}(\Sigma V(-,B), G(-,B'))
\]
where $b:B\rightarrow B'$ is sent to $\Sigma V(-,B)\rightarrow G(-,B)\xrightarrow{G(-,b)} G(-,B')$ where the first map is the canonical map.
It is not hard to check 
\[
\alpha_{-,-}: \Hom_{H^0(\B)}(-,-)\rightarrow \Hom_{\A}(\Sigma V, G)
\]
defines an $H^0(\B)-H^0(\B)$-bimodule morphism.  
By \ref{bimodule1} and \ref{bimodule2}, this defines a morphism $u:\Sigma V\rightarrow G$ in $\D(\B^{op}\otimes \A)$. By definition, $u_{(-,B)}$ is the canonical morphism $\Sigma V(-,B)\rightarrow G(-,B)$.

Let $X_0$ be $\Sigma^{-1}\Cone(u)$. 
Then $X_0(-,B)$ is quasi-isomorphic to $H(B)$ for each $B\in\B$ and hence $X_0\in \rep_{dg}(\B, \A)$. 
Consider the composition morphism $X_0\rightarrow V\rightarrow X_1\oplus PX_2$.
The sequence $X_0\rightarrow X_1\oplus PX_2\rightarrow X_2$ then corresponds to a square
\[
\begin{tikzcd}
X_0^{\wedge}\ar[r]\ar[d]&X_1^{\wedge}\ar[d]\\
PX_2^{\wedge}\ar[r]&X_2^{\wedge}
\end{tikzcd}
\]
which is homotopy cocartesian and hence a homotopy pushout of $S$.
\end{proof}
 \begin{proof}[Proof of Theorem \ref{fun}]
 Let $\mathcal S$ be the class of homotopy bicartesian squares \linebreak $X\in \rep(\mathrm{Sq}, \rep_{dg}(\B,\A))$ such that $F_B(X)\in\rep(\mathrm{Sq},\A)$ is a conflation for each $B\in\B$. By Lemma \ref{suff} and Lemma \ref{pointwise}, one checks that $(\rep_{dg}(\B,\A),\mathcal S)$ is an exact dg category.
\end{proof}
\begin{remark}
From the definition of the canonical exact dg structure on $\rep_{dg}(\B,\A)$, it is clear that an exact morphism $F:\A\rightarrow \A'$ in $\Hqe$ induces an exact morphism $F_*:\rep_{dg}(\B,\A)\rightarrow\rep_{dg}(\B,\A')$ in $\Hqe$.
\end{remark}
Let $\A$ be an exact dg category. 
Recall that there is a canonical extriangulated structure $(H^0(\A),\mathbb E,\mathfrak s) $ on $H^0(\A)$.
Let $\{I\xrightarrow{\overline{f}} P\}$ be family of morphisms in $H^0(\A)$ from injectives to projectives. 
Let $\B$ be the dg category $k\Mor=k(0\rightarrow 1)$.
Recall that if a morphism $f:I\rightarrow P$ in $Z^0(\A)$ is homotopic to $f'$, then the objects $f:I\rightarrow P$ and $f':I\rightarrow P$ are isomorphic in $\rep(\B,\A)$. 
Thus we may consider the family of objects $I\xrightarrow{{f}} P$ in $\rep(\B,\A)$. 
We may assume $\{I\rightarrow P\}$ forms an additive subcategory of $\rep(\B,\A)$.  
Let $\J$ be the ideal in $H^0(\A)$ consisting of morphisms which factor through morphisms in the family $\{I\rightarrow P\}$. 

By Theorem \ref{fun}, $\rep_{dg}(\B,\A)$ is canonically an exact dg category. Consider the exact dg substructure of $\rep_{dg}(\B,\A)$ which makes the objects represented by $I\rightarrow P$ projective-injective. We have an equivalence of categories $\rep(\B,\A)\xrightarrow{\sim} H^0(\rep_{dg}(\B,\A))$ where $H^0(\rep_{dg}(\B,\A))$ is identified with the full subcategory of $\D(\rep_{dg}(\B,\A))$ of cofibrant quasi-representable dg modules via the Yoneda functor.
Denote by $\mathbb E'$ the corresponding bifunctor of the extriangulated structure on $\rep(\B,\A)$. 

Denote by $F$ (resp.~$G$) the dg functor $k\rightarrow \B=k(0\rightarrow 1)$, sending the object $*$ to $0$ (resp.~$1$). 
Let $M$ be the $\B$-$k$-bimodule $\Hom_{\B}(F(-), -)$ and $N$ the $k$-$\B$-bimodule $\Hom_{\B}(-, G(-))$. 
Then we have a morphism of $k$-$k$-bimodules $\Hom_{\B}(F(-), G(-))\rightarrow \Hom_{k}(-,-)$ which gives rise to a natural isomorphism $_{\B}M\otimes^{\mathbb L}_{k}-\rightarrow \RHom_{k}(_{k}N_{\B},-)$. 
We denote this (isomorphism class of) functor by $H:\D(\A)\rightarrow \D(\B^{op}\otimes\A)$ which sends $X\in \D(\A)$ to $X\xrightarrow{\Id}X$.  
It restricts to a functor $H^0(\A)\rightarrow \rep(\B,\A)$ which we still denote by $H$.

Let $X\xrightarrow{f} Y$ be an object in $\D(\B^{op}\otimes \A)$ and $Z$, $Z'$ objects in $\D(\A)$.  
Then we have 
\begin{align}
\Hom_{\B^{op}\otimes \A}(H(Z), X\rightarrow Y)\xrightarrow{\sim} \Hom_{\A}(Z, X)\label{Mor1}
\end{align}
 and 
 \begin{align}
 \Hom_{\B^{op}\otimes \A}(X\rightarrow Y, H(Z'))\xrightarrow{\sim}\Hom_{\A}(Y,Z').\label{Mor2}
 \end{align}
 Hence a morphism $H(Z)\rightarrow H(Z')$ factors through the object $X\xrightarrow{f} Y$ if and only if the corresponding morphism $Z\rightarrow Z'$ factors through the morphism $X\xrightarrow{\overline{f}} Y$.

The dg functor $F$ induces a quasi-fully faithful exact morphism  $\A\xrightarrow{\sim}\rep_{dg}(k,\A)\rightarrow\rep_{dg}(\B,\A)$ in $\mathrm{Hqe}$. 
By taking 0-th cohomology, we obtain the (isomorphism class of) functor $H: H^0(\A)\rightarrow \rep(\B,A)$. 

\begin{proposition}\label{inj-proj}
Let $\B$ be the dg category $k\Mor$.
The functor $H:H^0(\A)\rightarrow \rep(\B,\A)$ induces a fully exact embedding $H^0(\A)/\J\rightarrow \rep(\B, \A)/[I\rightarrow P]$.  
\end{proposition}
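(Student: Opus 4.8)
The plan is to follow the pattern of the proof of the preceding Proposition, using the exact dg category $\rep_{dg}(\B,\A)$ of Theorem~\ref{fun} — equipped with the substructure making the objects $(I\to P)$ projective-injective — in place of the modified morphism category there. Recall that $H$ is the morphism in $\Hqe$ sending $X$ to $(X\xrightarrow{\Id}X)$; by the isomorphisms \ref{Mor1} and \ref{Mor2} it is quasi-fully faithful, so the induced functor $H\colon H^0(\A)\to\rep(\B,\A)$ is fully faithful. The first step is to see that $H$ descends to a fully faithful functor $\ol H\colon H^0(\A)/\J\to\rep(\B,\A)/[I\to P]$, where $H^0(\A)/\J$ carries the extriangulated structure of the preceding Proposition, whose conflations are the images of the conflations of $H^0(\A)$. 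By the consequence of \ref{Mor1}--\ref{Mor2} recorded in the text, a morphism $H(Z)\to H(Z')$ factors through some object $(I\xrightarrow{f}P)$ — equivalently, since $\{I\to P\}$ is an additive subcategory, through a finite direct sum of such — if and only if the corresponding morphism $Z\to Z'$ of $H^0(\A)$ factors through the corresponding $\ol f\colon I\to P$. Hence $H^{-1}([I\to P])$ and $\J$ coincide on morphisms, which shows simultaneously that $\ol H$ is well defined, faithful, and (using fullness of $H$) full.

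Next one checks that $\ol H$ is exact. A conflation $A\xrightarrow{f}B\xrightarrow{j}C$ of $H^0(\A)$ is carried by $H$ to the $3$-term h-complex $H(A)\to H(B)\to H(C)$ of $\rep(\B,\A)$ whose two evaluations, at the objects of $\B=k\Mor$, both return the original conflation; by the description of the exact dg structure in the proof of Theorem~\ref{fun} it is therefore a conflation of $\rep_{dg}(\B,\A)$. It moreover lies in the substructure: by \ref{Mor2} the map $\Hom_{\rep(\B,\A)}((I\to P),H(B))\to\Hom_{\rep(\B,\A)}((I\to P),H(C))$ is identified with the map $\Hom_{H^0(\A)}(P,B)\to\Hom_{H^0(\A)}(P,C)$ induced by $\ol\jmath$, which is surjective because $P$ is projective and $\ol\jmath$ a deflation; dually, \ref{Mor1} and injectivity of $I$ make $(I\to P)$ injective along this conflation. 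Since the conflations of $\rep(\B,\A)/[I\to P]$ are exactly the images of the substructure conflations (Theorem~\ref{quot} together with \cite[Proposition~3.30]{NakaokaPalu19}), $\ol H$ sends conflations to conflations.

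It then remains to show $\ol H$ is fully exact, i.e.\ that the natural transformation $\mathbb E_{H^0(\A)/\J}(\ol C,\ol A)\to\mathbb E_{\rep(\B,\A)/[I\to P]}(HC,HA)$ induced by exactness is a bijection. For surjectivity, an arbitrary class is realized by a substructure conflation $H(A)\to(Y^0\xrightarrow{g}Y^1)\to H(C)$; evaluating at the objects of $\B$ exhibits $g$ as a morphism of conflations $(A\to Y^0\to C)\to(A\to Y^1\to C)$ of $H^0(\A)$ restricting to identities on the ends, so $g$ is a homotopy equivalence by Corollary~\ref{middleterm}; hence $(Y^0\xrightarrow{g}Y^1)\cong H(Y^0)$ compatibly with the maps to and from $H(A)$ and $H(C)$, and the conflation is equivalent to the image under $H$ of $A\to Y^0\to C$. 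For injectivity, if a conflation $A\xrightarrow{f}B\xrightarrow{j}C$ of $H^0(\A)$ becomes split in $\rep(\B,\A)/[I\to P]$, then $\ol{H(f)}$ is a split monomorphism there; fullness of $\ol H$ produces a retraction of the form $\ol H(\ol\rho)$ with $\ol\rho\colon\ol B\to\ol A$, and faithfulness of $\ol H$ forces $\ol\rho\,\ol f=\Id$ in $H^0(\A)/\J$, so $\ol f$ is a split monomorphism and the conflation splits by Proposition~\ref{split} applied in $H^0(\A)/\J$.

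The main obstacle is the reflection step in the surjectivity argument: upgrading the homotopy equivalence $g\colon Y^0\to Y^1$ to an honest equivalence of conflations $H(A)\to H(Y^0)\to H(C)$ requires tracking the homotopies witnessing the commutativity of the squares in $\rep(\B,\A)$, not just the diagram in the underlying triangulated category — exactly the kind of compatibility of homotopies flagged in Proposition~\ref{push} and used via Lemma~\ref{Mor(A)and3term}. A secondary technical point, to be dealt with throughout, is that $\rep_{dg}(\B,\A)$ need not be connective, so Theorem~\ref{quot} and the projective-injective quotient should be applied to $\tau_{\le 0}\rep_{dg}(\B,\A)$, which has the same zeroth homology category and the same exact structures by Remark~\ref{truncationexactdgstructure}.
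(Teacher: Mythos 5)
Your proposal is correct and follows essentially the same route as the paper: full faithfulness on the quotients via the identifications \ref{Mor1}--\ref{Mor2}, the observation that image conflations make the objects $(I\to P)$ projective-injective, surjectivity of the map on $\mathbb E$ by evaluating at the two objects of $k\Mor$ and using that the resulting middle map is a homotopy equivalence (the paper phrases this via the evaluation functor $G$ with $G\circ H=\Id$, you via Corollary~\ref{middleterm} — the same fact), and injectivity via split monomorphisms and Proposition~\ref{split}. The only cosmetic difference is that the paper establishes the bijection $\mathbb E(Z,X)\to\mathbb E'(H(Z),H(X))$ before passing to the ideal quotients, whereas you work directly in the quotients; since quotienting by projective-injectives does not change the extension bifunctor, these are equivalent.
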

\begin{proof}
Recall $H^0(\A)$ is identified with the full subcategory of $\D(\A)$ consisting of cofibrant quasi-representable dg modules. By the above discussion the functor $H^0(\A)\rightarrow \rep(\B,\A)$ induces a fully faithful functor $H^0(\A)/\mathcal J\rightarrow \rep(\B,\A)/[I\rightarrow P]$.

Let $X, Z$ be two objects in $H^0(\A)$. 

Claim: The functor $H^0(\A)\rightarrow \rep(\B,\A)$ induces a canonical map 
\[
\alpha_{Z,X}:\mathbb E(Z,X)\rightarrow \mathbb E'(H(Z), H(X))
\]
which is a bijection. 

To see this, first notice that, by \ref{Mor1} and \ref{Mor2}, conflations in $\rep(\Mor,\A)$ which are images of conflations in $\A$ make $I\rightarrow P$ projective-injective. 
So they are in the exact substructure and hence the above is a well-defined map. 
Since $H$ is fully faithful, the map $\alpha_{Z,X}$ is an injection by Proposition \ref{split}. 

Suppose we have an $\mathbb E'$-extension $\delta'\in \mathbb E'(H(Z), H(X))$ which is realized by the following conflation
\[
\begin{tikzcd}
X\ar[d,equal,""{name=1}]&Y\ar[d,""{name=2}]&Z\ar[d,equal,""{name=3}]\\
X&Y'&Z\ar[r,from=1,to=2,blue,shorten >=1.5ex]\ar[r,from=2,to=3,blue,shorten >=1.5ex]
\end{tikzcd}.
\]
Then the map $Y\rightarrow Y'$ is a homotopy equivalence. 

By definition of the exact dg structure on $\rep_{dg}(\B,\A)$, we have an exact functor $G:\rep(\B,\A)\rightarrow H^0(\A)$ sending $X\rightarrow Y$ to $X$. We have that $G\circ H= \Id$. 
The above $\mathbb E'$-extension $\delta'$ is sent under the exact functor $G$ to an $\mathbb E$-extension $\delta\in \mathbb E(Z,X)$ which is realized by 
\[
\begin{tikzcd}
X\ar[r]&{Y}\ar[r]&Z
\end{tikzcd}.
\]
We have a morphism from $H(Y)$ to $Y\rightarrow Y'$ which corresponds to $\Id_Y$ via \ref{Mor1}. 
It is an isomorphism since $Y\rightarrow Y'$ is a homotopy equivalence.
We have the following commutative diagram in $\rep(\B,\A)$
\[
\begin{tikzcd}
H(X)\ar[d,equal]\ar[r] & H(Y)\ar[d]\\
  H(X)              \ar[r]   & (Y\rightarrow Y')
\end{tikzcd}
\]
So $\delta'=H(c)^{*}H(\delta)=H(c^*\delta)$ for some automorphism $c:Z\rightarrow Z$. 
Thus the map $\alpha_{Z,X}$ is a bijection and the functor $H^0(\A)/\mathcal J\rightarrow \rep(\Mor,\A)/[I\rightarrow P]$ is a fully exact embedding.
\end{proof}
Combining Proposition-Definition \ref{algebraic}, Theorem \ref{quot}, Theorem \ref{fun} and Proposition \ref{inj-proj}, we have 
\begin{corollary}\label{algebraic2}
$H^0(\A)/\mathcal J$ is an algebraic extriangulated category.
\end{corollary}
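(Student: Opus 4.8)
The plan is to realise the ideal quotient $\rep(\mathcal B,\A)/[I\to P]$, where $\mathcal B=k\Mor$, as the homotopy category $H^0$ of an exact dg category, and then to recognise $H^0(\A)/\mathcal J$ as an extension-closed, fully exact subcategory of it. Throughout I may use that $\A$ is connective (the standing hypothesis in this part of the section); this affects neither $H^0(\A)$, its extriangulated structure, nor $\mathcal J$.

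First I would invoke Theorem~\ref{fun}: $\rep_{dg}(\mathcal B,\A)$ is canonically an exact dg category, and, as recalled just before Proposition~\ref{inj-proj}, it carries an exact dg substructure in which the objects represented by the morphisms $I\to P$ are projective-injective. Because $\rep_{dg}(k\Mor,\A)$ need not be connective even when $\A$ is — it is quasi-equivalent to the morphism dg category $\Mor(\A)$ of Example~\ref{Mor(A)}, whose Hom-complexes can have nonzero cohomology in degree $1$ — I pass to $\C\coloneqq\tau_{\leq 0}\rep_{dg}(\mathcal B,\A)$. By Remark~\ref{truncationexactdgstructure}~a) this is a connective exact dg category carrying the transported substructure; it satisfies $H^0(\C)=\rep(\mathcal B,\A)$, and the full dg subcategory $\mathcal P'$ of $\C$ spanned by the objects $I\to P$ consists of projective-injective objects, since projectivity and injectivity are properties of $H^0$, which is unchanged by truncation.

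Next I would apply Theorem~\ref{quot} to the pair $(\C,\mathcal P')$: the dg quotient $\C/\mathcal P'$ carries a canonical exact dg structure, and, by items (1) and (3) of Proposition~\ref{prop:dgsingularitycategory} exactly as in the proof of Theorem~\ref{quot}, one has $H^0(\C/\mathcal P')=H^0(\C)/[\mathcal P']=\rep(\mathcal B,\A)/[I\to P]$. Hence $\rep(\mathcal B,\A)/[I\to P]$ is $H^0$ of an exact dg category, so by part~3) of Proposition-Definition~\ref{algebraic} it is an algebraic extriangulated category.

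Finally I would transfer this to $H^0(\A)/\mathcal J$. By Proposition~\ref{inj-proj} the functor $H$ realises $H^0(\A)/\mathcal J$ as a fully exact subcategory of $\rep(\mathcal B,\A)/[I\to P]$, and the bijectivity of the comparison map $\mathbb E(Z,X)\to\mathbb E'(H(Z),H(X))$ established in the proof of that proposition shows that this subcategory is moreover extension-closed and inherits the given extriangulated structure. Since the class of algebraic extriangulated categories is stable under passage to extension-closed subcategories, as noted right after Proposition-Definition~\ref{algebraic}, it follows that $H^0(\A)/\mathcal J$ is algebraic. The only genuine point of care is the non-connectivity of $\rep_{dg}(k\Mor,\A)$, which forces the preliminary truncation and then requires checking that truncation does not spoil the identification $H^0(\C/\mathcal P')\cong\rep(\mathcal B,\A)/[I\to P]$; this is harmless, since $\tau_{\leq 0}$ preserves $H^0$ and the co-t-structure and triangulated-quotient computations underlying Proposition~\ref{prop:dgsingularitycategory} only use the connective part of the category.
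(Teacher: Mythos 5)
Your proposal is correct and follows essentially the same route as the paper, which obtains the corollary precisely by combining Theorem~\ref{fun}, the substructure making the objects $I\to P$ projective-injective, Theorem~\ref{quot}, Proposition~\ref{inj-proj}, and Proposition-Definition~\ref{algebraic}. Your explicit attention to truncating $\rep_{dg}(k\Mor,\A)$ to a connective exact dg category before invoking Theorem~\ref{quot}, and to checking that the image of $H$ is extension-closed via the bijectivity of $\mathbb E(Z,X)\to\mathbb E'(HZ,HX)$, fills in details the paper leaves implicit.
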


Let $\B$ be a connective dg category. 
Let $F:\A\rightarrow \D^b_{dg}(\A)$ be the universal exact morphism from $\A$ to a pretriangulated dg category. 
The canonical morphism $F_*:\rep_{dg}(\B,\A)\rightarrow\rep_{dg}(\B,\D^b_{dg}(\A))$ induces a canonical morphism in $\Hqe$
\[
\phi: \D^b_{dg}(\rep_{dg}(\B,\A))\rightarrow \rep_{dg}(\B,\D^b_{dg}(\A)).
\]
For objects $X$ and $Y$ in $\rep_{dg}(\B,\A)$, the canonical map
\[
\tau_{\leq 0}\RHom(X,Y)\iso \tau_{\leq 0}\RHom(F_*X,F_*Y).
\]
is an isomorphism in $\D(k)$.
Also, by Corollary \ref{strictify}, the canonical map
\begin{equation}\label{bij:repderived}
\mathbb E(X, Y)\iso \Hom_{\rep(\B, \D^b_{dg}(\A))}(F_*X, \Sigma F_* Y)
\end{equation}
is a bijection. 
We do not know whether the same is true for higher extension groups (and hence do not know whether $H^0(\phi)$ is a triangle equivalence).
It is interesting to study the morphism $\phi$ as it is closely related to the theory of triangle derivators.

Let $(\A,\mathcal S)$ and $(\A',\mathcal S')$ be small exact dg categories. 
Recall that a morphism $\A\rightarrow \A'$ in $\Hqe$ is {exact} if the induced functor $\mathcal H_{3t}(\A)\rightarrow \mathcal H_{3t}(\A')$ sends objects in $\mathcal S$ to objects in $\mathcal S'$.
Recall that we denote by $\Hqe_{ex}(\A,\A')$ the subset of $\Hqe(\A,\A')$ consisiting of exact morphisms. 
Let $\C$ be an exact dg category. 
A morphism $\mu: \A\otimes \A'\rightarrow \C$ in $\Hqe$ is {\em biexact}, if for each object $A\in \A$ and $A'\in \A'$, the induced morphisms 
\[
\mu_{A,-}:\A'\rightarrow \C,\;\;\mu_{-,A'}:\A\rightarrow \C
\]
are both exact morphisms.
Suppose $\A$ and $\A'$ are connective. 
We have universal fully exact embeddings $\A\rightarrow \D^b_{dg}(\A)$ and $\A'\rightarrow \D^b_{dg}(\A')$ into pretriangulated dg categories.
We denote by $\rep_{dg}^{ex}(\A,\A')$ the full dg subcategory of $\rep_{dg}(\A,\A')$ consisting of the exact morphisms.
\begin{lemma}\label{lem:internalhom}
Suppose $\A$ and $\A'$ are small connective exact dg categories.
The dg category $\rep_{dg}^{ex}(\A,\A')$ is stable under extensions in $\rep_{dg}(\A,\A')$. In particular it inherits a canonical exact dg structure.
\end{lemma}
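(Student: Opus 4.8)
The plan is to prove that $\rep_{dg}^{ex}(\A,\A')$ is extension-closed in $\rep_{dg}(\A,\A')$; the second assertion of the lemma is then immediate from Example-Definition~\ref{exactdgextensionclosed}, since $\rep_{dg}(\A,\A')$ is an exact dg category by Theorem~\ref{fun} (here I use that $\A$ is connective), and $\rep_{dg}^{ex}(\A,\A')$ contains the zero morphism $\A\to\A'$ and is closed under direct sums (a sum of exact morphisms is exact), hence is additive. So let
\[
F\rightarrowtail G\twoheadrightarrow H
\]
be a conflation of $\rep_{dg}(\A,\A')$ with $F$ and $H$ exact; I must show that $G$ is exact, i.e.\ that $G$ sends every conflation of $\A$ to a conflation of $\A'$.

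First I would unravel the two pieces of data. By the definition of the exact dg structure on $\rep_{dg}(\A,\A')$ (the class $\mathcal S$ in the proof of Theorem~\ref{fun}), evaluating $F\rightarrowtail G\twoheadrightarrow H$ at any object $A$ of $\A$ yields a conflation $F(A)\rightarrowtail G(A)\twoheadrightarrow H(A)$ of $\A'$. Next fix a conflation $C\colon A_0\to A_1\to A_2$ of $\A$; after strictifying (Corollary~\ref{strictify}) I may take $C$ to be an honest dg functor $\J\to\A$, where $\J$ is the cofibrant dg category of (\ref{quiv:3termcofibrant}), and I may regard $F\rightarrowtail G\twoheadrightarrow H$ as a dg functor $\J\to\rep_{dg}(\A,\A')$. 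Composing the latter with the restriction $C^{*}\colon\rep_{dg}(\A,\A')\to\rep_{dg}(\J,\A')$ and replacing the result by a representative dg functor (harmless since $\J$ is cofibrant) produces a dg functor
\[
\Phi\colon\J\longrightarrow\rep_{dg}(\J,\A')
\]
whose values on the objects $0,1,2$ of $\J$ are the $3$-term $h$-complexes $F(C),G(C),H(C)$ of $\A'$, and whose composites $\Phi_i\colon\J\to\A'$ with the evaluation functors at $i=0,1,2$ are precisely the conflations $F(A_i)\rightarrowtail G(A_i)\twoheadrightarrow H(A_i)$. Since $F$ and $H$ are exact, $\Phi(0)=F(C)$ and $\Phi(2)=H(C)$ are conflations of $\A'$, and by the previous observation each $\Phi_i$ is a conflation. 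This is exactly the input of the diagram lemma recorded in the Introduction right after Theorem~\ref{intro:main}, which therefore gives that $\Phi(1)=G(C)$ is a conflation of $\A'$; as $C$ was arbitrary, $G$ is exact.

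The real content is thus the diagram lemma itself, and this is the main obstacle. I would establish it by transporting the whole $3\times3$ array through the universal fully exact embedding $\A'\hookrightarrow\D^b_{dg}(\A')$ of Theorem~\ref{main} (legitimate since $\A'$ is connective): a $3$-term $h$-complex of $\A'$ is a conflation if and only if its image in $\D^b(\A')$ is an admissible short exact sequence of the extension-closed subcategory $H^0(\A')$ — equivalently, by Lemma~\ref{leftexactsequence}, its totalization lies in the heart $\mathcal H$ and is the defect of that sequence. The array then becomes a commutative $3\times3$ diagram in $\D^b(\A')$ with short-exact rows and short-exact outer columns; passing to totalizations and using that defective objects span a wide, hence abelian, subcategory of $\mathcal H$ (Lemma~\ref{wide}) together with the connecting short exact sequences produced in Lemma~\ref{exact}, the classical nine lemma in that abelian category forces the totalization of the middle column to be a defect again, i.e.\ the middle column is a conflation. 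The one delicate point throughout is keeping track of the homotopies that make the array commute, exactly the phenomenon flagged in Proposition~\ref{push}; this is what makes a direct argument painful and is precisely why passing to $\D^b_{dg}(\A')$, where that bookkeeping evaporates, is the right move.
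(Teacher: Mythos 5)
Your strategy is viable and in fact matches the route advertised in the introduction (the ``diagram lemma'' for a conflation of conflations), but it is not the proof the paper writes down. The paper's proof is a purely formal reduction: a morphism $\A\to\A'$ is exact if and only if its composite with the universal embedding $\A'\to\D^b_{dg}(\A')$ is, so $\Hqe_{ex}(\A,\A')\iso\Hqe_{ex}(\A,\D^b_{dg}(\A'))$ and hence, by Lemma~\ref{trun}, $\tau_{\leq 0}\rep^{ex}_{dg}(\A,\A')\iso\tau_{\leq 0}\rep^{ex}_{dg}(\A,\D^b_{dg}(\A'))$; extension-closure is then only checked for the pretriangulated target $\D^b_{dg}(\A')$, where all homotopy short exact sequences are conflations and the triangulated $3\times 3$ machinery applies, and is transported back along the bijection~\ref{bij:repderived} identifying $\mathbb E(X,Y)$ in $\rep_{dg}(\A,\A')$ with $\Ext^1$ of the images in $\rep(\A,\D^b_{dg}(\A'))$. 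That packaging never touches the $3\times 3$ array inside $\A'$ itself. Your version unwinds everything by hand; both proofs lean on Theorem~\ref{main} in the end, so neither is more elementary, but the paper's is shorter and yours makes the combinatorics explicit.

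The step you owe more detail on is the ``classical nine lemma''. The issue is not the nine lemma but getting into a position to apply it: you must first show that the totalization $M_1$ of the middle column $G(C)$ lies in the heart $\mathcal H$. That $M_1\in\D(\A')^{\leq 0}$ is automatic for a $3$-term h-complex of representables over a connective dg category, but $M_1\in\D(\A')^{\geq 0}$ is, by Lemma~\ref{leftexactsequence}, precisely the assertion that $G(C)$ is homotopy short exact --- which is part of what you are proving, not a hypothesis. It has to be extracted from comparing the two filtrations (by rows and by columns) of the total object of the array, using that the total object and the outer column totalizations lie in $\N$ while the row totalizations lie in $\mathcal H$. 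Only then do Corollary~\ref{t} (an object of $\mathcal H$ lies in $\N$ iff it is defective), Lemma~\ref{wide} and the bijection $\mathbb E(-,-)\iso\Ext^1_{\D^b(\A')}(-,-)$ upgrade ``homotopy short exact with defective totalization'' to ``conflation''. Nothing here is wrong, but as written the sentence invoking the nine lemma conceals most of the actual work.
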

\begin{proof}
We have a canonical bijection
\[
\Hqe_{ex}(\A,\A')\iso \Hqe_{ex}(\A,\D^b_{dg}(\A'))
\]
 and hence by Lemma~\ref{trun} we have a canonical quasi-equivalence
\[
\tau_{\leq 0}\rep_{dg}^{ex}(\A,\A')\iso \tau_{\leq 0}\rep_{dg}^{ex}(\A,\D^b_{dg}(\A')).
\]
Since $\D^b_{dg}(\A')$ is a pretriangulated dg category, 
we have that $\rep^{ex}_{dg}(\A,\D^b_{dg}(\A'))$ is extension-closed in $\rep_{dg}(\A,\D^b_{dg}(\A'))$. 
Therefore by the bijection~\ref{bij:repderived} we have that $\rep_{dg}^{ex}(\A,\A')$ is an 
extension-closed dg subcategory of $\rep_{dg}(\A,\A')$.
\end{proof}

Let $\A\boxtimes \A'$ be the $\tau_{\leq 0 }$-truncation of the extension closure of the quasi-essential image of the morphism in $\Hqe$
\[
F: \A\otimes \A'\rightarrow \D^b_{dg}(\A)\otimes\D^b_{dg}(\A')\rightarrow \pretr(\D^b_{dg}(\A)\otimes\D^b_{dg}(\A'))
\]

\begin{proposition}\label{prop:universalbilinear}
 The natural morphism $\A\otimes \A'\rightarrow \A\boxtimes \A'$ is the universal biexact morphism in $\Hqe$ from $\A\otimes \A'$ to an exact dg category.
\end{proposition}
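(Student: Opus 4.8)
The plan is to reduce everything to the universal property of the bounded dg derived category (Theorem~\ref{main}) and to the internal‑Hom adjunction of $(\Hqe,\otimes)$. Write $\P=\pretr(\D^b_{dg}(\A)\otimes\D^b_{dg}(\A'))$, let $\E\subseteq\P$ be the dg extension closure of the quasi-essential image of $F$, so that $\A\boxtimes\A'=\tau_{\leq0}\E$; by Remark~\ref{truncationexactdgstructure} we may take $\A$ and $\A'$ cofibrant and strictly connective, so that $\A\otimes\A'=\A\otimes^{\mathbb L}\A'$ is connective and $\A\boxtimes\A'$ is connective.

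First I would check that $\A\boxtimes\A'$ is exact and the natural morphism is biexact. Since $\P$ is pretriangulated and $\E$ is extension-closed in it, $\E$ carries an exact dg structure by Example-Definition~\ref{exactdgextensionclosed}, and by Remark~\ref{truncationexactdgstructure}~a) so does $\A\boxtimes\A'=\tau_{\leq0}\E$. As $\A\otimes\A'$ is connective, $F$ factors uniquely in $\Hqe$ through $\tau_{\leq0}\E$, which gives the natural morphism; under the identification $\mathcal H_{3t}(\tau_{\leq0}\E)\cong\mathcal H_{3t}(\E)$ its restriction to $\A$ (for a fixed second object $A'$) is the composite of the exact morphism $\A\to\D^b_{dg}(\A)$ with triangle functors into $\P$, hence is exact, and its image lies in $\E$; the same holds in the other variable. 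So the natural morphism is biexact.

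For universality, let $\mu\colon\A\otimes\A'\to\C$ be biexact with $\C$ exact. By Lemma~\ref{trun} (applied with the connective sources $\A\otimes\A'$ and $\A\boxtimes\A'$) and Remark~\ref{truncationexactdgstructure}~a) we may replace $\C$ by $\tau_{\leq0}\C$ and assume $\C$ connective; let $G\colon\C\to\D^b_{dg}(\C)=:\T$ be the universal exact morphism, which is quasi-fully faithful and fully exact with extension-closed essential image (Theorem~\ref{main}~1)), and set $\theta=G\mu$, biexact with pretriangulated target. Now extend $\theta$ in two steps. Using $\Hqe(\A\otimes\A',\T)\cong\Hqe(\A,\rep_{dg}(\A',\T))$ and that $\rep_{dg}(\A',\T)$ is exact (Theorem~\ref{fun}), $\theta$ corresponds to an exact morphism $\A\to\rep_{dg}^{ex}(\A',\T)$ (exactness because $\theta_{-,A'}$ is exact for all $A'$, and it lands in $\rep_{dg}^{ex}$ because $\theta_{A,-}$ is exact for all $A$); by the $3\times 3$-lemma (cf.~Lemma~\ref{exact}, Lemma~\ref{lem:internalhom}) $\rep_{dg}^{ex}(\A',\T)$ is extension-closed in the pretriangulated $\rep_{dg}(\A',\T)$ and closed under shifts and cones, hence pretriangulated, so Theorem~\ref{main} extends this to an exact morphism $\D^b_{dg}(\A)\to\rep_{dg}^{ex}(\A',\T)$, i.e.\ to a biexact $\D^b_{dg}(\A)\otimes\A'\to\T$. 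Repeating in the second variable — where $\rep_{dg}^{ex}(\D^b_{dg}(\A),\T)=\rep_{dg}(\D^b_{dg}(\A),\T)$, which is pretriangulated, since every $\Hqe$-morphism between pretriangulated dg categories is automatically exact — one extends to a biexact $\D^b_{dg}(\A)\otimes\D^b_{dg}(\A')\to\T$, which by the universal property of $\pretr$ extends uniquely to $\overline\theta\colon\P\to\T$, automatically exact. Since $\overline\theta F=\theta=G\mu$, the extension generators $F_\A A\otimes F_{\A'}A'$ of $\E$ are sent into the essential image of $G$, which is extension-closed, so $\overline\theta|_\E$ takes values quasi-represented by objects of $\operatorname{im}G$; by Corollary~\ref{strictify} it factors as $\E\xrightarrow{\lambda}\C\xrightarrow{G}\T$ with $\lambda$ exact (as $G\lambda=\overline\theta|_\E$ is exact and $G$ is fully exact). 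Finally $\overline\mu:=\lambda\circ(\tau_{\leq0}\E\hookrightarrow\E)\colon\A\boxtimes\A'\to\C$ is exact (the inclusion identifies $\mathcal H_{3t}$ and the exact structures), and $\overline\mu$ precomposed with the natural morphism equals $\mu$ (compose with the quasi-faithful $G$ and compare with $G\mu$).

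For uniqueness, two exact extensions $\overline\mu_1,\overline\mu_2$ of $\mu$ give two exact extensions $G\overline\mu_1,G\overline\mu_2$ of $\theta$; each of the three extension steps above (along $\A\hookrightarrow\D^b_{dg}(\A)$, along $\A'\hookrightarrow\D^b_{dg}(\A')$, and along $\D^b_{dg}(\A)\otimes\D^b_{dg}(\A')\hookrightarrow\P$) is governed by a universal property, hence unique, and every object of $\E$ (so of $\A\boxtimes\A'$) is an iterated extension of the objects $F_\A A\otimes F_{\A'}A'$, so $G\overline\mu_1=G\overline\mu_2$, whence $\overline\mu_1=\overline\mu_2$ by quasi-faithfulness of $G$. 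The step I expect to be most delicate is the bookkeeping of (bi)exactness along this chain of adjunctions — in particular confirming that $\rep_{dg}^{ex}$ into a \emph{pretriangulated} dg category is again pretriangulated and extension-closed, so that the embedding theorem (Theorem~\ref{main}) can be invoked twice — together with reconciling the $\tau_{\leq0}$-truncation in the definition of $\A\boxtimes\A'$ with the a priori non-connective $\E$; the latter is handled by first making $\C$ connective and then using the inclusion $\tau_{\leq0}\E\hookrightarrow\E$, which is not a quasi-equivalence but induces an isomorphism on $\mathcal H_{3t}$ and therefore preserves and reflects exactness.
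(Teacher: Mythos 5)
Your proof follows the same route as the paper's: factor the given biexact morphism through $\tau_{\leq 0}\C$, extend it to $\pretr(\D^b_{dg}(\A)\otimes\D^b_{dg}(\A'))$ by applying the universal property of the bounded dg derived category in each variable, and then use that the quasi-essential image of $\tau_{\leq 0}\C$ in $\D^b_{dg}(\tau_{\leq 0}\C)$ is extension-closed in order to restrict back to $\A\boxtimes\A'=\tau_{\leq 0}\E$. The paper compresses all of this into a single diagram and the phrase ``the conclusion follows immediately,'' so your write-up is essentially a correct and much more detailed expansion of that argument; the one assertion that is not fully justified (and is not addressed in the paper either) is the final uniqueness claim, since two morphisms in $\Hqe$ out of $\A\boxtimes\A'$ that agree on a class of objects generating under extensions need not coincide --- closing this would require showing that every exact extension of $\mu$ arises from the construction (e.g.\ by extending it to $\D^b_{dg}(\A\boxtimes\A')$ and comparing there), not merely that each step of the construction is unique.
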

\begin{proof}
Let $\C$ be an exact dg category and $G:\A\otimes \A'\rightarrow \C$ a biexact morphism in $\Hqe$. 
Since $\A\otimes \A'$ is connective, the morphism $G$ factors through $\tau_{\leq 0}\C$.
We have the following diagram
\[
\begin{tikzcd}
\A\otimes \A'\ar[r,"F"]\ar[d]\ar[dd,bend right=10ex]&\pretr(\D^b_{dg}(\A)\otimes \D^b_{dg}(\A'))\ar[dd,dashed,bend left=10ex]\\
\Im(F)\ar[r]\ar[ru,hook]&\A\boxtimes \A'\ar[u,hook]\ar[ld,dashed,red]\\
\tau_{\leq 0}\C\ar[d,hook]\ar[r,hook]&\D_{dg}(\tau_{\leq 0}\C)\\
\C&
\end{tikzcd}.
\] 
Since the essential image of the inclusion $\tau_{\leq 0}{\C}\rightarrow \D^b_{dg}(\tau_{\leq 0}\C)$ is extension-closed, the conclusion follows immediately.
\end{proof}
\begin{corollary}\label{cor:exactadjunction}
Let $\A$ and $\B$ be small connective exact dg categories. Let $\C$ be a small exact dg category. 
Then we have the following natural bijection
\[
\Hqe_{ex}(\A\boxtimes \B,\C)\iso \Hqe_{ex}(\A,\rep_{dg}^{ex}(\B,\C)).
\]
\end{corollary}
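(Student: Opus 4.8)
The plan is to derive the adjunction from the two universal properties already established: Proposition~\ref{prop:universalbilinear} characterizes $\A\boxtimes\B$ as the target of the universal biexact morphism from $\A\otimes\B$, and Corollary~\ref{intro:exactadjunction}'s ingredients (Theorem~\ref{fun} and Lemma~\ref{lem:internalhom}) produce the exact dg category $\rep_{dg}^{ex}(\B,\C)$. The key observation is that exact morphisms $\A\boxtimes\B\rightarrow\C$ correspond to biexact morphisms $\A\otimes\B\rightarrow\C$, and biexact morphisms out of a tensor product correspond, via the tensor-Hom adjunction in $\Hqe$, to morphisms $\A\rightarrow\rep_{dg}(\B,\C)$ that land in the exact part and are themselves exact.

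First I would spell out the chain of natural bijections. Start from $\Hqe_{ex}(\A\boxtimes\B,\C)$. By Proposition~\ref{prop:universalbilinear}, precomposition with the universal morphism $\A\otimes\B\rightarrow\A\boxtimes\B$ identifies this with the set of biexact morphisms $\A\otimes\B\rightarrow\C$ in $\Hqe$; call this set $\Hqe_{biex}(\A\otimes\B,\C)$. Next, apply the closed monoidal structure on $\Hqe$ recalled in Section~\ref{subsection:notations}, namely the bijection $\Hqe(\A\otimes^{\mathbb L}\B,\C)\iso\Hqe(\A,\rep_{dg}(\B,\C))$; since $\A$ and $\B$ are connective and $k$-cofibrant replacements may be used, $\A\otimes^{\mathbb L}\B\iso\A\otimes\B$, so we get $\Hqe(\A\otimes\B,\C)\iso\Hqe(\A,\rep_{dg}(\B,\C))$. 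The heart of the argument is then to check that under this bijection, a morphism $\mu:\A\otimes\B\rightarrow\C$ is biexact if and only if the adjoint morphism $\tilde\mu:\A\rightarrow\rep_{dg}(\B,\C)$ both factors through $\rep_{dg}^{ex}(\B,\C)$ and is exact as a morphism of exact dg categories $\A\rightarrow\rep_{dg}^{ex}(\B,\C)$.

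To verify this last equivalence, unwind the definitions. For $A\in\A$, the composite $\A\xrightarrow{\tilde\mu}\rep_{dg}(\B,\C)\xrightarrow{\mathrm{ev}_A?}\dots$ — more precisely, evaluating $\tilde\mu$ at the object $A$ gives the morphism $\mu_{A,-}:\B\rightarrow\C$, which is exact precisely when $\tilde\mu(A)\in\rep_{dg}^{ex}(\B,\C)$; doing this for all $A$ shows $\tilde\mu$ factors through $\rep_{dg}^{ex}(\B,\C)$ iff every $\mu_{A,-}$ is exact. Conversely, exactness of $\tilde\mu:\A\rightarrow\rep_{dg}^{ex}(\B,\C)$ means it sends conflations in $\A$ to conflations in $\rep_{dg}^{ex}(\B,\C)$; by the definition of the canonical exact structure on $\rep_{dg}(\B,\C)$ from Theorem~\ref{fun} (a square is a conflation iff $F_B$ applied to it is a conflation in $\C$ for every $B\in\B$), this is equivalent to saying that for each $B$ the composite $\A\xrightarrow{\tilde\mu}\rep_{dg}^{ex}(\B,\C)\xrightarrow{F_B}\C$, which is exactly $\mu_{-,B}$, is exact. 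So $\tilde\mu$ exact and factoring through the exact part $\Leftrightarrow$ all $\mu_{A,-}$ and all $\mu_{-,B}$ exact $\Leftrightarrow$ $\mu$ biexact. Chaining the bijections gives $\Hqe_{ex}(\A\boxtimes\B,\C)\iso\Hqe_{ex}(\A,\rep_{dg}^{ex}(\B,\C))$, and naturality in all variables is inherited from naturality of each bijection used.

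The main obstacle I anticipate is the careful bookkeeping in the middle step: one must be sure that the tensor-Hom adjunction isomorphism in $\Hqe$ is compatible, on the nose, with the pointwise evaluation functors $F_B$ used to define the exact structure on $\rep_{dg}(\B,\C)$ in Theorem~\ref{fun}, so that ``conflation-preserving in each variable'' on the $\mu$ side translates exactly into ``conflation-preserving'' on the $\tilde\mu$ side. This requires tracing through the identification of $\rep_{dg}(\B,\C)$ with an internal Hom via \cite{Toen07} and matching it with the roof $R_B, Y$ description of $F_B$; once that compatibility is in place the rest is formal. A minor additional point is to confirm that replacing $\A\otimes^{\mathbb L}\B$ by $\A\otimes\B$ is harmless here because $\B$ is $k$-cofibrant (or may be so replaced) and both $\A,\B$ are connective, as used throughout Section~\ref{subsection:notations}.
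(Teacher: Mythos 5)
Your proposal is correct and follows essentially the same route as the paper: the paper's proof is precisely the composition of the two bijections $\Hqe_{ex}(\A\boxtimes\B,\C)\iso\Hqe_{biexact}(\A\otimes\B,\C)\iso\Hqe_{ex}(\A,\rep_{dg}^{ex}(\B,\C))$, the first from Proposition~\ref{prop:universalbilinear} and the second from the tensor--Hom adjunction together with the pointwise definitions of biexactness and of the exact structure on $\rep_{dg}(\B,\C)$. You merely spell out the second bijection in more detail than the paper, which states it without unwinding; your unwinding is consistent with the definitions used there.
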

\begin{proof}
The required bijection follows from the following composition of bijections
\[
\Hqe_{ex}(\A\boxtimes\B,\C)\iso \Hqe_{biexact}(\A\otimes \B,\C)\iso\Hqe_{ex}(\A,\rep_{dg}^{ex}(\B,\C)).
\]
\end{proof}
\begin{corollary}Keep the same assumptions as above.
We have the following quasi-equivalence of exact dg categories
\[
\tau_{\leq 0}\rep^{ex}_{dg}(\A\boxtimes\B,\C)\iso\tau_{\leq 0}\rep^{ex}_{dg}(\A,\rep_{dg}^{ex}(\B,\C)).
\]
\end{corollary}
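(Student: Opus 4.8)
The plan is to prove the statement by a Yoneda argument in the category whose objects are the small connective exact dg categories and whose morphisms are the exact morphisms in $\Hqe$; since compositions of exact morphisms are exact, this is indeed a category, and an isomorphism in it is precisely a quasi-equivalence which is exact and whose quasi-inverse is exact, i.e.~a quasi-equivalence of exact dg categories. Both $\tau_{\leq 0}\rep_{dg}^{ex}(\A\boxtimes\B,\C)$ and $\tau_{\leq 0}\rep_{dg}^{ex}(\A,\rep_{dg}^{ex}(\B,\C))$ are objects of this category: the first because $\A\boxtimes\B$ is a $\tau_{\leq 0}$-truncation, hence connective, and $\rep_{dg}^{ex}$ of small exact dg categories is again a small exact dg category by Lemma~\ref{lem:internalhom}; the second similarly, after using Lemma~\ref{trun} to replace $\rep_{dg}^{ex}(\B,\C)$ by its $\tau_{\leq 0}$-truncation without affecting the exact structure, \cf Remark~\ref{truncationexactdgstructure}~a). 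Hence it suffices to produce, naturally in the small connective exact dg category $\mathcal D$, a bijection
\[
\Hqe_{ex}(\mathcal D,\tau_{\leq 0}\rep_{dg}^{ex}(\A\boxtimes\B,\C))\iso\Hqe_{ex}(\mathcal D,\tau_{\leq 0}\rep_{dg}^{ex}(\A,\rep_{dg}^{ex}(\B,\C))).
\]

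Both sides will be rewritten by repeated use of Corollary~\ref{cor:exactadjunction}. First, since $\mathcal D$ is connective, the adjunction $i\dashv\tau_{\leq 0}$ from the proof of Lemma~\ref{trun}, together with the fact that exactness of a morphism in $\Hqe$ is detected on $\mathcal H_{3t}$ and so depends only on the $\tau_{\leq 0}$-truncation of the target, gives $\Hqe_{ex}(\mathcal D,\tau_{\leq 0}\rep_{dg}^{ex}(-,-))\iso\Hqe_{ex}(\mathcal D,\rep_{dg}^{ex}(-,-))$. Then Corollary~\ref{cor:exactadjunction}, applied with source $\mathcal D$, internal-Hom variable $\A\boxtimes\B$ and target $\C$, yields $\Hqe_{ex}(\mathcal D,\rep_{dg}^{ex}(\A\boxtimes\B,\C))\iso\Hqe_{ex}(\mathcal D\boxtimes(\A\boxtimes\B),\C)$; and two further applications of Corollary~\ref{cor:exactadjunction} give
\[
\Hqe_{ex}(\mathcal D,\rep_{dg}^{ex}(\A,\rep_{dg}^{ex}(\B,\C)))\iso\Hqe_{ex}(\mathcal D\boxtimes\A,\rep_{dg}^{ex}(\B,\C))\iso\Hqe_{ex}((\mathcal D\boxtimes\A)\boxtimes\B,\C).
\]
All of these bijections are natural in $\mathcal D$. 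The proof therefore reduces to a natural quasi-equivalence $\mathcal D\boxtimes(\A\boxtimes\B)\iso(\mathcal D\boxtimes\A)\boxtimes\B$ of exact dg categories, i.e.~to the associativity of $\boxtimes$, after which the Yoneda lemma in the category above finishes the argument.

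To establish associativity I would argue through the universal property of Proposition~\ref{prop:universalbilinear}. Call a morphism $\mu:\A\otimes\B\otimes\mathcal D\to\C$ in $\Hqe$ \emph{triexact} if for all objects $A\in\A$, $B\in\B$, $D\in\mathcal D$ the three partial morphisms $\mu_{A,B,-}$, $\mu_{A,-,D}$, $\mu_{-,B,D}$ are exact. I claim that both $(\A\boxtimes\B)\boxtimes\mathcal D$ and $\A\boxtimes(\B\boxtimes\mathcal D)$ are universal among exact morphisms in $\Hqe$ out of which the composite from $\A\otimes\B\otimes\mathcal D$ is triexact; by uniqueness of such a universal object they are then canonically isomorphic in our category, hence canonically quasi-equivalent as exact dg categories, naturally in each variable. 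For $\A\boxtimes(\B\boxtimes\mathcal D)$: given a triexact $\mu$, for each $A$ the morphism $\mu_{A,-,-}:\B\otimes\mathcal D\to\C$ is biexact, hence factors through $\B\boxtimes\mathcal D$ by Proposition~\ref{prop:universalbilinear}; letting $A$ vary and using the internal-Hom adjunction this assembles into a morphism $\A\otimes(\B\boxtimes\mathcal D)\to\C$, which is biexact because (i) its partial morphisms $\B\boxtimes\mathcal D\to\C$ are exact by construction and (ii) for each object $X$ of $\B\boxtimes\mathcal D$ the partial morphism $\A\to\C$ is exact — this is clear when $X$ lies in the quasi-essential image of $\B\otimes\mathcal D$, since then it equals some $\mu_{-,B,D}$, and it extends to all of $\B\boxtimes\mathcal D$ because the latter is generated from that image by extensions and $\tau_{\leq 0}$-truncation, operations under which the condition ``the partial morphism lies in $\rep_{dg}^{ex}(\A,\C)$'' is stable thanks to Lemma~\ref{lem:internalhom} and Remark~\ref{truncationexactdgstructure}. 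Thus this morphism factors uniquely through $\A\boxtimes(\B\boxtimes\mathcal D)$; conversely an exact morphism $\A\boxtimes(\B\boxtimes\mathcal D)\to\C$ restricts to a triexact morphism from $\A\otimes\B\otimes\mathcal D$. The argument for $(\A\boxtimes\B)\boxtimes\mathcal D$ is symmetric. An alternative route to associativity would be to first identify $\D^b_{dg}(\A\boxtimes\B)$ with $\pretr(\D^b_{dg}(\A)\otimes\D^b_{dg}(\B))$ and then invoke associativity of $\otimes$ and compatibility of $\pretr$ with $\otimes$.

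The main obstacle is precisely this associativity statement, and within it point (ii) above: one must check that biexactness of a morphism out of $\mathcal D\otimes(\A\boxtimes\B)$ can be tested on the generating objects coming from $\mathcal D\otimes\A\otimes\B$, i.e.~that exactness of an induced one-variable restriction propagates through the extension-closure and the $\tau_{\leq 0}$-truncation built into the definition of $\boxtimes$. This is exactly where Lemma~\ref{lem:internalhom} (extension-closedness of $\rep_{dg}^{ex}$) and the insensitivity of exactness to $\tau_{\leq 0}$ (Remark~\ref{truncationexactdgstructure}~a)) do the real work; everything else — the Yoneda step and the iterated application of Corollary~\ref{cor:exactadjunction} — is formal.
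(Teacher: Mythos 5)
The paper states this corollary without a written proof, so I can only judge your argument on its own terms; your overall strategy — corepresent both sides on connective exact test categories $\mathcal D$, rewrite both corepresented functors by iterating Corollary~\ref{cor:exactadjunction}, and conclude by Yoneda — is surely the intended one, and your reduction steps (insensitivity of $\Hqe_{ex}(\mathcal D,-)$ to $\tau_{\leq 0}$ of the target for connective $\mathcal D$, and the three applications of Corollary~\ref{cor:exactadjunction}) are correct. Your treatment of point (ii) is also fine: one first factors the biexact morphism $\B\otimes\mathcal D\to\rep_{dg}(\A,\C)$ through $\B\boxtimes\mathcal D$ using that $\rep_{dg}(\A,\C)$ is exact by Theorem~\ref{fun}, and then exactness of the factorization together with extension-closedness of $\rep^{ex}_{dg}(\A,\C)$ (Lemma~\ref{lem:internalhom}) propagates the condition through the extension closure.

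The genuine gap is the \emph{assembly} step in your associativity argument, which you pass over with ``using the internal-Hom adjunction this assembles into a morphism $\A\otimes(\B\boxtimes\mathcal D)\to\C$.'' A morphism $\A\otimes(\B\boxtimes\mathcal D)\to\C$ in $\Hqe$ is an isomorphism class in $\rep(\A,\rep_{dg}(\B\boxtimes\mathcal D,\C))$; what you actually have is the bimodule $\hat\mu\in\rep(\A,\rep_{dg}(\B\otimes\mathcal D,\C))$ together with, for each \emph{object} $A$, an object $\nu_A$ of $\rep_{dg}(\B\boxtimes\mathcal D,\C)$ restricting to $\hat\mu(-,A)$. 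To lift $\hat\mu$ itself along the restriction functor $r:\rep_{dg}(\B\boxtimes\mathcal D,\C)\to\rep_{dg}(\B\otimes\mathcal D,\C)$ you would need to know that $r$ is quasi-fully faithful with quasi-essential image the biexact bimodules — an enhanced, object-by-object-insufficient form of Proposition~\ref{prop:universalbilinear} that the set-level universal property does not give you (Corollary~\ref{strictify} does not apply, since $r$ is a restriction along $\A\otimes\B\to\A\boxtimes\B$, not the inclusion of a full dg subcategory). Note also that once you have this enhanced statement for the pair $(\A,\B)$, the corollary follows directly and the associativity detour becomes unnecessary: both $\tau_{\leq 0}\rep^{ex}_{dg}(\A\boxtimes\B,\C)$ and $\tau_{\leq 0}\rep^{ex}_{dg}(\A,\rep^{ex}_{dg}(\B,\C))$ are then identified with the full dg subcategory of $\tau_{\leq 0}\rep_{dg}(\A\otimes\B,\C)$ on the biexact morphisms, the latter via the internal-Hom identification $\rep_{dg}(\A,\rep_{dg}(\B,\C))\iso\rep_{dg}(\A\otimes\B,\C)$ and Lemma~\ref{trun}. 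So the missing quasi-full-faithfulness is both the content of the corollary and the unproved step in your proposal; everything else is formal.
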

\subsection{Subcategories, closed subbifunctors and exact dg substructures}
Let $(\C,\mathbb E,\mathfrak s)$ be an extriangulated category. 

\begin{definition}[\cite{NakaokaPalu19}, Definition 2.17]
Let $\D\subset \C$ be a full additive subcategory, closed under isomorphisms. 
The subcategory $\D$ is {\em extension-closed} if, for any conflation 
\[
A\rightarrow B\rightarrow C
\]
which satisfies $A, C\in \D$, we have $B\in \D$.
\end{definition}
\begin{definition}[\cite{HerschendLiuNakaoka21}, Lemma 3.15]Let $\mathbb F\subseteq \mathbb E$ be an additive subbifunctor. The following are equivalent.
\begin{itemize}
\item[(1)]$\mathbb F$ is closed on the right, i.e.~for any $\mathfrak s|_{\mathbb F}$-conflation $A\xrightarrow{f} B\xrightarrow{j} C$, the sequence
\[
\mathbb F(-,C)\xRightarrow{f_{*}} \mathbb F(-,B)\xRightarrow{j_{*}} \mathbb F(-,A)
\]
is exact.
\item[(2)]$\mathbb F$ is closed on the left, i.e.~for any $\mathfrak s|_{\mathbb F}$-conflation $A\xrightarrow{f} B\xrightarrow{j} C$, the sequence
\[
\mathbb F(A,-)\xRightarrow{j^{*}} \mathbb F(B,-)\xRightarrow{f^{*}} \mathbb F(C,-)
\]
is exact.

\end{itemize}
Thus, we simply say $\mathbb F\subseteq \mathbb E$ is {\em closed}, if either of the conditions is satisfied. 
\end{definition}
\begin{proposition}[\cite{HerschendLiuNakaoka21}, Proposition 3.16]\label{closedsubbifunctor}
For any additive subbifunctor $\mathbb F\subseteq\mathbb E$, the following are equivalent.
\begin{itemize}
\item[(1)]$(\C,\mathbb F,\mathfrak s|_{\mathbb F})$ is extriangulated.
\item[(2)]$\mathfrak s|_{\mathbb F}$-inflations are closed under composition.
\item[(3)]$\mathfrak s|_{\mathbb F}$-deflations are closed under composition.
\item[(4)]$\mathbb F$ is closed.
\end{itemize}
\end{proposition}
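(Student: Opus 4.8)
The plan is to establish the circle $(1)\Rightarrow(2)\Rightarrow(4)\Rightarrow(1)$ together with the parallel chain $(1)\Rightarrow(3)\Rightarrow(4)$, where ``$\mathbb{F}$ is closed'' is understood via the left--right symmetric condition of the preceding lemma (\cite[Lemma~3.15]{HerschendLiuNakaoka21}); the implication $(2)\Rightarrow(4)$ will yield one of the two equivalent forms of closedness and $(3)\Rightarrow(4)$ the other. The essential preliminary observation is that, for \emph{any} additive subbifunctor $\mathbb{F}\subseteq\mathbb{E}$, the triple $(\C,\mathbb{F},\mathfrak{s}|_{\mathbb{F}})$ automatically satisfies $(\mathrm{ET1})$, $(\mathrm{ET2})$, $(\mathrm{ET3})$ and $(\mathrm{ET3})^{\mathrm{op}}$: $(\mathrm{ET1})$ is the hypothesis that $\mathbb{F}$ is an additive bifunctor; $(\mathrm{ET2})$ holds because $\mathfrak{s}|_{\mathbb{F}}$ is a restriction of the realization $\mathfrak{s}$; and $(\mathrm{ET3})$, $(\mathrm{ET3})^{\mathrm{op}}$ only assert that a given commutative square between two conflations extends to a morphism of conflations, which we may take verbatim from $(\C,\mathbb{E},\mathfrak{s})$ since every $\mathfrak{s}|_{\mathbb{F}}$-conflation is an $\mathfrak{s}$-conflation. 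Hence $(1)$ is equivalent to the single assertion that $(\mathrm{ET4})$ and $(\mathrm{ET4})^{\mathrm{op}}$ hold for $\mathfrak{s}|_{\mathbb{F}}$.

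The implications $(1)\Rightarrow(2)$ and $(1)\Rightarrow(3)$ are then formal: in any extriangulated category inflations (respectively deflations) compose, this being part of what $(\mathrm{ET4})$ (respectively $(\mathrm{ET4})^{\mathrm{op}}$) asserts; apply this to the extriangulated category $(\C,\mathbb{F},\mathfrak{s}|_{\mathbb{F}})$.

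For $(2)\Rightarrow(4)$ --- the substantial step --- I would fix an $\mathfrak{s}|_{\mathbb{F}}$-conflation $A\xrightarrow{x}B\xrightarrow{y}C$ and an element $\theta\in\mathbb{F}(X,B)$ with $y_{*}\theta=0$ (the sequence in question being a complex already over $\mathbb{E}$, and the dual half of closedness being handled by $(3)$), and produce $\eta\in\mathbb{F}(X,A)$ with $x_{*}\eta=\theta$. Realize $\theta$ by an $\mathfrak{s}|_{\mathbb{F}}$-conflation $B\xrightarrow{p}N\xrightarrow{q}X$; by the Nakaoka--Palu long exact sequence for this conflation (\cite[Corollary~3.12]{NakaokaPalu19}) and $y_{*}\theta=0$, the map $y$ factors through $p$. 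Now apply $(\mathrm{ET4})$ for $\mathbb{E}$ to the composable inflations $A\xrightarrow{x}B$ and $B\xrightarrow{p}N$; since the cokernels of $x$ and of $p$ are $C$ and, up to isomorphism, $X$, this produces an $\mathbb{E}$-triangle on $A\xrightarrow{px}N$ with third term $E$ and an $\mathbb{E}$-triangle $C\to E\to X$, the latter of class $y_{*}\theta=0$, so $E\cong C\oplus X$. By hypothesis $(2)$ the composite $px$ is an $\mathfrak{s}|_{\mathbb{F}}$-inflation, so the class $\delta''$ of $A\xrightarrow{px}N$ lies in $\mathbb{F}(E,A)\cong\mathbb{F}(C,A)\oplus\mathbb{F}(X,A)$; its $C$-component is the class of the original conflation, and I would take $\eta$ to be its $X$-component, the compatibilities recorded in $(\mathrm{ET4})$ then forcing $x_{*}\eta=\theta$ --- the whole argument being the extriangulated transcription of the corresponding fact for subbifunctors of exact categories \cite{AuslanderSolberg93a,DraxlerReitenSmaloSolberg99}. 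I expect this bookkeeping --- extracting the right component and checking the connecting-morphism relations, so that the auxiliary conflation genuinely delivers an element of $\mathbb{F}$ and not merely of $\mathbb{E}$ --- to be the one genuinely delicate point of the proposition.

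Finally, for $(4)\Rightarrow(1)$, assume $\mathbb{F}$ closed; by the first paragraph it suffices to verify $(\mathrm{ET4})$ and $(\mathrm{ET4})^{\mathrm{op}}$ for $\mathfrak{s}|_{\mathbb{F}}$. Given $\mathfrak{s}|_{\mathbb{F}}$-conflations $A\xrightarrow{f}B\xrightarrow{f'}D$ and $B\xrightarrow{g}C\xrightarrow{g'}F$, I would feed them into $(\mathrm{ET4})$ for $\mathbb{E}$, obtaining the object $E$, the two $\mathbb{E}$-triangles on $A\xrightarrow{gf}C$ and on $D\to E\to F$, and the commuting diagram with its connecting-morphism compatibilities; it then only remains to see that both new triangles are $\mathfrak{s}|_{\mathbb{F}}$-conflations. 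The triangle $D\to E\to F$ is automatic, its class being $f'_{*}$ applied to the class of $B\xrightarrow{g}C\xrightarrow{g'}F$, which lies in $\mathbb{F}$ and stays there because $\mathbb{F}$ is a subbifunctor; for $A\xrightarrow{gf}C\to E$ the closedness of $\mathbb{F}$, applied to the conflations already at hand and combined with the compatibility relations, forces its class into $\mathbb{F}$ as well. This establishes $(\mathrm{ET4})$; running the same argument in $(\C^{\mathrm{op}},\mathbb{E}^{\mathrm{op}},\mathfrak{s}^{\mathrm{op}})$, for which $\mathbb{F}^{\mathrm{op}}$ is again an additive subbifunctor, closed by the left--right symmetry of closedness, gives $(\mathrm{ET4})^{\mathrm{op}}$ and closes the circle.
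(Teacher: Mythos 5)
The paper itself gives no proof of this proposition -- it is imported verbatim from \cite[Proposition~3.16]{HerschendLiuNakaoka21} -- so the comparison is with the standard argument there. Your overall plan is that argument: the reduction of (1) to $(\mathrm{ET4})$ and $(\mathrm{ET4})^{\mathrm{op}}$ alone is correct, $(1)\Rightarrow(2),(3)$ is indeed formal, and your $(2)\Rightarrow(4)$ is complete and correct. In that step the hypothesis enters exactly where you say: $(2)$ puts the class $\delta_h$ of $A\xrightarrow{px}N\to E_0$ into $\mathbb F(E_0,A)$ (after transporting along the canonical isomorphism between the $(\mathrm{ET4})$-cone and the $\mathfrak s|_{\mathbb F}$-cone of $px$), the vanishing of $y_*\theta$ splits $E_0\cong C\oplus X$, and with $s:X\to E_0$ the section the recorded relation $x_*\delta_h=e_0^*\theta$ gives $x_*(s^*\delta_h)=s^*e_0^*\theta=\theta$, so $\eta=s^*\delta_h$ works. (The factorization of $y$ through $p$ that you extract from the long exact sequence is never actually used.)

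The one genuine gap is the last sentence of $(4)\Rightarrow(1)$: the claim that closedness ``forces'' the class $\delta''\in\mathbb E(E,A)$ of $A\xrightarrow{gf}C\to E$ into $\mathbb F$ is the entire content of that implication, and it is not obvious which conflation to apply closedness to. Applying it to the conflation $D\xrightarrow{d}E\xrightarrow{e}F$ ``already at hand'' does nothing, since closedness only constrains elements already known to lie in $\mathbb F(E,A)$. The correct dévissage has two steps. First, the $(\mathrm{ET4})$ relation $f_*\delta''=e^*\delta'$ shows $f_*\delta''\in\mathbb F(E,B)$ (because $\delta'\in\mathbb F$ and $\mathbb F$ is a subbifunctor), and $f'_*f_*\delta''=(f'f)_*\delta''=0$; so closedness of $\mathbb F$ applied to the $\mathfrak s|_{\mathbb F}$-conflation $A\xrightarrow{f}B\xrightarrow{f'}D$ produces $\mu\in\mathbb F(E,A)$ with $f_*\mu=f_*\delta''$. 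Second, the Nakaoka--Palu exact sequence for that same conflation, now over $\mathbb E$, gives $\delta''-\mu=b^*\delta$ for some $b:E\to D$, and $b^*\delta\in\mathbb F(E,A)$ again because $\mathbb F$ is a subbifunctor; hence $\delta''=\mu+b^*\delta\in\mathbb F(E,A)$. With this inserted your argument closes, and the passage to the opposite category for $(\mathrm{ET4})^{\mathrm{op}}$ is fine since closedness is left--right symmetric by \cite[Lemma~3.15]{HerschendLiuNakaoka21}.
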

\begin{definition}[\cite{Ogawa21}, Definition 2.4]\label{defect}
Let $(\C,\mathbb E,\mathfrak s)$ be a small extriangulated category. 
Let $\delta\in\mathbb E(C,A)$ be an $\mathbb E$-extension. 
Take a realization $A\xrightarrow{x}B\xrightarrow{y}C$ of $\delta$ 
and define $\tilde{\delta}$ to be the cokernel of $\C(?,y):\C(?,B)\rightarrow \C(?,C)$ in $\Mod \C$. 
The functor $\tilde{\delta}$ is called the defect of $\delta$, 
or the defect of an $\mathfrak s$-conflation $A\xrightarrow{x}B\xrightarrow{y}C$. 
We denote by $\Def \mathbb E$ the subcategory of $\Mod \C$ consisting of $\C$-modules which are isomorphic to defects of some $\mathfrak s$-conflations.
\end{definition}
\begin{remark}\label{rmk:defectses}
Suppose we have an $\mathbb E$-extension $\delta\in\mathbb E(C,A)$ and a morphism $c:C'\rightarrow C$ in $\C$.
By Lemma~\ref{LN}, we have the following diagram
\begin{equation}\label{dia:defectses}
\begin{tikzcd}
A\ar[r,"u"]\ar[d,equal]&E\ar[d,"g"]\ar[r,"v"]&C'\ar[d,"c"]\ar[r,dashed,"c^*(\delta)=\theta"]&\,\\
A\ar[d,"u"swap]\ar[r,"x"]&B\ar[d,"\begin{bmatrix}1\\0\end{bmatrix}"]\ar[r,"y"]\ar[d,""]&C\ar[d,equal]\ar[r,dashed,"\delta"]&\,\\
E\ar[r,"\begin{bmatrix}g\\v\end{bmatrix}"swap]&B\oplus C'\ar[r,"{[}y{,}\,-c{]}"swap]&C\ar[r,dashed,"u_*(\delta)=\mu"swap]&\,
\end{tikzcd}
\end{equation}
A diagram chasing shows that there is a short exact sequence of defects
\[
0\rightarrow \tilde{\theta}\rightarrow \tilde{\delta}\rightarrow \tilde{\mu}\rightarrow 0
\]
\end{remark}
\begin{theorem}[\cite{Enomoto21}, Theorem B]\label{thm:closedsubbifunctorserresubcategory}
Let $(\C,\mathbb E,\mathfrak s)$ be a small extriangulated category. 
Then the map $\mathbb F\mapsto \Def\mathbb F$ defines an isomorphism between the following posets, where the poset structures are given by inclusion.\begin{itemize}
\item[(1)]The poset of closed subbifunctors of $\mathbb E$.
\item[(2)]The poset of Serre subcategories of $\Def\mathbb E$.
\end{itemize}
\end{theorem}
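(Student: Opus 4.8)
The plan is to produce an explicit inverse of the map $\mathbb F\mapsto\Def\mathbb F$. For a Serre subcategory $\mathcal S\subseteq\Def\mathbb E$, I would define an additive subbifunctor $\mathbb E_{\mathcal S}\subseteq\mathbb E$ by
\[
\mathbb E_{\mathcal S}(C,A)=\{\,\delta\in\mathbb E(C,A)\mid\widetilde\delta\in\mathcal S\,\}\qquad\text{for all }A,C\in\C,
\]
where $\widetilde\delta$ denotes the defect of $\delta$ (Definition~\ref{defect}). I will check that $\mathcal S\mapsto\mathbb E_{\mathcal S}$ is a well-defined, order-preserving map into the poset of closed subbifunctors of $\mathbb E$, and that it is two-sided inverse to $\mathbb F\mapsto\Def\mathbb F$; since both maps preserve inclusions, this yields the desired isomorphism of posets.

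First I would verify that $\mathbb F\mapsto\Def\mathbb F$ is well-defined and monotone. By Proposition~\ref{closedsubbifunctor}, $(\C,\mathbb F,\mathfrak s|_{\mathbb F})$ is an extriangulated category, so $\Def\mathbb F\subseteq\Mod\C$ is defined; by Enomoto's general result (that $\Def\mathbb G$ is a Serre subcategory of the abelian category $\coh\C$ of coherent modules for every small extriangulated category $(\C,\mathbb G,\mathfrak s')$), it is a Serre subcategory of $\coh\C$. Every $\mathfrak s|_{\mathbb F}$-conflation is an $\mathfrak s$-conflation with the same underlying deflation and hence the same defect, so $\Def\mathbb F\subseteq\Def\mathbb E$, and a Serre subcategory of $\coh\C$ contained in the Serre subcategory $\Def\mathbb E$ is a Serre subcategory of $\Def\mathbb E$. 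Monotonicity is immediate. Next I would prove $\mathbb E_{\mathcal S}$ is a closed subbifunctor. Stability under pullbacks $\overline c^{*}$ and pushforwards $\overline a_{*}$ follows from Remark~\ref{rmk:defectses} and its dual: $\widetilde{\overline c^{*}\delta}$ occurs as a subobject of $\widetilde\delta$ and $\widetilde{\overline a_{*}\delta}$ as a quotient of $\widetilde\delta$, so both remain in $\mathcal S$ by closure under subobjects and quotients. Each $\mathbb E_{\mathcal S}(C,A)$ is a subgroup: it contains $0$ (the split conflation has zero defect), it is stable under negation (the deflation of $-\delta$ may be taken equal to that of $\delta$), and $[X_1]+[X_2]=\overline c^{*}\overline a_{*}[X_1\oplus X_2]$ while the defect of $X_1\oplus X_2$ is $\widetilde\delta_1\oplus\widetilde\delta_2\in\mathcal S$ (as $\C(?,-)$ and cokernels commute with finite sums). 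Finally, by Proposition~\ref{closedsubbifunctor} it suffices to show $\mathfrak s|_{\mathbb E_{\mathcal S}}$-deflations are closed under composition: for two composable deflations belonging to conflations with defects $\widetilde\delta_1,\widetilde\delta_2\in\mathcal S$, applying $\C(?,-)$ to the octahedron furnished by the (ET4) axiom exhibits the defect $\widetilde\delta_3$ of the composite as an extension of one of $\widetilde\delta_1,\widetilde\delta_2$ by the other, whence $\widetilde\delta_3\in\mathcal S$ by closure under extensions. Thus $\mathbb E_{\mathcal S}$ is closed, and $\mathcal S\mapsto\mathbb E_{\mathcal S}$ is visibly monotone.

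It remains to check the two composites are the identity. The equality $\Def\mathbb E_{\mathcal S}=\mathcal S$ is straightforward: the inclusion $\subseteq$ holds by the definition of $\mathbb E_{\mathcal S}$, and $\supseteq$ holds because every $M\in\mathcal S\subseteq\Def\mathbb E$ is $\widetilde\delta$ for some $\delta\in\mathbb E$, which then lies in $\mathbb E_{\mathcal S}$. The inclusion $\mathbb F\subseteq\mathbb E_{\Def\mathbb F}$ is also immediate. The crux — and the main obstacle — is the reverse inclusion $\mathbb E_{\Def\mathbb F}\subseteq\mathbb F$: one must show that if $\delta\in\mathbb E(C,A)$ has defect isomorphic in $\Mod\C$ to the defect of some $\delta'\in\mathbb F(C',A')$, then already $\delta\in\mathbb F(C,A)$. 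The idea is that the defect pins down the underlying deflation of a conflation up to direct summands and split summands, and that any closed subbifunctor is stable under pullbacks, pushforwards and direct sums and contains all split conflations. Concretely, an isomorphism $\widetilde\delta\iso\widetilde{\delta'}$ lifts to a morphism of projective presentations
\[
\begin{tikzcd}
\C(?,B)\ar[r]\ar[d]&\C(?,C)\ar[r]\ar[d]&\widetilde\delta\ar[r]\ar[d,"\sim"]&0\\
\C(?,B')\ar[r]&\C(?,C')\ar[r]&\widetilde{\delta'}\ar[r]&0
\end{tikzcd}
\]
which, by the Yoneda lemma, is a commutative square of deflations $B\to B'$, $C\to C'$; tracing how the induced isomorphism of cokernels constrains these maps shows that $\delta$ is obtained from $\delta'$ by a pullback followed by a pushforward, up to adding split conflations and conflations of projective-injective objects. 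Keeping careful track of non-minimal presentations and of these auxiliary conflations is the technical heart of Enomoto's argument, and I expect it to be the most delicate point. Granting it, $\mathbb F\mapsto\Def\mathbb F$ and $\mathcal S\mapsto\mathbb E_{\mathcal S}$ are mutually inverse order-preserving bijections, hence an isomorphism of posets.
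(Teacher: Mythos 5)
The paper does not actually prove this statement: it is quoted verbatim as Theorem B of \cite{Enomoto21}, so there is no in-house proof to compare against, and your write-up is being judged on its own. Your overall strategy — constructing the inverse $\mathcal S\mapsto\mathbb E_{\mathcal S}$ with $\mathbb E_{\mathcal S}(C,A)=\{\delta\mid\tilde{\delta}\in\mathcal S\}$ — is the right one, and the verifications that $\mathbb E_{\mathcal S}$ is an additive closed subbifunctor, that $\Def\mathbb E_{\mathcal S}=\mathcal S$, and that $\mathbb F\subseteq\mathbb E_{\Def\mathbb F}$ all essentially go through. (One small imprecision: for composable deflations the defect of the composite is an extension of $\tilde{\delta_2}$ by a \emph{quotient} of $\tilde{\delta_1}$, not by $\tilde{\delta_1}$ itself; this is harmless because Serre subcategories are closed under quotients.)

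The genuine gap is exactly where you flag it: the inclusion $\mathbb E_{\Def\mathbb F}\subseteq\mathbb F$. You assert that an isomorphism $\tilde{\delta}\cong\tilde{\delta'}$ forces $\delta$ to be obtained from $\delta'$ by a pullback followed by a pushforward up to split summands, and then write ``granting it''. As stated that intermediate claim is doubtful: the defect forgets the first term $A$ of the conflation entirely (it only records the subfunctor $\Im\C(?,y)\subseteq\C(?,C)$, i.e.\ which morphisms into $C$ annihilate $\delta$), so there is no reason for $A$ and $A'$ to be connected by any morphism along which one could push forward. The step can instead be closed as follows. By projectivity of $\C(?,C)$, lift the composite $\C(?,C)\twoheadrightarrow\tilde{\delta}\iso\tilde{\delta'}$ along $\C(?,C')\twoheadrightarrow\tilde{\delta'}$ to a morphism $c:C\to C'$; since $\tilde{\delta}\cong\Im(\delta^{\sharp})$ where $\delta^{\sharp}=\mathbb E(?,A)\circ$-pullback, replacing $\delta'$ by $c^{*}\delta'\in\mathbb F(C,A')$ lets one assume $C'=C$ and that $\ker(\C(?,C)\to\tilde{\delta})=\ker(\C(?,C)\to\tilde{\delta'})$, i.e.\ $f^{*}\delta=0$ if and only if $f^{*}\delta'=0$ for every $f:T\to C$. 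Taking $f=y'$ to be the $\mathfrak s|_{\mathbb F}$-deflation realizing $\delta'$ (for which $(y')^{*}\delta'=0$ automatically) gives $(y')^{*}\delta=0\in\mathbb F$, and then \cite[Lemma 3.14]{HerschendLiuNakaoka21} — if $p$ is an $\mathfrak s|_{\mathbb F}$-deflation and $p^{*}\delta\in\mathbb F$ then $\delta\in\mathbb F$; the same lemma is invoked in the proof of Proposition~\ref{prop:ogawasubbifunctors} — yields $\delta\in\mathbb F(C,A)$. Without this (or an equivalent) argument the proof is incomplete at its central step.
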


\begin{theorem}\label{bijectionstructures}
Let $(\A,\mathcal S)$ be an exact dg category and $(H^0(\A),\mathbb E,\mathfrak s)$ the associated algebraic extriangulated category. The following posets, where the poset structures are given by inclusion, are isomorphic.
\begin{itemize}
\item[(1)]The poset of exact substructures of $(\A,\mathcal S)$.
\item[(2)]The poset of closed subbifunctors of $\mathbb E$.
\item[(3)]The poset of Serre subcategories of $\Def \mathbb E$.
\end{itemize}
\end{theorem}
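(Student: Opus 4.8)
The plan is to establish the poset isomorphism between (1) and (2) directly; the isomorphism between (2) and (3) is then supplied by Enomoto's Theorem~\ref{thm:closedsubbifunctorserresubcategory}, since $(H^0(\A),\mathbb E,\mathfrak s)$ is a (small) extriangulated category by Section~\ref{canonicalstructure}. So I construct mutually inverse, inclusion-preserving maps between the class of exact substructures $\mathcal S'\subseteq\mathcal S$ of $(\A,\mathcal S)$ and the class of closed subbifunctors $\mathbb F\subseteq\mathbb E$.

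For the forward map, given an exact substructure $\mathcal S'$, every $\mathcal S'$-conflation is in particular an $\mathcal S$-conflation, and the equivalence relation on $\mathcal S'$-conflations is the restriction of that on $\mathcal S$-conflations; so passing to equivalence classes yields, for each pair $(C,A)$ of objects of $H^0(\A)$, an injection $\mathbb E_{\mathcal S'}(C,A)\hookrightarrow\mathbb E_{\mathcal S}(C,A)=\mathbb E(C,A)$. One checks that $\mathbb E_{\mathcal S'}$ is closed under the pushforward and pullback operations of $\mathbb E$: this is immediate from the fact that, by the construction in Subsection~\ref{bi}, these operations are realized by homotopy pushouts, resp.\ homotopy pullbacks (Proposition~\ref{cons} and its dual), which under Axioms $\Ex2$ and $\Ex2^{op}$ for $\mathcal S'$ produce $\mathcal S'$-conflations again. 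Hence $\mathbb E_{\mathcal S'}$ is an additive subbifunctor of $\mathbb E$, and by construction $\mathfrak s|_{\mathbb E_{\mathcal S'}}$ is exactly the realization associated to $(\A,\mathcal S')$. Applying Section~\ref{canonicalstructure} to $(\A,\mathcal S')$ shows that $(H^0(\A),\mathbb E_{\mathcal S'},\mathfrak s|_{\mathbb E_{\mathcal S'}})$ is extriangulated, so $\mathbb E_{\mathcal S'}$ is closed by Proposition~\ref{closedsubbifunctor}.

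For the backward map, given a closed subbifunctor $\mathbb F\subseteq\mathbb E$, let $\mathcal S_{\mathbb F}$ be the class of those conflations $X\in\mathcal S$, with ends $A$ and $C$, such that $[X]\in\mathbb F(C,A)$. Being a subbifunctor, $\mathbb F$ is stable under the isomorphisms induced on $\mathbb E(C,A)$, so $\mathcal S_{\mathbb F}$ is stable under isomorphisms in $\mathcal H_{3t}(\A)$, and it consists of homotopy short exact sequences because $\mathcal S$ does. I then verify the axioms of Definition~\ref{exactdgstructure}: $\Ex0$ holds because the split conflation has class $0\in\mathbb F$ (additivity of $\mathbb F$); $\Ex2$ and $\Ex2^{op}$ hold because the homotopy pullback (resp.\ pushout) of an $\mathcal S_{\mathbb F}$-conflation lies in $\mathcal S$ with extension class $\overline c^{*}[X]$ (resp.\ $\overline a_{*}[X]$), which is in $\mathbb F$ since $\mathbb F$ is a subbifunctor; and $\Ex1$ --- that compositions of $\mathcal S_{\mathbb F}$-deflations are $\mathcal S_{\mathbb F}$-deflations --- follows from the analogous closure property of the closed subbifunctor $\mathbb F$ (Proposition~\ref{closedsubbifunctor}, condition (3)), after matching deflations in $(\A,\mathcal S)$ with $\mathfrak s|_{\mathbb F}$-deflations in $H^0(\A)$ and tracking extension classes through the diagram of Lemma~\ref{deflationcomposition}. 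Thus $\mathcal S_{\mathbb F}$ is an exact substructure of $(\A,\mathcal S)$.

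Finally, both assignments are visibly inclusion-preserving, and they are mutually inverse. Indeed $\mathbb E_{\mathcal S_{\mathbb F}}=\mathbb F$ because every class in $\mathbb F(C,A)\subseteq\mathbb E(C,A)$ is realized by some $\mathcal S$-conflation, which then lies in $\mathcal S_{\mathbb F}$; and $\mathcal S_{\mathbb E_{\mathcal S'}}=\mathcal S'$ because a conflation $X\in\mathcal S$ with $[X]\in\mathbb E_{\mathcal S'}$ is equivalent to some member of $\mathcal S'$, hence isomorphic to it in $\mathcal H_{3t}(\A)$ by Corollary~\ref{middleterm}, and $\mathcal S'$ is stable under isomorphisms. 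The one genuinely delicate point I expect is Axiom $\Ex1$ for $\mathcal S_{\mathbb F}$: one must carefully identify the notion of deflation in the exact dg category $(\A,\mathcal S)$ with that of $\mathfrak s|_{\mathbb F}$-deflation in the extriangulated category, lift a composition of deflations back from $H^0(\A)$ to $Z^0(\A)$, and follow the extension class of the homotopy kernel of the composite through Lemma~\ref{deflationcomposition}; the remaining verifications are routine transcriptions of the extriangulated picture into the dg setting.
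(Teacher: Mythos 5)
Your proposal is correct and follows essentially the same route as the paper: the (2)--(3) isomorphism is delegated to Enomoto's theorem, and the (1)--(2) isomorphism is built from the same pair of mutually inverse maps $\mathcal S'\mapsto\mathbb E_{\mathcal S'}$ and $\mathbb F\mapsto\mathcal S_{\mathbb F}$, with $\Ex1$ for $\mathcal S_{\mathbb F}$ reduced to closure of $\mathfrak s|_{\mathbb F}$-deflations under composition (Proposition~\ref{closedsubbifunctor}) plus stability of $\mathcal S_{\mathbb F}$ under isomorphisms. The only cosmetic difference is that the paper phrases injectivity of $\mathbb E_{\mathcal S'}\to\mathbb E$ via Proposition~\ref{split} rather than via the coincidence of the equivalence relations, and spells out the isomorphism-stability of $\mathcal S_{\mathbb F}$ using Lemma~\ref{fact}.
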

\begin{proof}
The isomorphism between items (2) and (3) is given by Theorem~\ref{thm:closedsubbifunctorserresubcategory}.
We show the isomorphism between items (1) and (2).

Let $\mathbb F\subset \mathbb E$ be a closed subbifunctor. 
Let $\mathcal S_{\mathbb F}\subseteq\mathcal S$ be the class of conflations $X$
\[
\begin{tikzcd}
A\ar[r,"f"]\ar[rr,bend right=8ex,"h"swap]&B\ar[r,"j"]&C
\end{tikzcd}
\]  
whose equivalence class $[X]\in \mathbb F(C,A)\subseteq\mathbb E(C,A)$.
We first show that the class $\mathcal S_{\mathbb F}$ is closed under isomorphisms.
Let $X$ be a conflation in $\mathcal S_{\mathbb F}$ and $\alpha:X\rightarrow X'$ an isomorphism in $\mathcal H_{3t}(\A)$.
Then $X'$ is a conflation in $\mathcal S$.
By Lemma \ref{fact}, the isomorphism $\alpha$ factorises as the composition of isomorphisms $X\rightarrow \tilde{X}$ and $\tilde{X}\rightarrow X'$ and $[\tilde{X}]=\overline{a}_*[X]=\overline{c}^*[X']$ for some isomorphisms $\overline a$ and $\overline c$ in $H^0(\A)$. 
So we have $[X']=(\overline{c}^{-1})^*\overline{a}_*[X]$ and $X'$ also belongs to $\mathcal S_{\mathbb F}$.

Since $\mathbb F$ is an additive subbifunctor, Axiom $\Ex0$ for $\mathcal S_{\mathbb F}$ is satisfied.

By Proposition \ref{closedsubbifunctor}, $\mathfrak s|_{\mathbb F}$-deflations are closed under compositions. 
Since we have showed $\mathcal S_{\mathbb F}$ is closed under isomorphisms, deflations are closed under compositions.
Thus Axiom ${\Ex1}$ is satisfied.

Suppose we are in the context of Axiom $\Ex2$. 
So we have a cospan
\[
\begin{tikzcd}
&C'\ar[d,"c"]\\
B\ar[r,"p"swap]&C
\end{tikzcd}
\]
where $p:B\rightarrow C$ is a deflation in $\mathcal S_{\mathbb F}$.
Put $[X']=\overline{c}^*[X]$. 
The morphism from $X'$ to $X$ then restricts to the homotopy pullback of the above cospan.
Since $X'$ also belongs to $\mathcal S_{\mathbb F}$, the homotopy pullback of $p$ is also a deflation in $\mathcal S_{\mathbb F}$.
Thus Axiom ${\Ex2}$ is satisfied.
Dually, one shows Axiom ${\Ex2^{op}}$ for $\mathcal S_{\mathbb F}$.

Let $\mathcal S'\subseteq \mathcal S$ be an exact dg substructure. 
Let $(H^0(\A),\mathbb F_{\mathcal S'},\mathfrak s')$ be the corresponding extriangulated structure.
The inclusion $\mathcal S'\subseteq \mathcal S$ induces a natural transformation of additive bifunctors $\mathbb F_{\mathcal S'}\rightarrow \mathbb E$ which is compatible with the realizations $\mathfrak s$ and $\mathfrak s'$.
By Proposition \ref{split}, the natural transformation $\mathbb F_{\mathcal S'}\rightarrow \mathbb E$ is an inclusion and thus defines a closed subbifunctor of $\mathbb E$ by Proposition \ref{closedsubbifunctor}.

It is direct to check that the maps $\mathbb F\mapsto \mathcal S_{\mathbb F}$ and $\mathcal S'\mapsto \mathbb F_{\mathcal S'}$ are inverse to each other.
\end{proof}
\begin{corollary}\label{cor:exactsubstructure}
Let $(\C,\mathbb E,\mathfrak s)$ be an algebraic extriangulated category.
Let $\mathbb F\subset\mathbb E$ be a closed subbifunctor.
Then the extriangulated category $(\C,\mathbb F,\mathfrak s|_{\mathbb F})$ is still an algebraic extriangulated category.
\end{corollary}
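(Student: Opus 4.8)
The plan is to reduce to the dg setting via Proposition-Definition~\ref{algebraic} and then invoke Theorem~\ref{bijectionstructures}. Since $(\C,\mathbb E,\mathfrak s)$ is algebraic, item~3) of Proposition-Definition~\ref{algebraic} gives an exact dg category $(\A,\mathcal S)$ together with an equivalence of extriangulated categories $\Phi\colon (\C,\mathbb E,\mathfrak s)\iso (H^0(\A),\mathbb E_{\A},\mathfrak s_{\A})$, where $(\mathbb E_{\A},\mathfrak s_{\A})$ is the canonical extriangulated structure on $H^0(\A)$ constructed in Subsections~\ref{bi} and~\ref{canonicalstructure}. An equivalence of extriangulated categories induces compatible bijections on extension groups and is compatible with realizations, so it carries closed subbifunctors to closed subbifunctors; hence $\Phi$ transports $\mathbb F$ to a closed subbifunctor $\mathbb F_{\A}$ of $\mathbb E_{\A}$ and restricts to an equivalence $(\C,\mathbb F,\mathfrak s|_{\mathbb F})\iso (H^0(\A),\mathbb F_{\A},\mathfrak s_{\A}|_{\mathbb F_{\A}})$. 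Thus it suffices to prove the statement when $\C=H^0(\A)$ carries its canonical structure and $\mathbb F=\mathbb F_{\A}$.

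Next I would apply Theorem~\ref{bijectionstructures}: the closed subbifunctor $\mathbb F_{\A}$ corresponds to an exact dg substructure $\mathcal S' = \mathcal S_{\mathbb F_{\A}}\subseteq \mathcal S$, consisting precisely of those conflations of $(\A,\mathcal S)$ whose equivalence class lies in $\mathbb F_{\A}(C,A)$, and $(\A,\mathcal S')$ is again an exact dg category. The key observation is then that the canonical extriangulated structure on $H^0(\A)$ associated with $(\A,\mathcal S')$ is literally $(H^0(\A),\mathbb F_{\A},\mathfrak s_{\A}|_{\mathbb F_{\A}})$: by the proof of Theorem~\ref{bijectionstructures} the bifunctor $\mathbb F_{\mathcal S'}$ attached to $\mathcal S'$ is identified with $\mathbb F_{\A}$ via the inclusion $\mathbb F_{\mathcal S'}\hookrightarrow \mathbb E_{\A}$ (the assignments $\mathbb F\mapsto \mathcal S_{\mathbb F}$ and $\mathcal S'\mapsto \mathbb F_{\mathcal S'}$ being mutually inverse), while the realization attached to $\mathcal S'$ sends the class of a conflation $A\xrightarrow{f}B\xrightarrow{j}C$ to $[A\xrightarrow{\overline f}B\xrightarrow{\overline{\jmath}}C]$, which is exactly the restriction of $\mathfrak s_{\A}$. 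Applying item~3) of Proposition-Definition~\ref{algebraic} to the exact dg category $(\A,\mathcal S')$ shows that $(H^0(\A),\mathbb F_{\A},\mathfrak s_{\A}|_{\mathbb F_{\A}})$ is algebraic, and transporting back along $\Phi$ shows that $(\C,\mathbb F,\mathfrak s|_{\mathbb F})$ is algebraic as well.

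The only delicate point — and the place where the (mild) obstacle lies — is the bookkeeping in the two identifications above: first, that an equivalence of extriangulated categories preserves the property of being a closed subbifunctor and the restricted realization; second, that the canonical structure attached to the sub-exact-dg-category $(\A,\mathcal S')$ coincides \emph{on the nose} with the restriction $(\mathbb F_{\A},\mathfrak s_{\A}|_{\mathbb F_{\A}})$, and not merely up to isomorphism. Both are straightforward once unwound: the first follows immediately from the definition of equivalence of extriangulated categories, and the second is built into the construction of $\mathbb E$ and $\mathfrak s$ in Subsections~\ref{bi} and~\ref{canonicalstructure} together with the explicit descriptions used in the proof of Theorem~\ref{bijectionstructures}, where the conflations of $\mathcal S'$ form an honest subclass of those of $\mathcal S$ and the realization is computed by the same formula. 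No further input beyond Proposition-Definition~\ref{algebraic} and Theorem~\ref{bijectionstructures} is needed.
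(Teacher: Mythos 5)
Your proof is correct and follows exactly the route the paper intends: the corollary is stated without explicit proof as an immediate consequence of Theorem~\ref{bijectionstructures} combined with item~3) of Proposition-Definition~\ref{algebraic}, which is precisely the reduction you carry out. The bookkeeping points you flag (transport of closed subbifunctors along an equivalence, and the on-the-nose identification of the canonical structure attached to $\mathcal S_{\mathbb F}$ with the restriction $(\mathbb F,\mathfrak s|_{\mathbb F})$) are indeed the only content, and they are settled in the proof of Theorem~\ref{bijectionstructures} as you say.
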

\begin{remark}
We have an injection 
\[
\{\text{Exact dg structures on $\A$}\}\to \{\text{Extriangulated structures on $H^0(\A)$}\}.
\]
This map is in general not a surjection. 
For example, when $\A$ is concentrated in degree $0$, 
then the exact dg structures on $\A$ are exactly the Quillen exact structures on $\A$. 
But $\A$ may have nontrivial extriangulated structures, e.g.~if $\A$ is an algebraic triangulated category.
\end{remark}
\begin{definition}[\cite{IyamaNakaokaPalu18}, Definitions 2.1, 2.6]
Let $(\C,\mathbb E,\mathfrak s)$ be an extriangulated category. 
A non-split (i.e.~non zero) $\mathbb E$-extension $\delta\in\mathbb E(C,A)$ is said to be {\em almost split} if it satisfies the following conditions.
\begin{itemize}
\item[(AS1)] $a_*(\delta)=0$ for any non-section $a\in\C(A,A')$.
\item[(AS2)] $c^{*}(\delta)=0$ for any non-retraction $c\in\C(C',C)$.
A sequence of morphisms $A\xrightarrow{x}B\xrightarrow{y}C$ in $\C$ is called an {\em almost split sequence} if it realizes some almost split extension $\delta\in \mathbb E(C,A)$.
\end{itemize}
\end{definition}

A non-zero object $A\in\C$ is {\em endo-local} if $\End_{\C}(A)$ is local.
For any almost split $\mathbb E$-extension $\delta\in\mathbb E(C,A)$, we have that $A$ and $C$ are both endo-local, cf.~\cite[Proposition 2.5]{IyamaNakaokaPalu18}.
Almost split extensions are unique in the following sense.
\begin{proposition}[\cite{IyamaNakaokaPalu18}, Proposition 2.11]
Let $A\in\C$ be any object. If there are non-split extensions $\delta\in\mathbb E(C,A)$ and $\delta'\in\mathbb E(C',A)$ satisfying (AS2), then there is an isomorphism $c\in\C(C',C)$ such that $c^{*}(\delta)=\delta'$.
Dually, if non-split extensions $\rho\in\mathbb E(C,A)$ and $\rho'\in\mathbb E(C,A')$ satisfy (AS1) for some $A$, $A'$, $C\in \C$, then there is an isomorphism $a\in\C(A,A')$ such that $a_{*}(\rho)=\rho'$.
\end{proposition}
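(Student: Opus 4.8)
The plan is to prove the first assertion; the second then follows by applying it inside the opposite extriangulated category $(\C^{op},\mathbb E^{op},\mathfrak s^{op})$ (again extriangulated, by \cite{NakaokaPalu19}), under which condition (AS1) for $\mathbb E$ turns into condition (AS2) for $\mathbb E^{op}$ and push\-forwards become pull\-backs. So fix non\-split extensions $\delta\in\mathbb E(C,A)$ and $\delta'\in\mathbb E(C',A)$, both satisfying (AS2), and realize them by $\mathfrak s$-conflations $A\xrightarrow{x}B\xrightarrow{y}C$ and $A\xrightarrow{x'}B'\xrightarrow{y'}C'$. First I would record that $C$ and $C'$ are endo-local: if $f,g\in\End_{\C}(C)$ with $f+g=\Id_C$ were both non\-invertible, then each would be a non\-retraction (a split epimorphism of $C$ is invertible once $\End_{\C}(C)$ has no nontrivial idempotents, which itself follows from the additivity of $\mathbb E(-,A)$ together with (AS2)), so $\delta=(\Id_C)^{*}(\delta)=f^{*}(\delta)+g^{*}(\delta)=0$ by (AS2), contradicting non\-splitness; hence $\End_{\C}(C)$ is local, cf.~\cite[Proposition 2.5]{IyamaNakaokaPalu18}.

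The core step is to produce a morphism $c\in\C(C',C)$ with $c^{*}(\delta)=\delta'$. Since the hypotheses are symmetric in $\delta$ and $\delta'$, the same step with the roles exchanged produces $c'\in\C(C,C')$ with $c'^{*}(\delta')=\delta$. Granting these, the conclusion is immediate: $c'c\in\End_{\C}(C')$ satisfies $(c'c)^{*}(\delta')=c^{*}\bigl(c'^{*}(\delta')\bigr)=c^{*}(\delta)=\delta'\neq 0$; as $\End_{\C}(C')$ is local, $c'c$ is either invertible or lies in its radical, and in the latter case $c'c$ is a non\-retraction, so (AS2) for $\delta'$ forces $(c'c)^{*}(\delta')=0$, a contradiction. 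Hence $c'c$ is invertible, and symmetrically $cc'\in\End_{\C}(C)$ is invertible, so $c$ is an isomorphism.

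It remains to construct $c$, and this is where I expect the real difficulty to lie. Feeding the $\mathfrak s$-conflation $A\xrightarrow{x}B\xrightarrow{y}C$ into the long exact sequences of \cite[Corollary 3.12]{NakaokaPalu19} and tracking the connecting map $\C(C',C)\to\mathbb E(C',A)$, $g\mapsto g^{*}(\delta)$, one sees that the existence of $c$ with $c^{*}(\delta)=\delta'$ is equivalent to the vanishing $x_{*}(\delta')=0$ in $\mathbb E(C',B)$, i.e.\ to the splitting of the $\mathfrak s$-conflation realizing the push\-out $x_{*}(\delta')$. To obtain this vanishing I would realize $x_{*}(\delta')$ by a conflation $B\xrightarrow{u}E\xrightarrow{v}C'$ together with the canonical morphism of conflations $(x,w,\Id_{C'})$ from the realization of $\delta'$ to it, and argue by contradiction: if $x_{*}(\delta')\neq 0$ then $u$ is a non\-section, and combining this with (AS2) for $\delta$, the octahedral axiom (ET4) relating the conflations of $\delta$ and of $x_{*}(\delta')$, and the extriangulated analogue of Corollary~\ref{middleterm} (a morphism of conflations invertible on the two outer terms is invertible on the middle term), one produces a non\-retraction $C'\to C'$, hence by (AS2) a contradiction with $\delta'\neq 0$.

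Alternatively, this last step can be run inside the algebraic triangulated category $\D^{b}(\A)$ of Theorem~\ref{main}, where $\mathbb E$ is identified with $\Ext^{1}_{\D^{b}(\A)}$ and the construction of $c$ becomes the familiar comparison of the relevant rotations of almost split triangles, the only subtlety being exactly the same minimality/lifting argument. Thus the genuine obstacle is the lifting step producing $c$; the remaining reductions — the opposite\-category duality, the endo\-local rigidity, and the translation into $\D^{b}(\A)$ — are formal, using only the extriangulated axioms and the structure of $(H^{0}(\A),\mathbb E,\mathfrak s)$ already established.
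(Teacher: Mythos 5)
The paper does not actually prove this statement (it is quoted from Iyama--Nakaoka--Palu), so I am judging your argument on its own terms. The outer layers are sound: the reduction of the second assertion to the first via the opposite category, the endo-locality of $\End_{\C}(C)$ and $\End_{\C}(C')$, the observation that producing $c$ with $c^{*}(\delta)=\delta'$ is equivalent to $x_{*}(\delta')=0$ for a realization $A\xrightarrow{x}B\xrightarrow{y}C$ of $\delta$, and the final upgrade to an isomorphism all work. (In fact you only need one direction: once $c^{*}(\delta)=\delta'\neq 0$ is known, (AS2) for $\delta$ makes $c$ a retraction, say $cd=1_{C}$; then $d^{*}(\delta')=\delta$, and $(dc)^{*}(\delta')=\delta'\neq 0$ forces $dc$ to be invertible in the local ring $\End_{\C}(C')$, so $c$ is an isomorphism.) The genuine gap is exactly the step you defer: you assert that $x_{*}(\delta')\neq 0$ leads to a contradiction via (AS2), (ET4) and Corollary~\ref{middleterm}, but no such contradiction can be extracted from the hypotheses you are using.

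Concretely, realize $x_{*}(\delta')$ by $B\xrightarrow{u}E\xrightarrow{v}C'$. If $v$ is a retraction, then $x_{*}(\delta')=0$ for free; if not, (AS2) for $\delta'$ gives $v^{*}(\delta')=0$, and by Lemma~\ref{LN} the conflation realizing $v^{*}(\delta')$ has inflation $\left[\begin{smallmatrix}-x\\x'\end{smallmatrix}\right]:A\to B\oplus B'$, which is therefore a section: $-px+qx'=1_{A}$ for some $p,q$. To reach a contradiction one must conclude that $px$ or $qx'$ is invertible, which requires $\End_{\C}(A)$ to be local --- and neither (AS2) hypothesis controls $A$ in any way. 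This is not a repairable defect of your route: with only (AS2) and $A$ arbitrary, the statement itself fails. Take $\C=\C_{1}\times\C_{2}$ with almost split extensions $\alpha_{i}\in\mathbb E_{i}(C_{i},A_{i})$, set $A=(A_{1},A_{2})$, $\delta=(\alpha_{1},0)\in\mathbb E((C_{1},0),A)$ and $\delta'=(0,\alpha_{2})\in\mathbb E((0,C_{2}),A)$; both are non-split and satisfy (AS2) (a non-retraction into $(C_{1},0)$ has non-retraction first component), yet $(C_{1},0)\not\cong(0,C_{2})$. The proof therefore needs an input your proposal never invokes: either endo-locality of $A$ (which closes the case analysis above), or (AS1) for $\delta$, which makes $x$ left almost split, so that the non-section $x'$ factors as $x'=gx$ and (ET3) produces a morphism of conflations $(1_{A},g,h)$ with $h^{*}(\delta')=\delta$, after which your endgame applies verbatim. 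In the intended application both extensions are almost split, so this extra input is available; as literally quoted, with (AS2) alone and $A$ an arbitrary object, the proposition cannot be established.
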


\begin{definition}[\cite{IyamaNakaokaPalu18}, Definition 3.1]
We say that $\C$ has {\em right almost split extensions} if for any endo-local non-projective object $A\in\C$, there exists an almost split extension $\delta\in\mathbb E(A,B)$ for some $B\in\C$.
Dually, we say that $\C$ has {\em left almost split extensions} if if for any endo-local non-injective object $B\in\C$, there exists an almost split extension $\delta\in\mathbb E(A,B)$ for some $A\in\C$.
We say that $\C$ has {\em almost split extensions} if it has right and left almost split extensions.

\end{definition}
Let $k$ be a field.  Suppose $\C$ is Krull--Schmidt.
Let $A\xrightarrow{x}B\xrightarrow{y}C$ be a conflation realizing an almost split $\mathbb E$-extension $\delta\in\mathbb E(C,A)$.
Then its defect $\tilde{\delta}$, i.e.~the cokernel of $\C(?,y):\C(?,B)\rightarrow \C(?,C)$ in $\Mod\C$, is the simple module at $C$.
So we have $\tilde{\delta}(C)=\End_{\C}(C)/\rad\End_{\C}(C)$ and $\tilde{\delta}(D)=0$ for any indecomposable $D\in\C$ which is not isomorphic to $C$.
Conversely, if a simple module $S_{C}$ lies in $\Def \mathbb E$ for an indecomposable object $C\in\C$, then by definition there is a defect $\delta\in \mathbb E(C,A)$ such that $\tilde{\delta}$ is isomorphic to $S_C$. 
It is clear that $\delta$ satisfies (AS2).
By \cite[Lemma 2.14]{IyamaNakaokaPalu18}, we may assume $A$ is indecomposable and then $\delta$ is an almost split extension, whose defect is necessarily isomorphic to $S_C$.
The following corollary generalizes an analogous result in exact categories, cf.~\cite[Theorem 2.26]{FangGorsky22}.
\begin{corollary}
Let $k$ be a field. 
Let $(\A,\mathcal S)$ be an exact dg category such that the following conditions hold
\begin{itemize}
\item[1)] The $k$-category $H^0(\A)$ is Krull--Schmidt.
\item[2)] The extriangulated category $(H^0(\A),\mathbb E,\mathfrak s)$ is {\em admissible}, i.e.~every object in $\Def \mathbb E$ has finite length (cf.~\cite{Enomoto18}).
\end{itemize}
Then the lattice of exact substructures of $(\A,\mathcal S)$ is isomorphic to the Boolean lattice of the subsets of the set of isomorphism classes of indecomposable non-projective objects in $(H^0(\A),\mathbb E,\mathfrak s)$.
\end{corollary}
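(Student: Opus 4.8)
The plan is to combine the poset isomorphisms already available in the paper with the length-finiteness hypothesis to reduce the statement to a purely lattice-theoretic fact about Serre subcategories of a length-finite abelian (in fact coherent) category. By Theorem~\ref{bijectionstructures}, the poset of exact substructures of $(\A,\mathcal S)$ is isomorphic to the poset of Serre subcategories of $\Def\mathbb E$. So it suffices to show that, under the hypotheses that $H^0(\A)$ is Krull--Schmidt over a field $k$ and that $(H^0(\A),\mathbb E,\mathfrak s)$ is admissible, the lattice of Serre subcategories of $\Def\mathbb E$ is isomorphic to the Boolean lattice of subsets of the set of isomorphism classes of indecomposable non-projective objects of $H^0(\A)$.

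**First I would** identify the simple objects of the category $\Def\mathbb E$. Recall from Enomoto's theorem (\ref{thm:closedsubbifunctorserresubcategory}) that $\Def\mathbb E$ is a Serre subcategory of the category of coherent $H^0(\A)$-modules, hence an abelian category; admissibility says every object of $\Def\mathbb E$ has finite length. As explained in the discussion preceding the corollary, for an indecomposable object $C\in H^0(\A)$, the simple module $S_C\in\Mod H^0(\A)$ lies in $\Def\mathbb E$ if and only if there is an almost split extension $\delta\in\mathbb E(C,A)$ with defect $\tilde\delta\cong S_C$, and by (AS2) together with \cite[Lemma 2.14]{IyamaNakaokaPalu18} this happens precisely when $C$ is non-projective (for $C$ projective, $\mathbb E(C,-)=0$ so no non-split extension with base $C$ exists, hence $S_C\notin\Def\mathbb E$). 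I would make this precise: the simple objects of the abelian category $\Def\mathbb E$ are exactly the modules $S_C$ with $C$ running over the indecomposable non-projective objects of $H^0(\A)$, up to isomorphism. The Krull--Schmidt hypothesis guarantees that the endomorphism rings are local so these $S_C$ are pairwise non-isomorphic and account for all simples.

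**Then I would** invoke the standard fact that in a length-finite abelian category, the assignment $\mathcal X\mapsto\{\text{simples in }\mathcal X\}$ is a lattice isomorphism from the lattice of Serre subcategories to the Boolean lattice of subsets of the set of isomorphism classes of simple objects, with inverse sending a set $T$ of simples to the full subcategory of objects all of whose composition factors lie in $T$. (A Serre subcategory is closed under subobjects, quotients and extensions; in the length-finite setting it is therefore determined by, and freely specified by, the set of simples it contains.) Composing this with the isomorphism of the previous paragraph and with Theorem~\ref{bijectionstructures}, one obtains that the lattice of exact substructures of $(\A,\mathcal S)$ is isomorphic to the Boolean lattice of subsets of the set of isomorphism classes of indecomposable non-projective objects of $(H^0(\A),\mathbb E,\mathfrak s)$, which is the assertion.

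**The main obstacle** I expect is not any single hard step but rather assembling the bookkeeping cleanly: one must be careful that ``non-projective indecomposable object of $H^0(\A)$'' as appearing in the statement is exactly the indexing set of simples of $\Def\mathbb E$, which requires the precise correspondence between almost split sequences and simple defects (and a clean treatment of the degenerate projective case), and one must check that the Krull--Schmidt and admissibility hypotheses are exactly what is needed to apply the length-finite classification of Serre subcategories. A secondary point worth verifying is that the poset isomorphism of Theorem~\ref{bijectionstructures} and Enomoto's isomorphism are genuine lattice isomorphisms (they are, since they are poset isomorphisms for the inclusion order and hence automatically preserve meets and joins), so that the final statement about lattices, not merely posets, is justified.
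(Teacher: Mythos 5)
Your overall architecture coincides with the paper's: reduce via Theorem~\ref{bijectionstructures} to Serre subcategories of $\Def\mathbb E$, identify the simple objects of $\Def\mathbb E$, and invoke the classification of Serre subcategories of a length category by sets of simples. The first and third steps are fine. The gap is in the second step, and precisely in the inclusion you need most: that for \emph{every} indecomposable non-projective $C$ the simple module $S_C$ actually lies in $\Def\mathbb E$. You justify this by appealing to the discussion preceding the corollary together with (AS2) and \cite[Lemma 2.14]{IyamaNakaokaPalu18}, but that discussion only establishes the equivalence ``$S_C\in\Def\mathbb E$ if and only if there exists an almost split extension ending at $C$'', plus the easy implication that such a $C$ must be non-projective (your parenthetical about $\mathbb E(C,-)=0$ covers only the projective case). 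What is missing is the \emph{existence} of an almost split extension ending at each indecomposable non-projective $C$, and this is exactly where the two hypotheses do real work. The paper's proof spends its first half on this point: it chooses a nonzero $\delta\in\mathbb E(C,A)$ with $A$ indecomposable and with $\tilde\delta$ of minimal length (this is where admissibility enters), and then uses the short exact sequence of defects from Remark~\ref{rmk:defectses} to show that $c^*(\delta)=0$ for every non-retraction $c$, so that $\delta$ is almost split with $\tilde\delta\cong S_C$. Without this step your lattice is only indexed by those $C$ for which $S_C$ happens to lie in $\Def\mathbb E$, which is not the statement to be proved.

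A plausible shortcut, closer to the spirit of what you wrote, would be: for $C$ indecomposable non-projective pick any nonzero $\delta\in\mathbb E(C,A)$; since $\End_{H^0(\A)}(C)$ is local and the realizing deflation is not a split epimorphism, $\tilde\delta$ is a nonzero quotient of $H^0(\A)(?,C)$ by a subfunctor contained in $\rad(?,C)$, hence (being of finite length by admissibility) admits $S_C$ as its unique simple quotient; as $\Def\mathbb E$ is a Serre subcategory of the coherent modules, it is closed under quotients and therefore contains $S_C$. Some such argument must be written out explicitly — as it stands, your proposal asserts the key identification of simples rather than proving it.
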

\begin{proof}
Let $\M$ be a set of representatives for each isomorphism class of non-projective indecomposable objects in $\C=H^0(\A)$. 
Let us show that for each $C\in \M$, there exists an almost split extension with ending term $C$.
Indeed, since $C$ is non-projective and $\C$ is Krull--Schmidt, there exists a non-zero extension $\delta\in\mathbb E(C,A)$ with $A$ indecomposable.
By item 2), each defect has finite length. Let us choose $\delta$ such that the length of $\tilde{\delta}$ is minimal among such extensions.
Let $c:C'\rightarrow C$ be a non-retraction morphism in $\C$.
Let us consider the diagram~\ref{dia:defectses} in Remark~\ref{rmk:defectses}. 
So there is a surjection from $\tilde{\delta}$ to $\tilde{\mu}$ where $\mu=u_*(\delta)$ for some morphism $u:A\rightarrow E$.
Since $C$ is indecomposable and $\delta$ is non-zero and $c$ is a non-retraction, the $\mathbb E$-extension $\mu$ is nonzero. 
We decompose $E$ into indecomposables $E=\oplus_{i} E_i$.
Then for some $p_i:E\rightarrow E_i$, we have $\mu_i=(p_i)_*(\mu)\neq 0$.
There is a surjection $\tilde{\mu}\rightarrow \tilde{\mu_i}$. So the composition $\tilde{\delta}\twoheadrightarrow \tilde{\mu}\twoheadrightarrow\tilde{\mu_i}$ is an isomorphism by the minimality of the length of $\tilde{\delta}$.
Then $\tilde{\delta}$ is isomorphic to $\tilde{\mu}$ and hence $\tilde{\theta}=0$. 
It follows that $c^*(\theta)=0$ and hence we have that $\tilde{\delta}$ is isomorphic to $S_C$ and that $\delta$ is an almost split extension.

We choose an almost split extension $\delta_C$ for each $C\in\M$ and put $\mathcal R=\{\delta_C,C\in\M\}$.
Since any defect takes zero value at each projective object in $\C$, 
the category $\Def\mathbb E$ can be identified with a full subcategory of $\Mod (\add \M)$.  
The defects $\tilde{\delta_{C}}$ are the simple modules in $\Mod (\add \M)$ and hence we have that $\Def \mathbb E$ is the full subcategory of $\Mod (\add\M)$ consisting of objects which have finite lengths. 
Recall that in a small finite length abelian category, the lattice of Serre subcategories is isomorphic to the lattice of the subsets of the set of isomorphism classes of simple objects.
Therefore, by Theorem~\ref{bijectionstructures}, the lattice of exact substructures of $(\A,\mathcal S)$ is isomorphic to the Boolean lattice of the subsets of the set of isomorphism classes of indecomposable non-projective objects in $\C$.
\end{proof}
\begin{example}
Let $A$ be the path $k$-algebra $k(\overrightarrow{A_2})$ of the quiver $A_2$. 
Let $\C=\mathcal H^{[-1,0]}(\proj A)$ be the full subcategory of $\T=\mathcal H^b(\proj A)$ consisting of two-term complexes $P^{-1} \to P^0$ of finitely generated projective $A$-modules. 
Let $\A=\C^{b}_{dg}(\proj A)$ be the canonical enhancement of
${\mathcal H}^b(\proj A)$ and $\A'$ its full dg subcategory on the objects $P^{-1} \to P^0$.
Then $\A'$ is stable under extensions in the pretriangulated dg category $\A$ and thus inherits a canonical exact dg structure.
Since the algebra $A$ is hereditary, it is the greatest exact dg structure on $\A'$, cf.~Theorem~\ref{thm:maximalexactdgstructure}. 
Let $(\C=H^0(\A'),\mathbb E,\mathfrak s)$ be the corresponding extriangulated category.
We denote by $S_2$ the two-term complex $P_1\rightarrow P_2$. 
Then the AR-quiver of $\C$ is as follows
\[
\begin{tikzcd}
&P_2\ar[rd]&&\Sigma P_1\ar[rd]\ar[ll,dashed,no head]&\\
P_1\ar[ru]&&S_2\ar[ru]\ar[ll,dashed,no head]&&\Sigma P_2\ar[ll,dashed,no head]
\end{tikzcd}
\]
where $P_1$ and $P_2$ are projective objects in $\C$.
We take the following AR-sequences
\[
\begin{tikzcd}
\alpha: P_2\ar[r,tail]&S_2\ar[r,two heads]&\Sigma P_1,
\end{tikzcd}
\]
\[
\begin{tikzcd}
\beta: P_1\ar[r,tail]&P_2\ar[r,two heads]&S_2,
\end{tikzcd}
\]
\[
\begin{tikzcd}
\gamma:S_2\ar[r,tail]&\Sigma P_1\ar[r,two heads]&\Sigma P_2.
\end{tikzcd}
\]
Then the lattice of exact structures on $\A'$ is isomorphic to the following lattice
\[
\begin{tikzcd}
&\{\alpha,\beta,\gamma\}&\\
\{\alpha,\beta\}\ar[ru]&&\{\beta,\gamma\}\ar[lu]\\
&\{\alpha,\gamma\}\ar[uu,dashed]&\\
\{\alpha\}\ar[uu]\ar[ru,dashed]&\{\beta\}\ar[luu]\ar[ruu]&\{\gamma\}\ar[uu]\ar[lu,dashed]\\
&\emptyset\ar[u]\ar[lu]\ar[ru]&
\end{tikzcd}
\]
We adopt the notation in Lemma~\ref{extensionclosed}.
Then we have the following correspondence between closed subbifunctors and exact structures.
\[
\begin{tikzcd}
 \mathbb E_{\add{\Sigma P_1}}=\mathbb E^{\add{P_1}}\ar[r,leftrightarrow,dashed]& \{\alpha,\beta\}&\mathbb E^{\add{S_2}}\ar[r,leftrightarrow,dashed]& \{\gamma\} \\
  \mathbb E_{\add \Sigma P_2}=\mathbb E^{\add{P_2}}\ar[r,leftrightarrow,dashed]& \{\alpha,\gamma\} & \mathbb E_{\add S_2}\ar[r,leftrightarrow,dashed]& \{\beta\}\\
     \mathbb E_{0}=\mathbb E^0\ar[r,leftrightarrow,dashed]& \emptyset & \mathbb E_{\C}\ar[r,leftrightarrow,dashed]& \{\alpha,\beta,\gamma\}
 \end{tikzcd}
 \]
\end{example}

\subsection{DG relative cluster categories and Higgs categories}\label{subsection:higgscategories}
One of the motivations for our work is Yilin Wu's construction of the Higgs category associated with a finite ice quiver with potential (see \cite{Wu23a}\cite{Wu21} for the Jacobi-finite case and \cite{KellerWu22} for the Jacobi-infinite case). 

Let $(Q,F,W)$ be a Jacobi-finite ice quiver with potential. 
Denote by $\Gamma_{rel}$ the relative Ginzburg dg algebra $\Gamma_{rel}(Q,F,W)$. 
Let $e=\sum_{i\in F}e_i$ be the idempotent associated with all frozen vertices. 
Let $\pvd_{e}(\Gamma_{rel})$ be the full subcategory of $\pvd(\Gamma_{rel})$ of the dg $\Gamma_{rel}$-modules whose restriction to frozen vertices is acyclic.

Then the {\em relative cluster category} $\C=\C(Q,F,W)$ associated to $(Q,F,W)$ is defined as the Verdier quotient of triangulated categories
\[
\per(\Gamma_{rel})/\pvd_{e}(\Gamma_{rel}).
\]
The {\em relative fundamental domain} $\mathcal F^{rel}_{\Gamma_{rel}}=\mathcal F^{rel}$ associated to $(Q,F,W)$ is defined as the following subcategory of $\per(\Gamma_{rel})$
\[
\mathcal F^{rel}{\coloneqq}\{\Cone(X_1\xrightarrow{f}X_0) | X_i\in\add(\Gamma_{rel}) \text{ and } \Hom(f,I) \text{ is surjective}, \forall I\in \mathcal P{\coloneqq}\add(e\Gamma_{rel})\}.
\]
Let $\pi^{rel}:\per(\Gamma_{rel})\rightarrow \C$ be the canonical quotient functor. 

By \cite[Proposition 5.20]{Wu23a}, the restriction of the quotient functor $\pi^{rel}$ to $\mathcal F^{rel}$ is fully faithful. 
In particular for $X, Y\in \mathcal F^{rel}$, $\pi^{rel}$ induces an injection 
\[
\Ext^1_{\per(\Gamma_{rel})}(X,Y)\rightarrow\Ext^1_{\C}(\pi^{rel}X,\pi^{rel}Y).
\]
The {\em Higgs category} (\cite[Definition 5.22]{Wu23a}) $\mathcal H$ is the image of $\mathcal F^{rel}$ in $\C$ under the quotient functor $\pi^{rel}$. 
Indeed, we have the following
\begin{lemma}\label{equi}Let $X,Y$ be objects in $\mathcal F^{rel}$.
Then $\pi^{rel}$ induces a canonical isomorphism 
\[
\Hom_{\per(\Gamma_{rel})}(X,\Sigma^{i}Y)\xrightarrow{\sim}\Hom_{\C}(\pi^{rel}X,\Sigma^{i}\pi^{rel}Y).
\] for any $i\leq 0$ or for $Y\in \add\Gamma_{rel}$ and $i\leq 1$.

\end{lemma}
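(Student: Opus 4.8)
For $X, Y \in \mathcal F^{rel}$, the quotient functor $\pi^{rel}$ induces an isomorphism $\Hom_{\per(\Gamma_{rel})}(X,\Sigma^{i}Y)\xrightarrow{\sim}\Hom_{\C}(\pi^{rel}X,\Sigma^{i}\pi^{rel}Y)$ for all $i\leq 0$, and also for $i\leq 1$ when $Y\in\add\Gamma_{rel}$.

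The plan is to apply the ``full faithfulness up to a subcategory'' principle for Verdier quotients, i.e. Lemma~\ref{lem:tstructureff}, in its t-structure (or torsion-pair) incarnation. Recall $\C = \per(\Gamma_{rel})/\pvd_e(\Gamma_{rel})$, so the localizing/thick subcategory $\mathcal S$ we quotient by is $\mathcal S = \pvd_e(\Gamma_{rel})$. First I would recall the key homological fact (already used in Wu's thesis) that $\add\Gamma_{rel}$ is presilting in $\per\Gamma_{rel}$, so that $\Z' = \add\Gamma_{rel}*\add\Sigma\Gamma_{rel}$, which contains $\mathcal F^{rel}$, is extension-closed, and that $\P = \add(e\Gamma_{rel})$ consists of $\Ext$-projectives. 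The decisive vanishing is the following: for $X, Y\in\mathcal F^{rel}$ one has $\Hom_{\per(\Gamma_{rel})}(X, \Sigma^{-j}N) = 0$ for $N\in\pvd_e(\Gamma_{rel})$ and $j$ in a suitable range, and dually $\Hom_{\per(\Gamma_{rel})}(N, \Sigma^{i}Y) = 0$. This is exactly the kind of orthogonality hypothesis needed to invoke Lemma~\ref{lem:tstructureff}: writing objects of $\mathcal F^{rel}$ as cones $\Cone(X_1\to X_0)$ with $X_i\in\add\Gamma_{rel}$ and using that $\Gamma_{rel}$ is connective, one checks that $X\in {}^{\perp}(\Sigma^{>0}\pvd_e)$ and $\Sigma^{i}Y$ lies in the appropriate aisle, so the canonical map $\Hom_{\per/[\P]}(X,\Sigma^iY)\to\Hom_{\per/\mathcal S}(X,\Sigma^iY)$ is a bijection; since $\mathcal F^{rel}$ avoids the intersection subcategory $\P$, the left-hand side equals $\Hom_{\per}(X,\Sigma^iY)$.

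Concretely, the steps in order: (1) reduce to computing $\Hom$ in a quotient and identify the thick subcategory $\mathcal S = \pvd_e(\Gamma_{rel})$ and the relevant torsion pair / t-structure on it coming from the silting $\Gamma_{rel}$; (2) establish the two one-sided vanishing statements $\Hom_{\per}(X, \Sigma^{i}N) = 0$ and $\Hom_{\per}(N, \Sigma^{i}Y)=0$ for $N\in\pvd_e(\Gamma_{rel})$, $X,Y\in\mathcal F^{rel}$, in the stated ranges of $i$ — here the case distinction in the statement (general $Y$ gives $i\leq 0$; $Y\in\add\Gamma_{rel}$ gives $i\leq 1$) corresponds exactly to how far one can push the vanishing of $\Hom_{\per}(\Sigma^{-1}\pvd_e, \Sigma^{i}Y)$, using that $\add\Gamma_{rel}$ is presilting so $\Hom_{\per}(\Gamma_{rel},\Sigma^{>0}\pvd_e)$ has an extra degree of vanishing; (3) invoke Lemma~\ref{lem:tstructureff} with $\mathcal S = (\X,\Y)$ a t-structure (so $\P = 0$ there) to conclude $\Hom_{\per}(X,\Sigma^iY)\xrightarrow{\sim}\Hom_{\C}(X,\Sigma^iY)$; alternatively, run the elementary roof-calculus argument directly, as in the proof of Lemma~\ref{lem:quotientprojectiveinjective}: any roof $X\xleftarrow{s} M\xrightarrow{b}\Sigma^iY$ with $\Cone(s)\in\mathcal S$ can be normalized so that $\Cone(s)$ lies in a bounded-above piece of the t-structure on $\mathcal S$, whence $b$ factors through $s$ and the map is surjective; injectivity is the dual diagram chase.

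The main obstacle I expect is step (2): pinning down precisely the homological vanishing between $\mathcal F^{rel}$ (equivalently $\Z'$) and $\pvd_e(\Gamma_{rel})$ and tracking the exact degree ranges. One must use that $\mathcal F^{rel}$ is characterized inside $\Z'$ as the relatively $\mathbb E$-projective objects with respect to $\P$, i.e. $\Ext^1(X,\P) = 0$, together with connectivity of $\Gamma_{rel}$; the surjectivity condition ``$\Hom(f,I)$ is surjective for all $I\in\P$'' in the definition of $\mathcal F^{rel}$ is exactly what forces the extra vanishing needed to reach $i\leq 1$ when $Y\in\add\Gamma_{rel}$. Everything else is a formal application of the quotient-functor machinery; the degree bookkeeping is where care is required, and it is essentially the same bookkeeping that appears in the proofs of Lemma~\ref{lem:quotientprojectiveinjective} and Proposition~\ref{prop:dgsingularitycategory} in this paper, so I would model the argument on those.
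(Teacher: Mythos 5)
Your overall shape (roof calculus in the Verdier quotient, truncating the cone of the denominator via a t-structure on the subcategory being killed) matches the paper's, but there is a genuine gap at the decisive step, namely the source of the vanishing in step (2). The crucial orthogonality $\Hom_{\per(\Gamma_{rel})}(M,\Sigma^{i}\Gamma_{rel})=0$ for $M\in\pvd_{e}(\Gamma_{rel})^{rel}_{\leq 0}$ and $i\leq 1$ is \emph{not} a consequence of connectivity, of $\add\Gamma_{rel}$ being presilting, or of the surjectivity condition ``$\Hom(f,I)$ surjective for $I\in\P$'' in the definition of $\mathcal F^{rel}$ (that condition constrains $X$, not its interaction with $\pvd_e$). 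It comes from the relative $3$-Calabi--Yau duality of the relative Ginzburg algebra: $\Hom_{\Gamma_{rel}}(M,\Sigma^{i}\Gamma_{rel})\cong D\Hom_{\Gamma_{rel}}(\Gamma_{rel},\Sigma^{3-i}M)$ for $M\in\pvd_e(\Gamma_{rel})$ (Wu's Corollary~3.13), combined with the fact that the \emph{relative} t-structure on $\D(\Gamma_{rel})$ restricts to $\pvd_e(\Gamma_{rel})$, so that after truncation $\Hom(\Gamma_{rel},\Sigma^{3-i}M)=0$ for $i\leq 1$. Without invoking this duality your argument has no way to kill maps out of $\pvd_e$-objects into $\Sigma^i\Gamma_{rel}$ in negative and low positive degrees, and both injectivity and surjectivity of the comparison map collapse. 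Note also that both halves of the argument use vanishing of maps \emph{from} $\pvd_e$-objects \emph{into} $\Sigma^i\Gamma_{rel}$; there is no ``dual diagram chase'' using maps in the other direction.

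The second, smaller, discrepancy is the reduction structure. The paper first proves the case $Y=\Gamma_{rel}$ with $i\leq 1$ directly by the roof argument above (using $X\in\add\Gamma_{rel}*\add\Sigma\Gamma_{rel}\subset\per(\Gamma_{rel})^{rel}_{\leq 0}$ to place the truncation of $\Cone(s)$ in $\pvd_e(\Gamma_{rel})^{rel}_{\leq 0}$), and then deduces the case of general $Y\in\mathcal F^{rel}$ with $i\leq 0$ by applying $\Hom(X,-)$ to the defining triangle $X_1\to X_0\to Y\to\Sigma X_1$ and the five lemma; the loss of one degree in passing from $\add\Gamma_{rel}$ to $\mathcal F^{rel}$ is exactly this long-exact-sequence step, not an extra vanishing forced by the approximation condition. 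If you repair step (2) by citing the relative Calabi--Yau property and organize the deduction this way, your plan goes through.
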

\begin{proof}We first show the case when $Y=\Gamma_{rel}$ and $i\leq 1$. 

Recall the relative t-structure $(\D(\Gamma_{rel})^{rel}_{\leq 0}, \D(\Gamma_{rel})^{rel}_{\geq 0})$ on $\D(\Gamma_{rel})$ which restricts to t-structures on $\per \Gamma_{rel}$ and on $\pvd_{e}(\Gamma_{rel})$, cf.~\cite[Propositions 4.10, 4.11]{Wu23a}.

We show the injectivity of the map.
Let $f:X\rightarrow \Sigma^{i}\Gamma_{rel}$ be a map in $\per(\Gamma_{rel})$ which factors through an object $M$ in $\pvd_{e}(\Gamma_{rel})$. 

We want to show that $f$ is indeed a zero map.
By definition $\Gamma_{rel}\in \per(\Gamma_{rel})^{rel}_{\leq 0}$ and hence $X\in \add \Gamma_{rel}*\add \Sigma\Gamma_{rel}\subset \per(\Gamma_{rel})^{rel}_{\leq 0}$.
So we may suppose $M\in \pvd_{e}(\Gamma_{rel})^{rel}_{\leq 0}$.

By \cite[Corollary 3.13]{Wu23a}, we have 
\[
\Hom_{\Gamma_{rel}}(M,\Sigma^i\Gamma_{rel})\xrightarrow{\sim}D\Hom_{\Gamma_{rel}}(\Gamma_{rel}, \Sigma^{3-i}M)=0
\] 
where $D=\Hom(-,k)$. 

This shows the injectivity of the morphism. 
We now show the surjectivity of the map. Consider a roof $b/s$ 
\[
\begin{tikzcd}
&N\ar[ld,Rightarrow,"s"swap]\ar[rd,"b"]&\\
X&&\Sigma^i\Gamma_{rel}
\end{tikzcd}.
\]
Since $X\in\per(\Gamma_{rel})^{rel}_{\leq 0}$, we may assume $\Cone(s)\in \pvd(\Gamma_{rel})^{rel}_{\leq 0}$.
Then since we have
\[
\Hom_{\Gamma_{rel}}(\Sigma^{-1}\Cone(s),\Sigma^{i} \Gamma_{rel})\iso D\Hom_{\Gamma_{rel}}(\Gamma,\Sigma^{2-i} \Cone(s))=0,
\] 
the morphism $b$ factors through $s$ and this shows the surjectivity of the map.

For general $Y\in \mathcal F^{rel}$ and $i\leq 0$, it follows from the above case by applying the homological functor $\Hom(X,-)$ to the defining triangle for $Y$.
\end{proof}
Let $\per_{dg}(\Gamma_{rel})$ (resp. $\pvd_{e}(\Gamma_{rel})_{dg}$) be the canonical dg enhancement of $\per(\Gamma_{rel})$ (resp. $\pvd_{e}(\Gamma_{rel})$). 
Let $\C_{dg}$ be the canonical dg enhancement of $\C=\C(Q,F,W)$ given by the dg quotient $\per_{dg}(\Gamma_{rel})/\pvd_{e}(\Gamma_{rel})_{dg}$. 
Let $\mathcal H_{dg}$ be the full dg subcategory of $\C_{dg}$ consisting of objects in $\mathcal H$.
Let $\F^{rel}_{dg}$ be the dg full subcategory of $\per_{dg}(\Gamma_{rel})$ consisting of objects in $\F^{rel}$.

By \cite[Definition 5.12, Proposition 5.39]{Wu23a}, the dg category $\F^{rel}_{dg}$ (resp. $\mathcal H_{dg}$) is extension-closed in $\per_{dg}(\Gamma_{rel})$ (resp. $\C_{dg}$) and thus inherits a canonical exact dg structure. 
By Lemma \ref{equi}, the canonical morphism from $\per_{dg}(\Gamma_{rel})$ to $\C_{dg}$ induces a quasi-equivalence 
\[
\tau_{\leq 0}\F_{dg}^{rel}\rightarrow \tau_{\leq 0} \mathcal H_{dg}.
\]

By \cite[Section 5.8]{Wu23a}, Theorem \ref{main} and Proposition \ref{higher}, the canonical morphism 
\[
\tau_{\leq 0}\mathcal H_{dg}\rightarrow \C_{dg}
\]
characterises $\C_{dg}$ as the bounded dg derived category of $\tau_{\leq 0}\mathcal H_{dg}$.

Since $\Gamma_{rel}$ is projective in the extriangulated category $\F^{rel}$, it follows that higher extensions $\mathbb E^i$ of $\F^{rel}$ vanish for $i\geq 2$.
Also we have $\Ext^{i}_{\per(\Gamma_{rel})}(X,Y)=0$ for $X,Y\in\F^{rel}$ and $i\geq 2$.
Thus by  Theorem \ref{main} and Proposition \ref{higher}, the canonical morphism 
\[
\tau_{\leq 0}\mathcal F^{rel}_{dg}\rightarrow \per_{dg}(\Gamma_{rel})
\]
characterises $\per_{dg}(\Gamma_{rel})$ as the bounded dg derived category of $\tau_{\leq 0}\mathcal F^{rel}_{dg}$.

Claim: Via the canonical quasi-equivalence $\tau_{\leq 0}\F^{rel}_{dg}\rightarrow \tau_{\leq 0} \mathcal H_{dg}$, the exact dg structure on $\tau_{\leq 0}\F^{rel}_{dg}$ is identified with the exact substructure on $\tau_{\leq 0}\mathcal H_{dg}$ left generated by $\add \Gamma_{rel}$.

Indeed, let $g:\pi^{rel}X\rightarrow \Sigma \pi^{rel}Y$ be a morphism in $\mathcal H$. 
Then by Lemma \ref{equi}, $g$ is of the form $\pi^{rel}(f)$ for some $f:X\rightarrow \Sigma Y$ if and only if $g$ factors though an object in $\add \Gamma_{rel}$. 

Note that $\Gamma_{rel}$ is cluster-tilting in $\mathcal H$, cf.~\cite[Proposition 5.49]{Wu23a}. In general we have 

\begin{proposition}\label{ct}
Let $(\A,\mathcal S)$ be a connective exact dg category such that $(H^0(\A),\mathbb E)$ is a Hom-finite, Krull--Schmidt extriangulated category with a basic cluster-tilting object $T$. 
Let $\Gamma$ be the dg endomorphism algebra of $T$ in $\A$. 
Let $(\A,\mathcal S')$ be the connective exact dg category with exact substructure left generated by $\add T$.
Let $\D^b_{\mathcal S'}(\A){\coloneqq}H^0(\D^b_{dg}(\A))$ be the bounded derived category of $\A$ with the exact structure $\mathcal S'$.
Then we have an equivalence of triangulated categories up to direct summand
\[
\D^b_{\mathcal S'}(\A)\rightarrow \per\Gamma.
\]  
\end{proposition}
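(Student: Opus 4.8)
The plan is to realise $T$ as a compact generator of $\D^b_{\mathcal S'}(\A)$ whose dg endomorphism algebra is $\Gamma$, and then invoke the standard dg‑algebra formalism.

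\textbf{Step 1: the substructure $\mathcal S'$.} First I would unwind what it means to pass to the exact substructure $\mathcal S'$ left generated by $\add T$: in $(\A,\mathcal S')$ the objects of $\add T$ are projective, and, because $T$ is cluster‑tilting in the extriangulated category $(H^0(\A),\mathbb E)$, every object $X$ of $H^0(\A)$ fits into an $\mathcal S'$‑conflation
\[
T_1\longrightarrow T_0\longrightarrow X,\qquad T_0,T_1\in\add T .
\]
Indeed, choosing a right $\add T$‑approximation $T_0\to X$ that is an $\mathfrak s$‑deflation (such exist since $\add T$ is one half of a cotorsion pair, cf.~\cite{IyamaYoshino08,LiuNakaoka19}), its kernel $T_1$ again lies in $\add T$; applying $\Hom_{H^0(\A)}(T,-)$ shows the obstruction to this conflation lying in $\mathcal S'$ is an element of $\mathbb E(T,T_1)=0$, so it is an $\mathcal S'$‑conflation.

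\textbf{Step 2: $\RHom$‑complexes and the comparison functor.} Let $\mathcal N'$ be the triangulated subcategory of $\tr(\A)$ generated by the totalizations of $\mathcal S'$‑conflations, so that $\D^b_{dg}(\A)=\pretr(\A)/\mathcal N'_{dg}$ and $\D^b_{\mathcal S'}(\A)=H^0(\D^b_{dg}(\A))$. By Lemma~\ref{lem:leftorthogonal}, $T$ being projective in $(\A,\mathcal S')$ means $\Hom_{\tr(\A)}(T,\mathcal N')=0$; combined with the vanishing $\Hom_{\tr(\A)}(T,\Sigma^i N)=0$ for $i\neq 0$ and $N$ a totalization of a conflation (used in the proof of that lemma), this shows $T$ lies in the left orthogonal of $\mathcal N'$ in the derived sense. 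Hence the localization dg functor induces isomorphisms $\RHom_{\D^b_{dg}(\A)}(T,M)\xrightarrow{\sim}\RHom_{\pretr(\A)}(T,M)$ for all $M\in\pretr(\A)$; in particular $\RHom_{\D^b_{dg}(\A)}(T,T)\cong\A(T,T)=\Gamma$. Since $\D^b_{\mathcal S'}(\A)$ is an algebraic triangulated category and $T$ has dg endomorphism algebra $\Gamma$, the standard theory (\cite{Keller94,Keller06d}) yields a fully faithful triangle functor $\per\Gamma\to\D^b_{\mathcal S'}(\A)$ sending $\Gamma$ to $T$, with essential image $\thick(T)$; equivalently, $\RHom_{\D^b_{dg}(\A)}(T,-)$ restricts to a triangle functor $\D^b_{\mathcal S'}(\A)\to\per\Gamma$ which is fully faithful on $\thick(T)$.

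\textbf{Step 3: generation.} As $\pretr(\A)$ is generated, as a triangulated category, by $\A$, the category $\D^b_{\mathcal S'}(\A)$ is generated by the image of $H^0(\A)$. By Step 1 each $X\in H^0(\A)$ sits in a triangle $T_1\to T_0\to X\to\Sigma T_1$ in $\D^b_{\mathcal S'}(\A)$ (an $\mathcal S'$‑conflation becomes such a triangle, as in the proof of Lemma~\ref{univer}), so $X\in\thick(T)$; hence $\thick(T)=\D^b_{\mathcal S'}(\A)$. Therefore the comparison functor $\RHom_{\D^b_{dg}(\A)}(T,-)\colon\D^b_{\mathcal S'}(\A)\to\per\Gamma$ is fully faithful, and its image generates $\per\Gamma$; after idempotent completion it becomes an equivalence, which is precisely the asserted equivalence up to direct summand (the passage to idempotent completions is what the phrase ``up to direct summand'' accounts for, since $\per\Gamma$ is idempotent complete while a triangle quotient such as $\D^b_{\mathcal S'}(\A)$ need not a priori be).

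\textbf{Main obstacle.} I expect the delicate point to be Step~1: making the definition of the substructure ``left generated by $\add T$'' precise and checking that it simultaneously makes $\add T$ projective and supplies the approximation $\mathcal S'$‑conflations $T_1\to T_0\to X$ with both ends in $\add T$. Once this is established, Steps~2 and~3 are a direct application of Lemma~\ref{lem:leftorthogonal}, Lemma~\ref{univer}, Proposition~\ref{higher} and the dg‑algebra formalism.
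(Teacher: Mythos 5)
Your proposal is correct and follows essentially the same route as the paper's (sketched) proof: make $\add T$ projective with enough projectives via the cluster-tilting two-term resolutions, identify $\RHom(T,T)$ with $\Gamma$, and conclude by generation and d\'evissage, with idempotent completion accounting for ``up to direct summand''. The only cosmetic differences are that you compute $\Hom_{\D^b_{\mathcal S'}(\A)}(T,\Sigma^i T)$ via the left-orthogonality of $T$ to $\mathcal N'$ (Lemma~\ref{lem:leftorthogonal}) rather than via Theorem~\ref{main} and Proposition~\ref{higher}, and your Step~1 justification that the approximation conflation lies in $\mathcal S'$ is more roundabout than necessary --- it is automatic because its first term lies in $\add T$, by the definition of $\mathbb E^{\add T}$ in Lemma~\ref{extensionclosed}.
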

\begin{proof}The dg functor 
\[
\Hom(T,-):\D^b_{dg}(\A)\rightarrow \C_{dg}(\Gamma)
\]
 induces a functor $D^b_{\mathcal S'}(\A)\rightarrow \per\Gamma$. 
Notice that $T$ is projective in $(H^0(\A),\mathbb E')$ with $\mathbb E'$ given by the exact structure $\mathcal S'$.

Since $T$ is cluster-tilting, the extriangulated category $(H^0(\A),\mathbb E')$ has enough projective objects $\add T$ and is hereditary in the sense of \cite{GorskyNakaokaPalu23}. 
Also, $T$ generates $\D^b_{\mathcal S'}(\A)$ as a triangulated category.

Now the proposition follows from Theorem \ref{main} and Proposition \ref{higher}, by a d\'evissage argument, cf.~\cite[Lemma 4.2]{Keller94}\cite[1.3]{Keller98c}.
\end{proof}
\begin{remark}In Proposition \ref{ct}, if we take different basic cluster-tilting objects $T$ in $(H^0(\A),\mathbb E)$, then the exact substructures thus constructed are different because they have different projective objects.
\end{remark}
We end this section by constructing a class of homotopy bicartesian squares in the dg category $\mathcal F^{rel}_{dg}$ (or, by Remark \ref{truncationexactdgstructure}, equivalently in the dg category $\tau_{\leq 0}\F^{rel}_{dg}$) which are not necessarily homotopy bicartesian in the ambient pretriangulated dg category $\per_{dg}(\Gamma_{rel})$.
The idea is that we have a quasi-equivalence of dg categories
\[
\tau_{\leq 0}\F^{rel}_{dg}\simeq\tau_{\leq 0}\mathcal H_{dg}
\]
which identifies the exact structure on the left hand side with a substructure of that on the right hand side. 
So we only need to find conflations in $\tau_{\leq 0}\mathcal H_{dg}$ which are not in $\tau_{\leq 0}\F^{rel}_{dg}$.

Let $M$ be an object in $\mathcal F^{rel}$ of the form $\Cone(P\xrightarrow{f}Q)$. 
Denote by $M^{*}$ the dual $\RHom_{\Gamma_{rel}}(M,\Gamma_{rel})\in \per(\Gamma^{op}_{rel})$ of $M$.
Then we have $H^iM^{*}=0$ for $i\geq 2$ and that $H^1M^{*}$ is finite-dimensional.
Thus $N=\tau_{\leq 0}M^{*}$ lies in $\per(\Gamma^{op}_{rel})$. 

Claim 3: We have $H^1M^{*}\in \pvd_{e}(\Gamma^{op}_{rel})$ and $M\xrightarrow{\sim} \tau_{\leq 0}N^{*}$.

Indeed, we apply the cohomological functor $\Hom(-,\Gamma_{rel})$ to the triangle 
\[
P\xrightarrow{f}Q\rightarrow M\rightarrow \Sigma P
\]
and we get the following exact sequence
\[
\begin{tikzcd}
\Hom(\Sigma Q, \Sigma e\Gamma_{rel})\ar[r, two heads]&\Hom(\Sigma P,\Sigma e\Gamma_{rel})\ar[r]&\Hom(M,\Sigma e\Gamma_{rel})\ar[r]&\Hom(Q,\Sigma e\Gamma_{rel})=0.
\end{tikzcd}
\]
Thus we have $eH^1M^*=\Hom(M,\Sigma e\Gamma_{rel})=0$ and $H^1M^*\in \pvd_{e}(\Gamma^{op}_{rel})$.
We have a canonical triangle
\[
\begin{tikzcd}
(\Sigma^{-1}H^1M^{*})^{*}\ar[r]&M^{**}\ar[r]& N^{*}\ar[r]&\Sigma(\Sigma^{-1}H^1M^*)^*.
\end{tikzcd}
\]
Then the claim follows by observing $(H^1M^*)^*\in \D(\Gamma^{op}_{rel})_{\geq 3}$. 
Indeed, we have 
\[
(H^1M^*)^*\in \D(\Gamma^{op}_{rel})_{\geq 3}\cap\D(\Gamma^{op}_{rel})_{\leq 3}
\]
which implies that $N^{*}\in \D(\Gamma_{rel}^{op})_{\leq 1}^{rel}$.

Claim 4: We have $\Hom(N,\Sigma^{i} \Gamma_{rel}e)\xrightarrow{\sim}H^i(M)e=0$ for $i\geq 1$ and $\Hom(N,T)\xrightarrow{\sim} \Hom(H^1M^{*}, \Sigma^2 T)$ for $T\in \D(\Gamma^{op}_{rel})_{\leq -2}$.

Indeed, the claim follows by applying the cohomological functors $\Hom(-,\Sigma \Gamma_{rel}e)$ and $\Hom(-,T)$ to the triangle in $\per(\Gamma_{rel}^{op})$
\[
N=\tau_{\leq 0}M^{*}\rightarrow M^{*}\rightarrow \Sigma^{-1}H^1M^{*}\rightarrow \Sigma N.
\]
We also have $\Hom(\Gamma_{rel}e,\Sigma^{\geq 1}N)=0$.

Summarising the above discussion, we have, by \cite[Definition 5.12]{Wu23a}, that $N\in \F^{rel}_{\Gamma^{op}_{rel}}$ if and only if $\Hom(H^1M^{*},T)=0$ for any $T\in \D(\Gamma^{op}_{rel})_{\leq -4}$.

\begin{proposition}
Keep the notations as above. 
Suppose that $N$ lies in $\mathcal F^{rel}_{\Gamma^{op}_{rel}}$ and has a defining triangle
\[
\begin{tikzcd}
R\ar[r,"g"] &S\ar[r]&N\ar[r]&\Sigma R
\end{tikzcd}
\]
where $R$ and $S$ belongs to $\add\Gamma_{rel}$ and the morphism $g$ induces a surjection 
\[
\Hom_{\Gamma^{op}_{rel}}(S,e\Gamma^{op}_{rel})\rightarrow \Hom_{\Gamma^{op}_{rel}}(R,e\Gamma^{op}_{rel}).
\] 
Then the following diagram in $\F^{rel}\subset \D(\F^{rel}_{dg})$
\begin{equation}\label{diagram}
\begin{tikzcd}
M\ar[r,"u"]\ar[d]&S^{*}\ar[d,"g^{*}"]\\
0\ar[r]& R^*
\end{tikzcd},
\end{equation}
where the map $u$ is given by 
\[
M\xrightarrow{\sim}\tau_{\leq 0}N^*\rightarrow N^*\rightarrow S^{*},
\]
lifts to an object in $\rep(\mathrm{Sq},\F^{rel}_{dg})$, which is homotopy bicartesian, under the canonical functor $\Dia:\D((k\mathrm{Sq})^{op}\otimes \F^{rel}_{dg})\rightarrow \Fun(\mathrm{Sq}, \D(\F^{rel}_{dg}))$. 
Moreover, it is homotopy bicartesian with respect to $\per_{dg}(\Gamma_{rel})$ if and only if $\Hom(M,\Sigma\Gamma_{rel})=0$.
\end{proposition}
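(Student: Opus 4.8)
I would verify the three ingredients of the statement in turn — the lift to $\rep(\Sq,\F^{rel}_{dg})$, homotopy cartesianness with respect to $\F^{rel}$, homotopy cocartesianness with respect to $\F^{rel}$ — and then isolate when bicartesianness survives the passage to $\per_{dg}(\Gamma_{rel})$. The first move is to dualize the defining triangle $R\xrightarrow{g}S\to N\to \Sigma R$ by $\RHom_{\Gamma_{rel}}(-,\Gamma_{rel})$, producing a triangle $N^{*}\xrightarrow{h^{*}}S^{*}\xrightarrow{g^{*}}R^{*}\to \Sigma N^{*}$ in $\per(\Gamma_{rel})$, so that $\Sigma^{-1}\Cone(g^{*}\colon S^{*}\to R^{*})\iso N^{*}$ and $\Cone(h^{*}\colon N^{*}\to S^{*})\iso R^{*}$. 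By Claim~3 the map $M\to N^{*}$ is the composite $M\iso \tau_{\leq 0}N^{*}\hookrightarrow N^{*}$, so in particular $g^{*}\circ u=0$; hence $M\xrightarrow{u}S^{*}\xrightarrow{g^{*}}R^{*}$ (with zero homotopy, the terms all lying in $\F^{rel}$) is an object of $\mathcal H_{3t}(\F^{rel}_{dg})$, and via the embedding $\mathcal H_{3t}(\F^{rel}_{dg})\hookrightarrow \rep(\Sq,\F^{rel}_{dg})$ following Corollary~\ref{rep} (cf.~Lemma~\ref{equivalences}) it gives an object of $\rep(\Sq,\F^{rel}_{dg})$ whose image under $\Dia$ is isomorphic to~(\ref{diagram}). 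That settles the lift.

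Next I would check homotopy cartesianness with respect to $\F^{rel}$. The canonical map of Definition~\ref{maindef} is precisely $M\to \Sigma^{-1}\Cone(g^{*})\iso N^{*}$, which by the above factors through $\tau_{\leq 0}N^{*}$ via an isomorphism. Since $\tau_{\leq 0}\RHom(A^{\wedge},\tau_{\leq 0}K)\iso \tau_{\leq 0}\RHom(A^{\wedge},K)$ for every $A$ and $K$ (proof of Lemma~\ref{adj}), this map induces an isomorphism on $\tau_{\leq 0}\RHom(A^{\wedge},-)$ for all $A$, in particular for $A\in\F^{rel}$; so the lift is homotopy cartesian with respect to $\F^{rel}$ (indeed with respect to any full dg subcategory containing $M$, $S^{*}$, $R^{*}$). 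For homotopy cocartesianness I would apply the octahedral axiom to $M\xrightarrow{a}N^{*}\xrightarrow{h^{*}}S^{*}$, where $\Cone(a)=\tau_{\geq 1}N^{*}$ and $\Cone(h^{*})=R^{*}$; this yields a triangle $\tau_{\geq 1}N^{*}\to \Cone(u)\to R^{*}\to \Sigma\tau_{\geq 1}N^{*}$, so the canonical map $\Cone(u)\to R^{*}$ has cone $\Sigma\tau_{\geq 1}N^{*}$. Combining Claim~3 with the hypothesis $N\in\F^{rel}_{\Gamma^{op}_{rel}}$ (via the characterization recalled just before the Proposition: this holds iff $\Hom(H^{1}M^{*},T)=0$ for $T\in\D(\Gamma^{op}_{rel})_{\leq -4}$), the object $(H^{1}M^{*})^{*}$ is concentrated in degree $3$; hence $\tau_{\geq 1}N^{*}=\Sigma^{2}(H^{1}M^{*})^{*}$ is concentrated in degree $1$, and $\Sigma\tau_{\geq 1}N^{*}$ is the $\Gamma_{rel}$-module $\bar C:=H^{1}(N^{*})\iso H^{3}((H^{1}M^{*})^{*})$ placed in degree $0$. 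It then remains to see $\RHom(\bar C,A^{\wedge})\in\D(k)^{\geq 2}$ for each $A\in\F^{rel}$; the degrees $\neq 0$ are immediate from t-structure orthogonality ($\bar C$ sits in degree $0$, $A$ is connective), and the degree-$0$ case reduces to $\Hom_{\Gamma_{rel}}(\bar C,H^{0}A)=0$, which follows because $eH^{1}M^{*}=0$ (Claim~3) together with the surjectivity condition $\Hom(g,e\Gamma^{op}_{rel})\twoheadrightarrow\Hom(\cdot,e\Gamma^{op}_{rel})$ forces $\bar C$ to be supported on the frozen vertices, on which no $A\in\F^{rel}$ has a nonzero quotient by the very definition of $\F^{rel}$ (equivalently, via the relative $3$-Calabi--Yau property of $\Gamma_{rel}$ one has $\bar C\iso D(H^{1}M^{*})$ up to shift). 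This gives homotopy cocartesianness with respect to $\F^{rel}$, and with the previous step the lift is homotopy bicartesian.

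For the ``moreover'' statement I would argue as follows. By Definition~\ref{maindef} and its dual applied with $\A=\per_{dg}(\Gamma_{rel})$, the lift is homotopy bicartesian with respect to $\per_{dg}(\Gamma_{rel})$ iff the two canonical maps above induce isomorphisms on $\tau_{\leq 0}\RHom(A^{\wedge},-)$ and on $\tau_{\leq 0}\RHom(-,A^{\wedge})$ for \emph{every} $A\in\per_{dg}(\Gamma_{rel})$; testing against $A=\tau_{\geq 1}N^{*}$ forces $\tau_{\geq 1}N^{*}=0$, i.e. $H^{1}M^{*}=0$, and $H^{1}M^{*}=H^{1}\RHom_{\Gamma_{rel}}(M,\Gamma_{rel})=\Hom_{\D(\Gamma_{rel})}(M,\Sigma\Gamma_{rel})$. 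Conversely, if $\Hom(M,\Sigma\Gamma_{rel})=0$ then $\tau_{\geq 1}N^{*}=0$, so $M\iso N^{*}=\Sigma^{-1}\Cone(g^{*})$ and $\Cone(u)\iso R^{*}$, and (\ref{diagram}) is an honest homotopy bicartesian square in the pretriangulated dg category $\per_{dg}(\Gamma_{rel})$, hence bicartesian with respect to it.

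\textbf{Main obstacle.} The one genuinely technical point is the vanishing $\Hom_{\Gamma_{rel}}(\bar C,H^{0}A)=0$ for $A\in\F^{rel}$ in the cocartesian step: this is the only place where the hypotheses $N\in\F^{rel}_{\Gamma^{op}_{rel}}$ and the surjectivity of $\Hom(g,e\Gamma^{op}_{rel})$ are truly used, and making it rigorous requires pinning down the frozen-vertex support of $\bar C=H^{3}((H^{1}M^{*})^{*})$ through the relative $t$-structure (and the relative Calabi--Yau structure) on $\D(\Gamma_{rel})$; everything else is bookkeeping with triangles, octahedra, and the already established properties of $M^{*}$ and $N^{*}$.
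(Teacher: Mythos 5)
Your overall route is the same as the paper's: dualize the defining triangle, identify $M\to\Sigma^{-1}\Cone(g^{*})\iso N^{*}$ as the canonical comparison map with cone $\tau_{\geq 1}N^{*}=\Sigma^{-1}H^{1}N^{*}$, and reduce bicartesianness over $\F^{rel}_{dg}$ to the vanishing of $\Hom(X,\Sigma^{\leq -1}H^{1}N^{*})$ and $\Hom(H^{1}N^{*},\Sigma^{\leq 1}X)$ for $X\in\F^{rel}$. Your treatment of the lift, of the cartesian half, and of the ``moreover'' equivalence (which the paper's written proof does not even spell out) is correct.

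The genuine gap is in your justification of the cocartesian half, i.e.\ of $\Hom(H^{1}N^{*},\Sigma^{j}A)=0$ for $j\leq 1$ and $A\in\F^{rel}$. First, ``the degrees $\neq 0$ are immediate from t-structure orthogonality'' is false: since $A$ has cohomology in degrees $-1$ and $0$ and $\bar C=H^{1}N^{*}$ sits in the heart, orthogonality only kills $j\leq -2$; the cases $j=-1,0,1$ all survive, and $j=1$ is an $\Ext^{1}$-type group that is not formal (nor does $j=0$ reduce to $\Hom_{\Gamma_{rel}}(\bar C,H^{0}A)$ alone — an $\Ext^{1}(\bar C,H^{-1}A)$ term also contributes). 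Second, your support argument runs backwards: the hypothesis on $g$ gives, by the same computation as in Claim~3, that $H^{1}N^{*}$ lies in $\pvd_{e}(\Gamma_{rel})$, i.e.\ $e\,H^{1}N^{*}=0$, so $\bar C$ \emph{vanishes} at the frozen vertices rather than being supported there; and in any case objects of $\F^{rel}$ (e.g.\ $\Gamma_{rel}$ itself) do have nonzero $H^{0}$ at frozen vertices, so the ``no nonzero quotient'' claim has no basis in the definition of $\F^{rel}$. The correct mechanism is the one you only mention parenthetically: the relative $3$-Calabi--Yau duality used in the proof of Lemma~\ref{equi} gives $\Hom(P,Y)\iso D\Hom(Y,\Sigma^{3}P)$ for $P\in\pvd_{e}(\Gamma_{rel})$ and $Y\in\per(\Gamma_{rel})$, whence $\Hom(\bar C,\Sigma^{j}X)\iso D\Hom(X,\Sigma^{3-j}\bar C)$, and the right-hand side vanishes for $j\leq 1$ because $X$ is a two-term complex of projectives and $\bar C$ a module, so only $3-j\in\{0,1\}$ can contribute. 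Promote that duality argument to the main line and the proof closes.
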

\begin{proof}
We may assume $N^*$ is strictly concentrated in degrees $\leq 1$ and $M=\tau_{\leq 0} N^{*}$. 
We also assume the triangle 
\[
N^{*}\rightarrow S^*\rightarrow R^*\rightarrow \Sigma N^*
\]
is induced by a short exact sequence of dg $\Gamma^{op}_{rel}$-modules
\[
0\rightarrow N^*\rightarrow S^*\rightarrow R^*\rightarrow 0.
\]
Then we have the following commutative diagram in $\C(\Gamma_{rel}^{op})$
\[
\begin{tikzcd}
M\ar[r,tail]\ar[d,equal] &N^*\ar[r,two heads]\ar[d,tail]& \Sigma^{-1} H^1N^*\ar[d,tail]\\
M\ar[r,tail]&S^*\ar[r, two heads]\ar[d,two heads]&U\ar[d,two heads]\\
&R^*\ar[r,equal]& R^*
\end{tikzcd}.
\]
Since $\Hom(H^1N^*,\Sigma^{\leq 1}X)=0$ and $\Hom(X,\Sigma^{\leq -1}H^1N^*)=0$ for $X\in \F^{rel}_{\Gamma_{rel}^{op}}$, the following object in $\rep(\mathrm{Sq},\F^{rel}_{\Gamma^{op}_{rel}})$
\[
\begin{tikzcd}
M\ar[r]\ar[d]&S^*\ar[d]\\
0\ar[r]&R^*
\end{tikzcd}
\] 
is a lift of the diagram \ref{diagram} which is homotopy bicartesian.
\end{proof}
\begin{example}Let $(Q,F,W)$ be the following ice quiver with potential
\[
\begin{tikzcd}
&4\ar[rd,"\beta"]&\\
\textcolor{blue}3\ar[ru,"\alpha"]&&\textcolor{blue}1\ar[ld,"\gamma"blue, blue]\\
&\textcolor{blue}2\ar[lu,"\delta"blue, blue]&
\end{tikzcd}
\]
where the ice part $F$ is given by the blue vertices and blue arrows and the potential $W=\delta\gamma\beta\alpha$.
 Then the underlying graded quiver of the corresponding relative Ginzburg algebra $\Gamma_{rel}(Q,F,W)$ is given as follows
 \[
 \begin{tikzcd}
 &4\ar[rd,"\beta", shift left=1ex]\ar[loop, looseness=4, "t_4"swap, green]\ar[ld,"\alpha^{*}",shift left=1ex,red]&\\
\textcolor{blue}3\ar[ru,"\alpha", shift left =1ex]&&\textcolor{blue}1\ar[ld,"\gamma"blue, blue]\ar[lu,"\beta^{*}", shift left=1ex,red]\\
&\textcolor{blue}2\ar[lu,"\delta"blue, blue]&
 \end{tikzcd}
 \]
 where $|\alpha^*|=-1$, $|\beta^{*}|=-1$ and $|t_4|=-2$. The differential $d$ is determined by the values on generators, which are given as follows
 \begin{gather*}
 d(\alpha)=d(\beta)=d(\gamma)=d(\delta)=0,\\
  d(\alpha^*)=\delta\gamma\beta, d(\beta^*)=\alpha\beta\gamma, d(t_4)=\alpha\alpha^*-\beta^*\beta.
 \end{gather*}
 
 We have $e=e_1+e_2+e_3$. Denote by $P_i= e_i\Gamma_{rel}$.
 
 Let $M=S_1$. Then we have the following triangle
 \[
 P_4\rightarrow P_1\rightarrow S_1\rightarrow \Sigma P_4
 \]
 such that 
 \[
 \Hom(P_1, e\Gamma_{rel})\twoheadrightarrow \Hom(P_4,e\Gamma_{rel}).
 \]
 So $M\in \F^{rel}$.
 
 Let $L$ be the homotopy fiber of the map $P_3\rightarrow P_4$ induced by $\alpha$. 
 Then we have a triangle
 \[
 D\rightarrow L\rightarrow \Sigma^{-1} S_4\rightarrow \Sigma D
 \]
 where $D$ is the mapping cone of the map $P_4\rightarrow P_1$ induced by $\beta$, which is isomorphic to $S_1$.

 So $N=\tau_{\leq 0} M^*$ is isomorphic to $S_3'$. 
 Similarly we have the following triangle
  \[
 P_4^*\rightarrow P_3^*\rightarrow S_3'\rightarrow \Sigma P_4^*
 \]
 which shows that $N\in \F^{rel}$.
 
 We thus have the following diagram in $\F^{rel}$
 \[
 \begin{tikzcd}
 S_1\ar[r]\ar[d]& P_3\ar[d]\\
 0\ar[r] &P_4
 \end{tikzcd}
 \]
 which can be lifted to a homotopy bicartesian square in $\F^{rel}_{dg}$.
 \end{example}

\newpage
\section{The greatest exact structure on an additive dg category}
We follow Rump's idea \cite{Rump11,Rump15} to show the existence of the greatest exact structure. 

Let $\A$ be an additive connective dg category. For simplicity, we may assume that $Z^0(\A)$ is an additive category, cf.~Remark \ref{truncationexactdgstructure}.
\begin{definition}\label{def:leftstructure}
A {\em left exact structure} on $\A$ is a class $\mathcal{S}\subseteq \mathcal H_{3t}(\A)$ stable under isomorphisms, consisting of homotopy short exact sequences (called {\em conflations})
\[
\begin{tikzcd}
A\ar[r, tail,"i"]\ar[rr,bend right=8ex,"h"swap]&B\ar[r,two heads, "p"]&C\\
\end{tikzcd} 
\]
where $i$ is called an {\em inflation} and $p$ is called a {\em deflation}, such that the following axioms are satisfied
\begin{itemize}
\item[Ex0]$\Id_{0}$ is a deflation.
\item[{Ex}1]Compositions of deflations are deflations.
\item[{Ex}2]Given a deflation $p:B\rightarrow C$ and any map $c: C'\rightarrow C$ in $Z^0(\A)$, the object
\[
\begin{tikzcd}
&C'\ar[d,"c"]\\
B\ar[r,"p"swap]&C
\end{tikzcd}
\]
admits a homotopy pullback 
\[
\begin{tikzcd} 
{B'}\ar[r,"{p'}"]\ar[d,"{b}"swap]\ar[rd,"s"blue,blue]&{C'}\ar[d,"{c}"]\\
{B}\ar[r,"{p}"swap]&{C}
\end{tikzcd}
\]
and ${p'}$ is also a deflation.
\item[{Ex3}]If a morphism $g:B\rightarrow C$ admits a homotopy kernel and for some $h:B'\rightarrow B$ the composition $gh:B'\rightarrow C$ is a deflation, then $g$ is a deflation.
\end{itemize}
We call $(\A,\mathcal {S})$ or simply $\A$ a {\em left exact dg category}.
\end{definition}
Dually, a right exact structure on $\A$ is given by a class of homotopy short exact sequences which satisfies the axioms ${\Ex0}$, ${\Ex1}^{op}$, ${\Ex2}^{op}$ and ${\Ex3}^{op}$.
\begin{remark}By the axioms $\Ex0$ and $\Ex2$, the morphism $\Id_A$ is a deflation for each $A\in\A$. 
By the axioms $\Ex0$ and $\Ex3$, the morphism $A\rightarrow 0$ is a deflation for each $A\in\A$.
By Axiom $\Ex3$, in the context of Axiom $\Ex2$, the 3-term h-complex
\[
\begin{tikzcd}
B'\ar[r,"\begin{bmatrix}b\\p'\end{bmatrix}"]\ar[rr,bend right=8ex,"s"swap]&B\oplus C'\ar[r,"{[}-p{,}\;c{]}"]&C
\end{tikzcd}
\]
is a conflation.
\end{remark}
\begin{proposition}\label{leftplusright}
Let $\A$ be an additive connective dg category equipped with a left exact structure $\mathcal{S}_1$ and a right exact structure $\mathcal{S}_2$. 
Then the class of homotopy short exact sequences  
\[
\begin{tikzcd}
A\ar[r, tail,"f"]\ar[rr,"h"swap,bend right=8ex]&B\ar[r,two heads, "j"]&C
\end{tikzcd} 
\]
where $f$ is an inflation in $\mathcal S_2$ and $j$ is a deflation in $\mathcal S_1$, makes $\A$ into an exact dg category. 
\end{proposition}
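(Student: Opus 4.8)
The plan is to study the class $\mathcal S$ of the homotopy short exact sequences described in the statement and to observe first that it coincides with the intersection $\mathcal S_1\cap\mathcal S_2$, i.e.\ with the class of homotopy short exact sequences that are conflations for \emph{both} one‑sided structures. Indeed, if a homotopy short exact sequence $A\xrightarrow{f}B\xrightarrow{j}C$ has $f$ a $\mathcal S_2$‑inflation, then $f$ occurs in a $\mathcal S_2$‑conflation whose deflation is necessarily the homotopy cokernel of $f$; since homotopy cokernels are unique up to a unique isomorphism and $\mathcal S_2$ is stable under isomorphisms, the whole sequence $A\to B\to C$ is a $\mathcal S_2$‑conflation and $j$ is a $\mathcal S_2$‑deflation. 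Dually, if $j$ is a $\mathcal S_1$‑deflation then $A\to B\to C$ is a $\mathcal S_1$‑conflation. Hence $\mathcal S=\mathcal S_1\cap\mathcal S_2$, this class is stable under isomorphisms (Lemma~\ref{Mor(A)and3term}), and a morphism is an $\mathcal S$‑inflation (resp.\ $\mathcal S$‑deflation) if and only if it is at once a $\mathcal S_1$‑ and a $\mathcal S_2$‑inflation (resp.\ deflation). In particular Axiom $\Ex0$ for $\mathcal S$ is immediate from $\Ex0$ for $\mathcal S_1$ and for $\mathcal S_2$.

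Next I would check Axioms $\Ex2$ and $\Ex2^{op}$. For $\Ex2$, given an $\mathcal S$‑deflation $p\colon B\to C$ and a map $c\colon C'\to C$ in $Z^0(\A)$, Axiom $\Ex2$ for the left exact structure $\mathcal S_1$ produces a homotopy pullback, with $p'\colon B'\to C'$ again a $\mathcal S_1$‑deflation; its homotopy kernel is again $A$, fitting into a factorisation $A\xrightarrow{u'}B'\xrightarrow{b}B$ of the homotopy kernel $f\colon A\to B$ of $p$. Since $f$ is a $\mathcal S_2$‑inflation ($p$ being an $\mathcal S$‑deflation) and $u'$ admits the homotopy cokernel $p'$, Axiom $\Ex3^{op}$ for the right exact structure $\mathcal S_2$ forces $u'$ to be a $\mathcal S_2$‑inflation; thus $A\to B'\to C'$ is also a $\mathcal S_2$‑conflation and $p'$ is an $\mathcal S$‑deflation. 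Axiom $\Ex2^{op}$ is established in the mirror‑image way, using $\Ex2^{op}$ for $\mathcal S_2$ and $\Ex3$ for $\mathcal S_1$.

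The remaining axiom $\Ex1$, that compositions of $\mathcal S$‑deflations are $\mathcal S$‑deflations, is where the two one‑sided structures genuinely have to cooperate, and I expect it to be the main obstacle. Given $\mathcal S$‑deflations $j\colon B\to C$ and $j'\colon C\to D$, with $\mathcal S$‑conflations $A\xrightarrow{f}B\xrightarrow{j}C$ and $A'\xrightarrow{f'}C\xrightarrow{j'}D$, the composite $j'j$ is a $\mathcal S_1$‑deflation by $\Ex1$ for $\mathcal S_1$, so everything reduces to showing that its homotopy kernel is a $\mathcal S_2$‑inflation. Lemma~\ref{deflationcomposition} (whose hypothesis is furnished by $\Ex2$ for $\mathcal S_1$) realises this homotopy kernel as the map $w\colon E\to B$ out of the homotopy pullback $E$ of $j$ along $f'$, and fits it into a $\mathcal S_1$‑conflation $A\xrightarrow{u}E\xrightarrow{v}A'$ with $wu$ homotopic to $f$; applying $\Ex3^{op}$ for $\mathcal S_2$ once more shows $u$ is a $\mathcal S_2$‑inflation, so $A\to E\to A'$ is a $\mathcal S_2$‑conflation. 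Pushing out the $\mathcal S_2$‑inflation $f$ along $u$ (Axiom $\Ex2^{op}$ for $\mathcal S_2$, cf.\ Proposition~\ref{cons}) then yields a $\mathcal S_2$‑conflation $E\xrightarrow{\binom{w}{v}}B\oplus A'\xrightarrow{[j,-f']}C$, and the heart of the matter is to descend from this to the sought $\mathcal S_2$‑conflation $E\xrightarrow{w}B\xrightarrow{j'j}D$: the split $\mathcal S_2$‑conflation $A'\to B\oplus A'\to B$ together with the $\mathcal S_2$‑conflation $A'\xrightarrow{f'}C\xrightarrow{j'}D$ assembles the two known $\mathcal S_2$‑conflations into a $3\times 3$ diagram whose third row is exactly $E\xrightarrow{w}B\xrightarrow{j'j}D$, and I would deduce that this row is a $\mathcal S_2$‑conflation by a diagram chase based on $\Ex2^{op}$ and $\Ex3^{op}$ for $\mathcal S_2$ and on the lemmas of Section~\ref{sec:diagramlemmas} (Lemma~\ref{fact}, Lemma~\ref{bothconflation}, Corollary~\ref{cok}, Lemma~\ref{univ}). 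Axiom $\Ex1^{op}$ then follows by the dual argument; alternatively, once $\mathcal S$ is seen to satisfy $\Ex0$, $\Ex1$, $\Ex2$ and $\Ex2^{op}$, it is an exact dg structure and $\Ex1^{op}$ is Proposition~\ref{property}~d). The genuinely delicate part throughout is, as always in this setting, to verify that all the homotopies produced by these constructions are mutually compatible, in the spirit of Proposition~\ref{push}.
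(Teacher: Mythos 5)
Your proposal is correct and follows essentially the same route as the paper's proof: the reformulation $\mathcal S=\mathcal S_1\cap\mathcal S_2$, the verification of $\Ex2$ and $\Ex2^{op}$ via $\Ex3$ for $\mathcal S_1$ and $\Ex3^{op}$ for $\mathcal S_2$, and the reduction of $\Ex1$ to showing that the homotopy kernel $w\colon E\to B$ of $j'j$ is an $\mathcal S_2$-inflation all match the argument given there. The one place where you diverge is the final step: the paper invokes no $3\times 3$ lemma (and none is established for one-sided structures), but instead observes that $\left[\begin{smallmatrix}1&0\\0&f'\end{smallmatrix}\right]\circ\binom{w}{v}$ is an $\mathcal S_2$-inflation as a composite of $\mathcal S_2$-inflations, that it is homotopic to $\binom{1}{j}\circ w$, and that $\Ex3^{op}$ then applies to $w$ because $w$ admits a homotopy cokernel; your ``diagram chase'' should be replaced by exactly this short argument, which uses only the tools you already named.
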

\begin{proof}
Consider a homotopy short exact sequence $X$
\[
\begin{tikzcd}
A\ar[r,"f"]\ar[rr,"h"swap,bend right=8ex]&B\ar[r,"j"]&C
\end{tikzcd} 
\]
where $j$ is a deflation in $\mathcal S_1$ and $f$ is an inflation in $\mathcal S_2$.
We call such homotopy short exact sequences conflations and denote by $\mathcal S$ the class of conflations.
We will show that the class $\mathcal S$ defines an exact dg structure on $\A$.
It is clear that $\mathcal S$ is closed under isomorphisms in $\mathcal H_{3t}(\A)$.
Let us point out that the axioms $\Ex2$ and $\Ex2^{op}$ can be verified directly, using Axiom $\Ex3$ for $\mathcal S_1$ and Axiom $\Ex3^{op}$ for $\mathcal S_2$.
So it remains to show Axiom $\Ex1$ for $\mathcal S$.

Let $X'$ be another conflation
\[
\begin{tikzcd}
E\ar[r,"f'"]\ar[rr,"h'"swap,bend right=8ex]&C\ar[r,"j'"]&F\mathrlap{.}
\end{tikzcd} 
\]
We want to show that the composition $j'j$ remains a deflation in $\mathcal S$.
Note that Lemma \ref{fact} remains true for the dg category $\A$ with the class $\mathcal S$ of homotopy short exact sequences.
 The composition $j'j$ is a deflation in $\mathcal S_1$, so it admits a homotopy kernel $\tilde{X'}$
 \[
 \begin{tikzcd}
 {D}\ar[r,"{f''}"]\ar[rr,"h''"swap, bend right=8ex]&B\ar[r,"j'j"]&F
 \end{tikzcd}
 \]
 which is homotopy short exact. 
 By Lemma \ref{strictmorphismlift}, the obvious morphism from $B\xrightarrow{j'j} F$ to $C\xrightarrow{j'} F$ in $H^0(\Mor(\A))$ induces a morphism $\mu:\tilde{X'}\rightarrow X'$ in $\mathcal H_{3t}(\A)$
 \[
 \begin{tikzcd}
 D\ar[rr,bend left=8ex,"h''"]\ar[rrd,"t"{blue},blue]\ar[d,"l"swap]\ar[rd,"s_2"{red,swap},red]\ar[r,"f''"]&B\ar[rd,"0"red,red]\ar[d,"{j}"swap]\ar[r,"j'j"]&F\ar[d,equal]\\
 E\ar[rr,bend right=8ex,"h'"swap]\ar[r,"f'"swap]&C\ar[r,"j'"swap]&F
 \end{tikzcd}
 \]
which restricts to a homotopy pullback $Y$ of the cospan $L$
 \[
 \begin{tikzcd}
 &E\ar[d,"f'"]\\
 B\ar[r,"j"swap]&C\mathrlap{.}
 \end{tikzcd}
 \] 
 Apply Axiom ${\Ex2}$ to the cospan $L$ and we get a morphism $\theta:\tilde{X}\rightarrow X$ of conflations in $\mathcal S_1$ (cf.~Proposition \ref{cons})
\[
\begin{tikzcd}
A\ar[rd,"s_1"{red,swap},red]\ar[rrd,"t"{blue},bend left=2ex,blue]\ar[rr,bend left=8ex,"\tilde{h}"]\ar[r,"k"]\ar[d,equal]&D\ar[r,"l"]\ar[d,"f''"swap]\ar[rd,"s_2"red,red]&E\ar[d,"f'"]\\
A\ar[r, "f"swap]\ar[rr,"h"swap,bend right=8ex]&B\ar[r,"j"swap]&C
\end{tikzcd}
\]
 Note that the object $Y$ is a homotopy bicartesian square, cf.~the proof of Proposition~\ref{property}.
 
We have the following object in $\rep(\overline{\Sq},\A)$ 
\[
\begin{tikzcd}
A\ar[rd,"u"]\ar[r,"k"]\ar[d,"f"swap]&D\ar[d,"\begin{bmatrix}f''\\l\end{bmatrix}"]\\
B\ar[r,"\begin{bmatrix}1\\0\end{bmatrix}"swap]&B\oplus E
\end{tikzcd}
\]
where $u=[s_1,\tilde{h}]$. 
It is a graded-split extension of 
\[
\begin{tikzcd}
A\ar[rd,"\tilde{h}"swap]\ar[r,"k"]\ar[d,"0"swap]&D\ar[d,"l"]\\
0\ar[r,"0"swap]&E
\end{tikzcd}
\]
by 
\[
\begin{tikzcd}
0\ar[rd,""]\ar[r,""]\ar[d,""swap]&0\ar[d,""]\\
B\ar[r,equal]&B
\end{tikzcd}
\]
and is thus homotopy bicartesian by Lemma \ref{leftexactsequence}.
So $\begin{bmatrix}f''\\l\end{bmatrix}:D\rightarrow B\oplus E$ is an inflation in $\mathcal S_2$.

Similarly, the following object
\[
\begin{tikzcd}
D\ar[rd,"v"]\ar[d,"\begin{bmatrix}f''\\l\end{bmatrix}"swap]\ar[r,"f''"]     &B\ar[d,"\begin{bmatrix}1\\j\end{bmatrix}"]      \\
B\oplus E\ar[r,swap,"\begin{bmatrix}1\ 0\\0\ f'\end{bmatrix}"]           &                            B\oplus C
\end{tikzcd}
\]
where $v=[0,-s_2]^{\intercal}$, is a homotopy bicartesian square.
Therefore $\begin{bmatrix}1\\j\end{bmatrix}:B\rightarrow B\oplus C$ is an inflation in $\mathcal S_2$. 
Notice that $\begin{bmatrix}1&0\\0&f'\end{bmatrix}:B\oplus E\rightarrow B\oplus C$ is an inflation in $\mathcal S_2$ as a homotopy pushout of the inflation $f':E\rightarrow C$. 
So $f''$ is also an inflation in $\mathcal S_2$ by ${\Ex3}^{op}$.
Therefore $j'j$ is a deflation in $\mathcal S$ and this finishes the proof.
\end{proof}

Let $\R'$ be a class of objects $p:Y\rightarrow Z$ in $H^0(\Mor(\A))$ which is closed under isomorphisms.
The class $\R'$ is {\em stable under pullbacks}, provided that for any object in $\R'$, the cospan along any morphism in $Z^0(\A)$ admits a homotopy pullback and that $\R'$ is stable under forming homotopy pullbacks. 
We define $P\R'\subset \R'$ to be the class of objects $f:C\rightarrow F$ such that for any $h:E\rightarrow F$ in $H^0(\Mor(\A))$, the cospan $L$
\begin{equation}\label{cospan:max}
\begin{tikzcd}
&C\ar[d,"f"]\\
E\ar[r,"h"swap]&F
\end{tikzcd}
\end{equation}
admits a homotopy pullback $X$
\begin{equation}\label{pullback:max}
\begin{tikzcd}
 {B}\ar[rd,"s"]\ar[d,"e"swap]\ar[r," {b}"]& {C}\ar[d,"  f"]\\
 {E}\ar[r,"h"swap]& {F} 
\end{tikzcd}
\end{equation}
such that ${e}:B\rightarrow E$ belongs to $\R'$. 
In particular, objects in $P\R'$ admits homotopy kernels.
Note that $P\R'=\R'$ if and only if $\R'$ is stable under pullbacks.

We write $Q\R'\subset \R'$ for the class of objects $f:C\rightarrow F$ such that if a homotopy pullback $X$ as above exists, then $ {b}\in \R'$ implies $ {h}\in \R'$. 
The classes $P\R'$ and $Q\R'$ are again closed under isomorphisms in $H^0(\Mor(\A))$.
Note that for an object $A\in \A$, if the object $\Id_A\in H^0(\Mor(\A))$ belongs to $\R'$, then it also belongs to the classes $P\R'$ and $Q\R'$.

Let $\R$ be the class of objects $p:Y\rightarrow Z$ in $H^0(\Mor(\A))$ which admits a homotopy kernel which is homotopy bicartesian. Clearly, the class $\R$ is closed under isomorphisms in $H^0(\Mor(\A))$ and contains $\Id_{A}:A\rightarrow A$ for $A\in\A$.
It is clear that to define a left exact structure, it is enough to specify the class $\R'$ of deflations, which is contained in $\R$, satisfying the axioms $\Ex0$--$\Ex3$.
Note that whenever $\R'$ defines a left exact structure, we have $Q\R'=\R'$.

\begin{proposition}\label{operationP}
Keep the assumptions as above. The following statements hold.
\begin{itemize}
\item[1)]We have $PP(\R')=P(\R')$, i.e.~$P\R'$ is stable under pullbacks;
\item[2)]If the class $\R'$ is stable under forming compositions, the same holds for $P\R'$.
\end{itemize}
\end{proposition}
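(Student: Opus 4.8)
\textbf{Plan of proof for Proposition~\ref{operationP}.}

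The plan is to prove both statements by a direct, diagrammatic "pasting"-style argument, using Corollary~\ref{pastinglaw:second} (the pasting law for homotopy pullbacks) and Corollary~\ref{Comp} as the main technical engines, together with the characterisation of homotopy pullbacks via the universal property in Remark~\ref{pullbackuniversal}. Throughout we work with objects of $H^0(\Mor(\A))$ up to isomorphism, which is harmless by Lemma~\ref{Mor(A)and3term} and Lemma~\ref{squareepivalence}.

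For statement 1), the inclusion $PP(\R') \subseteq P(\R')$ is immediate since $P(\R') \subseteq \R'$ implies $PP(\R') \subseteq P(\R')$ by definition of the operation $P$ applied to $P(\R')$; so the content is the reverse inclusion, i.e. that every $f \colon C \to F$ in $P(\R')$ already lies in $PP(\R')$. First I would take an arbitrary $h \colon E \to F$ in $H^0(\Mor(\A))$ and, using $f \in P(\R')$, produce a homotopy pullback square~\eqref{pullback:max} with $e \colon B \to E$ in $\R'$. I would then need to upgrade "$e \in \R'$" to "$e \in P(\R')$", which by definition means: for each further morphism $h' \colon E' \to E$, the cospan formed by $e$ and $h'$ admits a homotopy pullback whose left leg lies in $\R'$. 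The key step is to paste: given $h'\colon E'\to E$, form the homotopy pullback of $b\colon B\to C$ along... — more precisely, observe that $h h' \colon E' \to F$ is a morphism into $F$, so by $f \in P(\R')$ the cospan of $f$ and $hh'$ has a homotopy pullback $X'$ with left leg $e' \colon B' \to E'$ in $\R'$; by the pasting law (Corollary~\ref{pastinglaw:second}, in the relevant orientation), $X'$ factors through $X$, exhibiting $B'$ as a homotopy pullback of $b$ along $h'$ composed with $X$, and one reads off that the homotopy pullback of $e$ along $h'$ exists with left leg $e'\in\R'$. This shows $e \in P(\R')$, hence $f \in PP(\R')$. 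I would also record at the start of the argument, for later use, that $\Id_A \in \R'$ forces $\Id_A \in P(\R')$, so $P(\R')$ still contains all identities and the operation $P$ is well-behaved on it.

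For statement 2), assume $\R'$ is closed under compositions; I want to show $P(\R')$ is too. Take $f_1 \colon C \to F$ and $f_2 \colon B \to C$ both in $P(\R')$, and an arbitrary $h \colon E \to F$. First form the homotopy pullback of $f_1$ along $h$, which by $f_1 \in P(\R')$ exists with left leg $e_1 \colon B_1 \to E$ in $\R'$ and top leg $b_1 \colon B_1 \to C$; then form the homotopy pullback of $f_2$ along $b_1$, which by $f_2 \in P(\R')$ exists with left leg $e_2 \colon B_2 \to B_1$ in $\R'$. By the pasting law (Corollary~\ref{pastinglaw:second}), the outer rectangle is a homotopy pullback of $f_1 f_2$ along $h$, and its left leg is $e_1 e_2$, which lies in $\R'$ because $\R'$ is closed under composition. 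Since $h$ was arbitrary, $f_1 f_2 \in P(\R')$.

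The main obstacle I anticipate is bookkeeping of homotopies rather than any conceptual difficulty: the pasting law and the universal property of homotopy pullbacks are only "unique up to a (canonical) homotopy", so when I paste two homotopy pullback squares and then claim the composed left leg is literally $e_1 e_2$ (or $e' $ in part 1), I must check that the comparison isomorphisms in $H^0(\Mor(\A))$ are compatible — this is exactly the kind of "compatibility of homotopies" subtlety flagged in the discussion after Lemma~\ref{lem:3termhcomplex}. I would handle this by working consistently in $\rep(\overline{\Sq},\A)$, where by the remarks in Subsection~\ref{res} homotopy pullbacks of cospans of representables can be chosen to restrict strictly to the given cospan, so that the pasting of Corollary~\ref{pastinglaw:second} produces an honest object of $\rep(\overline{\Sq},\A)$ with the stated legs, and only afterwards pass to $H^0$. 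A secondary, minor point is to make sure that "admits a homotopy pullback" is actually verified (not just that the left leg lands in $\R'$): this is automatic from $f_1, f_2 \in P(\R') \subseteq \R'$ and the definition of $P$, but I would state it explicitly.
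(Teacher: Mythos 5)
Your proof is correct and follows essentially the same route as the paper's: for 1) you upgrade the leg $e$ to $P\R'$ by pulling $f$ back along the composite $hh'$ and invoking the pasting law, and for 2) you perform the two successive pullbacks and compose the legs using closure of $\R'$ under composition. The only cosmetic difference is that you package the pasting step via Corollary~\ref{pastinglaw:second} where the paper combines Lemma~\ref{strictmorphismlift}, Proposition~\ref{push} and Corollary~\ref{Comp}, and your concluding remark about fixing representatives in $\rep(\overline{\Sq},\A)$ is exactly how the paper handles the homotopy bookkeeping.
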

\begin{proof}
1). Let $f:C\rightarrow F$ be an object in $P\R'$ and $h:E\rightarrow F$ any object in $H^0(\Mor(\A))$. 
Then the cospan $L$ (\ref{cospan:max}) admits a homotopy pullback $X$ of the form (\ref{pullback:max}) such that $ {e}\in \mathcal R'$. 

Claim: The object ${e}: B\rightarrow E$ belongs to $P\R'$. 
Indeed, let $ {g}: {D}\rightarrow  {E}$ be any object in $H^0(\Mor(\A))$, then the cospan $L'$
\[
\begin{tikzcd}
& {C}\ar[d,"  f"]\\
 {D}\ar[r,"  h  g"swap]& {F}
\end{tikzcd}
\]
 admits a homotopy pullback $X'$
 \[
 \begin{tikzcd}
 A'\ar[rd,"s'"]\ar[r,"c'"]\ar[d,"d'"swap]&{C}\ar[d,"f"]\\
D\ar[r,"hg"swap]&F\mathrlap{.}
\end{tikzcd}
\]
The obvious morphism $L'\rightarrow L$ between cospans induces a morphism $X'\rightarrow X$ (cf.~Lemma~\ref{strictmorphismlift}). Thus, by restriction we obtain a homotopy pullback square (cf.~Proposition~\ref{push})
\[
\begin{tikzcd}
A'\ar[r,"c'"]\ar[rd,"s"]\ar[d,"b'"swap]&D\ar[d,"g"]\\
B\ar[r,"e"swap]&E
\end{tikzcd}
\]
Now apply Corollary~\ref{Comp} and we see that $b':A'\rightarrow B$ belongs to $\R'$. 
Thus we have ${e}\in P\mathcal R'$ and this proves 1).
 
2). Let $g:D\rightarrow E$ and $h:E\rightarrow F$ be two objects in $P\mathcal R'$ and $f:C\rightarrow F$ any object in $H^0(\Mor(\A))$. 
Then the cospan (\ref{cospan:max}) admits a homotopy pullback (\ref{pullback:max}) where $b$ belongs to $\R'$.
The cospan 
\[
\begin{tikzcd}
&B\ar[d,"e"]\\
D\ar[r,"g"swap]&E
\end{tikzcd}
\]
also admits a homotopy pullback square 
\[
\begin{tikzcd}
A'\ar[r,"c'"]\ar[rd,"s"]\ar[d,"b'"swap]&D\ar[d,"g"]\\
B\ar[r,"e"swap]&E
\end{tikzcd}
\]
where $b':A'\rightarrow B$ belongs to $\R'$ and hence the composition $bb':A'\rightarrow C$ also belongs to $\R'$.
 By Corollary \ref{Comp} we see that $hg$ also belongs to $P\R'$.  

\end{proof}
The class $\R$ is not closed under compositions in general. 
By Lemma \ref{deflationcomposition}, the class $P\R$ is closed under compositions. 
Indeed, in the context of Lemma \ref{deflationcomposition}, suppose that $j:B\rightarrow C$ and $j':C\rightarrow D$ are two objects in $P\R$.
Then the object $v:E\rightarrow A'$ belongs to $\R$ and hence the sequence on the leftmost column is homotopy short exact.
Then the pullback of the cospan $L$ is homotopy bicartesian and hence the sequence in the middle row is homotopy short exact.
 Thus the composition $j'j:B\rightarrow D$ belongs to $\R$.
 
 Notice that the homotopy pullback of $j:B\rightarrow C$ along any morphism in $Z^0(\A)$ is in $P\R$.
 Thus, the homotopy pullback of $j'j$ along any morphism in $Z^0(\A)$ is a composition of objects in $P(\R)$ and hence is in $\R$ by the above discussion.
 Therefore $j'j$ is in $P\R$. 
 So we have proved
 \begin{lemma}The class $P\R$ is closed under compositions. 
 \end{lemma}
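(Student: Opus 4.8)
The plan is to verify that the class $P\R$ satisfies the closure property by reducing everything to the composition lemma for $\R$ and the already-established behaviour of the operation $P$. First I would fix two objects $j\colon B\to C$ and $j'\colon C\to D$ in $P\R$ and an arbitrary morphism $d\colon D'\to D$ in $Z^0(\A)$; I must produce a homotopy pullback of the cospan given by $j'j$ and $d$ whose pullback projection onto $D'$ again lies in $\R$, and moreover I should check that $j'j$ itself admits a homotopy kernel which is homotopy bicartesian, i.e.\ that $j'j\in\R$. The key observation to set up is that $P\R\subseteq\R$ by definition, so $j$ and $j'$ individually admit homotopy bicartesian homotopy kernels; then I would invoke Lemma~\ref{deflationcomposition} with the cospan $L$ formed from $j$ and $f'$, as indicated in the discussion preceding the statement: the hypothesis that $j$ is in $P\R$ guarantees that this cospan admits a homotopy pullback, so $v\colon E\to A'$ lies in $\R$, making the leftmost column of the diagram in Lemma~\ref{deflationcomposition} a homotopy short exact sequence, and the middle row becomes homotopy short exact because the pullback square is homotopy bicartesian. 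This yields $j'j\in\R$.

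Next I would handle the pullback-stability requirement. Given the arbitrary $d\colon D'\to D$, I form the homotopy pullback of $j'\colon C\to D$ along $d$; since $j'\in P\R$, this pullback exists and its projection $C'\to D'$ lies in $\R$ — in fact one checks it lies in $P\R$ by the same reasoning as in the proof of Proposition~\ref{operationP}~1), using Corollary~\ref{Comp} to paste pullback squares. Then I pull back $j\colon B\to C$ along the map $C'\to C$ coming from this square; since $j\in P\R$, the result is again in $\R$ (indeed in $P\R$). By the pasting law Corollary~\ref{pastinglaw:second}, the outer composite square is a homotopy pullback of the original cospan $(j'j,d)$, and its projection to $D'$ is the composition of the two projections, each of which lies in $P\R$; by the composition closure of $\R$ established in the first paragraph (applied to these pulled-back deflations), this composite projection lies in $\R$. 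Hence the homotopy pullback of $j'j$ along $d$ exists with projection in $\R$, which is exactly the condition defining membership in $P\R$.

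Combining the two paragraphs: $j'j$ admits a homotopy bicartesian homotopy kernel, and for every cospan formed from $j'j$ and a morphism in $Z^0(\A)$ a homotopy pullback exists with the projection onto the new base object lying in $\R$. Therefore $j'j\in P\R$, and $P\R$ is closed under compositions.

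The main obstacle I anticipate is bookkeeping the compatibility of homotopies through the iterated pullback constructions: each application of Corollary~\ref{Comp} and Corollary~\ref{pastinglaw:second} produces the relevant square only up to a (coherent) homotopy, and one must make sure that the ``projection'' morphism whose membership in $\R$ we claim is literally the composite of the two projections and not merely isomorphic to it in $H^0(\Mor(\A))$ — although, since $\R$ and $P\R$ are by definition closed under isomorphisms in $H^0(\Mor(\A))$, this subtlety is harmless once stated carefully. The conceptual content is entirely contained in Lemma~\ref{deflationcomposition}, Corollary~\ref{Comp} and Corollary~\ref{pastinglaw:second}; the proof is a short assembly of these, exactly parallel to the argument already given for the stability of $P\R'$ under composition in Proposition~\ref{operationP}~2).
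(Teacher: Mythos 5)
Your proposal is correct and follows essentially the same route as the paper: first use Lemma~\ref{deflationcomposition} to show $j'j\in\R$, then observe (via the stability of $P\R$ under pullbacks from Proposition~\ref{operationP} and the pasting laws) that any homotopy pullback of $j'j$ is a composition of objects of $P\R$, hence lies in $\R$ by the first step. The only difference is that you spell out the appeals to Corollary~\ref{Comp} and Corollary~\ref{pastinglaw:second} more explicitly than the paper does.
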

\begin{proposition}\label{obscureaxiom}
Suppose that the class $\R'$ contains $\Id_{A}:A\rightarrow A$ for each $A\in \A$, and is stable under compositions and pullbacks.
 If the morphism $A\rightarrow 0$ belongs to $\R'$ for each $A\in \A$, then $Q\R'$ is closed under compositions and satisfies Axiom $\Ex3$: if a composition $A\xrightarrow{a}B\xrightarrow{b}C $ belongs to $Q\R'$ and $b$ admits a homotopy kernel, then $b\in Q\R'$.
\end{proposition}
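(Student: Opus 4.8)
The plan is to verify the two claims — closure of $Q\R'$ under composition, and Axiom $\Ex3$ for $Q\R'$ — separately, using throughout the following standing remarks: since $\R'$ is stable under pullbacks we have $P\R'=\R'$, so homotopy pullbacks of morphisms in $\R'$ along arbitrary morphisms of $Z^0(\A)$ exist; the identities and the morphisms $A\to 0$ lie in $Q\R'$; and every split epimorphism lies in $\R'$, being (after the usual identification $E\simeq E'\oplus K$) the homotopy pullback of a morphism $K\to 0$ along a morphism $E'\to 0$.

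For closure under composition, let $g\colon X_0\to X_1$ and $f\colon X_1\to X_2$ lie in $Q\R'$. Then $fg\in\R'$ since $\R'$ is closed under composition, so it remains to check the defining pullback property. Given $h\colon Y\to X_2$ and a homotopy pullback of $fg$ along $h$ whose edge lying over $h$ lies in $\R'$, I would form successively the homotopy pullback of $f$ along $h$ and then the homotopy pullback of $g$ along the resulting comparison map — these exist because $f,g\in P\R'=\R'$. By the pasting law for homotopy pullbacks (Corollary~\ref{pastinglaw:second}, together with Corollary~\ref{Comp}) the composite of these two squares is again a homotopy pullback of $fg$ along $h$, hence isomorphic to the given one by uniqueness of homotopy pullbacks (Proposition~\ref{pullbackunique}). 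Consequently the top edge of the upper square lies in $\R'$; applying $g\in Q\R'$ to the upper square shows the middle comparison map lies in $\R'$, and then applying $f\in Q\R'$ to the lower square gives $h\in\R'$. This step is essentially formal.

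For Axiom $\Ex3$, suppose $A\xrightarrow{a}B\xrightarrow{b}C$ with $ba\in Q\R'$ and with $b$ admitting a homotopy kernel $k\colon K_b\to B$; one must prove $b\in Q\R'$, that is, (i) $b\in\R'$ and (ii) the pullback property. Part (ii) is the easier half and I would handle it first: given $\psi\colon Y\to C$ and a homotopy pullback of $b$ along $\psi$ whose edge $\bar\psi\colon Z\to B$ lying over $b$ is in $\R'$, pull $\bar\psi$ back along $a$ (legitimate since $\bar\psi\in P\R'$); the edge of that new square parallel to $\bar\psi$, say $a'\colon W\to A$, then lies in $\R'$ because $\R'$ is stable under pullbacks. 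Pasting this square on top of the given one produces, by Corollary~\ref{Comp}, a homotopy pullback of $ba$ along $\psi$ whose top edge is $a'\in\R'$, and $ba\in Q\R'$ now yields $\psi\in\R'$.

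The crux is part (i), proving $b\in\R'$ — note that merely deducing $b\in\R$ would not suffice, as $\R'$ may be strictly smaller than $\R$. My plan is: form the homotopy pullback $P$ of $ba$ along $b$, with projections $p_1\colon P\to A$ and $p_2\colon P\to B$; then $p_2\in\R'$ since it is the pullback of $ba$, and the universal property of homotopy pullbacks (Remark~\ref{pullbackuniversal}) applied to the pair $(\Id_A,a)$ yields a section $s\colon A\to P$ of $p_1$ with $p_2 s = a$. A pasting-law computation identifies the homotopy kernel of $p_1$ with $K_b$, and together with the section this exhibits $P$ as homotopy equivalent to $A\oplus K_b$ in such a way that $p_2$ becomes $[a,\,k]\colon A\oplus K_b\to B$; hence $[a,\,k]\in\R'$. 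It then remains to extract $b\in\R'$ from this, using the homotopy kernel of $b$, the presence of split epimorphisms and of morphisms $A\to 0$ in $\R'$, and stability under pullback and composition — concretely, by assembling a homotopy-bicartesian homotopy kernel of $b$ out of $\R'$-morphisms. I expect this final extraction to be the main obstacle, and the point where Rump's argument \cite{Rump11} must be transported to the homotopical setting with care; as usual the subtle part will be checking the compatibility of the homotopies involved (cf.\ Proposition~\ref{push}).
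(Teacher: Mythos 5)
Your treatment of closure under composition and of the detection half of Axiom $\Ex3$ is correct and is essentially the paper's argument: decompose the relevant homotopy pullback by pasting (Corollaries~\ref{Comp} and~\ref{pastinglaw:second}) and apply the $Q$-condition of the two factors, resp.\ of $ba$, in succession. Your opening observation that split projections $E\oplus K\to E$ lie in $\R'$, as pullbacks of $K\to 0$ along $E\to 0$, is also exactly the right use of the hypothesis that the morphisms $A\to 0$ belong to $\R'$.

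The genuine gap is the one you flag yourself: you never prove $b\in\R'$, and the route you sketch for closing it points the wrong way. Having identified the homotopy pullback $P$ of $ba$ along $b$ with $K_b\oplus A$, you record that the projection $p_2=[k,\,a]\colon K_b\oplus A\to B$ lies in $\R'$ and propose to ``extract'' $b\in\R'$ from that; such an extraction is genuinely hard (it is essentially the content of the divisive case, Theorem~\ref{thm:maximalexactdgstructure} via Theorem~\ref{divisivedgcategory}) and is not needed here. The correct move uses the \emph{other} projection. The square
\[
\begin{tikzcd}
K_b\oplus A\ar[r,"{[}0{,}\;1{]}"]\ar[d,"{[}k{,}\;a{]}"swap]\ar[rd,"{[}u{,}\;0{]}"swap]&A\ar[d,"ba"]\\
B\ar[r,"b"swap]&C
\end{tikzcd}
\]
(with $u$ the homotopy of the homotopy kernel of $b$) is homotopy cartesian; its right edge is $ba$, its bottom edge is $b$, and its \emph{top} edge $[0,\;1]$ is the split projection onto $A$, hence lies in $\R'$ by your own remark. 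The definition of $ba\in Q\R'$ says precisely that a top edge in $\R'$ forces the bottom edge into $\R'$, so $b\in\R'$ in one step. In other words, you do not need $p_2\in\R'$ at all; you need $p_1\in\R'$, which is automatic, and the $Q$-property of $ba$ does the rest. What you do still owe is the verification that this explicit square is homotopy cartesian --- a short computation with the homotopy $[u,\,0]$, as in the analogous square in the proof of Theorem~\ref{divisivedgcategory} --- rather than the indirect pasting-law identification of $P$ with $A\oplus K_b$ that you sketch; either route is acceptable, but the conclusion must be drawn from the top edge, not the left one.
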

\begin{remark}The class $P\R$ satisfies the conditions in Proposition \ref{obscureaxiom}.
\end{remark}
\begin{proof}[Proof of Proposition \ref{obscureaxiom}]
Since the cospan of an object in $\R'$ along any morphism in $Z^0(\A)$ admits a homotopy pullback, it is clear that $Q\R'$ is closed under compositions by Corollary~\ref{Comp}.

We show that $Q\R'$ satisfies Axiom ${\Ex3}$.
Let $g:D\rightarrow E$ and $h:E\rightarrow F$ be two morphisms in $Z^0(\A)$ such that the composition $c=hg$ belongs to $Q\R'$ and that $h$ admits a homotopy kernel
\[
\begin{tikzcd}
K\ar[r,"k"]\ar[rr,bend right=8ex,"u"swap]&E\ar[r,"h"]&F\mathrlap{.}
\end{tikzcd}
\]
Then the object in $\rep(\overline{\Sq},\A)$
\[
\begin{tikzcd}
K\oplus D\ar[r,"{[}0{,}\;1{]}"]\ar[d,"{[}k{,}\;g{]}"swap]\ar[rd,"{[}u{,}\;0{]}"{swap}]&D\ar[d,"c"]\\
E\ar[r,"h"swap] &F
\end{tikzcd}
\]
is homotopy cartesian.
Since the object $K\oplus D\xrightarrow{[0,\;1]}D$ belongs to $\R'$, by definition we have that $h$ belongs to $\R'$.

Let $f:C\rightarrow F$ be arbitrary morphism in $Z^0(\A)$. 
Consider the homotopy pullback (\ref{pullback:max}) of $h$ along $f$ and the homotopy pullback of $hg$ along $f$.
Assume $e\in \R'$.
By Corollary~\ref{pastinglaw:second}, we infer that $f\in \R'$ since $\R'$ is stable under pullbacks and $hg \in Q\R'$.
Therefore $h\in Q\R'$ and this finishes the proof.
\end{proof}
\begin{proposition}\label{generatingleftexactstructure}
Suppose that the class $\R'$ is stable under compositions and pullbacks. Assume that the morphism $A\rightarrow 0$ belongs to $\R'$ for each $A\in \A$.
Then the class $PQ\R'$ is stable under compositions and pullbacks and satisfies Axiom $\Ex3$, i.e.~the class $PQ\R'$ defines a left exact dg structure on $\A$.
\end{proposition}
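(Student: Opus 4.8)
The plan is to verify the four axioms $\Ex0$--$\Ex3$ of a left exact dg structure for the class $PQ\R'$ of deflations, deducing $\Ex0$--$\Ex2$ formally from the material already in place and doing the real work for $\Ex3$.

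First I would note that under the present hypotheses $\R'$ contains every identity: $\Id_0=(0\to 0)$ lies in $\R'$, and $\Id_A$ is a homotopy pullback of the deflation $\Id_0$ along $A\to 0$, so stability of $\R'$ under pullbacks forces $\Id_A\in\R'$. Hence the hypotheses of Proposition~\ref{obscureaxiom} are met, and that proposition gives that $Q\R'$ is closed under compositions and satisfies Axiom $\Ex3$; recall also $Q\R'\subseteq\R'$. From here the first three axioms for $PQ\R'$ are immediate: for $\Ex0$, $\Id_0\in\R'$ gives $\Id_0\in Q\R'$ and then $\Id_0\in P(Q\R')$ by the remark following the definitions of $P$ and $Q$; for $\Ex1$, $Q\R'$ is closed under compositions, hence so is $P(Q\R')$ by Proposition~\ref{operationP}~2); and for $\Ex2$, Proposition~\ref{operationP}~1) applied to $Q\R'$ gives $PP(Q\R')=P(Q\R')$, i.e.\ $P(Q\R')$ is stable under pullbacks, which is exactly the statement that homotopy pullbacks of deflations exist and are again deflations.

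The heart of the proof is Axiom $\Ex3$ for $PQ\R'$. Suppose $g\colon B\to C$ admits a homotopy kernel and $g\circ h\in PQ\R'$ for some $h\colon B'\to B$; we must show $g\in PQ\R'$. Since $gh\in PQ\R'\subseteq Q\R'$ and $Q\R'$ satisfies $\Ex3$, we first obtain $g\in Q\R'$, and hence $g\in\R'$. Stability of $\R'$ under pullbacks now provides, for every $c\colon C'\to C$, a homotopy pullback
\[
\begin{tikzcd}
B\times_C C'\ar[r,"g'"]\ar[d]&C'\ar[d,"c"]\\
B\ar[r,"g"swap]&C\mathrlap{,}
\end{tikzcd}
\]
and it suffices to show that each such $g'$ lies in $Q\R'$. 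The plan is to apply $\Ex3$ for $Q\R'$ once more, to a factorisation of $g'$. First, $g'\in\R'$ as a pullback of $g\in\R'$, so $g'$ admits a homotopy pullback along $0\to C'$; by the pasting law (Corollary~\ref{Comp}) this homotopy pullback coincides with the homotopy pullback of $g$ along $0\to C$, i.e.\ with the homotopy kernel of $g$, so $g'$ admits a homotopy kernel. Second, since $gh\in P(Q\R')$, the cospan of $gh$ with $c$ admits a homotopy pullback whose deflation-leg lies in $Q\R'$; by Corollary~\ref{pastinglaw:second} this homotopy pullback decomposes as the square above followed by a homotopy pullback of $h$ along $B\times_C C'\to B$, so its deflation-leg is $g'\circ\hat h$ for the resulting morphism $\hat h\colon B'\times_C C'\to B\times_C C'$, and therefore $g'\circ\hat h\in Q\R'$. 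Now $\Ex3$ for $Q\R'$, applied to $g'\circ\hat h\in Q\R'$ with $g'$ admitting a homotopy kernel, yields $g'\in Q\R'$, as required, so $g\in P(Q\R')=PQ\R'$.

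The main obstacle is precisely this passage from $g\in Q\R'$ to $g\in P(Q\R')$: Axiom $\Ex3$ for $PQ\R'$ demands not merely that $g$ be a deflation in a weak sense, but that \emph{every} homotopy pullback $g'$ of $g$ have its deflation-leg in $Q\R'$, so the $\Ex3$-argument for $Q\R'$ must be re-run along each base change $c$. The three points that make this go through are that membership $g\in\R'$ already guarantees the existence of all homotopy pullbacks of $g$; that homotopy kernels are stable under pullback, so each $g'$ still admits a homotopy kernel; and that each such $g'$ occurs as the deflation-leg of a homotopy pullback of $gh$, which brings $g'\circ\hat h\in Q\R'$ within range of $\Ex3$. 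Everything else reduces to routine applications of the pasting laws of Section~\ref{sec:diagramlemmas} together with Propositions~\ref{operationP} and~\ref{obscureaxiom}.
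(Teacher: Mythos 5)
Your proof is correct and follows essentially the same route as the paper's: Propositions~\ref{obscureaxiom} and~\ref{operationP} deliver $\Ex0$--$\Ex2$, and for $\Ex3$ one first gets $g\in Q\R'$ and then upgrades to $g\in PQ\R'$ by decomposing each pulled-back composite via Corollary~\ref{pastinglaw:second} and applying $\Ex3$ for $Q\R'$. You additionally make explicit two points the paper leaves implicit --- that $\Id_A\in\R'$ follows from the stated hypotheses (so Proposition~\ref{obscureaxiom} really applies), and that each homotopy pullback $g'$ of $g$ still admits a homotopy kernel so that $\Ex3$ for $Q\R'$ can be invoked --- which is a welcome clarification.
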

\begin{proof}
By Proposition~\ref{obscureaxiom}, the class $Q\R'$ is stable under compositions.
So by Proposition~\ref{operationP}, the class $PQ\R'$ is stable under pullbacks and compositions.
We show that $PQ\R'$ satisfies Axiom $\Ex3$.

Let $a:A\rightarrow B$ and $b:B\rightarrow C$ be morphisms in $Z^0(\A)$ such that $b$ admits a homotopy kernel and that the composition $c=ba$ belongs to $PQ\R'$. 
By Proposition \ref{obscureaxiom}, the morphism $b$ belongs to the class $Q\R'$.
In particular, the cospan of $b$ along any morphism in $Z^0(\A)$ admits a homotopy pullback.
Now the claim follows from Corollary \ref{pastinglaw:second} and the fact that $Q\R'$ satisfies Axiom $\Ex3$ which is proved in Proposition \ref{obscureaxiom}. 
\end{proof}
The class $P\R$ satisfies the conditions of Proposition \ref{generatingleftexactstructure}.  So we have
\begin{corollary}\label{maximalleftexact}
The class $PQP\R$ defines a left exact dg structure on $\A$, which is the unique maximal left exact dg structure on $\A$.
\end{corollary}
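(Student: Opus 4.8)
The plan is to verify that $PQP\mathcal R$ defines a left exact structure and is the greatest one, then dualize and combine the two sides. First I would apply Proposition~\ref{generatingleftexactstructure} with $\mathcal R' = P\mathcal R$: we have already observed that $P\mathcal R$ is stable under compositions (the lemma right before Proposition~\ref{obscureaxiom}) and under pullbacks (statement 1) of Proposition~\ref{operationP}, which gives $PP\mathcal R = P\mathcal R$), and that $A\to 0$ lies in $P\mathcal R$ for each $A\in\A$ since $A\to 0$ admits a homotopy kernel which is the split conflation $A\xrightarrow{\Id}A\to 0$, hence is homotopy bicartesian, and the requisite homotopy pullbacks along arbitrary maps exist and stay in $\mathcal R$. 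Therefore Proposition~\ref{generatingleftexactstructure} tells us directly that $PQP\mathcal R = PQ(\mathcal R')$ is stable under compositions and pullbacks and satisfies Axiom $\Ex3$, so that the class of deflations $PQP\mathcal R$ (which is contained in $\mathcal R$) defines a left exact dg structure on $\A$.

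Next I would prove maximality. Let $\mathcal S$ be any left exact dg structure on $\A$, with class of deflations $\mathcal R'_{\mathcal S}\subseteq \mathcal R$. Since a left exact structure is by definition closed under isomorphisms, contains all identities, is stable under compositions (Axiom $\Ex1$) and under pullbacks (Axiom $\Ex2$), and satisfies Axiom $\Ex3$, the remark after Proposition~\ref{obscureaxiom}-type reasoning shows $P\mathcal R'_{\mathcal S} = \mathcal R'_{\mathcal S}$ (stability under pullbacks is exactly $P\mathcal R' = \mathcal R'$) and $Q\mathcal R'_{\mathcal S} = \mathcal R'_{\mathcal S}$ (noted in the text: whenever $\mathcal R'$ defines a left exact structure, $Q\mathcal R' = \mathcal R'$). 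Now $\mathcal R'_{\mathcal S}\subseteq \mathcal R$, and all three operations $P$, $Q$ are monotone with respect to inclusion of isomorphism-closed classes — this monotonicity is the one routine point I would check carefully, reading off the definitions of $P\mathcal R'$ and $Q\mathcal R'$. Monotonicity then gives $\mathcal R'_{\mathcal S} = P Q P\,\mathcal R'_{\mathcal S} \subseteq P Q P\,\mathcal R$, so every deflation of $\mathcal S$ is a deflation of the structure defined by $PQP\mathcal R$; since a conflation is determined (up to the appropriate equivalence) by its deflation via the homotopy kernel, this shows $\mathcal S$ is contained in the left exact structure attached to $PQP\mathcal R$. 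Hence $PQP\mathcal R$ is the unique maximal left exact dg structure.

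I expect the monotonicity bookkeeping for $Q$ to be the main obstacle, since $Q\mathcal R'$ is defined by an implication ("if a homotopy pullback exists and $b\in\mathcal R'$ then $h\in\mathcal R'$"), and implications do not behave monotonically in the naive way: enlarging $\mathcal R'$ enlarges both the hypothesis $b\in\mathcal R'$ and the conclusion $h\in\mathcal R'$. The correct statement one needs is that if $\mathcal R'_1\subseteq\mathcal R'_2$ are both isomorphism-closed and $\mathcal R'_2$ is already stable under pullbacks, then $Q\mathcal R'_1\subseteq Q\mathcal R'_2$ — and here it is essential that $\mathcal R'_2 = \mathcal R'_{\mathcal S}$ has $Q\mathcal R'_{\mathcal S} = \mathcal R'_{\mathcal S}$, so the argument really compares $\mathcal R'_{\mathcal S}$ with $Q P\mathcal R$ along the ambient class $\mathcal R$, not with an abstract enlargement. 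I would organize the maximality proof so that one never needs monotonicity of $Q$ in full generality, only the inclusion $\mathcal R'_{\mathcal S}\subseteq \mathcal R$ together with the fixed-point identities $P\mathcal R'_{\mathcal S}=Q\mathcal R'_{\mathcal S}=\mathcal R'_{\mathcal S}$ and the explicit unravelling of the definitions of $P$ and $Q$ applied to $\mathcal R$. Finally, the passage from "greatest left exact structure" to "greatest exact structure" uses Proposition~\ref{leftplusright} together with its dual Corollary (the dual of Corollary~\ref{maximalleftexact} giving the greatest right exact structure $PQP\mathcal R^{op}$ on $\A$): intersecting the maximal left and maximal right exact structures yields an exact dg structure by Proposition~\ref{leftplusright}, and any exact dg structure is in particular both left and right exact, hence contained in this intersection, so it is the greatest exact dg structure, which is Theorem~\ref{thm:maximalexactdgstructure}.
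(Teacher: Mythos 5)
Your first paragraph reproduces, with the hypotheses spelled out, exactly the paper's own justification of the first half of the statement: $P\R$ is stable under compositions and pullbacks and contains $A\to 0$ for every $A$, so Proposition~\ref{generatingleftexactstructure} applied to $\R'=P\R$ shows that $PQP\R$ is a left exact dg structure. That part is correct and matches the paper. (The final paragraph about combining with Proposition~\ref{leftplusright} and its dual is really the proof of Theorem~\ref{thm:maximalexactdgstructure}, not of this corollary, but that is harmless.)

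The maximality half is where your argument has a genuine gap --- one you half-diagnose but do not close. The crux is the inclusion $\R'_{\mathcal S}\subseteq QP\R$ for an arbitrary left exact structure $\mathcal S$. Unwinding the definition, this says: for a deflation $f\colon C\to F$ of $\mathcal S$ and any homotopy pullback square of $f$ along some $h\colon E\to F$, if the pulled-back morphism $b\colon B\to C$ lies in $P\R$, then $h$ lies in $P\R$. The fixed-point identity $Q\R'_{\mathcal S}=\R'_{\mathcal S}$ only yields this implication under the \emph{stronger} hypothesis $b\in\R'_{\mathcal S}$; when $b\in P\R\setminus\R'_{\mathcal S}$ it says nothing. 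Your substitute lemma (``if $\R'_1\subseteq\R'_2$ with $\R'_2$ pullback-stable then $Q\R'_1\subseteq Q\R'_2$'') fails for exactly the same reason: enlarging the class enlarges the hypothesis of the implication defining $Q$, so $Q$ is not monotone even on pullback-stable classes, and the ``explicit unravelling'' you defer to is precisely the missing content. What is actually required is a direct verification that such an $h$ belongs to $P\R$: one must show both that $h\in\R$ (for which one needs that the homotopy pullback square of a deflation is homotopy bicartesian, as in the proof of Proposition~\ref{property}, together with Proposition~\ref{push} to transport the homotopy bicartesian kernel of $b$ to one of $h$) and, harder, that every homotopy pullback of $h$ exists with its other leg in $\R$; neither is a formal consequence of $P\R'_{\mathcal S}=Q\R'_{\mathcal S}=\R'_{\mathcal S}$ and $\R'_{\mathcal S}\subseteq\R$. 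To be fair, the paper itself asserts maximality without writing out this argument (it points to Rump's method), so you are not contradicting a written proof; but as it stands your proposal establishes only that $PQP\R$ \emph{is} a left exact structure, not that it is the greatest one.
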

Combining Corollary \ref{maximalleftexact} and Proposition \ref{leftplusright}, we have
\begin{theorem}\label{thm:maximalexactdgstructure}
For each additive dg category $\A$, the greatest exact dg structure exists.
\end{theorem}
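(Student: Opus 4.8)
The plan is to deduce Theorem~\ref{thm:maximalexactdgstructure} by combining the construction of a greatest left exact dg structure (Corollary~\ref{maximalleftexact}) with its dual, and then gluing the two via Proposition~\ref{leftplusright}. First I would reduce, as usual, to the case where $\A$ is additive and connective with $Z^0(\A)$ an additive category, using Remark~\ref{truncationexactdgstructure} a) to transport exact dg structures along $\A \rightsquigarrow \tau_{\leq 0}\A$ and Remark~\ref{truncationexactdgstructure} c) to replace $\A$ by its additive closure in $\pretr(\A)$; this is legitimate because the whole section already works under these hypotheses, and the bijection between exact dg structures on $\A$ and on $\tau_{\leq 0}\A$ respects the inclusion order, so a greatest structure on one side yields one on the other.

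Next I would invoke Corollary~\ref{maximalleftexact}: the class $\mathcal R_\ell := PQP\R$ of deflations defines the unique maximal \emph{left} exact dg structure $\mathcal S_\ell$ on $\A$. Dually — applying the entire development of this section to $\A^{op}$, or simply running the mirror-image arguments — there is a unique maximal \emph{right} exact dg structure $\mathcal S_r$ on $\A$, whose inflations form a class $\mathcal R_r$ obtained by the dual operations. Then Proposition~\ref{leftplusright} produces an exact dg structure $\mathcal S$ on $\A$ whose conflations are exactly the homotopy short exact sequences
\[
\begin{tikzcd}
A\ar[r, tail,"f"]\ar[rr,"h"swap,bend right=8ex]&B\ar[r,two heads, "j"]&C
\end{tikzcd}
\]
with $f$ an inflation in $\mathcal S_r$ and $j$ a deflation in $\mathcal S_\ell$.

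It remains to show $\mathcal S$ is the greatest exact dg structure, i.e.\ that every exact dg structure $\mathcal S'$ on $\A$ satisfies $\mathcal S' \subseteq \mathcal S$. Given a conflation $X \in \mathcal S'$ with inflation $f$ and deflation $j$, observe that the underlying class of homotopy short exact sequences of $\mathcal S'$ satisfies the axioms of a left exact dg structure: Ex0, Ex1, Ex2 are literally among the axioms of Definition~\ref{exactdgstructure}, and Ex3 is exactly Proposition~\ref{property} b). Hence $j$ is a deflation for the left exact structure generated by $\mathcal S'$; by the maximality in Corollary~\ref{maximalleftexact}, $j$ is a deflation in $\mathcal S_\ell$. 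Symmetrically, Proposition~\ref{property} b)$^{op}$ shows the sequences of $\mathcal S'$ form a right exact dg structure, so $f$ is an inflation in $\mathcal S_r$. Therefore $X$ is a conflation of $\mathcal S$, proving $\mathcal S' \subseteq \mathcal S$. Uniqueness of a maximal element is then automatic, so $\mathcal S$ is the greatest exact dg structure.

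The main obstacle is verifying that the left exact structure $\mathcal S_\ell$ and the right exact structure $\mathcal S_r$ are genuinely comparable with an arbitrary exact dg structure in the precise sense needed — that is, that passing from the full datum of $\mathcal S'$ to ``just its deflations'' and then enlarging to the maximal left exact structure does not overshoot, so that a conflation of $\mathcal S'$ really does reassemble into a conflation of $\mathcal S$ with both half-axioms witnessed \emph{by the same} $f$ and $j$. This is where Proposition~\ref{leftplusright} does the essential work, since it guarantees precisely that an inflation from a right exact structure together with a deflation from a left exact structure, sitting in a common homotopy short exact sequence, forms a conflation; the delicate point one must not gloss over is the compatibility of the homotopy $h$ in the middle, but this is already handled inside the proof of Proposition~\ref{leftplusright} via the diagram-chasing lemmas of Section~\ref{sec:diagramlemmas}.
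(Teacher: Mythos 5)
Your proposal is correct and follows essentially the same route as the paper, which deduces the theorem precisely by combining Corollary~\ref{maximalleftexact} (and its dual) with Proposition~\ref{leftplusright}. The only thing you add is an explicit verification of greatestness — that any exact dg structure restricts to a left (resp.\ right) exact structure via Proposition~\ref{property}~b) and b)$^{op}$ and is therefore contained in the glued structure — which the paper leaves implicit but which is exactly the intended argument.
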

The above theorem generalizes Rump's result on the maximal exact structure on an additive category, cf.~\cite{Rump11} and also~\cite{SiegWegner11,Crivei12}.

Let us turn to the special case when $H^0(\A)$ is {\em divisive} (=weakly idempotent complete), i.e.~each split epimorphism has a kernel.
The following lemma shows that an additive dg category with homotopy kernels is divisive.
\begin{lemma}
Let $\A$ be a dg category.
Let $\overline{p}:Y\rightarrow Z$ be a split epimorphism in $H^0(\A)$. If the object $p:Y\rightarrow Z$ in $\Mor(\A)$ admits a homotopy kernel, then $\overline{p}$ admits a kernel in $H^0(\A)$.
\end{lemma}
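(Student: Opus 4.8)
The plan is to prove the slightly stronger assertion that the first map $\overline{k}\colon K\to Y$ of a homotopy kernel
\[
\begin{tikzcd}
K\ar[r,"k"]\ar[rr,bend right=8ex,"\eta"swap]&Y\ar[r,"p"]&Z
\end{tikzcd}
\]
of $p$ (so $d(k)=0$, $|\eta|=-1$ and $d(\eta)=-pk$) is a categorical kernel of $\overline{p}$ in $H^0(\A)$. Two of the three required properties need no hypothesis on $\overline{p}$. First, $pk=-d(\eta)$ is a coboundary, so $\overline{p}\circ\overline{k}=0$. Second, for the weak universal property I would take a closed degree-$0$ morphism $v\colon A\to Y$ with $\overline{p}\,\overline{v}=0$, choose $w\in\A^{-1}(A,Z)$ with $d(w)=-pv$, and apply Lemma~\ref{lem:3termhcomplex} at $n=0$ to obtain $x\in Z^{0}\A(A,K)$ and $v'\in\A^{-1}(A,Y)$ with $v-kx=d(v')$; then $\overline{v}=\overline{k}\,\overline{x}$.

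The hard part is the third property, injectivity of $\overline{k}_{*}\colon H^{0}\A(A,K)\to H^{0}\A(A,Y)$: extra input is genuinely needed here, since a homotopy kernel need not be a homotopy monomorphism (remark after Definition~\ref{homotopykernel}), and the split epimorphism hypothesis is exactly what I would use. Fix a closed degree-$0$ morphism $s\colon Z\to Y$ representing a section of $\overline{p}$ and $t\in\A^{-1}(Z,Z)$ with $ps=\Id_{Z}+d(t)$. Given $x\in Z^{0}\A(A,K)$ with $\overline{k}\,\overline{x}=0$, write $kx=d(a)$ with $a\in\A^{-1}(A,Y)$ and set $\zeta=\eta x+pa\in\A^{-1}(A,Z)$; a Leibniz computation using $d(\eta)=-pk$ and $d(a)=kx$ gives $d(\zeta)=0$. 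The decisive step is to replace $a$ by $b=a-s\zeta$: one still has $d(b)=kx$, because $d(s\zeta)=s\,d(\zeta)=0$, while
\[
pb=pa-(\Id_{Z}+d(t))\zeta=pa-\zeta-d(t\zeta)=-\eta x-d(t\zeta),
\]
so $\eta x+pb=d(-t\zeta)$ is a coboundary. Then $x$, equipped with the data $(v',w')=(-b,-t\zeta)$, satisfies the conditions of Lemma~\ref{lem:3termhcomplex} for the pair $(v,w)=(0,0)$; since $x=0$ with data $(0,0)$ does too, the uniqueness-up-to-coboundary clause of that lemma forces $x$ to be a coboundary, i.e.\ $\overline{x}=0$.

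Combining the three points, $\overline{k}$ is a kernel of $\overline{p}$ in $H^{0}(\A)$, proving the lemma. I expect the only delicate point to be the sign bookkeeping in the Leibniz rule and in matching the degree and sign conventions of Lemma~\ref{lem:3termhcomplex}; there is no conceptual obstacle once the substitution $a\mapsto a-s\zeta$ is in place, and the hypothesis on $\overline{p}$ enters solely through the existence of $s$ and $t$ needed to carry it out.
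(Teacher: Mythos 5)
Your proof is correct, and it establishes the same strengthened statement as the paper's proof, namely that the first map $\overline{k}$ of the homotopy kernel is a categorical kernel of $\overline{p}$ in $H^0(\A)$; the difference is one of packaging. The paper argues structurally: homotopy left exactness identifies $\tau_{\leq 0}\RHom(A^{\wedge},K)$ with $\tau_{\leq 0}\RHom(A^{\wedge},\Sigma^{-1}\Cone(p))$, so the triangle on $\Sigma^{-1}\Cone(p)$ yields an exact sequence
\[
H^{-1}\A(A,Y)\rightarrow H^{-1}\A(A,Z)\rightarrow H^{0}\A(A,K)\rightarrow H^{0}\A(A,Y)\rightarrow H^{0}\A(A,Z),
\]
and the section of $\overline{p}$ makes the first map surjective, which simultaneously kills the connecting map (injectivity of $\overline{k}_*$) and gives exactness at $H^{0}\A(A,Y)$. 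Your argument is the cochain-level unwinding of this via Lemma~\ref{lem:3termhcomplex}: the class of $\zeta=\eta x+pa$ in $H^{-1}\A(A,Z)$ is precisely the obstruction coming from the connecting map, and the substitution $a\mapsto a-s\zeta$ is the explicit lift of $[\zeta]$ along $H^{-1}\A(A,Y)\rightarrow H^{-1}\A(A,Z)$ provided by the section. Both routes rest on the same input; yours pays in sign bookkeeping (which does check out: $d(\zeta)=0$, $\eta x+pb=d(-t\zeta)$, so the pair $(v',w')=(-b,-t\zeta)$ witnesses $x$ as a solution for $(v,w)=(0,0)$ and the uniqueness clause forces $x$ to be a coboundary) and gains explicit formulas, while the paper's long-exact-sequence version gains brevity.
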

\begin{proof}
Suppose we have a homotopy left exact sequence
\[
\begin{tikzcd}
X\ar[r,"f"]\ar[rr,bend right,"h"swap]&Y\ar[r,"p"]&Z
\end{tikzcd}
\]
where $d(h)=-pf$. 
For each object $A\in\A$, it induces a long exact sequence
\[
\begin{tikzcd}[column sep=tiny]
\ldots\ar[r]&H^{-1}\A(A,X)\ar[r]&H^{-1}\A(A,Y)\ar[r,two heads]&H^{-1}\A(A,Z)\ar[r]&H^0\A(A,X)\ar[r]&H^0\A(A,Y)\ar[r,two heads]&H^0\A(A,Z).
\end{tikzcd}
\]
Since $\overline{p}:Y\rightarrow Z$ is an split epimorphism in $H^0(\A)$, we have that the induced map $H^{-1}\A(A,Y)\rightarrow H^{-1}\A(A,Z)$ is also surjective for each $A\in\A$.
Therefore by the above long exact sequence, we have that $\overline{f}$ is a kernel of $\overline{p}$.
 \end{proof}
\begin{proposition}\label{prop:leftexactembedding}
Let $(\A,\mathcal S)$ be a left exact dg category such that $H^0(\A)$ is divisive. 
Then there is a universal exact morphism in $\Hqe$
\[
F:\A\rightarrow\D^b_{dg}(\A)
\]
 to a pretriangulated dg category. Put $\D^b(\A)=H^0(\D^b_{dg}(\A))$. If $\A$ is connective, then this morphism satisfies the following properties
 \begin{itemize}
 \item[a)] It induces a quasi-fully faithful morphism from $\tau_{\leq 0}\A\rightarrow \tau_{\leq 0}\D^b_{dg}(\A)$. 
 Let $\A'\subseteq \D^b_{dg}(\A)$ be the essential image.
 \item[b)] The conflations of $\A$ are inherited from $\D^b_{dg}(\A)$.
 \item[c)] The inclusion $H^0(\A)\rightarrow \D^b(\A)$ is stable under kernels of retracts. 
 \item[d)] The inclusion $\A'\hookrightarrow \D^b_{dg}(\A)$ is stable under special pullbacks, i.e.~if we are given a diagram of the following form
 \[
 \begin{tikzcd}
 X\ar[r,dashed]\ar[d,equal]&Y'\ar[r,dashed]\ar[d,dashed]&W\ar[d]\\
 X\ar[r]&Y\ar[r]&Z
 \end{tikzcd}
 \]
 where the objects $X$, $Y$, $Z$ and $W$ are in $\A'$, and the second row is a conflation in $\D^b_{dg}(\A)$, and the right square is a homotopy pullback square in $\D^b_{dg}(\A)$, then $Y'$ is also an object in $\A'$. 
 \end{itemize}
\end{proposition}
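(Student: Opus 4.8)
The strategy is to transport the proof of Theorem~\ref{main} to the left exact setting, the one point requiring a new argument being the replacement of the pushforward operation $\overline a_*$ of subsection~\ref{bi} (which relied on Axiom~$\Ex2^{op}$) by an appeal to Axiom~$\Ex3$. Assuming, as we may by Remark~\ref{truncationexactdgstructure}, that $\A$ has cofibrant Hom complexes, I would let $\mathcal N$ be the full triangulated subcategory of $\tr(\A)$ generated by the totalizations of conflations, let $\mathcal N_{dg}\subseteq\pretr(\A)$ be the corresponding full dg subcategory, set $\D^b_{dg}(\A)=\pretr(\A)/\mathcal N_{dg}$ and take $F\colon\A\to\D^b_{dg}(\A)$ to be the canonical morphism in $\Hqe$. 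The proof that $F$ is the universal exact morphism to a pretriangulated dg category is then word for word that of Lemma~\ref{univer}: it uses only that conflations give distinguished triangles and the universal property of the Drinfeld quotient, so Axiom~$\Ex2^{op}$ is nowhere needed and the argument is valid for an arbitrary left exact dg category.

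Now assume $\A$ connective. The crucial input is the $t$-structure of Corollary~\ref{t}: by the remark following Lemma~\ref{wide}, the full subcategory of defective objects of $\mathcal H=\Mod H^0(\A)$ is wide, its proof using only Axioms~$\Ex0$, $\Ex1$, $\Ex2$ and Proposition~\ref{property}~b), and Proposition~\ref{property}~b) is precisely Axiom~$\Ex3$ of Definition~\ref{def:leftstructure}. Hence $(\D(\A)^{\leq0},\D(\A)^{\geq0})$ restricts to a bounded $t$-structure on $\mathcal N$ whose heart consists of the defective objects. Exactly as in the proof of Theorem~\ref{main}, Corollary~\ref{t} together with Lemma~\ref{lem:tstructureff} then gives $\Hom_{\tr(\A)}(C,\Sigma^iA)\xrightarrow{\ \sim\ }\Hom_{\D^b(\A)}(FC,\Sigma^iFA)$ for $A,C\in H^0(\A)$ and $i\leq0$; since $\A$ is connective this upgrades to the quasi-isomorphism $\tau_{\leq0}\RHom_\A(A,A')\xrightarrow{\ \sim\ }\tau_{\leq0}\Hom_{\D^b_{dg}(\A)}(FA,FA')$, which is assertion a). Write $\A'$ for the essential image.

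For assertion b), one direction is exactness of $F$. For the converse, let $X\colon A\xrightarrow{f}B\xrightarrow{j}C$ be a $3$-term $h$-complex in $\A$ whose image $F(X)$ is homotopy short exact in $\D^b_{dg}(\A)$. Applying $\Hom_{\D^b(\A)}(FA',-)$ to the triangle $F(X)$, using assertion a) and Lemma~\ref{lem:3termhcomplex}, one sees first that $X$ is homotopy short exact already in $\A$, so that $f$ is a homotopy kernel of $j$. Let $\partial\colon FC\to\Sigma FA$ be the connecting morphism, an element of $\Ext^1_{\D^b(\A)}(C,A)$; by the $t$-structure argument in the proof of Theorem~\ref{main} it factors as $\partial=(\Sigma F\overline a)\circ\partial''\circ(F\overline c)$, where $\partial''$ is the connecting morphism of a conflation $X''\colon A''\to B''\to C''$ of $\A$ and $\overline a\colon A''\to A$, $\overline c\colon C\to C''$ are morphisms of $H^0(\A)$. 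Lifting $\overline c$ to $Z^0(\A)$ via a) and pulling $X''$ back along it by Axiom~$\Ex2$ produces a conflation $\widetilde X\colon A''\to\widetilde B\xrightarrow{\widetilde\jmath}C$ whose associated connecting morphism is $\partial''\circ F\overline c$. Since $F(\widetilde\jmath)$ is followed by this connecting morphism in a distinguished triangle, $\partial\circ F(\widetilde\jmath)=(\Sigma F\overline a)\circ\partial''\circ F\overline c\circ F(\widetilde\jmath)=0$, so $F(\widetilde\jmath)$ factors through $Fj$ in $\D^b(\A)$; by the full-faithfulness in a) this descends to $\widetilde\jmath=j\circ\overline\varphi$ in $H^0(\A)$ for some $\overline\varphi\colon\widetilde B\to B$. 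Choosing a $Z^0(\A)$-representative $\varphi$ of $\overline\varphi$ makes $j\varphi$ homotopic to the deflation $\widetilde\jmath$, hence itself a deflation, and as $j$ admits a homotopy kernel, Axiom~$\Ex3$ forces $j$ to be a deflation; by the uniqueness of homotopy kernels (Proposition~\ref{pullbackunique}), $X$ is the conflation to which $j$ belongs, i.e.\ $X\in\mathcal S$. Assertions c) and d) are then formal. For c), a split epimorphism $p\colon Y\to Z$ of $\D^b(\A)$ with $Y,Z\in\A'$ has a section $s$; by a) both $p$ and $s$ come from $H^0(\A)$ and satisfy $ps=\Id_Z$ there, so $p$ is split epi in $H^0(\A)$, and divisiveness yields a kernel $K\in H^0(\A)$ and a decomposition $Y\cong K\oplus Z$, which persists in $\D^b(\A)$, so $\ker(p)\cong K\in\A'$. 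For d), b) identifies the given conflation $X\to Y\to Z$ of $\D^b_{dg}(\A)$ with the image under $F$ of a conflation of $\A$, a) lifts $W\to Z$ to $Z^0(\A)$, and Axiom~$\Ex2$ produces in $\A$ a conflation $X\to Y''\to W$ over this morphism; the resulting morphism of conflations in $\D^b_{dg}(\A)$ is the identity on the left end $FX$, so by Proposition~\ref{push} the right-hand square $[FY''\to FW;FY\to FZ]$ is homotopy bicartesian, and by the uniqueness of homotopy pullbacks (Proposition~\ref{pullbackunique}) it coincides with the given one, whence $Y'\cong FY''\in\A'$.

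The main obstacle is exactly the point in b) where Axiom~$\Ex3$ takes over from $\overline a_*$: in the two-sided case one realizes the connecting class $\partial$ by a conflation with prescribed ends $A$ and $C$ by pulling back and then pushing forward, whereas here only the pullback is available, so one must recognize the would-be deflation $j$ as a deflation a posteriori through the factorization $\widetilde\jmath=j\overline\varphi$. Two points deserve care: that every comparison morphism manufactured inside $\D^b(\A)$ genuinely lives in $H^0(\A)$, which is where assertion a) (hence Corollary~\ref{t}) is invoked repeatedly; and that the weak idempotent completeness of $H^0(\A)$ is used only — but essentially — in c).
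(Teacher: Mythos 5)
Your proposal is correct and follows essentially the same route as the paper's (sketched) proof: the same Drinfeld-quotient construction and universal property, the same observation that Lemma~\ref{wide} and Corollary~\ref{t} only need $\Ex0$--$\Ex2$ and Axiom~$\Ex3$, and for b) the same mechanism of realizing the connecting morphism by a conflation, pulling it back along $\overline c$ via Axiom~$\Ex2$ (since $\overline a_*$ is unavailable), and then recognizing $j$ as a deflation a posteriori via the factorization through a known deflation and Axiom~$\Ex3$. Parts c) and d) are also handled exactly as in the paper.
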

\begin{proof}[Sketch of proof]
By construction $\D^b_{dg}(\A)$ is the dg quotient $\pretr(\A)/\N_{dg}$ where $\N_{dg}$ is the full dg subcategory of $\pretr(\A)$ generated by the totalizations of conflations in $\A$.
The proof of a) is similar to the proof of Theorem \ref{main}.
The assertion c) follows from the fact that $H^0(\A)$ is divisive.

We show the assertion b).  
Consider a triangle in $\D^b(\A)$
\[
\begin{tikzcd}
A\ar[r,"\overline{f}"]&B\ar[r,"\overline{\jmath}"]&C\ar[r,"u/t"]&\Sigma A
\end{tikzcd}
\]
where $A$, $B$ and $C$ are objects in $\A$ and we keep the same notation for the corresponding objects in $\D^b(\A)$.
We proceed as in the proof of Theorem \ref{main}.
We assume the roof $u/t$ is of the form
\[
\begin{tikzcd}
&W\ar[ld,Rightarrow,"t"swap]\ar[rd,"u"]&\\
C&&\Sigma A
\end{tikzcd}
\]
where $N'=\Cone(t)$ is the totalization of a conflation $X'$ of the form
\[
\begin{tikzcd}
A'\ar[r,"f'"]\ar[rr,"h"swap, bend right=8ex]&B'\ar[r,"j'"]&C'.
\end{tikzcd}
\]
Then the morphisms $C\rightarrow N'$ and $\Sigma^{-1}N'\rightarrow \Sigma A$ give rise to morphisms $\overline{c}:C\rightarrow C'$ and $\overline{a}:A'\rightarrow A$ in $H^0(\A)$.
Claim: the morphism $j:B\rightarrow C$ admits a homotopy kernel which is homotopy short exact in $\A$.
Indeed, there is an isomorphism from the above triangle to the following triangle
\[
\begin{tikzcd}
\Sigma^{-1}\Cone(j)\ar[r]&B\ar[r,"\overline{\jmath}"]&C\ar[r,""]&\Cone(j)
\end{tikzcd}
\]
which restricts to identities of $B$ and $C$. 
Let $s:A\rightarrow \Sigma^{-1}\Cone(j)$ be the restricted morphism in $\D^b(\A)$.
By the quasi-full faithfulness, this morphism lifts to a morphism in $\pretr(\A)$ and from this morphism we get a homotopy kernel of $j$ which is homotopy short exact. Thus it is enough to show that $j$ is a deflation.

Apply Axiom $\Ex2$ and we get the following diagram
\[
\begin{tikzcd}
A'\ar[r,"i"]\ar[d,equal]&E\ar[r,"p"]\ar[d]&C\ar[d,"c"]\\
A'\ar[r,"f'"swap]&B'\ar[r,"j'"swap]&C'
\end{tikzcd}
\]
where for simplicity we omit the homotopies and where $p$ is a deflation. 
Then we have the following diagram in $\D^b(\A)$
\[
\begin{tikzcd}
A'\ar[r,"\overline{i}"]\ar[d,"\overline{a}"swap]&E\ar[r,"\overline{p}"]\ar[d]&C\ar[d,equal]\\
A\ar[r,"\overline{f}"swap]&B\ar[r,"\overline{\jmath}"swap]&C.
\end{tikzcd}
\]
By the above claim, the morphism $j:B\rightarrow C$ admits a homotopy kernel in $\A$.
By the quasi-full faithfulness of $F$ and Axiom $\Ex3$, we infer that $j$ is a deflation.

It is direct to verify the assertion d), using Axiom $\Ex2$ and the assertion b).
\end{proof}
\begin{remark}
The converse of Proposition \ref{prop:leftexactembedding} also holds. Let $\D$ be a pretriangulated dg category. If $\A\subseteq \D$ is a full dg subcategory which is closed under special pullback in the sense of Proposition \ref{prop:leftexactembedding} d) and which is stable under kernels of retracts, then $\A$ with the conflations inherited from $\D$ is a divisive left exact dg category.
\end{remark}
\begin{proposition}\label{divisivekernel}
Let $\A$ be an additive dg category such that $H^0(\A)$ is divisive. 
Let $\R'$ be a class which is stable under pullbacks. 
Let $a:A\rightarrow B$ and $b:B\rightarrow C$ be morphisms in $Z^0(\A)$. 
If the composition $c=ba$ belongs to $\R'$, then $b$ admits a homotopy kernel. 
\end{proposition}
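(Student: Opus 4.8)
The idea is that a homotopy kernel of $b$ can be extracted from the homotopy pullback of $c$ along $b$, which exists by hypothesis, after using divisiveness of $H^0(\A)$ to discard an $A$-summand. So the plan has three stages: produce the homotopy pullback and observe it has a natural direct summand $A$; split it off using weak idempotent completeness; then compute the relevant mapping cone to identify the complementary summand with a homotopy kernel of $b$.

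\textbf{Step 1: the homotopy pullback and its section.} Since $c=ba$ lies in $\R'$ and $\R'$ is stable under pullbacks, the cospan $\begin{tikzcd}[cramped,sep=small]&A\ar[d,"c"]\\ B\ar[r,"b"swap]&C\end{tikzcd}$ admits a homotopy pullback, i.e.\ a homotopy cartesian square
\[
\begin{tikzcd}
P\ar[r,"q"]\ar[d,"\beta"swap]&B\ar[d,"b"]\\
A\ar[r,"c"swap]&C
\end{tikzcd}
\]
which, by Corollary~\ref{strictify} (or Corollary~\ref{rep}), I may assume has representable corners. As $c=ba$, the square with corners $A,B,A,C$ and arrows $a$, $\Id_A$, $c$, $b$ commutes in $Z^0(\A)$, so by the universal property of homotopy pullbacks (Remark~\ref{pullbackuniversal}) it factors through $P$: there is $r\colon A\to P$ with $\beta r=\Id_A$ and $qr=a$ in $H^0(\A)$. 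Thus $\overline{\beta}\colon P\to A$ is a split epimorphism in $H^0(\A)$.

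\textbf{Step 2: splitting off $A$.} Since $H^0(\A)$ is divisive, $\overline{\beta}$ has a kernel $\iota\colon K\to P$ in $H^0(\A)$, and in a weakly idempotent complete additive category a split epimorphism splits off its kernel; hence $P\cong K\oplus A$ in $H^0(\A)$ with $\overline{\beta}$ the projection onto $A$ and $\overline{r}$ the inclusion. This isomorphism is a homotopy equivalence in $\A$, so (using e.g.\ Lemma~\ref{homotopyequivalencestable}-type invariance of homotopy cartesianness under homotopy equivalence) I may transport the square above to the homotopy cartesian square
\[
\begin{tikzcd}
K\oplus A\ar[r,"\begin{bmatrix}f_K&a\end{bmatrix}"]\ar[d,"\begin{bmatrix}0&1\end{bmatrix}"swap]&B\ar[d,"b"]\\
A\ar[r,"c"swap]&C
\end{tikzcd}
\]
with $f_K:=q\iota\colon K\to B$; note $bf_K=bq\iota=c\beta\iota=0$ in $H^0(\A)$, so $K\xrightarrow{f_K}B\xrightarrow{b}C$ (with this homotopy) is a genuine object of $\mathcal H_{3t}(\A)$.

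\textbf{Step 3: identifying $K$ with a homotopy kernel.} Here I would unwind Definition~\ref{maindef} for the last square. Using $c=ba$, the automorphism $(x,y)\mapsto(x-ay,y)$ of $B^{\wedge}\oplus A^{\wedge}$ identifies $\Cone((-b,c)^{\wedge}\colon B^{\wedge}\oplus A^{\wedge}\to C^{\wedge})$ with $\Cone((-b,0)^{\wedge})=\Cone(b^{\wedge})\oplus\Sigma A^{\wedge}$, hence $\Sigma^{-1}\Cone((-b,c)^{\wedge})\cong\Sigma^{-1}\Cone(b^{\wedge})\oplus A^{\wedge}$, and under this identification the canonical map $(K\oplus A)^{\wedge}\to\Sigma^{-1}\Cone((-b,c)^{\wedge})$ becomes the block upper triangular map $\left(\begin{smallmatrix}\gamma_K&*\\0&\Id_{A^{\wedge}}\end{smallmatrix}\right)\colon K^{\wedge}\oplus A^{\wedge}\to\Sigma^{-1}\Cone(b^{\wedge})\oplus A^{\wedge}$, where $\gamma_K$ is the canonical map attached to $K\xrightarrow{f_K}B\xrightarrow{b}C$. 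Applying the additive functor $\tau_{\leq 0}\RHom_{\A}(A'^{\wedge},-)$ for each $A'\in\A$ and using that a block upper triangular endomorphism-matrix with an isomorphism in one diagonal slot is invertible if and only if the other diagonal slot is, I get that $\gamma_K$ induces an isomorphism on $\tau_{\leq 0}\RHom_{\A}(A'^{\wedge},-)$ for all $A'$; equivalently $\begin{tikzcd}[cramped,sep=small]K\ar[r,"f_K"]\ar[d]&B\ar[d,"b"]\\0\ar[r]&C\end{tikzcd}$ is homotopy cartesian, i.e.\ $K\xrightarrow{f_K}B\xrightarrow{b}C$ is homotopy left exact. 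By Definition~\ref{homotopykernel} this is a homotopy kernel of $b$.

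\textbf{Main obstacle.} Everything in Steps 1--2 is either a direct application of the hypotheses or the standard splitting of split epimorphisms in a weakly idempotent complete category; the delicate part is Step 3, namely carrying out the mapping-cone change of variables while keeping honest track of the homotopies, so that the canonical map genuinely becomes block upper triangular with $\Id_{A^{\wedge}}$ on the diagonal, together with the (routine but needs-care) verification that homotopy cartesianness is preserved when $P$ is replaced by $K\oplus A$.
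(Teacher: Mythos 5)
Your proof is correct and follows essentially the same route as the paper's: form the homotopy pullback of $c$ along $b$, use the universal property to see that the induced map to $A$ is a split epimorphism in $H^0(\A)$, and use divisiveness to split off an $A$-summand. The only divergence is the final step: the paper concludes by applying the dual of Corollary~\ref{cok} to the homotopy cartesian square, transferring the (obvious) homotopy kernel of the projection $K\oplus A\to A$ to $b$, whereas you re-derive this transfer by an explicit mapping-cone computation that in addition identifies the homotopy kernel as $K\xrightarrow{f_K}B$; both arguments are sound.
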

\begin{proof}
The cospan of $c$ along $b$ admits a homotopy pullback $X$
\[
\begin{tikzcd}
D\ar[r,"e"]\ar[d,"d"swap]\ar[rd,"s"]&B\ar[d,"b"]\\
A\ar[r,"c"swap]&C\mathrlap{.}
\end{tikzcd}
\]
We view $X$ as a morphism from $e$ to $c$ in $H^0(\Mor(\A))$.
By Lemma \ref{univ}, the obvious morphism from $a$ to $c$ factors through the morphism $X$ and hence the morphism $\overline{d}:D\rightarrow A$ is a split epimorphism in $H^0(\A)$.
Since $H^0(\A)$ is divisive, it admits a kernel $K$ and hence the object $d\in H^0(\Mor(\A))$ is isomorphic to the canonical projection $K\oplus A\rightarrow A$ and hence admits a homotopy kernel.
By Corollary \ref{cok}, the object $b$ also admits a homotopy kernel.
\end{proof}
\begin{theorem}\label{divisivedgcategory}
Let $\A$ be an additive dg category such that $H^0(\A)$ is divisive.
Then $P\R$ defines the greatest left exact structure on $\A$.
\end{theorem}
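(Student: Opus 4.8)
\textbf{Proof strategy for Theorem~\ref{divisivedgcategory}.}
The plan is to show that when $H^0(\A)$ is divisive, the class $P\R$ already satisfies all the axioms $\Ex0$--$\Ex3$ of a left exact dg structure, and that it is the largest one. First I would recall that $\R$ is the class of objects $p:Y\rightarrow Z$ in $H^0(\Mor(\A))$ admitting a homotopy kernel which is homotopy bicartesian, and that $P\R$ is the subclass of those $f:C\rightarrow F$ such that every cospan of $f$ along an arbitrary morphism admits a homotopy pullback with the pulled-back leg again in $\R$. We already know from the discussion before Proposition~\ref{generatingleftexactstructure} that $P\R$ is closed under compositions (Axiom $\Ex1$) and is by construction stable under pullbacks, which in particular gives Axiom $\Ex2$ with the pulled-back deflation still in $\R$; since $P\R$ is stable under pullbacks, that pulled-back deflation is in fact in $P\R$. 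Axiom $\Ex0$ is immediate because $A\rightarrow 0$ lies in $P\R$ for each $A$. So the only axiom requiring real work is $\Ex3$.

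For Axiom $\Ex3$, suppose $a:A\rightarrow B$ and $b:B\rightarrow C$ are morphisms in $Z^0(\A)$ with $c=ba\in P\R$ and assume $b$ admits a homotopy kernel. I would argue that $b\in P\R$ directly. By Proposition~\ref{obscureaxiom} (applied with $\R'=\R$, whose hypotheses $P\R$ satisfies as noted in the remark after that proposition) the class $Q\R$ satisfies $\Ex3$, and by the chain $PQP\R$ in Corollary~\ref{maximalleftexact} we know $PQP\R$ is the greatest left exact structure. The key new input from divisiveness is Proposition~\ref{divisivekernel}: if $c=ba\in P\R$ then $b$ automatically admits a homotopy kernel, so the hypothesis ``$b$ admits a homotopy kernel'' in $\Ex3$ is superfluous and, more importantly, I can invoke the pullback/paste machinery without worrying about existence of kernels. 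Concretely, form the homotopy pullback $X$ of the cospan given by $c$ along $b$; by Lemma~\ref{univ} the obvious morphism $a\rightarrow c$ in $H^0(\Mor(\A))$ factors through $X$, so the leg $\overline d$ of $X$ is a split epimorphism in $H^0(\A)$; divisiveness makes it (up to isomorphism) a canonical projection $K\oplus A\rightarrow A$, hence in $\R$, and then Corollary~\ref{pastinglaw:second} together with stability of $P\R$ under pullbacks forces $b\in P\R$. This shows $P\R$ is closed under $\Ex3$, hence defines a left exact structure.

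It remains to see $P\R$ is the greatest such structure. Any left exact dg structure $\mathcal S$ on $\A$ has its class of deflations $\R'$ contained in $\R$ (a deflation has a homotopy kernel which is a conflation, i.e.\ homotopy bicartesian, by Definition~\ref{def:leftstructure}); and by Axiom $\Ex2$ for $\mathcal S$, every cospan of a deflation along an arbitrary morphism admits a homotopy pullback whose pulled-back leg is again a deflation, hence lies in $\R$. This is precisely the defining condition for membership in $P\R$, so $\R'\subseteq P\R$. Since $P\R$ itself defines a left exact structure by the previous paragraph, it is the unique maximal one; combined with Corollary~\ref{maximalleftexact} this also identifies $P\R$ with $PQP\R$ in the divisive case, but we do not need that. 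I expect the main obstacle to be the verification of $\Ex3$: one must carefully track the homotopies in the pasting of the two homotopy pullback squares and check that the factorization produced by Lemma~\ref{univ} is compatible with the splitting coming from divisiveness, which is exactly where Proposition~\ref{divisivekernel} and Corollary~\ref{pastinglaw:second} are used. The rest is bookkeeping with the operations $P$, $Q$ already developed.
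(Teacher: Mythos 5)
Your overall architecture is right: $P\R$ is stable under compositions and pullbacks by the results already established, the only axiom needing work is $\Ex3$, divisiveness enters through Proposition~\ref{divisivekernel}, and your maximality argument (deflations of any left exact structure lie in $\R$ and, by $\Ex2$, their pullback legs do too, so they lie in $P\R$) is correct and matches what the paper leaves implicit. But the core of the $\Ex3$ verification has a genuine gap.

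Given $c=ba\in P\R$, you form the homotopy pullback of $c$ along $b$, observe that the leg $\ol{d}$ is a split epimorphism, use divisiveness to identify it with the projection $K\oplus A\to A$, and then assert that Corollary~\ref{pastinglaw:second} plus pullback-stability ``forces $b\in P\R$.'' This skips the substantive step. Knowing that $b$ admits a homotopy kernel (which is all Proposition~\ref{divisivekernel} gives) is strictly weaker than $b\in\R$: membership in $\R$ requires that homotopy kernel to be homotopy \emph{bicartesian}, and the paper explicitly warns that a homotopy kernel need not be the homotopy kernel of its homotopy cokernel. The paper closes this gap by noting that the relevant leg of the pullback is not $d$ but $u=[k,\,a]:K\oplus A\to B$, which lies in $\R$ precisely because $c\in P\R$; it then builds a morphism of 3-term h-complexes from the split sequence $K\to K\oplus A\to A$ to the kernel sequence $K\to B\to C$ restricting to $\Id_K$, with homotopy cartesian right-hand square, and invokes Corollary~\ref{cok} and Proposition~\ref{push} to transfer homotopy bicartesianness and conclude $b\in\R$. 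Only after that does Corollary~\ref{pastinglaw:second} enter, in the second half: to show $b\in P\R$ one decomposes the pullback of $c$ along an arbitrary $f:F\to C$ as a composition through the pullback of $b$, and reruns the first half for that pullback. Your write-up conflates these two halves into one sentence and omits the bicartesian transfer entirely.

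A secondary point: your appeal to Proposition~\ref{obscureaxiom} ``with $\R'=\R$'' is not legitimate, since $\R$ is not closed under compositions (the paper says so explicitly); the remark after that proposition licenses $\R'=P\R$, which yields $\Ex3$ for $QP\R$, not for $P\R$ itself. You do not actually use this in the end, but as written it suggests a route that does not exist.
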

\begin{proof}
It is clear that the class $P\R$ is stable under compositions and pullbacks.
It remains to check Axiom $\Ex3$ for $P\R$.

Let $a:A\rightarrow B$ and $b:B\rightarrow C$ be morphisms in $Z^0(\A)$ such that the composition $c=ba$ belongs to $P\R$. We claim that $b\in \R$.
Note that $b$ admits a homotopy kernel $X$ by Proposition \ref{divisivekernel}
\[
\begin{tikzcd}
K\ar[r,"k"]\ar[rr,"h"swap, bend right=8ex]&B\ar[r,"b"]&C
\end{tikzcd}
\]

We have a morphism of 3-term h-complexes
\[
\begin{tikzcd}
K\ar[rr,"0",bend left=8ex]\ar[d,equal]\ar[r,"v"]\ar[rd,"0"{red,swap},red]\ar[rrd,"0"{blue,swap,near end},blue,bend left=2ex]&K\oplus A\ar[r,"{[}0{,}\;1{]}"]\ar[d,"u"swap]\ar[rd,"{[}-h{,}\;0{]}"{red,near start},red]&A\ar[d,"c"]\\
K\ar[r,"k"swap]\ar[rr,"h"swap, bend right=8ex]&B\ar[r,"b"swap]&C
\end{tikzcd}
\]
where $u=[k,\;a]$, $v=\begin{bmatrix}1\\0\end{bmatrix}$. 
It is not hard to see that the right-hand square is homotopy cartesian.
So the morphism $u$ belongs to $\R$. 
By Corollary \ref{cok} and Proposition \ref{push}, the right-hand square is homotopy bicartesian.
By Proposition \ref{push}, the 3-term h-complex $X$ is homotopy short exact. 
Therefore $b$ belongs to $\R$.

It remains to show that $b$ belongs to $P\R$.
Let $f:F\rightarrow C$ be any object in $H^0(\Mor(\A))$.
Consider the following diagram
\[
\begin{tikzcd}
A'\ar[r,dashed,"a'"]\ar[d,dashed]&B'\ar[r,dashed,"b'"]\ar[d,dashed]&F\ar[d,"f"]\\
A\ar[r,"a"swap]&B\ar[r,"b"swap]&C\mathrlap{.}
\end{tikzcd}
\]
By Corollary \ref{pastinglaw:second},  the homotopy pullback $c'$ of $c$ along $f$ is homotopic to the composition of some morphism $a'$ with the homotopy pullback $b'$ of $b$ along $f$. 
By the above, we have that $b'$ belongs to $\R$. 
Thus $b$ belongs to $P\R$.
\end{proof}
A morphism $b:B\rightarrow C$ is a {\em semi-stable} homotopy cokernel if it belongs to $P\R$. 
Dually, one defines the notion of {\em semi-stable} homotopy kernel. 
A homotopy short exact sequence 
\[
\begin{tikzcd}
A\ar[r,"f"]\ar[rr,"h"swap,bend right=8ex]&B\ar[r,"j"]&C
\end{tikzcd}
\]
is {\em stable} if $f$ is a semi-stable homotopy kernel and if $g$ is a semi-stable homotopy cokernel.
If the additive dg category $\A$ is such that $H^0(\A)$ is divisive, then by Theorem \ref{divisivedgcategory}, the homotopy short exact sequences in the greatest exact dg structure on $\A$ are exactly the stable ones. 
\begin{remark}
In~\cite{SiegWegner11}, the authors showed that in pre-abelian categories, i.e.~categories with kernels and cokernels, the greatest exact structure consists of the class of all stable kernel-cokernel pairs.
In~\cite{Crivei12}, it was generalized to a more general class of additive categories, namely 
the weakly idempotent complete ones.
In~\cite{Rump15}, Rump gave a counterexample where not every stable short exact sequence in an additive category is in the greatest exact structure.
\end{remark}
\begin{example}
Let $\C$ be an abelian category having enough projectives and enough injectives. 
A full subcategory of $\C$ is {\em resolving} if it is stable under extensions, stable under kernels of epimorphisms, stable under taking direct summands and contains all the projectives, cf.~\cite[Section 3]{AuslanderReiten91}. Dually one defines the notion of {\em coresolving} subcategory.
Let $\E$ be a both resolving and coresolving subcategory of $\C$. 
Then it is direct to check that a kernel in $\E$ is a monomorphism in $\C$. 
Therefore a kernel-cokernel pair in $\E$ is a short exact sequence in $\C$. 
Thus the inherited exact structure on $\E$ from $\C$ is the maximal exact structure on $\E$.
For example, the category $\rep_{\rm{l.f.}}(H)$ of locally free modules over the generalized path algebra $H$ is a both resolving and coresolving subcategory of the ambient abelian category $\rep(H)$, cf.~\cite[Theorem 3.9]{GeissLeclercSchroer17}, and thus the exact structure of $\rep_{\rm{l.f.}}(H)$ inherited from $\rep(H)$ is the maximal exact structure.

Let $\C$ be an arbitrary abelian category. 
If $\E$ is an extension-closed subcategory of $\C$ which is stable under the passage to subobjects, then a kernel-cokernel pair in $\E$ is necessarily a short exact sequence in $\C$. Thus the inherited exact structure on $\E$ from $\C$ is the maximal exact structure. For example, let $Q$ be an acyclic quiver and $\B$ an abelian category. Let $\rep(Q,\B)$ be the category of representations of $Q$ in $\B$, i.e.~the category of functors $Q\rightarrow \B$, where $Q$ is considered as a category in the natural way. The monomorphism category, denoted by $\mathrm{mono}(Q,\B)$, is a full subcategory of $\rep(Q,\B)$ which consists of all representations $(M_i,M_{\alpha})_{i\in Q_0, \alpha\in Q_1}$ for which 
\[
\begin{tikzcd}
M_{i,in}:\bigoplus_{\alpha\in Q_1,t(\alpha)=i} M_{s(\alpha)}\ar[r,"(M_{\alpha})_{\alpha}"]& M_i
\end{tikzcd}
\]
is a monomorphism for all vertices $i$, where $s(\alpha)$ and $t(\alpha)$ denote the source and target of the arrow $\alpha$.
It is clear that $\mathrm{mono}(Q,\B)$ is stable under extensions and the passage to subobjects in $\rep(Q,\B)$, cf.~\cite{GaoKulshammerKvammePsaroudakis23}. Thus the exact structure of $\mathrm{mono}(Q,\B)$ inherited from $\rep(Q,\B)$ is the maximal exact structure.
\end{example}
\begin{example}We recall the setting in Example \ref{example:2-term}.
Let $A$ be a finite-dimensional $k$-algebra. 
Let $\mathcal H^{[-1,0]}(\proj A)$ be the full subcategory of $\T=\mathcal H^b(\proj A)$ consisting of two-term complexes $P^{-1} \to P^0$ of finitely generated projective $A$-modules. 
Let $\A=\C^{b}_{dg}(\proj A)$ be the canonical enhancement of
${\mathcal H}^b(\proj A)$ and $\A'$ its full dg subcategory on the objects $P^{-1} \to P^0$.
For an $A$-module $M$, we denote by $M^*$ its $A$-dual.

Consider the algebra $A$ given by a quiver with relations as follows
\[
\begin{tikzcd}
 &2\ar[rd,""{name=b,swap,near start},""{name=b',swap, near end},"b",""{name=b'',swap}]&\\
1\ar[ru,""{name= a,swap,near end},"a"]&&3\mathrlap{.}\ar[ll,""{name=c, swap,near start},"c"]\arrow[from = a, to = b,bend right=20, dashed, no head]\ar[from=b'',to=c,bend right=20,dashed,no head]
\end{tikzcd}
\]
There exists a homotopy exact squence $X$ in $\A'$ which is not homotopy short exact in $\A$
\[
\begin{tikzcd}
S_2\ar[r]\ar[rr,bend right=8ex,"0"swap]&P_3\ar[r]&P_1
\end{tikzcd}
\]
where $S_2$ stands for the two-term complex $P_1\rightarrow P_2$.
We claim that this homotopy short exact sequence is stable. 
As a consequence, the natural exact dg structure on $\A'$ is not the greatest one.

The key point is that the map $P_3\rightarrow P_1$ is a right $\add(P_2\oplus P_3)$-approximation of $P_1$ and the map $P_2^{*}\rightarrow P_1^*$ is a right $\add (P_2^*\oplus P_3^*)$-approximation of $P_1^*$. 

Let $T=(T^{-1}\rightarrow T^0)$ be an object in $\A'$.
Consider a morphism $S_2\rightarrow P_3$ in $Z^0(\A')$.
Take the $A$-dual and we get
\[
\begin{tikzcd}
&((T^{0})^*\rightarrow (T^{-1})^*)\ar[d]\\
(P_3^*\rightarrow 0)\ar[r]&(P_2^*\rightarrow P_1^*)\mathrlap{.}
\end{tikzcd}
\]
We have the following diagram
\[
\begin{tikzcd}
0\ar[r]&S_3'\ar[d,tail]\ar[r,tail]&P_2^*\ar[r]\ar[d,tail]&P_1^*\ar[d,equal]\\
0\ar[r]&M\ar[r]\ar[d,two heads]&P_2^*\oplus (T^{-1})^*\ar[r]\ar[d,two heads]&P_1^*\\
&N\ar[r,tail]&(T^{-1})^*&
\end{tikzcd}
\]
The image of the map $P_2^*\oplus (T^{-1})^*\rightarrow P_1^*$ is either $P_1^*$ so that the second row is split exact, or equals the image of the map $P_2^*\rightarrow P_1^*$ so that $N=(T^{-1})^*$ and the first column is split exact.

In the first case, $M$ is a projective module and $P_1$ is a direct summand of $T^{-1}$.
Assume that $T^{-1}$ decomposes as $P_1\oplus K$. 
Then the cospan $L$
\[
\begin{tikzcd}
S_2\ar[d]\ar[r]&P_3\\
T&
\end{tikzcd}
\]
admits a homotopy pushout which is homotopy bicartesian. 
So we have the following diagram
\[
\begin{tikzcd}
S_2\ar[d]\ar[r]&P_3\ar[d]\ar[r]&P_1\ar[d,equal]\\
(P_1\oplus K\rightarrow T^0)\ar[r]&(P_2\oplus K\rightarrow T^0\oplus P_3)\ar[r]&P_1
\end{tikzcd}
\]
and it is direct to check the second row is homotopy left exact and hence homotopy exact.

In the second case, we have $M=S_3'\oplus (T^{-1})^*$ and hence the cospan $L$ admits a homotopy pushout which is homotopy bicartesian. 
Similarly as above, one checks that the pushout of $X$ along $S_2\rightarrow T$ is homotopy short exact. 
Therefore, the morphism $S_2\rightarrow P_3$ is a semi-stable homotopy kernel.
Similarly, the morphism $P_3\rightarrow P_1$ is a semi-stable homotopy cokernel and hence the homotopy short exact sequence $X$ is stable.
\end{example}
\newpage
\appendix

\section{Extriangulated categories}
In this appendix, we give two related constructions of closed subbifunctors, one of them generalizing the construction by Ogawa \cite[Proposition 2.1]{Ogawa22} in the case of triangulated categories, and some diagram lemmas in extriangulated categories.
\subsection{Closed subbifunctors associated with extension-closed subcategories}
\begin{lemma}\label{extensionclosed}
Let $\N\subset \C$ be an extension-closed subcategory.
Then we have the following closed subbifunctors of $\mathbb E$
\[
\mathbb E^{\N}: \C^{op}\times \C\rightarrow \Ab,\;\; (C,A)\mapsto\sum_{f:N\rightarrow A, N\in \N}\Im(\mathbb E(C,N)\rightarrow \mathbb E(C,A)) 
\]
and
\[
\mathbb E_{\N}: \C^{op}\times \C\rightarrow \Ab,\;\; (C,A)\mapsto\sum_{f:C\rightarrow N, N\in \N}\Im(\mathbb E(N,A)\rightarrow \mathbb E(C,A))
\]
such that $\mathbb E^{\N}|_{\mathbb \N}=\mathbb E|_{\mathbb \N}=\mathbb E_{\N}|_{\mathbb \N}$.
The closed subbifunctor $\mathbb E^{\N}$ (resp.~$\mathbb E_{\N}$) is called the subbifunctor {\em left generated} (resp.~{\em right generated}) by $\N$. 
\end{lemma}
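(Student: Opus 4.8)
The plan is to verify directly that $\mathbb E^{\N}$ and $\mathbb E_{\N}$ are additive subbifunctors of $\mathbb E$ and then to apply Proposition~\ref{closedsubbifunctor} (the equivalence of conditions there) to conclude they are closed. By duality (passing to $\C^{op}$, which interchanges $\mathbb E^{\N}$ and $\mathbb E_{\N}$), it suffices to treat $\mathbb E^{\N}$. First I would check that $\mathbb E^{\N}(C,A)$ is a subgroup of $\mathbb E(C,A)$: it is a sum of images of group homomorphisms $\mathbb E(C,N)\to\mathbb E(C,A)$ induced by morphisms $f\colon N\to A$ with $N\in\N$, hence a subgroup. For bifunctoriality, given $\overline a\colon A\to A'$ and $\overline c\colon C'\to C$, one checks $\overline a_*$ carries $\mathbb E^{\N}(C,A)$ into $\mathbb E^{\N}(C,A')$ because $\overline a_*\circ f_* = (\overline a f)_*$ and $\overline a f\colon N\to A'$ still has source in $\N$; and $\overline c^*$ carries $\mathbb E^{\N}(C,A)$ into $\mathbb E^{\N}(C',A)$ because $\overline c^*$ commutes with $f_*$ by bifunctoriality of $\mathbb E$. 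Additivity of the subbifunctor (that $\mathbb E^{\N}(C_1\oplus C_2, A_1\oplus A_2)$ decomposes compatibly) follows from additivity of $\mathbb E$ together with the fact that $\N$, being additive and closed under isomorphisms, is closed under finite direct sums.

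Next I would establish the key closure property, namely condition (1) of the ``closed'' definition: for any $\mathfrak s|_{\mathbb E^{\N}}$-conflation $A\xrightarrow{x}B\xrightarrow{y}C$, the sequence $\mathbb E^{\N}(-,C)\xRightarrow{x_*}\mathbb E^{\N}(-,B)\xRightarrow{y_*}\mathbb E^{\N}(-,A)$ should be exact. Exactness of $\mathbb E(-,C)\to\mathbb E(-,B)\to\mathbb E(-,A)$ already holds in any extriangulated category (long exact sequence), so what must be checked is that an element of $\mathbb E^{\N}(D,B)$ killed by $y_*$ actually comes from $\mathbb E^{\N}(D,C)$, i.e.\ from an image involving a morphism out of an object of $\N$. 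The crucial input is that the conflation $A\xrightarrow{x}B\xrightarrow{y}C$ lies in the substructure, which by definition means its class $\delta=[A\xrightarrow{x}B\xrightarrow{y}C]\in\mathbb E^{\N}(C,A)$, so $\delta$ is (a sum of) images of classes of conflations with a term in $\N$. I would then take a typical generator $g_*\eta$ of $\mathbb E^{\N}(D,B)$ with $\eta\in\mathbb E(D,N)$, $g\colon N\to B$, satisfying $y_*g_*\eta=(yg)_*\eta=0$; using the long exact sequence for $A\xrightarrow{x}B\xrightarrow{y}C$ one factors $yg$ through $y$ up to the action of $\delta$, and here the extension-closedness of $\N$ enters: forming the relevant pullback/pushout along morphisms to and from objects of $\N$ and using that $\delta\in\mathbb E^{\N}(C,A)$ lets one replace $N$ by an extension that remains in $\N$, so that the class descends to $\mathbb E^{\N}(D,C)$. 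This diagram-chase, juggling the weakly effaceable structure and keeping track of which objects stay in $\N$, is the step I expect to be the main obstacle.

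Once condition (1) is verified, Proposition~\ref{closedsubbifunctor} immediately gives that $(\C,\mathbb E^{\N},\mathfrak s|_{\mathbb E^{\N}})$ is extriangulated and that $\mathbb E^{\N}$ is a closed subbifunctor; the dual argument, or directly Proposition~\ref{closedsubbifunctor}(3) about composition of deflations, handles $\mathbb E_{\N}$. Finally I would record the restriction identity: for $A,C\in\N$ we must show $\mathbb E^{\N}(C,A)=\mathbb E(C,A)$. This is clear since for $A\in\N$ one may take $f=\mathrm{id}_A\colon A\to A$ as the defining morphism with source in $\N$, whence $\mathbb E^{\N}(C,A)\supseteq\Im(\mathbb E(C,A)\xrightarrow{\mathrm{id}}\mathbb E(C,A))=\mathbb E(C,A)$; the reverse inclusion is automatic. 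The same argument with $\mathrm{id}_C$ gives $\mathbb E_{\N}|_{\N}=\mathbb E|_{\N}$, and combining these yields $\mathbb E^{\N}|_{\N}=\mathbb E|_{\N}=\mathbb E_{\N}|_{\N}$, completing the proof.
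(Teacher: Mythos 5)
Your preliminary verifications are fine: that each $\mathbb E^{\N}(C,A)$ is a subgroup, that the assignment is a subbifunctor (using $\overline a_*f_*=(\overline af)_*$ and the commutation of $f_*$ with $\overline c^*$), that additivity follows from $\N$ being closed under finite direct sums, that the two constructions are exchanged by passing to $\C^{op}$, and that taking $f=\mathrm{id}$ gives the restriction identity $\mathbb E^{\N}|_{\N}=\mathbb E|_{\N}=\mathbb E_{\N}|_{\N}$. Your reduction to Proposition~\ref{closedsubbifunctor} is also the paper's strategy. But all of that is routine; the entire content of the lemma is the closedness verification, and you explicitly leave it unproved (``the step I expect to be the main obstacle''). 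A sentence saying that ``forming the relevant pullback/pushout \ldots lets one replace $N$ by an extension that remains in $\N$'' is not an argument: you must actually produce the object of $\N$ that witnesses membership of the relevant class in the subbifunctor, and this is where extension-closedness has to be deployed in a specific diagram. As written, the proof has a genuine gap exactly at its crux.

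For comparison, the paper verifies criterion (3) of Proposition~\ref{closedsubbifunctor} (composition of $\mathfrak s|_{\mathbb F}$-deflations) rather than the exactness criterion you aim at. It first uses additivity of $\N$ to replace the sum of images by a single image, so each $\mathfrak s|_{\mathbb F}$-conflation has class $j^*(\delta)$ for a single $j\colon C\to N$ with $N\in\N$. Given two $\mathfrak s|_{\mathbb F}$-deflations with associated objects $N,N'\in\N$, it builds (via Lemma~\ref{LN}) an $\mathbb E$-triangle $N\to W\to N'$, concludes $W\in\N$ by extension-closedness, and then uses the pasting law for homotopy cartesian squares together with Lemma~\ref{lem:extrihomotopycartesiandeflation} to exhibit the class of the composite as a pullback along a morphism into $W$, hence as an element of $\mathbb E_{\N}$. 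Nothing equivalent to this chain appears in your proposal. Two further signs that the sketch is off track: by the long exact sequence, an element of $\mathbb E^{\N}(D,B)$ killed by $y_*$ should be shown to come from the kernel side $\mathbb E^{\N}(D,A)$ (your text says it should come from $\mathbb E^{\N}(D,C)$), and weak effaceability plays no role in this lemma, so invoking ``the weakly effaceable structure'' suggests a confusion with Proposition~\ref{higher}.
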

\begin{remark}
When $\C$ is a Quillen exact category, this gives a way to construct Quillen exact substructures of $\C$.
\end{remark}
\begin{proof}[Proof of Lemma~\ref{extensionclosed}]
We show the case for $\mathbb F=\mathbb E_{\N}$. 
The case for $\mathbb E^{\N}$ follows by duality. 
By \cite[Proposition 3.16]{HerschendLiuNakaoka21}, it is enough to show that $\mathfrak s|_{\mathbb F}$-deflations are closed under compositions.

Since $\N$ is an additive subcategory, it is direct to check that an $\mathfrak s|_{\mathbb F}$-conflation $\delta\in\mathbb F(C,A)$ is in the image of a map
\[
\mathbb E(f,A): \mathbb E(N,A)\rightarrow \mathbb E(C,A)
\]
for some object $N\in \N$ and some map $f:C\rightarrow N$.

Suppose $f:X\rightarrow Y$ and $g:Y\rightarrow Z$ are two $\mathfrak s|_{\mathbb F}$-deflations.
We have the following diagrams made of $\mathbb E$-triangles
\[
\begin{tikzcd}
U\ar[r,"e"]\ar[d,equal]&X\ar[d,"j'"]\ar[r,"f"]&Y\ar[d,"j"]\ar[r,"j^*(\delta)",dashed]&\;\\
U\ar[r] &X'\ar[r,"f'"swap]&N\ar[r,"\delta"swap,dashed]&\;
\end{tikzcd}\;,
\]
\[
\begin{tikzcd}
V\ar[r,"d"]\ar[d,equal]&Y\ar[d,"k'"]\ar[r,"g"]&Z\ar[d,"k"]\ar[r,"k^*(\mu)",dashed]&\;\\
V\ar[r] &Y'\ar[r,"g'"swap]&N'\ar[r,"\mu"swap,dashed]&\;
\end{tikzcd}\;.
\]
By \cite[Proposition 1.20]{LiuNakaoka19}, we may choose the maps $j'$ and $k'$ such that we have the following $\mathbb E$-triangles
\[
\begin{tikzcd}
X\ar[r,"\begin{bmatrix}j'\\f\end{bmatrix}"]&X'\oplus Y\ar[r,"{[}f'{,}-j{]}"]&N\ar[r,"e_{*}(\delta)",dashed]&\;
\end{tikzcd},
\]
\[
\begin{tikzcd}
Y\ar[r,"\begin{bmatrix}k'\\g\end{bmatrix}"]&Y'\oplus Z\ar[r,"{[}g'{,}-k{]}"]&N'\ar[r,"d_{*}(\mu)",dashed]&\;
\end{tikzcd}.
\]
So we have the following diagram
\[
\begin{tikzcd}
Y\ar[r,"\begin{bmatrix}k'\\g\end{bmatrix}"]\ar[d,"j"swap]&Y' \oplus Z\ar[d,dashed,red,"{[}u{,}\;v{]}"]\ar[r,"{[}g'{,}-k{]}"]&N'\ar[r,dashed,"d_{*}(\mu)"]\ar[d,equal]&\;\\
N\ar[r,"x"swap]& W\ar[r,"y"swap]&N'\ar[r,dashed,"j_*d_*(\mu)"swap]&\;
\end{tikzcd}.
\]
Since $\N$ is an extension-closed subcategory, we have $W\in\N$.
By Lemma~\ref{LN}, there exists a morphism $[u,\;v]:Y'\oplus Z\rightarrow W$ such that the following sequence is an $\mathbb E$-triangle
\[
\begin{tikzcd}
Y\ar[r,"\begin{bmatrix}-j\\k'\\g\end{bmatrix}"]&N\oplus Y'\oplus Z\ar[r,"{[}x{,}\;u{,}\; v{]}"]&W\ar[rr,dashed,"\theta=y^{*}j_{*}d_{*}(\mu)"]&&\;.
\end{tikzcd}
\]
We have the following isomorphism of sequences
\[
\begin{tikzcd}
X\ar[r,"\begin{bmatrix}j'\\f\\k'f \end{bmatrix}"]\ar[d,equal]& X'\oplus Y\oplus Y'\ar[d,"s"]\ar[r,"\begin{bmatrix}f'\;\;-j\;\;0\\0\;\;-k'\;\;1\end{bmatrix}"]&N\oplus Y'\ar[d,equal]&\\
X\ar[r,"\begin{bmatrix}j'\\f\\0 \end{bmatrix}"swap]&X'\oplus Y\oplus Y'\ar[r,"\begin{bmatrix}f'\;\;-j\;\;0\\0\;\;0\;\;1\end{bmatrix}"swap]&N\oplus Y'\ar[r,dashed,"e_*(\delta)\oplus 0"]&\;
\end{tikzcd}
\]
where $s=\begin{bmatrix}1&0&0\\0&1&0\\0&-k'&1\end{bmatrix}$.
Thus the first row is also a conflation.

Now we apply the pasting law for homotopy cartesian squares (cf.~\cite{HeXieZhou23}) to the following diagram
\[
\begin{tikzcd}
X\ar[r,"f"]\ar[d,"\begin{bmatrix}j'\\k'f\end{bmatrix}"swap]&Y\ar[d,"\begin{bmatrix}j\\k'\end{bmatrix}"]\ar[r,"g"]&Z\ar[d,"-v"]\\
X'\oplus Y'\ar[r,"\begin{bmatrix}f'\;\;0\\0\;\;1\end{bmatrix}"swap]&N\oplus Y'\ar[r,"{[}-x{,}\; u{]}"swap]&W
\end{tikzcd}
\]
and by Lemma~\ref{lem:extrihomotopycartesiandeflation}, the morphism $[-x,\;u]$ is an $\mathfrak s$-deflation. 
We see that $gf$ is an $\mathfrak s|_{\mathbb F}$-deflation.

\end{proof}

\begin{proposition}\label{prop:ogawasubbifunctors}
Let $(\C,\mathbb E,\mathfrak s)$ be an extriangulated category and $\N$ an extension-closed subcategory.
Then we have the following closed subbifunctors of $\mathbb E$ 
\[
\mathbb E^{L}_{\N}:\C^{op}\times \C\to \mathrm{Ab},\quad (C,A)\mapsto \{\delta\in \mathbb E(C,A)| \text{for any $N\in \N$ and $s:N\rightarrow C$, $s^{*}(\delta)\in \mathbb E^{\N}(N,A)$})\}
\]
and 
\[
\mathbb E^{R}_{\N}:\C^{op}\times \C\to \mathrm{Ab},\quad (C,A)\mapsto \{\delta\in \mathbb E(C,A)| \text{for any $N\in\N$ and $t:A\rightarrow N$, $t_{*}(\delta)\in \mathbb E_{\N}(C,N)$})\}
\]
where $\mathbb E^{\N}$ and $\mathbb E_{\N}$ are the closed subbifunctors defined in Lemma~\ref{extensionclosed}.
\end{proposition}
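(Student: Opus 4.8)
The plan is to prove the statement for $\mathbb E^{L}_{\N}$; the assertion for $\mathbb E^{R}_{\N}$ then follows by passing to the opposite extriangulated category $(\C^{\mathrm{op}},\mathbb E^{\mathrm{op}},\mathfrak s^{\mathrm{op}})$, under which pullbacks and pushforwards are interchanged, deflations and inflations are interchanged, and the closed subbifunctors $\mathbb E^{\N}$ and $\mathbb E_{\N}$ of Lemma~\ref{extensionclosed} are interchanged. That $\mathbb E^{L}_{\N}$ is an additive subbifunctor of $\mathbb E$ is routine: each $\mathbb E^{L}_{\N}(C,A)$ is a subgroup of $\mathbb E(C,A)$ since the maps $s^{*}$ are additive and $\mathbb E^{\N}(N,A)$ is a subgroup; functoriality in $C$ follows from $(cs')^{*}=s'^{*}c^{*}$, which shows $c^{*}$ preserves the defining condition, and functoriality in $A$ follows from $s^{*}a_{*}=a_{*}s^{*}$ together with $\mathbb E^{\N}$ being a subbifunctor. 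Along the way one also records $\mathbb E^{\N}\subseteq\mathbb E^{L}_{\N}$ and $\mathbb E^{L}_{\N}|_{\N}=\mathbb E|_{\N}$ (because $\mathbb E^{\N}(N'',A')=\mathbb E(N'',A')$ whenever $N'',A'\in\N$). None of this is hard.

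The substantive point is closedness, and the plan is to use Proposition~\ref{closedsubbifunctor}, so that it suffices to show $\mathfrak s|_{\mathbb E^{L}_{\N}}$-deflations are closed under composition. Let $g_{1}\colon B\to C$ and $g_{2}\colon D\to B$ be such deflations, with defining conflations of classes $\delta_{1}\in\mathbb E^{L}_{\N}(C,K_{1})$ and $\delta_{2}\in\mathbb E^{L}_{\N}(B,K_{2})$, and let $K_{3}\to D\xrightarrow{g_{1}g_{2}}C$ be a conflation for the $\mathfrak s$-deflation $g_{1}g_{2}$, of class $\delta_{3}$; we must show $s^{*}\delta_{3}\in\mathbb E^{\N}(N,K_{3})$ for every $s\colon N\to C$ with $N\in\N$. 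I would apply $(\mathrm{ET4}^{\mathrm{op}})$ to $\delta_{1}$ and $\delta_{2}$ to obtain the octahedral diagram relating the three conflations, pull the whole diagram back along $s$ (using the pasting law for homotopy cartesian squares in extriangulated categories), and thereby reduce to a diagram all of whose conflations have rightmost term in $\N$: the pullback $\tilde g_{1}\colon\tilde B\to N$ of $g_{1}$ then has class $s^{*}\delta_{1}\in\mathbb E^{\N}(N,K_{1})$, so it is an $\mathfrak s|_{\mathbb E^{\N}}$-deflation, and $s^{*}\delta_{3}$ is realized by the composite of $\tilde g_{1}$ with the pullback $\tilde g_{2}\colon\tilde D\to\tilde B$ of $g_{2}$ along $\tilde B\to B$. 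Since $\mathbb E^{\N}$ is itself closed (Lemma~\ref{extensionclosed}), its deflations compose, so it would be enough to show that $\tilde g_{2}$ is an $\mathfrak s|_{\mathbb E^{\N}}$-deflation, i.e.\ that $(\tilde B\to B)^{*}\delta_{2}\in\mathbb E^{\N}(\tilde B,K_{2})$.

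To attack this I would resolve $\tilde B$: writing $s^{*}\delta_{1}=a_{*}(\varepsilon)$ with $a\colon P\to K_{1}$, $P\in\N$, $\varepsilon\in\mathbb E(N,P)$, and realizing $\varepsilon$ by a conflation $P\to Q\to N$ with $Q\in\N$ (this is where extension-closedness of $\N$ enters), the pushout description of $\tilde B$ yields a conflation $P\to Q\oplus K_{1}\to\tilde B$, so $\tilde B$ admits an $\mathfrak s|_{\mathbb E^{\N}}$-deflation from $Q\oplus K_{1}$ (its kernel $P$ lies in $\N$, so its class automatically lies in $\mathbb E^{\N}$). Pulling $(\tilde B\to B)^{*}\delta_{2}$ back along this deflation splits it, in $\mathbb E(Q\oplus K_{1},K_{2})=\mathbb E(Q,K_{2})\oplus\mathbb E(K_{1},K_{2})$, into a $Q$-component lying in $\mathbb E^{\N}(Q,K_{2})$ (since $\delta_{2}\in\mathbb E^{L}_{\N}(B,-)$ and $Q\in\N$) and a $K_{1}$-component equal to $i_{1}^{*}\delta_{2}$, where $i_{1}\colon K_{1}\to B$ is the inflation; so the whole problem is reduced to showing $i_{1}^{*}\delta_{2}\in\mathbb E^{\N}(K_{1},K_{2})$.

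This last reduction is where I expect the genuine difficulty to lie: a priori $i_{1}^{*}\delta_{2}$ is only known to lie in the larger group $\mathbb E^{L}_{\N}(K_{1},K_{2})$. In the octahedron, $i_{1}^{*}\delta_{2}$ is precisely the class of the conflation $K_{2}\to K_{3}\to K_{1}$, so one option is to set up an induction on this ``shorter'' instance. A cleaner alternative, which I would pursue in parallel, is to bypass composition of deflations entirely: identify the full subcategory $\Def\mathbb E^{L}_{\N}\subseteq\Mod\C$ of defects of $\mathfrak s|_{\mathbb E^{L}_{\N}}$-conflations directly as a Serre subcategory of $\Def\mathbb E$ — using the short exact sequence of defects attached to a pullback (Remark~\ref{rmk:defectses}) to check closure under subobjects, quotients, and extensions — and then invoke Enomoto's poset isomorphism in Theorem~\ref{thm:closedsubbifunctorserresubcategory} to conclude that $\mathbb E^{L}_{\N}$ is closed. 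In either approach, $\mathbb E^{R}_{\N}$ is then handled by the opposite-category reduction noted at the outset.
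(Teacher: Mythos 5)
Your setup (additivity and bifunctoriality of $\mathbb E^{L}_{\N}$, the reduction via Proposition~\ref{closedsubbifunctor} to closure of $\mathfrak s|_{\mathbb E^{L}_{\N}}$-deflations under composition, and the duality handling $\mathbb E^{R}_{\N}$) is fine and agrees with the paper. The gap is in the main reduction. By insisting that the pullback $\tilde g_2$ of $g_2$ over $\tilde B$ be an $\mathfrak s|_{\mathbb E^{\N}}$-deflation, and then descending along $Q\oplus K_1\to\tilde B$, you are forced into the claim $i_1^{*}\delta_2\in\mathbb E^{\N}(K_1,K_2)$. That claim is strictly stronger than the proposition and is false in general: take $\N$ to consist only of zero objects, so that $\mathbb E^{\N}=0$ while $\mathbb E^{L}_{\N}=\mathbb E$ (the defining condition is vacuous); your claim would then force $i_1^{*}\delta_2=0$ for every pair of composable deflations, which fails already for non-split composable conflations in an abelian category, even though the proposition is trivially true there. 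So no induction can rescue this step. The Serre-subcategory detour does not close the gap either as sketched: verifying that $\Def\mathbb E^{L}_{\N}$ is closed under extensions inside $\Def\mathbb E$ is essentially equivalent to the composition axiom you are trying to prove, and even granting it, Theorem~\ref{thm:closedsubbifunctorserresubcategory} only produces \emph{some} closed subbifunctor with that defect category; identifying it with $\mathbb E^{L}_{\N}$ would require knowing that membership in the not-yet-closed subbifunctor $\mathbb E^{L}_{\N}$ is detected by the defect alone.

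The paper's proof uses your object $Q$ (there called $N''$: an extension of $N$ by an object of $\N$, hence in $\N$ by extension-closedness) as a cover $b\colon N''\to N$ of the \emph{base} $N$, rather than as part of a cover of $\tilde B$. The hypothesis $\delta_2\in\mathbb E^{L}_{\N}$ is then invoked a \emph{second} time, with the test map $N''\to B$ whose source lies in $\N$ --- precisely the move unavailable at $K_1$ or $\tilde B$, which need not lie in $\N$. This yields a lift of $N''$ into the total homotopy pullback and a morphism of extensions $\gamma\to s^{*}\delta_3$ over $b$ with $\gamma$ pushed forward from an object of $\N$, whence $b^{*}(s^{*}\delta_3)\in\mathbb E^{\N}$; since $b$ is an $\mathfrak s|_{\mathbb E^{\N}}$-deflation, the descent property of the closed subbifunctor $\mathbb E^{\N}$ (\cite[Lemma 3.14]{HerschendLiuNakaoka21}) gives $s^{*}\delta_3\in\mathbb E^{\N}(N,K_3)$ directly, without ever requiring $\tilde g_2$ to be an $\mathbb E^{\N}$-deflation.
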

\begin{proof}
We prove that $\mathbb E^{L}_{\N}$ is a closed subbifunctor. The other case is similar.

Put $\mathbb E'=\mathbb E^{\mathcal N}$ and $\mathbb F=\mathbb E^{L}_{\N}$.
Let 
\[
\begin{tikzcd}
E\ar[r,"f"]&D\ar[r,"j"]&C\ar[r,dashed,"\delta"]&\;
\end{tikzcd},
\]
\[
\begin{tikzcd}
H\ar[r,"r"]&B\ar[r,"p"]&D\ar[r,dashed,"\psi"]&\;
\end{tikzcd}
\]
 be $\mathfrak s|_{\mathbb F}\mbox{-}$triangles.
 Let $N$ be an object in $\N$ and $s:N\rightarrow C$ any morphism in $\C$.
 Then we have the following diagram in $\C$
 \[
 \begin{tikzcd}
 A\ar[r,"i"]&B\ar[r,"jp"]\ar[d,"p"]&C\ar[d,equal]\ar[r,dashed,"\theta"]&\;\\
E\ar[r,"f"swap]&D\ar[r,"j"swap]&C\ar[r,dashed,"\delta"swap]&\;
\end{tikzcd}
 \]
  where the first row is an $\mathfrak s\mbox{-}$triangle.
  By Axiom $(\mathrm{ET3})^{op}$, there exists a morphism $(a,\Id_{C}):\theta\rightarrow \delta$ which is realized by $(a,p,\Id_{C})$ and in particular, we have $a_*(\theta)=\delta$.
  
  We have the following diagram in $\C$ made of $\mathfrak s\mbox{-}$triangles
   \[
 \begin{tikzcd}
 A\ar[r,"u"]\ar[d,equal]&F\ar[r,"q"]&N\ar[r,dashed,"s^{*}(\theta)"]\ar[d,"s"]&\;\\
 A\ar[r,"i"swap]&B\ar[r,"jp"swap]&C\ar[r,dashed,"\theta"swap]&\;
\end{tikzcd}.
 \]
 By Lemma~\ref{LN}, there exists a morphism $z:F\rightarrow B$ such that 
\begin{equation}\label{dia:Etriangle}
 \begin{tikzcd}
 F\ar[r,"\begin{bmatrix}z\\q\end{bmatrix}"]&B\oplus N \ar[r,"{[}jp{,}\;-s{]}"]&C\ar[r,dashed,"u_{*}(\theta)"]&\;
\end{tikzcd}
\end{equation}
is an $\mathbb E\mbox{-}$triangle. 

 By definition, we have $s^*(\delta)\in\mathbb E'(N,E)$ and hence we have some morphism $t:N' \rightarrow E$ and an $\mathbb E$-extension $\mu\in \mathbb E(N,N')$ with $N'\in \mathcal N$ and $t_*(\mu)=s^*(\delta)$. So we have the following diagram
 \[
 \begin{tikzcd}
 N'\ar[r,"a"]\ar[d,"t"]&N''\ar[d,"m"]\ar[r,"b"]&N\ar[d,equal]\ar[r,"\mu",dashed]&\;\\
 E\ar[r]\ar[d,equal]&G\ar[d,"n"]\ar[r]&N\ar[d,"s"]\ar[r,dashed,"s^{*}(\delta)"]&\;\\
 E\ar[r,"f"]&D\ar[r,"j"]&C\ar[r,dashed,"\delta"]&\;
 \end{tikzcd}.
 \]
 By definition, we have $(nm)^{*}(\psi)\in \mathbb E'(N'',H)$ and hence we have some morphism $g:\tilde{N}\rightarrow H$ and an $\mathbb E$-extension $\gamma\in \mathbb E(N'', \tilde{N})$ with $\tilde{N}\in \N$ and $g_*(\gamma)=(nm)^{*}(\psi)$. 
 So we have the following diagram
 \[
 \begin{tikzcd}
 \tilde{N}\ar[r,"v"]\ar[d,"g"]&\tilde{N'}\ar[d,"k"]\ar[r,"w"]&N''\ar[d,equal]\ar[r,"\gamma",dashed]&\;\\
 H\ar[r]\ar[d,equal]&G\ar[d,"l"]\ar[r]&N''\ar[d,"nm"]\ar[r,dashed,"(nm)^{*}(\psi)"]&\;\\
 H\ar[r,"r"]&B\ar[r,"p"]&D\ar[r,dashed,"\psi"]&\;
 \end{tikzcd}.
 \]
 So we have the following diagram
 \[
 \begin{tikzcd}
 \tilde{N'}\ar[r, "w"]\ar[rd,dashed,"c"description]\ar[ddr,bend right=6ex,"lk"swap]&N''\ar[rd,"b"]&\\
 &F\ar[r,"q"]\ar[d,"z"]&N\ar[d,"s"]\\
 &B\ar[r,"jp"swap]&C
 \end{tikzcd}
 \]
By the $\mathbb E$-triangle~\ref{dia:Etriangle}, there exists a (non-unique) morphism $c:\tilde{N'}\rightarrow F$ making the above diagram commute.
We apply axiom $(\mathrm{ET3})^{op}$ to the following diagram
\[
\begin{tikzcd}
\tilde{N}\ar[r,"v"]\ar[d,dashed,"d"swap]&\tilde{N'}\ar[d,"c"]\ar[r,"w"]&N''\ar[d,"b"]\ar[r,"\gamma",dashed]&\;\\
 A\ar[r,"u"swap]&F\ar[r,"q"swap]&N\ar[r,dashed,"s^{*}(\theta)"swap]&\;
\end{tikzcd}
\]
 and obtain a morphism of $\mathbb E$-triangles $(d,b):\gamma\rightarrow s^*(\theta)$.
 This implies that $b^*(s^*(\theta))$ is an $\mathbb E'$-extension.
 Since $b$ is an $\mathbb E'$-deflation, by \cite[Lemma 3.14]{HerschendLiuNakaoka21} we have $s^*(\theta)\in \mathbb E'(N,A)$ and hence $jp:B\rightarrow C$ is an $\mathbb F$-deflation. This finishes the proof.
\end{proof}
\subsection{Some diagram lemmas }
Let $(\mathcal C,\mathbb E,\mathfrak s)$ be an extriangulated category. 
The following lemma is called (ET4-2) in \cite{KongLinWang21}.
\begin{lemma}[\cite{LiuNakaoka19}, Proposition 1.20]\label{LN}
Let  
\[
\begin{tikzcd}
A\ar[r,"x"]&B\ar[r,"y"]&C\ar[r,dashed,"\delta"]&\ 
\end{tikzcd}
\]
be any $\mathbb E\mbox{-}$triangle, let $f:A\rightarrow D$ any morphism, and let
\[
\begin{tikzcd}
D\ar[r,"d"]&E\ar[r,"e"]&C\ar[r,dashed,"f_{*}\delta"]&\ 
\end{tikzcd}
\]
be any $\mathbb E\mbox{-}$triangle realizing $f_{*}\delta$.
Then there is a morphism $g$ which gives a morphism of $\mathbb E\mbox{-}$triangles
\[
\begin{tikzcd}
A\ar[r,"x"]\ar[d,"f"swap]&B\ar[r,"y"]\ar[d,"g"red,dashed]&C\ar[d,equal]\ar[r,dashed,"\delta"]&\ \\
D\ar[r,"d"swap]            &E\ar[r,"e"swap]              &C\ar[r,dashed,"f_{*}\delta"swap]           &\ 
\end{tikzcd}
\]
and moreover, the sequence
\[
\begin{tikzcd}
A\ar[r,"\begin{bmatrix} {-}f \\ x \end{bmatrix}"] & D\oplus B\ar[r,"{[}d{,}\; g{]}"]&E\ar[r,dashed,"e^{*}\delta"]&\ 
\end{tikzcd}
\]
becomes an $\mathbb E\mbox{-}$triangle.
\end{lemma}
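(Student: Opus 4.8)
The plan is to derive both assertions from the axioms of an extriangulated category in the form recorded in \cite{NakaokaPalu19}; indeed the statement is a mild reformulation of the ``base change'' consequence of $(\mathrm{ET4})$, and it is \cite[Proposition~1.20]{LiuNakaoka19}. The first assertion is essentially a formality: the pair $(f,\mathrm{id}_{C})$ is a morphism of $\mathbb E$-extensions from $\delta$ to $f_{*}\delta$, because $(\mathrm{id}_{C})^{*}(f_{*}\delta)=f_{*}\delta=f_{*}\big((\mathrm{id}_{C})^{*}\delta\big)$; hence the lifting property built into the definition of a realization, applied to the prescribed realizations $\mathfrak s(\delta)=[A\xrightarrow{x}B\xrightarrow{y}C]$ and $\mathfrak s(f_{*}\delta)=[D\xrightarrow{d}E\xrightarrow{e}C]$, already produces a morphism $g\colon B\to E$ with $gx=df$ and $eg=y$. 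The content of the lemma is that $g$ can be chosen so that the ``moreover'' part holds simultaneously, and that is what I would establish next.

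The key step is to realize the first map of the candidate sequence as a composition of two inflations and apply $(\mathrm{ET4})$. Concretely, the column $(-f,x)^{\intercal}\colon A\to D\oplus B$ factors as
\[
A\xrightarrow{\ (-f,\mathrm{id}_{A})^{\intercal}\ }D\oplus A\xrightarrow{\ \mathrm{id}_{D}\oplus x\ }D\oplus B .
\]
The first map is a split inflation with cokernel object $D$ (it differs from the canonical inclusion $A\to D\oplus A$ by an automorphism of $D\oplus A$), so it sits in the split $\mathbb E$-triangle $A\xrightarrow{(-f,\mathrm{id}_{A})^{\intercal}}D\oplus A\xrightarrow{[\mathrm{id}_{D},f]}D$ with zero connecting extension; the second map sits in the $\mathbb E$-triangle $D\oplus A\xrightarrow{\mathrm{id}_{D}\oplus x}D\oplus B\xrightarrow{[0,y]}C$ obtained as the direct sum of $D\xrightarrow{\mathrm{id}_{D}}D\to0$ with the given $\mathbb E$-triangle, whose connecting extension is $(\iota_{A})_{*}\delta$ for the inclusion $\iota_{A}\colon A\to D\oplus A$. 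Feeding this composable pair into $(\mathrm{ET4})$ produces an object $\bar E$, morphisms $\bar h\colon D\oplus B\to\bar E$, $\bar d\colon D\to\bar E$, $\bar e\colon\bar E\to C$ fitting into its octahedral diagram, and an $\mathbb E$-extension $\bar\delta\in\mathbb E(\bar E,A)$ realized by $A\xrightarrow{(-f,x)^{\intercal}}D\oplus B\xrightarrow{\bar h}\bar E$, subject to the compatibilities that $D\xrightarrow{\bar d}\bar E\xrightarrow{\bar e}C$ realizes $\big([\mathrm{id}_{D},f]\big)_{*}(\iota_{A})_{*}\delta=f_{*}\delta$ (using $[\mathrm{id}_{D},f]\circ\iota_{A}=f$) and that $\big((-f,\mathrm{id}_{A})^{\intercal}\big)_{*}\bar\delta=\bar e^{*}(\iota_{A})_{*}\delta$.

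It then remains to transport this along the comparison isomorphism between the two realizations of $f_{*}\delta$. Since $D\xrightarrow{\bar d}\bar E\xrightarrow{\bar e}C$ and the given $D\xrightarrow{d}E\xrightarrow{e}C$ both realize $f_{*}\delta$, there is an isomorphism $\psi\colon\bar E\to E$ with $\psi\bar d=d$ and $e\psi=\bar e$; composing with $\psi$ turns the $\mathbb E$-triangle $A\xrightarrow{(-f,x)^{\intercal}}D\oplus B\xrightarrow{\bar h}\bar E$ realizing $\bar\delta$ into the $\mathbb E$-triangle $A\xrightarrow{(-f,x)^{\intercal}}D\oplus B\xrightarrow{\psi\bar h}E$ realizing $(\psi^{-1})^{*}\bar\delta$. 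The commutativity of the octahedron gives $\psi\bar h\circ(\mathrm{id}_{D}\oplus x)=d\circ[\mathrm{id}_{D},f]$ and $e\circ\psi\bar h=[0,y]$, which force $\psi\bar h=[d,g]$ for a unique $g\colon B\to E$; reading off blocks gives $gx=df$ and $eg=y$, so this $g$ also serves for the first assertion. Finally, applying $(p_{A})_{*}$ for the projection $p_{A}\colon D\oplus A\to A$ to the identity $\big((-f,\mathrm{id}_{A})^{\intercal}\big)_{*}\bar\delta=\bar e^{*}(\iota_{A})_{*}\delta$, and using $p_{A}\circ(-f,\mathrm{id}_{A})^{\intercal}=\mathrm{id}_{A}$ and $p_{A}\circ\iota_{A}=\mathrm{id}_{A}$, gives $\bar\delta=\bar e^{*}\delta$; since $\bar e=e\psi$, this yields $(\psi^{-1})^{*}\bar\delta=(\bar e\circ\psi^{-1})^{*}\delta=e^{*}\delta$, so the transported $\mathbb E$-triangle is exactly $A\xrightarrow{(-f,x)^{\intercal}}D\oplus B\xrightarrow{[d,g]}E$ with connecting extension $e^{*}\delta$, as required.

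The step I expect to be the main nuisance is matching the data of $(\mathrm{ET4})$ precisely: the exact form and signs of the compatibility relations it outputs differ between references, so one must check that the connecting extension of the transported sequence comes out literally equal to $e^{*}\delta$ rather than merely to isomorphic data; the outline above is organized so that this reduces to applying the two projections of $D\oplus A$ to a single identity of $\mathbb E$-extensions. An equivalent route, if one prefers not to invoke $(\mathrm{ET4})$ by hand, is to quote the homotopy-cocartesian-square construction of \cite{NakaokaPalu19}: the lemma is precisely the assertion that the homotopy pushout of the span $D\xleftarrow{f}A\xrightarrow{x}B$ yields an $\mathbb E$-triangle $A\to D\oplus B\to E$, and the argument above is that construction written out.
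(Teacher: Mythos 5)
The paper does not prove this lemma: it is imported verbatim from Liu--Nakaoka (cited as \cite[Proposition 1.20]{LiuNakaoka19}, a.k.a.\ (ET4-2) in \cite{KongLinWang21}) with no argument given, so there is nothing in the text to compare your proof against line by line. Your reconstruction is correct and is essentially the standard derivation of this ``base change'' statement from the axioms. The first assertion is indeed immediate from the lifting property in the definition of a realization applied to the morphism of extensions $(f,\Id_C)\colon\delta\to f_*\delta$; the substance is, as you say, getting a single $g$ that also makes the second sequence an $\mathbb E$-triangle with connecting class exactly $e^*\delta$. Your factorization $(-f,x)^{\intercal}=(\Id_D\oplus x)\circ(-f,\Id_A)^{\intercal}$, with the first map a split inflation of class $0$ and the second the direct sum triangle of class $(\iota_A)_*\delta$, feeds correctly into $(\mathrm{ET4})$; the compatibilities you quote ($[\Id_D,f]_*(\iota_A)_*\delta=f_*\delta$ and $((-f,\Id_A)^{\intercal})_*\bar\delta=\bar e^{*}(\iota_A)_*\delta$) are the ones the axiom delivers, and applying $(p_A)_*$ to the latter does collapse it to $\bar\delta=\bar e^{*}\delta$ since both $p_A\circ(-f,\Id_A)^{\intercal}$ and $p_A\circ\iota_A$ are $\Id_A$. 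The transport along the comparison isomorphism $\psi$ of the two realizations of $f_*\delta$, and the block decomposition forcing $\psi\bar h=[d,g]$ with $gx=df$, $eg=y$, are also handled correctly, so the same $g$ serves both conclusions. The only implicit ingredients you should be aware of are the additivity of $\mathfrak s$ (so that the direct sum of $\mathbb E$-triangles is an $\mathbb E$-triangle of class $0\oplus\delta=(\iota_A)_*\delta$) and the stability of realizations under composing with isomorphisms of the end terms (used twice: for the twisted split triangle and for $\psi$); both are part of, or immediate from, the Nakaoka--Palu definitions, so the argument is complete.
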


\begin{definition}[\cite{He19}, Definition~3.1]\label{def:extrihomotopycartesian}
A commutative square 
\begin{equation}\label{dia:extrihomotopycartesian}
\begin{tikzcd}
A\ar[r,"f"]\ar[d,"a"swap]&B\ar[d,"b"]\\
C\ar[r,"u"swap]&D
\end{tikzcd}
\end{equation}
is a {\em homotopy cartesian square} if there exists an $\mathbb E$-triangle
\[
\begin{tikzcd}
A\ar[r,"\begin{bmatrix}f\\a\end{bmatrix}"]&B\oplus C \ar[r,"{[}-b{,}\; u{]}"]&D\ar[r,dashed,"\delta"]&\;
\end{tikzcd}
\] 
in $\C$. The $\mathbb E$-extension $\delta$ is called a {\em differential} for the square.
\end{definition}
\begin{lemma}\label{lem:extrihomotopycartesiandeflation}
Consider a homotopy cartesian square~\ref{dia:extrihomotopycartesian} with a differential $\delta$.
Then $f$ is an $\mathfrak s$-deflation (resp.~$\mathfrak s$-inflation) if and only if so is $u$.
\end{lemma}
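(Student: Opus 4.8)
The final statement to prove is Lemma~\ref{lem:extrihomotopycartesiandeflation}: given a homotopy cartesian square~\ref{dia:extrihomotopycartesian} with differential $\delta$ realized by the $\mathbb E$-triangle $A \xrightarrow{[f,a]^{\intercal}} B\oplus C \xrightarrow{[-b,u]} D \dashrightarrow$, the map $f$ is an $\mathfrak s$-deflation if and only if $u$ is. By the symmetry of the square (swap the roles of the two rows/columns), it suffices to prove one implication, say: if $u$ is an $\mathfrak s$-deflation, then so is $f$; the dual statement about inflations then follows by passing to $\C^{op}$, since homotopy cartesian squares and differentials are self-dual notions under $(-)^{op}$.

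The plan is to realize $f$ as a base change (pullback) of $u$ along $b$, and then invoke Axiom $(\mathrm{ET2})$ / the standard fact that a base change of an $\mathfrak s$-deflation along an arbitrary morphism is again an $\mathfrak s$-deflation (this is part of the axioms of an extriangulated category, or \cite[Proposition 3.15]{HerschendLiuNakaoka21}). Concretely, since $u : C \to D$ is an $\mathfrak s$-deflation, there is an $\mathbb E$-extension $\rho \in \mathbb E(D, K)$ with an $\mathbb E$-triangle $K \to C \xrightarrow{u} D \overset{\rho}{\dashrightarrow}$. First I would apply Lemma~\ref{LN} (the dual form) or directly Axiom $(\mathrm{ET4})^{op}$ to build the pullback of this triangle along $b : B \to D$: this yields an $\mathbb E$-triangle $K \to P \xrightarrow{p} B \overset{b^{*}\rho}{\dashrightarrow}$ together with a morphism $P \to C$ making a morphism of $\mathbb E$-triangles over $b$, and $p$ is an $\mathfrak s$-deflation. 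The key point is then to identify $P$ with $A$ and $p$ with $f$ (up to isomorphism compatible with the data): the universal property encoded in the $\mathbb E$-triangle defining $\delta$ says precisely that $A$ together with $f$ and $a$ forms such a pullback, so uniqueness of the pullback $\mathbb E$-triangle up to a canonical isomorphism of triangles (again \cite{LiuNakaoka19}, or the defining property that homotopy cartesian squares with fixed differential are determined up to isomorphism) gives an isomorphism $A \iso P$ under $B$ and $C$ carrying $f$ to $p$. Since the class of $\mathfrak s$-deflations is closed under isomorphism in $\Mor(\C)$, it follows that $f$ is an $\mathfrak s$-deflation.

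More efficiently, one can argue directly from the $\mathbb E$-triangle $A \xrightarrow{[f,a]^{\intercal}} B\oplus C \xrightarrow{[-b,u]} D \dashrightarrow \delta$ without an explicit reconstruction: composing the deflation $[-b,u] : B\oplus C \to D$ with nothing, we already know $[-b, u]$ is an $\mathfrak s$-deflation (it appears in an $\mathbb E$-triangle). On the other hand, when $u$ is an $\mathfrak s$-deflation, the morphism $[-b, u]$ factors as $B\oplus C \xrightarrow{\mathrm{pr}_C + (\text{shear by } b)} \cdots$; more cleanly, consider the $\mathfrak s$-triangle $A \xrightarrow{[f,a]^{\intercal}} B \oplus C \xrightarrow{[-b,u]} D \dashrightarrow$ and the split $\mathfrak s$-triangle $C \xrightarrow{[0,1]^{\intercal}} B\oplus C \xrightarrow{[1,0]} B \dashrightarrow 0$; using Axiom $(\mathrm{ET4})$ applied to the composable pair $[0,1]^{\intercal}$ followed by $[-b,u]$ produces, via the octahedron-type diagram, exactly an $\mathbb E$-triangle exhibiting $f : A \to B$ as an $\mathfrak s$-deflation whenever $u : C \to D$ is, with the connecting datum controlled by $\delta$. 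I would write this out as the main body of the proof.

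The main obstacle I anticipate is purely bookkeeping: matching the morphism $[f,a]^{\intercal}$ and $[-b,u]$ against the conclusion of $(\mathrm{ET4})$ (or against the pullback construction of Lemma~\ref{LN}) so that the resulting $\mathbb E$-triangle has $f$, and not some morphism merely isomorphic to $f$, in the deflation slot — and then invoking the stability of $\mathfrak s$-deflations under isomorphism to close the gap. There is no deep content; the work is in choosing the cleanest of the equivalent routes (direct use of $(\mathrm{ET4})$ versus base-change via Lemma~\ref{LN}) and being careful with signs in the $2\times 2$ matrices. Once one implication and one of the two variants (deflation) is established, the inflation case and the converse implication are immediate by duality, so I would state those explicitly but prove them only by the $(-)^{op}$ reduction.
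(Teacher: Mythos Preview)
Your proposal is correct and its core (the pullback-plus-uniqueness argument) is exactly how the paper handles the implication ``$u$ a deflation $\Rightarrow$ $f$ a deflation'': one forms the pullback of $u$ along $b$ using the dual of Lemma~\ref{LN}, obtains a second homotopy cartesian square on the same cospan $B\xrightarrow{b}D\xleftarrow{u}C$, and concludes by the fact that two $\mathbb E$-triangles with the same deflation have isomorphic first terms compatibly with the inflations (an application of (ET3)$^{op}$ together with \cite[Corollary 3.6]{NakaokaPalu19}).

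The one genuine difference is organizational. You reduce to a single implication via the evident symmetry of the square (swapping $(f,a)\leftrightarrow(u,b)$ turns the differential $\delta$ into $-\delta$, so the transposed square is again homotopy cartesian). The paper instead proves the two implications by different means: for ``$f$ deflation $\Rightarrow$ $u$ deflation'' it applies \cite[Proposition 3.17]{NakaokaPalu19} directly to the pair of $\mathbb E$-triangles $E\to A\xrightarrow{f}B$ and $A\xrightarrow{[f,a]^{\intercal}}B\oplus C\xrightarrow{[-b,u]}D$, reading off an $\mathbb E$-triangle with deflation $u$; for the converse it runs the pullback argument you describe. Your symmetry reduction is cleaner and avoids invoking \cite[Proposition 3.17]{NakaokaPalu19}; the paper's route has the small advantage that it produces the new $\mathbb E$-triangle explicitly (with its connecting extension) rather than going through an abstract identification of two pullbacks. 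Your ``more efficient'' alternative via (ET4) on the split triangle $C\to B\oplus C\to B$ is, up to the choice of which axiom to cite, the same as the paper's use of \cite[Proposition 3.17]{NakaokaPalu19}; if you pursue that route, make precise which form of (ET4) you apply (it is the dual/Proposition~3.17 version, since one of the two triangles has $B\oplus C$ as its \emph{middle} term rather than composing two inflations).
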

\begin{proof}
We only prove the deflation part of the statement. 
Suppose that $f$ is an $\mathfrak s$-deflation. Let 
\[
\begin{tikzcd}
E\ar[r,"g"]&A\ar[r,"f"]&B\ar[r,dashed,"\theta"]&\;
\end{tikzcd}
\]
be an $\mathbb E$-triangle.
We apply \cite[Proposition 3.17]{NakaokaPalu19} to the following diagram
\[
\begin{tikzcd}
E\ar[r,"g"]&A\ar[r,"f"]\ar[d,"\begin{bmatrix}f\\a\end{bmatrix}"]&B\ar[r,dashed,"\theta"]\ar[d,equal]&\;\\
C\ar[r]&B\oplus C\ar[d,"{[}-b{,}\; u{]}"]\ar[r,"{[}1{,}\;0{]}"]&B\ar[r,"0",dashed]&\;\\
D\ar[r,equal]&D\ar[d,dashed,"\delta"]&&\\
&\;&&
\end{tikzcd}
\]
and hence $u:C\rightarrow D$ is also an $\mathfrak s$-deflation. 

Conversely, suppose that $u$ is an $\mathfrak s$-deflation.
Let 
\[
\begin{tikzcd}
E\ar[r,"h"]&C\ar[r,"u"]&D\ar[r,dashed,"\mu"]&\;
\end{tikzcd}
\]
be an $\mathbb E$-triangle.
We have the following diagram
\[
\begin{tikzcd}
E\ar[r,"g'"]\ar[d,equal]&A'\ar[r,"f'"]\ar[d,"a'"red,dashed]&B\ar[d,"b"l]\ar[r,dashed,"b^{*}(\mu)"]&\ \\
E\ar[r,"h"swap]            &C\ar[r,"u"swap]              &D\ar[r,dashed,"\mu"swap]           &\ 
\end{tikzcd}
\]
By the dual of Lemma~\ref{LN}, there exists a morphism $a':A'\rightarrow C$ making the above diagram commute and such that the right hand side square is homotopy cartesian with a differential given by $g'_{*}(\mu)$. 
It follows that there exists an isomorphism $j:A\rightarrow A'$ such that $f'j=f$ and hence $f$ is also an $\mathfrak s$-deflation.
\end{proof}
The following lemma is called (ET4-5) in \cite{KongLinWang21}.
\begin{lemma}[\cite{KongLinWang21}, Theorem 3.3]
Let  
\[
\begin{tikzcd}
A_1\ar[r,"x_1"]&B_1\ar[r,"y_1"]&C\ar[r,dashed,"\delta"]&\ 
\end{tikzcd}
\]
and 
\[
\begin{tikzcd}
A_2\ar[r,"x_2"]&B_2\ar[r,"y_2"]&C\ar[r,dashed,"\delta'"]&\ 
\end{tikzcd}
\]
be $\mathbb E-$triangles and $g:B_1\rightarrow B_2$ be a morphism such that $y_2g=y_1$. 
Then there is a morphism $f:A_1\rightarrow A_2$ which gives a morphism of $\mathbb E\mbox{-}$triangles
\[
\begin{tikzcd}
A_1\ar[r,"x_1"]\ar[d,"f"swap,dashed]&B_1\ar[r,"y_1"]\ar[d,"g"]&C\ar[d,equal]\ar[r,dashed,"\delta"]&\ \\
A_2\ar[r,"x_2"swap]                         &B_2\ar[r,"y_2"swap]              &C\ar[r,dashed,"\delta'"swap]                  &\ 
\end{tikzcd}
\]
and moreover, the squence
\[
\begin{tikzcd}
A_1\ar[r,"\begin{bmatrix} {-}f \\ x_1 \end{bmatrix}"] & A_2\oplus B_1\ar[r,"{[}x_2{,}g{]}"]&B_2\ar[r,dashed,"y_2^{*}\delta"]&\ 
\end{tikzcd}
\]
becomes an $\mathbb E\mbox{-}$triangle.
\end{lemma}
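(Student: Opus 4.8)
The plan is to prove the assertion in two stages: first produce the morphism $f$ together with the claimed morphism of $\mathbb{E}$-triangles, and then upgrade this to the asserted $\mathbb{E}$-triangle on $A_2\oplus B_1$. For the first stage, note that the hypothesis $y_2 g = y_1$ says exactly that the right-hand square with rows $y_1,y_2$ and verticals $g,\Id_C$ commutes. I would feed this square into axiom $(\mathrm{ET3})^{op}$ (the dual of the axiom used earlier to build the extriangulated structure), obtaining a morphism $f_0\colon A_1\to A_2$ such that $(f_0,g,\Id_C)$ is a morphism of $\mathbb{E}$-triangles; in particular $x_2 f_0 = g x_1$ and $(f_0)_*\delta = \Id_C^{*}\delta' = \delta'$.

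For the second stage I would invoke Lemma~\ref{LN}. Since $(f_0)_*\delta = \delta'$ is realized by $A_2\xrightarrow{x_2}B_2\xrightarrow{y_2}C$, applying Lemma~\ref{LN} to the $\mathbb{E}$-triangle $A_1\xrightarrow{x_1}B_1\xrightarrow{y_1}C$ (with class $\delta$) and to the morphism $f_0$ produces a map $\tilde{g}\colon B_1\to B_2$ and an $\mathbb{E}$-triangle
\[
A_1 \xrightarrow{\begin{bmatrix}-f_0\\ x_1\end{bmatrix}} A_2\oplus B_1 \xrightarrow{[x_2,\,\tilde{g}]} B_2
\]
realizing $y_2^{*}\delta$. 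This is precisely the desired shape, except that Lemma~\ref{LN} delivers its own $\tilde{g}$ rather than the prescribed $g$. I expect the reconciliation of $g$ with $\tilde g$ to be the main obstacle. Both $(f_0,g,\Id_C)$ and $(f_0,\tilde{g},\Id_C)$ are morphisms of $\mathbb{E}$-triangles from $\delta$ to $\delta'$, so $y_2(g-\tilde{g})=0$ and $(g-\tilde{g})x_1=0$; exactness of $\C(B_1,A_2)\xrightarrow{x_{2*}}\C(B_1,B_2)\xrightarrow{y_{2*}}\C(B_1,C)$ then yields $w\colon B_1\to A_2$ with $g-\tilde{g}=x_2 w$, whence $x_2 w x_1 = (g-\tilde{g})x_1 = 0$. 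I would transport the $\mathbb{E}$-triangle along the automorphism $\Phi=\begin{bmatrix}1 & -w\\ 0 & 1\end{bmatrix}$ of $A_2\oplus B_1$: it fixes the ends $A_1$ and $B_2$, replaces $[x_2,\tilde{g}]$ by $[x_2,g]$, and changes the left-hand map to $\begin{bmatrix}-(f_0+w x_1)\\ x_1\end{bmatrix}$, the differential remaining $y_2^{*}\delta$ since isomorphic $\mathbb{E}$-triangles with the same ends carry the same extension.

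It then remains to set $f:=f_0+w x_1$ and check that this choice still validates the first stage. Indeed $x_2 f = x_2 f_0 + x_2 w x_1 = g x_1$, and $f_*\delta = (f_0)_*\delta + (w x_1)_*\delta = \delta' + w_*(x_{1*}\delta) = \delta'$, because $x_{1*}\delta = 0$ by the dual of Corollary~\ref{zero}. The delicate point throughout is exactly that $x_2$ need not be monic, so $x_2 w x_1 = 0$ does not force $w x_1 = 0$; the argument must absorb the discrepancy $w x_1$ into $f$ and then rely on the vanishing $x_{1*}\delta = 0$ to keep $(f,g,\Id_C)$ a morphism of $\mathbb{E}$-triangles. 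With $f$ so chosen, the transported triangle is the required one.
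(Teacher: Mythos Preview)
The paper does not supply its own proof of this lemma; it is quoted from \cite[Theorem~3.3]{KongLinWang21} and then used as input for the lemma that follows. So there is nothing to compare against, and your argument stands on its own.

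Your proof is correct and follows the natural route: obtain a preliminary $f_0$ via $(\mathrm{ET3})^{op}$, invoke Lemma~\ref{LN} to produce the desired $\mathbb{E}$-triangle shape but with an uncontrolled middle map $\tilde g$, and then conjugate by the unipotent automorphism $\Phi=\begin{bsmallmatrix}1&-w\\0&1\end{bsmallmatrix}$ of $A_2\oplus B_1$ to exchange $\tilde g$ for the prescribed $g$, absorbing the correction term $wx_1$ into $f$. The checks that $x_2 f = gx_1$ and $f_*\delta=\delta'$ are clean, and the extension class $y_2^*\delta$ is indeed preserved because $(\Id_{A_1},\Phi,\Id_{B_2})$ is an equivalence of realizations.

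One minor point: you cite ``the dual of Corollary~\ref{zero}'' for the vanishing $x_{1*}\delta=0$, but that corollary is stated and proved in the exact-dg setting of Section~5, whereas here you are working in a general extriangulated category. The fact itself is of course standard (it is an immediate consequence of the long exact sequence associated with an $\mathfrak{s}$-triangle in \cite{NakaokaPalu19}), so this is only a matter of pointing to the correct reference.
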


Using the above lemma, we prove a slightly stronger version of \cite[Proposition 3.17]{NakaokaPalu19}.
\begin{lemma}
Suppose we are given $\mathbb E\mbox{-}$triangles
\[
\begin{tikzcd}
A_1\ar[r,"x_1"]&B_1\ar[r,"y_1"]&C\ar[r,dashed,"\delta"]&,
\end{tikzcd}
\]
\[
\begin{tikzcd}
A_2\ar[r,"x_2"]&B_2\ar[r,"y_2"]&C\ar[r,dashed,"\delta'"]&,
\end{tikzcd}
\]
\[
\begin{tikzcd}
B_1\ar[r,"g"]&B_2\ar[r,"h"]&D\ar[r,dashed,"\delta''"]&\ 
\end{tikzcd}
\]
satisfying $y_2\circ g=y_1$. Then there is an $\mathbb E\mbox{-}$triangle 
\[
\begin{tikzcd}
A_1\ar[r,"f"]&A_2\ar[r,"k"]&D\ar[r,dashed,"\theta"]&\ 
\end{tikzcd}
\]
which makes the following diagram commutative in $\mathcal C$
\[
\begin{tikzcd}
A_1\ar[r,"x_1"]\ar[d,"f"swap,dashed]&B_1\ar[r,"y_1"]\ar[d,"g"]                    &C\ar[d,equal]\\
A_2\ar[r,"x_2"]\ar[d,"k"swap,dashed]                         &B_2\ar[r,"y_2"] \ar[d,"h"]             &C\\
D\ar[r,equal]&D&
\end{tikzcd}
\]
and satisfy the following equalities:
\begin{itemize}
\item[(i)] $f_{*}(\delta)=\delta'$,
\item[(ii)] $(x_1)_*(\theta)=\delta''$,
\item[(iii)] $h^{*}(\theta)+y_2^{*}(\delta)=0$
\end{itemize}
and moreover, the sequence
\[
\begin{tikzcd}
A_1\ar[r,"\begin{bmatrix} {-}f \\ x_1 \end{bmatrix}"] & A_2\oplus B_1\ar[r,"{[}x_2{,}g{]}"]&B_2\ar[r,dashed,"y_2^{*}\delta"]&\ 
\end{tikzcd}
\]
becomes an $\mathbb E\mbox{-}$triangle.
\end{lemma}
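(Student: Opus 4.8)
The plan is to recognise the final statement as the extriangulated $3\times 3$-lemma, \cite[Proposition 3.17]{NakaokaPalu19}, with one extra piece of data appended: the assertion that $A_1 \xrightarrow{[-f,\,x_1]^{\intercal}} A_2\oplus B_1 \xrightarrow{[x_2,\,g]} B_2$ is an $\mathbb E$-triangle realizing $y_2^{*}\delta$. I would handle this appended part first, since it is literally the output of the preceding lemma. Applying \cite[Theorem 3.3]{KongLinWang21} (the lemma above) to the $\mathbb E$-triangles $A_1 \xrightarrow{x_1} B_1 \xrightarrow{y_1} C$, $A_2 \xrightarrow{x_2} B_2 \xrightarrow{y_2} C$ and the morphism $g$ (which satisfies $y_2 g = y_1$ by hypothesis) produces a morphism $f\colon A_1 \to A_2$ such that $(f,g,\mathrm{id}_C)$ is a morphism of $\mathbb E$-triangles — so the top square $x_2 f = g x_1$ commutes and $f_{*}(\delta) = \mathrm{id}_C^{*}(\delta') = \delta'$, which is (i) — and, simultaneously, exactly the $\mathbb E$-triangle $A_1 \xrightarrow{[-f,\,x_1]^{\intercal}} A_2\oplus B_1 \xrightarrow{[x_2,\,g]} B_2$ asked for. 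I would then fix $f$ to be this morphism throughout.

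Next I would construct $k$ and $\theta$, and verify (ii), (iii) and that $A_1 \xrightarrow{f} A_2 \xrightarrow{k} D$ is an $\mathbb E$-triangle, by running the proof of \cite[Proposition 3.17]{NakaokaPalu19} with this particular $f$. Concretely: apply $(\mathrm{ET4})$ to the $\mathbb E$-triangles realizing $\delta$ and $\delta''$ to obtain an object $E$, an $\mathbb E$-triangle $A_1 \xrightarrow{g x_1} B_2 \xrightarrow{m} E$, an $\mathbb E$-triangle $C \xrightarrow{n} E \xrightarrow{l} D$, and the compatibilities (including $m g = n y_1$ and $l m = h$) supplied by that axiom. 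Using $g x_1 = x_2 f$, the square with rows $A_1 \xrightarrow{x_2 f} B_2 \xrightarrow{m} E$ and $A_2 \xrightarrow{x_2} B_2 \xrightarrow{y_2} C$, left leg $f$ and right leg $\mathrm{id}_{B_2}$, commutes; feeding it to $(\mathrm{ET3})^{op}$ and then to Lemma~\ref{LN} yields the morphism $k\colon A_2 \to D$ together with the $\mathbb E$-triangle $A_1 \xrightarrow{f} A_2 \xrightarrow{k} D$ and its realizing extension $\theta$, the two remaining squares of the diagram commuting by construction. The equalities (ii) $(x_1)_{*}(\theta) = \delta''$ and (iii) $h^{*}(\theta) + y_2^{*}(\delta) = 0$ would then be obtained from the compatibility relations attached to these applications of $(\mathrm{ET4})$, $(\mathrm{ET3})^{op}$ and Lemma~\ref{LN}, together with $f_{*}(\delta) = \delta'$ and the long exact sequences associated with the $\mathbb E$-triangles in play.

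The step I expect to be the main obstacle is not the existence of $k$ and $\theta$ — that is essentially \cite[Proposition 3.17]{NakaokaPalu19} — but the coherence of the two halves of the argument: one must check that the $f$ delivered by \cite[Theorem 3.3]{KongLinWang21} is a legitimate input for the $3\times 3$-lemma construction (it is, because that construction uses $f$ only through the property that $(f,g,\mathrm{id}_C)$ is a morphism of $\mathbb E$-triangles), and, above all, one must track the sign in (iii) through the normalisations hidden inside $(\mathrm{ET4})$ and Lemma~\ref{LN}. Once the signs are pinned down, what remains is a routine diagram chase with the relevant long exact sequences.
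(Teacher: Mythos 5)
Your first step coincides with the paper's: both invoke the preceding lemma (\cite[Theorem 3.3]{KongLinWang21}) on $\delta$, $\delta'$ and $g$ to obtain $f$ such that $(f,g,\Id_C)$ is a morphism of $\mathbb E$-triangles, which gives (i) and, at the same time, the appended $\mathbb E$-triangle $A_1\rightarrow A_2\oplus B_1\rightarrow B_2$ realizing $y_2^{*}\delta$. Where you diverge is in producing $k$, $\theta$ and the identities (ii)--(iii): you propose to rerun the proof of the $3\times 3$-lemma from scratch, starting from $(\mathrm{ET4})$ applied to the composable inflations $x_1$ and $g$, whereas the paper feeds the freshly constructed triangle $A_1\xrightarrow{[-f,\,x_1]^{\intercal}} A_2\oplus B_1\xrightarrow{[x_2,\,g]} B_2$, the split column $B_1\rightarrow A_2\oplus B_1\rightarrow A_2$ and the given column $B_1\xrightarrow{g}B_2\xrightarrow{h}D$ into the dual of \cite[Proposition 3.17]{NakaokaPalu19}. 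That choice of input is the real trick: the cited proposition hands over the bottom row $A_1\xrightarrow{-f}A_2\xrightarrow{k}D$ with extension $-\theta$ (the sign being absorbed by replacing $(-f,-\theta)$ with $(f,\theta)$), the relation $h^{*}(-\theta)=y_2^{*}\delta$, which is exactly (iii), and the relation $\bigl[\begin{smallmatrix}-f\\x_1\end{smallmatrix}\bigr]_{*}(-\theta)+\bigl[\begin{smallmatrix}0\\1\end{smallmatrix}\bigr]_{*}(\delta'')=0$, from which (ii) follows by a single application of $[0,\,1]_{*}$. Your route is workable in principle, but as sketched it is underdetermined precisely at the points the paper's choice of diagram makes automatic: the square you feed to the axiom is commutative on the first two terms, so it is $(\mathrm{ET3})$ rather than $(\mathrm{ET3})^{op}$ that applies, and it yields a morphism out of the octahedron object $E$ into $C$ rather than the desired $k\colon A_2\rightarrow D$ (which in the end is forced to be $hx_2$ by commutativity); moreover the extraction of (ii) and (iii) ``from the compatibility relations'' would still require a nontrivial chase through the octahedron that you have not indicated. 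If you want to avoid that chase, apply the dual of \cite[Proposition 3.17]{NakaokaPalu19} to the diagram described above.
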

\begin{proof}
By the previous lemma, there exists a morphism $f:A_1\rightarrow A_2$ which gives a morphism of $\mathbb E-$triangles
\[
\begin{tikzcd}
A_1\ar[r,"x_1"]\ar[d,"f"swap,dashed]&B_1\ar[r,"y_1"]\ar[d,"g"]&C\ar[d,equal]\ar[r,dashed,"\delta"]&\ \\
A_2\ar[r,"x_2"swap]                         &B_2\ar[r,"y_2"swap]              &C\ar[r,dashed,"\delta'"swap]                  &\ 
\end{tikzcd}
\]
and moreover, the sequence
\[
\begin{tikzcd}
A_1\ar[r,"\begin{bmatrix} {-}f \\ x_1 \end{bmatrix}"] & A_2\oplus B_1\ar[r,"{[}x_2{,}g{]}"]&B_2\ar[r,dashed,"y_2^{*}\delta"]&\ 
\end{tikzcd}
\]
becomes an $\mathbb E\mbox{-}$triangle.

Thus we have the following diagram in $\mathcal C$
\[
\begin{tikzcd}
                                                                                      &B_1\ar[r,equal]\ar[d,"\begin{bmatrix}0\\1\end{bmatrix}"swap]         &B_1\ar[d,"g"]\\
A_1\ar[d,equal]\ar[r,"\begin{bmatrix} {-}f \\ x_1 \end{bmatrix}"] & A_2\oplus B_1\ar[r,"{[}x_2{,}g{]}"]\ar[d,"{[}1{,}0{]}"swap]     &B_2\ar[d,"h"]\\
A_1\ar[r,"-f"]&A_2\ar[r,dashed,"k"]&D
\end{tikzcd}
\]
By the dual of \cite[Proposition 3.17]{NakaokaPalu19}, there is an $\mathbb E-$triangle 
\[
\begin{tikzcd}
A_1\ar[r,"-f"swap]&A_2\ar[r,"k"swap]&D\ar[r,dashed,"-\theta"]&\ 
\end{tikzcd}
\]
which makes the above diagram commutative in $\mathcal C$ and $h^*(-\theta)=y_2^{*}\delta$ and 
\[
\begin{bmatrix}-f\\x_1\end{bmatrix}_{*}(-\theta)+\begin{bmatrix}0\\1\end{bmatrix}_{*}(\delta'')=0.
\]
 Applying the morphism $[0,1]_{*}$ to the last equality, we see that $(x_1)_{*}(\theta)=\delta''$.
\end{proof}
Similarly, we can prove a slightly stronger version of \cite[Proposition 3.15]{NakaokaPalu19}.
\begin{lemma}\label{lem:diagramlemmaextriangulated4}
Suppose we are given $\mathbb E\mbox{-}$triangles
\[
\begin{tikzcd}
A_1\ar[r,"x_1"]&B_1\ar[r,"y_1"]&C\ar[r,dashed,"\delta"]&,
\end{tikzcd}
\]
\[
\begin{tikzcd}
A_2\ar[r,"x_2"]&B_2\ar[r,"y_2"]&C\ar[r,dashed,"\delta'"]&.
\end{tikzcd}
\]
Then there is a commutative diagram in $\mathcal C$: 
\[
\begin{tikzcd}
                                               &A_2\ar[r,equal]\ar[d,"m_2"]          &A_2\ar[d,"x_2"]\\
A_1\ar[r,"m_1"]\ar[d,equal]     &M\ar[r,"e_1"]\ar[d,"e_2"]              &B_2\ar[d,"y_2"]\\
A_1\ar[r,"x_1"swap]                        &B_1\ar[r,"y_1"swap]                             &C
\end{tikzcd}
\]
which satisfies
\[
\mathfrak s(y_2^{*}\delta)=[A_1\xrightarrow{m_1} M\xrightarrow{e_1} B_2],
\]

\[
\mathfrak s(y_1^{*}\delta')=[A_2\xrightarrow{m_2} M\xrightarrow{e_2} B_1],
\]
\[
(m_1)_{*}(\delta)+(m_2)_{*}(\delta')=0
\]
and moreover, the sequence
\[ 
\begin{tikzcd}
M\ar[r,"\begin{bmatrix}{e_1}\\e_2\end{bmatrix}"]&B_1\oplus B_2\ar[r,"{[}y_1{,}-y_2{]}"]&C\ar[r,dashed,"(m_1)_*(\delta)"]&\,
\end{tikzcd}
\]
is an $\mathbb E\mbox{-}$triangle.
\end{lemma}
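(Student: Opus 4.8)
The statement to prove is Lemma~\ref{lem:diagramlemmaextriangulated4}, which produces a ``pullback-of-two-$\mathbb{E}$-triangles-onto-a-common-base'' diagram, and it is the extriangulated analogue of a standard lemma that in \cite{NakaokaPalu19} appears as Proposition~3.15. The plan is to reduce it to the two preceding lemmas, exactly as those lemmas were reduced to Lemma~\ref{LN} (= (ET4-2)). First I would form the $\mathbb{E}$-triangle realizing $y_1^{*}\delta'$: start from the $\mathbb{E}$-triangle $A_2\xrightarrow{x_2}B_2\xrightarrow{y_2}C\xrightarrow{\delta'}$ and the morphism $y_1:B_1\rightarrow C$, and realize the pulled-back extension $y_1^{*}\delta'$ as an $\mathbb{E}$-triangle $A_2\xrightarrow{m_2}M\xrightarrow{e_2}B_1\xrightarrow{y_1^{*}\delta'}$. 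Dually (or by the same lemma applied symmetrically), the morphism $y_2:B_2\rightarrow C$ together with $\delta$ gives an $\mathbb{E}$-triangle $A_1\xrightarrow{m_1}M'\xrightarrow{e_1}B_2\xrightarrow{y_2^{*}\delta}$.

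The key point will be to identify the two middle terms $M$ and $M'$. For this I would apply the dual of Lemma~\ref{LN} to the map $y_1:B_1\to C$ and the $\mathbb{E}$-triangle realizing $\delta'$, getting a commutative square whose diagonal $\mathbb{E}$-triangle is $M\xrightarrow{\binom{e_1}{e_2}}B_1\oplus B_2\xrightarrow{[y_1,-y_2]}C\xrightarrow{(m_1)_*(\delta)}$; this is the ``bicartesian square'' formulation. Concretely, I would follow the argument of the preceding lemma (the stronger version of \cite[Proposition 3.17]{NakaokaPalu19} proved just above): take $M$ to be the middle term of the $\mathbb{E}$-triangle realizing $y_2^{*}\delta$, obtain morphisms $m_1:A_1\to M$, $e_1:M\to B_2$ by Lemma~\ref{LN}, and then obtain the second leg $m_2:A_2\to M$, $e_2:M\to B_1$ by applying the dual of Lemma~\ref{LN} (or (ET4-5) as stated above) to the map $y_1:B_1\to C$. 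The compatibility $[y_1,-y_2]\circ\binom{e_1}{e_2}=0$ and the commutativity of the $2\times 2$ array then follow by tracking the morphisms of $\mathbb{E}$-triangles produced along the way, precisely as in the proof of the preceding lemma. The three displayed equalities are obtained by applying $(m_1)_*$, $(m_2)_*$, $y_1^{*}$, $y_2^{*}$ and $[1,0]_*$, $[0,1]_*$ to the relations $\binom{-f}{x}_*(-\theta)+\binom{0}{1}_*(\delta'')=0$ and $h^{*}(-\theta)=y_2^{*}\delta$ that come out of the dual of \cite[Proposition 3.17]{NakaokaPalu19}; these are bookkeeping with the functoriality of $\mathbb{E}$ and the axioms $(\mathrm{ET3})$, $(\mathrm{ET3})^{op}$.

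The main obstacle, and the only genuinely non-formal step, is verifying the \emph{sign compatibility} $(m_1)_{*}(\delta)+(m_2)_{*}(\delta')=0$ simultaneously with the identification of $M$ as the common middle term of both constructions; one must be careful that the two realizations (of $y_2^{*}\delta$ and of $y_1^{*}\delta'$) can be chosen with the \emph{same} object $M$, and that the relevant morphism of $\mathbb{E}$-triangles can be arranged to restrict to identities on $A_1$, $A_2$, $C$ and to the prescribed $y_1$, $y_2$ on $B_1$, $B_2$. This is exactly where I would invoke the ``moreover'' clause of Lemma~\ref{LN}, which hands us the $\mathbb{E}$-triangle $A_1\xrightarrow{\binom{-f}{x_1}}A_2\oplus B_1\xrightarrow{[x_2,g]}B_2\xrightarrow{y_2^{*}\delta}$, and feed it into the dual of \cite[Proposition 3.17]{NakaokaPalu19} to obtain the third column of the array together with the sign relation; then apply $[0,1]_{*}$ and $[1,0]_{*}$ to extract the two individual equalities. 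Everything else is a routine assembly of the octahedral-type axioms already available in the excerpt, so the proof is short once the common middle term $M$ is pinned down.
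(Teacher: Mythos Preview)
Your overall strategy is right and the essential tools (the dual of Lemma~\ref{LN} and \cite[Proposition 3.17]{NakaokaPalu19}) are exactly the ones the paper uses, but your ordering of the steps is off and this creates unnecessary work. The paper does \emph{not} form two separate pullbacks $M$ and $M'$ (one from $y_2^{*}\delta$, one from $y_1^{*}\delta'$) and then identify them; that route would leave you with precisely the ``common middle term'' problem you flag as the main obstacle. Instead, the paper fixes a single $M$ from the start and extracts the second leg directly on that $M$.

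Concretely: first realize $y_2^{*}\delta$ as $A_1\xrightarrow{m_1}M\xrightarrow{e_1}B_2$ (so $m_1,e_1$ come from the realization, not from Lemma~\ref{LN}). Then apply the dual of Lemma~\ref{LN} to the morphism of $\mathbb{E}$-triangles $(\Id_{A_1},\,?\,,y_2):y_2^{*}\delta\to\delta$; this produces $e_2:M\to B_1$ and the diagonal $\mathbb{E}$-triangle $M\to B_1\oplus B_2\to C\dashrightarrow$ with extension $(m_1)_*\delta$. Finally, apply \cite[Proposition 3.17]{NakaokaPalu19} to the diagram whose top row is $A_2\xrightarrow{x_2}B_2\xrightarrow{-y_2}C\dashrightarrow -\delta'$ and whose middle row is the diagonal triangle just obtained, connected via $\binom{0}{1}:B_2\to B_1\oplus B_2$. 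This hands you the morphism $m_2:A_2\to M$ together with an $\mathbb{E}$-triangle $A_2\xrightarrow{m_2}M\xrightarrow{k}B_1\dashrightarrow\theta$ (with $k=e_2$), plus the relations $(m_2)_*(-\delta')=(m_1)_*\delta$ and $[1,0]^{*}\theta+[y_1,-y_2]^{*}(-\delta')=0$; pulling the latter back along $\binom{1}{0}$ gives $\theta=y_1^{*}\delta'$. So the second column is obtained \emph{directly} on $M$, and no identification of two middle terms is needed. Your sketch of applying the dual of Lemma~\ref{LN} ``to the map $y_1$'' to get $m_2,e_2$ would instead land you in a different object $M'$, which is where your obstacle comes from.
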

\begin{proof}
We apply the dual of Lemma \ref{LN} to the diagram
\[
\begin{tikzcd}
A_1\ar[d,equal]\ar[r,"m_1"]&M\ar[r,"e_1"]\ar[d,dashed,"e_2"]&B_2\ar[r,dashed,"y_2^{*}(\delta)"]\ar[d,"y_2"]&\,\\
A_1\ar[r,"x_1"swap]&B_1\ar[r,"y_1"swap]&C\ar[r,dashed,"\delta"swap]&\,
\end{tikzcd}
\]
Then there exists a morphism $e_2:M\rightarrow B_1$ such that the above diagram commutes in $\mathcal C$ and
\[
\begin{tikzcd}
M\ar[r,"\begin{bmatrix}{e_2}\\e_1\end{bmatrix}"]&B_1\oplus B_2\ar[r,"{[}y_1{,}-y_2{]}"]&C\ar[r,dashed,"(m_1)_*(\delta)"]&\,
\end{tikzcd}
\]
is an $\mathbb E\mbox{-}$triangle.

We then apply \cite[Proposition 3.17]{NakaokaPalu19} to the diagram
\[
\begin{tikzcd}
A_2\ar[r,"x_2"]\ar[d,"m_2",dashed]&B_2\ar[r,"-y_2"]\ar[d,"\begin{bmatrix}0\\1\end{bmatrix}"]                    &C\ar[d,equal]\\
M\ar[r,"\begin{bmatrix}e_2\\e_1\end{bmatrix}"]\ar[d,"k",dashed]                         &B_1\oplus B_2\ar[r,"{[}y_1{,}-y_2{]}"] \ar[d,"{[}1{,}0{]}"]             &C\\
B_1\ar[r,equal]&B_1&
\end{tikzcd}.
\]
So we have an $\mathbb E\mbox{-}$triangle
\[
\begin{tikzcd}
A_2\ar[r,"m_2"]&M\ar[r,"k"]&B_1\ar[r,dashed,"\theta"]&\,
\end{tikzcd}
\]
such that the above diagram commutes (so we have $k=e_2$) and 
\begin{itemize}
\item[(i)] $(m_2)_{*}(-\delta')=(m_1)_*(\delta)$,
\item[(ii)] $(x_2)_{*}(\theta)=0$,
\item[(iii)] $[1,0]^{*}(\theta)+[y_1,-y_2]^{*}(-\delta')=0$.
\end{itemize}
Applying the morphism $\begin{bmatrix}1\\0\end{bmatrix}^{*}$ to the last equality, we get $\theta=y_1^*(\delta')$.
\end{proof}

We also have the following slightly stronger version of \cite[Lemma 3.14]{NakaokaPalu19}, which is called (ET4-3) in \cite{KongLinWang21}.
\begin{lemma}\label{lem:ET4strongform}
Let
\[
\begin{tikzcd}
A\ar[r,"f"]&B\ar[r,"f'"]&D\ar[r,dashed,"\delta_f"]&,
\end{tikzcd}
\]
\[
\begin{tikzcd}
A\ar[r,"h"]&C\ar[r,"h_0"]&E_0\ar[r,dashed,"\delta_h"]&,
\end{tikzcd}
\]
\[
\begin{tikzcd}
B\ar[r,"g"]&C\ar[r,"g'"]&F\ar[r,dashed,"\delta_g"]&,
\end{tikzcd}
\]
be $\mathbb E\mbox{-}$triangles satisfying $h=gf$. Then there are morphisms $d_0:D\rightarrow E_0$ and $e_0:E_0\rightarrow F$ such that the following 
\[
\begin{tikzcd}
A\ar[r,"f"]\ar[d,equal]                    &B\ar[d,"g"]\ar[r,"f'"]                                        &D\ar[r,"\delta_{f}"]\ar[d,"d_0"]            &\,\\
A\ar[r,"h"]                   &C\ar[d,"g'"]\ar[r,"h_0"]                                   &E_0\ar[r,"\delta_{h}"] \ar[d,"e_0"]          &\,\\                                                  
\,                               &F\ar[d,dashed,"\delta_{g}"]\ar[r,equal]          &F\ar[d,dashed,"f'_{*}(\delta_g)"]  &\,\\
\,                                &\,                                                               &\,                                                   &\,
\end{tikzcd}
\]
is a commutative diagram whose third column is an $\mathbb E\mbox{-}$triangle, moreover, $d_0^*(\delta_h)=\delta_f$ and $f_*\delta_h=e_0^*(\delta_g)$ and the sequence
\[ 
\begin{tikzcd}
B\ar[r,"\begin{bmatrix}g\\f{'}\end{bmatrix}"]           &   C\oplus D\ar[r,"{[}h_0{,}-d_0{]}"]             &E_0\ar[r,dashed,"f_{*}(\delta_h)"]&\,
\end{tikzcd}
\]
is an $\mathbb E\mbox{-}$triangle.
\end{lemma}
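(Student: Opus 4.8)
The plan is to deduce the statement from the axiom (ET4) in the definition of an extriangulated category \cite{NakaokaPalu19}, combined with Lemma~\ref{LN}. First I would apply (ET4) to the $\mathbb E$-extensions $\delta_f\in\mathbb E(D,A)$ and $\delta_g\in\mathbb E(F,B)$, realized respectively by $A\xrightarrow{f}B\xrightarrow{f'}D$ and $B\xrightarrow{g}C\xrightarrow{g'}F$. This yields an object $E$, an $\mathbb E$-extension $\delta''\in\mathbb E(E,A)$ realized by $A\xrightarrow{gf}C\xrightarrow{h'}E$, morphisms $d\colon D\to E$ and $e\colon E\to F$, and a commutative octahedral diagram whose first two rows are the triangles $\delta_f$ and $\delta''$, together with the standard compatibilities: $D\xrightarrow{d}E\xrightarrow{e}F$ realizes $(f')_{*}\delta_g$, $d^{*}\delta''=\delta_f$, and $f_{*}\delta''=e^{*}\delta_g$.

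Next I would normalize the third term. Since $gf=h$, the $\mathbb E$-triangle $A\xrightarrow{gf}C\xrightarrow{h'}E\dashrightarrow\delta''$ and the given $\mathbb E$-triangle $A\xrightarrow{h}C\xrightarrow{h_0}E_0\dashrightarrow\delta_h$ have the same inflation, so by the uniqueness up to isomorphism of the completion of an inflation to an $\mathbb E$-triangle (a standard consequence of (ET3), cf.~\cite{NakaokaPalu19}) there is an isomorphism of $\mathbb E$-triangles $(1_A,1_C,\psi)$ with $\psi\colon E\iso E_0$, $\psi h'=h_0$, and $\psi^{*}\delta_h=\delta''$. Setting $d_0=\psi d$ and $e_0=e\psi^{-1}$ and transporting all the data along $\psi$, the octahedral diagram acquires exactly the shape stated, with rows $\delta_f$ and $\delta_h$ and third column $D\xrightarrow{d_0}E_0\xrightarrow{e_0}F\dashrightarrow(f')_{*}\delta_g$; using the bifunctoriality of $\mathbb E$, the three compatibilities above become the asserted realization of the third column together with $d_{0}^{*}\delta_h=\delta_f$ and $f_{*}\delta_h=e_{0}^{*}\delta_g$.

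It remains to produce the $\mathbb E$-triangle $B\xrightarrow{\left[\begin{smallmatrix}g\\f'\end{smallmatrix}\right]}C\oplus D\xrightarrow{[h_0,-d_0]}E_0\dashrightarrow f_{*}\delta_h$, which is the genuinely ``slightly stronger'' clause and which is not part of the raw output of (ET4). For this I would invoke Lemma~\ref{LN} applied to the $\mathbb E$-triangle $B\xrightarrow{g}C\xrightarrow{g'}F\dashrightarrow\delta_g$ and the morphism $f'\colon B\to D$, taking as the prescribed realization of $(f')_{*}\delta_g$ the third column $D\xrightarrow{d_0}E_0\xrightarrow{e_0}F$. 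Lemma~\ref{LN} then returns a morphism $\tilde g\colon C\to E_0$ — making $(f',\tilde g,1_F)$ a morphism of $\mathbb E$-triangles, hence satisfying $\tilde g g=d_0 f'$ and $e_0\tilde g=g'$, the same relations as $h_0$ in the octahedron — and an $\mathbb E$-triangle $B\xrightarrow{\left[\begin{smallmatrix}-f'\\g\end{smallmatrix}\right]}D\oplus C\xrightarrow{[d_0,\tilde g]}E_0\dashrightarrow e_{0}^{*}\delta_g$. Since $e_{0}^{*}\delta_g=f_{*}\delta_h$, it remains to swap $D\oplus C$ with $C\oplus D$, correct signs, and replace $\tilde g$ by $h_0$: here $\tilde g-h_0$ factors through $g'$ and is killed by $e_0$, so the difference $\tilde g - h_0$ lifts along the inflation $d_0$ (weak kernel property of $E_0$), and I would absorb it by an appropriate triangular automorphism of $C\oplus D$ (possibly together with one of the biproduct on the source).

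The main obstacle is precisely this last reconciliation step: carrying through the sign conventions of Lemma~\ref{LN}, matching the two orderings of the biproduct, and in particular verifying cleanly that the induced middle component may be taken to be $h_0$ — this uses the weak kernel/cokernel properties of $\mathbb E$-triangles and is not entirely formal. A cleaner alternative, which I would follow if the above becomes unwieldy, is to imitate the proofs of the two preceding lemmas of this appendix: apply the dual of Lemma~\ref{LN} and then \cite[Proposition 3.17]{NakaokaPalu19} to the appropriate faces, so that $d_0$, $e_0$, all the commutativities, both extension relations, and both $\mathbb E$-triangles are produced in one stroke, at the price of not factoring the argument through the black box (ET4).
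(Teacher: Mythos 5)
Your first two steps are sound: applying (ET4) to $\delta_f$ and $\delta_g$ and then rigidifying the resulting cone of $h=gf$ onto $E_0$ by an isomorphism of $\mathbb E$-triangles $(1_A,1_C,\psi)$ does produce $d_0$, $e_0$, the commutative diagram, the third-column $\mathbb E$-triangle realizing $f'_*(\delta_g)$, and the identities $d_0^*(\delta_h)=\delta_f$ and $f_*(\delta_h)=e_0^*(\delta_g)$. The genuine gap is the final reconciliation, and it is not a matter of signs or of reordering the biproduct. Lemma~\ref{LN} applied to $\delta_g$ and $f'$ gives you an $\mathbb E$-triangle with deflation $[\tilde g,\,-d_0]\colon C\oplus D\to E_0$ for \emph{some} $\tilde g$ satisfying the same two commutativities as $h_0$, whence $\tilde g-h_0=d_0u$ for some $u\colon C\to D$. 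Every isomorphism of $C\oplus D$ over $E_0$ carrying $[\tilde g,-d_0]$ to $[h_0,-d_0]$ has the form $\begin{bmatrix}1&0\\u+k&1\end{bmatrix}$ with $d_0k=0$, and it simultaneously replaces the inflation $\begin{bmatrix}g\\f'\end{bmatrix}$ by $\begin{bmatrix}g\\f'-(u+k)g\end{bmatrix}$; nothing forces $(u+k)g=0$ for an admissible $k$, and the ambiguity in $u$ is not governed by any exact sequence, since there is no term to the left of $D$ in the column triangle. More structurally, the biproduct $\mathbb E$-triangle is an \emph{additional} property that an arbitrary (ET4)-octahedron need not satisfy --- this is precisely the ``good octahedron'' phenomenon, cf.\ the remark following Lemma~\ref{lem:ET4strongform} citing \cite[Appendix B]{Hubery16} --- so fixing $d_0$ first by a black-box appeal to (ET4) and checking the biproduct triangle afterwards is the wrong order of quantifiers.

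The repair is your own fallback, which is the proof given in the paper, and it is not optional. One must \emph{construct} $d_0$ so that the biproduct triangle containing the given $h_0$ holds from the outset: the dual of the (ET4-5) lemma of \cite{KongLinWang21} recorded just after Lemma~\ref{LN} (whose input is the pair of triangles $\delta_f,\delta_h$ with common source $A$ together with the middle morphism $g$ satisfying $gf=h$, and whose output is $d_0$ together with the $\mathbb E$-triangle $B\to C\oplus D\to E_0$ realizing $f_*(\delta_h)$ and the relation $d_0^*(\delta_h)=\delta_f$) does this in one stroke; the dual of \cite[Proposition 3.17]{NakaokaPalu19} applied to the resulting diagram then produces $e_0$, the column $\mathbb E$-triangle realizing $f'_*(\delta_g)$, and $e_0^*(\delta_g)=f_*(\delta_h)$.
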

\begin{proof}
By the dual of Lemma \ref{LN}, there exists a morphism $d_0:D\rightarrow E_0$ such that 
\[
\begin{tikzcd}
B\ar[r,"\begin{bmatrix}g\\f'\end{bmatrix}"]&C\oplus D\ar[r,"{[}h_0{,}-d_0{]}"]&E_0\ar[r,dashed,"\theta=f_*(\delta_h)"]&\,
\end{tikzcd}
\]
is an $\mathbb E\mbox{-}$triangle and $(\Id_{A},g,d_0)$ is a morphism of $\mathbb E\mbox{-}$triangles.
 In particular, $d_0^{*}(\delta_h)=\delta_f$.
 
Consider the following commutative diagram
\[
\begin{tikzcd}
&D\ar[r,equal]\ar[d,"\begin{bmatrix}0\\-1\end{bmatrix}"swap]&D\ar[d,dashed,"d_0"]&\\
B\ar[r,"\begin{bmatrix}g\\f'\end{bmatrix}"]\ar[d,equal]&C\oplus D\ar[r,"{[}h_0{,}-d_0{]}"]\ar[d,"{[}1{,}0{]}"swap]&E_0\ar[r,dashed,"\theta"]\ar[d,dashed,"e_0"]&\,\\
B\ar[r,"g"swap]&C\ar[r,"g'"swap] \ar[d,dashed,"0"swap]&F\ar[d,dashed, "\mu"]\ar[r,dashed,"\delta_g"swap]&\,\\
      \,&\,&\,
\end{tikzcd}
\]
By the dual of \cite[Proposition 3.17]{NakaokaPalu19}, there exists an $\mathbb E\mbox{-}$triangle
\[
\begin{tikzcd}
D\ar[r,"\tilde{d_0}"]&E_0\ar[r,"e_0"]&F\ar[r,dashed,"\mu"]&\,
\end{tikzcd}
\]
which makes the above diagram commute (hence $d_0=\tilde{d_0}$) and $\begin{bmatrix}0\\-1\end{bmatrix}_{*}(\mu)+\begin{bmatrix}g\\f'\end{bmatrix}_{*}(\delta_g)=0$, $e_0^{*}(\delta_g)=\theta$.

Thus we have $\mu=f'_{*}(\delta_g)$ and $e_0^{*}(\delta_g)=f_*(\delta_{h})$.
\end{proof}
\begin{remark}
The triangulated case for Lemma~\ref{lem:ET4strongform} is proved in \cite[Appendix B]{Hubery16}.
\end{remark}

\begin{definition}
A {\em contravariant connected sequence of functors} is a pair $(T,\epsilon)$ where $T=(T^i)_{i\geq 0}$ is sequence of right $\C$-modules and $\epsilon$ is a collection of morphisms $\epsilon_{\delta}^i:T^i(A)\rightarrow T^{i+1}(C)$ for each $\mathbb E\mbox{-}$extension $\delta\in\mathbb E(C,A)$ and $i\geq 0$ which is natural with respect to morphisms of $\mathbb E$-extensions.
It is a {\em right $\delta$-functor} if for any $\mathfrak s\mbox{-}$triangle 
\[
\begin{tikzcd}
A\ar[r,"k"]&B\ar[r,"p"] &C\ar[r,dashed,"\delta"]&\;
\end{tikzcd},
\]
 the associated sequence (which is a complex by the naturality of $\epsilon$, cf.~\cite[Proposition 3.20]{GorskyNakaokaPalu21})
\[
\ldots \rightarrow T^{n}(C)\xrightarrow{T^n(p)} T^n(B)\xrightarrow{T^n(k)}T^n(A)\xrightarrow{\epsilon^{n}_{\delta}} T^{n+1}(C)\rightarrow \ldots 
\] 
is exact.
A morphism $(T,\epsilon)\rightarrow (\tilde{T},\tilde{\epsilon})$ of right $\delta$-functors is a family of morphisms of right $\C$-modules $\theta^i:T^i\rightarrow \tilde{T}^i$ which is compatible with the connecting morphisms, i.e.~for each $\delta\in \mathbb E(C,A)$ and each $i\geq 0$, the following diagram is commutative
\[
\begin{tikzcd}
T^{i}(A)\ar[r,"\epsilon^i_{\delta}"]\ar[d,"\theta^i(A)"swap]&T^{i+1}(C)\ar[d,"\theta^{i+1}(C)"]\\
\tilde{T}^i(A)\ar[r,"\tilde{\epsilon}^i_{\delta}"swap]&\tilde{T}^{i+1}(C)
\end{tikzcd}.
\]
Dually one defines the notion of {\em covariant connected sequence of functors} and {\em left $\delta$-functors}.
\end{definition}

We have the following extriangulated analogue of \cite[Proposition 2.1]{Grothendieck57}.
\begin{proposition}\label{prop:effaceableuniversal}
Let $(T,\epsilon)$ and $(R,\eta)$ be right $\delta$-functors such that $R^i$ is weakly effaceable for each $i>0$.
Then each morphism $R^0\rightarrow T^0$ extends uniquely to a morphism of right $\delta$-functors $(R,\eta)\rightarrow (T,\epsilon)$.
\end{proposition}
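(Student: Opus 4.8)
The statement is the extriangulated analogue of Grothendieck's universality of effaceable $\delta$-functors, so the plan is to follow the classical argument, building the comparison maps $\theta^i \colon R^i \to T^i$ inductively on $i$ and checking at each stage that they are natural, compatible with connecting morphisms, and unique. The base case $\theta^0$ is given. For the inductive step, suppose $\theta^0,\ldots,\theta^{n}$ have been constructed with the required properties; I want to construct $\theta^{n+1}$. The key device is weak effaceability of $R^{n+1}$: given $C \in \C$ and $x \in R^{n+1}(C)$, there is a deflation $p \colon B \to C$ with $R^{n+1}(p)(x) = 0$, and completing $p$ to an $\mathfrak{s}$-triangle $A \xrightarrow{k} B \xrightarrow{p} C \xrightarrow{\delta} {}$ gives, via exactness of the $R$-sequence, an element $a \in R^{n}(A)$ with $\eta^{n}_\delta(a) = x$. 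I would then define $\theta^{n+1}(C)(x) \coloneqq \epsilon^{n}_\delta(\theta^{n}(A)(a))$.

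\textbf{Key steps.} First I would verify that this value is independent of all choices: of the lift $a$ (using that $R^{n}(k)$ is surjective onto $\ker \eta^{n}_\delta$ and that $\epsilon^{n}_\delta \circ T^{n}(k) = 0$ in the $T$-complex, together with the induction hypothesis that $\theta^{n}$ intertwines $R^{n}(k)$ and $T^{n}(k)$), and of the deflation $p$ (given two deflations $p_1 \colon B_1 \to C$, $p_2 \colon B_2 \to C$, form the fibre product $B_1 \times_C B_2$ using axiom $\rm{E2}$-type considerations, i.e.\ pull $p_2$ back along $p_1$; this dominates both, reducing to the case of a morphism $B_1 \to B_2$ over $C$, which is handled by naturality of $\epsilon$ and $\eta$ with respect to the induced morphism of $\mathbb E$-extensions). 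Second, I would check $\theta^{n+1}$ is a morphism of $\C$-modules (additivity and functoriality in $C$): for $c \colon C' \to C$, pull the triangle $\delta$ back along $c$ to get $\delta' = c^*\delta$ realized by $A \to B' \to C'$, use that the lift over $C$ restricts to a lift over $C'$, and invoke naturality of $\epsilon, \eta$ with respect to the morphism $(\mathrm{id}_A, c) \colon \delta' \to \delta$. Third, I would check compatibility with connecting morphisms, i.e.\ $\theta^{n+1}_{C}\circ \eta^{n}_{\delta} = \epsilon^{n}_{\delta}\circ \theta^{n}_{A}$ for every $\delta \in \mathbb E(C,A)$: given $x = \eta^n_\delta(a)$, the triangle $\delta$ itself provides a valid deflation $p \colon B \to C$ and $a$ is a valid lift, so by construction the two sides agree; one then checks this forces compatibility for the remaining connecting maps $\eta^i_\delta$, $i \le n$, which already hold by induction. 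Finally, uniqueness: any morphism of $\delta$-functors extending $\theta^0$ must, by the compatibility relation applied to a triangle realizing weak effaceability, satisfy exactly the defining formula for $\theta^{n+1}$, so it is determined; this also shows the $\theta^i$ assemble into a genuine morphism $(R,\eta) \to (T,\epsilon)$.

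\textbf{Main obstacle.} The routine but delicate heart of the argument is the well-definedness of $\theta^{n+1}(C)(x)$ under change of deflation. In the abelian or exact setting one uses honest pullbacks of epimorphisms; here one must instead use that deflations in an extriangulated category can be pulled back (this is part of the structure, cf.\ the homotopy-cartesian square machinery and Lemma~\ref{LN} / Lemma~\ref{lem:extrihomotopycartesiandeflation} in the appendix) and that the resulting square induces a compatible morphism of $\mathbb E$-extensions so that naturality of $\epsilon$ and $\eta$ applies. Assembling the cofiltered system of deflations over $C$ and checking the cocycle-type compatibility there is where all the extriangulated axioms get used; everything else is a formal diagram chase. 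Once well-definedness is secured, additivity, functoriality, compatibility, and uniqueness follow by the same naturality inputs, so I expect no further conceptual difficulty beyond bookkeeping.
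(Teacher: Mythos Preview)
Your proposal is correct and follows essentially the same approach as the paper: inductive construction of $\theta^{i}$ via effaceability, with the main work being well-definedness under change of deflation. The paper handles that step by invoking Lemma~\ref{lem:diagramlemmaextriangulated4} (a strengthening of \cite[Proposition~3.15]{NakaokaPalu19}), which packages precisely the ``homotopy fibre product'' you describe together with the morphisms of $\mathbb E$-extensions needed to compare the two triangles; your citations of Lemma~\ref{LN} and Lemma~\ref{lem:extrihomotopycartesiandeflation} are in the right neighbourhood but Lemma~\ref{lem:diagramlemmaextriangulated4} is the sharper tool here.
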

\begin{proof}
Suppose we have constructed natural morphisms $\theta^j:R^j\iso T^j$ which are compatible with the connecting morphisms for $0\leq j<i$.

Let $C$ be an object in $\C$ and $x\in R^i(C)$ an arbitrary element.
Since $R^i$ is effaceable, there exists a $\mathbb E$-triangle 
\[
\begin{tikzcd}
A\ar[r,"k"]&B\ar[r,"p"] &C\ar[r,dashed,"\delta"]&\;
\end{tikzcd},
\]
 such that $R^i(p)(x)=0$.
We have the following diagram
\[
\begin{tikzcd}
R^{i-1}(B)\ar[r,"R^{i-1}(k)"]\ar[d,"\theta^{i-1}_{B}"swap]&R^{i-1}(A)\ar[r,"\eta_{\delta}^{i-1}"]\ar[d,"\theta^{i-1}_{A}"swap]&R^i(C)\ar[r,"R^i(p)"]\ar[d,dashed]&R^i(B)\\
T^{i-1}(B)\ar[r,"T^{i-1}(k)"swap]&T^{i-1}(A)\ar[r,"\epsilon_{\delta}^{i-1}"swap]&T^i(C)\ar[r]&T^i(B)\\
\end{tikzcd}
\]
By the exactness of the top horizontal sequence, there exists an element $y\in R^{i-1}(A)$ such that $\eta_{\delta}^{i-1}(y)=x$.
Since the left square commutes by the construction of $\theta^{i-1}$, it is clear that the element $\epsilon_{\delta}^{i-1}\circ \theta_{A}^{i-1}(y)$ is independent of the choice of $y$. 
Let 
\[
\begin{tikzcd}
A'\ar[r,"l"]&B'\ar[r,"q"] &C\ar[r,dashed,"\delta'"]&\;
\end{tikzcd}
\] 
be another such $\mathbb E$-triangle. 
By Lemma~\ref{lem:diagramlemmaextriangulated4}, we have the following diagram
\[
\begin{tikzcd}
&A'\ar[r,equal]\ar[d,"f"]&A'\ar[d,"l"]&\;\\
A\ar[r,"m"]\ar[d,equal]&E\ar[r,"r"]\ar[d,"g"]&B'\ar[d,"q"]\ar[r,dashed,"q^{*}(\delta)"]&\;\\
A\ar[r,"k"swap]&B\ar[r,"p"swap] &C\ar[r,dashed,"\delta"swap]&\;
\end{tikzcd}
\]
where $m_*(\delta)=-f_*(\delta')$ and the sequence
\[
\begin{tikzcd}
E\ar[r,"\begin{bmatrix}r\\g\end{bmatrix}"]&B'\oplus B\ar[r,"{[}-q{,}\;p{]}"] &C\ar[r,dashed,"m_{*}(\delta)"]&\;
\end{tikzcd}
\] 
is an $\mathbb E$-triangle. So we have the following diagram made of $\mathbb E$-triangles
\[
\begin{tikzcd}
A\ar[r,"k"]\ar[d,"m"swap]&B\ar[r,"p"] \ar[d,"\begin{bmatrix}1\\0\end{bmatrix}"swap]&C\ar[r,dashed,"\delta"]\ar[d,equal]&\;\\
E\ar[r,"\begin{bmatrix}g\\r\end{bmatrix}"swap]&B\oplus B'\ar[r,"{[}-q{,}\;p{]}"swap] &C\ar[r,dashed,"m_{*}(\delta)"swap]\ar[d,equal]&\;\\
A'\ar[r,"l"swap]\ar[u,"-f"]&B'\ar[r,"q"swap] \ar[u,"\begin{bmatrix}0\\-1\end{bmatrix}"]&C\ar[r,dashed,"\delta'"swap]&\;
\end{tikzcd}
\]
This implies that $\epsilon_{\delta}^{i-1}\circ \theta_{A}^{i-1}(y)$ is independent of the choice of the $\mathbb E$-triangle. 
Similarly as above, one shows that the map $\theta^{i}_{C}: R^{i}(C)\rightarrow T^{i}(C)$ is a morphism of abelian groups.
It is direct to verify that $\theta^i:R^i\rightarrow T^{i}$ is a natural transformation 
and it follows directly from the construction that $\theta^i$ is compatible with the connecting morphisms. 
By induction on $i$, we have natural transformations $\theta^i:R^i\rightarrow T^i$ for $i\geq 0$ which are compatible with the connecting morphisms. The uniqueness is also clear from the construction of the $\theta^i$.
\end{proof}

\begin{definition}[\cite{GorskyNakaokaPalu21}, Definitions 4.5]\label{def:deltafunctor}
A {\em $\delta$-functor} is a triple $(T,\epsilon,\eta)$ where $T=(T^i)_{i\geq 0}$ be a sequence of $\C$-$\C$-bimodules
and $\epsilon$ and $\eta$ are collections of morphisms $\epsilon_{\delta}^i:T^i(A,-)\rightarrow T^{i+1}(C,-)$ and $\eta_{\delta}^i:T^i(?,C)\rightarrow T^{i+1}(?,A)$ for each $\mathbb E$-extension $\delta\in\mathbb E(C,A)$ and $i\geq 0$ which are natural with respect to morphisms of $\mathbb E$-extensions, and such that for each $\mathfrak s$-triangle
\[
\begin{tikzcd}
A\ar[r,"k"]&B\ar[r,"p"] &C\ar[r,dashed,"\delta"]&\;
\end{tikzcd},
\]
 the associated complexes
 \[
\ldots \rightarrow T^{n}(C,-)\xrightarrow{T^n(p,-)} T^n(B,-)\xrightarrow{T^n(k,-)}T^n(A,-)\xrightarrow{\epsilon^{n}_{\delta}} T^{n+1}(C,-)\rightarrow \ldots 
\] 
\[
\ldots \rightarrow T^{n}(?,A)\xrightarrow{T^n(?,k)} T^n(?,B)\xrightarrow{T^n(?,p)}T^n(?,C)\xrightarrow{\eta^{n}_{\delta}} T^{n+1}(?,A)\rightarrow \ldots 
\] 
are exact in $\Mod \C$, resp.~in $\C\Mod$. 
A morphism $(T,\epsilon,\eta)\rightarrow (\tilde{T},\tilde{\epsilon},\tilde{\eta})$ of $\delta$-functors is a family of morphisms of $\C$-$\C$-bimodules $\theta^i:T^i\rightarrow \tilde{T}^i$ which is compatible with the connecting morphisms.
\end{definition}
Let $(T,\epsilon,\eta)$ a $\delta$-functor. 
We assume that
\begin{equation}\label{assumption:delta1}\tag{A.1}
 \text{$T^0=\Hom(?,-)$ and $T^i$ are effaceable for $i>0$.}
 \end{equation}

By Proposition~\ref{prop:effaceableuniversal}, we have that $T^i$ is isomorphic to $\mathbb E^i$ for $i>0$ which is compatible with the connecting morphisms $\epsilon$.
We see that $\epsilon_{\delta}^0:\Hom(A,-)\rightarrow \mathbb E(C,-)$ is given by 
\[
(\epsilon_{\delta}^0)_U:\Hom(A,U)\rightarrow \mathbb E(C,U),\;\; f\mapsto f_{*}\delta
\]
for each object $U\in\C$. 
\begin{equation}\label{assumption:delta2}\tag{A.2}
\text{We assume that a similar description holds for $\eta_{\delta}^0$.}
\end{equation}  
For each $i\geq 2$ and any two $\mathfrak s$-triangles
\[
\begin{tikzcd}
A\ar[r,"k"]&B\ar[r,"p"] &C\ar[r,dashed,"\delta"]&\;
\end{tikzcd},
\]
\[
\begin{tikzcd}
F\ar[r,"l"]&G\ar[r,"q"] &H\ar[r,dashed,"\psi"]&\;
\end{tikzcd}
\]
\begin{equation}\label{assumption:delta3}\tag{A.3}
\text{we assume the following diagram~\ref{dia:bivariantdeltafunctor} is commutative}
\end{equation}
\begin{equation}\label{dia:bivariantdeltafunctor}
\begin{tikzcd}
 T^{i-2}(A,H)\ar[r,"\epsilon_{\delta}^{i-2}"]\ar[d,"\eta_{\psi}^{i-2}"swap]&T^{i-1}(C,H)\ar[d,"\eta_{\psi}^{i-1}"]\\
T^{i-1}(A,F)\ar[r,"\epsilon_{\delta}^{i-1}"swap]&T^i(C,F)
\end{tikzcd}\;.
\end{equation}

Put $\mathbb E^0=\Hom_{\C}(?,-)$. 
From the definition of $\mathbb E^n$ and the construction of the connecting morphisms, cf.~\cite[Claim 3.11]{GorskyNakaokaPalu21}, it is clear that $(\mathbb E^n)_{n\geq 0}$ together with its connecting morphisms satisfy the above assumptions~\ref{assumption:delta1}--\ref{assumption:delta3}.
\begin{corollary}\label{cor:effaceablebimodule}
Keep the notations as above. 
Let $(R,\gamma,\mu)$ be another $\delta$-functor which satisfies the above assumptions A.1--A.3. 
 Then there is a unique isomorphism of $\delta$-functors $\theta^i: R^i\rightarrow T^i$
 for $i\geq 0$ such that $\theta^0=\Id_{\Hom_{\C}(?,-)}$.
\end{corollary}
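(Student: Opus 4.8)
The plan is to deduce the bivariant statement from the one-variable Proposition~\ref{prop:effaceableuniversal} by freezing the covariant argument, and then to promote the resulting family of maps to an honest morphism of $\delta$-functors using the compatibility assumption~\ref{assumption:delta3} between the two connecting morphisms.

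First I would observe that for a fixed object $V\in\mathcal{C}$ the sequence $(T^\bullet(?,V))$, equipped with the connecting morphisms $\epsilon^\bullet_\delta$ evaluated at $V$, is a right $\delta$-functor in the sense of Proposition~\ref{prop:effaceableuniversal}; the same holds for $(R^\bullet(?,V))$ with $\gamma^\bullet_\delta$, the modules $R^i(?,V)$ are weakly effaceable for $i>0$ by~\ref{assumption:delta1}, and $R^0(?,V)=\Hom(?,V)=T^0(?,V)$. Proposition~\ref{prop:effaceableuniversal} then yields a unique morphism of right $\delta$-functors $\theta^\bullet_{?,V}\colon R^\bullet(?,V)\to T^\bullet(?,V)$ extending $\Id_{\Hom(?,V)}$. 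Next I would check naturality in $V$: for $g\colon V\to V'$ the two composites $T^\bullet(?,g)\circ\theta^\bullet_{?,V}$ and $\theta^\bullet_{?,V'}\circ R^\bullet(?,g)$ are morphisms of right $\delta$-functors $R^\bullet(?,V)\to T^\bullet(?,V')$ that agree in degree $0$ (both equal $\Hom(?,g)$), hence coincide by the uniqueness clause of Proposition~\ref{prop:effaceableuniversal}. This assembles the $\theta^i_{?,V}$ into morphisms of bimodules $\theta^i\colon R^i\to T^i$ with $\theta^0=\Id_{\Hom(?,-)}$ which are, by their very construction, compatible with the connecting morphisms $\gamma^\bullet$ and $\epsilon^\bullet$ in the contravariant variable.

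The hard part will be to show that $\theta^\bullet$ is also compatible with the connecting morphisms $\mu^\bullet$ and $\eta^\bullet$ in the covariant variable, that is, that $\eta^i_\delta\circ\theta^i_{?,C}=\theta^{i+1}_{?,A}\circ\mu^i_\delta$ for every $\mathbb{E}$-extension $\delta\in\mathbb{E}(C,A)$ and every $i\geq 0$; I would prove this by induction on $i$. For $i=0$ the statement follows from~\ref{assumption:delta2}: under the canonical identifications $R^1\cong\mathbb{E}\cong T^1$ supplied by Proposition~\ref{prop:effaceableuniversal} the map $\theta^1$ becomes the identity (again by the uniqueness clause), while $\mu^0_\delta$ and $\eta^0_\delta$ both become the pullback $f\mapsto f^*\delta$. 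For $i\geq 1$, given $x\in R^i(W,C)$ I would use weak effaceability of the right $\mathcal{C}$-module $R^i(-,C)$ — available since $i>0$ by~\ref{assumption:delta1} — together with exactness of the contravariant long exact sequence to write $x=\gamma^{i-1}_\omega(z)$ for a suitable $\omega\in\mathbb{E}(W,W')$ and $z\in R^{i-1}(W',C)$. Then one rewrites $\eta^i_\delta(\theta^i_{W,C}(x))$ in five steps: apply compatibility of $\theta$ with $\gamma,\epsilon$ to get $\eta^i_\delta(\epsilon^{i-1}_\omega(\theta^{i-1}_{W',C}(z)))$; apply~\ref{assumption:delta3} for $T$ to interchange $\epsilon^{i-1}_\omega$ and $\eta^i_\delta$; apply the inductive hypothesis in degree $i-1$; apply compatibility with $\gamma,\epsilon$ once more; and finally apply~\ref{assumption:delta3} for $R$ to recombine $\gamma^i_\omega$ and $\mu^{i-1}_\delta$ into $\mu^i_\delta\circ\gamma^{i-1}_\omega$, which on $x$ equals $\mu^i_\delta(x)$. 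Since every element of $R^i(W,C)$ admits such a presentation, the square commutes. I expect the bookkeeping of indices in this chain of five rewrites, and checking that the instances of~\ref{assumption:delta3} invoked are exactly the ones with the correct arguments, to be the main technical obstacle; assumptions~\ref{assumption:delta2} and~\ref{assumption:delta3} are imposed precisely so that this induction can be carried out.

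Finally, uniqueness and the isomorphism property are formal consequences. Any morphism of $\delta$-functors $R^\bullet\to T^\bullet$ extending $\Id_{\Hom(?,-)}$ restricts, for each fixed $V$, to a morphism of right $\delta$-functors extending $\Id$, hence equals $\theta^\bullet_{?,V}$ by Proposition~\ref{prop:effaceableuniversal}; so $\theta$ is the unique such morphism. Running the whole construction with $R$ and $T$ interchanged produces $\theta'^\bullet\colon T^\bullet\to R^\bullet$, and both $\theta'\circ\theta$ and $\theta\circ\theta'$ are morphisms of $\delta$-functors extending the identity, hence equal the identity by the uniqueness just proved; therefore each $\theta^i$ is an isomorphism of $\mathcal{C}$-$\mathcal{C}$-bimodules.
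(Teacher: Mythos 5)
Your proposal is correct and follows essentially the same route as the paper: both rest on Proposition~\ref{prop:effaceableuniversal} applied in one variable, followed by an induction that uses effaceability in the other variable together with assumption~\ref{assumption:delta3} to handle the mixed compatibility. The only (cosmetic) difference is that the paper builds two families $\theta^i$, $\lambda^i$ from the two variables and proves $\theta^i=\lambda^i$, whereas you build one family and prove directly that it commutes with the covariant connecting morphisms — the same inductive computation in a different wrapper.
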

\begin{proof}
By Proposition~\ref{prop:effaceableuniversal}, we have that for each $i\geq 0$, the right $\C$-module $R^i(?,A)$ is naturally isomorphic to $T^i(?,A)$ in $\Mod \C$ and the isomorphism is natural in the variable $A$ and compatible with the connecting morphisms. 
Hence we have an isomorphism of $\C$-$\C$-bimodules $\theta^i: R^i\rightarrow T^i$ for each $i\geq 0$.
Similarly $R^i(C,-)$ is naturally isomorphic to $T^i(C,-)$ and this also gives an isomorphism of $\C$-$\C$-bimodules $\lambda^i:R^i\rightarrow T^i$ for each $i\geq 0$.

We identify both $R^1$ and $T^1$ with $\mathbb E$ and by assumption the natural isomorphisms $\theta^1$ and $\lambda^1$ are both given by the identity.
We only need to check that the natural isomorphisms $\theta^i$ and $\lambda^i$ coincide with each other for each $i\geq 2$ and we proceed by induction on $i$.

Let $C$ and $F$ be objects in $\C$ and $x\in R^{i}(C,F)$ for $i\geq 1$.
Since $R^i$ and $R^{i-1}$ are both effaceable, there exists $\mathfrak s$-triangles
\[
\begin{tikzcd}
A\ar[r,"k"]&B\ar[r,"p"] &C\ar[r,dashed,"\delta"]&\;
\end{tikzcd},
\]
\[
\begin{tikzcd}
F\ar[r,"l"]&G\ar[r,"q"] &H\ar[r,dashed,"\psi"]&\;
\end{tikzcd}
\]
such that there exists an element $y\in R^{i-2}(A,H)$ with $x=\mu_{\psi}^{i-1}\gamma_{\delta}^{i-2}(y)=\gamma_{\delta}^{i-1}\mu_{\psi}^{i-2}(y)$ where the second equality is given by the commutativity of the diagram~\ref{dia:bivariantdeltafunctor} for the bimodules $R^{i}$, $R^{i-1}$ and $R^{i-2}$.
Since by induction $\theta^{j}$ coincides with $\lambda^{j}$ for $j=i-2$ and $i-1$, we obtain that the isomorphisms $\theta^{i}$ and $\lambda^i$ coincide with each other and this finishes the proof. 
\end{proof}
	
	\newpage
	\def\cprime{$'$} \def\cprime{$'$}
	\providecommand{\bysame}{\leavevmode\hbox to3em{\hrulefill}\thinspace}
	\providecommand{\MR}{\relax\ifhmode\unskip\space\fi MR }
	\providecommand{\MRhref}[2]{%
		\href{http://www.ams.org/mathscinet-getitem?mr=#1}{#2}
	}
	\providecommand{\href}[2]{#2}
%
	\bibliographystyle{amsplain}
	\bibliography{stanKeller}

\def\cprime{$'$} \def\cprime{$'$}
\providecommand{\bysame}{\leavevmode\hbox to3em{\hrulefill}\thinspace}
\providecommand{\MR}{\relax\ifhmode\unskip\space\fi MR }
\providecommand{\MRhref}[2]{%
  \href{http://www.ams.org/mathscinet-getitem?mr=#1}{#2}
}
\providecommand{\href}[2]{#2}
\begin{thebibliography}{100}

\bibitem{AdachiEnomotoTsukamoto23}
Takahide Adachi, Haruhisa Enomoto, and Mayu Tsukamoto, \emph{Intervals of
  s-torsion pairs in extriangulated categories with negative first extensions},
  Mathematical Proceedings of the Cambridge Philosophical Society \textbf{174}
  (2023), no.~3, 451–469.

\bibitem{AdachiTsukamoto22}
Takahide Adachi and Mayu Tsukamoto, \emph{Hereditary cotorsion pairs and
  silting subcategories in extriangulated categories}, J. Algebra \textbf{594}
  (2022), 109--137. \MR{4352595}

\bibitem{AdachiTsukamoto23}
\bysame, \emph{An assortment of properties of silting subcategories of
  extriangulated categories}, arXiv:2303.08125 [math.RT], 2023.

\bibitem{AuslanderSolberg93a}
M.~Auslander and \O~. Solberg, \emph{Relative homology and representation
  theory. {I}. {R}elative homology and homologically finite subcategories},
  Comm. Algebra \textbf{21} (1993), no.~9, 2995--3031. \MR{1228751}

\bibitem{AuslanderReiten91}
Maurice Auslander and Idun Reiten, \emph{Applications of contravariantly finite
  subcategories}, Adv. Math. \textbf{86} (1991), no.~1, 111--152.

\bibitem{BaillargeonBrustleGorskyHassoun22}
Rose-Line Baillargeon, Thomas Br\"{u}stle, Mikhail Gorsky, and Souheila
  Hassoun, \emph{On the lattices of exact and weakly exact structures}, J.
  Algebra \textbf{612} (2022), 77--109. \MR{4484820}

\bibitem{Barwick13}
Clark Barwick, \emph{On the {Q} construction for exact quasicategories},
  arXiv:1301.4725 [math.KT], 2013.

\bibitem{Barwick15}
\bysame, \emph{On exact {$\infty$}-categories and the theorem of the heart},
  Compos. Math. \textbf{151} (2015), no.~11, 2160--2186. \MR{3427577}

\bibitem{BazzoniCrivei13}
Silvana Bazzoni and Septimiu Crivei, \emph{One-sided exact categories}, J. Pure
  Appl. Algebra \textbf{217} (2013), no.~2, 377--391. \MR{2969259}

\bibitem{BeligiannisMarmaridis94}
Apostolos Beligiannis and Nikolaos Marmaridis, \emph{Left triangulated
  categories arising from contravariantly finite subcategories}, Comm. Algebra
  \textbf{22} (1994), no.~12, 5021--5036. \MR{1285724}

\bibitem{BeligiannisReiten07}
Apostolos Beligiannis and Idun Reiten, \emph{Homological and homotopical
  aspects of torsion theories}, Mem. Amer. Math. Soc. \textbf{188} (2007),
  no.~883, viii+207. \MR{2327478}

\bibitem{Benabou67}
Jean B\'{e}nabou, \emph{Introduction to bicategories}, Reports of the {M}idwest
  {C}ategory {S}eminar, Springer, Berlin, 1967, pp.~1--77. \MR{0220789}

\bibitem{Bergner07}
Julia~E. Bergner, \emph{A model category structure on the category of
  simplicial categories}, Trans. Amer. Math. Soc. \textbf{359} (2007), no.~5,
  2043--2058. \MR{2276611}

\bibitem{BoardmanVogt73}
J.~M. Boardman and R.~M. Vogt, \emph{Homotopy invariant algebraic structures on
  topological spaces}, Lecture Notes in Mathematics, Vol. 347, Springer-Verlag,
  Berlin-New York, 1973. \MR{0420609}

\bibitem{BondalKapranov90}
A.~I. Bondal and M.~M. Kapranov, \emph{Enhanced triangulated categories}, Mat.
  Sb. \textbf{181} (1990), no.~5, 669--683, translation in Math. USSR-Sb. 70
  no. 1, 93ï¿œ107.

\bibitem{BondalLarsenLunts04}
Alexey~I. Bondal, Michael Larsen, and Valery~A. Lunts, \emph{Grothendieck ring
  of pretriangulated categories}, Int. Math. Res. Not. (2004), no.~29,
  1461--1495.

\bibitem{Bondarko10}
M.~V. Bondarko, \emph{Weight structures vs. {$t$}-structures; weight
  filtrations, spectral sequences, and complexes (for motives and in general)},
  J. K-Theory \textbf{6} (2010), no.~3, 387--504. \MR{2746283}

\bibitem{BrustleHassounLangfordRoy20}
Thomas Br\"{u}stle, Souheila Hassoun, Denis Langford, and Sunny Roy,
  \emph{Reduction of exact structures}, J. Pure Appl. Algebra \textbf{224}
  (2020), no.~4, 106212, 29. \MR{4021926}

\bibitem{Buhler10}
Theo B\"{u}hler, \emph{Exact categories}, Expo. Math. \textbf{28} (2010),
  no.~1, 1--69. \MR{2606234}

\bibitem{ButlerHorrocks61}
M.~C.~R. Butler and G.~Horrocks, \emph{Classes of extensions and resolutions},
  Philos. Trans. Roy. Soc. London Ser. A \textbf{254} (1961/62), 155--222.
  \MR{188267}

\bibitem{CanonacoOrnaghiStellari18}
Alberto Canonaco, Mattia Ornaghi, and Paolo Stellari, \emph{Localizations of
  the category of {$A_\infty$} categories and internal {H}oms}, Doc. Math.
  \textbf{24} (2019), 2463--2492. \MR{4061058}

\bibitem{ChangZhouZhu19}
Wen Chang, Panyue Zhou, and Bin Zhu, \emph{Cluster subalgebras and cotorsion
  pairs in {F}robenius extriangulated categories}, Algebr. Represent. Theory
  \textbf{22} (2019), no.~5, 1051--1081. \MR{4026624}

\bibitem{Cisinski19}
Denis-Charles Cisinski, \emph{Higher categories and homotopical algebra},
  Cambridge Studies in Advanced Mathematics, vol. 180, Cambridge University
  Press, Cambridge, 2019. \MR{3931682}

\bibitem{Crans95}
Sjoerd~E. Crans, \emph{Quillen closed model structures for sheaves}, J. Pure
  Appl. Algebra \textbf{101} (1995), no.~1, 35--57. \MR{1346427}

\bibitem{Crivei12}
Septimiu Crivei, \emph{Maximal exact structures on additive categories
  revisited}, Math. Nachr. \textbf{285} (2012), no.~4, 440--446. \MR{2899636}

\bibitem{Dold58}
Albrecht Dold, \emph{Homology of symmetric products and other functors of
  complexes}, Ann. of Math. (2) \textbf{68} (1958), 54--80. \MR{97057}

\bibitem{DraxlerReitenSmaloSolberg99}
Peter Dr\"{a}xler, Idun Reiten, Sverre~O. Smal\o, and \O~yvind Solberg,
  \emph{Exact categories and vector space categories}, Trans. Amer. Math. Soc.
  \textbf{351} (1999), no.~2, 647--682, With an appendix by B. Keller.
  \MR{1608305}

\bibitem{Drinfeld04}
Vladimir Drinfeld, \emph{D{G} quotients of {DG} categories}, J. Algebra
  \textbf{272} (2004), no.~2, 643--691.

\bibitem{DuggerSpivak11}
Daniel Dugger and David~I. Spivak, \emph{Rigidification of quasi-categories},
  Algebr. Geom. Topol. \textbf{11} (2011), no.~1, 225--261. \MR{2764042}

\bibitem{DwyerHirschhornKan97}
Barry Dwyer, Philip~S. Hirschhorn, and Daniel~M. Kan, \emph{Model categories
  and more general abstract homotopy theory}, preprint, 1997.

\bibitem{EilenbergZilber50}
Samuel Eilenberg and J.~A. Zilber, \emph{Semi-simplicial complexes and singular
  homology}, Ann. of Math. (2) \textbf{51} (1950), 499--513. \MR{35434}

\bibitem{Enomoto18}
Haruhisa Enomoto, \emph{Classifications of exact structures and
  {C}ohen-{M}acaulay-finite algebras}, Adv. Math. \textbf{335} (2018),
  838--877. \MR{3836680}

\bibitem{Enomoto21}
\bysame, \emph{Classifying substructures of extriangulated categories via
  {S}erre subcategories}, Appl. Categ. Structures \textbf{29} (2021), no.~6,
  1005--1018. \MR{4324372}

\bibitem{EnomotoSaito22}
Haruhisa Enomoto and Shunya Saito, \emph{Grothendieck monoids of extriangulated
  categories}, arXiv:2208.02928 [math.CT], 2022.

\bibitem{BorveTrygsland21}
B{\o}rve Erlend and Trygsland Paul, \emph{Factorization, extensions and a
  theorem of {Retakh} for exact quasi-categories}, arXiv:2110.05138v1
  [math.CT], 2021.

\bibitem{FangGorsky22}
Xin Fang and Mikhail Gorsky, \emph{Exact structures and degeneration of {H}all
  algebras}, Adv. Math. \textbf{398} (2022), Paper No. 108210, 41. \MR{4367798}

\bibitem{FangGorskyPaluPlamondonPressland22}
Xin Fang, Mikhail Gorsky, Yann Palu, Pierre-Guy Plamondon, and Matthew
  Pressland, in progress, 2022.

\bibitem{Faonte17a}
Giovanni Faonte, \emph{{$A_\infty$}-functors and homotopy theory of
  dg-categories}, J. Noncommut. Geom. \textbf{11} (2017), no.~3, 957--1000.
  \MR{3713010}

\bibitem{Faonte17b}
\bysame, \emph{Simplicial nerve of an {$\mathcal A_\infty$}-category}, Theory
  Appl. Categ. \textbf{32} (2017), Paper No. 2, 31--52. \MR{3607208}

\bibitem{FrerickSieg10}
Leonhard Frerick and Dennis Sieg, \emph{Exact categories in functional
  analysis}, Online lecture notes, available at url:
  https://www.math.uni-trier.de/abteilung/analysis/HomAlg.pdf, 2010.

\bibitem{GabrielZisman67}
P.~Gabriel and M.~Zisman, \emph{Calculus of fractions and homotopy theory},
  Ergebnisse der Mathematik und ihrer Grenzgebiete, Band 35, Springer-Verlag
  New York, Inc., New York, 1967.

\bibitem{GaoKulshammerKvammePsaroudakis23}
Nan Gao, Julian K\"ulshammer, Sondre Kvamme, and Chrysostomos Psaroudakis,
  \emph{A functorial approach to monomorphism categories {II}:
  indecomposables}, arXiv:2303.07753 [math.RT], 2023.

\bibitem{GeissLeclercSchroeer13}
Ch. Geiss, B.~Leclerc, and J.~Schr\"oer, \emph{Cluster algebras in algebraic
  {L}ie theory}, Transform. Groups \textbf{18} (2013), no.~1, 149--178.

\bibitem{GeissLeclercSchroeer06}
Christof Gei\ss, Bernard Leclerc, and Jan Schr{\"o}er, \emph{Rigid modules over
  preprojective algebras}, Invent. Math. \textbf{165} (2006), no.~3, 589--632.

\bibitem{GeissLeclercSchroeer08b}
\bysame, \emph{Partial flag varieties and preprojective algebras}, Ann. Inst.
  Fourier (Grenoble) \textbf{58} (2008), no.~3, 825--876.

\bibitem{GeissLeclercSchroeer11b}
\bysame, \emph{Kac-{M}oody groups and cluster algebras}, Adv. Math.
  \textbf{228} (2011), no.~1, 329--433.

\bibitem{GeissLeclercSchroer17}
Christof Geiss, Bernard Leclerc, and Jan Schr\"{o}er, \emph{Quivers with
  relations for symmetrizable {C}artan matrices {I}: {F}oundations}, Invent.
  Math. \textbf{209} (2017), no.~1, 61--158. \MR{3660306}

\bibitem{GenoveseLowenVandenBergh22}
Francesco Genovese, Wendy Lowen, and Michel Van~den Bergh, \emph{{T}-structures
  on dg-categories and derived deformations}, arXiv:2212.12564 [math.CT].

\bibitem{GepnerGrothNikolaus15}
David Gepner, Moritz Groth, and Thomas Nikolaus, \emph{Universality of
  multiplicative infinite loop space machines}, Algebr. Geom. Topol.
  \textbf{15} (2015), no.~6, 3107--3153. \MR{3450758}

\bibitem{GetzlerJones90}
Ezra Getzler and John D.~S. Jones, \emph{{$A\sb \infty$}-algebras and the
  cyclic bar complex}, Illinois J. Math. \textbf{34} (1990), no.~2, 256--283.

\bibitem{GoerssSchemmerhorn07}
Paul Goerss and Kristen Schemmerhorn, \emph{Model categories and simplicial
  methods}, Interactions between homotopy theory and algebra, Contemp. Math.,
  vol. 436, Amer. Math. Soc., Providence, RI, 2007, pp.~3--49. \MR{2355769}

\bibitem{GorskyNakaokaPalu21}
Mikhail Gorsky, Hiroyuki Nakaoka, and Yann Palu, \emph{Positive and negative
  extensions in extriangulated categories}, arXiv:2103.12482 [math.CT].

\bibitem{GorskyNakaokaPalu23}
\bysame, \emph{Hereditary extriangulated categories: silting objects, mutation,
  negative extensions}, arXiv:2303.07134 [math.RT], 2023.

\bibitem{Grothendieck60}
Alexander Grothendieck, \emph{The cohomology theory of abstract algebraic
  varieties}, Proc. {I}nternat. {C}ongress {M}ath. 1958, Cambridge Univ. Press,
  New York, 1960, pp.~103--118. \MR{0130879}

\bibitem{Grothendieck57}
Alexandre Grothendieck, \emph{Sur quelques points d'alg{\`e}bre homologique},
  T{\^o}hoku Math. J. \textbf{9} (1957), 119--221.

\bibitem{Haugland21}
Johanne Haugland, \emph{The {G}rothendieck group of an {$n$}-exangulated
  category}, Appl. Categ. Structures \textbf{29} (2021), no.~3, 431--446.
  \MR{4255583}

\bibitem{HeHeZhou22}
Jian He, Jing He, and Panyue Zhou, \emph{Localization of n-exangulated
  categories}, arXiv:2205.07644 [math.RT], 2022.

\bibitem{HeHuZhou22}
Jian He, Yonggang Hu, and Panyue Zhou, \emph{Torsion pairs and recollements of
  extriangulated categories}, Comm. Algebra \textbf{50} (2022), no.~5,
  2018--2036. \MR{4401127}

\bibitem{He19}
Jing He, \emph{Extensions of covariantly finite subcategories revisited},
  Czechoslovak Math. J. \textbf{69(144)} (2019), no.~2, 403--415. \MR{3959953}

\bibitem{HeXieZhou23}
Jing He, Chenbei Xie, and Panyue Zhou, \emph{Homotopy cartesian squares in
  extriangulated categories}, Open Mathematics \textbf{21} (2023), no.~1,
  20220570.

\bibitem{Heller58}
Alex Heller, \emph{Homological algebra in abelian categories}, Ann. of Math.
  (2) \textbf{68} (1958), 484--525. \MR{100622}

\bibitem{Heller68}
\bysame, \emph{Stable homotopy categories}, Bull. Amer. Math. Soc. \textbf{74}
  (1968), 28--63.

\bibitem{HerschendLiuNakaoka21}
Martin Herschend, Yu~Liu, and Hiroyuki Nakaoka, \emph{{$n$}-exangulated
  categories ({I}): {D}efinitions and fundamental properties}, J. Algebra
  \textbf{570} (2021), 531--586. \MR{4188310}

\bibitem{HinichSchechtman87}
V.~A. Hinich and V.~V. Schechtman, \emph{On homotopy limit of homotopy
  algebras}, {$K$}-theory, arithmetic and geometry ({M}oscow, 1984--1986),
  Lecture Notes in Math., vol. 1289, Springer, Berlin, 1987, pp.~240--264.
  \MR{923138}

\bibitem{HolsteinLazarev22}
J.~Holstein and A.~Lazarev, \emph{Categorical {K}oszul duality}, Adv. Math.
  \textbf{409} (2022), no.~part B, Paper No. 108644, 52. \MR{4477015}

\bibitem{Hovey99}
Mark Hovey, \emph{Model categories}, Mathematical Surveys and Monographs,
  vol.~63, American Mathematical Society, Providence, RI, 1999.

\bibitem{HuZHou21}
Yonggang Hu and Panyue Zhou, \emph{Recollements arising from cotorsion pairs on
  extriangulated categories}, Front. Math. China \textbf{16} (2021), no.~4,
  937--955. \MR{4307358}

\bibitem{Hubery16}
Andrew Hubery, \emph{Characterising the bounded derived category of an
  hereditary abelian category}, arXiv:1612.06674 [math.RA], 2016.

\bibitem{HuertaMendozaSaenzSantiago22}
Mindy Huerta, Octavia Mendoza, Corina S\'aenz, and Valente Santiago, \emph{Cut
  notions in extriangulated categories related to {A}uslander--{B}uchweitz
  theory and cotorsion theory}, arXiv:2209.08658 [math.CT], 2022.

\bibitem{IyamaNakaokaPalu18}
Osamu Iyama, Hiroyuki Nakaoka, and Yann Palu, \emph{Auslander-{R}eiten theory
  in extriangulated categories}, arXiv:1805.03776 [math.CT].

\bibitem{IyamaYang18}
Osamu Iyama and Dong Yang, \emph{Silting reduction and {C}alabi-{Y}au reduction
  of triangulated categories}, Trans. Amer. Math. Soc. \textbf{370} (2018),
  no.~11, 7861--7898. \MR{3852451}

\bibitem{IyamaYang20}
\bysame, \emph{Quotients of triangulated categories and equivalences of
  {B}uchweitz, {O}rlov, and {A}miot-{G}uo-{K}eller}, Amer. J. Math.
  \textbf{142} (2020), no.~5, 1641--1659. \MR{4150654}

\bibitem{IyamaYoshino08}
Osamu Iyama and Yuji Yoshino, \emph{{Mutations in triangulated categories and
  rigid Cohen-Macaulay modules}}, Inv. Math. \textbf{172} (2008), 117--168.

\bibitem{Jin20}
Haibo Jin, \emph{Cohen-{M}acaulay differential graded modules and negative
  {C}alabi-{Y}au configurations}, Adv. Math. \textbf{374} (2020), 107338, 59.
  \MR{4133519}

\bibitem{JorgensenShah22}
Peter J{\o}rgensen and Amit Shah, \emph{The index with respect to a rigid
  subcategory of a triangulated category}, arXiv:2201.00740 [math.RT], 2022.

\bibitem{Joyal08}
Andr\'e Joyal, \emph{The theory of quasi-categories and its applications},
  lectures at: Advanced Course on Simplicial Methods in Higher Categories, CRM
  2008.

\bibitem{KaledinLowen15}
Dmitry Kaledin and Wendy Lowen, \emph{Cohomology of exact categories and
  (non-)additive sheaves}, Adv. Math. \textbf{272} (2015), 652--698.
  \MR{3303245}

\bibitem{Keller90}
Bernhard Keller, \emph{Chain complexes and stable categories}, Manuscripta
  Math. \textbf{67} (1990), no.~4, 379--417.

\bibitem{Keller94}
\bysame, \emph{Deriving {D}{G} categories}, Ann. Sci. {\'E}cole Norm. Sup. (4)
  \textbf{27} (1994), no.~1, 63--102.

\bibitem{Keller98c}
\bysame, \emph{On the construction of triangle equivalences}, Derived
  equivalences for group rings, Lecture Notes in Math., vol. 1685, Springer,
  Berlin, 1998, pp.~155--176.

\bibitem{Keller99}
\bysame, \emph{On the cyclic homology of exact categories}, J. Pure Appl.
  Algebra \textbf{136} (1999), no.~1, 1--56.

\bibitem{Keller01}
\bysame, \emph{Introduction to {$A$}-infinity algebras and modules}, Homology
  Homotopy Appl. \textbf{3} (2001), no.~1, 1--35 (electronic).

\bibitem{Keller02}
\bysame, \emph{Addendum to: ``{I}ntroduction to {$A$}-infinity algebras and
  modules'' [{H}omology {H}omotopy {A}ppl.\ {\bf 3} (2001), no.\ 1, 1--35;},
  Homology Homotopy Appl. \textbf{4} (2002), no.~1, 25--28 (electronic).

\bibitem{Keller06c}
\bysame, \emph{A-infinity algebras, modules and functor categories}, Trends in
  representation theory of algebras and related topics, Contemp. Math., vol.
  406, Amer. Math. Soc., Providence, RI, 2006, pp.~67--93.

\bibitem{Keller06d}
\bysame, \emph{On differential graded categories}, International Congress of
  Mathematicians. Vol. II, Eur. Math. Soc., Z\"urich, 2006, pp.~151--190.

\bibitem{KellerNicolas13}
Bernhard Keller and Pedro Nicol\'{a}s, \emph{Weight structures and simple dg
  modules for positive dg algebras}, Int. Math. Res. Not. IMRN (2013), no.~5,
  1028--1078. \MR{3031826}

\bibitem{KellerVossieck87}
Bernhard Keller and Dieter Vossieck, \emph{Sous les cat{\'e}gories
  d{\'e}riv{\'e}es}, C. R. Acad. Sci. Paris S{\'e}r. I Math. \textbf{305}
  (1987), no.~6, 225--228.

\bibitem{KellerWu22}
Bernhard Keller and Yilin Wu, \emph{Relative cluster categories and {H}iggs
  categories with infinite-dimensional morphism spaces}, in preparation.

\bibitem{Kelly65}
G.~M. Kelly, \emph{Chain maps inducing zero homology maps}, Proc. Cambridge
  Philos. Soc. \textbf{61} (1965), 847--854.

\bibitem{KlapprothMsapatoShah22}
Carlo Klapproth, Dixy Msapato, and Amit Shah, \emph{Idempotent completions of
  n-exangulated categories}, arXiv:2207.04023 [math.CT], 2022.

\bibitem{Klemenc22}
Jona Klemenc, \emph{The stable hull of an exact {$\infty$}-category}, Homology
  Homotopy Appl. \textbf{24} (2022), no.~2, 195--220. \MR{4467024}

\bibitem{KongLinWang21}
Xiaoxue Kong, Zengqiang Lin, and Minxiong Wang, \emph{The ({ET4}) axiom for
  extriangulated categories}, arXiv:2112.06445 [math.CT].

\bibitem{Kontsevich98}
Maxim Kontsevich, \emph{Triangulated categories and geometry}, Course at the
  {\'{E}}cole {N}ormale Sup\'erieure, Paris, Notes taken by J. Bella\"{\i}che,
  J.-F. Dat, I. Marin, G. Racinet and H. Randriambololona, 1998.

\bibitem{Land21}
Markus Land, \emph{Introduction to infinity-categories}, Compact Textbooks in
  Mathematics, Birkh\"{a}user/Springer, Cham, [2021] \copyright 2021.
  \MR{4259746}

\bibitem{Lefevre03}
Kenji Lef\`evre-Hasegawa, \emph{Sur les ${A}_\infty$-cat\'egories}, Th\`ese de
  doctorat, {U}niversit\'e {D}enis {D}iderot -- Paris 7, November 2003,
  arXiv:math.CT/0310337.

\bibitem{LiuNakaoka19}
Yu~Liu and Hiroyuki Nakaoka, \emph{Hearts of twin cotorsion pairs on
  extriangulated categories}, J. Algebra \textbf{528} (2019), 96--149.
  \MR{3928292}

\bibitem{LiuZhou20}
Yu~Liu and Panyue Zhou, \emph{Abelian categories arising from cluster tilting
  subcategories}, Appl. Categ. Structures \textbf{28} (2020), no.~4, 575--594.
  \MR{4114992}

\bibitem{LiuZhou20a}
\bysame, \emph{Abelian categories arising from cluster tilting subcategories
  {II}: quotient functors}, Proc. Roy. Soc. Edinburgh Sect. A \textbf{150}
  (2020), no.~6, 2721--2756. \MR{4190088}

\bibitem{LiuZhou21}
\bysame, \emph{Gorenstein dimension of abelian categories arising from cluster
  tilting subcategories}, Czechoslovak Math. J. \textbf{71(146)} (2021), no.~2,
  435--453. \MR{4263179}

\bibitem{LiuZhouZhouZhu21}
Yu~Liu, Panyue Zhou, Yu~Zhou, and Bin Zhu, \emph{Silting reduction in
  extriangulated categories}, arXiv:2108.07964 [math.RT], 2021.

\bibitem{LurieHA}
Jacob Lurie, \emph{Higher algebra}, available at the author's home page.

\bibitem{Lurie23}
\bysame, \emph{Kerodon}, \verb"kerodon.net".

\bibitem{Lurie06}
\bysame, \emph{Stable {I}nfinity {C}ategories}, arXiv:math/0608228 [math.CT],
  2006.

\bibitem{Lurie09}
\bysame, \emph{Higher topos theory}, Annals of Mathematics Studies, vol. 170,
  Princeton University Press, Princeton, NJ, 2009. \MR{2522659}

\bibitem{Lyubashenko03}
Volodymyr Lyubashenko, \emph{Category of {$A\sb \infty$}-categories}, Homology
  Homotopy Appl. \textbf{5} (2003), no.~1, 1--48 (electronic).

\bibitem{MaZhaoZhuang23}
Xin Ma, Tiwei Zhao, and Xin Zhuang, \emph{Resolving subcategories and
  dimensions in recollements of extriangulated categories}, Bull. Malays. Math.
  Sci. Soc. \textbf{46} (2023), no.~1, Paper No. 36, 24. \MR{4516454}

\bibitem{MacLane98}
Saunders Mac~Lane, \emph{Categories for the working mathematician}, second ed.,
  Graduate Texts in Mathematics, vol.~5, Springer-Verlag, New York, 1998.

\bibitem{Dixy22}
Dixy Msapato, \emph{The {K}aroubi envelope and weak idempotent completion of an
  extriangulated category}, Appl. Categ. Structures \textbf{30} (2022), no.~3,
  499--535. \MR{4421094}

\bibitem{NakaokaOgawaSakai22}
Hiroyuki Nakaoka, Yasuaki Ogawa, and Arashi Sakai, \emph{Localization of
  extriangulated categories}, J. Algebra \textbf{611} (2022), 341--398.
  \MR{4474640}

\bibitem{NakaokaPalu20}
Hiroyuki Nakaoka and Yann Palu, \emph{External triangulation of the homotopy
  category of exact quasi-category}, arXiv:2004.02479 [math.CT].

\bibitem{NakaokaPalu19}
\bysame, \emph{Extriangulated categories, {H}ovey twin cotorsion pairs and
  model structures}, Cah. Topol. G\'{e}om. Diff\'{e}r. Cat\'{e}g. \textbf{60}
  (2019), no.~2, 117--193. \MR{3931945}

\bibitem{Neeman01}
Amnon Neeman, \emph{${K}$-theory for triangulated categories {$3\frac 34$}: {A}
  direct proof of the theorem of the heart}, $K$-Theory \textbf{22} (2001),
  no.~1,2, 1--144.

\bibitem{Neeman99}
\bysame, \emph{Triangulated {Categories}}, Annals of Mathematics Studies, vol.
  148, Princeton University Press, Princeton, NJ, 2001.

\bibitem{Neeman21}
Amnon Neeman, \emph{A counterexample to vanishing conjectures for negative
  {$K$}-theory}, Invent. Math. \textbf{225} (2021), no.~2, 427--452.
  \MR{4285139}

\bibitem{Neeman22}
\bysame, \emph{Finite approximations as a tool for studying triangulated
  categories}, arXiv:2211.06587 [math.CT], to appear in the proceedings of the
  2022 ICM, 2022.

\bibitem{Ogawa22}
Yasuaki Ogawa, \emph{Localization of triangulated categories with respect to
  extension-closed subcategories}, arXiv:2205.12116 [math.CT].

\bibitem{Ogawa21}
Yasuaki Ogawa, \emph{Auslander's defects over extriangulated categories: an
  application for the general heart construction}, J. Math. Soc. Japan
  \textbf{73} (2021), no.~4, 1063--1089. \MR{4329022}

\bibitem{Pauksztello08}
David Pauksztello, \emph{Compact corigid objects in triangulated categories and
  co-{$t$}-structures}, Cent. Eur. J. Math. \textbf{6} (2008), no.~1, 25--42.
  \MR{2379950}

\bibitem{Positselski21}
Leonid Positselski, \emph{Exact {DG}-categories and fully faithful triangulated
  inclusion functors}, arXiv:2110.08237 [math.CT], 2021.

\bibitem{Pressland23}
Matthew Pressland, \emph{A categorification of acyclic principal coefficient
  cluster algebras}, Nagoya Mathematical Journal (2023), 1–41.

\bibitem{Puppe62}
Dieter Puppe, \emph{On the structure of stable homotopy theory}, Colloquium on
  algebraic topology, Aarhus Universitet Matematisk Institut, 1962, pp.~65--71.

\bibitem{Quillen73}
Daniel Quillen, \emph{Higher algebraic ${K}$-theory. {I}}, Algebraic
  $K$-theory, I: Higher $K$-theories (Proc. Conf., Battelle Memorial Inst.,
  Seattle, Wash., 1972), Lecture Notes in Math., vol. 341, Springer verlag,
  1973, pp.~85--147.

\bibitem{Quillen67}
Daniel~G. Quillen, \emph{Homotopical algebra}, Lecture Notes in Mathematics,
  No. 43, Springer-Verlag, Berlin, 1967.

\bibitem{RiveraZeinalian18}
Manuel Rivera and Mahmoud Zeinalian, \emph{Cubical rigidification, the cobar
  construction and the based loop space}, Algebr. Geom. Topol. \textbf{18}
  (2018), no.~7, 3789--3820. \MR{3892231}

\bibitem{Rump11}
Wolfgang Rump, \emph{On the maximal exact structure on an additive category},
  Fund. Math. \textbf{214} (2011), no.~1, 77--87. \MR{2845634}

\bibitem{Rump15}
\bysame, \emph{Stable short exact sequences and the maximal exact structure of
  an additive category}, Fund. Math. \textbf{228} (2015), no.~1, 87--96.
  \MR{3291633}

\bibitem{Sakai23}
Arashi Sakai, \emph{Relative extriangulated categories arising from half exact
  functors}, J. Algebra \textbf{614} (2023), 592--610. \MR{4499356}

\bibitem{Shulman06}
Michael Shulman, \emph{Homotopy limits and colimits and enriched homotopy
  theory}, arXiv:math/0610194.

\bibitem{SiegWegner11}
Dennis Sieg and Sven-Ake Wegner, \emph{Maximal exact structures on additive
  categories}, Math. Nachr. \textbf{284} (2011), no.~16, 2093--2100.
  \MR{2844680}

\bibitem{Tabuada05}
Goncalo Tabuada, \emph{Une structure de cat\'egorie de mod\`eles de {Q}uillen
  sur la cat\'egorie des dg-cat\'egories}, C. R. Math. Acad. Sci. Paris
  \textbf{340} (2005), no.~1, 15--19. \MR{MR2112034 (2005h:18033)}

\bibitem{Tabuada10}
Gon\c{c}alo Tabuada, \emph{On {D}rinfeld's dg quotient}, J. Algebra
  \textbf{323} (2010), no.~5, 1226--1240. \MR{2584954}

\bibitem{TanWangZhao23}
Lingling Tan, Dingguo Wang, and Tiwei Zhao, \emph{Almost split triangles in
  pre-extriangulated categories}, Colloq. Math. \textbf{171} (2023), no.~2,
  189--209. \MR{4543044}

\bibitem{ThomasonTrobaugh90}
Robert~W. Thomason and Thomas~F. Trobaugh, \emph{Higher algebraic {K--theory}
  of schemes and of derived categories}, The Grothendieck Festschrift (a
  collection of papers to honor Grothendieck's 60'th birthday), vol.~3,
  Birkh{\"a}user, 1990, pp.~247--435.

\bibitem{Toen07}
Bertrand To{\"e}n, \emph{The homotopy theory of {$dg$}-categories and derived
  {M}orita theory}, Invent. Math. \textbf{167} (2007), no.~3, 615--667.

\bibitem{Toen11}
Bertrand To\"{e}n, \emph{Lectures on dg-categories}, Topics in algebraic and
  topological {$K$}-theory, Lecture Notes in Math., vol. 2008, Springer,
  Berlin, 2011, pp.~243--302.

\bibitem{ToenVezzosi04a}
Bertrand To\"{e}n and Gabriele Vezzosi, \emph{A remark on {$K$}-theory and
  {$S$}-categories}, Topology \textbf{43} (2004), no.~4, 765--791. \MR{2061207}

\bibitem{Verdier96}
Jean-Louis Verdier, \emph{Des cat{\'e}gories d{\'e}riv{\'e}es des
  cat{\'e}gories ab{\'e}liennes}, Ast{\'e}risque, vol. 239, Soci{\'e}t{\'e}
  Math{\'e}matique de France, 1996 (French).

\bibitem{WangWeiZhang22a}
Li~Wang, Jiaqun Wei, and Haicheng Zhang, \emph{Hall algebras of extriangulated
  categories}, J. Algebra \textbf{610} (2022), 366--390. \MR{4466102}

\bibitem{WangWeiZhang22}
\bysame, \emph{Recollements of extriangulated categories}, Colloq. Math.
  \textbf{167} (2022), no.~2, 239--259. \MR{4358449}

\bibitem{WangWeiZhangZhao23}
Li~Wang, Jiaqun Wei, Haicheng Zhang, and Tiwei Zhao, \emph{Idempotent
  completion of extriangulated categories}, J. Algebra Appl. \textbf{22}
  (2023), no.~4, Paper No. 2350095, 22. \MR{4553230}

\bibitem{Weibel94}
Charles~A. Weibel, \emph{An introduction to homological algebra}, Cambridge
  Studies in Advanced Mathematics, vol.~38, Cambridge University Press, 1994.

\bibitem{Wu21}
Yilin Wu, \emph{Relative {C}alabi--{Y}au structures in representation theory},
  Ph.~D.~thesis.

\bibitem{Wu23b}
\bysame, \emph{Categorification of ice quiver mutation}, Math. Z. \textbf{304}
  (2023), no.~1, Paper No. 11, 42. \MR{4576399}

\bibitem{Wu23a}
\bysame, \emph{Relative cluster categories and {H}iggs categories}, Adv. Math.
  \textbf{424} (2023), Paper No. 109040. \MR{4581969}

\bibitem{Yoneda60}
Nobuo Yoneda, \emph{On {E}xt and exact sequences}, J. Fac. Sci. Univ. Tokyo
  Sect. I \textbf{8} (1960), 507--576 (1960). \MR{225854}

\bibitem{ZhaoZhuZhuang21}
Tiwei Zhao, Bin Zhu, and Xiao Zhuang, \emph{Tilting pairs in extriangulated
  categories}, Proc. Edinb. Math. Soc. (2) \textbf{64} (2021), no.~4, 947--981.
  \MR{4349418}

\bibitem{ZhouZhu18}
Panyue Zhou and Bin Zhu, \emph{Triangulated quotient categories revisited}, J.
  Algebra \textbf{502} (2018), 196--232. \MR{3774890}

\bibitem{ZhuZhuang20}
Bin Zhu and Xiao Zhuang, \emph{Tilting subcategories in extriangulated
  categories}, Front. Math. China \textbf{15} (2020), no.~1, 225--253.
  \MR{4074356}

\bibitem{ZhuZhuang21}
\bysame, \emph{Grothendieck groups in extriangulated categories}, J. Algebra
  \textbf{574} (2021), 206--232. \MR{4210999}

\end{thebibliography}
\end{document}